\documentclass[12pt,reqno]{amsart}
\usepackage[T1]{fontenc}
\usepackage[latin9]{inputenc}
\usepackage{geometry}  
\geometry{verbose,tmargin=3cm,bmargin=3cm,lmargin=2cm,rmargin=2cm}
\usepackage{color}
\usepackage{mathrsfs} 
\usepackage{mathtools}
\usepackage{amstext}
\usepackage{amsthm}
\usepackage{amssymb}
\usepackage{stmaryrd}
\usepackage{wasysym}
\usepackage[all]{xy}
\usepackage[unicode=true,pdfusetitle,
 bookmarks=true,bookmarksnumbered=false,bookmarksopen=false,
 breaklinks=false,pdfborder={0 0 1},backref=false,colorlinks=true]
 {hyperref}

\makeatletter
\numberwithin{equation}{section}
\numberwithin{figure}{section}
\theoremstyle{plain}
\newtheorem{thm}{\protect\theoremname}[section]
\theoremstyle{remark}
\newtheorem{rem}[thm]{\protect\remarkname}
\theoremstyle{definition}
\newtheorem{defn}[thm]{\protect\definitionname}
\theoremstyle{definition}
\newtheorem{example}[thm]{\protect\examplename}
\theoremstyle{plain}
\newtheorem{prop}[thm]{\protect\propositionname}
\theoremstyle{plain}
\newtheorem{lem}[thm]{\protect\lemmaname}
\theoremstyle{remark}
\newtheorem{claim}[thm]{\protect\claimname}
\theoremstyle{plain}
\newtheorem{cor}[thm]{\protect\corollaryname}
\theoremstyle{remark}
\newtheorem{notation}[thm]{\protect\notationname}

\usepackage{amsthm,amssymb}
\usepackage[all]{xy}
\usepackage{amsmath}
\usepackage{textcomp}
\usepackage{cases}
\usepackage{here}
\usepackage[dvipdfmx]{graphicx}
\usepackage{amscd}
\usepackage{mathrsfs}
\usepackage{float}
\usepackage{wrapfig}
\usepackage{color}
\usepackage[capitalize]{cleveref}

\def\P{\mathbb{P}}
\def\A{\mathbb{A}}
\def\G{\mathbb{G}}
\def\R{\mathbb{R}}
\def\Z{\mathbb{Z}}
\def\Q{\mathbb{Q}}
\def\C{\mathbb{C}}

\def\L{\mathrm{L}}

\def\N{\mathcal{N}}

\def\H{\mathscr{H}}

\def\F{\mathscr{F}}

\def\RgHom{\mathop{\mathrm{R}\mathrm{grHom}}\nolimits}

\def\Tot{\mathop{\mathrm{Tot}}\nolimits}
\def\geom{\mathop{\mathrm{geom}}\nolimits}
\def\sheaf{\mathop{\mathrm{sheaf}}\nolimits}
\def\op{\mathop{\mathrm{op}}\nolimits}
\def\MW{\mathop{\mathrm{MW}}\nolimits}
\def\For{\mathop{\mathrm{For}}\nolimits}
\def\RgHoms{\mathop{\mathrm{R}gr\mathscr{H}om}\nolimits}
\def\gHoms{\mathop{gr\mathscr{H}om}\nolimits}
\def\Homs{\mathop{\mathscr{H}om}\nolimits}
\def\Fil{\mathop{\mathrm{Fil}}\nolimits}

\def\Bl{\mathop{\mathrm{Bl}}\nolimits}
\def\Num{\mathop{\mathrm{Num}}\nolimits}
\def\ext{\mathop{\mathrm{ext}}\nolimits}
\def\Gr{\mathop{\mathrm{Gr}}\nolimits}
\def\gHom{\mathop{\mathrm{grHom}}\nolimits}
\def\gr{\mathop{\mathrm{gr}}\nolimits}
\def\cpt{\mathop{\mathrm{cpt}}\nolimits}
\def\BM{\mathop{\mathrm{BM}}\nolimits}
\def\sm{\mathop{\mathrm{sm}}\nolimits}
\def\sing{\mathop{\mathrm{sing}}\nolimits}
\def\Shv{\mathop{\mathrm{Shv}}\nolimits}
\def\Psh{\mathop{\mathrm{Psh}}\nolimits}
\def\gMod{\mathop{\mathrm{grMod}}\nolimits}
\def\Fct{\mathop{\mathrm{Fct}}\nolimits}
\def\Modu{\mathop{\mathrm{Mod}}\nolimits}

\def\Tan{\mathop{\mathrm{Tan}}\nolimits}

\def\Tan{\mathop{\mathrm{Tan}}\nolimits}

\def\codim{\mathop{\mathrm{codim}}\nolimits}

\def\pt{\mathop{\mathrm{pt}}\nolimits}
\def\pr{\mathop{\mathrm{pr}}\nolimits}
\def\Id{\mathop{\mathrm{Id}}\nolimits}
\def\CH{\mathop{\mathrm{CH}}\nolimits}
\def\gr{\mathop{\mathrm{gr}}\nolimits}
\def\Gr{\mathop{\mathrm{Gr}}\nolimits}

\def\res{\mathop{\mathrm{res}}\nolimits}
\def\Ima{\mathop{\mathrm{Im}}\nolimits}
\def\Ker{\mathop{\mathrm{Ker}}\nolimits}
\def\Coker{\mathop{\mathrm{Coker}}\nolimits}

\def\sing{\mathop{\mathrm{sing}}\nolimits}

\def\Trop{\mathop{\mathrm{trop}}\nolimits}

\def\relint{\mathop{\mathrm{rel.int}}\nolimits}

\def\Span{\mathop{\mathrm{Span}}\nolimits}

\def\supp{\mathop{\mathrm{supp}}\nolimits}

\def\Spec{\mathop{\mathrm{Spec}}\nolimits}

\def\Hom{\mathop{\mathrm{Hom}}\nolimits}

\def\id{\mathop{\mathrm{id}}\nolimits}
\def\Trop{\mathop{\mathrm{Trop}}\nolimits}

\def\Gr{\mathop{\mathrm{Gr}}\nolimits}

\makeatother

\providecommand{\claimname}{Claim}
\providecommand{\corollaryname}{Corollary}
\providecommand{\definitionname}{Definition}
\providecommand{\examplename}{Example}
\providecommand{\lemmaname}{Lemma}
\providecommand{\notationname}{Notation}
\providecommand{\propositionname}{Proposition}
\providecommand{\remarkname}{Remark}
\providecommand{\theoremname}{Theorem}

\begin{document}
\address{{[}Ryota Mikami{]}{Institute of mathematics, Academia Sinica, 7F,
Astronomy-Mathematics Building, No. 1, Sec. 4, Roosevelt Road, Da-an,
Taipei 106319, Taiwan} }
\email{ryotamikamimath467jhoiv9dhk3@gmail.com}
\title{Tropical intersection homology}
\author{Ryota Mikami}
\begin{abstract}
Numerical equivalence of algebraic cycles is defined abstractly by
intersection numbers. 
For smooth complex proper toric
varieties, the quotients by numerical equivalence with rational coefficients
can be described geometrically as singular cohomology. They are also isomorphic to tropical cohomology, introduced
by Itenberg-Katzarkov-Mikhalkin-Zharkov. This paper aims to generalize
this result to suitable pairs of smooth proper varieties and divisors by
introducing a tropical analog of intersection
homology.
\end{abstract}

\keywords{tropical geometry, tropical cohomology, intersection homology, algebraic
cycles, numerical equivalence}

\maketitle
\setcounter{tocdepth}{1} 

\tableofcontents{}

\section{Introduction\label{sec:Introduction}}

\emph{Numerical equivalence} $\sim_{\Num}$ of algebraic cycles $Z^{p}(Y)$
of smooth proper varieties $Y$ over a field $K$ is defined by intersection
numbers: $\alpha\sim_{\Num}\beta$ ($\alpha,\beta\in Z^{p}(Y))$ if
and only if $\deg(\alpha\cdot\gamma)=\deg(\beta\cdot\gamma)\in\Z$
($\gamma\in Z^{\dim Y-p}(Y)$). The quotient $\CH_{\Num}^{p}(Y)\otimes_{\Z}\Q$
(tensored with $\Q$) plays a central role in the theory of Grothendieck's
pure motives (see e.g., \cite{MurreNagelPetersLecturesonthetheoryofpuremotives2013}).
The abstract definition of numerical equivalence is one of the reasons
why some fundamental problems still remain. 

Classically, for a smooth complex proper toric variety $T_{\Sigma}$,
we have a geometric description 
\begin{equation}
\CH_{\Num}^{p}(T_{\Sigma})\otimes_{\Z}\Q\cong H_{\sing}^{2p}(T_{\Sigma}(\mathbb{C});\Q)\label{eq:toric case}
\end{equation}
using singular cohomology. In the toric case, the quotient $\CH_{\Num}^{p}(T_{\Sigma})\otimes_{\Z}\Q$
is also isomorphic to a cohomology theory in \emph{tropical geometry},
\emph{tropical cohomology} $H_{\Trop}^{p,p}(\Trop(T_{\Sigma});\Q)$,
introduced by Itenberg-Katzarkov-Mikhalkin-Zharkov (\cite{ItenbergKatzarkovMikhalkinZharkovTropicalhomology2019}).
The aim of this paper is to give a generalization of this description
to suitable pairs of smooth proper varieties and divisors by introducing
a tropical analog of intersection homology, \emph{tropical intersection
homology}. 

We consider the following situation. Let $\varphi\colon Y\hookrightarrow T_{\Lambda}$
be a closed immersion to the smooth toric variety $T_{\Lambda}$ corresponding
to a unimodular fan $\Lambda$ such that $\varphi(Y)$ is tropically
compact (\cite{TevelevCompactificationsofsubvarietiesoftori07}, \cite{GublerAguidetotropicalizations2013}),
i.e., for any cone $\lambda\in\Lambda$, the intersection $\varphi(Y)\cap O(\lambda)$
with the orbit $O(\lambda)$ is non-empty and of codimension $=\dim\lambda$
in $\varphi(Y)$. (The condition of codimension holds for example,
when the intersection of $\varphi(Y)$ and
the toric divisor form a simple normal crossing divisor.)
We also assume that for $\lambda \in \Lambda$ with $\dim \lambda \leq \dim Y -1$ (i.e., $\dim \varphi(Y)\cap O(\lambda) \geq 1$), 
the intersection $\varphi(Y)\cap O(\lambda)$ is irreducible. 
(This is a technical assumption, see Section \ref{sec:Algebraicity}.)
Since we would like to give an expression of $\CH_{\Num}^{*}(Y)\otimes_{\Z}\Q$
using tropicalization of $\varphi(Y)$, we assume that the natural
map 
\begin{equation}
\CH^{*}(T_{\Lambda})\otimes_{\Z}\Q\to\CH_{\Num}^{*}(Y)\otimes_{\Z}\Q\label{eq:Intro Assump ch num surj}
\end{equation}
 is surjective. 
\begin{rem}
The existence of such a $\varphi\colon Y\hookrightarrow T_{\Lambda}$
shows that $\CH_{\Num}^{*}(Y)\otimes_{\Z}\Q$ is generated by divisor
classes and $[Y]$ as a $\Q$-algebra. When the base field is infinite
and $Y$ is projective, by Bertini's theorem, the converse is also
true (take an open toric subvariety $T_{\Lambda}\subset\prod_{i}\P^{n_{i}}$).
For a smooth irreducible projective variety $Y_{0}$, this condition
does not necessary hold (\cite{HuhWangLefschetzclassesonprojectivevarieties2017}),
but when the base field is infinite, there is always a blow-up $Y_{1}\to Y_{0}$
satisfying this conditions (Proposition \ref{prp:existence-of-good-example},
see also Example \ref{cor:nice example}). In this sense, there are
many such $Y$ and $\varphi\colon Y\hookrightarrow T_{\Lambda}$.
\end{rem}

\emph{Tropical geometry} is, in a sense, a generalization of toric
geometry. The \emph{tropicalization} $\Trop(\varphi(Y))$ of $\varphi(Y)\subset T_{\Lambda}$
with respect to the trivial valuation of the base field $K$ is a
compactification of the dual intersection complex of pull-backs $\{\varphi^{-1}(D_{l})\}$
of toric divisors $\{D_{l}\}$ equipped with the information of intersection
numbers $\deg\varphi^{*}([\overline{O(\lambda)}])$ for torus orbits
$O(\lambda)\subset T_{\Lambda}$ of codimension $\dim Y$ (see e.g.,
\cite[Subsection 3.3 and 3.5]{GublerRabinoffWernerTropicalskeletons2017}
for details). 

A tropical analog of singular cohomology, \emph{tropical cohomology}
$H_{\Trop}^{p,q}(\Trop(\varphi(Y));\Q)$, was introduced by Itenberg-Katzarkov-Mikhalkin-Zharkov
(\cite{ItenbergKatzarkovMikhalkinZharkovTropicalhomology2019}) to
study limit mixed Hodge structures of some maximally degenerate varieties
(cf. \cite{GrossSiebertMirrorsymmetryvialogarithmicdegenerationdataII2010}).
As we have seen, when $Y=T_{\Lambda}$ over $\C$, we have 
\[
H_{\Trop}^{p,p}(\Trop(\varphi(Y));\Q)\cong H_{\sing}^{2p}(Y(\C);\Q)
\]
 (\cite{ItenbergKatzarkovMikhalkinZharkovTropicalhomology2019}) (cf.
\cite{AksnesAminiPiquerezShawCohomologicallytropicalvarieties2023}).
In general, Amini-Piquerez (\cite[Theorem 1.1]{AminiPiquerezTropicalFeichtner-Yuzvinskyandpositivitycriterionforfans2024})
proved 
\begin{align}
H_{\Trop}^{p,q}(\Trop(\varphi(Y));\Q)\cong & \begin{cases}
\CH^{p}(T_{\Lambda})\otimes_{\Z}\Q & (p=q)\\
0 & (p\leq q-1)
\end{cases},\label{eq:AP result}
\end{align}
and that a pairing of $\CH^{*}(T_{\Lambda})\otimes_{\Z}\Q$ given
by $(\alpha,\beta)\mapsto\deg\varphi^{*}(\alpha\cdot\beta)$ coincides
with cup products of $H_{\Trop}^{*,*}(\Trop(\varphi(Y));\Q)$. (There
is also a similar result (\cite[Theorem 1.1]{MikamiOntropicalcycleclassmaps2020})
for tropical cohomology $H_{\Trop}^{p,q}(Y;\Q)$ of $Y$ itself.)
By assumption (\ref{eq:Intro Assump ch num surj}), we have a natural
surjective morphism 
\[
H_{\Trop}^{p,p}(\Trop(\varphi(Y));\Q)\twoheadrightarrow\CH_{\Num}^{p}(Y)\otimes_{\Z}\Q.
\]
However, this is not necessary isomorphism, in other words, \emph{Poincar\'{e}
duality} for $H_{\Trop}^{*,*}(\Trop(\varphi(Y));\Q)$ does not necessary
hold. (In $1$-dimensional case,  Gubler-Jell-Rabinoff (\cite{GubleJellRabinoffharmonictropicalization21},
\cite{GublerJellRabinoffDolbeaultCohomologyofGraphsandBerkovichCurves})
introduced new tropical cohomology, and proved Poincar\'{e} duality
for it.)

Classically, the failure of Poincar\'{e} duality for singular (co)homology
of singular complex proper algebraic varieties is known, and Goresky-MacPherson 
(\cite{GoreskyMacPhersonIntersectionhomologytheory}) 
and Deligne-Goresky-MacPherson  
(\cite{GoreskyMacPhersonIntersectionhomologyII83})
introduced new cohomology theory, \emph{intersection homology}, which
coincides with singular homology for smooth varieties, and proved
Poincar\'{e}-Verdier duality for them (of the middle perversity)
in full generality. 

In this paper, we will introduce a tropical analog of intersection homology, \emph{tropical intersection (co)homology} $IH_{\Trop}^{p,q}(X;\Q)$
for a tropical variety $X$ regular at infinity (\cite[Definition 1.2]{MikhalkinZharkovTropicaleignewaveandintermediatejacobians2014}).
It coincides with $H_{\Trop}^{p,q}(X;\Q)$ when Poincar\'{e}-Verdier
duality holds (e.g., $X=\Trop(\varphi(Y))$ for $Y=T_{\Lambda}$ (see
\cite{JellShawSmackaSuperformstropicalcohomologyandPoincarduality2019},
\cite{GrossShokirehAsheaf-theoreticapproachtotropicalhomology23},
\cite{AminiPiquerezHomologicalsmoothnessandDeligneresolutionfortropicalfans2024})).
Our main result is the following.
\begin{thm}
\label{thm:main result-1} (Corollary \ref{cor:poincare duality for sheaf def-1},
Corollary \ref{cor:IH =00003D Ch/num }) For $p,q\geq0$, there is
a non-degenerate pairing 
\[
IH_{\Trop}^{p,q}(X;\Q)\times IH_{\Trop,c}^{\dim X-p,\dim X-q}(X;\Q)\to\Q.
\]
Moreover, when $X=\Trop(\varphi(Y))$, under assumption (\ref{eq:Intro Assump ch num surj}),
we have a natural isomorphism 
\begin{equation}
IH_{\Trop}^{p,q}(\Trop(\varphi(Y));\Q)\cong\begin{cases}
\CH_{\Num}^{p}(Y)\otimes_{\Z}\Q & (p=q)\\
0 & (p\neq q)
\end{cases},\label{eq:troIH and CHnum}
\end{equation}
under which the pairing of $IH_{\Trop}^{*,*}(\Trop(\varphi(Y));\Q)$
coincides with intersection numbers in $Y$.
\end{thm}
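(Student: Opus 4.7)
My plan is to adapt Goresky--MacPherson's sheaf-theoretic construction of intersection homology to the tropical setting, and then combine the resulting Poincar\'{e}--Verdier duality with the Amini--Piquerez computation \eqref{eq:AP result} to identify $IH_{\Trop}^{*,*}(\Trop(\varphi(Y));\Q)$ with $\CH_{\Num}^{*}(Y)\otimes_{\Z}\Q$.

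For the duality statement, I would define $IH_{\Trop}^{p,q}(X;\Q)$ as hypercohomology of a tropical intersection complex $\mathcal{IC}_{X}^{p}$, built by Deligne's inductive truncation--pushforward procedure along the natural stratification of a tropical variety regular at infinity (open strata given by relative interiors of faces, glued by the local cone data). The middle perversity is chosen precisely so that $\mathcal{IC}_{X}^{p}$ is Verdier self-dual, up to the appropriate bi-shift, against the tropical dualizing complex on the smooth locus, where tropical Poincar\'{e} duality is already known via \cite{JellShawSmackaSuperformstropicalcohomologyandPoincarduality2019}, \cite{GrossShokirehAsheaf-theoreticapproachtotropicalhomology23}, \cite{AminiPiquerezHomologicalsmoothnessandDeligneresolutionfortropicalfans2024}. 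Standard formalities then yield the non-degenerate pairing
\[
IH_{\Trop}^{p,q}(X;\Q)\times IH_{\Trop,c}^{\dim X-p,\dim X-q}(X;\Q)\to\Q.
\]

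For the identification with $\CH_{\Num}^{*}(Y)\otimes_{\Z}\Q$, take $X=\Trop(\varphi(Y))$. The Deligne construction furnishes a canonical comparison morphism $H_{\Trop}^{p,q}(X;\Q)\to IH_{\Trop}^{p,q}(X;\Q)$ which intertwines the cup product on the left with the Poincar\'{e} pairing on the right. The Amini--Piquerez vanishing $H_{\Trop}^{p,q}(X;\Q)=0$ for $p\le q-1$ transports to the intersection side, and the duality above symmetrises this to $IH_{\Trop}^{p,q}(X;\Q)=0$ whenever $p\neq q$. In the diagonal case, \eqref{eq:AP result} gives $H_{\Trop}^{p,p}(X;\Q)\cong \CH^{p}(T_{\Lambda})\otimes_{\Z}\Q$, and Amini--Piquerez's compatibility of pairings identifies the radical of the cup-product form with the kernel of $\CH^{p}(T_{\Lambda})\otimes_{\Z}\Q\to\CH_{\Num}^{p}(Y)\otimes_{\Z}\Q$. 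Since the Poincar\'{e} pairing on $IH^{p,p}_{\Trop}(X;\Q)$ is non-degenerate and factors the tropical cup product, this same radical is exactly the kernel of the comparison map; combined with assumption \eqref{eq:Intro Assump ch num surj}, this produces a well-defined injection $\CH_{\Num}^{p}(Y)\otimes_{\Z}\Q \hookrightarrow IH_{\Trop}^{p,p}(X;\Q)$ under which intersection numbers go to the Poincar\'{e} pairing.

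The principal obstacle is surjectivity of $H_{\Trop}^{p,p}(X;\Q)\to IH_{\Trop}^{p,p}(X;\Q)$: once this is established, the kernel analysis above promotes the injection to the desired isomorphism. I expect to handle it by a local analysis of $\mathcal{IC}_{X}^{p}$ on each stratum of $\Trop(\varphi(Y))$, reducing through Deligne's axiomatic characterisation to a statement about local tropical cohomology of transversal slices along pulled-back toric divisors. Here assumption \eqref{eq:Intro Assump ch num surj} is crucial: it guarantees that every class is represented by a combination of pulled-back toric classes, whose transversal local models are star fans of faces in $\Lambda$, for which $IH_{\Trop}=H_{\Trop}$ by the toric Poincar\'{e} duality results cited above. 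Piecing these local comparisons together via a Mayer--Vietoris or spectral-sequence argument on the stratification should yield global surjectivity, completing the proof and giving the compatibility of the pairings.
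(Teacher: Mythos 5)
Your overall architecture (a Deligne-type truncation--pushforward construction, formal Verdier duality, then comparison with Chow groups via Amini--Piquerez) matches the paper's, but two of your key steps rest on mechanisms that do not work, and the first would, if true, make the theorem false. You propose to prove surjectivity of $H_{\Trop}^{p,p}(X;\Q)\to IH_{\Trop}^{p,p}(X;\Q)$ by arguing that the transversal local models (star fans of faces of $\Lambda$) satisfy $IH_{\Trop}=H_{\Trop}$ by toric Poincar\'{e} duality. But these local models are the links of strata of $X=\Trop(\varphi(Y))$ itself, which is in general \emph{not} locally matroidal; if every local model satisfied duality, then $X$ would already be a Poincar\'{e} duality space for $H_{\Trop}$, and the theorem would give $\CH^{p}(T_{\Lambda})\otimes_{\Z}\Q$ rather than its quotient by numerical equivalence --- exactly the failure the construction is designed to repair. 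Assumption (\ref{eq:Intro Assump ch num surj}) also plays no role in surjectivity; it is used only at the very end to pass from $\CH^{p}(T_{\Lambda})\otimes_{\Z}\Q/\Num_{(\Lambda,w)}^{p}(T_{\Lambda})$ to $\CH_{\Num}^{p}(Y)\otimes_{\Z}\Q$ via the projection formula. The actual mechanism is a spectral sequence for the stratification by toric orbits whose $E_{1}$-terms are $IH_{\Trop}$ of the tropical fans $X\cap N_{\lambda,\R}$, combined with the vanishing $IH_{p,q}^{\Trop,\cpt}(W)=0$ for $\max\left\{ p,1\right\} \leq q$ for any tropical fan $W$ (Lemma \ref{lem:the Lemma}), proved by coning an allowable cycle to the origin --- the allowability condition is designed precisely so that this cone is allowable in that range. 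This yields both $\bigoplus_{\lambda\in\Lambda_{s}}\Q\cong E_{1}^{s,s}\twoheadrightarrow IH_{\Trop}^{s,s}(X;\Q)$ and the off-diagonal vanishing. For the same reason, your claim that the Amini--Piquerez vanishing for $H_{\Trop}^{p,q}$ ($p\leq q-1$) ``transports to the intersection side'' is unjustified: the paper stresses that the $E_{1}$-page for $IH_{\Trop}$ does \emph{not} vanish below the diagonal and $E_{\infty}^{s,s}\neq E_{2}^{s,s}$ in general, so the comparison map is not surjective in all bidegrees and no transport argument is available.

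On the duality itself, a classical single-perversity Deligne construction does not produce a self-dual complex here. Near a stratum $\relint S$ the local model factors (K\"{u}nneth-compatibly) into a sedentarity direction, a lineality direction, and a star fan, and the correct truncation degree must vary with this decomposition; the paper encodes this by the two functions $u$ and $v$ in the allowability condition and is forced to work in derived categories of \emph{locally graded} sheaves with grade-dependent truncation functors, proving self-duality first at the graded level. It also must replace the IKMZ coefficients by Gubler--Jell--Rabinoff-type coefficients $F^{p,w}$ to get duality near the codimension-one polyhedral strata. Your sketch omits both ingredients, and without them the attaching conditions required by the Deligne characterisation fail near the strata at infinity and near $\relint Q$ for $Q\in\Lambda_{\sm,d-1}$.
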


\subsection*{Idea of proof and definitions}

Let us discuss idea of proof of isomorphism (\ref{eq:troIH and CHnum})
and definitions of tropical intersection homology. 

First, we recall a proof of isomorphism (\ref{eq:toric case}) 
\[
\CH_{\Num}^{p}(T_{\Sigma})\otimes_{\Z}\Q\cong H_{\sing}^{2p}(T_{\Sigma}(\mathbb{C});\Q)
\]
 (\cite[Section 12.3]{CoxaLittleSchenckToricvarieties2011}, \cite[Theorem 4]{TotaroChowgroupsChowcohomologyandlinearvarieties2014}).
A filtration 
\[
\Ker(C_{\sing}^{r}(T_{\Sigma}(\mathbb{C});\Q)\to C_{\sing}^{r}(T_{\Sigma}(\mathbb{C})\setminus\bigcup_{\substack{\sigma\in\Sigma_{p}}
}\overline{O(\sigma)}(\mathbb{C});\Q))
\]
of the vector space $C_{\sing}^{r}(T_{\Sigma}(\mathbb{C});\Q)$ of
singular cochains induces a cohomological spectral sequence 
\begin{align*}
E_{1}^{p,q} & =H_{\sing,\bigcup_{\substack{\sigma\in\Sigma_{p}}
}\overline{O(\sigma)}(\mathbb{C})\setminus\bigcup_{\substack{\sigma\in\Sigma_{p+1}}
}\overline{O(\sigma)}(\mathbb{C})}^{p+q}(T_{\Sigma}(\mathbb{C})\setminus\bigcup_{\substack{\sigma\in\Sigma_{p+1}}
}\overline{O(\sigma)}(\mathbb{C});\Q)\Rightarrow H_{\sing}^{p+q}(T_{\Sigma}(\mathbb{C});\Q)\\
 & \cong\bigoplus_{\substack{\sigma\in\Sigma_{p}}
}H_{\sing}^{q-p}(O(\sigma)(\C);\Q)(-p),
\end{align*}
where $O(\sigma)$ is the torus orbit corresponding to $\sigma\in\Sigma$,
the subset $\Sigma_{i}\subset\Sigma$ consists of cones of dimension
$i$, and $(-p)$ is the Tate twist. Since $E_{1}^{p,q}$ is of pure
weight $2q$, and $H_{\sing}^{p+q}(T_{\Sigma}(\mathbb{C});\Q)$ is
of pure weight $p+q$, this degenerates at $E_{2}$-pages, and we
have a surjective morphism 
\[
\bigoplus_{\substack{\sigma\in\Sigma_{p}}
}H_{\sing}^{0}(O(\sigma)(\C);\Q)(-p)\cong E_{1}^{p,p}\twoheadrightarrow E_{2}^{p,p}\cong H_{\sing}^{2p}(T_{\Sigma}(\mathbb{C});\Q).
\]
By an identification $H_{\sing}^{0}(O(\sigma)(\C);\Q)(-p)\cong\Q[\overline{O(\sigma)}]$,
this surjective morphism coincides with the restriction of the cycle
class map, which is compatible with intersection numbers and cup products.
Since cup products are non-degenerate, we get isomorphism (\ref{eq:toric case}). 

Next, we consider $\varphi\text{\ensuremath{\colon Y\hookrightarrow}}T_{\Lambda}$
as in the beginning of this section. We still have a stratification
$\varphi(Y)=\bigsqcup_{\lambda\in\Lambda}\varphi(Y)\cap O(\lambda)$,
and hence when $K=\C$, we have a similar spectral sequence. However,
since $H_{\sing}^{q-p}(\varphi(Y)\cap O(\lambda)(\C);\Q)$ is not
necessary of the pure highest weight $2(q-p)$, the above discussion
does not work. Instead, we use cohomology theories in tropical geometry. 

Tropical cohomology $H_{\Trop}^{p,*}$ and tropical intersection cohomology
$IH_{\Trop}^{p,*}$ use sheaves of tropical holomorphic $p$-forms:
original one $\F^{p}$ (Definition \ref{def:IKMZ F^p -1}) introduced
by Itenberg-Katzarkov-Mikhalkin-Zharkov (\cite{ItenbergKatzarkovMikhalkinZharkovTropicalhomology2019})
and a version of Gubler-Jell-Rabinoff's new one $\F^{p,w}$ (\cite{GublerJellRabinoffDolbeaultCohomologyofGraphsandBerkovichCurves},
\cite{GubleJellRabinoffharmonictropicalization21}), respectively
(see the beginning of Subsection \ref{subsec:Definition} and Definition
\ref{def:F^p}). For $\lambda\in\Lambda_{p}$, their stalks at the
center $0_{\lambda}$ of the fan $\Trop(\varphi(Y)\cap O(\lambda))$
approximate the highest weight garded quotient. In the following,
for the purpose of explanation, we assume that $K=\C$ and irreducible
components of intersections of $\varphi(Y)$ and toric divisors form
simple normal crossing divisors of $\varphi(Y)$. Then the highest
weight graded quotient is 
\begin{align*}
 & \Gr_{2(q-p)}^{W}H_{\sing}^{q-p}(\varphi(Y)\cap O(\lambda)(\C);\Q)\\
\cong & \bigg\{(a_{R})_{R}\in\bigoplus_{\substack{\lambda\subset R\in\Lambda_{q}}
}H_{\sing}^{0}(\varphi(Y)\cap\overline{O(R)}(\C);\Q)\\
 & \bigg|\sum_{\substack{S\subset R\in\Lambda_{q}}
}\epsilon_{S,R}a_{R}\cap[\varphi(Y)\cap\overline{O(R)}]=0\in H_{\sing}^{2}(\varphi(Y)\cap\overline{O(S)}(\C);\Q)\ (\lambda\subset S\in\Lambda_{q-1})\bigg\}
\end{align*}
(\cite[Proposition 4.10]{PetersSteenbrinkMixedHodgestructures2008}),
where $\cap$ is the cap product, and the sign $\epsilon_{S,R}\in\{\pm1\}$
is detemined by the fixed orientations of cones $S$ and $R$. Since
homological equivalence and numerical equivalence are same for $\Q$-coefficients
divisors (\cite[19.3.1 (ii)]{FultonIntersectionTheory}), the stalk
\begin{align*}
 & \F_{0_{\lambda}}^{q-p,w}:=\F_{\Trop(\varphi(Y)),0_{\lambda}}^{q-p,w}\\
\cong & \bigg\{(a_{R})_{R}\in\bigoplus_{\substack{\lambda\subset R\in\Lambda_{q}}
}\Q\\
 & \bigg|\sum_{\substack{S\subset R\in\Lambda_{q}}
}\epsilon_{S,R}a_{R}\deg(\varphi^{*}[\overline{O(R)}]\cdot[\overline{O(Q)}])=0\ (\lambda\subset S\in\Lambda_{q-1},\ Q\in\Lambda_{\dim Y-q}\ (Q\cap S=\{(0,\dots,0)\}))\bigg\}
\end{align*}
 approximates it, and the stalk 
\[
\F_{0_{\lambda}}^{q-p}:=\F_{\Trop(\varphi(Y)),0_{\lambda}}^{q-p}:=\Ima(\F_{\Trop(T_{\Lambda}),0_{\lambda}}^{q-p,w}\to\F_{\Trop(\varphi(Y)),0_{\lambda}}^{q-p,w})
\]
approximates the image of $H_{\sing}^{q-p}(O(\lambda)(\C);\Q)$ to
it. (In fact, we have 
\[
\F_{0_{\lambda}}^{q-p,w}\cong\Gr_{2(q-p)}^{W}H_{\sing}^{q-p}(\varphi(Y)\cap O(\lambda)(\C);\Q)
\]
when each $\varphi(Y)\cap\overline{O(R)}$ is connected and $\CH_{\Num}^{\dim Y-q}(\varphi(Y)\cap\overline{O(S)})\otimes_{\Z}\Q$
is generated by $[\varphi(Y)\cap\overline{O(Q)}\cap\overline{O(S)}]$
($Q\in\Lambda_{\dim Y-q},\ Q\cap S=\{(0,\dots,0)\}$), and $\F_{0_{\lambda}}^{q-p}$
is isomorphic to them when moreover, the natural morphism 
\[
H_{\sing}^{q-p}(O(\lambda)(\C);\Q)\to\Gr_{2(q-p)}^{W}H_{\sing}^{q-p}(\varphi(Y)\cap O(\lambda)(\C);\Q)
\]
is surjective. For example, when $Y=T_{\Lambda}$, these three coincide.)

Similarly to the toric case, for $H=H_{\Trop},IH_{\Trop}$ and $s\in\Z_{\geq0}$,
we have spectral sequences 
\begin{align*}
E_{1}^{p,q} & \cong\bigoplus_{\substack{\lambda\in\Lambda_{p}}
}H^{s-p,q-s}(\Trop(\varphi(Y)\cap O(\lambda));\Q)\Rightarrow H^{s,p+q-s}(\Trop(\varphi(Y));\Q).
\end{align*}
 By using retractions of supports of fans to points, we have
\begin{align*}
H_{\Trop}^{s-p,q-s}(\Trop(\varphi(Y)\cap O(\lambda));\Q) & =0\ (q-s\geq1)\\
IH_{\Trop}^{s-p,q-s}(\Trop(\varphi(Y)\cap O(\lambda));\Q) & =0\ (q-s\geq1\text{ and }p+q\geq2s)
\end{align*}
(Lemma \ref{lem:the Lemma}). Hence for both $H=H_{\Trop},IH_{\Trop}$,
we have 
\[
\bigoplus_{\substack{\lambda\in\Lambda_{s}}
}\Q\cong\bigoplus_{\substack{\lambda\in\Lambda_{s}}
}H^{0,0}(\Trop(\varphi(Y)\cap O(\lambda));\Q)\cong E_{1}^{s,s}\twoheadrightarrow H^{s,s}(\Trop(\varphi(Y));\Q)
\]
and $H^{p,q}(\Trop(\varphi(Y));\Q)=0$ for $p\leq q-1$. Hence by
a natural non-degenerate pairing 
\[
IH_{\Trop}^{p,q}(\Trop(\varphi(Y));\Q)\times IH_{\Trop}^{\dim Y-p,\dim Y-q}(\Trop(\varphi(Y));\Q)\to\Q,
\]
we have isomorphisms (\ref{eq:AP result}) and (\ref{eq:troIH and CHnum}).
(Note that $H_{\Trop}^{r,0}(\Trop(\varphi(Y)\cap O(\lambda));\Q)\cong\F_{0_{\lambda}}^{r}$.)
(When $Y=T_{\Lambda}$, for $H=H_{\Trop},IH_{\Trop}$, the direct
sum of these spectral sequence (with respect to $s\in\Z_{\geq0}$)
coincides with the spectral sequence for singular cohomology.) 

The point is the non-degeneracy of the natural pairing of $IH_{\Trop}$.
Let us give an observation related to it. Recall that algebraic equivalence
and homological equivalence are same for $\Q$-coefficients divisors
(\cite[19.3.1 (ii)]{FultonIntersectionTheory}). Hence the image of
\[
IH_{\Trop}^{1,0}(\Trop(\varphi(Y)\cap O(\lambda));\Q)\cong\F_{0_{\lambda}}^{1,w}\to\sum_{\substack{\lambda\subset R\in\Lambda_{s}}
}\Q\cdot\varphi^{*}[\overline{O(R)}]\subset\CH^{s}(Y)\otimes_{\Z}\Q
\]
($\lambda\in\Lambda_{s-1}$) are close to algebraically equivalent
to zero. Hence the $E_{2}^{s,s}$-terms 
\begin{align*}
E_{2}^{s,s}\cong & \Coker(\bigoplus_{\substack{\lambda\in\Lambda_{s-1}}
}IH_{\Trop}^{1,0}(\Trop(\varphi(Y)\cap O(\lambda)))\to\bigoplus_{\substack{\lambda\in\Lambda_{s}}
}\Q)
\end{align*}
should not equal to $\CH_{\Num}^{p}(Y)\otimes_{\Z}\Q$ in general.
Therefore, we need $E_{\infty}^{s,s}\not\cong E_{2}^{s,s}$ for the
non-degeneracy of the pairing of $IH_{\Trop}$ while $E_{\infty}^{s,s}=E_{2}^{s,s}$
for $H_{\sing}$ in the toric case and $H_{\Trop}$. More precisely,
we need $E_{\infty}^{s,s}$ to be ``as small as possible'', in otherwords,
we need $E_{1}^{p,q}$ ($p+q=2s-1$, $q-s\geq1$) to be ``as large
as possible'' . We define $IH_{\Trop}$ so that these vanishing and
non-vanishing of $E_{1}^{p,q}$ holds. (Other terms $E_{1}^{p,q}$
($p+q\leq2s-2$, $q-s\geq1$) are also ``as large as possible''.) 

Since $E_{1}$-terms are isomorphic to the direct sums of stalks of
cohomology sheaves at the center $0_{\lambda}$ of the fan $\Trop(\varphi(Y)\cap O(\lambda))$,
these vanishing and non-vanishing are achieved by using \emph{allowability}
in geometric definition and \emph{truncation} functors in sheaf-theoretic
definition, which are analogs of ones used in definitions of 
the usual intersection homology (\cite{GoreskyMacPhersonIntersectionhomologytheory},
\cite{GoreskyMacPhersonIntersectionhomologyII83}). Actual proof of
the non-degeneracy of the pairing of $IH_{\Trop}$ is formal computation
in sheaf theory based on the truncation functors, similarly to \cite{GoreskyMacPhersonIntersectionhomologyII83}.

Note that due to K\"{u}nneth formula, our allowaility and truncation
functors involve some filtrations, or we should say grades (see Remark
\ref{rem:allowable chains}). For technical reason, we will introduce
notion of locally graded sheaves (Section \ref{sec:Graded-modules}),
and the first assertion of Theorem \ref{thm:main result-1} holds
at the level of Poincar\'{e}-Verdier duality in the derived category
of locally graded sheaves (Theorem \ref{thm:Verdier-duality-1}). 

\subsection*{Contents of the paper}

Contents of the paper are as follows. In Section \ref{sec:Tropical-varieties},
we recall tropical varieties. In Section \ref{sec:Tropical-intersection-homology},
we study geometrically defined tropical intersection (co)chains $IC_{\Trop,\geom}^{p,*}$.
In Section \ref{sec:Graded-modules}, we discuss locally graded sheaves
and truncation functors which involve grades. In Section \ref{sec:Intersection-product-and-1},
we introduce $IC_{\Trop,\sheaf}^{p,*}$ using truncation functors,
and prove Poincar\'{e}-Verdier duality (Theorem \ref{thm:Verdier-duality-1}).
In Section \ref{sec:Comparisons}, we give several comparisons. In
Section \ref{sec:Algebraicity}, the second assertion of Theorem \ref{thm:main result-1}
is proved and Example \ref{cor:nice example} is given. In Section
\ref{sec:Appendix.} (Appendix), for convenience, we discuss six-functors
formalism for derived categories of sheaves of graded modules in the
classical way.

\subsection*{Acknowledgements}

I would like to thank Emile Bouaziz, Ionut Ciocan-Fontanine, Adeel
Ahmad Khan, Yuan-Pin Lee, and Shih-Wei Wille Liu for their interest
and helpful discussions. Especially, I am deeply grateful to my mentor
Yuan-Pin Lee for his kind encouragement, which has been instrumental
in continuing my work.

\section{Tropical varieties\label{sec:Tropical-varieties}}

Throughout this paper, we fix a free $\Z$-module $M$ of finite rank
$n$. We put $N:=\Hom(M,\Z)$. We also fix a fan $\Sigma$ in $N\otimes\R$
and a corresponding (normal) toric variety $T_{\Sigma}$ over $\C$.
Remind that there is a natural bijection between cones $\sigma\in\Sigma$
and torus orbits $O(\sigma)$ in $T_{\Sigma}$. The torus orbit $O(\sigma)$
is isomorphic to the torus $\Spec\C[M\cap\sigma^{\perp}].$ We put
$N_{\sigma}:=\Hom(M\cap\sigma^{\perp},\Z)$. For $R=\Q$ or $\R$,
we put $N_{\sigma,R}:=N_{\sigma}\otimes R$. We put $T_{\sigma}:=\bigcup_{\substack{\tau\in\Sigma\\
\tau\subset\sigma
}
}O(\tau)$ the affine toric variety corresponding to a cone $\sigma\in\Sigma$.
See \cite{CoxaLittleSchenckToricvarieties2011} for toric geometry. 

We shall recall the Kajiwara-Payne partial compactification $\Trop(T_{\Sigma})$,
a \emph{tropical toric variety}, of $N_{\R}\cong\R^{n}$. See \cite{PayneAnalytificationisthelimitofalltropicalizations2009},
\cite{RabinoffTropicalanalyticgeometryNewtonpolygonsandtropicalintersections2012}.
We put $\Trop(T_{\Sigma}):=\bigsqcup_{\sigma\in\Sigma}N_{\sigma,\R}$
as a set. We define a topology on $\Trop(T_{\Sigma})$ as follows.
We extend the canonical topology on $\R$ to that on $\R\cup\{\infty\}$
so that $(a,\infty]$ for $a\in\R$ are a basis of neighborhoods of
$\infty$. We also extend the addition on $\R$ to that on $\R\cup\{\infty\}$
by $a+\infty=\infty$ for $a\in\R\cup\{\infty\}$. We consider the
set of semigroup homomorphisms $\Hom(M\cap\sigma^{\vee},\R\cup\{\infty\})$
as a topological subspace of $(\R\cup\{\infty\})^{M\cap\sigma^{\vee}}$.
We define a topology on $\Trop(T_{\sigma}):=\bigsqcup_{\substack{\tau\in\Sigma\\
\tau\subset\sigma
}
}N_{\tau,\R}$ by the canonical bijection 
\[
\Hom(M\cap\sigma^{\vee},\R\cup\{\infty\})\cong\bigsqcup_{\substack{\tau\in\Sigma\\
\tau\subset\sigma
}
}N_{\tau,\R}.
\]
 Then we endow $\Trop(T_{\Sigma})=\bigsqcup_{\sigma\in\Sigma}N_{\sigma,\R}$
with the topology generated by open subsets of $\Trop(T_{\sigma})$
($\sigma\in\Sigma$). For each $\sigma\in\Sigma$, we also put $\Trop(O(\sigma)):=N_{\sigma,\R}$.
Note that when $T_{\Sigma}$ is smooth, we have $T_{\sigma}\cong\A^{\dim\sigma}\times\G_{m}^{n-\dim\sigma}$,
hence 
\[
\Trop(T_{\sigma})\cong(\R\cup\{\infty\})^{\dim\sigma}\times\R^{n-\dim\sigma}.
\]

\begin{defn}
A subset of $\R^{r}$ is called a polyhedron if it is an intersection
of finitely many sets of the form 
\[
\{x\in\R^{r}\mid\langle x,a\rangle\leq b\}\ (a\in\R^{r},b\in\R),
\]
 here $\langle x,a\rangle$ is the usual inner product of $\R^{r}$.
Unless otherwise stated, we assume that a polyhedron is rational,
i.e., $a\in\Z^{r}$. 
\end{defn}

For a polyhedron $P\subset N_{\sigma,\R}\cong\R^{n-\dim\sigma}$ and
$R=\Z,\Q$, or $\R$, we put $\Tan_{R}P$ its tangent space with $R$-coefficient.
It is well-defined because of rationality of $P$. We consider $\Tan_{R}P\subset N_{\sigma,R}$
in the natural way. 

\begin{defn}
A polyhedron in $\Trop(T_{\Sigma})$ is the closure $\overline{C}$
in $\Trop(T_{\Sigma})$ of a polyhedron $C\subset N_{\sigma,\R}$
for some cone $\sigma\in\Sigma$. We put $\dim(\overline{C}):=\dim(C)$,
$\relint\overline{C}:=\relint C$, and $\Tan_{R}\overline{C}:=\Tan_{R}C$. 
\end{defn}

Let $P\subset\Trop(T_{\Sigma})$ be a polyhedron. We put $\sigma_{P}\in\Sigma$
the unique cone such that $P\cap N_{\sigma_{P},\R}\subset P$ is dense.
A subset $Q$ of $P$ is called a face of $P$ if it is the closure
of the intersection $P^{a}\cap N_{\tau,\R}$ in $\Trop(T_{\Sigma})$
for some $a\in M\cap\sigma_{P}^{\perp}$ and some cone $\tau\in\Sigma$,
where $P^{a}$ is the closure of 
\[
\{x\in P\cap N_{\sigma_{P},\R}\mid x(a)\leq y(a)\text{ for any }y\in P\cap N_{\sigma_{P},\R}\}
\]
in $\Trop(T_{\Sigma})$. A locally finite collection $\Lambda$ of
polyhedra in $\Trop(T_{\Sigma})$ is called a polyhedral complex if
it satisfies the following two conditions. 
\begin{itemize}
\item For all $P\in\Lambda$, each face of $P$ is also in $\Lambda$. 
\item For all $P,Q\in\Lambda$, the intersection $P\cap Q$ is a face of
$P$ and $Q$. 
\end{itemize}
For a locally finite collection $\Lambda$ of polyhedra, we put $\Lambda_{i}\subset\Lambda$
the subset of polyhedra of dimension $i$. We also put $\supp\Lambda:=\bigcup_{P\in\Lambda}P\subset\Trop(T_{\Sigma})$.
When $\Lambda$ is a polyhedral complex, we call $\Lambda$ a polyhedral
complex structure of $\supp\Lambda$. 
\begin{defn}
\label{def:locally closed polyhedral subset}A locally closed subset
$A\subset\Trop(T_{\Sigma})$ is said to be \emph{polyhedral} if there
is a polyhedral complex structure $\Lambda$ of $\Trop(T_{\Sigma})$
such that $A$ is a union of relative interiors of some polyhedra
in $\Lambda$. The set $\left\{ \relint P\cap A\right\} _{P\in\Lambda}$
gives an unrestricted topological stratification (Definition \ref{def:unrestricted-topological-stratifications})
of $A$. We call it a \emph{polyhedral stratification} of $A$. 
\end{defn}

\begin{defn}
\label{def:polyhedrally stratified map}Let $A$ and $B$ be locally
closed polyhedral subsets of tropical toric varieties. A continuous
map $f\colon A\to B$ is called a \emph{polyhedrally stratified map}
if for any polyhedral stratifications $\Lambda_{A}$ of $A$ and $\Lambda_{B}$
of $B$, there exist polyhedral stratifications $\Lambda_{A,2}$ of
$A$ and $\Lambda_{B,2}$ of $B$ finer than $\Lambda_{A}$ and $\Lambda_{B}$
respectively such that $f$ is stratified (Definition \ref{def:stratified continuous-map})
with respect to $\Lambda_{A,2}$ and $\Lambda_{B,2}$. 
\end{defn}

\begin{rem}
Projections from products to components and embeddings of polyhedral
subsets are polyhedral stratified maps. Only these two types of morphisms
are used in our sheaf-theoretic study.
\end{rem}

\begin{defn}
\label{def:def of tropical variety}In this paper, a \emph{tropcial
variety} $X=(\Lambda,w=(w_{P})_{P\in\Lambda_{d}})$ of dimension $d$
in $\Trop(T_{\Sigma})$ consists of a finite polyhedral complex $\Lambda$
of pure dimension $d$ and positive integers $w_{P}\in\Z_{\geq1}$
for $d$-dimensional polyhedra $P\in\Lambda$ such that for any cone
$Q\in\Lambda$ of dimension $(d-1)$, we have 
\[
\sum_{\substack{P\in\Lambda_{d}\\
Q\subset P,\sigma_{Q}=\sigma_{P}
}
}w_{P}v_{P,Q}=0
\]
in $N_{\sigma_{Q}}/\Tan_{\Z}Q$, where $v_{P,Q}\in N_{\sigma_{Q},\Q}/\Tan_{\Q}Q$
is the primitive vector such that $\R_{\geq0}\cdot v_{P,Q}$ equals
the image of $P\cap N_{\sigma_{Q},\R}$ in $N_{\sigma_{Q},\R}/\Tan_{\R}Q$.
(Here $\Tan_{R}Q$ ($R=\Z,\R$) is the tangent space considered as
a $R$-submodule of $N_{\sigma_{Q},R}$ in the natural way.) By abuse
of notation, let $X$ also denote $\supp\Lambda\subset\Trop(T_{\Sigma})$. 
\end{defn}

\begin{rem}
In most literature, we only assume that $\Lambda$ is locally finite.
Finiteness of $\Lambda$ is used to ensure that $X$ is compactifiable
(Definition \ref{def:compactifiable}), which is needed to see that
tropical intersection homology is finite dimensional.
\end{rem}

\begin{example}
\label{exa:tropical toric variety} The pair 
\[
(\left\{ \overline{N_{\sigma,\R}}\mid\sigma\in\Sigma\right\} ,(w_{\overline{N_{\R}}}:=1)_{\overline{N_{\R}}=\overline{N_{\left\{ (0,\dots,0)\right\} ,\R}}\in\left\{ \overline{N_{\sigma,\R}}\mid\sigma\in\Sigma\right\} _{n}}),
\]
where the closures $\overline{N_{\sigma,\R}}$ are taken in $\Trop(T_{\Sigma})$,
is a tropical variety of dimension $n$ whose support is $\Trop(T_{\Sigma})$.
By abuse of notation, we also call this a \emph{tropical toric variety,}
and denote it by $\Trop(T_{\Sigma})$. For example, we have $\Trop(\A^{s})=(\R\cup\left\{ \infty\right\} )^{s}$
for a fixed toric strcture of $\A^{s}$. 
\end{example}

For technical reason, in this paper, we consider only tropcial varieties
\emph{regular at infinity}. This notion is used in the literature,
see e.g., \cite[Definition 1.2]{MikhalkinZharkovTropicaleignewaveandintermediatejacobians2014}. 
\begin{defn}
\label{def:regular at infinity} We assume that $T_{\Sigma}$ is smooth.
In this paper, we say that a polyhedron $P\subset\Trop(T_{\Sigma})$
is \emph{regular at infinity} if 
\begin{itemize}
\item $P\cap N_{\R}\neq\emptyset$, and
\item for any $\tau\in\Sigma$ and $x\in P\cap N_{\tau,\R}$, there exists
an open neighborhood $U_{x}\subset\Trop(T_{\Sigma})$ of $x$ such
that 
\[
P\cap N_{\R}\cap U_{x}=((P\cap N_{\tau,\R})\times\Tan_{\R}\tau)\cap U_{x}
\]
 for some (any) splitting $N\cong N_{\tau}\times\Tan_{\Z}\tau$ of
natural exact sequence 
\[
0\to\Tan_{\Z}\tau\to N\to N_{\tau}\to0.
\]
\end{itemize}
We say that a tropical variety $(\Lambda,w)$ is \emph{regular at
infinity} if every maximal dimensional polyhedron $P\in\Lambda_{d}$
is regular at infinity. 
\end{defn}

\begin{defn}
\label{def:products of tropical varieties} For $i=1,2$, let $M_{i}$
be a free $\Z$-module of finite rank, $N_{i}:=\Hom(M_{i},\Z)$, and
$\Sigma_{i}$ a fan in $N_{i,\R}$. Let $X_{i}=(\Lambda_{i},w_{i}=(w_{i,P_{i}})_{P_{i}\in\Lambda_{i,d_{i}}})$
($i=1,2$) be a tropical variety of dimension $d_{i}$ in $\Trop(T_{\Sigma_{i}})$.
We put 
\[
\Lambda_{X_{1}\times X_{2}}:=\Lambda_{1}\times\Lambda_{2}:=\left\{ R_{1}\times R_{2}\subset\Trop(T_{\Sigma_{1}})\times\Trop(T_{\Sigma_{2}})\mid R_{1}\in\Lambda_{1},R_{2}\in\Lambda_{2}\right\} 
\]
 a pure-$(d_{1}+d_{2})$-dimensional polyhedral complex in $\Trop(T_{\Sigma_{1}})\times\Trop(T_{\Sigma_{2}})$,
and for $P_{X_{1}\times X_{2}}\in\Lambda_{X_{1}\times X_{2},d_{1}+d_{2}}$,
we put 
\[
w_{X_{1}\times X_{2},P_{X_{1}\times X_{2}}}:=w_{1,P_{1}}\times w_{2,P_{2}},
\]
 where $P_{i}\in\Lambda_{i,d_{i}}$ ($i=1,2$) is the polyhedron such
that $P_{X_{1}\times X_{2}}=P_{1}\times P_{2}$. We put 
\[
X_{1}\times X_{2}:=(\Lambda_{X_{1}\times X_{2}},w_{X_{1}\times X_{2}}:=(w_{X_{1}\times X_{2},P_{X_{1}\times X_{2}}})_{P_{X_{1}\times X_{2}}\in\Lambda_{X_{1}\times X_{2},d_{1}+d_{2}}})
\]
a tropical variety of dimension $(d_{1}+d_{2})$ in $\Trop(T_{\Sigma_{1}}\times T_{\Sigma_{2}})$. 
\end{defn}

\section{Geometric tropical intersection homology\label{sec:Tropical-intersection-homology}}

In the section, let $X=(\Lambda,w=(w_{P})_{P\in\Lambda_{d}})$ be
a tropical variety of dimension $d$ in a tropical toric variety $\Trop(T_{\Sigma})$.
We assume that $T_{\Sigma}$ is smooth, and $X$ is regular at infinity
(Definition \ref{def:regular at infinity}).

\subsection{Geometric definition\label{subsec:Definition}}

Tropical homology, introduced by Itenberg-Katzarkov-Mikhalkin-Zharkov
(\cite{ItenbergKatzarkovMikhalkinZharkovTropicalhomology2019}), is
defined as homology of singular chains on $X$ with some coefficients
$F_{p}$. 
In this subsection, we shall give a geometric definition
(Definition \ref{def:geom trop IH}) of tropical analog of
intersection homology, introduced by 
 Goresky-MacPherson (\cite{GoreskyMacPhersonIntersectionhomologytheory}), 
 called 
\emph{tropical
intersection homology}.
Intersection homology was also constructed by sheaf theory by 
 Deligne-Goresky-MacPherson 
 (\cite{GoreskyMacPhersonIntersectionhomologyII83}).
In Subsection 
\ref{subsec:Intersection-product-and-1}, we will give a sheaf-theoretic construction. 
They give the same tropical intersection homology 
(Proposition \ref{prop:comparison of 2 definition-1}).

 We put $\Lambda_{\sm}\subset\Lambda$
the subset of polyhedra $R$ of dimension $\geq(d-1)$ such that $R\cap N_{\R}\subset R$
is dense (i.e., $R\cap N_{\R}\neq\emptyset$). (We have $\Lambda_{d}\subset\Lambda_{\sm}$
since $X$ is regular at infinity.) We also put $\Lambda_{\sing}:=\Lambda\setminus\Lambda_{\sm}$.
The subset $\Lambda_{\sing}$ plays the role of singular strata in
the theory of the usual intersection homology. 

In Goresky-MacPherson's paper (\cite{GoreskyMacPhersonIntersectionhomologytheory}), 
 they only consider spaces smooth away
from subsets of codimension $\geq2$. This is because when there are
singular subsets of codimension $1$, we can hope at best duality
between absolute cohomology and relative cohomology. In tropical geometry,
the subsets $\relint Q$ ($Q\in\Lambda_{\sm,d-1}$) play the role
of $1$-codimensional strata. However, the complex of sheaves of tropical
cochains $\mathscr{C}^{p,*}$ does not necessary satisfy Poincar\'{e}-Verdier
duality near $\relint Q$ ($Q\in\Lambda_{\sm,d-1}$) (i.e., $\mathscr{C}^{p,*}\not\cong D(\mathscr{C}^{d-p,*})$).
For this reason, we will use a version of Gubler-Jell-Rabinoff's coefficients
\cite{GublerJellRabinoffDolbeaultCohomologyofGraphsandBerkovichCurves}
\cite{GubleJellRabinoffharmonictropicalization21} on $X_{\sm}:=\bigcup_{R\in\Lambda_{\sm}}\relint R$,
which give a cohomology satisfying Poincar\'{e}-Verdier duality near
such ``$1$-codimensional strata''. (Note that the author is not sure
whether our coefficients can be considered as an analog of their coefficients
on the whole $X$, but the coefficients outside of $X_{\sm}$ are
not important in this paper.)
\begin{defn}
\label{def:F_p}Let $p\in\Z_{\geq0}$. 
\begin{itemize}
\item For $P\in\Lambda_{\sm,d}$ of dimension $d$, we put 
\[
F_{p,w}(P):=\wedge^{p}\Tan_{\Q}P.
\]
\item For $Q\in\Lambda_{\sm,d-1}$ of dimension $(d-1)$, we put $F_{p,w}(Q)$
the quotient of 
\[
\bigoplus_{P\in\Lambda_{\sm,d},Q\subsetneq P}\wedge^{p}\Tan_{\Q}P
\]
 by identifying 
\[
\Ima(\Tan_{\Q}Q\to\Tan_{\Q}P_{1})=\Ima(\Tan_{\Q}Q\to\Tan_{\Q}P_{2})
\]
 for $P_{i}\in\Lambda_{\sm,d}$ with $Q\subsetneq P_{i}$ ($i=1,2$)
and 
\[
\sum_{P\in\Lambda_{\sm,d},Q\subsetneq P}w_{P}v_{P,Q}=0,
\]
 where $\tilde{v_{P,Q}}\in\Tan_{\Q}P\subset N_{\Q}$ is a lifting
of $v_{P,Q}$ as in Definition \ref{def:def of tropical variety}
such that we have $\sum_{P\in\Lambda_{\sm,d},Q\subsetneq P}w_{P}v_{P,Q}=0$
in $N_{\Q}$. 
\end{itemize}
It is easy to see that $F_{p,w}(Q)$ is independent of the choices
of $v_{P,Q}$. 
\begin{itemize}
\item For a polyhedron $R\in\Lambda_{\sing,d-1}$ of dimension $(d-1)$,
since $X$ is regular at infinity, there exist a unique $d$-dimensional
polyhedron $\tilde{{R}}\in\Lambda_{\sm,d}$ containing $R$ and a
unique $1$-dimensional cone $l\in\Xi$ such that $R=\tilde{{R}}\cap\overline{N_{l,\R}}$.
Then $\Tan_{\Q}l\subset\Tan_{\Q}\tilde{{R}}$, and we put 
\[
F_{p,w}(R):=\wedge^{p}(\Tan_{\Q}\tilde{{R}}/\Tan_{\Q}l).
\]
\end{itemize}
\end{defn}

For $R\in\Lambda_{d-1}$ and $P\in\Lambda_{\sm,d}$ with $R\subset P$,
there is a natural map $\iota_{R\subset P}\colon F_{p,w}(P)\to F_{p,w}(R)$.
We extend $F_{p,w}$ to any $S\in\Lambda$ in a formal way.
\begin{defn}
For a polyhedron $S\in\Lambda$, we put 
\[
F_{p,w}(S):=\underset{\substack{P\in\Lambda_{\sm,d}\\
S\subset P
}
}{\bigoplus}F_{p,w}(P)/\underset{\substack{R\in\Lambda_{d-1}\\
S\subset R
}
}{\sum}\Ker(\underset{\substack{P\in\Lambda_{\sm,d}\\
R\subset P
}
}{\bigoplus}F_{p,w}(P)\to F_{p,w}(R)).
\]
For $S_{1},S_{2}\in\Lambda$ with $S_{2}\subset S_{1}$, there is
a natural map $\iota_{S_{2}\subset S_{1}}\colon F_{p,w}(S_{1})\to F_{p,w}(S_{2}).$
\end{defn}

\begin{rem}
Let $P\in\Lambda_{\sm,d}$ be a polyhedron of dimension $d$, and
a polyhedron $S\in\Lambda$ contained in $P$. Since $X$ is regular
at infinity, we have 
\[
\Tan_{\Q}\sigma_{S}\subset\pr_{\sigma_{S}}^{-1}(\Tan_{\Q}S)\subset\Tan_{\Q}P,
\]
where $\sigma_{S}\in\Sigma$ is the cone such that $N_{\sigma_{S},\R}$
contains $\relint S$, and $\pr_{\sigma_{S}}\colon N_{\Q}\to N_{\sigma_{S},\Q}$
is the projection (given by the restriction to $M\cap\sigma_{S}^{\perp}$),
and we have 
\[
\dim\pr_{\sigma_{S}}^{-1}(\Tan_{\Q}S)=\dim S+\dim\sigma_{S}.
\]
\end{rem}

The following numbers $v(\alpha\cap\sigma_{S})$ and $u(\alpha\cap\pr_{\sigma_{S}}^{-1}(S))$
are cores in geometric definition (Definition \ref{def:geom trop IH})
of tropical intersection homology. 
\begin{defn}
\label{def:v and u}For a polyhedron $P\in\Lambda_{\sm,d}$ of dimension
$d$, a polyhedron $S\in\Lambda$ contained in $P$, and $\alpha\in F_{p,w}(P)$,
we put 
\[
v(\alpha\cap\sigma_{S}):=\max\left\{ j\leq p\mid\alpha\in\bigwedge^{j}\Tan_{\Q}\sigma_{S}\wedge\bigwedge^{p-j}\Tan_{\Q}P\right\} 
\]
and 
\begin{align*}
u(\alpha\cap\pr_{\sigma_{S}}^{-1}(S)):=\max\left\{ l\leq p\mid\alpha\in\bigwedge^{l}\pr_{\sigma_{S}}^{-1}(\Tan_{\Q}S)\wedge\bigwedge^{p-l}\Tan_{\Q}P\right\} 
\end{align*}
 (see \cite[Subsection 1.2, 1.3, and 3.1]{MikhalkinZharkovTropicaleignewaveandintermediatejacobians2014}
for a related notion).
\end{defn}

Of course, we have $v(\alpha\cap\sigma_{S})\leq u(\alpha\cap\pr_{\sigma_{S}}^{-1}(S))$. 

We put 
\[
\overline{C}_{p,q,w}^{\Trop}(X)
\subset 
\prod_{\Delta}F_{p,w}(R_{\Delta})[\Delta]/ \sim
\]
the vector subspace of chains with locally finite support,
where $\Delta$ runs through all possibly non-rational polyhedra 
whose relative interiors are contained in some polyhedra $R_{\Delta} \in \Lambda$.
We put $ C_{p,q,w}^{\Trop} (X)$
its quotient by (locally finite) subdivisions. 
Then we get the natural boundary map 
\[
\partial\colon
C_{p,q,w}^{\Trop}(X) \to 
C_{p,q-1,w}^{\Trop}(X)
\]
 given by 
\[
\alpha_{\Delta}[\Delta]\mapsto\sum_{i}\iota_{R_{\tau_{i}}\subset R_{\Delta}}(\alpha_{\Delta})\epsilon_{\tau_{i},\Delta}[\tau_{i}]
\]
 ($\alpha_{\Delta}\in F_{p,w}(R_{\Delta})$, $\partial[\Delta]=\sum_{i}\epsilon_{\tau_{i},\Delta}[\tau_{i}]$),
where 
$[\mu]\in H_{\dim\mu}^{\sing}(\mu,\partial\mu;\Z)$ ($\mu\in T$)
is a generator, 
and $\epsilon_{\tau_{i},\Delta}\in\left\{ \pm1\right\} $.
\begin{defn}
\label{def:allowability}A $(p,q)$-chain $\gamma=\sum\alpha_{\Delta}[\Delta]\in C_{p,q,w}^{\Trop}(X)$
is \emph{allowable} at $S\in\Lambda_{\sing}$ if for any polyhedron
$\Delta$ intersecting with $\relint S$, the coefficient $\alpha_{\Delta}\in F_{p,w}(R_{\Delta})$
is $0$ or a sum of images of $\alpha_{\Delta,i}\neq0\in F_{p,w}(P_{\Delta,i})$
($R_{\Delta}\subset P_{\Delta,i}\in\Lambda_{\sm,d}$) under the natural
map $\iota_{R_{\Delta}\subset P_{\Delta,i}}\colon F_{p,w}(P_{\Delta,i})\to F_{p,w}(R_{\Delta})$
such that 
\[
q-\dim(\Delta\cap\relint S)+v(\alpha_{\Delta,i}\cap\sigma_{S})\geq\max\left\{ 2,p+\dim\sigma_{S}-u(\alpha_{\Delta,i}\cap\pr_{\sigma_{S}}^{-1}(S))+1\right\} 
\]
 or 
\[
\dim\sigma_{S}=0\ \text{and}\ q-\dim(\Delta\cap\relint S)=d-\dim S.
\]
\end{defn}

Some reasons of our allowability are explained in Remark \ref{rem:2nd condition of allowability}
and Remark \ref{rem:allowable chains}.
\begin{rem}
\label{rem:2nd condition of allowability} By definition, we have
\[
u(\alpha_{\Delta,i}\cap\pr_{\sigma_{S}}^{-1}(S))\geq p-d+\dim S+\dim\sigma_{S}.
\]
We assume $\dim\sigma_{S}=0$ and $q-\dim(\Delta\cap\relint S)=d-\dim S.$
Then when
\begin{align*}
u(\alpha_{\Delta,i}\cap\pr_{\sigma_{S}}^{-1}(S))\geq p-d+\dim S+1,
\end{align*}
the first condition (i.e., the inequality) of Definition \ref{def:allowability}
holds. Therefore the second condition is needed only when 
\[
u(\alpha_{\Delta,i}\cap\pr_{\sigma_{S}}^{-1}(S))=p-d+\dim S.
\]
This holds e.g., when $\gamma$ is the fundamental class $[X]$ (Subsection
\ref{subsec:On-smooth-part}). 
\end{rem}

\begin{defn}
A $(p,q)$-chain $\gamma\in C_{p,q,w}^{\Trop}(X)$ is \emph{allowable}
if it is allowable at all $S\in\Lambda_{\sing}$. 
\end{defn}

\begin{rem}
For an allowable chain $\gamma=\sum\alpha_{\Delta}[\Delta]\in C_{p,q,w}^{\Trop}(X)$,
since 
\[
p+\dim\sigma_{S}-v(\alpha_{\Delta,i}\cap\sigma_{S})-u(\alpha_{\Delta,i}\cap\pr_{\sigma_{S}}^{-1}(S))\geq0,
\]
we have $R_{\Delta}\in\Lambda_{\sm}$ for $\Delta$ with $\alpha_{\Delta}\neq0$.
\end{rem}

\begin{defn}
\label{def:geom trop IH}We put 
\[
IC_{p,q}^{\Trop,\BM}(X):=\left\{ \gamma\in C_{p,q,w}^{\Trop}(X)\mid\gamma\ \text{and }\partial\gamma\ \text{{are}}\ \text{{allowable}}\right\} ,
\]
 the $\Q$-vector space of tropical intersection locally finite geometric
$(p,q)$-chains. We put $IC_{p,q}^{\Trop,\cpt}(X)$ its subsets of
chains of compact supports. We put $IH_{p,q}^{\Trop,\BM}(X):=IH_{p,q}^{\Trop,\BM}(X;\Q)$
(resp. $IH_{p,q}^{\Trop,\cpt}(X):=IH_{p,q}^{\Trop,\cpt}(X;\Q)$) the
$q$-th homology group of $IC_{p,*}^{\Trop,\BM}(X)$ (resp. $IC_{p,*}^{\Trop,\cpt}(X)$),
called the \emph{tropical intersection Borel-Moore }(resp. \emph{compactly
supported})\emph{ $(p,q)$-homology group}. When $X$ is compact,
they coincide, and hence we omit the superscripts $\cpt$ and $\BM$,
and call it the \emph{tropical intersection} \emph{$(p,q)$-homology
group.}
\end{defn}

For an open subset $U\subset X$, we put 
\begin{align*}
 & IC_{p,q}^{\Trop,\BM}(X,X\setminus U)\\
:= & IC_{p,q}^{\Trop,\BM}(X)/\left\{ \gamma\in IC_{p,q}^{\Trop,\BM}(X)\mid\supp\gamma\subset X\setminus U\right\} .
\end{align*}

\begin{defn}
\label{def:geom trop IH sheaf}We put $IC_{p,q}^{\Trop,\geom}$ the
sheafification of the presheaf 
\[
U\mapsto IC_{p,q}^{\Trop,\BM}(X,X\setminus U)
\]
on $X$ (cf. \cite[Definition 4.6(a)]{GrossShokirehAsheaf-theoreticapproachtotropicalhomology23}
and \cite[Chapter II. Section 1]{BorelIntersectioncohomology1984}).
We put 
\[
IC_{\Trop,\geom,X}^{d-p,*}:=IC_{\Trop,\geom}^{d-p,*}:=IC_{p,-*-p}^{\Trop,\geom}
\]
and 
\[
IH_{\Trop,\geom,\epsilon_{1}}^{d-p,d-q}(X):=IH_{\Trop,\geom,\epsilon_{1}}^{d-p,d-q}(X;\Q):=H_{\epsilon_{1}}^{-p-q}(R\Gamma(IC_{\Trop,\geom,X}^{d-p,*}))=IH_{p,q}^{\Trop,\epsilon_{2}}(X)
\]
 ($(\epsilon_{1},\epsilon_{2})=(\emptyset,\BM),(c,\cpt)$). 
\end{defn}

\begin{rem}
By \cite[Chapter 1. Theorem 6.2]{BredonSheafTheory1997}
and the following, 
we have a natural isomorphism
$$IC_{p,q}^{\Trop,\BM}(X) \cong IC_{p,q}^{\Trop,\geom}(X).$$
The presheaf 
\[
U\mapsto IC_{p,q}^{\Trop,\BM}(X,X\setminus U)
\]
is obviously local, i.e., a global section whose restriction to each member of an open covering is $0$ is $0$. 
It is also conjunctive (\cite[Chapter 1.1.7]{BredonSheafTheory1997})
for an open covering $X=\bigcup_{i} U_i$,
i.e., 
for $\gamma_i \in IC_{p,q}^{\Trop,\BM}(X,X\setminus U_i)$
such that 
$\gamma_i |_{U_i \cap U_j} = \gamma_j |_{U_i \cap U_j}$ for $i,j$, 
there is a $\gamma \in IC_{p,q}^{\Trop,\BM}(X)$ 
such that 
$\gamma  |_{U_i} = \gamma_i$.
This can be easily checked 
(cf. \cite[Chapter 1. Exersice 12]{BredonSheafTheory1997}).
\end{rem}

In Subsection \ref{subsec:Intersection-product-and-1}, we will also
define $IH_{\Trop,\sheaf,\epsilon}^{d-p,d-q}(X)$ $(\epsilon=\emptyset,c$),
and show that it is canonically isomorphic to $IH_{\Trop,\geom,\epsilon}^{d-p,d-q}(X)$
(Proposition \ref{prop:comparison of 2 definition-1}). Hence we will
simply denote them by $IH_{\Trop,\epsilon}^{d-p,d-q}(X)$.

\begin{rem}
Similarly to usual intersection chains \cite[Chapter II, Proposition 5.1]{BorelIntersectioncohomology1984},
we can see that 
the complex $IC_{\Trop,\geom}^{d-p,*}$ is a complex of soft sheaves. 
\end{rem}

We shall define filtrations on $IC_{\Trop,\geom}^{d-p,*}$ locally,
which play the central role in our local computation in Subsection
\ref{subsec:Local-computations}. For $S\in\Lambda$, we fix open
polyhedral (Definition \ref{def:locally closed polyhedral subset})
neighborhoods $W_{S}$ of $\relint S$ in $X$ such that $W_{S}\cap W_{S'}=\emptyset$
for $S,S'\in\Lambda$ with $S\not\subset S'$ and $S'\not\subset S$.
In particular, we have 
\[
W_{S}\subset\bigcup_{\substack{R\in\Lambda\\
S\subset R
}
}\relint R.
\]

\begin{defn}
For each $S\in\Lambda$ and $v,u,p,q\in\Z_{\geq0}$ satisfying $v\leq u\leq p$,
we put 
\[
T_{\sigma_{S}}^{v}T_{S}^{u}IC_{p,q}^{\Trop,\geom}|_{W_{S}}\subset IC_{p,q}^{\Trop,\geom}|_{W_{S}}
\]
 the subsheaf given by sections $\gamma=\sum\alpha_{\Delta}[\Delta]$
of $IC_{p,q}^{\Trop,\geom}|_{W_{S}}$ such that for any $\Delta$,
the coefficient $\alpha_{\Delta}\in F_{p,w}(R_{\Delta})$ is $0$
or a sum of images of $\alpha_{\Delta,i}\in F_{p,w}(P_{\Delta,i})$
($R_{\Delta}\subset P_{\Delta,i}\in\Lambda_{\sm,d}$) under the natural
map $\iota_{R_{\Delta}\subset P}\colon F_{p,w}(P_{\Delta,i})\to F_{p,w}(R_{\Delta})$
such that 
\[
v(\alpha_{\Delta,i}\cap\sigma_{S})\geq v\ \text{and }u(\alpha_{\Delta,i}\cap\pr_{\sigma_{S}}^{-1}(S))\geq u.
\]
We put $i_{W_{S}}\colon W_{S}\hookrightarrow X$ and 
\[
T_{\sigma_{S}}^{v}T_{S}^{u}IC_{p,q}^{\Trop,\geom}:=i_{W_{S}!}(T_{\sigma_{S}}^{v}T_{S}^{u}IC_{p,q}^{\Trop,\geom}|_{W_{S}})\subset IC_{p,q}^{\Trop,\geom}.
\]
\end{defn}

\begin{rem}
We have $T_{\sigma_{S}}^{v}T_{S}^{u}IC_{p,q}^{\Trop,\geom}=0$ unless
$v\leq\dim\sigma_{S}$ and $u\leq\dim S+\dim\sigma_{S}$. 
\end{rem}

\begin{defn}
Let $S\in\Lambda$ and $k\in\Z$. We put 
\[
\Fil_{S}^{k}IC_{\Trop,\geom}^{d-p,-p-q}(a):=\Fil_{S}^{k}IC_{p,q}^{\Trop,\geom}:=\begin{cases}
IC_{p,q}^{\Trop,\geom} & (k\leq0)\\
\sum_{\substack{v\leq u\leq p\\
v+u=k
}
}T_{\sigma_{S}}^{v}T_{S}^{u}IC_{p,q}^{\Trop,\geom} & (\text{1\ensuremath{\leq}k\ensuremath{\leq}}\dim S+2\dim\sigma_{S})\\
0 & (k\geq\dim S+2\dim\sigma_{S}+1)
\end{cases}
\]
 a subsheaf of $IC_{p,q}^{\Trop,\geom}$. 
\end{defn}

For $a=(a_{S})_{S\in\Lambda}\in\Z^{\Lambda}$, we put 
\[
\Fil_{*}^{\Lambda}IC_{\Trop,\geom}^{d-p,*}(a):=\bigcap_{S\in\Lambda}\Fil_{S}^{-a_{S}}IC_{\Trop,\geom}^{d-p,*}(a)\subset IC_{\Trop,\geom}^{d-p,*}.
\]
For $a,b\in\Z^{\Lambda}$ with $a_{S}\leq b_{S}$ ($S\in\Lambda$),
we have 
\[
\Fil_{*}^{\Lambda}IC_{\Trop,\geom}^{d-p,*}(a)\subset\Fil_{*}^{\Lambda}IC_{\Trop,\geom}^{d-p,*}(b).
\]
These define an object

\[
\Fil_{*}^{\Lambda}IC_{\Trop,\geom}^{d-p,*}\in D_{c}^{b}(X,\gMod\Q[T_{S}]_{S\in\Lambda}),
\]
 see Section \ref{sec:Graded-modules} for the derived category $D_{c}^{b}(X,\gMod\Q[T_{S}]_{S\in\Lambda})$. 
\begin{rem}
For each stratum $\relint S$ ($S\in\Lambda_{\sing}$), we considered
filtrations on an open neighborhood $W_{S}$ of $\relint S$ instead
of that on $\relint S$. This is because we need to recover $\Fil_{*}^{\Lambda}IC_{\Trop,\geom}^{d-p,*}$
from its restriction to an open subset $X_{\sm}:=\bigcup_{R\in\Lambda_{\sm}}\relint R$,
which does not intersect with $\relint S$ ($S\in\Lambda_{\sing}$).
(See Section \ref{sec:Intersection-product-and-1} for details.)
\end{rem}

\begin{rem}
\label{rem:allowable chains} We give remark on allowable and non-allowable
chains. Let $S\in\Lambda_{\sing}$ and $\gamma=\sum\alpha_{\Delta}[\Delta]\in C_{p,q,w}^{\Trop}(X)$. 
\begin{itemize}
\item When $q=d$ and $\partial\gamma=0$, the chain $\gamma$ is always
allowable at any $S$. The case of $\dim\sigma_{S}=0$ follows from
the second condition of allowability (cf. Remark \ref{rem:2nd condition of allowability}).
Otherwise, we can take $v(\alpha_{\Delta,i}\cap\sigma_{S})\geq1$
in Definition \ref{def:allowability} because $X$ is regular at infinity
and $\partial\gamma=0$, hence by 
\[
u(\alpha_{\Delta,i}\cap\pr_{\sigma_{S}}^{-1}(S))\geq p-d+\dim S+\dim\sigma_{S},
\]
 $\gamma$ is allowable at $S$. 
\item When $S$ is $0$-dimensional, $S\subset N_{\R}$, and $\supp\gamma\cap S\neq\emptyset$,
the chain $\gamma$ is allowable at $S$ if and only if $q=d$ or
$\max\left\{ 2,p+1\right\} \leq q$. This is used in Lemma \ref{lem:the Lemma}. 
\item Small open neighborhoods of $\relint S$ in $X$ can be identified
with small open neighborhood of 
\[
\Trop(O(\tau_{S}))\times\{\pt\}\times\{(0,\dots,0)\}\in\Trop(T_{\tau_{S}})\times\R^{\dim S}\times\Lambda_{S},
\]
 where $\tau_{S}$ is the cone $\sigma_{S}$ considered as a cone
in $\Span_{\R}\sigma_{\R}$, the affine toric variety $T_{\tau_{S}}$
corresponds to the cone $\tau_{S}$ (in particular, $O(\tau_{S})\subset T_{\tau_{S}}$
is the $0$-dimensional orbit), $\R^{\dim S}=\Trop(\G_{m}^{s})$ is
a tropical toric variety, $\pt\in\R^{\dim S}$ is any point, and $\Lambda_{S}$
is the star fan of $S$, which is a fan in $\R^{n-\dim S-\dim\sigma_{S}}$.
Since K\"{u}nneth formula (Subsection \ref{subsec:Knneth-formula})
holds for $IH_{p,q}^{\Trop}$, our allowability is affected by this
decomposition, in particular, is affected by a decomposition of coefficients
\begin{align*}
F_{p,w}(\Trop(T_{\tau_{S}})\times\{\pt\}\times\{(0,\dots,0)\}) & \cong\bigoplus_{i,j}F_{j,w}(\Trop(T_{\tau_{S}}))\otimes F_{i,w}(\R^{\dim S})\otimes F_{p-i-j,w}(\{(0,\dots,0)\}).
\end{align*}
 This leads our two functions $u$ and $v$.
\item (toric case) We assume that $X=\Trop(T_{\Sigma})$ (Example \ref{exa:tropical toric variety}).
Let $\delta\in C_{p+q}^{\sing,\BM}(T_{\Sigma}(\C))$ be a singular
chain such that $\supp\delta\cap\G_{m}^{n}(\C)\subset\supp\delta$
is dense, and the restriction $\delta|_{\G_{m}^{n}(\C)}$ to $\G_{m}^{n}(\C)\subset T_{\Sigma}(\C)$
is of the form 
\[
\delta|_{\G_{m}^{n}(\C)}=\sum_{l}\alpha_{l}\times\beta_{l}
\]
 ($\alpha_{l}\in C_{p}^{\sing}((S^{1})^{n})$, $\beta_{l}\in C_{q}^{\sing,\BM}(\R_{>0}^{n})$),
where we identify 
\[
\G_{m}^{n}(\C)\cong(\C^{\times})^{n}\cong\R_{>0}^{n}\times(S^{1})^{n}.
\]
We consider $\alpha_{l}\in C_{p}^{\sing}((S^{1})^{n})$ as an element
of 
\[
\bigwedge^{p}\Tan_{\R}N_{\R}\cong\Hom(\bigwedge^{p}M\otimes\R,\R)
\]
 by 
\[
\alpha_{l}\colon\bigwedge^{p}M\otimes\R\ni\sum_{i}a_{i}f_{i,1}\wedge\dots\wedge f_{i,p}\mapsto\frac{1}{(2\pi i)^{p}}\sum_{i}a_{i}\int_{\alpha}d\log f_{i,1}\wedge d\log f_{i,p}\in\R
\]
($a_{i}\in\R$, $f_{i,j}\in M$), where $f_{i,j}\in M$ is also considered
as a holomorphic function on $\G_{m}^{n}(\C)=\Spec\C[M](\C)$. We
assume that the restriction $\gamma|_{N_{\R}}$ of $\gamma$ is of
the form 
\[
\gamma|_{N_{\R}}=\sum_{l}\alpha_{l}(-\log_{*}(\beta_{l})),
\]
 where $-\log\colon\R_{>0}^{n}\to\R^{n}$ is the coordinate-wise map.
The toric variety $T_{\Sigma}(\C)$ is stratified by torus orbits,
and we can consider allowability of $\delta$ with respect to the
middle perversity $\overline{m}$ (\cite[Subsection 1.3 and Subsection 5.1]{GoreskyMacPhersonIntersectionhomologytheory})
(here for simplicity, we ignore piecewise linearity). Then by definition,
for each $\sigma\in\Sigma$, the chain $\delta$ is allowable at strata
$O(\sigma)(\C)$ if and only if $\gamma$ is allowable at $\overline{N_{\sigma,\R}}$. 
\end{itemize}
\end{rem}

\subsection{Local computations\label{subsec:Local-computations}}

For $S\in\Lambda_{\sing}$ and $x\in\relint S$, we shall prove attaching
property when $\dim\sigma_{S}=0$ (Proposition \ref{prop:attaching})
and a vanishing type result of the stalk $\H^{m}(IC_{\Trop,\geom}^{d-p,*})_{x}$
(Corollary \ref{cor:extra-local-vanishing}). When $\dim\sigma_{S}=0$,
proofs are same as \cite[Subsection 2.4 and 2.5]{GoreskyMacPhersonIntersectionhomologyII83}.
When $\dim\sigma_{S}\geq1$, attaching property will follow from comparison
to sheaf-theoretic definition (Proposition \ref{prop:comparison of 2 definition-1}). 
\begin{rem}
When $\dim\sigma_{S}=0$ (i.e., $x\in\relint S\subset N_{\R}$), there
is a neighborhood of $x$ in $X\cap N_{\R}$ which is stratified PL-homeomorphic
to $x*S^{\dim S-1}*L_{S}$, where $X\cap N_{\R}$ is equipped with
the natural PL-structure, $*$ means PL-joins, $S^{\dim S-1}$ is
the $(\dim S-1)$-PL-sphere, $L_{S}$ is the stratified PL-link of
$\relint S$ in $X\cap N_{\R}$. We take $L_{S}\subset W_{S}$. 

For a compact subset $L\subset X$, we put 
\[
C_{p,q,w}^{\Trop}(L;F_{X}):=\left\{ \gamma\in C_{p,q,w}^{\Trop,\cpt}(X)\mid\supp\gamma\subset L\right\} .
\]
We also put 
\[
IC_{p,q}^{\Trop}(L;F_{X}):=C_{p,q,w}^{\Trop}(L;F_{X})\cap IC_{p,q}^{\Trop,\cpt}(X)
\]
and put $IH_{p,q}^{\Trop}(L;F_{X})$ its homology group. When $L\subset W_{S}$,
we also put 
\[
\Fil_{S}^{k}IC_{p,q}^{\Trop}(L;F_{X}):=IC_{p,q}^{\Trop}(L;F_{X})\cap\Fil_{S}^{k}IC_{p,q}^{\Trop}|_{W_{S}}(W_{S})
\]
 and $\Fil_{S}^{k}IH_{p,q}^{\Trop}(L;F_{X})$ its homology group. 
\end{rem}

\begin{prop}
\label{lem:local-computation-part1} When $\dim\sigma_{S}=0$, for
$2\leq q\leq d-1$, we have 
\begin{align*}
\H^{-p-q}(IC_{\Trop,\geom}^{d-p,*})_{x} & \cong\begin{cases}
\Fil_{S}^{p+\dim S-q+1}IH_{p,q-\dim S-1}^{\Trop}(L_{S};F_{X}) & (q\geq\dim S+2)\\
0 & (q\leq\dim S+1)
\end{cases}.
\end{align*}
\end{prop}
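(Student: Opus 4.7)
The plan is to carry out the cone-formula argument of Goresky-MacPherson (\cite{GoreskyMacPhersonIntersectionhomologyII83}, Subsection 2.4), adapted to our filtered tropical setting. Because $\dim\sigma_S=0$, the point $x$ lies in $X\cap N_\R$, and a small open neighborhood of $x$ is stratum-preservingly PL-homeomorphic to the open cone on $Y:=S^{\dim S-1}*L_S$, with $x$ at the apex. By softness of $IC_{\Trop,\geom}^{d-p,*}$, the stalk $\H^{-p-q}(IC_{\Trop,\geom}^{d-p,*})_x$ is computed as
\[
\varinjlim_{U\ni x}H_q\!\bigl(IC^{\Trop,BM}_{p,*}(X,X\setminus U)\bigr),
\]
which by excision yields the $q$-th homology of intersection chains on the local cone relative to the complement of the apex.

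Next I would apply the cone formula: any allowable $q$-chain meeting $x$ is homologous, modulo chains missing $x$, to a cone $c\beta$ on a $(q-1)$-chain $\beta$ in $Y$. Since $\dim\sigma_S=0$ forces $v(\alpha\cap\sigma_S)=0$, the allowability requirement at $S$ reduces to
\[
q-\dim(\Delta\cap\relint S)\ \geq\ \max\bigl\{2,\ p+1-u(\alpha_{\Delta,i}\cap\pr_{\sigma_S}^{-1}(S))\bigr\},
\]
together with the exceptional clause of Definition \ref{def:allowability}. For a cone chain $c\beta$ meeting $x$ one has $\dim(c\beta\cap\relint S)\leq\dim S$, so the first inequality forces $q\geq\dim S+2$, and the constraint on $u$ becomes $u\geq p+\dim S-q+1$, i.e., the coefficients of $\beta$ must lie in $\Fil_S^{p+\dim S-q+1}$. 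The exceptional clause is checked not to contribute extra stalk classes in the range $2\leq q\leq d-1$, consistent with Remark \ref{rem:2nd condition of allowability}.

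Hence for $q\leq\dim S+1$ no cone chain meeting $x$ is allowable and the stalk vanishes. For $q\geq\dim S+2$, the correspondence $\beta\mapsto c\beta$ identifies the direct limit of allowable chains at $x$ with $(q-\dim S-1)$-chains on $L_S$ whose coefficients lie in $\Fil_S^{p+\dim S-q+1}$; the degree shift of $\dim S+1$ reflects the join with $S^{\dim S-1}$ and the additional cone direction. Passing to homology and invoking $\partial(c\beta)=\beta\pm c(\partial\beta)$ identifies cycles and boundaries with those on $L_S$, yielding the claimed $\Fil_S^{p+\dim S-q+1}IH^{\Trop}_{p,q-\dim S-1}(L_S;F_X)$.

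The principal obstacle is verifying that the function $u(\cdot\cap\pr_{\sigma_S}^{-1}(S))$, and hence the filtration $\Fil_S^\bullet$, is preserved under the cone construction, so that the filtration descends cleanly to the intersection homology of the link; equally delicate is confirming that the exceptional allowability clause contributes nothing in the vanishing range $q\leq\dim S+1$.
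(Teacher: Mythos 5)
Your overall strategy (cone formula at the apex, then a join decomposition of the link $S^{\dim S-1}*L_S$) matches the paper's, but two steps do not hold up as written. First, the vanishing for $q\le\dim S+1$: from $\dim(c\beta\cap\relint S)\le\dim S$ and the allowability inequality $q-\dim(\Delta\cap\relint S)\ge 2$ you conclude $q\ge\dim S+2$, but the inequality runs the wrong way --- it only yields $q\ge\dim(\Delta\cap\relint S)+2$, which can be as weak as $q\ge 2$ when the chain meets $\relint S$ in low dimension. For instance, with $p=0$ and $q=2\le\dim S+1$, the cone $x*\beta$ on a $1$-cycle $\beta\subset L_S$ meets $\relint S$ only at $x$ and is allowable at $S$. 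The stalk still vanishes in this range, but for a homological reason: such classes bound (e.g.\ $\beta=\partial(y*\beta)$ for $y\in S^{\dim S-1}$, and that join is allowable). In the paper this is encoded by the complex $Z_*$ on $L_S$, which vanishes in degrees $\le 0$, combined with the fact that $H_*(\hat{C}_*(S^{\dim S-1}))$ is concentrated in degree $\dim S-1$; the resulting total shift of $\dim S+1$ is what kills the stalk for $q\le\dim S+1$. So "no allowable cone chain exists" is not the right mechanism.

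Second, the identification of $IH^{\Trop}_{p,q-1}(S^{\dim S-1}*L_S;F_X)$ with $\Fil_S^{p+\dim S-q+1}IH^{\Trop}_{p,q-\dim S-1}(L_S;F_X)$ is asserted via "the correspondence $\beta\mapsto c\beta$", but a general allowable chain on the join is not a join of a sphere chain with a link chain, so there is no such correspondence without further work. The paper's proof supplies it explicitly: a chain map $f$ from the total complex of $\hat{C}_*(S^{\dim S-1})\otimes Z_*$ given by joins, a homotopy inverse $\psi$ given by the pseudo-radial retraction onto $L_S$, and Lemma \ref{lem:determine allowability at strata lemma part 2} to check that allowability of a sum of joins is detected summand-wise. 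The case analysis in the definition of $Z_*$ is precisely where the exceptional clause $q-\dim(\Delta\cap\relint S)=d-\dim S$ and the filtration $\Fil_S$ enter. This is the technical core of the argument, and you correctly flag it as "the principal obstacle" but do not resolve it, so the proposal has a genuine gap at exactly the point where the proposition's content lies.
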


Though proof is same as \cite[Subsection 2.4]{GoreskyMacPhersonIntersectionhomologyII83},
since our coefficients are complicated a little, we give proof for
the reader's convenience. 
\begin{rem}
\label{rem:determine allowability at strata part 1} Let $A=\sum_{\boxempty}[\boxempty]\in C_{j}(S^{\dim S-1})$
be a PL-chain, and 
\[
B=\sum_{\Delta,i}\alpha_{\Delta,i}[\Delta]\in C_{p,q-j-2,w}^{\Trop}(L_{S};F_{X}).
\]
 Then for a polyhedron $S'\in\Lambda_{\sing}$ with $S'\supsetneq S$
(in particular, $\dim\sigma_{S'}=0$), we have 
\begin{align*}
 & q-1-\dim(([\boxempty]*[\Delta])\cap\relint S')\geq\max\left\{ 2,p-u(\alpha_{\Delta,i}\cap\pr_{\sigma_{S'}}^{-1}(S'))+1\right\} \\
\iff & q-j-2-\dim([\Delta]\cap\relint S')\geq\max\left\{ 2,p-u(\alpha_{\Delta,i}\cap\pr_{\sigma_{S'}}^{-1}(S'))+1\right\} ,
\end{align*}
and 
\begin{align*}
 & q-1-\dim(([\boxempty]*[\Delta])\cap\relint S')=d-\dim S'\\
\iff & q-j-2-\dim([\Delta]\cap\relint S')=d-\dim S'.
\end{align*}
Namely, the join $A*B$ is allowable at $S'$ if and only if $B$
is allowable at $S'$. Similarly, the join $A*B$ is allowable at
$S$ if and only if 
\begin{align*}
 & q-j-1\geq\max\left\{ 2,p-u(\alpha_{\Delta,i}\cap\pr_{\sigma_{S}}^{-1}(S))+1\right\} 
\end{align*}
 or 
\[
q-j-1=d-\dim S.
\]
\end{rem}

\begin{lem}
\label{lem:determine allowability at strata lemma part 2} Let $A_{j,l}\in C_{j}(S^{\dim S-1})$
and $B_{j,l}\in C_{p,q-j-2,w}^{\Trop}(L_{S};F_{X})$ with $\dim(A_{j,l}\cap A_{j,m})\leq j-1$
for $m\neq l$. Then 
\[
\sum_{j\geq0,l}A_{j,l}*B_{j,l}+B_{-1,0}\in IC_{p,q-1}^{\Trop}(S^{\dim S-1}*L_{S};F_{X})
\]
if and only if each $A_{j,l}*B_{j,l}$ and $B_{-1,0}$ is in $IC_{p,q-1}^{\Trop}(S^{\dim S-1}*L_{S};F_{X})$. 
\end{lem}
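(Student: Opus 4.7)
The $(\Leftarrow)$ direction is immediate, since $IC_{p,q-1}^{\Trop}(S^{\dim S-1}*L_{S};F_{X})$ is a $\Q$-linear subspace of $C_{p,q-1,w}^{\Trop}(S^{\dim S-1}*L_{S};F_{X})$. For $(\Rightarrow)$, the plan is to track how cells of the PL-join $S^{\dim S-1}*L_{S}$ decompose across summands and to apply Remark \ref{rem:determine allowability at strata part 1} cell by cell.

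First I would note that every $(q-1)$-cell of $S^{\dim S-1}*L_{S}$ is canonically either a join cell $[\boxempty_{j}*\Delta_{q-j-2}]$ for a uniquely determined $j\geq 0$, or a cell $[\Delta_{q-1}]$ entirely in $L_{S}$ (which can only be contributed by $B_{-1,0}$). By the hypothesis $\dim(A_{j,l}\cap A_{j,m})\leq j-1$, the $j$-cells of $A_{j,l}$ for different $l$ have disjoint relative interiors, so each join cell of the sum $\gamma:=\sum_{j\geq 0,l}A_{j,l}*B_{j,l}+B_{-1,0}$ lies in at most one summand, and its coefficient in $\gamma$ coincides (up to sign) with the coefficient from that unique summand. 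Since the allowable coefficients at a fixed cell form a $\Q$-linear subspace of $F_{p,w}(R_{\mu})$, the chain-level allowability of $\gamma$ reduces cell by cell to that of each summand, and by Remark \ref{rem:determine allowability at strata part 1} this is equivalent to the allowability of each $B_{j,l}$ (and of $B_{-1,0}$) as a tropical chain on $L_{S}$.

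Next I would treat the boundary condition using the expansion $\partial(A_{j,l}*B_{j,l})=\partial A_{j,l}*B_{j,l}+(-1)^{j+1}A_{j,l}*\partial B_{j,l}$. A $(q-2)$-cell $\nu=[\boxempty_{j_{0}}*\Delta_{k_{0}}]$ with $j_{0}+k_{0}=q-3$ receives contributions from $\partial A_{j_{0}+1,l}*B_{j_{0}+1,l}$ for various $l$ and from $A_{j_{0},l^{\ast}}*\partial B_{j_{0},l^{\ast}}$ for a unique $l^{\ast}$ (by disjointness at level $j_{0}$). The hard part will be ruling out cancellations across summands that might produce an allowable total from non-allowable individual contributions. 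I plan to handle this by varying $\boxempty_{j_{0}}$ over the $j_{0}$-cells of $S^{\dim S-1}$: the disjointness condition isolates the unique $A_{j_{0},l^{\ast}}*\partial B_{j_{0},l^{\ast}}$-term, while the chain-level conclusion from the previous step already places each $(B_{j_{0}+1,l})_{\Delta_{k_{0}}}$ inside the appropriate allowable subspace, so the remaining boundary contribution $(\partial B_{j_{0},l^{\ast}})_{\Delta_{k_{0}}}$ is forced into the allowable subspace as well.

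Combining the chain-level and boundary-level analyses and invoking Remark \ref{rem:determine allowability at strata part 1} once more, the cell-wise allowabilities repackage into the statement that each $\partial(A_{j,l}*B_{j,l})$ and $\partial B_{-1,0}$ is allowable. Consequently each $A_{j,l}*B_{j,l}$ and $B_{-1,0}$ lies in $IC_{p,q-1}^{\Trop}(S^{\dim S-1}*L_{S};F_{X})$, as required.
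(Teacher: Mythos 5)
Your proof is correct and follows essentially the same route as the paper's: deduce allowability of each summand from the disjointness hypothesis, use Remark \ref{rem:determine allowability at strata part 1} to see that each $(\partial A_{j,l})*B_{j,l}$ is allowable, and then exploit the fact that allowable coefficients form a linear subspace to isolate the remaining $A_{j,l}*(\partial B_{j,l})$ and $\partial B_{-1,0}$ contributions; your cell-by-cell bookkeeping is just a more explicit version of the paper's chain-level subtraction. The only cosmetic imprecision is the claim that allowability of $A_{j,l}*B_{j,l}$ is "equivalent to allowability of $B_{j,l}$ on $L_S$" --- at the stratum $S$ itself the Remark gives a $j$-dependent condition on $B_{j,l}$ rather than its allowability, but this does not affect the argument since what you actually use is only that $A_{j,l}*B_{j,l}$ and $(\partial A_{j,l})*B_{j,l}$ satisfy the same condition.
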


\begin{proof}
If part is trivial. To show the converse, it suffices to show that
each $\partial(A_{j,l}*B_{j,l})$ and $\partial B_{-1,0}$ is allowable.
Since $A_{j,l}*B_{j,l}$ are allowable, by Remark \ref{rem:determine allowability at strata part 1},
the chain $(\partial A_{j,l})*B_{j,l}$ is allowable. Since
\[
\partial(\sum_{j\geq0,l}A_{j,l}*B_{j,l}+B_{-1,0})=\sum_{j\geq0,l}(\partial A_{j,l})*B_{j,l}+(-1)^{j+1}A_{j,l}*(\partial B_{j,l})+\partial B_{-1,0}
\]
 is allowable, by the assumption on $A_{j,l}$, each $\partial(A_{j,l}*B_{j,l})$
and $\partial B_{-1,0}$ is allowable. 
\end{proof}
\begin{proof}
(Proposition \ref{lem:local-computation-part1}) We shall compute
$IH_{p,q-1}^{\Trop}(S^{\dim S-1}*L_{S};F_{X}).$ We put 
\[
Z_{q-j-2}:=\begin{cases}
0 & (q-j-2\geq d-\dim S)\\
IZ_{p,q-j-2}^{\Trop}(L_{S};F_{X})\\
+\Fil_{S}^{p+j+3-q}IC_{p,q-j-2}^{\Trop}(L_{S};F_{X}) & (2\leq q-j-2=d-\dim S-1)\\
IZ_{p,q-j-2}^{\Trop}(L_{S};F_{X}) & (1=q-j-2=d-\dim S-1)\\
\Fil_{S}^{p+j+2-q}IZ_{p,q-j-2}^{\Trop}(L_{S};F_{X})\\
+\Fil_{S}^{p+j+3-q}IC_{p,q-j-2}^{\Trop}(L_{S};F_{X}) & (2\leq q-j-2\leq d-\dim S-2)\\
\Fil_{S}^{p+j+2-q}IZ_{p,q-j-2}^{\Trop}(L_{S};F_{X}) & (1=q-j-2\leq d-\dim S-2)\\
0 & (q-j-2\leq0)
\end{cases},
\]
where $IZ_{*,*}^{\Trop}\subset IC_{*,*}^{\Trop}$ is the subset of
closed chains. Let 
\[
f\colon\bigoplus_{j=-1}^{\dim S-1}\hat{C}_{j}(S^{\dim S-1})\otimes Z_{q-j-2}\to IC_{p,q-1}^{\Trop}(S^{\dim S-1}*L_{S};F_{X})
\]
be the map given by 
\[
\sum_{j\geq-1,l}A_{j,l}\otimes B_{j,l}\mapsto\sum_{j\geq0,l}(-1)^{\dim B_{j,l}}A_{j,l}*B_{j,l}+\sum(-1)_{l}^{\dim B_{-1,l}}A_{-1,l}B_{-1,l},
\]
where we put 
\[
\hat{C}_{j}(S^{\dim S-1}):=\begin{cases}
C_{j}(S^{\dim S-1}) & (j\geq0)\\
\Q & (j=-1)
\end{cases}
\]
a complex with the boundary map $\hat{C}_{0}(S^{\dim S-1})\to\hat{C}_{-1}(S^{\dim S-1})$
given by $\sum_{y\in S^{\dim S-1}}a_{y}[y]\mapsto\sum a_{y}$. By
Remark \ref{rem:determine allowability at strata part 1}, the map
$f$ is well-defined and gives a morphism 
\[
f\colon(\hat{C}_{*}(S^{\dim S-1})\otimes Z_{*})_{*}\to IC_{p,*}^{\Trop}(S^{\dim S-1}*L_{S};F_{X})
\]
of complexes from the single complex $(\hat{C}_{*}(S^{\dim S-1})\otimes Z_{*})_{*}$
associated to the double complex $\hat{C}_{*}(S^{\dim S-1})\otimes Z_{*}$.

We shall give a homotopy inverse $\psi$ of $f$. For a polyhedron
$\Delta\subset S^{\dim S-1}*L_{S}$, we put $\varphi(\Delta)\subset S^{\dim S-1}*L_{S}$
the polyhedron whose vertices are the vertices of $\Delta\cap S^{\dim S-1}$
and the image of the other vertices of $\Delta$ under the peusdo-radical
retraction 
\[
(S^{\dim S-1}*L_{S})\setminus S^{\dim S-1}\twoheadrightarrow L_{S}
\]
 along join lines. (We have $\varphi(\Delta)=(\Delta\cap S^{\dim S-1})*(\varphi(\Delta)\cap L_{S})$.)
This give a morphism 
\[
\psi\colon IC_{p,q-1}^{\Trop}(S^{\dim S-1}*L_{S};F_{X})\to\bigoplus_{j=-1}^{\dim S-1}\hat{C}_{j}(S^{\dim S-1})\otimes C_{p,q-j-2,w}^{\Trop}(L_{S};F_{X})
\]
 by $\alpha_{\Delta}[\Delta]\mapsto\epsilon_{\Delta}[\Delta\cap S^{\dim S-1}]\otimes\alpha_{\Delta}[\varphi(\Delta)\cap L_{S}]$
($\epsilon_{\Delta}\in\{\pm1\}$) (where we canonically identify 
\[
\hat{C}_{-1}(S^{\dim S-1})\otimes C_{p,q-1,w}^{\Trop}(L_{S};F_{X})
\]
 with $C_{p,q-1,w}^{\Trop}(L_{S};F_{X})$). We shall show that the
image of $\psi$ is contained in the source of $f$. For an element
$\eta$ in the source of $\psi$, we put 
\[
\psi(\eta)=\sum_{j\geq0,l}A_{j,l}\otimes B_{j,l}+B_{-1,0}\quad(A_{j,l}\in C_{j}(S^{\dim S-1}),B_{j,l}\in C_{p,q-j-2,w}^{\Trop}(L_{S};F_{X}))
\]
with $\dim(A_{j,l}\cap A_{j,m})\leq j-1$ for $m\neq l$. Then since
for each polyhedron $\Delta$ and strata $S'\in\Lambda_{\sing}$,
we have
\[
\dim(\Delta\cap\relint S')=\dim(\varphi(\Delta)\cap\relint S'),
\]
we have $f(\psi(\eta))\in IC_{p,q-1}^{\Trop}(S^{\dim S-1}*L_{S};F_{X})$
(where $f$ is extended canonically). By Lemma \ref{lem:determine allowability at strata lemma part 2},
the chains $A_{j,l}*B_{j,l}$ and $B_{-1,0}$ are in $IC_{p,q-1,w}^{\Trop}(S^{\dim S-1}*L_{S};F_{X})$.
Hence by Remark \ref{rem:determine allowability at strata part 1},
we have $\psi(\eta)\in(\hat{C}_{*}(S^{\dim S-1})\otimes Z_{*})_{q-1}$,
i.e., the morphism $\psi$ induces
\[
\psi\colon IC_{p,*}^{\Trop}(S^{\dim S-1}*L_{S};F_{X})\to(\hat{C}_{*}(S^{\dim S-1})\otimes Z_{*})_{*}.
\]
 It is easy to see that $\psi$ is a homotopy inverse of $f$ (with
a chain homotopy $h$ given by mapping cylinder of natural PL-map
$\varphi\colon\Delta\cong\varphi(\Delta)$). Since 
\[
H_{j}(\hat{C}_{*}(S^{\dim S-1}))=\begin{cases}
\Q[S^{\dim S-1}] & (j=\dim S-1)\\
0 & (\text{otherwise}),
\end{cases}
\]
we have 
\[
IH_{p,q-1}^{\Trop}(S^{\dim S-1}*L_{S};F_{X})\cong\Fil_{S}^{p+\dim S-q+1}IH_{p,q-\dim S-1}^{\Trop}(L_{S};F_{X})
\]
for $2\leq q\leq d-1$. Every element of $\H^{-p-q}(IC_{\Trop,\geom}^{d-p,*})_{x}$
has a representative of the form $x*\gamma$ for some $\gamma\in IZ_{p,q-1}^{\Trop}(S^{\dim S-1}*L_{S};F_{X})$.
Conversely, for such a $\gamma$, the join \textbf{$x*\gamma$} gives
an elment of $\H^{-p-q}(IC_{\Trop,\geom}^{d-p,*})_{x}$. This give
an isomorphism 
\[
\H^{-p-q}(IC_{\Trop,\geom}^{d-p,*})_{x}\cong IH_{p,q-1}^{\Trop}(S^{\dim S-1}*L_{S};F_{X}).
\]
Therefore we have completed proof. 
\end{proof}
Let $i_{S}\colon W_{S}\setminus\relint S\hookrightarrow W_{S}$ be
an open embedding, and $j_{S}\colon\relint S\hookrightarrow W_{S}$
a closed embedding. 
\begin{prop}
\label{prop:attaching} When $\dim\sigma_{S}=0$, the natural map
\[
IC_{\Trop,\geom}^{d-p,*}|_{W_{S}}\to Ri_{S*}i_{S}^{*}(IC_{\Trop,\geom}^{d-p,*}|_{W_{S}})
\]
 induces isomorphisms 
\[
\H^{m}(j_{S}^{*}\Fil_{S}^{k}IC_{\Trop,\geom}^{d-p,*}|_{W_{S}})\cong\H^{m}(j_{S}^{*}Ri_{S*}i_{S}^{*}(\Fil_{S}^{k}IC_{\Trop,\geom}^{d-p,*}|_{W_{S}}))
\]
 for 
\begin{align*}
m\leq\max\left\{ -p-d,\min\left\{ -2p-\dim S+k-1,-p-\dim S-2\right\} \right\} .
\end{align*}
\end{prop}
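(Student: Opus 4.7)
The plan is to imitate Goresky--MacPherson's analysis of the attaching map at a singular stratum (\cite[Subsection 2.5]{GoreskyMacPhersonIntersectionhomologyII83}). Since $\dim\sigma_{S}=0$, every $x\in\relint S$ lies in $N_{\R}$, and we work in the model neighborhood $x*(S^{\dim S-1}*L_{S})$ of Proposition \ref{lem:local-computation-part1}, in which the stratum $\relint S$ corresponds locally to the open cone on $S^{\dim S-1}$ with apex $x$. Both stalks to be compared will be computed by the same join/cone argument as in Proposition \ref{lem:local-computation-part1}, and the claim will reduce to identifying the range of $m$ on which the two computations agree.

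For the left-hand side, $\H^{-p-q}(j_{S}^{*}\Fil_{S}^{k}IC_{\Trop,\geom}^{d-p,*}|_{W_{S}})_{x}$ is computed exactly as in the proof of Proposition \ref{lem:local-computation-part1}, with the single modification that sections are required to lie in $\Fil_{S}^{k}$, i.e., $u(\alpha_{\Delta,i}\cap\pr_{\sigma_{S}}^{-1}(S))\geq k$ is imposed in addition to the allowability-induced constraint $u\geq p+\dim S-q+1$ (the $v$-constraint is trivial since $\dim\sigma_{S}=0$). Running the chain equivalence $f,\psi$ with this combined constraint yields
\[
\H^{-p-q}(j_{S}^{*}\Fil_{S}^{k}IC_{\Trop,\geom}^{d-p,*}|_{W_{S}})_{x}\;\cong\;\Fil_{S}^{\max\{p+\dim S-q+1,\,k\}}IH_{p,\,q-\dim S-1}^{\Trop}(L_{S};F_{X})
\]
for $\dim S+2\leq q\leq d-1$, and zero for $q\leq\dim S+1$.

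For the right-hand side, the stalk of $Ri_{S*}i_{S}^{*}$ at $x$ is the hypercohomology of $\Fil_{S}^{k}IC_{\Trop,\geom}^{d-p,*}$ on the punctured neighborhood $(x*(S^{\dim S-1}*L_{S}))\setminus\relint S$, which deformation-retracts along the join coordinates onto $L_{S}$. This retraction is a stratified PL map carrying allowable chains to allowable chains and preserving $\Fil_{S}^{k}$. Equivalently, one applies the same $f,\psi$-equivalence of Proposition \ref{lem:local-computation-part1}, but now \emph{without} the cone constraint at the apex $x$ (which is deleted), giving
\[
\H^{-p-q}(j_{S}^{*}Ri_{S*}i_{S}^{*}\Fil_{S}^{k}IC_{\Trop,\geom}^{d-p,*}|_{W_{S}})_{x}\;\cong\;\Fil_{S}^{k}IH_{p,\,q-\dim S-1}^{\Trop}(L_{S};F_{X}).
\]

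Under the two identifications above, the natural attaching map from LHS to RHS becomes the obvious inclusion of truncations, and is therefore an isomorphism exactly when $\max\{p+\dim S-q+1,\,k\}=k$, i.e., $q\geq p+\dim S-k+1$. Combined with the automatic vanishing range $q\leq\dim S+1$ (where LHS is zero by Proposition \ref{lem:local-computation-part1}) and the trivial range $q\geq d$ (where both sides vanish for chain-dimensional reasons), and translated via $m=-p-q$, this is precisely the stated bound $m\leq\max\{-p-d,\,\min\{-2p-\dim S+k-1,\,-p-\dim S-2\}\}$. The main obstacle will be verifying carefully that the deformation retraction from the punctured join onto $L_{S}$ preserves both the allowability conditions at nearby strata $S'\supsetneq S$ and the filtration $\Fil_{S}^{k}$, so that Remark \ref{rem:determine allowability at strata part 1} and Lemma \ref{lem:determine allowability at strata lemma part 2} can be reused in the filtered setting; this bookkeeping with the filtration is the genuinely new ingredient compared to \cite[Subsection 2.5]{GoreskyMacPhersonIntersectionhomologyII83}.
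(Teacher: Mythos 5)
Your overall strategy is the right one and is in fact the paper's: the paper's proof of this proposition consists of the single sentence ``Proof is completely same as \cite[Subsection 2.5]{GoreskyMacPhersonIntersectionhomologyII83}'', i.e.\ exactly the comparison you propose between the cone computation of the stalk (Proposition \ref{lem:local-computation-part1}, with the extra constraint $u\geq k$ carried along) and the computation of $\H^{m}(j_{S}^{*}Ri_{S*}i_{S}^{*}(\cdot))_{x}$ as filtered intersection homology of the deleted neighborhood $\simeq L_{S}\times\R^{\dim S+1}$, after which the attaching map is the inclusion $\Fil_{S}^{\max\{p+\dim S-q+1,\,k\}}\subseteq\Fil_{S}^{k}$ and the stated bound is read off. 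Your numerology in the range $\dim S+2\leq q\leq d-1$ is consistent with the statement.

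There is, however, one concrete error. The range in the proposition always contains $m=-p-d$ (since $m\leq\max\{-p-d,\dots\}$), i.e.\ $q=d$, and you dispose of $q\geq d$ by asserting that ``both sides vanish for chain-dimensional reasons.'' That is false at $q=d$: in degree $-p-d$ the complex $IC_{\Trop,\geom}^{d-p,*}$ is the sheaf of $(p,d)$-chains, so $\H^{-p-d}(j_{S}^{*}\Fil_{S}^{k}IC_{\Trop,\geom}^{d-p,*})_{x}$ is the space of germs of \emph{closed} allowable $(p,d)$-chains in $\Fil_{S}^{k}$, which is nonzero in general (on $X_{\sm}$ it is $\F_{X_{\sm}}^{d-p,w}$ by Lemma \ref{lem:tropical IH on smooth part2-1}, and Corollary \ref{cor:extra-local-vanishing} records its nonvanishing form at singular strata). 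The isomorphism at $q=d$ nevertheless holds, but it needs its own (short) argument: since the degree $-p-d-1$ term is zero, both sides are just kernels of $\partial$, the map is restriction of closed chains to $W_{S}\setminus\relint S$, and this is bijective because $\relint S$ has dimension $\leq d-2$ (so a chain supported on it vanishes, giving injectivity, and the closure of a closed Borel--Moore $(p,d)$-chain on the deleted neighborhood has boundary supported in dimension $\leq d-2$, hence is closed, and is allowable by the first bullet of Remark \ref{rem:allowable chains}, giving surjectivity). Separately, your phrase ``combined with the automatic vanishing range $q\leq\dim S+1$'' is misleading: in that range the left side vanishes while the right side is $\Fil_{S}^{k}IH_{p,0}^{\Trop}(L_{S};F_{X})\neq 0$ at $q=\dim S+1$, so the attaching map genuinely fails to be an isomorphism there; this is why the bound excludes $m=-p-\dim S-1$, not a case you get for free. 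Finally, the deferred verification that the join/retraction operations preserve allowability at the strata $S'\supsetneq S$ and the filtration is precisely the content of Remark \ref{rem:determine allowability at strata part 1} and Lemma \ref{lem:determine allowability at strata lemma part 2} together with the grading decomposition used later in Proposition \ref{prop:local vanish and attaching}, so that part of your plan is sound.
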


\begin{proof}
Proof is completely same as \cite[Subsection 2.5]{GoreskyMacPhersonIntersectionhomologyII83}.
We omit it.
\end{proof}
\begin{rem}
\label{rem:tropical structure of X =00005Ccap N_sigma R}Let $\sigma\in\Sigma$.
We put 
\[
\Lambda\cap\overline{N_{\sigma,\R}}:=\left\{ R\cap\overline{N_{\sigma,\R}}\mid R\in\Lambda\right\} 
\]
a ($d-\dim\sigma$)-dimensional polyhedral complex. For a $(d-\dim\sigma)$-dimensional
polyhedron $L\in\Lambda\cap\overline{N_{\sigma,\R}}$, we put $w_{L}:=w_{P_{L}}$,
where $P_{L}\in\Lambda_{d}$ is the unique $d$-dimensional polyhedron
such that $L=P\cap\overline{N_{\sigma,\R}}$. Then 
\[
X\cap\overline{N_{\sigma,\R}}:=(\Lambda\cap\overline{N_{\sigma,\R}},(w_{L})_{L})
\]
 is a tropical variety of dimension ($d-\dim\sigma$). 
\end{rem}

Remind that $\sigma_{S}\in\Sigma$ is the cone such that $\relint S\subset N_{\sigma_{S},\R}$. 
\begin{prop}
\label{prp:local-computation-near-toric boundary} We assume that
$\dim\sigma_{S}\geq1$. Let $\sigma\in\Sigma$ be a cone of dimension
$\geq1$ contained in $\sigma_{S}$. (In particular, $x\in\relint S\subset X\cap\overline{N_{\sigma,\R}}$.)
Then for $1\leq q\leq d$, we have an isomorphism 
\[
\H^{-p-q+2\dim\sigma}(IC_{\Trop,\geom,X\cap\overline{N_{\sigma,\R}}}^{d-p,*})_{x}\cong\H^{-p-q}(IC_{\Trop,\geom,X}^{d-p,*})_{x}.
\]
Moreover, for $k\in\Z$, it gives an isomorphism
\[
\H^{-p-q+2\dim\sigma}(\Fil_{S}^{k}IC_{\Trop,\geom,X\cap\overline{N_{\sigma,\R}}}^{d-p,*})_{x}\cong\H^{-p-q}(\Fil_{S}^{k+2\dim\sigma}IC_{\Trop,\geom,X}^{d-p,*})_{x}.
\]
\end{prop}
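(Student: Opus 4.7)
The plan is to use the local product decomposition of $X$ near $x$ (described in the remark of Subsection \ref{subsec:Definition}) together with the K\"unneth formula for tropical intersection cohomology (Subsection \ref{subsec:Knneth-formula}), reducing the problem to an explicit computation at the apex of a smooth tropical affine space.

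Near $x \in \relint S$, a neighborhood of $x$ in $X$ is stratified-isomorphic to a neighborhood of $(\infty, \pt, 0)$ in $\Trop(T_{\tau_S}) \times \R^{\dim S} \times \Lambda_S$, where $\tau_S$ is $\sigma_S$ regarded as a full-dimensional cone in $\Span_\R \sigma_S$ and $\Lambda_S$ is the star fan of $S$. Under this identification, the slice $X \cap \overline{N_{\sigma,\R}}$ corresponds to $(\Trop(T_{\tau_S}) \cap \overline{N_{\sigma,\R}}) \times \R^{\dim S} \times \Lambda_S$, and the first factor is the tropical toric variety $\Trop(T_{\tau_S/\sigma})$ of dimension $\dim\tau_S - \dim\sigma$. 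Applying K\"unneth to both $IC_{\Trop,\geom,X}^{d-p,*}$ and the filtrations $\Fil_S^k$, the stalks at $x$ decompose as sums of tensor products over the three local factors. Since the $\R^{\dim S}$ and $\Lambda_S$ factors contribute identically for $X$ and for the slice, the problem reduces to the corresponding local statement for $\Trop(T_{\tau_S})$ at the apex $\infty$, with slice $\Trop(T_{\tau_S/\sigma})$.

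In this toric case, both tropical toric varieties are smooth and contractible, deformation retracting properly onto their apices via tropical scaling, so the stalks at the apices compute global tropical (intersection) cohomology. Using that tropical intersection cohomology coincides with tropical cohomology on smooth tropical toric varieties, and invoking Amini-Piquerez's description (\ref{eq:AP result}), I would compare the bidegrees of $H^{p,q}_{\Trop}(\Trop(T_{\tau_S}))$ and $H^{p,q}_{\Trop}(\Trop(T_{\tau_S/\sigma}))$; the dimension mismatch produces the cohomological shift by $2\dim\sigma$. For the filtered version, I would track the $(v,u)$-bigrading through K\"unneth: a monomial whose $\Trop(T_{\tau_S})$-component contains a $\wedge^{\dim\sigma}$-factor along $\Tan_\Q\sigma$ contributes $\dim\sigma$ to each of $v(\cdot \cap \sigma_S)$ and $u(\cdot \cap \pr_{\sigma_S}^{-1}(S))$, since $\Tan_\Q\sigma \subset \Tan_\Q\sigma_S \subset \pr_{\sigma_S}^{-1}(\Tan_\Q S)$. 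These $\sigma$-directions are precisely the ones that disappear on the slice, producing the filtration shift $k \mapsto k + 2\dim\sigma$.

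The main obstacle is the compatibility of the K\"unneth isomorphism with the $(v,u)$-bigrading defining $\Fil_S^k$, and the precise matching of the $2\dim\sigma$ shift with the contribution of the $\sigma$-directions; this requires detailed chain-level bookkeeping of the functions $v$ and $u$ on the toric factor. A secondary technical step is verifying that the local stalk at the apex of the tropical toric variety indeed reduces to global tropical cohomology via the proper deformation retract and that no boundary contributions from the non-compact directions spoil the comparison.
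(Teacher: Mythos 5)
Your proposed route has a genuine circularity problem. The K\"unneth formula you want to invoke (Proposition \ref{prop:Kunneth formula-1}) is established in the paper only for the sheaf-theoretic complex $\Fil_*^\Lambda IC_{\Trop,\sheaf}^{p,*}$, via the Deligne-type truncation construction and Verdier duality of Section \ref{sec:Intersection-product-and-1}. The statement you are proving concerns stalks of the \emph{geometric} complex $IC_{\Trop,\geom}^{d-p,*}$, and the only way the paper transfers K\"unneth to the geometric side is through the comparison LG-quasi-isomorphism of Proposition \ref{prop:comparison of 2 definition-1} — whose proof explicitly cites the present proposition (together with the sheaf-side K\"unneth) as the input handling strata with $\dim\sigma_{S}\geq 1$. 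So you cannot use a K\"unneth decomposition of the geometric stalks here without first proving a chain-level K\"unneth statement for allowable chains, which is essentially the content you are trying to establish. The same objection applies to your appeal to ``tropical intersection cohomology coincides with tropical cohomology on smooth tropical toric varieties'' and to the Amini--Piquerez computation: those comparisons are downstream of this proposition in the paper's logical order.

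What the paper actually does is the chain-level bookkeeping you defer to at the end of your proposal. After reducing by induction to $\dim\sigma=1$ and to the local product $X=(\R\cup\{\infty\})\times(X\cap\overline{N_{\sigma,\R}})$, it splits the stalk by whether the coefficient contains the factor $e_1$ along $\Tan_\Q\sigma$, kills the $e_1$-free summand with the mapping cylinder of the projection onto $\{0\}\times\overline{N_{\sigma,\R}}$, and identifies the $e_1$-divisible summand with the stalk on the slice (with the grading shift $k\mapsto k+2$) using the mapping cylinder of the projection onto $\{\infty\}\times\overline{N_{\sigma,\R}}$; each step requires a careful verification that the cylinders and their boundaries remain allowable, which is where the functions $v$ and $u$ and regularity at infinity enter. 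Your observation that the $\sigma$-directions contribute $\dim\sigma$ to each of $v$ and $u$, producing the $2\dim\sigma$ shift, is the right heuristic, but it must be implemented on allowable chains directly rather than imported through a K\"unneth isomorphism that is not yet available for the geometric complex.
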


For tropical homology, this proposition is known \cite[Lemma 3.2]{AminiPiquerezTropicalFeichtner-Yuzvinskyandpositivitycriterionforfans2024}. 
\begin{proof}
When $q=d$, the assertion is easy. We assume that $q\leq d-1$. By
induction on $\dim\sigma$, we may assume that $\dim\sigma=1$. To
simplify notation, we fix an isomorphism $N\cong\Z^{n}$. We may assume
that $\sigma=\R_{\geq0}\cdot e_{1}$, where $e_{1}$ is the first
coordinate. We identify $N\cong\Z\cdot e_{1}\times N_{\sigma}.$ Since
the problem is local, we may assume that 
\[
X=(\R\cup\left\{ \infty\right\} )\times(X\cap\overline{N_{\sigma,\R}}),
\]
 where $\R\cup\left\{ \infty\right\} =\Trop(\A^{1})$ is a tropical
toric variety (Example \ref{exa:tropical toric variety}). The local
homology group $\H^{-p-q}(IC_{\Trop,\geom,X}^{d-p,*})_{x}$ is the
direct sum of groups $\H_{I}^{-p-q}(IC_{\Trop,\geom,X}^{d-p,*})_{x}$
($I=\emptyset$ or $\left\{ 1\right\} $) of elements having representatives
$\gamma$ of the form 
\[
\gamma=\sum_{\Delta}\bigwedge_{i\in I}e_{i}\wedge\alpha_{\Delta}'[\Delta]\in IC_{p,q}^{\Trop}(X)
\]
 with $\alpha_{\Delta}'=\sum_{j}\alpha_{\Delta,j}'$ and $\alpha_{\Delta,j}'\in\bigwedge^{p-\#I}(\Tan_{\Q}P_{\Delta,j}\cap N_{\sigma,\Q})$
($\Delta\subset P_{\Delta,j}\in\Lambda_{\sm,d}$). We shall show that
$\H_{\emptyset}^{-p-q}(IC_{\Trop,\geom,X}^{d-p,*})_{x}=0$, and 
\[
\sum_{\boxempty}\beta_{\boxempty}[\boxempty]\mapsto\sum_{\boxempty}e_{1}\wedge\beta_{\boxempty}[(\R\cup\left\{ \infty\right\} )\times\boxempty]
\]
gives an isomorphism 
\[
\H^{-p-q+2}(IC_{\Trop,\geom,X\cap\overline{N_{\sigma,\R}}}^{d-p,*})_{x}\cong\H_{\left\{ 1\right\} }^{-p-q}(IC_{\Trop,\geom,X}^{d-p,*})_{x},
\]
 which gives 
\[
\H^{-p-q+2}(\Fil_{S}^{k}IC_{\Trop,\geom,X\cap\overline{N_{\sigma,\R}}}^{d-p,*})_{x}\cong\H^{-p-q}(\Fil_{S}^{k+2}IC_{\Trop,\geom,X}^{d-p,*})_{x}.
\]
\begin{claim}
\label{claim:1 in local-computation-near-toric boundary} When $I=\emptyset,$
we have $\H_{\emptyset}^{-p-q}(IC_{\Trop,\geom,X}^{d-p,*})_{x}=0$. 
\end{claim}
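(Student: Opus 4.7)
The approach is to exhibit an explicit local nullhomotopy. A class in $\H_\emptyset^{-p-q}(IC_{\Trop,\geom,X}^{d-p,*})_x$ is represented, after shrinking, by a closed chain
\[
\gamma = \sum_\Delta \alpha'_\Delta[\Delta]\in IC_{p,q}^{\Trop}(U),\qquad \alpha'_\Delta = \sum_j \alpha'_{\Delta,j},\qquad \alpha'_{\Delta,j}\in\bigwedge^p(\Tan_\Q P_{\Delta,j}\cap N_{\sigma,\Q}),
\]
on a neighborhood $U = (T, \infty] \times V$ of $x$, whose coefficients carry no $e_1$-component. Because the stalk is a colimit over shrinking neighborhoods, it suffices to produce, on a smaller $U'\subset U$, an allowable chain $\eta\in IC_{p,q+1}^{\Trop}(U')$ with $\partial\eta = \gamma|_{U'}$.

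Pick $T < T' < \inf_{p\in\supp\gamma}\pr_1(p)$ and set $U' := (T', \infty] \times V$. Refine the polyhedral structure of $\gamma$ so that each $\Delta$ is a product $\Delta_1\times\Delta_V$ with $\Delta_1\subset\R\cup\{\infty\}$ either a single point $\{t_\Delta\}$ with $t_\Delta>T'$, or a closed interval in the $e_1$-direction. For each such $\Delta$ define the ``downward slide''
\[
\Delta^- := [T',\sup\Delta_1]\times\Delta_V \subset (T',\infty)\times V \subset X\cap N_\R,
\]
and put $\eta := \sum_\Delta \pm\alpha'_\Delta[\Delta^-]$ with standard prism signs.

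Since $\Delta^-$ lies entirely in the top-dimensional stratum $X\cap N_\R$, it meets no singular stratum $S\in\Lambda_\sing$ with $\dim\sigma_S\geq 1$ (these are contained in the toric boundary at $\infty$), so allowability there is vacuous. At singular strata with $\dim\sigma_S = 0$, the quantities $v(\alpha'_{\Delta,j}\cap\sigma_S)$ and $u(\alpha'_{\Delta,j}\cap\pr_{\sigma_S}^{-1}(S))$ are unchanged by the passage $\Delta\mapsto\Delta^-$ (they depend only on the coefficient and the ambient cone), while $\dim(\Delta^-\cap\relint S)$ and the chain degree both increase by the same amount relative to $\dim(\Delta\cap\relint S)$ and $q$. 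Hence the defining inequalities of allowability for $\eta$ follow from those for $\gamma$. The Borel--Moore boundary of $\Delta^-$ in $U'$ contributes $\{\sup\Delta_1\}\times\Delta_V$ at the closed upper end (which reassembles into $\gamma|_{U'}$), the prism $[T',\sup\Delta_1]\times\partial\Delta_V$ on the side, and nothing at $\{T'\}\times\Delta_V$ because $T'\notin U'$. Summing and using $\partial\gamma = 0$, the side prisms telescope to zero, yielding $\partial\eta = \gamma|_{U'}$.

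The principal technical point is the refinement step: polyhedra $\Delta$ whose $\Delta_1$ is a genuine interval cause $\Delta^-$ to degenerate (failing to gain a dimension over $\Delta$), and must be treated separately, by further subdividing or by absorbing them into auxiliary exact prisms using that $\gamma$ is closed and $\alpha'_\Delta$ carries no $e_1$-wedge. Equivalently, one can formalize the whole argument as a K\"unneth-type identification of $IC_{\Trop,\geom,X}^{d-p,*}$ on the $I=\emptyset$ piece with a pullback $\pr_V^{-1}IC_{\Trop,\geom,X\cap\overline{N_{\sigma,\R}}}^{d-1-p,*}$, reducing the claim to the elementary vanishing of local cohomology of the constant sheaf on $\Trop(\A^1)$ at $\infty$, whose neighborhood basis consists of contractible half-open intervals.
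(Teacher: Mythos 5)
Your proof is essentially the paper's: both contract the chain along the $e_1$-direction by a prism/mapping-cylinder, verify allowability of the cylinder using that the coefficients $\alpha'_{\Delta,j}$ lie in $\bigwedge^p(\Tan_\Q P_{\Delta,j}\cap N_{\sigma,\Q})$ (so $v$ and $u$ are preserved across the strata the cylinder newly meets), and conclude because the remaining boundary terms are supported away from $x$. The paper takes the mapping cylinder of the retraction $\pr_0\times\Id$ onto the slice at $0$, which automatically disposes of the two technical points you flag (the degenerate prisms over polyhedra already spanning the $e_1$-direction, and the impossibility of choosing $T'$ strictly below the support of a Borel--Moore representative — one instead shrinks the neighborhood of $x$ after forming the cylinder).
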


\begin{proof}
(Claim \ref{claim:1 in local-computation-near-toric boundary}) We
put 
\[
\pr_{0}\times\Id_{\overline{N_{\sigma,\R}}}\colon(\R\cup\left\{ \infty\right\} )\times\overline{N_{\sigma,\R}}\to\left\{ 0\right\} \times\overline{N_{\sigma,\R}},
\]
where $\Id_{\overline{N_{\sigma,\R}}}$ is the identity map, and $\pr_{0}\colon\R\cup\left\{ \infty\right\} \to\left\{ 0\right\} $
is the map to $0$. Let $c([\Delta])$ be the mapping cyclinder of
this map. We put 
\[
c(\gamma):=\sum_{\Delta}\alpha_{\Delta}'c([\Delta])\in C_{p,q+1,w}^{\Trop}(X).
\]
Then since $\alpha_{\Delta}'=\sum_{j}\alpha_{\Delta,j}'$ and $\alpha_{\Delta,j}'\in\bigwedge^{p}(\Tan_{\Q}P_{\Delta,j}\cap N_{\sigma,\Q})$,
the mapping cyclinder $c(\gamma)$ is allowable. Namely, for $S_{1}\in\Lambda_{\sing}$
and $\Delta$ with $\supp c([\Delta])\cap\relint S_{1}\neq\emptyset$,
there is a strata $S_{2}=S_{1}$ or 
\[
S_{2}=S_{1}\cap(\left\{ \infty\right\} \times\overline{N_{\sigma,\R}})
\]
 such that $\Delta\cap\relint S_{2}\neq\emptyset$ and 
\[
\dim(\Delta\cap\relint S_{2})\geq\dim(\supp c([\Delta])\cap\relint S_{1})-1.
\]
 When $S_{2}\neq S_{1}$, we have 
\begin{align*}
 & q+1-\dim(\supp c([\Delta])\cap\relint S_{1})+v(\alpha_{\Delta,j}'\cap\sigma_{S_{1}})\\
\geq & q-\dim(\Delta\cap\relint S_{2})+v(\alpha_{\Delta,j}'\cap\sigma_{S_{2}})\\
\geq & \max\left\{ 2,p+\dim\sigma_{S_{2}}-u(\alpha_{\Delta,j}'\cap\pr_{\sigma_{S_{2}}}^{-1}(S_{2}))+1\right\} \\
\geq & \max\left\{ 2,p+\dim\sigma_{S_{1}}-u(\alpha_{\Delta,j}'\cap\pr_{\sigma_{S_{1}}}^{-1}(S_{1}))+1\right\} ,
\end{align*}
where the first inequality follows from $v(\alpha_{\Delta,j}'\cap\sigma_{S_{1}})=v(\alpha_{\Delta,j}'\cap\sigma_{S_{2}})$
for $\alpha_{\Delta,j}'\in\bigwedge^{p}(\Tan_{\Q}P_{\Delta,j}\cap N_{\sigma,\Q})$.
(The case of $S_{2}=S_{1}$ is similar.) Similarly, $\alpha_{\Delta}'(\pr_{0}\times\Id_{\overline{N_{\sigma,\R}}})[\Delta]$
is allowable. Note that 
\[
\partial c([\Delta])=[\Delta]-(\pr_{0}\times\Id_{\overline{N_{\sigma,\R}}})[\Delta]+c(\partial[\Delta])
\]
 Since $\partial\gamma$ is allowable, $c(\partial\gamma)$ is allowable.
Consequently, the boundary $\partial c(\gamma)$ is also allowable.
Therefore $c(\gamma)\in IC_{p,q+1}^{\Trop}(X)$. Then $\gamma-\partial c(\gamma)$
has the same homology class as $\gamma$ in $\H_{\emptyset}^{-p-q}(IC_{\Trop,\geom,X}^{d-p,*})_{x}$.
Since $\alpha_{\Delta,j}'\in\bigwedge^{p}(\Tan_{\Q}P_{\Delta,j}\cap N_{\sigma,\Q})$,
the support of $\gamma-\partial c(\gamma)$ is in $(\left\{ 0\right\} \times(X\cap\overline{N_{\sigma,\R}}))\cup\supp c(\partial\gamma)$.
(Note that this is not necessary the case when $I=\left\{ 1\right\} $,
since the natural map $F_{p,w}(P_{\Delta,j})\to F_{p,w}(P_{\Delta,j}\cap N_{\sigma,\R})$
is not injective.). Since $\partial\gamma=0$ near $x$, we have $\H_{\emptyset}^{-p-q}(IC_{\Trop,\geom,X}^{d-p,*})_{x}=0$. 
\end{proof}
We continue proof of Proposition \ref{prp:local-computation-near-toric boundary}.
We assume $I=\left\{ 1\right\} $. We put 
\begin{align*}
\psi:= & \pr_{\infty}\times\Id_{\overline{N_{\sigma,\R}}}\colon(\R\cup\left\{ \infty\right\} )\times\overline{N_{\sigma,\R}}\to\left\{ \infty\right\} \times\overline{N_{\sigma,\R}},
\end{align*}
 where $\pr_{\infty}\colon\R\cup\left\{ \infty\right\} \to\left\{ \infty\right\} $
is the map to $\infty$. Similarly to the above, we put $c_{1}(\gamma)\in C_{p,q+1,w}^{\Trop}(X)$
the mapping cyclinder of $\psi$. By taking a different representative
in the homology class of $\gamma$, we may assume that for $\Delta$
with $\alpha_{\Delta}\neq0$, we have $x\in\Delta$ and $\relint\Delta\subset\relint P$
for some polyhedron $P\in\Lambda_{\sm,d}$ of dimension $d$. 
\begin{claim}
\label{claim:2 in local-computation-near-toric boundary} We have
$c_{1}(\gamma)\in IC_{p,q+1}^{\Trop}(X)$ by taking a different representative
in the homology class of $\gamma$ if necessary.
\end{claim}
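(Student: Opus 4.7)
The plan is to verify the two conditions defining $IC_{p,q+1}^{\Trop}(X)$---allowability of $c_{1}(\gamma)$ and of its boundary---after replacing $\gamma$ by a sufficiently generic homologous representative. First I would subdivide and slightly perturb $\gamma$ so that every supporting polyhedron $\Delta$ with $\alpha_{\Delta}\neq 0$ satisfies $x\in\Delta$, $\relint\Delta\subset\relint P_{\Delta,j}$, and so that each $c_{1}([\Delta])$ fits into a common polyhedral complex structure intersecting the $\Lambda$-stratification transversely; this is possible because $\psi=\pr_{\infty}\times\Id_{\overline{N_{\sigma,\R}}}$ is affine in the $e_{1}$-direction and $c_{1}([\Delta])$ is obtained by extending $[\Delta]$ by one unit along that direction.

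For allowability of $c_{1}(\gamma)=\sum_{\Delta}(e_{1}\wedge\alpha_{\Delta}')\,c_{1}([\Delta])$ at a singular stratum $S_{1}$, I would split into two cases. If $S_{1}\not\subset\{\infty\}\times\overline{N_{\sigma,\R}}$, then $c_{1}([\Delta])$ meets $\relint S_{1}$ only where $[\Delta]$ does, with $\dim(c_{1}([\Delta])\cap\relint S_{1})=\dim([\Delta]\cap\relint S_{1})+1$, so the extra dimension of the cylinder exactly matches the extra unit in $q+1$; coupled with the obvious bounds $v(e_{1}\wedge\alpha_{\Delta,j}'\cap\sigma_{S_{1}})\geq v(\alpha_{\Delta,j}'\cap\sigma_{S_{1}})$ and the analogous estimate for $u$, the allowability inequality for $c_{1}(\gamma)$ reduces to the one already known for $\gamma$. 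If $S_{1}\subset\{\infty\}\times\overline{N_{\sigma,\R}}$, then $\sigma\subset\sigma_{S_{1}}$, so $e_{1}\in\Tan_{\Q}\sigma_{S_{1}}$ and
\[
v\bigl((e_{1}\wedge\alpha_{\Delta,j}')\cap\sigma_{S_{1}}\bigr)=v(\alpha_{\Delta,j}'\cap\sigma_{S_{1}})+1,
\]
so this $+1$ absorbs the increment $q\mapsto q+1$ on the left of the allowability inequality, and allowability at $S_{1}$ follows from the allowability of $\gamma$ at the corresponding ``finite'' stratum $\psi^{-1}(\psi(S_{1}))$.

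For the boundary, the formula $\partial c_{1}([\Delta])=[\Delta]-\psi_{*}[\Delta]+c_{1}(\partial[\Delta])$ yields $\partial c_{1}(\gamma)=\gamma-\psi_{*}\gamma+c_{1}(\partial\gamma)$. Here $\gamma$ is allowable by hypothesis; $\psi_{*}\gamma$ is supported in $\{\infty\}\times\overline{N_{\sigma,\R}}$, where the $e_{1}$ factor again boosts $v$ by $1$, so its allowability is proved exactly as in Claim \ref{claim:1 in local-computation-near-toric boundary}; and $c_{1}(\partial\gamma)$ is handled by the same case split applied to $\partial\gamma$, which is allowable because $\gamma\in IC_{p,q}^{\Trop}(X)$. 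I expect the principal obstacle to be the second, equality clause of Definition \ref{def:allowability} (at strata with $\dim\sigma_{S_{1}}=0$), which does not behave as cleanly under the dimension shift as the inequality clause; this is the reason for the latitude to change representative, and by arranging via the initial subdivision that no supporting $\Delta$ of $\gamma$ meets such a stratum generically in the delicate dimension, one is left only with the inequality clause, already treated above.
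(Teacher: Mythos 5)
Your overall strategy (normalize the representative so that $x\in\Delta$ and $\relint\Delta\subset\relint P$ for $P\in\Lambda_{\sm,d}$, verify allowability of $c_{1}(\gamma)$ by a dimension count for strata meeting $N_{\R}$ and by the boost $v((e_{1}\wedge\alpha')\cap\sigma_{S_{1}})=v(\alpha'\cap\sigma_{S_{1}})+1$ for strata at infinity, then expand $\partial c_{1}(\gamma)=\gamma-\psi_{*}\gamma\pm c_{1}(\partial\gamma)$) is the same as the paper's. However, there is a genuine gap in your treatment of $c_{1}(\partial\gamma)$. The normalization controls the supporting polyhedra of $\gamma$ itself, but a face $\tau$ appearing in $\partial\gamma$ can satisfy $\relint\tau\subset\relint Q$ for some $Q\in\Lambda_{\sm,d-1}$. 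Since $Q\in\Lambda_{\sm}$, neither $\gamma$ nor $\partial\gamma$ is subject to any allowability constraint along $Q$, yet $c_{1}([\tau])$ crosses the singular stratum $Q\cap(\{\infty\}\times\overline{N_{\sigma,\R}})\in\Lambda_{\sing}$, where allowability can genuinely fail --- this is exactly the configuration the paper singles out as dangerous. Your reduction for strata at infinity, ``allowability at $S_{1}$ follows from allowability of $\gamma$ at $\psi^{-1}(\psi(S_{1}))$,'' collapses precisely here, because $\psi^{-1}(\psi(S_{1}))=Q$ is a smooth stratum carrying no constraint. The paper closes this gap by using that $\gamma$ represents a stalk class, so $\partial\gamma=0$ near $x$: after a further change of representative one arranges $x\in\tau$ for every such face, whence $\tau\not\subset\supp\partial\gamma$ and the bad terms do not occur at all. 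Your proposal never invokes the vanishing of $\partial\gamma$ near $x$, and without it the allowability of $c_{1}(\partial\gamma)$ is not established.

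Two further points. First, you identify the equality clause of Definition \ref{def:allowability} at strata with $\dim\sigma_{S_{1}}=0$ as the principal obstacle; in fact that clause is essentially harmless in this computation (cf.\ Remark \ref{rem:2nd condition of allowability}), and the real difficulty is the inequality clause at the strata $Q\cap(\{\infty\}\times\overline{N_{\sigma,\R}})$ just described. Second, the appeal to a ``slight perturbation'' putting $\gamma$ transverse to the $\Lambda$-stratification is not available as stated: a change of representative must be realized by adding the boundary of an \emph{allowable} chain, and moreover moving $\Delta$ off the stratum $\relint R_{\Delta}$ changes the coefficient group $F_{p,w}(R_{\Delta})$ in which $\alpha_{\Delta}$ lives, so ``perturbation'' is not even well defined on coefficients without further argument. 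The paper's normalizations are obtained by subdivision and by adding allowable boundaries (mapping cylinders), not by general position, and your proof should do the same.
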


\begin{proof}
(Claim \ref{claim:2 in local-computation-near-toric boundary}) By
allowability of $\gamma,$ we can easily see that the chain $c_{1}(\gamma)$
is allowable. (Note that if there existed $\Delta$ with $\alpha_{\Delta}'\neq0$
and $\relint\Delta\subset\relint Q$ for some polyhedron $Q\in\Lambda_{\sm,d-1}$
of dimension $(d-1)$, the mapping cyclinder $c_{1}(\gamma)$ may
not be allowable at 
\[
Q\cap(\left\{ \infty\right\} \times\overline{N_{\sigma,\R}})\in\Lambda_{\sing}.)
\]
Since $\partial\gamma$ is allowable, $c_{1}(\partial\gamma)$ is
also allowable. (Strictly speaking, in this case, we need to show
that for a polyhedron $\Delta$ with $\alpha_{\Delta}'\neq0$ and
a face $\tau\subset\Delta$ of codimension $1$ such that $\supp c_{1}([\tau])\not=\emptyset$
and $\relint\tau\subset\relint Q$ for some polyhedron $Q\in\Lambda_{\sm,d-1}$
of dimension $(d-1)$, we have $\tau\not\subset\supp\partial\gamma$.
Since $Q$ is of the form $(\R\cup\left\{ \infty\right\} )\times(Q\cap\overline{N_{\sigma,\R}})$
and $\Delta\not\subset Q$, we have 
\[
\tau=\Delta\cap((\R\cup\left\{ \infty\right\} )\times(Q\cap\overline{N_{\sigma,\R}})).
\]
Then since $x\in\Delta$, by taking a different representative in
the homology class of $\gamma$ if necessary, we have $x\in\tau$.
Consequently, since $\partial\gamma=0$ near $x$, we have $\tau\not\subset\supp\partial\gamma$.)
Since $\gamma=\sum_{\Delta}e_{1}\wedge\alpha_{\Delta}'[\Delta]$,
the support of $\gamma-\partial c_{1}(\gamma)$ is in $\supp c_{1}(\partial\gamma)$.
Consequently, the boundary $\partial c_{1}(\gamma)$ is also allowable.
\end{proof}
We continue proof of Proposition \ref{prp:local-computation-near-toric boundary}.
By Claim \ref{claim:2 in local-computation-near-toric boundary},
\[
\gamma':=\gamma-\partial c_{1}(\gamma)=\sum_{\Delta}e_{1}\wedge\beta_{\Delta}[\Delta]\in IC_{p,q}^{\Trop}(X)
\]
 has the same homology class as $\gamma$ in $\H_{\left\{ 1\right\} }^{-p-q}(IC_{\Trop,\geom,X}^{d-p,*})_{x}$,
and a polyhedron $\Delta$ with $\beta_{\Delta}\neq0$ satisfies $\dim\psi(\Delta)=q-1$. 
\begin{claim}
\label{claim:gamma explicit form}The chain $\gamma'$ is of the form
\[
\gamma'=\sum_{\boxempty\subset X\cap\overline{N_{\sigma,\R}}}e_{1}\wedge\beta'_{\boxempty}[(\R\cup\left\{ \infty\right\} )\times\boxempty]
\]
near $x$. 
\end{claim}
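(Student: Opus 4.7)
The plan is to combine the local product structure of $X$ near $x$ (coming from regularity at infinity) with the already-established dimension count $\dim\psi(\Delta)=q-1$.

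By the regularity of $X$ at infinity (Definition \ref{def:regular at infinity}) and the local product identification $X=(\R\cup\{\infty\})\times(X\cap\overline{N_{\sigma,\R}})$ used in this proof, each $d$-dimensional polyhedron $P\in\Lambda_{d}$ whose closure contains $x=(\infty,x_{0})$ is, in a neighborhood of $x$, of the form $(\R\cup\{\infty\})\times\boxempty_{P}$ for a $(d-1)$-dimensional polyhedron $\boxempty_{P}$ in $V:=X\cap\overline{N_{\sigma,\R}}$ near $x_{0}$. Every face of $P$ near $x$ is then of product form $I\times\tau$ with $I\in\{\R\cup\{\infty\},\{\infty\}\}$ and $\tau$ a face of $\boxempty_{P}$, so the polyhedral complex $\Lambda$ restricted to a neighborhood of $x$ consists entirely of such product polyhedra with the first factor being either the whole tropical line or the single point $\{\infty\}$.

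Next I represent $\gamma$ and $\gamma'=\gamma-\partial c_{1}(\gamma)$ in the polyhedral complex $\Lambda$ itself near $x$, without introducing any further refinement in the $\R\cup\{\infty\}$-direction. This is valid since the coefficient of $\gamma$ has the form $e_{1}\wedge\alpha'_{\Delta}$ with $\alpha'_{\Delta}\in\bigwedge^{p-1}(\Tan_{\Q}P_{\Delta}\cap N_{\sigma,\Q})$ lying in directions transverse to $e_{1}=\Tan_{\Q}\sigma$, and both the mapping cylinder operation $c_{1}$ and the boundary operator $\partial$ are compatible with this product decomposition and do not introduce any dependence of the coefficient on the $\R\cup\{\infty\}$-coordinate. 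Hence the coefficients of $\gamma'$ are constant along the $\R\cup\{\infty\}$-factor, and after coarsening, the polyhedra of $\gamma'$ near $x$ are exactly the $q$-dimensional polyhedra of $\Lambda$ near $x$, which by the previous paragraph are of the form $(\R\cup\{\infty\})\times\boxempty$ (with $\dim\boxempty=q-1$) or $\{\infty\}\times\boxempty'$ (with $\dim\boxempty'=q$).

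Finally, the already-established dimension count $\dim\psi(\Delta)=q-1$ for every $\Delta$ with $\beta_{\Delta}\neq 0$ rules out the second form (since for $\Delta=\{\infty\}\times\boxempty'$ we would have $\psi(\Delta)=\Delta$ and hence $\dim\psi(\Delta)=q$). Only the first form survives, and writing its coefficient as $e_{1}\wedge\beta'_{\boxempty}$ yields the desired expression. The most delicate step will be the second one: verifying carefully that the coefficients of $\gamma'$ are translation-invariant in the $\R\cup\{\infty\}$-direction, so that all subdivisions collapse into honest product polyhedra with $I=\R\cup\{\infty\}$. This reduces to tracking how the $e_{1}$ factor interacts with the boundary of the mapping cylinder $c_{1}([\Delta_{0}])$ for each $\Delta_{0}$ in $\gamma$, which is essentially a bookkeeping exercise once the local product structure is fixed.
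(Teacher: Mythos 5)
There is a genuine gap in your middle step. Your first paragraph correctly describes the product structure of the ambient complex $\Lambda$ near $x$, but the chain $\gamma'$ does not live on $\Lambda$: chains are taken in $C_{p,q,w}^{\Trop}(X)=\underrightarrow{\lim}\,C_{p,q,w}^{\Trop,T}(X)$ over all refinements $T$ of $\Lambda$, and the support of a $(p,q)$-chain is in general not contained in the $q$-skeleton of $\Lambda$ (the polyhedra $\Delta$ of $\gamma$ were only normalized so that $x\in\Delta$ and $\relint\Delta\subset\relint P$ for some $P\in\Lambda_{\sm,d}$). So the assertion that ``after coarsening, the polyhedra of $\gamma'$ near $x$ are exactly the $q$-dimensional polyhedra of $\Lambda$ near $x$'' is unavailable, and the product structure of $\Lambda$ does not transfer to $\gamma'$.

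More importantly, your argument never uses the one hypothesis that makes the claim true, namely $\partial\gamma'=0$ near $x$. The condition $\dim\psi(\Delta)=q-1$ only says that $e_{1}$ lies in $\Tan_{\R}\Delta$; it does not prevent $\Delta$ from being a slanted simplex or a bounded slab such as $[0,1]\times\boxempty$, which has coefficient ``independent of the first coordinate'' and satisfies $\dim\psi(\Delta)=q-1$, yet is not of the form $(\R\cup\left\{ \infty\right\} )\times\boxempty$. Only closedness rules this out, and this is exactly how the paper argues: for a codimension-one face $\tau\subset\Delta$ with $\dim\psi(\tau)=q-1$ and $\tau\not\subset\left\{ \infty\right\} \times\overline{N_{\sigma,\R}}$, cancellation in $\partial\gamma'=0$ near $x$ produces a unique adjacent $\Delta_{1}$ with $\beta_{\Delta_{1}}=\beta_{\Delta}$ on the other side of $\tau$ in the $e_{1}$-direction, and iterating this sweeps out the full fiber $(\R\cup\left\{ \infty\right\} )\times\boxempty$ with constant coefficient. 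Your final step, using $\dim\psi(\Delta)=q-1$ to exclude pieces inside $\left\{ \infty\right\} \times\overline{N_{\sigma,\R}}$, is correct and agrees with the paper; but the ``bookkeeping exercise'' you defer is precisely this boundary-cancellation argument, and it cannot be replaced by compatibility of $c_{1}$ and $\partial$ with the product decomposition alone.
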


\begin{proof}
(Claim \ref{claim:gamma explicit form}) Let $\Delta$ be a polyhedron
with $\beta_{\Delta}\neq0$, and $\tau\subset\Delta$ a face of codimension
$1$ with $\tau\not\subset\left\{ \infty\right\} \times\overline{N_{\sigma,\R}}$,
$x\in\tau$, and $\dim\psi(\tau)=q-1$. Since $\partial\gamma'=0$
near $x$, we have another polyhedron $\Delta_{1}\neq\Delta$ with
$\tau\subset\Delta_{1}$ and $\beta_{\Delta_{1}}\neq0$. This $\Delta_{1}$
is unique and $\beta_{\Delta_{1}}=\beta_{\Delta}$ since $\psi(\Delta_{1})=q-1$
(i.e., ``$\Delta$ and $\Delta_{1}$ live in $(\pm e_{1})$-directions
of $\tau$ respectively''). This show that $\gamma'$ is of that form. 
\end{proof}
It is easy to see that we have 
\[
\sum_{\boxempty\subset X\cap\overline{N_{\sigma,\R}}}\beta'_{\boxempty}[\boxempty]\in IC_{p-1,q-1}^{\Trop}(X\cap\overline{N_{\sigma,\R}}),
\]
and this gives the isomorphisms in the assertion of Proposition \ref{prp:local-computation-near-toric boundary}. 
\end{proof}
\begin{cor}
\label{cor:extra-local-vanishing} When $\dim\sigma_{S}\geq1$, we
have 
\begin{align*}
 & \H^{-p-q}(IC_{\Trop,\geom}^{d-p,*})_{x}\\
\cong & \begin{cases}
\H^{-p-q}(\Fil_{S}^{p+\dim S+2\dim\sigma_{S}-d}IC_{\Trop,\geom}^{d-p,*})_{x} & (q=d)\\
\H^{-p-q}(\Fil_{S}^{p+\dim S+2\dim\sigma_{S}-q+1}IC_{\Trop,\geom}^{d-p,*})_{x} & (\dim S+\dim\sigma_{S}+2\leq q\leq d-1)\\
0 & (q\leq\min\left\{ \dim S+\dim\sigma_{S}+1,d-1\right\} )
\end{cases}.
\end{align*}
 
\end{cor}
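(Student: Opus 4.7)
The plan is to reduce the general case $\dim\sigma_S \geq 1$ to the already-proved case $\dim\sigma_S = 0$ of Proposition \ref{lem:local-computation-part1} via the toric-boundary reduction in Proposition \ref{prp:local-computation-near-toric boundary}. Setting $\sigma = \sigma_S$ in that proposition, I pass to the tropical subvariety $X' := X\cap\overline{N_{\sigma_S,\R}}$ of dimension $d' = d - \dim\sigma_S$ (Remark \ref{rem:tropical structure of X =00005Ccap N_sigma R}), in which the point $x \in \relint S$ lies in a stratum whose associated cone $\sigma_S^{X'}$ is zero-dimensional. Proposition \ref{prp:local-computation-near-toric boundary} then furnishes
\[
\H^{-p-q}(IC_{\Trop,\geom,X}^{d-p,*})_x \cong \H^{-p-q+2\dim\sigma_S}(IC_{\Trop,\geom,X'}^{d-p,*})_x
\]
together with its filtered counterpart $\H^{-p-q}(\Fil_S^{k}IC_{\Trop,\geom,X}^{d-p,*})_x \cong \H^{-p-q+2\dim\sigma_S}(\Fil_S^{k-2\dim\sigma_S}IC_{\Trop,\geom,X'}^{d-p,*})_x$, so everything reduces to a stalk assertion on $X'$.

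On $X'$ I apply Proposition \ref{lem:local-computation-part1} with the shifted index $q' := q - 2\dim\sigma_S$, noting that because $\dim\sigma_S^{X'} = 0$ the filtration $\Fil_S^{k'}IC$ reduces on $X'$ to $T_S^{k'}IC$ (the $v$-part vanishes), so the same PL-join/double-complex argument used in the proof of Proposition \ref{lem:local-computation-part1} yields a filtered refinement: the stalk of $\Fil_S^{k'}IC$ at $x$ on $X'$ coincides with the full stalk $\Fil_S^{p+\dim S - q'+1}IH_{p,q'-\dim S -1}^{\Trop}(L_S;F_{X'})$ whenever $k' \leq p + \dim S - q' + 1$, and the full stalk itself is zero when $q' \leq \dim S + 1$.

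Undoing the shifts yields each of the three cases of the corollary: the main range $\dim S + \dim\sigma_S + 2 \leq q \leq d-1$ with $k = p+\dim S+2\dim\sigma_S - q + 1$ and the top-degree case $q = d$ with $k = p+\dim S+2\dim\sigma_S - d$ both satisfy $k - 2\dim\sigma_S \leq p + \dim S - q' + 1$ (indeed the inequalities collapse to $0 \leq 2\dim\sigma_S$ and $0\leq 2\dim\sigma_S + 1$ respectively), so the filtered stalk equals the unfiltered one; whereas the vanishing range $q \leq \min\{\dim S+\dim\sigma_S + 1, d-1\}$ forces $q' \leq \dim S + 1$, so by Proposition \ref{lem:local-computation-part1} (applied on $X'$) both stalks are zero.

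The main technical difficulty is the consistent bookkeeping of the two opposing shifts $+2\dim\sigma_S$ in cohomological degree and $-2\dim\sigma_S$ in filtered level as one compares $X$ and $X'$, and verifying that the PL-cone computation of Proposition \ref{lem:local-computation-part1} is compatible with the $T_S^u$-filtration (which is ensured by the collapse $\Fil_S^{k'} = T_S^{k'}$ on $X'$). Modulo this bookkeeping the corollary is a formal consequence of Propositions \ref{lem:local-computation-part1} and \ref{prp:local-computation-near-toric boundary}.
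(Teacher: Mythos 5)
Your overall strategy---reduce to the $\dim\sigma_S=0$ case via Proposition \ref{prp:local-computation-near-toric boundary} and then apply Proposition \ref{lem:local-computation-part1} on $X'=X\cap\overline{N_{\sigma_S,\R}}$---is exactly the paper's (its proof is literally ``direct corollary of'' those two propositions), but your index bookkeeping is off in a way that changes the answer. Under the isomorphism of Proposition \ref{prp:local-computation-near-toric boundary}, the complex $IC^{d-p,*}_{\Trop,\geom,X'}$ on the $(d-\dim\sigma_S)$-dimensional variety $X'$ is the complex of $(p-\dim\sigma_S,\ast)$-chains: see the end of that proof, where a class on $X$ is represented by $e_1\wedge\beta$ with $\beta$ a $(p-1,q-1)$-chain on $X'$ when $\dim\sigma=1$. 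So the correct translation is the diagonal shift $(p,q)\mapsto(P,Q)=(p-\dim\sigma_S,\,q-\dim\sigma_S)$, not $q\mapsto q'=q-2\dim\sigma_S$ with $p$ fixed. This matters because the thresholds in Proposition \ref{lem:local-computation-part1} depend on $P$ and $Q$ separately, not only on the cohomological degree $-P-Q$: the vanishing condition there is $Q\leq\dim S+1$, which under the diagonal shift gives exactly $q\leq\dim S+\dim\sigma_S+1$, whereas your convention gives $q\leq\dim S+2\dim\sigma_S+1$---a strictly larger range when $\dim\sigma_S\geq 1$, which would force vanishing inside the second case of the corollary, where the stalk is a generally nonzero filtered link homology. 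Similarly, the filtration level $P+\dim S-Q+1$ equals $p+\dim S-q+1=k-2\dim\sigma_S$ on the nose; the ``slack $0\leq 2\dim\sigma_S$'' you find is an artifact of the wrong shift, not a feature of the argument.

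A second point: Proposition \ref{lem:local-computation-part1} only covers $2\leq Q\leq\dim X'-1$, so the case $q=d$ (i.e.\ $Q=\dim X'$) is not an instance of it, and indeed the filtration exponent there is $p+\dim S+2\dim\sigma_S-d$, one less than what the pattern of the second case would give at $q=d$. The reason is that top-degree local cycles are allowable only through the second condition of Definition \ref{def:allowability}, where $u(\alpha\cap\pr_{\sigma_S}^{-1}(S))$ may take its minimal value $p-d+\dim S+\dim\sigma_S$ (Remark \ref{rem:2nd condition of allowability}, and the first item of Remark \ref{rem:allowable chains}); the paper handles $q=d$ by exactly this remark. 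Your proposal treats $q=d$ by plugging into the same formula as the middle range, which is not justified and would in any case produce the wrong exponent.
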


\begin{proof}
This is a direct corollary of Proposition \ref{lem:local-computation-part1}
and Proposition \ref{prp:local-computation-near-toric boundary}.
See Remark \ref{rem:2nd condition of allowability} for $q=d$. 
\end{proof}
\begin{rem}
\label{rem:enough local vanishing}When $\dim\sigma_{S}\geq1$, by
Corollary \ref{cor:extra-local-vanishing}, in particular, we have
\begin{align*}
 & \H^{-p-q}(IC_{\Trop,\geom}^{d-p,*})_{x}\\
\cong & \begin{cases}
\H^{-p-q}(\Fil_{S}^{p+\dim S+\dim\sigma_{S}-q+1}IC_{\Trop,\geom}^{d-p,*})_{x} & (\dim S+1\leq q)\\
0 & (q\leq\dim S)
\end{cases}.
\end{align*}
 This weaker result is enough for sheaf theoretic study in Section
\ref{sec:Intersection-product-and-1}.
\end{rem}

For our sheaf-theoretic approach in Section \ref{sec:Intersection-product-and-1},
we need slight generalizations (Proposition \ref{prop:local vanish and attaching})
of Proposition \ref{lem:local-computation-part1}, Remark \ref{rem:enough local vanishing},
and Proposition \ref{prop:attaching}. Let $S_{0}\in\Lambda_{\sing}$,
\[
L=(S_{L,1}\subsetneq S_{L,2}\subsetneq\dots\subsetneq S_{L,r(L)}=S_{0})
\]
 ($S_{L,i}\in\Lambda$, $r(L)\in\Z_{\geq1}$) a sequence, and $a=(a_{S'})_{S'\in\Lambda}\in\Z^{\Lambda}$
with $a_{S'}\geq0$ ($S'\in\Lambda\setminus\left\{ S_{L,i}\right\} _{i=1}^{r(L)}$).
We put 
\[
m_{p,S_{0}}:=-2p-\dim S_{0}-\dim\sigma_{S_{0}}-1
\]
and $W_{L}:=\bigcap_{i=1}^{r(L)}W_{S_{L,i}}$. Remind that $W_{S_{L,i}}$
are small open polyhedral neighborhoods of $\relint S_{L,i}$. We
put $a(S_{0}=m_{p,S_{0}}+p+q)\in\Z^{\Lambda}$ by 
\begin{align*}
a(S_{0} & =m_{p,S_{0}}+p+q)_{S_{0}}=m_{P,S_{0}}+p+q\\
a(S_{0} & =m_{p,S_{0}}+p+q)_{S'}=a_{S'}
\end{align*}
 ($S'\neq S_{0}\in\Lambda$). Remind that we put
\[
\Fil_{*}^{\Lambda}IC_{\Trop,\geom}^{d-p,*}(a):=\bigcap_{S'\in\Lambda}\Fil_{S'}^{-a_{S'}}IC_{\Trop,\geom}^{d-p,*}\subset IC_{\Trop,\geom}^{d-p,*}.
\]

\begin{prop}
\label{prop:local vanish and attaching} 
\begin{enumerate}
\item (local vanishing) When $\dim\sigma_{S_{0}}=0$, for $q\leq d-1$,
we have 
\begin{align*}
 & \H^{-p-q}(\Fil_{*}^{\Lambda}IC_{\Trop,\geom}^{d-p,*}|_{\relint S_{0}\cap W_{L}}(a))\\
\cong & \begin{cases}
\H^{-p-q}(\Fil_{*}^{\Lambda}IC_{\Trop,\geom}^{d-p,*}|_{\relint S_{0}\cap W_{L}}(a(S_{0}=m_{p,S_{0}}+p+q))) & (\dim S_{0}+2\leq q\leq a_{S_{0}}-m_{p,S_{0}}-p)\\
0 & (q\leq\dim S_{0}+1)
\end{cases}.
\end{align*}
\item (local vanishing) When $\dim\sigma_{S_{0}}\geq1$, we have 
\begin{align*}
 & \H^{-p-q}(\Fil_{*}^{\Lambda}IC_{\Trop,\geom}^{d-p,*}|_{\relint S_{0}\cap W_{L}}(a))\\
\cong & \begin{cases}
\H^{-p-q}(\Fil_{*}^{\Lambda}IC_{\Trop,\geom}^{d-p,*}|_{\relint S_{0}\cap W_{L}}(a(S_{0}=m_{p,S_{0}}+p+q))) & (\dim S_{0}+1\leq q\leq a_{S_{0}}-m_{p,S_{0}}-p)\\
0 & (q\leq\dim S_{0})
\end{cases}.
\end{align*}
\item (attaching property) When $\dim\sigma_{S_{0}}=0$, the natural map
\[
\Fil_{*}^{\Lambda}IC_{\Trop,\geom}^{d-p,*}(a)|_{W_{L}}\to Ri_{S*}i_{S}^{*}\Fil_{*}^{\Lambda}IC_{\Trop,\geom}^{d-p,*}(a)|_{W_{L}}
\]
 induces an isomorphism
\[
\H^{m}(\Fil_{*}^{\Lambda}IC_{\Trop,\geom}^{d-p,*}(a)|_{\relint S_{0}\cap W_{L}})\cong\H^{m}((Ri_{S*}i_{S}^{*}\Fil_{*}^{\Lambda}IC_{\Trop,\geom}^{d-p,*}(a))|_{\relint S_{0}\cap W_{L}})
\]
 for
\begin{align*}
m\leq\max\left\{ -p-d,\min\left\{ m_{p,S}-a_{S},-p-\dim S-2\right\} \right\} .
\end{align*}
\end{enumerate}
\end{prop}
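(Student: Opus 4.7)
The plan is to adapt the proofs of Proposition \ref{lem:local-computation-part1}, Proposition \ref{prop:attaching}, and Proposition \ref{prp:local-computation-near-toric boundary}, now carefully tracking the additional filtration data $a=(a_{S'})_{S'}$ and the restriction to $\relint S_0 \cap W_L$.

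For part (1), with $\dim\sigma_{S_0}=0$, I would follow the join-decomposition argument of Proposition \ref{lem:local-computation-part1}: near $x\in\relint S_0$, a neighborhood in $X\cap N_{\R}$ is stratified PL-homeomorphic to $x*S^{\dim S_0-1}*L_{S_0}$, and every class has a representative of the form $x*\gamma$. I would set up the analogue of the morphism
\[
f\colon \bigl(\hat C_*(S^{\dim S_0-1})\otimes Z_*\bigr)_*\to IC^{\Trop}_{p,*}(S^{\dim S_0-1}*L_{S_0};F_X)
\]
and its homotopy inverse $\psi$ defined via the pseudo-radial retraction $\varphi$, but replacing $IC^{\Trop}_{p,*}$ everywhere by its intersection $\Fil_*^{\Lambda}IC^{d-p,*}_{\Trop,\geom}(a)$. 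The key observation is that both $f$ and $\psi$ act on polyhedra without touching the coefficient $\alpha_\Delta$, so the conditions $v(\alpha_{\Delta,i}\cap\sigma_{S'})\geq v$ and $u(\alpha_{\Delta,i}\cap\pr_{\sigma_{S'}}^{-1}(S'))\geq u$ defining $\Fil_{S'}^k$ are preserved for each $S'\neq S_0$. At $S_0$ itself, writing a representative in the form $x*\gamma$ forces $v(\alpha_{\Delta,i}\cap\sigma_{S_0})=0$ and $u(\alpha_{\Delta,i}\cap\pr_{\sigma_{S_0}}^{-1}(S_0))$ to be bounded; combined with the allowability inequality at $S_0$ this pins the effective filtration level at $S_0$ to $m_{p,S_0}+p+q$, which is exactly the prescribed shift $a(S_0=m_{p,S_0}+p+q)_{S_0}$. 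The vanishing for $q\leq \dim S_0+1$ then comes, as in Proposition \ref{lem:local-computation-part1}, from the fact that $\hat C_*(S^{\dim S_0-1})$ has homology only in degree $\dim S_0-1$ together with the allowability condition $q-j-1\geq 2$.

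For part (2), with $\dim\sigma_{S_0}\geq 1$, I would reduce to part (1) via Proposition \ref{prp:local-computation-near-toric boundary}. Locally $X\cong (\R\cup\{\infty\})^{\dim\sigma_{S_0}}\times (X\cap\overline{N_{\sigma_{S_0},\R}})$, and iterating the proposition gives an isomorphism
\[
\H^{-p-q+2\dim\sigma_{S_0}}\bigl(\Fil_{S_0}^{k}IC_{\Trop,\geom,X\cap\overline{N_{\sigma_{S_0},\R}}}^{d-p,*}\bigr)_x\cong\H^{-p-q}\bigl(\Fil_{S_0}^{k+2\dim\sigma_{S_0}}IC_{\Trop,\geom,X}^{d-p,*}\bigr)_x,
\]
which one has to check is compatible with the filtrations at the remaining strata $S'\neq S_0$ (this is straightforward since the isomorphism is given explicitly by wedging with $e_1\wedge\cdots\wedge e_{\dim\sigma_{S_0}}$ on coefficients, and this operation preserves the $v,u$ inequalities at strata $S'$). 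Inside $X\cap\overline{N_{\sigma_{S_0},\R}}$, the stratum $\relint S_0$ has cone-dimension $0$, so part (1) applies and yields the claim after the shift in filtration and in degree.

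For part (3), I would mirror the argument of Proposition \ref{prop:attaching} (hence of Goresky--MacPherson \cite{GoreskyMacPhersonIntersectionhomologyII83}, Subsection 2.5): the attaching cone of $Ri_{S*}i_S^*$ to identity is computed by the stalk of $IC^{d-p,*}_{\Trop,\geom}$ at $x$, and the homotopy equivalence $(f,\psi)$ from part (1) produces the explicit identification. The bound on $m$ is the sum of the bound from part (1)'s vanishing applied to $\Fil^{-a_{S_0}}_{S_0}$ (yielding the $m_{p,S_0}-a_{S_0}$ term) and the fundamental-class dimensional bound $-p-d$ (from the allowable chains of top dimension), with the intermediate term $-p-\dim S_0-2$ coming from the $q\leq \dim S_0+1$ case.

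The main obstacle will be the simultaneous bookkeeping of the filtration at every $S'\in\Lambda$ in the definition of $\Fil_*^{\Lambda}$: one must verify an analogue of Remark \ref{rem:determine allowability at strata part 1} and Lemma \ref{lem:determine allowability at strata lemma part 2} for each $S'$ intersecting $\relint S_0\cap W_L$, so that the homotopy $(f,\psi)$ descends to the subcomplexes cut out by $a$. Because $\dim(\Delta\cap\relint S')=\dim(\varphi(\Delta)\cap\relint S')$ and $v,u$ only depend on the coefficient $\alpha_\Delta$ (not on $\Delta$), this compatibility does go through, but the verification must be done stratum by stratum with some care about the case $S'=S_0$ where the shift is forced.
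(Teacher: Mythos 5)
Your overall strategy is the same as the paper's: rerun the local computations of Proposition \ref{lem:local-computation-part1}, Proposition \ref{prop:attaching}, and Proposition \ref{prp:local-computation-near-toric boundary} while tracking the filtration data, reducing part (2) to part (1) via the toric-boundary isomorphism and part (3) to the Goresky--MacPherson attaching argument. The identification of the forced filtration level $m_{p,S_0}+p+q$ at $S_0$ is also correct and matches the exponent $p+\dim S_0-q+1$ appearing in Proposition \ref{lem:local-computation-part1}.

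There is, however, a gap precisely at the point you flag as "the main obstacle" but then wave through. The subsheaves $T_{\sigma_{S'}}^{v}T_{S'}^{u}IC$ are defined by the \emph{existence} of a decomposition of each coefficient $\alpha_\Delta$ into images of elements $\alpha_{\Delta,i}\in F_{p,w}(P_{\Delta,i})$ satisfying the $v,u$ inequalities; for the intersection $\Fil_*^{\Lambda}IC(a)=\bigcap_{S'}\Fil_{S'}^{-a_{S'}}IC$ these decompositions could a priori differ from stratum to stratum, so "$v,u$ only depend on the coefficient" does not by itself show that the homotopy equivalences and mapping cylinders restrict to the intersection, nor that a class in $\Fil_*^{\Lambda}IC(a)$ that bounds in $IC$ actually bounds inside $\Fil_*^{\Lambda}IC(a)$. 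The paper resolves this with a short but essential device: fixing an inner product on $N\otimes\Q$ and using the flag
$0\subseteq\Tan_{\Q}\sigma_{S_{L,r(L)}}\subseteq\dots\subseteq\pr_{\sigma_{S_{L,r(L)}}}^{-1}(\Tan_{\Q}S_{L,r(L)})\subseteq\Tan_{\Q}P$
to produce an orthogonal decomposition of every $F_{p,w}(P)$ and hence a direct-sum decomposition $IC^{d-p,*}_{\Trop,\geom}|_{W_L}\cong\bigoplus_{b}\gr_{b}IC^{d-p,*}_{\Trop,\geom}|_{W_L}$ under which every $\Fil_{S_{L,i}}^{k}$ is simultaneously a sum of graded summands. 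Since the joins, retractions and mapping cylinders of the earlier proofs act $F_{p,w}$-linearly, they preserve each $\gr_b$, and all three assertions follow summand by summand. You should supply this (or an equivalent splitting) to make your stratum-by-stratum verification close.
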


\begin{proof}
For $c=(c_{S_{L,i}})_{S_{L,i}}\in\Z^{\left\{ S_{L,i}\right\} _{i=1}^{r(L)}}$,
we put $(c,(0))\in\Z^{\Lambda}$ the element whose $S_{L,i}$-component
($1\leq i\leq r(L)$) is $c_{S_{L,i}}$ and the other components are
$0$. For $b\in\Z^{\left\{ S_{L,i}\right\} _{i=1}^{r(L)}}$, we put
\begin{align*}
 & \gr_{b}IC_{\Trop,\geom}^{d-p,*}|_{W_{L}}\\
:= & \Fil_{*}^{\Lambda}IC_{\Trop,\geom}^{d-p,*}((b,(0)))|_{W_{L}}/\sum_{\substack{b'\in\Z^{\left\{ S_{L,i}\right\} _{i=1}^{r(L)}}\\
b'\lneq b
}
}\Fil_{*}^{\Lambda}IC_{\Trop,\geom}^{d-p,*}((b',(0)))|_{W_{L}},
\end{align*}
where $b'\lneq b$ means $b_{S_{L,i}}'\leq b_{S_{L,i}}$ for any $1\leq i\leq r(L)$
and the equality does not hold for some $1\leq i\leq r(L)$. For each
$P\in\Lambda_{\sm,d}$ containing $S_{0}$, we have a sequence 
\begin{align*}
0 & \subseteq\Tan_{\Q}\sigma_{S_{L,r(L)}}\subseteq\dots.\subseteq\Tan_{\Q}\sigma_{S_{L,1}}\\
 & \subseteq\pr_{\sigma_{S_{L,1}}}^{-1}(\Tan_{\Q}S_{L,1})\subseteq\dots\subseteq\pr_{\sigma_{S_{L,r(L)}}}^{-1}(\Tan_{\Q}S_{L,r(L)})\\
 & \subseteq\Tan_{\Q}P.
\end{align*}
We fix an inner product of $N\otimes\Q$. Then the above sequence
gives an orthogonal decomposition of $\Tan_{\Q}P$. It gives a decomposition
of $F_{p,w}(P)$, and these decompositions for all such $P$ gives
a decomposition 
\[
IC_{\Trop,\geom}^{d-p,*}|_{W_{L}}\cong\bigoplus_{b\in\Z^{\left\{ S_{L,i}\right\} _{i=1}^{r(L)}}}\gr_{b}IC_{\Trop,\geom}^{d-p,*}|_{W_{L}},
\]
which induces
\[
\Fil_{S_{0}}^{k}IC_{\Trop,\geom}^{d-p,*}|_{W_{L}}\cong\bigoplus_{\substack{b\in\Z^{\left\{ S_{L,i}\right\} _{i=1}^{r(L)}}\\
b_{S_{L,r(L)}}\geq-k
}
}\gr_{b}IC_{\Trop,\geom}^{d-p,*}|_{W_{L}}.
\]
Our proofs of Proposition \ref{lem:local-computation-part1}, Proposition
\ref{prop:attaching}, and Proposition \ref{prp:local-computation-near-toric boundary}
just use mapping cyclinders, $F_{p,w}$-linearly, hence they also
work for each $\gr_{b}IC_{\Trop,\geom}^{d-p,*}|_{W_{L}}\subset\Fil_{S_{0}}^{k}IC_{\Trop,\geom}^{d-p,*}|_{W_{L}}$.
Hence the assertions hold. 
\end{proof}
\begin{rem}
\label{lem:attaching in general} By Proposition \ref{prop:comparison of 2 definition-1},
attaching property (\ref{def:Axiom Ap}) also holds for $S_{0}$ with
$\dim\sigma_{S_{0}}\geq1$. 
\end{rem}

\section{Derived categories of locally graded sheaves\label{sec:Graded-modules}}

Deligne suggested sheaf-theoretic construction and characterization of intersection complex,
and it was established in a paper written by Goresky-MacPherson \cite{GoreskyMacPhersonIntersectionhomologyII83}.
Similarly, 
the tropical intersection complex has a sheaf-theoretic characterization
(Section \ref{sec:Intersection-product-and-1} and Subsection \ref{subsec:Knneth-formula})
using a kind of truncation functors. As we have seen in Subsection
\ref{subsec:Local-computations}, our truncation functors should involve
filtrations (or we should say grades). Hence we use derived categories
of sheaves of graded modules.

However, there is a techinical problem. Later (Theorem \ref{thm:Verdier-duality-1},
Proposition \ref{prop:comparison of 2 definition-1}), we will prove
that $D(\Fil_{*}^{\Lambda}IC_{\Trop,\geom}^{p,*})$ also satisfies
the sheaf-theoretic characterization, and hence prove Poincar\'{e}-Verdier
duality. For each stratum $\relint S$ ($S\in\Lambda$), the filtration
$\Fil_{S}^{k}IC_{\Trop,\geom}^{p,*}$ ($k\geq1$) was extended by
$0$ from an open neighborhood $W_{S}$ of $\relint S$. Of course,
for $x\notin\relint S$, the stalk of $\Fil_{S}^{k}IC_{\Trop,\geom}^{p,*}$($k\geq1$)
at $x$ is $0$, but the costalk can be non-zero, i.e., 
\[
j_{x}^{*}(D(\Fil_{S}^{k}IC_{\Trop,\geom}^{p,*}))\cong D(j_{x}^{!}(\Fil_{S}^{k}IC_{\Trop,\geom}^{p,*}))
\]
 can be non-zero, where $j_{x}\colon\{x\}\to X$. Hence $D(\Fil_{S}^{k}IC_{\Trop,\geom}^{p,*})$
may not be quasi-isomorphic to $\Fil_{S}^{k}IC_{\Trop,\geom}^{d-p,*}$.
For this reasons, in this section, we introduce localizations of categories,
\emph{(derived categories of) locally graded sheaves,} and $D(\Fil_{*}^{\Lambda}IC_{\Trop,\geom}^{p,*})$
is isomorphic to $\Fil_{*}^{\Lambda}IC_{\Trop,\geom}^{d-p,*}$ in
the localized category. (Some problems which may happen due to the
non-existence of canonical choices of $W_{S}$ are also solved by
these localizations.) 

In the first three subsections, we will give a quick introduction
to (multi)graded modules, their sheaves, and derived categories. Six
functors are discussed in Section \ref{sec:Appendix.}. In Subsection
\ref{subsec:Localization-by-LF-quasi-isomorp}, we discuss LG-(quasi-)isomorphisms
of (complexes of) graded sheaves (where LG stands for ``locally graded'').
In Subsection \ref{subsec: locally graded sheaves Verdier-duality},
we impose a finiteness condition on grades. In Subsection \ref{subsec:Truncation-functors},
we study the truncation functors which involve local grades. 

Note that while we use categories of graded modules as a framework,
probably, categories of filtered modules also work. We use the former
simply because it seems much simpler, and is enough for our purpose.
There are many references of sheaves of filtered and graded modules,
e.g., \cite{IllusieComplexecotangentetDeformationsI1971}, \cite{LaumonSurlacategoriederiveedesDmodulesfiltres}.
Many parts of this section and Section \ref{sec:Appendix.} about
derived categories of sheaves of (non-locally) graded modules must
be contained in some existing literature. 

In this section, let $k$ be a noetherian commutative ring of finite
global dimension with unit. We put $\Modu k$ the abelian category
of $k$-modules. 

\subsection{The abelian category of graded modules\label{subsec:The-category-of graded modules}}

Let $I$ be a finite set. For $a=(a_{i})_{i\in I}$ and $b=(b_{i})_{i\in I}\in\Z^{I}$,
we put $a\leq b$ if and only if $a_{i}\leq b_{i}$ for any $i$.
We consider $\Z^{I}$ as a preordered abelian group. By abuse of notation,
we put $0:=(0,\dots,0)\in\Z^{I}$. 
\begin{defn}
Let $k[T_{i}]_{i\in I}$ be the ring of polynomials in $T_{i}$ ($i\in I$)
over $k$. 
\begin{enumerate}
\item A ($\Z^{I}$-)graded $k[T_{i}]_{i\in I}$-module $A$ is a $k[T_{i}]_{i\in I}$-module
equipped with a decomposition $A=\bigoplus_{a\in\Z^{I}}A(a)$ into
$k$-modules $A(a)$ such that multiplications $T_{i}$ are the direct
sums of $T_{i}\colon A(a)\to A(a+e_{i})$ ($i\in I,a\in\Z^{I}$),
where $e_{i}\in\Z^{I}$ is the element whose $i$-component is $1$
and other components are $0$. 
\item We put $\gMod k[T_{i}]_{i\in I}$ the category of graded $k[T_{i}]_{i\in I}$-modules
with grade-preserving morphisms, i.e., 
\[
\Hom_{\gMod k[T_{i}]_{i\in I}}(A,B):=\left\{ \varphi\in\Hom_{\Modu k[T_{i}]_{i\in I}}(A,B)\mid\varphi(A(a))\subset B(a)(a\in\Z^{I})\right\} .
\]
\end{enumerate}
\end{defn}

We also put $\Fct(\Z^{I},\Modu k)$ the functor category from the
preordered set $\Z^{I}$ to the abelian category $\Modu k$. Then
we have a natural isomorphism 
\[
\Fct(\Z^{I},\Modu k)\cong\gMod k[T_{i}]_{i\in I}
\]
given by 
\[
F\mapsto\bigoplus_{a\in\Z^{I}}F(a)
\]
and 
\[
T_{i}:=F(a\to a+e_{i})\colon F(a)\to F(a+e_{i})
\]
 ($a\in\Z^{I}$, $i\in I$). We identify both hand sides. 
\begin{rem}
The functor category $\Fct(\Z^{I},\Modu k)$ is an abelian category
\cite[Proposition 12.1.6]{SchubertCategories72} admitting small direct
sums and products \cite[Section 3.4, Theorem 4.1 and Section 2.8]{PopescuAbelianCategories73}.
Kernels, images, limits, and colimits are computed in a grade-wise
manner. Explicitly, 
\begin{itemize}
\item for a morphism $f\in\Hom_{\gMod k[T_{i}]_{i\in I}}(A,B)$, we have
$(\Ker f)(a)=\Ker f\cap A(a)$ and $(\Ima f)(a)=f(A(a))$, and
\item for $A_{j}=\bigoplus_{a\in\Z^{I}}A_{j}(a)\in\gMod k[T_{i}]_{i\in I}$,
we have $(\bigoplus_{j}A_{j})(a)=\bigoplus_{j}(A_{j}(a))$ and $(\prod_{j}A_{j})(a)=\prod_{j}(A_{j}(a))$. 
\end{itemize}
\end{rem}

\begin{defn}
Let $A=\bigoplus_{a\in\Z^{I}}A(a)$ and $B=\bigoplus_{a\in\Z^{I}}B(a)\in\gMod k[T_{i}]_{i\in I}$.
Let $A\otimes_{k[T_{i}]_{i\in I}}B$ denote their non-graded tensor
product. We equip it with the structure of graded $k[T_{i}]_{i\in I}$-module
by 
\[
(A\otimes_{k[T_{i}]_{i\in I}}B)(a)=\Ima(\bigoplus_{a_{1}+a_{2}=a}A(a_{1})\otimes_{k}B(a_{2})\to A\otimes_{k[T_{i}]_{i\in I}}B).
\]
By abuse of notation, let $A\otimes_{k[T_{i}]_{i\in I}}B$ also denote
this graded tensor product. 
\end{defn}

For a $k$-module $A$ and a subset $\Omega\subset\Z^{I}$, we define
$\bigoplus_{a\in\Omega}A\in\gMod k[T_{i}]_{i\in I}$ by 
\[
(\bigoplus_{a\in\Omega}A)(b):=\begin{cases}
A & (b\in\Omega)\\
0 & (b\not\in\Omega)
\end{cases}
\]
 ($b\in\Z^{I}$), the identity map $A\to A$, and the zero maps $0\to A$
and $A\to0$. For $a\in\Z^{I}$, we put $\Z_{*a}^{I}:=\left\{ b\in\Z^{I}\mid b*a\right\} $
($*\in\{\geq,\leq\}$). By abuse of notation, we put $k[T_{i}]_{i\in I}:=\bigoplus_{a\in\Z_{\geq0}^{I}}k$.
We have $B\otimes_{k[T_{i}]_{i\in I}}k[T_{i}]_{i\in I}\cong B$ ($B\in\gMod k[T_{i}]_{i\in I}$)
as graded $k[T_{i}]_{i\in I}$-modules. 
\begin{defn}
A graded $k[T_{i}]_{i\in I}$-module $M$ is flat if for any exact
sequence 
\[
0\to N_{1}\to N_{2}\to N_{3}\to0
\]
 in $\gMod k[T_{i}]_{i\in I}$, a sequence 
\[
0\to N_{1}\otimes M\to N_{2}\otimes M\to N_{3}\otimes M\to0
\]
 is also exact. 
\end{defn}

\begin{defn}
\label{def:shift and grHom}Let $A=\bigoplus_{a\in\Z^{I}}A(a)\in\gMod k[T_{i}]_{i\in I}$
and $b\in\Z^{I}$. 
\begin{enumerate}
\item We define $A[b]_{\gr}\in\gMod k[T_{i}]_{i\in I}$ by $A[b]_{\gr}(a):=A(a+b)$
and natural morphisms $T_{i}\colon A(a+b)\to A(a+b+e_{i})$. 
\item We also put 
\[
\gHom(A,B)(b):=\Hom_{\gMod k[T_{i}]_{i\in I}}(A,B[b]_{\gr}).
\]
In particular, we put
\[
\gHom(A,B)(0):=\Hom_{\gMod k[T_{i}]_{i\in I}}(A,B).
\]
\item We put 
\[
T_{i}\colon\gHom(A,B)(b)\to\gHom(A,B)(b+e_{i})
\]
 the map given by multiplying $T_{i}$, and define
\[
\gHom(A,B):=\bigoplus_{b\in\Z^{I}}\gHom(A,B)(b)\in\gMod k[T_{i}]_{i\in I}.
\]
\end{enumerate}
\end{defn}

\begin{rem}
The functor $-[b]_{\gr}$ ($b\in\Z^{I}$) is an exact functor. We
have 
\begin{align*}
\gHom(k[T_{i}]_{i\in I},B)(b) & =B(b)\in\Modu k,\\
\gHom(k[T_{i}]_{i\in I},B) & =B\in\gMod k[T_{i}]_{i\in I}.
\end{align*}
\end{rem}

\begin{example}
\label{exa:duality in graded modules}We assume $I=\left\{ \pt\right\} $.
Let $A=\bigoplus_{a\in\Z^{\left\{ \pt\right\} }}A(a)\in\gMod k[T_{\pt}]$.
Then for $b\in\Z^{\left\{ \pt\right\} }$, we have 
\begin{align*}
\gHom(A,k[T_{\pt}])(b)\cong & \left\{ f\in\Hom(\lim_{a\to\infty}A(a),k)\mid f(A(-b-1))=0\right\} .
\end{align*}
In particular, for $a_{0}\in\Z^{\left\{ \pt\right\} }$, 
\begin{itemize}
\item when $A(a)=A(a_{0})$ for $a\geq a_{0}$, we have $\gHom(A,k[T_{\pt}])(b)=0$
for $b\leq-a_{0}-1$, and 
\item conversely, when $A(a)=0$ for $a\leq a_{0}$, we have 
\begin{align*}
\gHom(A,k[T_{\pt}])(b) & =\Hom(\lim_{a\to\infty}A(a),k)\\
 & =\gHom(A,k[T_{\pt}])(-a_{0}-1)
\end{align*}
for $b\geq-a_{0}-1$. 
\end{itemize}
This kind of duality is used in proof of Poincar\'{e}-Verdier duality
in Subsection \ref{subsec:Intersection-product-and-1}.
\end{example}

We shall also define external tensor product (Definition \ref{def:external tensor product})
which is used to prove K\"{u}nneth formula (Proposition \ref{prop:Kunneth formula-1}).
Let $J$ be a finite set. We put $\pr_{I}\colon I\times J\to I$ the
projection. 
\begin{defn}
We define $\min\pr_{I}\colon\Z^{I\times J}\to\Z^{I}$ by $\min\pr_{I}(b){}_{i}:=\min_{j\in J}\left\{ b_{(i,j)}\right\} $
($i\in I$) for $b=(b_{(i,j)})_{(i,j)\in I\times J}$, and define
$\iota_{I}\colon\Z^{I}\to\Z^{I\times J}$ by $\iota_{I}(a)_{(i,j)}:=a_{i}$
($(i,j)\in I\times J$) for $a=(a_{i})_{i\in I}$. 
\end{defn}

\begin{rem}
\label{rem:minpr =00003Dminpr iota minpr}For $b\in\Z^{I\times J}$,
we have $b\geq\iota_{I}(\min\pr_{I}(b))$ and 
\[
\min\pr_{I}(b)=\min\pr_{I}(\iota_{I}(\min\pr_{I}(b))).
\]
\end{rem}

\begin{defn}
\label{def:minprI and iota}Let $A\in\gMod k[T_{i}]_{i\in I}$, $B\in\gMod k[T_{(i,j)}]_{(i,j)\in I\times J}$. 
\begin{itemize}
\item We define $\min\pr_{I}^{*}A\in\gMod k[T_{(i,j)}]_{(i,j)\in I\times J}$
by 
\[
\min\pr_{I}^{*}A(b):=A(\min\pr_{I}(b))
\]
 ($b\in\Z^{I\times J}$) with natural morphisms. 
\item We define $\iota_{I}^{*}B\in\gMod k[T_{i}]_{i\in I}$ by 
\[
\iota_{I}^{*}B(a):=B(\iota_{I}(a))
\]
 ($a\in\Z^{I}$) with natural morphisms. 
\end{itemize}
\end{defn}

Obviously, the functors $\min\pr_{I}^{*}$ and $\iota_{I}^{*}$ are
exact functors. 
\begin{lem}
\label{lem:external projection formula}Let $A\in\gMod k[T_{i}]_{i\in I}$
and $B\in\gMod k[T_{(i,j)}]_{(i,j)\in I\times J}$. Then we have a
natural isomorphism 
\[
(\min\pr_{I}^{*}A\otimes B)(b^{\circ}+\iota_{I}(a^{\circ}))\cong(A\otimes\iota_{I}^{*}(B[b^{\circ}]_{\gr}))(a^{\circ})
\]
 ($a^{\circ}\in\Z^{I}$, $b^{\circ}\in\Z^{I\times J}$). 
\end{lem}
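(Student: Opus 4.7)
The plan is to exhibit the isomorphism by constructing a natural map $\Phi$ and its explicit inverse $\Psi$, exploiting the two identities $c \geq \iota_I(\min\pr_I(c))$ and $\min\pr_I(c) = \min\pr_I(\iota_I(\min\pr_I(c)))$ recorded in the preceding remark. Unpacking both sides, the left-hand side is the image of
\[
\bigoplus_{c_1 + c_2 = b^\circ + \iota_I(a^\circ)} A(\min\pr_I(c_1)) \otimes_k B(c_2)
\]
in $\min\pr_I^* A \otimes_{k[T_{(i,j)}]} B$ at grade $b^\circ + \iota_I(a^\circ)$, and the right-hand side is the image of
\[
\bigoplus_{a_1 + a_2 = a^\circ} A(a_1) \otimes_k B(\iota_I(a_2) + b^\circ)
\]
in $A \otimes_{k[T_i]} \iota_I^*(B[b^\circ]_{\gr})$ at grade $a^\circ$.

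I would define $\Phi$ from the right-hand side to the left-hand side on a generator $x \otimes y$ with $x \in A(a_1)$ and $y \in B(\iota_I(a_2) + b^\circ)$ by sending it to the same $x \otimes y$, where $x$ is now viewed as an element of grade $\iota_I(a_1)$ of $\min\pr_I^* A$ via the canonical identification $A(a_1) = (\min\pr_I^* A)(\iota_I(a_1))$; the total grade matches since $\iota_I(a_1) + \iota_I(a_2) + b^\circ = \iota_I(a^\circ) + b^\circ$. Well-definedness modulo the tensor relation of the right-hand side will follow from one key compatibility: the action of $T_i$ on $\iota_I^*(B[b^\circ]_{\gr})$ is by construction multiplication by $\prod_{j \in J} T_{(i,j)}$ on $B$, and applying $\prod_{j \in J} T_{(i,j)}$ to an element of $\min\pr_I^* A$ along the path from grade $\iota_I(a_1)$ to $\iota_I(a_1 + e_i)$ decomposes as $|J| - 1$ identity steps (over indices $(i, j')$ that were and remain above the $i$-row minimum) followed by a single step that raises the minimum and acts by $T_i$ on $A$, so the composite equals $T_i$ on $A$; the $k[T_{(i,j)}]$-balanced tensor on the left-hand side then transports this $\prod_j T_{(i,j)}$ across the tensor sign and gives the required equality.

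The inverse $\Psi$ will be defined on a generator $x \otimes y$ of the left-hand side at grades $(c_1, c_2)$ by setting $a_1 := \min\pr_I(c_1)$, $\delta := c_1 - \iota_I(a_1) \geq 0$, and $\Psi(x \otimes y) := x \otimes T^\delta y$ with $T^\delta := \prod_{(i,j)} T_{(i,j)}^{\delta_{(i,j)}}$; the relation $\Phi \circ \Psi = \mathrm{id}$ on generators is then immediate, since along the path $\iota_I(a_1) \to c_1$ every $T_{(i,j)}$ that appears occurs over a coordinate strictly above the $i$-row minimum (there is always a persistent witness $j_i$ with $\delta_{(i,j_i)} = 0$), and hence acts as the identity on $\min\pr_I^* A$. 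I expect the principal obstacle to be verifying that $\Psi$ descends modulo the tensor relation of the left-hand side. This will split into two cases according to whether a given $T_{(i,j)}$-increment preserves or changes $\min\pr_I$: in the former, $\delta$ shifts by $e_{(i,j)}$ and both sides match at once; in the latter, $a_1$ shifts by $e_i$, $\delta$ shifts by $-\sum_{j' \neq j} e_{(i,j')}$ (whose non-negativity is exactly the case hypothesis), and the required equality in the right-hand side reduces via the $T_i$-relation there to the identity $\prod_{j' \neq j} T_{(i,j')} \cdot T_{(i,j)} = \prod_{j'} T_{(i,j')}$, which is trivial. Bijectivity then follows from $\Phi$ and $\Psi$ being mutually inverse on generators.
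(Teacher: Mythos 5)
Your proof is correct and rests on the same two observations as the paper's (Remark \ref{rem:minpr =00003Dminpr iota minpr} together with the fact that the transition maps of $\min\pr_I^*A$ act as the identity along any increment that does not change a row minimum); the paper packages this as a chain of equalities of images inside the tensor product, while you make the isomorphism explicit via mutually inverse maps $\Phi$ and $\Psi$. Your case analysis for the descent of $\Psi$ through the tensor relations is accurate and in fact spells out the compatibility of the two quotient relations that the paper's last step leaves implicit.
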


\begin{proof}
We have 
\begin{align*}
 & (\min\pr_{I}^{*}A\otimes B)(b^{\circ}+\iota_{I}(a^{\circ}))\\
\cong & \sum_{b\in\Z^{I\times J}}\min\pr_{I}^{*}A(b)\otimes B(b^{\circ}+\iota_{I}(a^{\circ})-b)\\
\cong & \sum_{b\in\Z^{I\times J}}\min\pr_{I}^{*}A(\iota_{I}(\min\pr_{I}(b)))\otimes B(b^{\circ}+\iota_{I}(a^{\circ})-\iota_{I}(\min\pr_{I}(b)))\\
\cong & \sum_{a\in\Z^{I}}\min\pr_{I}^{*}A(\iota_{I}(a))\otimes B(b^{\circ}+\iota_{I}(a^{\circ})-\iota_{I}(a))\\
\cong & \sum_{a\in\Z^{I}}A(a)\otimes\iota_{I}^{*}(B[b^{\circ}]_{\gr})(a^{\circ}-a)\\
\cong & (A\otimes\iota_{I}^{*}(B[b^{\circ}]_{\gr}))(a^{\circ}),
\end{align*}
where the second isomorphism follows from Remark \ref{rem:minpr =00003Dminpr iota minpr}.
\end{proof}
\begin{cor}
\label{cor:external flat to flat}The functors $\min\pr_{I}^{*}$
and $\iota_{I}^{*}$ send flat modules to flat modules.
\end{cor}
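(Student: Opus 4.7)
The plan is to reduce both assertions to Lemma \ref{lem:external projection formula} by checking exactness grade by grade. Recall that in a category of $\Z^{I}$-graded modules, kernels and images are computed degree-wise, so a sequence is exact if and only if it is exact in every grade; moreover, the functors $\min\pr_{I}^{*}$, $\iota_{I}^{*}$, and the shifts $-[b]_{\gr}$ are all exact, as noted just after Definition \ref{def:shift and grHom} and Definition \ref{def:minprI and iota}.

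For the first claim, I would assume $A$ is flat and take a short exact sequence $0 \to B_{1} \to B_{2} \to B_{3} \to 0$ in $\gMod k[T_{(i,j)}]_{(i,j) \in I \times J}$. To verify exactness of $\min\pr_{I}^{*} A \otimes B_{\bullet}$ at an arbitrary grade $c \in \Z^{I \times J}$, I would set $b^{\circ} = c$ and $a^{\circ} = 0$ in Lemma \ref{lem:external projection formula}, which yields a natural identification $(\min\pr_{I}^{*} A \otimes B_{l})(c) \cong (A \otimes \iota_{I}^{*}(B_{l}[c]_{\gr}))(0)$. Since $\iota_{I}^{*}$ and the shift $-[c]_{\gr}$ are exact, $\iota_{I}^{*}(B_{\bullet}[c]_{\gr})$ remains short exact in $\gMod k[T_{i}]_{i \in I}$, and flatness of $A$ then delivers exactness of $A \otimes \iota_{I}^{*}(B_{\bullet}[c]_{\gr})$ at every grade, in particular at grade $0$.

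For the second claim, I would specialize Lemma \ref{lem:external projection formula} to $b^{\circ} = 0$, obtaining the natural isomorphism $(A \otimes \iota_{I}^{*} B)(a^{\circ}) \cong (\min\pr_{I}^{*} A \otimes B)(\iota_{I}(a^{\circ}))$ for every $a^{\circ} \in \Z^{I}$. Given a short exact sequence $0 \to A_{1} \to A_{2} \to A_{3} \to 0$ in $\gMod k[T_{i}]_{i \in I}$, exactness of $\min\pr_{I}^{*}$ combined with flatness of $B$ makes $\min\pr_{I}^{*} A_{\bullet} \otimes B$ exact at every grade, hence in particular at the grade $\iota_{I}(a^{\circ})$; transporting through the isomorphism gives exactness of $A_{\bullet} \otimes \iota_{I}^{*} B$ at $a^{\circ}$, and varying $a^{\circ}$ concludes the argument. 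There is no substantive obstacle here; the content is entirely absorbed by Lemma \ref{lem:external projection formula}, and the only care required is the bookkeeping of grades and shifts.
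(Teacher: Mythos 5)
Your proof is correct and follows the same route as the paper, whose entire proof is the single sentence that the corollary follows immediately from Lemma \ref{lem:external projection formula}; you have simply supplied the grade-by-grade bookkeeping that the paper leaves implicit.
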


\begin{proof}
This immediately follows from Lemma \ref{lem:external projection formula}. 
\end{proof}
Corollary \ref{cor:external flat to flat} is used to define external
tensor products for derived categories of sheaves of graded modules
(Subsection \ref{subsec:derived-category of graded sheaves}).
\begin{defn}
\label{def:external tensor product}For $A_{I}\in\gMod k[T_{i}]_{i\in I}$
and $A_{J}\in\gMod k[T_{j}]_{j\in J}$, we put 
\[
A_{I}\XBox A_{J}:=\min\pr_{I}^{*}A_{I}\otimes\min\pr_{J}^{*}A_{J}\in\gMod k[T_{(i,j)}]_{(i,j)\in I\times J},
\]
 the \emph{external tensor product}.
\end{defn}

\subsection{Sheaves of graded modules\label{subsec:Sheaves-of-graded}}

Let $X$ be a topological space. 
\begin{rem}
\label{rem:grMod abelian property}The abelian category $\gMod k[T_{i}]_{i\in I}$
sastisfies nice properties. It has a generator $\bigoplus_{a\in\Z^{I}}k[T_{i}]_{i\in I}[a]_{\gr}\in\gMod k[T_{i}]_{i\in I}$.
By \cite[Section 3.4, Theorem 4.1 and Section 2.8]{PopescuAbelianCategories73},
filtered inductive limits in $\gMod k[T_{i}]_{i\in I}$ preserve exact
sequences. Consequently, by \cite[Theorem 15.3.7]{SchubertCategories72},
every object in $\gMod k[T_{i}]_{i\in I}$ has an injective envelope
. By \cite[Section 3.4, Theorem 4.1 and Section 2.8]{PopescuAbelianCategories73},
the locally small category $\gMod k[T_{i}]_{i\in I}$ also satisfies
the following: for any object $A$ and any set $J$ such that for
each $j\in J$, a dierct set $\{A_{j(i)}\}_{i\in I_{j}}$ of subobjects
of $A$ is given, we have 
\[
\bigcap_{j\in J}(\sum_{i\in I_{j}}A_{j(i)})=\sum_{(j(i))\in\prod_{j\in J}I_{j}}A_{j(i)}.
\]
\end{rem}

We put $\Psh(X,\gMod k[T_{i}]_{i\in I})$ the abelian category of
presheaves of graded $k[T_{i}]_{i\in I}$-modules. By Remark \ref{rem:grMod abelian property},
we have the abelian category of sheaves $\Shv(X,\gMod k[T_{i}]_{i\in I})$
with enough injectives (\cite[Section 4.7]{PopescuAbelianCategories73}).
(See also \cite[Chapter 17]{KashiwaraSchapiraCategoriesandsheaves06}
for sheaf theory.) It is the full subcategory of $\Psh(X,\gMod k[T_{i}]_{i\in I})$
consisting of presheaves $\F$ satisfying the usual axiom:
\[
0\to\F(U)\to\prod_{i\in I}\F(U_{i})\to\prod_{i,j\in I}\F(U_{i}\cap U_{j})
\]
is exact for any open subset $U\subset X$ and open covering $\left\{ U_{i}\right\} _{i\in I}$
of $U$. We also have the natural sheafification functor 
\[
\Psh(X,\gMod k[T_{i}]_{i\in I})\to\Shv(X,\gMod k[T_{i}]_{i\in I}).
\]
We obviously have 
\begin{align*}
\Psh(X,\gMod k[T_{i}]_{i\in I}) & =\Fct(\Z^{I},\Psh(X,\Modu k)),\\
\Shv(X,\gMod k[T_{i}]_{i\in I}) & =\Fct(\Z^{I},\Shv(X,\Modu k)).
\end{align*}
 In particular, we can compute stalks, kernels, images, colimits and
sheafifications in a grade-wise manner, and for $a\in\Z^{I}$, we
have an exact functor 
\begin{align*}
\Shv(X,\gMod k[T_{i}]_{i\in I}) & \to\Shv(X,\Modu k)\\
F & \mapsto F(a).
\end{align*}

When $X$ is a locally closed polyhedral subset (Definition \ref{def:locally closed polyhedral subset})
of a tropical toric variety, we put 
\[
\Shv_{c}(X,\gMod k[T_{i}]_{i\in I})\subset\Shv(X,\gMod k[T_{i}]_{i\in I}).
\]
 the full abelian subcategory of sheaves constructible (Definition
\ref{def:constructible sheaves}) with respect to some polyhedral
stratifications. 

\subsection{Derived categories of sheaves of graded modules\label{subsec:derived-category of graded sheaves}}

Let $X$ and $Y$ be locally closed polyhedral subsets (Definition
\ref{def:locally closed polyhedral subset}) of tropical toric varieties,
and $f\colon X\to Y$ a polyhedrally stratified map (Definition \ref{def:polyhedrally stratified map}).
Let 
\[
D(X,\gMod k[T_{i}]_{i\in I}):=D(\Shv(X,\gMod k[T_{i}]_{i\in I}))
\]
 be the derived category of complexes of sheaves of $\gMod k[T_{i}]_{i\in I}$-modules,
and $D^{b}(X,\gMod k[T_{i}]_{i\in I})$ (resp. $D_{c}^{b}(X,\gMod k[T_{i}]_{i\in I})$)
its full triangulated subcategory of complexes quasi-isomorphic to
bounded complexes (resp. bounded complexes constructible (Definition
\ref{def:constructible complexes of sheaves}) with respect to some
polyhedral stratifications). 

We assume that $Rf_{*},Rf_{!}\colon D^{+}(X,\Modu k)\to D^{+}(Y,\Modu k)$
have finite cohomological dimensions, and that for any polyhedral
stratification of $X$, there is a refinement of it such that $f^{-1}(y)$
($y\in Y$) is compactifiable (Definition \ref{def:compactifiable})
with respect to the restriction of the stratification. Then we have
six functors 
\begin{align*}
f_{*} & \colon D_{c}^{b}(X,\gMod k[T_{i}]_{i\in I})\to D_{c}^{b}(Y,\gMod k[T_{i}]_{i\in I})\\
f^{*} & \colon D_{c}^{b}(Y,\gMod k[T_{i}]_{i\in I})\to D_{c}^{b}(X,\gMod k[T_{i}]_{i\in I})\\
f_{!} & \colon D_{c}^{b}(X,\gMod k[T_{i}]_{i\in I})\to D_{c}^{b}(Y,\gMod k[T_{i}]_{i\in I})\\
f^{!} & \colon D_{c}^{b}(Y,\gMod k[T_{i}]_{i\in I})\to D_{c}^{b}(X,\gMod k[T_{i}]_{i\in I})\\
\overset{\L}{\otimes} & \colon D_{c}^{b}(X,\gMod k[T_{i}]_{i\in I})\times D_{c}^{b}(X,\gMod k[T_{i}]_{i\in I})\to D_{c}^{b}(X,\gMod k[T_{i}]_{i\in I})\\
\RgHoms & \colon D_{c}^{b}(X,\gMod k[T_{i}]_{i\in I})\times D_{c}^{b}(X,\gMod k[T_{i}]_{i\in I})\to D_{c}^{b}(X,\gMod k[T_{i}]_{i\in I})
\end{align*}
(see Section \ref{sec:Appendix.} for details). They satisfies the
usual compatibility. 
\begin{example}
We have $\Fil_{*}^{\Lambda}IC_{\Trop,\geom}^{d-p,*}\in D_{c}^{b}(X,\gMod\Q[T_{S}]_{S\in\Lambda})$
(Subsection \ref{subsec:Definition}) for a tropical variety $X=(\Lambda,w)$
regular at infinity. 
\end{example}

\begin{rem}
For an abelian category $\mathcal{A}$, we put $C(\mathcal{A})$ the
category of (cochain) complexes. We have 
\begin{align*}
C(\Shv(X,\gMod k[T_{i}]_{i\in I})) & =C(\Fct(\Z^{I},\Shv(X,\Modu k)))\\
 & =\Fct(\Z^{I},C(\Shv(X,\Modu k))).
\end{align*}
i.e., a complex $A\in C(\Shv(X,\gMod k[T_{i}]_{i\in I}))$ of sheaves
$A^{j}=\bigoplus_{a\in\Z^{I}}A^{j}(a)\in\Shv(X,\gMod k[T_{i}]_{i\in I})$
($j\in\Z$) can be also considered as 
\[
A=\bigoplus_{a\in\Z^{I}}A(a)\in\Fct(\Z^{I},C(\Shv(X,\Modu k))),
\]
where we put $A(a):=(A^{j}(a))_{j\in\Z}\in C(\Shv(X,\Modu k))$. We
use this notation freely. For $a\in\Z^{I}$, we have a functor 
\begin{align*}
D_{c}^{b}(X,\gMod k[T_{i}]_{i\in I}) & \to D_{c}^{b}(X,\Modu k)\\
F & \mapsto F(a).
\end{align*}
\end{rem}

\begin{notation}
\label{def:basic  on complex of sheaves}Let $A\in D(X,\gMod k[T_{i}]_{i\in I})$.
We put
\end{notation}

\begin{itemize}
\item $\H^{j}(A)\in C(\Shv(X,\gMod k[T_{i}]_{i\in I}))$ is the sheaf of
$j$-th cohomology group,
\item $A[n]_{\deg}\in D(X,\gMod k[T_{i}]_{i\in I})$ the usual shift given
by $(A[n]_{\deg})^{j}=A^{j+n}$ and 
\[
d_{A[n]_{\deg}}:=(-1)^{n}d_{A}\colon(A[n]_{\deg})^{j}\to(A[n]_{\deg})^{j+1},
\]
\item $A[b]_{\gr}\in D(X,\gMod k[T_{i}]_{i\in I})$ the shift of grading
given by $A[b]_{\gr}(a):=A(a+b)$, and 
\item $\tau_{\leq p}A$ and $\tau^{\geq p}A$ the usual truncations given
by 
\begin{align*}
\tau_{\leq p}A^{j} & :=\begin{cases}
0 & (j\geq p+1)\\
\Ker(d\colon A^{p}\to A^{p+1}) & (j=p)\\
A^{j} & (j\leq p-1)
\end{cases}\\
\tau^{\geq p}A^{j} & :=\begin{cases}
A^{j} & (j\geq p+1)\\
A^{p}/\Ima(d\colon A^{p-1}\to A^{p}) & (j=p)\\
0 & (j\leq p-1)
\end{cases}.
\end{align*}
\end{itemize}
Let $J$ be also a finite set. For $A\in\Shv(X,\gMod k[T_{i}]_{i\in I})$,
we put 
\[
\min\pr_{I}^{*}A\in\Shv(X,\gMod k[T_{(i,j)}]_{(i,j)\in I\times J})
\]
 the sheaf given by 
\[
U\mapsto\min\pr_{I}^{*}(A(U))
\]
(Definition \ref{def:minprI and iota}). Since $\min\pr_{I}^{*}$
is exact, it defines a functor 
\[
\min\pr_{I}^{*}\colon D^{b}(X,\gMod k[T_{i}]_{i\in I})\to D^{b}(X,\gMod k[T_{(i,j)}]_{(i,j)\in I\times J}).
\]
Remind that $\min\pr_{I}^{*}$ sends flat modules to flat modules
(Corollary \ref{cor:external flat to flat}). For $A_{I}\in D^{b}(X,\gMod k[T_{i}]_{i\in I})$
and $A_{J}\in D^{b}(Y,\gMod k[T_{j}]_{j\in J})$, using flat resolutions
(Remark \ref{rem:flat resolution of finite length}), we define the
\emph{external tensor product} by
\[
A_{I}\overset{L}{\XBox}A_{J}:=\min\pr_{I}^{*}\pr_{X}^{*}A_{I}\overset{\L}{\otimes}\min\pr_{J}^{*}\pr_{Y}^{*}A_{J}\in D^{b}(X\times Y,\gMod k[T_{(i,j)}]_{(i,j)\in I\times J}),
\]
where $\pr_{Z}\colon X\times Y\to Z$ ($Z=X,Y$) is the projection. 
\begin{lem}
\label{lem:minpr and D are compatible}We have 
\[
\min\pr_{I}^{*}\circ D_{X}=D_{X}\circ\min\pr_{I}^{*}\colon D^{b}(X,\gMod k[T_{i}]_{i\in I})\to D^{b}(X,\gMod k[T_{(i,j)}]_{(i,j)\in I\times J}).
\]
\end{lem}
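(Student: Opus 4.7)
The plan is to reduce the claim to explicit compatibility of $\min\pr_{I}^{*}$ with the ingredients of Verdier duality. Unfolding $D_{X}F=\RgHoms(F,\omega_{X})$ where $\omega_{X}=\pi_{X}^{!}(k[T_{i}]_{i\in I})$ is the dualizing complex constructed in Section \ref{sec:Appendix.}, the argument splits into the three steps below.

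First I would verify the purely combinatorial identity $\min\pr_{I}^{*}(k[T_{i}]_{i\in I})\cong k[T_{(i,j)}]_{(i,j)\in I\times J}$. By Definition \ref{def:minprI and iota}, the grade-$b$ part of the left-hand side is $k[T_{i}]_{i\in I}(\min\pr_{I}(b))$, which equals $k$ precisely when $\min\pr_{I}(b)\geq0$, and this happens if and only if $b\geq0$ in $\Z^{I\times J}$. Second I would upgrade this to $\min\pr_{I}^{*}\omega_{X,I}\cong\omega_{X,I\times J}$: since $\min\pr_{I}^{*}$ is exact and is a pure re-indexing of grades (Definition \ref{def:minprI and iota}), it commutes with the sheaf-level operations ($Rf_{*}$, $Rf_{!}$, shifts, and so on) from which $\pi_{X}^{!}$ is built in the Appendix, so combining with the first step gives the claim.

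Third I would establish the compatibility $\min\pr_{I}^{*}\RgHoms(F,\omega_{X,I})\cong\RgHoms(\min\pr_{I}^{*}F,\min\pr_{I}^{*}\omega_{X,I})$. Taking a K-injective resolution $\omega_{X,I}\to\mathcal{I}^{\bullet}$ and comparing both sides grade by grade, the verification would proceed via the adjunction $\min\pr_{I}^{*}\dashv\iota_{I}^{*}$ (which follows directly from Definition \ref{def:minprI and iota}), together with the two identities $\iota_{I}^{*}\min\pr_{I}^{*}=\id_{\gMod k[T_{i}]_{i\in I}}$ and $\iota_{I}^{*}\omega_{X,I\times J}\cong\omega_{X,I}$ (the latter being another immediate grade check). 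Combining steps two and three yields $\min\pr_{I}^{*}D_{X}F\cong D_{X}\min\pr_{I}^{*}F$ naturally in $F$.

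The main obstacle will be step three: in general $\min\pr_{I}^{*}$ does \emph{not} commute with $\gHoms$ against an arbitrary target, because the $\Z^{I\times J}$-grading of $\min\pr_{I}^{*}F$ is coarser than a naive pullback would suggest (as reflected in Lemma \ref{lem:external projection formula}, where a non-trivial $\iota_{I}^{*}$-twist appears). The resolution is to exploit the very specific structure of $\omega_{X}$: the underlying sheaf of its grade-$a$ piece stabilizes (to the classical dualizing complex) as soon as $a\geq 0$, which is precisely the configuration in which the subtle grade combinatorics of $\min\pr_{I}^{*}$ collapse, and the desired identification can be pushed through by a Yoneda-style argument using the adjunction.
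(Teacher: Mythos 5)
Your overall strategy is sound and, at its core, it is the same reduction the paper makes: the general compatibility of $\min\pr_{I}^{*}$ with $\RgHoms$ fails, and the proof must exploit the special shape of the graded dualizing complex, namely $\omega_{X}[T_{i}]_{i\in I}\cong\bigoplus_{a\in\Z_{\geq0}^{I}}\omega_{X}$ (Lemma \ref{lem:dualizing complex direct sum of dualizing complex}), whose pullback under $\min\pr_{I}^{*}$ is $\bigoplus_{c\in\Z_{\geq0}^{I\times J}}\omega_{X}$ by your step one. Your steps one and two are correct (though the justification of step two should go through Lemma \ref{lem:dualizing complex direct sum of dualizing complex} rather than through a claimed commutation of $\min\pr_{I}^{*}$ with the construction of $f^{!}$, which itself involves a $\gHom$ and is therefore exactly the kind of statement you flag as problematic in step three).

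The gap is in the mechanism you propose for step three. A Yoneda argument using the adjunction $\min\pr_{I}^{*}\dashv\iota_{I}^{*}$ only computes $\Hom$ out of objects in the essential image of $\min\pr_{I}^{*}$, and these do not generate $D^{b}(X,\gMod k[T_{(i,j)}]_{(i,j)\in I\times J})$, so such a comparison cannot pin down the target object; moreover $\Hom(H,\min\pr_{I}^{*}(-))$ does not simplify under this adjunction since $\min\pr_{I}^{*}$ is the \emph{left} adjoint. Likewise, resolving $\omega_{X,I}$ by K-injectives does not help, because $\min\pr_{I}^{*}$ (being a left adjoint whose own left adjoint is a non-filtered Kan extension) need not preserve injectives. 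The correct move — and the one the paper makes — is to resolve the \emph{source}: replace $F$ by a complex $B$ whose terms are direct sums of sheaves $\bigoplus_{a\in\Z_{\geq a_{0}}^{I}}C$, observe that $\min\pr_{I}^{*}$ sends such a term to $\bigoplus_{b\in\Z_{\geq\iota_{I}(a_{0})}^{I\times J}}C$ (so the condition of Lemma \ref{lem:dual of flat complex} is preserved), and then both duals are computed by the \emph{underived} $\gHoms(-,\bigoplus\omega_{X})$. At that point your adjunction does finish the job grade by grade, via the identity $\iota_{I}^{*}\bigl((\bigoplus_{c\in\Z_{\geq0}^{I\times J}}\omega_{X})[b]_{\gr}\bigr)\cong(\bigoplus_{a\in\Z_{\geq0}^{I}}\omega_{X})[\min\pr_{I}(b)]_{\gr}$, which holds precisely because the grade support of the target is the up-set generated by $0$ with identity transition maps — this is the "collapse" you correctly anticipated. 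So the proposal needs this repair to step three (source resolution via Lemma \ref{lem:dual of flat complex} in place of the Yoneda/K-injective device), but with that repair it closes, and is then essentially the paper's argument phrased through the adjunction rather than through a direct grade computation.
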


\begin{proof}
Let $B\in D^{b}(X,\gMod k[T_{i}]_{i\in I})$ be a complex as in Lemma
\ref{lem:dual of flat complex}. Then $\min\pr_{I}^{*}B\in D^{b}(X,\gMod k[T_{(i,j)}]_{(i,j)\in I\times J})$
also satisfies the condition in Lemma \ref{lem:dual of flat complex}.
By Lemma \ref{lem:dual of flat complex}, for $a\in\Z^{I\times J}$,
we have 
\begin{align*}
\min\pr_{I}^{*}(D_{X}(B))(a)= & D_{X}(B)(\min\pr_{I}(a))\\
\cong & \gHoms(B,\bigoplus_{\substack{b\in\Z_{\geq0}^{I}}
}\omega_{X})(\min\pr_{I}(a))\\
\cong & \gHoms(\min\pr_{I}^{*}B,\bigoplus_{\substack{c\in\Z_{\geq0}^{I\times J}}
}\omega_{X})(a)\\
\cong & D(\min\pr_{I}^{*}B)(a),
\end{align*}
where $\omega_{X}\in D^{b}(X,\Modu k)$ is the injective dualizing
complex. Since every complex is quasi-isomorphic to such a $B$, the
assertion holds. 
\end{proof}

\subsection{Derived categories of locally graded sheaves\label{subsec:Localization-by-LF-quasi-isomorp} }

In this subsection, we discuss localizations $D_{\epsilon,LG}^{b}(Y,\gMod k[T_{S}]_{S\in\Lambda})$
($\epsilon=\emptyset,c$) of derived categories of sheaves of graded
modules by \emph{LG-quasi-isomorphisms} (Definition \ref{def:LG q-isomorphisms}),
which are used in our sheaf-theoretic study of tropical intersection
homology groups. To prove a basic property (Corollary \ref{cor:supported-truncation-hom-isomorphism})
of our truncation functor (Definition \ref{def:truncation}) in the
next subsection, we also show that $D_{\epsilon,LG}^{b}(Y,\gMod k[T_{S}]_{S\in\Lambda})$
is equivalent to the derived category of its heart, which is equivalent
to localizations $\Shv_{\epsilon,LG}(Y,\gMod k[T_{S}]_{S\in\Lambda})$
of abelian categories of sheaves of graded modules by \emph{LG-isomorphisms}
(Definition \ref{def:LG isomorphisms}). 

In this subsection, let $X=(\Lambda,w)$ be a tropical variety, and
$Y\subset X$ a locally closed polyhedral subset (Definition \ref{def:locally closed polyhedral subset}). 
\begin{defn}
\label{def:adapted pair}A pair $(L,a)$ of a sequenece $L=(S_{L,1}\subsetneq\dots\subsetneq S_{L,r(L)})$
($S_{L,i}\in\Lambda$, $r(L)\in\Z_{\geq0}$) and $a=(a_{S})_{S\in\Lambda}\in\Z^{\Lambda}$
is called an \emph{adapted} pair if $a_{S}\geq0$ for $S\in\Lambda\setminus\left\{ S_{L,i}\right\} _{i=1}^{r(L)}$.
\end{defn}

\begin{defn}
\label{def:LG q-isomorphisms} Let $A,B\in D_{\epsilon}^{b}(Y,\gMod k[T_{S}]_{S\in\Lambda})$
($\epsilon=\emptyset,c$). A morphism $\varphi\colon A\to B$ in $D_{\epsilon}^{b}(Y,\gMod k[T_{S}]_{S\in\Lambda})$
is an \emph{LG-quasi-isomorphism} (where LG stands for ``locally graded'')
if there are open polyhedral neighborhoods $W_{S}'\subset X$ of $\relint S$
($S\in\Lambda$) such that for an adapted pair $(L,a)$, the morphism
$\varphi$ induces a quasi-isomorphism 
\[
A|_{Y\cap W'_{L}}(a)\cong B|_{Y\cap W'_{L}}(a),
\]
where we put $W'_{L}:=\bigcap_{i=1}^{r(L)}W_{S_{L,i}}'$ We denote
LG-quasi-isomorphisms by $\cong_{LG}$.
\end{defn}

\begin{rem}
In Definition \ref{def:LG q-isomorphisms}, for an adapted pair $(L,a)$
with $r(L)=0$ (i.e., $\left\{ S_{L,i}\right\} _{i=1}^{r(L)}=\emptyset$),
we define the condition as $\varphi(a)\colon A(a)\cong B(a)$ on $Y$. 
\end{rem}

\begin{rem}
\label{rem:LG-q-isom another expression}For $S_{1},S_{2}\in\Lambda$
with $S_{1}\not\subset S_{2}$ and $S_{1}\not\supset S_{2}$, there
exist open polyhedral neighborhoods $W_{S_{i}}'\subset X$ of $\relint S_{i}$
($i=1,2$) such that $W_{S_{1}}'\cap W_{S_{2}}'=\emptyset$. Hence
the condition in Definition \ref{def:LG q-isomorphisms} can be written
as follows: there are open polyhedral neighborhoods $W_{S}'\subset X$
of $\relint S$ ($S\in\Lambda$) such that for any $a\in\Z^{\Lambda}$,
the morphism $\varphi$ induces a quasi-isomorphism 
\[
A|_{Y\cap\underset{\substack{S\in\Lambda\\
a_{S}\leq-1
}
}{\bigcap}W_{S}'}(a)\cong B|_{Y\cap\underset{\substack{S\in\Lambda\\
a_{S}\leq-1
}
}{\bigcap}W_{S}'}(a).
\]
\end{rem}

We introduce a usuful functor. 
\begin{defn}
\label{def:graded restriction to open}Let $i\colon V\hookrightarrow Y$
be an open polyhedral subset. Let $A\in\Shv_{\epsilon}(Y,\gMod k[T_{S}]_{S\in\Lambda})$
($\epsilon=\emptyset,c$) and $S_{0}\in\Lambda$. We define $(i_{!}i^{*})_{S_{0}}A\in\Shv_{\epsilon}(Y,\gMod k[T_{S}]_{S\in\Lambda})$
by 
\[
(i_{!}i^{*})_{S_{0}}A(a):=\begin{cases}
A(a) & (a_{S_{0}}\geq0)\\
i_{!}i^{*}A(a) & (a_{S_{0}}\leq-1)
\end{cases}
\]
 ($a\in\Z^{\Lambda}$) and natural morphisms. 
\end{defn}

\begin{rem}
The functor $(i_{!}i^{*})_{S_{0}}$ is exact, hence it induces a functor
$(i_{!}i^{*})_{S_{0}}$ on the derived category. Note that we have
a natural morphism $(i_{!}i^{*})_{S_{0}}A\to A$. 
\end{rem}

\begin{defn}
\label{def:i_! i^*} Let $A\in\Shv_{\epsilon}(Y,\gMod k[T_{S}]_{S\in\Lambda})$
($\epsilon=\emptyset,c$), and $W_{S}'\subset X$ open polyhedral
neighborhoods of $\relint S$ ($S\in\Lambda$). We put 
\begin{align*}
(i_{!}i^{*})_{(W_{S}')_{S\in\Lambda}}A & :=(i_{W_{S_{\#\Lambda}}'\cap Y!}i_{W_{S_{\#\Lambda}}'\cap Y}^{*})_{S^{\#\Lambda}}\dots(i_{W_{S_{1}}'\cap Y!}i_{W_{S_{1}}'\cap Y}^{*})_{S_{1}}A\\
 & \in\Shv_{\epsilon}(Y,\gMod k[T_{S}]_{S\in\Lambda}),
\end{align*}
 where $i_{W_{S}'\cap Y}\colon W_{S}'\cap Y\hookrightarrow Y$ ($S\in\Lambda$)
is an open immersion, and we fix an ordering $\Lambda=\left\{ S^{1},\dots,S^{\#\Lambda}\right\} $. 
\end{defn}

\begin{rem}
\label{rem:i_!i^* functoriality}The functor $(i_{!}i^{*})_{(W_{S}')_{S\in\Lambda}}$
is exact, hence it induces a functor $(i_{!}i^{*})_{(W_{S}')_{S\in\Lambda}}$
on the derived category. Note that we have a natural morphism $(i_{!}i^{*})_{(W_{S}')_{S\in\Lambda}}A\to A$,
and for a morphism $A\to B$ in $\Shv_{\epsilon}(Y,\gMod k[T_{S}]_{S\in\Lambda})$
or $D_{\epsilon}^{b}(Y,\gMod k[T_{S}]_{S\in\Lambda})$ ($\epsilon=\emptyset,c$),
we have a commutative diagram 
\[
\xymatrix{A\ar[r]^{\varphi} & B\\
(i_{!}i^{*})_{(W_{S}')_{S\in\Lambda}}A\ar[r]^{(i_{!}i^{*})_{(W_{S}')_{S\in\Lambda}}\varphi}\ar[u] & (i_{!}i^{*})_{(W_{S}')_{S\in\Lambda}}B\ar[u]
}
.
\]
\end{rem}

\begin{rem}
\label{rem:property of i_! i^*} Let $\varphi\colon A\to B$ be a
morphism in $D_{\epsilon}^{b}(Y,\gMod k[T_{S}]_{S\in\Lambda})$ ($\epsilon=\emptyset,c$),
and $W_{S}'\subset X$ open polyhedral neighborhoods of $\relint S$
($S\in\Lambda$). 
\begin{itemize}
\item The natural morphism $(i_{!}i^{*})_{(W_{S}')_{S\in\Lambda}}A\to A$
is an LG-quasi-isomorphism. 
\item When $\varphi\colon A\to B$ is an LG-quasi-isomorphism, the morphism
\[
(i_{!}i^{*})_{(W_{S}')_{S\in\Lambda}}\varphi\colon(i_{!}i^{*})_{(W_{S}')_{S\in\Lambda}}A\to(i_{!}i^{*})_{(W_{S}')_{S\in\Lambda}}B
\]
 is also an LG-quasi-isomorphism. Moreover, in this case, for sufficiently
small $W_{S}'$ ($S\in\Lambda$), the morphism $(i_{!}i^{*})_{(W_{S}')_{S\in\Lambda}}\varphi$
is a quasi-isomorphism. 
\end{itemize}
\end{rem}

The system $S_{\epsilon,LGq}$ ($\epsilon=\emptyset,c$) of all LG-quasi-isomorphisms
in $D_{\epsilon}^{b}(Y,\gMod k[T_{S}]_{S\in\Lambda})$ is the union
of the systems $S_{\epsilon,LGq,W'}$ (with respect to $W'=(W_{S}')_{S\in\Lambda}$)
arising from cohomological functors 
\begin{align*}
 & \prod_{L=(S_{L,1}\subsetneq\dots\subsetneq S_{L,r(L)})}\prod_{\substack{a\in\Z^{\Lambda}\\
a_{S}\geq0\ (S\in\Lambda\setminus\left\{ S_{L,i}\right\} _{i=1}^{r(L)})
}
}\prod_{j\in\Z}\H^{j}(-|_{Y\cap W'_{L}}(a))\\
\colon & D_{\epsilon}^{b}(Y,\gMod k[T_{S}]_{S\in\Lambda})\to\prod_{L=(S_{L,1}\subsetneq\dots\subsetneq S_{L,r(L)})}\prod_{\substack{a\in\Z^{\Lambda}\\
a_{S}\geq0\ (S\in\Lambda\setminus\left\{ S_{L,i}\right\} _{i=1}^{r(L)})
}
}\prod_{j\in\Z}\Shv_{\epsilon}(Y\cap W'_{L},\Modu k).
\end{align*}
The systems $S_{\epsilon,LGq,W'}$ are multiplicative, and their localizations
are triangulated (\cite[Proposition 10.4.1]{WeibelAnintroductiontohomologicalalgebra1984}).
The proof ({[}loc.cit.{]}) of this fact shows that the system $S_{\epsilon,LGq}$
is also multiplicative, its localization $S_{\epsilon,LGq}^{-1}D_{\epsilon}^{b}(Y,\gMod k[T_{S}]_{S\in\Lambda})$
is a triangulated category, and the canonical functor 
\[
D_{\epsilon}^{b}(Y,\gMod k[T_{S}]_{S\in\Lambda})\to D_{\epsilon,LG}^{b}(Y,\gMod k[T_{S}]_{S\in\Lambda}):=S_{\epsilon,LGq}^{-1}D_{\epsilon}^{b}(Y,\gMod k[T_{S}]_{S\in\Lambda})
\]
 is a morphism of triangulated categories. 
\begin{rem}
A priori, the localization $D_{\epsilon,LG}^{b}(Y,\gMod k[T_{S}]_{S\in\Lambda})$
exists in a larger universe, but in fact it exists in our universe.
This can be proven in the almost same way as the case of the system
of quasi-isomorphisms (\cite[Proposition 10.4.4]{WeibelAnintroductiontohomologicalalgebra1984}),
using Remark \ref{rem:i_!i^* functoriality}. We omit details. 
\end{rem}

\begin{rem}
By Remark \ref{rem:property of i_! i^*}, for a complex $A\in D_{c}^{b}(Y,\gMod k[T_{S}]_{S\in\Lambda})$,
a complex $B\in D^{b}(Y,\gMod k[T_{S}]_{S\in\Lambda})$ and an LG-quasi-isomorphism
$\varphi\colon B\to A$, the complex $(i_{!}i^{*})_{(W_{S}')_{S\in\Lambda}}B$
is in $D_{c}^{b}(Y,\gMod k[T_{S}]_{S\in\Lambda})$ for some $(W_{S}')_{S\in\Lambda}$.
Since the composition $(i_{!}i^{*})_{(W_{S}')_{S\in\Lambda}}B\to B\to^{\varphi}A$
is also an LG-quasi-isomorphism, by \cite[Lemma 10.3.13.2]{WeibelAnintroductiontohomologicalalgebra1984},
the full triangulated subcategory $D_{c}^{b}(Y,\gMod k[T_{S}]_{S\in\Lambda})$
of $D^{b}(Y,\gMod k[T_{S}]_{S\in\Lambda})$ is localizing for $S_{LGq}$,
and 
\[
S_{c,LGq}=S_{LGq}\cap D_{c}^{b}(Y,\gMod k[T_{S}]_{S\in\Lambda}).
\]
\end{rem}

\begin{rem}
\label{rem:some functors on D_LG}For a locally closed polyhedral
subset $h\colon Z\hookrightarrow Y$, since functors $Rh_{*},h^{*},Rh_{!},h^{!}$
are given in a grade-wise manner (Remark \ref{rem: 3 functors are graded-wise}
and Definition \ref{def:h^! locally closed map}), they preserve LG-quasi-isomorphisms,
and hence induce functors on the localizations. 
\end{rem}

The usual truncation functors $\tau_{\leq r}$ and $\tau^{\geq r}$
($r\in\Z$) on $D_{\epsilon}^{b}(Y,\gMod k[T_{S}]_{S\in\Lambda})$
($\epsilon=\emptyset,c$) induce truncation functors $\tau_{\leq r}$
and $\tau^{\geq r}$ on $D_{\epsilon,LG}^{b}(Y,\gMod k[T_{S}]_{S\in\Lambda})$. 
\begin{defn}
\label{def:t-structure on LG-localization}We put 
\[
D_{\epsilon,LG}^{b,?r}(Y,\gMod k[T_{S}]_{S\in\Lambda})\subset D_{\epsilon,LG}^{b}(Y,\gMod k[T_{S}]_{S\in\Lambda})
\]
($\epsilon=\emptyset,c$ and $?=\leq,\geq$) the essentail image of
$D_{\epsilon}^{b,?r}(Y,\gMod k[T_{S}]_{S\in\Lambda})$ under the natural
functor. This is a t-structure on $D_{\epsilon,LG}^{b}(Y,\gMod k[T_{S}]_{S\in\Lambda})$.
This easily follows from the following Lemma \ref{lem:natural t-structure on LG-localization}. 
\end{defn}

\begin{lem}
\label{lem:natural t-structure on LG-localization} For $A\in D_{\epsilon,LG}^{b,\leq-1}(Y,\gMod k[T_{S}]_{S\in\Lambda})$
and $B\in D_{\epsilon,LG}^{b,\geq0}(Y,\gMod k[T_{S}]_{S\in\Lambda})$,
we have 
\[
\Hom_{D_{\epsilon,LG}^{b}(Y,\gMod k[T_{S}]_{S\in\Lambda})}(A,B)=0.
\]
\end{lem}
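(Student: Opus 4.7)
The plan is to mimic the standard argument that the natural $t$-structure on a derived category gives orthogonality, but carry it out carefully through the roof calculus of the localization. First I would observe that the truncation functors $\tau_{\leq -1}$ and $\tau^{\geq 0}$ on $D_{\epsilon}^{b}(Y,\gMod k[T_{S}]_{S\in\Lambda})$ descend to $D_{\epsilon,LG}^{b}(Y,\gMod k[T_{S}]_{S\in\Lambda})$. Indeed, by Remark \ref{rem:some functors on D_LG} the operations of restricting to an open polyhedral subset $V\subset Y$ and of extracting the grade component at $a\in\Z^{\Lambda}$ are exact and preserve LG-quasi-isomorphisms; since $\tau_{\leq-1}$ and $\tau^{\geq 0}$ commute with these operations and send quasi-isomorphisms to quasi-isomorphisms, any LG-quasi-isomorphism in $D_{\epsilon}^{b}(Y,\gMod k[T_{S}]_{S\in\Lambda})$ remains one after applying $\tau_{\leq-1}$ or $\tau^{\geq 0}$.

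Next, by Definition \ref{def:t-structure on LG-localization} I would pick lifts $A_{0}\in D_{\epsilon}^{b,\leq-1}$ and $B_{0}\in D_{\epsilon}^{b,\geq 0}$ representing $A$ and $B$. Any morphism $A\to B$ in $D_{\epsilon,LG}^{b}(Y,\gMod k[T_{S}]_{S\in\Lambda})$ is given by a roof
\[
A_{0}\xleftarrow{\;s\;}C\xrightarrow{\;f\;}B_{0}
\]
in $D_{\epsilon}^{b}(Y,\gMod k[T_{S}]_{S\in\Lambda})$ with $s$ an LG-quasi-isomorphism. Applying $\tau^{\geq 0}$ to this roof, using that $\tau^{\geq 0}A_{0}=0$ in $D_{\epsilon}^{b}$, I obtain an LG-quasi-isomorphism $\tau^{\geq 0}s\colon \tau^{\geq 0}C\to 0$, so $\tau^{\geq 0}C\cong_{LG}0$. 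From the standard distinguished triangle $\tau_{\leq-1}C\to C\to\tau^{\geq 0}C\to$, it follows (triangulated structure of the localization) that the canonical morphism $\tau_{\leq-1}C\to C$ is an LG-quasi-isomorphism.

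Therefore the original roof is equivalent in $D_{\epsilon,LG}^{b}(Y,\gMod k[T_{S}]_{S\in\Lambda})$ to
\[
A_{0}\xleftarrow{\;s\circ\iota\;}\tau_{\leq-1}C\xrightarrow{\;f\circ\iota\;}B_{0},
\]
where $\iota\colon\tau_{\leq-1}C\to C$ is the canonical map. But in the underlying unlocalized category we have $\tau_{\leq-1}C\in D_{\epsilon}^{b,\leq-1}$ and $B_{0}\in D_{\epsilon}^{b,\geq 0}$, and the usual $t$-structure orthogonality gives $f\circ\iota=0$ in $D_{\epsilon}^{b}(Y,\gMod k[T_{S}]_{S\in\Lambda})$. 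Hence the roof vanishes in the localization, proving the lemma.

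The only delicate point is the first paragraph: one must check that $\tau_{\leq-1}$ and $\tau^{\geq 0}$ are compatible with the definition of LG-quasi-isomorphism (Definition \ref{def:LG q-isomorphisms}), which asks for quasi-isomorphisms after restricting to intersections $Y\cap W_{L}'$ and evaluating at grades $a$ adapted to a chain $L$. This compatibility is clean because both operations in the definition (open restriction and grade evaluation) are exact functors into $D_{\epsilon}^{b}(Y\cap W'_{L},\Modu k)$, and the truncations in the target commute with them; everything else is a routine manipulation of the roof calculus, so I do not foresee an obstacle beyond this bookkeeping.
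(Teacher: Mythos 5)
Your proof is correct and follows essentially the same route as the paper: both arguments replace the apex $C$ of the roof by $\tau_{\leq-1}C$, observe that $\tau_{\leq-1}C\to C\to A$ remains an LG-quasi-isomorphism, and then invoke the ordinary $t$-structure orthogonality $\Hom_{D_{\epsilon}^{b}}(\tau_{\leq-1}C,B)=0$. Your detour through $\tau^{\geq0}C\cong_{LG}0$ and the truncation triangle is just a slightly longer way of justifying the step the paper states directly.
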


\begin{proof}
Let $\varphi\colon C\to A$ be an LG-quasi-isomorphism in $D_{\epsilon}^{b}(Y,\gMod k[T_{S}]_{S\in\Lambda})$.
Then the composition $\tau_{\leq-1}C\to C\to^{\varphi}A$ is also
an LG-quasi-isomorphism. Since
\[
\Hom_{D_{\epsilon}^{b}(Y,\gMod k[T_{S}]_{S\in\Lambda})}(\tau_{\leq-1}C,B)=0,
\]
the assertion holds.
\end{proof}
We put 
\[
\H(D_{\epsilon,LG}^{b}(Y,\gMod k[T_{S}]_{S\in\Lambda})):=D_{\epsilon,LG}^{b,\leq0}(Y,\gMod k[T_{S}]_{S\in\Lambda})\cap D_{\epsilon,LG}^{b,\geq0}(Y,\gMod k[T_{S}]_{S\in\Lambda})
\]
 the heart, which is an abelian category. 

We shall give a description of $\H(D_{\epsilon,LG}^{b}(Y,\gMod k[T_{S}]_{S\in\Lambda}))$
(up to equivalences of categories), and shall show that $D_{\epsilon,LG}^{b}(Y,\gMod k[T_{S}]_{S\in\Lambda})$
is equivalent to its bounded derived category. 

\begin{defn}
\label{def:LG isomorphisms} Let $A,B\in\Shv_{\epsilon}(Y,\gMod k[T_{S}]_{S\in\Lambda})$
($\epsilon=\emptyset,c$). A morphism $\varphi\colon A\to B$ in $\Shv_{\epsilon}(Y,\gMod k[T_{S}]_{S\in\Lambda})$
is an \emph{LG-isomorphism} (where LG stands for ``locally graded'')
if there are open polyhedral neighborhoods $W_{S}'\subset X$ of $\relint S$
($S\in\Lambda$) such that for an adapted pair $(L,a)$, the morphism
$\varphi$ induces an isomorphism 
\[
A|_{Y\cap W'_{L}}(a)\cong B|_{Y\cap W'_{L}}(a).
\]
We denote LG-isomorphisms by $\cong_{LG}$.
\end{defn}

\begin{rem}
\label{rem:LG-isom another expression}Similarly to Remark \ref{rem:LG-q-isom another expression},
the condition in Definition \ref{def:LG isomorphisms} can be written
as follows: there are open polyhedral neighborhoods $W_{S}'\subset X$
of $\relint S$ ($S\in\Lambda$) such that for any $a\in\Z^{\Lambda}$,
the morphism $\varphi$ induces an isomorphism 
\[
A|_{Y\cap\underset{\substack{S\in\Lambda\\
a_{S}\leq-1
}
}{\bigcap}W_{S}'}(a)\cong B|_{Y\cap\underset{\substack{S\in\Lambda\\
a_{S}\leq-1
}
}{\bigcap}W_{S}'}(a).
\]
\end{rem}

\begin{rem}
\label{rem:property of i_! i^* LG-isom} Let $\varphi\colon A\to B$
be a morphism in $\Shv_{\epsilon}(Y,\gMod k[T_{S}]_{S\in\Lambda})$
($\epsilon=\emptyset,c$), and $W_{S}'\subset X$ open polyhedral
neighborhoods of $\relint S$ ($S\in\Lambda$). 
\begin{itemize}
\item The natural morphism $(i_{!}i^{*})_{(W_{S}')_{S\in\Lambda}}A\to A$
is an LG-isomorphism. 
\item When $\varphi\colon A\to B$ is an LG-isomorphism, the morphism 
\[
(i_{!}i^{*})_{(W_{S}')_{S\in\Lambda}}\varphi\colon(i_{!}i^{*})_{(W_{S}')_{S\in\Lambda}}A\to(i_{!}i^{*})_{(W_{S}')_{S\in\Lambda}}B
\]
 is also an LG-isomorphism. Moreover, in this case, for sufficiently
small $W_{S}'$ ($S\in\Lambda$), the morphism $(i_{!}i^{*})_{(W_{S}')_{S\in\Lambda}}\varphi$
is an isomorphism. 
\end{itemize}
\end{rem}

We put $\N_{\epsilon,LG}\subset\Shv_{\epsilon}(Y,\gMod k[T_{S}]_{S\in\Lambda})$
($\epsilon=\emptyset,c$) the abelian full subcategory of objects
$A$ LG-isomorphic to $0$. This is a Serre subcategory (i.e., closed
under subobjects, quotients, and extensions). The system $S_{\epsilon,LG}$
of all LG-isomorphisms in $\Shv_{\epsilon}(Y,\gMod k[T_{S}]_{S\in\Lambda})$
($\epsilon=\emptyset,c$) is the set of morphisms whose kernels and
cokernels are in $\N_{\epsilon,LG}$. Hence $S_{\epsilon,LG}$ is
a multiplicative system (\cite[Exercise 10.3.2.1]{WeibelAnintroductiontohomologicalalgebra1984}).
It is locally small by Remark \ref{rem:property of i_! i^* LG-isom},
hence by \cite[Lemma 10.3.13.2]{WeibelAnintroductiontohomologicalalgebra1984},
the localization 
\[
\Shv_{\epsilon,LG}(Y,\gMod k[T_{S}]_{S\in\Lambda}):=S_{\epsilon,LG}^{-1}\Shv_{\epsilon}(Y,\gMod k[T_{S}]_{S\in\Lambda})
\]
 exists in our universe. It is abelian and the natural functor 
\[
\Shv_{\epsilon}(Y,\gMod k[T_{S}]_{S\in\Lambda})\to\Shv_{\epsilon,LG}(Y,\gMod k[T_{S}]_{S\in\Lambda})
\]
 is exact (\cite[Exercise 10.3.2.4]{WeibelAnintroductiontohomologicalalgebra1984}). 
\begin{rem}
\label{rem:LG-morphism the same} By Remark \ref{rem:property of i_! i^* LG-isom},
every morphism $\varphi\colon A\to B$ in $\Shv_{\epsilon,LG}(Y,\gMod k[T_{S}]_{S\in\Lambda})$
is given by a morphism 
\[
f\colon(i_{!}i^{*})_{(W_{S}')_{S\in\Lambda}}A\to(i_{!}i^{*})_{(W_{S}')_{S\in\Lambda}}B
\]
 in $\Shv_{\epsilon}(Y,\gMod k[T_{S}]_{S\in\Lambda})$ (and natural
LG-isomorphisms $(i_{!}i^{*})_{(W_{S}')_{S\in\Lambda}}C\to C$ ($C=A,B$))
for some open polyhedral neighborhoods $W_{S}'\subset X$ of $\relint S$
($S\in\Lambda$). Moreover, the morphism $f$ is unique up to the
choice of $(W_{S}')_{S\in\Lambda}$.
\end{rem}

For $r\in\Z$, we put 
\[
\H^{r}:=\tau^{\geq r}\tau_{\leq r}\colon D_{\epsilon,LG}^{b}(Y,\gMod k[T_{S}]_{S\in\Lambda})\to\H(D_{\epsilon,LG}^{b}(Y,\gMod k[T_{S}]_{S\in\Lambda})).
\]

\begin{rem}
\label{lem:heart and LG-isom} Obviously, the functor $\H^{r}$ induces
a functor 
\[
\H^{r}\colon D_{\epsilon,LG}^{b}(Y,\gMod k[T_{S}]_{S\in\Lambda})\to\Shv_{\epsilon,LG}(Y,\gMod k[T_{S}]_{S\in\Lambda}).
\]
It is easy to see that 
\[
\H^{0}\colon\H(D_{\epsilon,LG}^{b}(Y,\gMod k[T_{S}]_{S\in\Lambda}))\to\Shv_{\epsilon,LG}(Y,\gMod k[T_{S}]_{S\in\Lambda})
\]
is an equivalence of category. A natural inclusion 
\[
\Shv_{\epsilon,LG}(Y,\gMod k[T_{S}]_{S\in\Lambda})\to\H(D_{\epsilon,LG}^{b}(Y,\gMod k[T_{S}]_{S\in\Lambda}))
\]
 is a quasi-inverse of it.
\end{rem}

\begin{lem}
\label{lem:LG-isom and LG-q-isom} For $\epsilon=\emptyset,c$, there
is an equivalence 
\[
D^{b}(\Shv_{\epsilon,LG}(Y,\gMod k[T_{S}]_{S\in\Lambda}))\to D_{\epsilon,LG}^{b}(Y,\gMod k[T_{S}]_{S\in\Lambda})
\]
 of categories which is t-exact and triangulated. 
\end{lem}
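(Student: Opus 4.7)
The plan is to exhibit the desired equivalence as a pair of quasi-inverse triangulated functors. First I would construct $\Psi\colon D_{\epsilon,LG}^{b}(Y,\gMod k[T_{S}]_{S\in\Lambda})\to D^{b}(\Shv_{\epsilon,LG}(Y,\gMod k[T_{S}]_{S\in\Lambda}))$ as follows: the Serre localization $Q\colon \Shv_{\epsilon}(Y,\gMod k[T_{S}]_{S\in\Lambda})\to \Shv_{\epsilon,LG}(Y,\gMod k[T_{S}]_{S\in\Lambda})$ is an exact functor of abelian categories, so it induces a triangulated t-exact functor $Q^{D}\colon D_{\epsilon}^{b}\to D^{b}(\Shv_{\epsilon,LG})$. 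For an LG-quasi-isomorphism $\varphi$ in $D_{\epsilon}^{b}$, each cohomology sheaf $\H^{r}(\varphi)$ is an LG-isomorphism (since taking $\H^{r}$ commutes with the exact operations of restriction to $Y\cap W'_{L}$ and passage to grade $a$), hence an isomorphism in $\Shv_{\epsilon,LG}$. Thus $Q^{D}$ sends LG-quasi-isomorphisms to quasi-isomorphisms and factors through the desired $\Psi$, which is triangulated and t-exact by construction.

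For the inverse $\Phi\colon D^{b}(\Shv_{\epsilon,LG})\to D_{\epsilon,LG}^{b}$, I would proceed by lifting bounded complexes via Remark \ref{rem:LG-morphism the same}. Given $(A^{\bullet},d^{\bullet})$ bounded in $\Shv_{\epsilon,LG}$, pick representatives $\tilde{A}^{n}\in \Shv_{\epsilon}$. Since only finitely many degrees are involved and $\Lambda$ is finite, that remark (together with Remark \ref{rem:property of i_! i^* LG-isom} applied both to the differentials $d^{n}$ and to the relations $d^{n+1}\circ d^{n}=0$) provides open polyhedral neighborhoods $W'_{S}\subset X$ of $\relint S$, chosen uniformly in $n$, such that each $d^{n}$ lifts to an honest morphism $\tilde{d}^{n}\colon(i_{!}i^{*})_{(W'_{S})_{S\in\Lambda}}\tilde{A}^{n}\to(i_{!}i^{*})_{(W'_{S})_{S\in\Lambda}}\tilde{A}^{n+1}$ in $\Shv_{\epsilon}$ and the relation $\tilde{d}^{n+1}\circ\tilde{d}^{n}=0$ actually holds after possibly shrinking the $W'_{S}$ finitely many times. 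The resulting bounded complex in $\Shv_{\epsilon}$ gives an object of $D_{\epsilon,LG}^{b}$, independent of the choices up to canonical LG-quasi-isomorphism. Chain maps and chain homotopies are lifted analogously, yielding a well-defined triangulated t-exact functor $\Phi$.

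The quasi-inverse property is then checked by observing that, via the equivalence of Remark \ref{lem:heart and LG-isom}, the restrictions of $\Psi$ and $\Phi$ to the hearts are mutually inverse. Since both functors are t-exact and both target t-structures are bounded (Definition \ref{def:t-structure on LG-localization}), an induction on cohomological amplitude using the distinguished triangles $\tau_{\leq n-1}A\to\tau_{\leq n}A\to \H^{n}(A)[-n]$ yields natural isomorphisms $\Phi\circ\Psi\cong \id$ and $\Psi\circ\Phi\cong\id$.

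The main obstacle will be making the construction of $\Phi$ genuinely functorial, in particular achieving the uniform choice of neighborhoods $W'_{S}$ when simultaneously lifting chain maps, compositions of chain maps, and chain homotopies between them. The finiteness of $\Lambda$ and the exactness and compatibility properties of $(i_{!}i^{*})_{(W'_{S})_{S\in\Lambda}}$ recorded in Remarks \ref{rem:i_!i^* functoriality} and \ref{rem:property of i_! i^* LG-isom} should handle this, but the bookkeeping is delicate and must be done with care so that the ambiguity in the choice of $W'_{S}$ collapses to a canonical isomorphism in $D^{b}_{\epsilon,LG}$. Once $\Phi$ is established, t-exactness of the equivalence and its compatibility with hearts (Remark \ref{lem:heart and LG-isom}) are automatic from the construction.
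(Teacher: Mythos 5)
Your proposal is correct and its core — lifting bounded complexes and morphisms from $\Shv_{\epsilon,LG}$ to honest complexes in $\Shv_{\epsilon}$ via $(i_{!}i^{*})_{(W'_{S})_{S\in\Lambda}}$ and Remark \ref{rem:LG-morphism the same}, using finiteness of $\Lambda$ and boundedness to shrink neighborhoods uniformly — is exactly the paper's construction of $\Phi$. Your explicit quasi-inverse $\Psi$ (induced by the exact localization functor, which sends LG-quasi-isomorphisms to quasi-isomorphisms on cohomology sheaves) and the verification on hearts merely flesh out what the paper dismisses as "similar reasoning," so the two arguments are essentially the same.
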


\begin{proof}
We construct a functor
\[
\Phi\colon C^{b}(\Shv_{\epsilon,LG}(Y,\gMod k[T_{S}]_{S\in\Lambda}))\to D_{\epsilon,LG}^{b}(Y,\gMod k[T_{S}]_{S\in\Lambda})
\]
 as follows, where $C^{b}(\Shv_{\epsilon,LG}(Y,\gMod k[T_{S}]_{S\in\Lambda}))$
is the category of bounded complexes. Let $A=(A^{j})_{j\in\Z}\in C^{b}(\Shv_{\epsilon,LG}(Y,\gMod k[T_{S}]_{S\in\Lambda}))$.
Since $A$ is bounded, by Remark \ref{rem:LG-morphism the same},
there exist open polyhedral neighborhoods $W_{A,S}'\subset X$ of
$\relint S$ ($S\in\Lambda$) such that the morphisms $d\colon A^{j}\to A^{j+1}$
in $\Shv_{\epsilon,LG}(Y,\gMod k[T_{S}]_{S\in\Lambda})$ is given
by morphisms 
\[
(i_{!}i^{*})_{(W_{A,S}')_{S\in\Lambda}}A^{j}\to(i_{!}i^{*})_{(W_{A,S}')_{S\in\Lambda}}A^{j+1}
\]
 in $\Shv_{\epsilon}(Y,\gMod k[T_{S}]_{S\in\Lambda})$. Hence 
\[
(i_{!}i^{*})_{(W_{A,S}')_{S\in\Lambda}}A:=((i_{!}i^{*})_{(W_{A,S}')_{S\in\Lambda}}A^{j})_{j\in\Z}
\]
is a complex in $\Shv_{\epsilon}(Y,\gMod k[T_{S}]_{S\in\Lambda})$.
We fix such a $(W_{A,S}')_{S\in\Lambda}$ for each $A$, and put $\Phi(A):=(i_{!}i^{*})_{(W_{A.S}')_{S\in\Lambda}}A$.
Let $A\to B$ be a morphism in $C^{b}(\Shv_{\epsilon,LG}(Y,\gMod k[T_{S}]_{S\in\Lambda}))$.
Then by Remark \ref{rem:LG-morphism the same}, we have a morphism
\[
(i_{!}i^{*})_{(W_{S}'')_{S\in\Lambda}}A\to(i_{!}i^{*})_{(W_{S}'')_{S\in\Lambda}}B
\]
 in $C^{b}(\Shv_{\epsilon}(Y,\gMod k[T_{S}]_{S\in\Lambda}))$ for
some open polyhedral neighborhoods $W_{S}''\subset X$ of $\relint S$
($S\in\Lambda$). Since natural morphisms 
\[
(i_{!}i^{*})_{(W_{S}'')_{S\in\Lambda}}C\to(i_{!}i^{*})_{(W_{C,S}')_{S\in\Lambda}}C
\]
 ($C=A,B$) are LG-quasi-isomorphisms, the functor $\Phi$ is well-defined.
By Remark \ref{rem:property of i_! i^* LG-isom} and similar reasoning,
we can see that $\Phi$ induces the required equivalence of triangulated
categories. 
\end{proof}

\subsection{A finiteness condition \label{subsec: locally graded sheaves Verdier-duality}}

In this subsection, we discuss certain finiteness condition on grade.
We also introduce $L$-local isomorphisms. Let $X=(\Lambda,w)$ be
a tropical variety regular at infinity, and $Y\subset X$ a locally
closed polyhedral subset. We put 
\[
\gr(X):=\sum_{S\in\Lambda}(\dim S+2\dim\sigma_{S})e_{S}\in\Z^{\Lambda},
\]
where $e_{S}\in\Z^{\Lambda}$ is the element whose $S$-component
is $1$ and other components are $0$. Let $I\subset\Lambda$ be a
subset. We put $-|_{I}\colon\Z^{\Lambda}\to\Z^{I}$ the projection.
Let $\epsilon=\emptyset,c$. 
\begin{defn}
\label{def:condition A} We put 
\begin{align*}
\Shv_{\epsilon,[-\gr(X)|_{I},0]}(Y,\gMod k[T_{S}]_{S\in I}) & \subset\Shv_{\epsilon}(Y,\gMod k[T_{S}]_{S\in I})\\
(\text{resp. }D_{\epsilon,[-\gr(X)|_{I},0]}^{b}(Y,\gMod k[T_{S}]_{S\in I}) & \subset D_{\epsilon}^{b}(Y,\gMod k[T_{S}]_{S\in I}))
\end{align*}
 the full abelian (resp. triangulated) subcategory consisting of objects
$E$ such that 
\begin{itemize}
\item for each $S\in I$ and $a\in\Z^{I}$ with $a_{S}\geq0$, the morphism
$E(a)\to E(a+e_{S})$ is an isomorphism (resp. a quasi-isomorphism),
and 
\item $E(a)\cong0$ for $a\in\Z^{I}$ with $-\gr(X)|_{I}\not\leq a$. 
\end{itemize}
\end{defn}

\begin{defn}
\label{def:restriction to =00005B-gr(X),0=00005D locus} Let $E$
be a sheaf (resp. a complex of sheaves) in $\Shv_{\epsilon}(Y,\gMod k[T_{S}]_{S\in I})$.
We put $E|_{[-\gr(X)|_{I},0]}$ the sheaf (resp. the complex of sheaves)
in $\Shv_{\epsilon}(Y,\gMod k[T_{S}]_{S\in I})$ such that 
\[
E|_{[-\gr(X)|_{I},0]}(a):=\begin{cases}
E((\min\left\{ 0,a_{S}\right\} )_{S\in I}) & (-\gr(X)|_{I}\leq a)\\
0 & (-\gr(X)|_{I}\not\leq a)
\end{cases}
\]
($a=(a_{S})_{S\in I}\in\Z^{I}$) and the morphisms between $E|_{[-\gr(X)|_{I},0]}(a)$
($a=(a_{S})_{S\in I}\in\Z^{I}$) are the natural ones, where $(\min\left\{ 0,a_{S}\right\} )_{S\in I}\in\Z^{I}$
is the element whose $S$-component is $\min\left\{ 0,a_{S}\right\} $. 
\end{defn}

\begin{rem}
\label{rem:on restriction to =00005B-gr(X),0=00005D locus} 
\begin{itemize}
\item By definition, for $-\gr(X)|_{I}\leq a\leq0$, we have $E|_{[-\gr(X)|_{I},0]}(a)=E(a)$. 
\item We have a natural morphism $E|_{[-\gr(X)|_{I},0]}\to E$. 
\item The functor 
\begin{align*}
-|_{[-\gr(X)|_{I},0]}\colon\Shv_{\epsilon}(Y,\gMod k[T_{S}]_{S\in I}) & \to\Shv_{\epsilon,[-\gr(X)|_{I},0]}(Y,\gMod k[T_{S}]_{S\in I})\\
(\text{resp. }-|_{[-\gr(X)|_{I},0]}\colon D_{\epsilon}^{b}(Y,\gMod k[T_{S}]_{S\in I}) & \to D_{\epsilon,[-\gr(X)|_{I},0]}^{b}(Y,\gMod k[T_{S}]_{S\in I}))
\end{align*}
is an exact functor (resp. a functor of triangulated categories),
whose restriction to $\Shv_{\epsilon,[-\gr(X)|_{I},0]}(Y,\gMod k[T_{S}]_{S\in I})$
(resp. $D_{\epsilon,[-\gr(X)|_{I},0]}^{b}(Y,\gMod k[T_{S}]_{S\in I})$)
is an equivalence. 
\end{itemize}
\end{rem}

Let 
\begin{align*}
A & \in K^{b}(\Shv_{\epsilon,[-\gr(X)|_{I},0]}(Y,\gMod k[T_{S}]_{S\in I})),\\
B & \in K^{b}(\Shv_{\epsilon}(Y,\gMod k[T_{S}]_{S\in I})),
\end{align*}
and $B\to A$ a quasi-isomorphism. Then the composition $B|_{[-\gr(X)|_{I},0]}\to B\to A$
is also a quasi-isomorphism. Hence by \cite[Lemma 10.3.13.2]{WeibelAnintroductiontohomologicalalgebra1984},
the full triangulated subcategory 
\[
K^{b}(\Shv_{\epsilon,[-\gr(X)|_{I},0]}(Y,\gMod k[T_{S}]_{S\in I}))\subset K^{b}(\Shv_{\epsilon}(Y,\gMod k[T_{S}]_{S\in I}))
\]
 is a localizing for quasi-isomorphisms. The essential image of the
localization in $D_{\epsilon}^{b}(Y,\gMod k[T_{S}]_{S\in I})$ is
$D_{\epsilon,[-\gr(X)|_{I},0]}^{b}(Y,\gMod k[T_{S}]_{S\in I})$. 

For 
\begin{align*}
A & \in D_{\epsilon,[-\gr(X),0]}^{b}(Y,\gMod k[T_{S}]_{S\in\Lambda}),\\
B & \in D_{\epsilon}^{b}(Y,\gMod k[T_{S}]_{S\in\Lambda}),
\end{align*}
and an LG-quasi-isomorphism $B\to A$, the composition $B|_{[-\gr(X),0]}\to B\to A$
is also an LG-quasi-isomorphism. Hence by \cite[Lemma 10.3.13.2]{WeibelAnintroductiontohomologicalalgebra1984},
the full subcategory $D_{\epsilon,[-\gr(X),0]}^{b}(Y,\gMod k[T_{S}]_{S\in\Lambda})$
of $D_{\epsilon}^{b}(Y,\gMod k[T_{S}]_{S\in\Lambda})$ is localizing
for $S_{\epsilon,LGq}$. We put 
\[
D_{\epsilon,[-\gr(X),0],LG}^{b}(Y,\gMod k[T_{S}]_{S\in\Lambda})
\]
the localization. Similarly, the full subcategory $\Shv_{\epsilon,[-\gr(X),0]}(Y,\gMod k[T_{S}]_{S\in\Lambda})$
of the category $\Shv_{\epsilon}(Y,\gMod k[T_{S}]_{S\in\Lambda})$
is localizing for $S_{\epsilon,LG}$. We put 
\[
\Shv_{\epsilon,[-\gr(X),0],LG}(Y,\gMod k[T_{S}]_{S\in\Lambda})
\]
 the localization. 

\begin{defn}
\label{def:L-local isom}Let $L=(S_{L,1}\subsetneq\dots\subsetneq S_{L,r(L)})$
($S_{L,i}\in\Lambda$, $r(L)\in\Z_{\geq0}$) be a sequenece, $A,B\in\Shv(Y,\Modu k)$.
A morphism $\varphi\colon A\to B$ in $\Shv(Y,\Modu k)$ is an \emph{$L$-local
isomorphism} if there are open polyhedral neighborhoods $W_{S_{L,i}}'\subset X$
of $\relint S_{L,i}$ ($1\leq i\leq r(L)$) such that the morphism
$\varphi$ induces an isomorphism $A|_{Y\cap W'_{L}}\cong B|_{Y\cap W'_{L}}$,
where $W_{L}':=\bigcap_{i=1}^{r(L)}W_{S_{L,i}}'$. We denote $L$-local
isomorphism by $\cong_{L}$. 
\end{defn}

Let $L=(S_{L,1}\subsetneq\dots\subsetneq S_{L,r(L)})$ ($S_{L,i}\in\Lambda$,
$r(L)\in\Z_{\geq0}$) be a sequenece. Similarly to the system $S_{\epsilon,LG}$
of all LG-isomorphisms (Subsection \ref{subsec:Localization-by-LF-quasi-isomorp}),
the system $S_{\epsilon,L}$ of all $L$-local isomorphism in $\Shv_{\epsilon}(Y,\Modu k)$
is locally small, the localization 
\[
\Shv_{\epsilon,L}(Y,\Modu k):=S_{\epsilon,L}^{-1}\Shv_{\epsilon}(Y,\Modu k)
\]
 exists in our universe, and is abelian. For $a\in\Z^{\Lambda}$ adapted
to $L$, we have an exact functor 
\[
-(a)\colon\Shv_{\epsilon,LG}(Y,\gMod k[T_{S}]_{S\in\Lambda})\to\Shv_{\epsilon,L}(Y,\Modu k)
\]
 given by $F\mapsto F(a)$. 
\begin{rem}
\label{rem:direct sum of -(a) is faithful} The functor 
\[
\bigoplus'_{a\in[-\gr(X),0]}-(a)\colon\Shv_{\epsilon,[-\gr(X),0],LG}(Y,\gMod k[T_{S}]_{S\in I}))\to\bigoplus'_{a\in[-\gr(X),0]}\Shv_{\epsilon,L(a)}(Y,\Modu k)
\]
 is faithful, where $a$ runs through elements in $[-\gr(X),0]$ such
that the set of $S\in\Lambda$ with $a_{S}\leq-1$ forms a sequence
$L(a)=(S_{L(a),1}\subsetneq\dots\subsetneq S_{L(a),r(L(a))})$ ($r(L(a))\in\Z_{\geq0}$). 
\end{rem}

We put $\omega_{Y}\in D_{c}^{b}(Y,\Modu k)$ the injective dualizing
complex.
\begin{lem}
\label{lem:Condition A duality} The functor 
\[
D(-)[\gr(X)|_{I}]_{\gr}\colon D_{c}^{b}(Y,\gMod k[T_{S}]_{S\in I})\to D_{c}^{b}(Y,\gMod k[T_{S}]_{S\in I})
\]
induces a functor 
\[
D(-)[\gr(X)|_{I}]_{\gr}\colon D_{c,[-\gr(X)|_{I},0]}^{b}(Y,\gMod k[T_{S}]_{S\in I})\to D_{c,[-\gr(X)|_{I},0]}^{b}(Y,\gMod k[T_{S}]_{S\in I}).
\]
\end{lem}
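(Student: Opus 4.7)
My plan is to reduce the statement to the single-variable case handled by Example~\ref{exa:duality in graded modules}, applied one grading coordinate at a time.

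First, using the equivalence from Remark~\ref{rem:on restriction to =00005B-gr(X),0=00005D locus} together with a flat resolution of finite length (Remark~\ref{rem:flat resolution of finite length}), I would represent $E\in D^{b}_{c,[-\gr(X)|_{I},0]}(Y,\gMod k[T_{S}]_{S\in I})$ by a bounded complex $B$ of sheaves of flat graded $k[T_{S}]_{S\in I}$-modules for which, in the spirit of Lemma~\ref{lem:dual of flat complex}, the dual is computed term-wise by $\gHoms(B,\omega_{X}^{\gr})$, where I set $\omega_{X}^{\gr}:=\bigoplus_{c\in\Z_{\geq 0}^{I}}\omega_{X}\cong\omega_{X}\otimes_{k}k[T_{S}]_{S\in I}$. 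Writing $g_{S}:=\dim S+2\dim\sigma_{S}$ for the $S$-component of $\gr(X)|_{I}$ and setting $F:=D(E)[\gr(X)|_{I}]_{\gr}$, I must then verify (i) that $F(a)\to F(a+e_{S})$ is a quasi-isomorphism whenever $a_{S}\geq 0$, and (ii) that $F(a)\cong 0$ whenever $-\gr(X)|_{I}\not\leq a$.

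Since the defining conditions of $D^{b}_{c,[-\gr(X)|_{I},0]}$ and the target conditions (i), (ii) are each imposed one index $S_{0}\in I$ at a time, I would fix $S_{0}$ and vary only $a_{S_{0}}$, holding the remaining coordinates constant. The resulting single-variable slice is $k[T_{S_{0}}]$-linear and falls directly under Example~\ref{exa:duality in graded modules}. The hypothesis on $E$ that $E(a)\to E(a+e_{S_{0}})$ is a quasi-isomorphism for $a_{S_{0}}\geq 0$ (stabilization) translates, via the Example, into the vanishing $F(a)\cong 0$ for $a_{S_{0}}\leq-g_{S_{0}}-1$; conversely, the vanishing $E(a)\cong 0$ for $a_{S_{0}}\leq-g_{S_{0}}-1$ translates into the stabilization $F(a)\to F(a+e_{S_{0}})$ for $a_{S_{0}}\geq 0$. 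The shift $[\gr(X)|_{I}]_{\gr}$ is precisely what realigns the raw $S_{0}$-range $[0,g_{S_{0}}]$ of the undualized answer back to the original $[-g_{S_{0}},0]$.

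The hard part is making this one-variable reduction compatible with the remaining multi-graded structure: after fixing $(a_{S})_{S\neq S_{0}}$, I must check that the $\gHoms$ computation in the full multi-graded setting restricts correctly to the $k[T_{S_{0}}]$-linear one in Example~\ref{exa:duality in graded modules}. I would handle this by writing
\[
\omega_{X}^{\gr}=\bigoplus_{c_{S_{0}}\geq 0}\Bigl(\bigoplus_{(c_{S})_{S\neq S_{0}}\geq 0}\omega_{X}\Bigr)
\]
as an iterated direct sum and using the flatness of $B$ in each $T_{S}$-variable to commute $\gHoms$ past the outer sum, so that Example~\ref{exa:duality in graded modules} applies verbatim in the $S_{0}$-slot with its coefficients $k$ replaced by the complex $\bigoplus_{(c_{S})_{S\neq S_{0}}\geq 0}\omega_{X}$.
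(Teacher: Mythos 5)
Your overall route is the one the paper takes: replace $E$ by a resolution $B$, compute $D(E)[\gr(X)|_{I}]_{\gr}$ via Lemma \ref{lem:dual of flat complex} as $\gHoms(B,\bigoplus_{a}\omega_{Y})$, and then do the grade bookkeeping of Example \ref{exa:duality in graded modules}; your bookkeeping (stabilization of $E$ for $a_{S_{0}}\geq0$ dualizing to vanishing of $F$ for $a_{S_{0}}\leq-g_{S_{0}}-1$, and conversely, with $[\gr(X)|_{I}]_{\gr}$ realigning the windows) is correct. The gap is in how you actually verify the two defining conditions on $F$. First, an arbitrary flat resolution is not admissible for Lemma \ref{lem:dual of flat complex}: that lemma needs each $B^{j}$ to be a direct sum of sheaves of the form $\bigoplus_{a\in\Z_{\geq a_{0}}^{I}}C$ (in the paper, $i_{V!}\underline{k[T_{S}]_{S\in I}[b]_{\gr}}_{V}$), and, crucially, the shifts $b$ must be constrained to the window coming from $[-\gr(X)|_{I},0]$. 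Without that constraint the graded pieces $\gHoms(B,\bigoplus\omega_{Y})(c)$ are nonzero \emph{termwise} for some $c$ with $-\gr(X)|_{I}\not\leq c$, so the vanishing condition could only hold after passing to cohomology, which your argument does not address. With the shifts constrained, each $\gHoms(i_{V!}\underline{k[T_{S}][b]_{\gr}}_{V},\bigoplus_{a\geq-\gr(X)|_{I}}\omega_{Y})$ is computed on the nose as a shift of $\bigoplus_{a}\omega_{Y}|_{V}$ and both conditions are checked term by term; that is exactly what the paper's ``obviously'' is doing.

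Second, the one-variable slicing does not work as stated. An element of $\gHom_{k[T_{S}]_{S\in I}}(A,B)(c)$ is a grade-preserving morphism $A\to B[c]_{\gr}$ compatible with \emph{all} the $T_{S'}$-actions at once, so fixing $(c_{S})_{S\neq S_{0}}$ does not produce a $k[T_{S_{0}}]$-linear Hom between one-variable slices to which Example \ref{exa:duality in graded modules} applies. Your proposed fix, writing $\omega_{X}^{\gr}$ as an iterated direct sum and commuting $\gHoms$ past the outer sum, does not achieve this reduction: even where the commutation is legitimate, the result is still a Hom over the full multigraded ring, and you would need a genuine base-change adjunction over $k[T_{S_{0}}]\otimes k[T_{S}]_{S\neq S_{0}}$ that you have not set up. There is also a conflation of ``zero'' with ``acyclic'': Example \ref{exa:duality in graded modules} is a statement about honest graded modules, whereas your hypotheses on $E$ hold only up to quasi-isomorphism, so applying it termwise again forces you back to a representative $B$ satisfying the support conditions literally. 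Both issues are resolved simultaneously by the paper's specific choice of resolution; I recommend you make that choice explicit and then verify the two conditions directly on the resulting explicit complex.
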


\begin{proof}
Let $E\in D_{c,[-\gr(X)|_{I},0]}^{b}(Y,\gMod k[T_{S}]_{S\in I})$.
Then there is $B\in D_{c,[-\gr(X)|_{I},0]}^{b}(Y,\gMod k[T_{S}]_{S\in I})$
quasi-isomorphic to $E$ such that $B^{j}$ ($j\in\Z$) is a direct
sum of sheaves of the form $i_{V!}\underline{k[T_{S}]_{S\in I}[b]_{\gr}}_{V}$
(an open immersion $i_{V}\colon V\hookrightarrow Y$ and $-\gr(X)|_{I}\leq b\leq0$).
Then by Lemma \ref{lem:dual of flat complex}, we have 
\[
D(E)[\gr(X)|_{I}]_{\gr}\cong\gHoms(B,\bigoplus_{\substack{a\in\Z_{\geq-\gr(X)|_{I}}^{I}}
}\omega_{Y}).
\]
 Obviously, by the property of $B^{j}$ ($j\in\Z$), we have 
\[
\gHoms(B,\bigoplus_{\substack{a\in\Z_{\geq-\gr(X)|_{I}}^{I}}
}\omega_{Y})\in D_{c,[-\gr(X)|_{I},0]}^{b}(Y,\gMod k[T_{S}]_{S\in I}).
\]
\end{proof}
\begin{defn}
\label{def:forgetful functors}For $J\subset\Lambda$, we put 
\[
\For_{J}\colon\Shv_{\epsilon,[-\gr(X),0]}(Y,\gMod k[T_{S}]_{S\in\Lambda})\to\Shv_{\epsilon,[-\gr(X)|_{\Lambda\setminus J},0]}(Y,\gMod k[T_{S}]_{S\in\Lambda\setminus J})
\]
 the ``forgetful'' functor given by 
\[
F\mapsto(a\mapsto F((a,(0))),
\]
where for $a\in\Z^{\Lambda\setminus J}$, we put $(a,(0))\in\Z^{\Lambda}$
the element whose $S$-component ($S\in\Lambda\setminus J$) is $a_{S}$
and $S_{j}$-component ($S_{j}\in J$) is $0$. 
\end{defn}

The functor $\For_{J}$ is an exact functor. It induces a functor
\[
\For_{J}\colon D_{\epsilon,[-\gr(X),0]}^{b}(Y,\gMod k[T_{S}]_{S\in\Lambda})\to D_{\epsilon,[-\gr(X)|_{\Lambda\setminus J},0]}^{b}(Y,\gMod k[T_{S}]_{S\in\Lambda\setminus J}).
\]

\begin{lem}
\label{lem:forgetful and dual} For a subset $J\subset\Lambda$, we
have a natural transformation 
\begin{align*}
 & \For_{J}\circ D(-)[\gr(X)]_{\gr}\cong D(-)[\gr(X)|_{\Lambda\setminus J}]_{\gr}\circ\For_{J}\\
 & :D_{c,[-\gr(X),0]}^{b}(Y,\gMod k[T_{S}]_{S\in\Lambda})\to D_{c,[-\gr(X)|_{\Lambda\setminus J},0]}^{b}(Y,\gMod k[T_{S}]_{S\in\Lambda\setminus J}).
\end{align*}
\end{lem}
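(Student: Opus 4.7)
The plan is to reduce to the explicit description of the duality functor provided by Lemma \ref{lem:Condition A duality}, and construct the natural transformation at the level of representatives.

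Given $E \in D_{c,[-\gr(X),0]}^{b}(Y,\gMod k[T_{S}]_{S\in\Lambda})$, the first step is to take a complex $B$ quasi-isomorphic to $E$ as in the proof of Lemma \ref{lem:Condition A duality}, whose terms are direct sums of sheaves of the form $i_{V!}\underline{k[T_{S}]_{S\in\Lambda}[b]_{\gr}}_{V}$ with $-\gr(X) \leq b \leq 0$. Then by Lemma \ref{lem:Condition A duality},
\[
D(E)[\gr(X)]_{\gr} \cong \gHoms\Bigl(B,\,\bigoplus_{a \in \Z^{\Lambda}_{\geq -\gr(X)}} \omega_{Y}\Bigr).
\]
The functor $\For_{J}$ is exact and commutes with direct sums, so $\For_{J}B$ is quasi-isomorphic to $\For_{J}E$. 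A direct check on each generator shows that $\For_{J}(i_{V!}\underline{k[T_{S}]_{S\in\Lambda}[b]_{\gr}}_{V})$ equals $i_{V!}\underline{k[T_{S}]_{S\in\Lambda\setminus J}[b|_{\Lambda\setminus J}]_{\gr}}_{V}$ when $b|_{J}=0$, and vanishes otherwise; in either case, $\For_{J}B$ satisfies the hypotheses required to apply Lemma \ref{lem:Condition A duality} over the smaller index set $\Lambda\setminus J$, yielding
\[
D(\For_{J}E)[\gr(X)|_{\Lambda\setminus J}]_{\gr} \cong \gHoms\Bigl(\For_{J}B,\,\bigoplus_{a' \in \Z^{\Lambda\setminus J}_{\geq -\gr(X)|_{\Lambda\setminus J}}} \omega_{Y}\Bigr).
\]

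Next I would construct the natural map. The functor $\For_{J}$ gives, for any graded modules $M,N$, a natural morphism $\For_{J}\gHoms(M,N) \to \gHoms(\For_{J}M,\For_{J}N)$ defined by restriction of graded homomorphisms to the grades of the form $(c,(0))$. Applied to $M = B$ and $N = \bigoplus_{a \geq -\gr(X)} \omega_{Y}$, together with the identification
\[
\For_{J}\Bigl(\bigoplus_{a \geq -\gr(X)} \omega_{Y}\Bigr) = \bigoplus_{a' \geq -\gr(X)|_{\Lambda\setminus J}} \omega_{Y}
\]
(which uses only that $\gr(X)|_{J} \geq 0$, so the $J$-component of the defining inequality is automatic), this produces the desired natural transformation.

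Finally, I would verify this morphism is an isomorphism in $D_{c,[-\gr(X)|_{\Lambda\setminus J},0]}^{b}$. On a summand $A = i_{V!}\underline{k[T_{S}]_{S\in\Lambda}[b]_{\gr}}_{V}$ with $b|_{J}=0$, the map is a grade-wise isomorphism by the explicit computation: both sides at grade $c$ are $\omega_{Y}|_{V}$ precisely when $c \geq b|_{\Lambda\setminus J} - \gr(X)|_{\Lambda\setminus J}$. The main obstacle is handling summands with $b|_{J} \neq 0$, for which $\For_{J}A = 0$ but $\For_{J}\gHom(A,\bigoplus_{a\geq -\gr(X)}\omega_{Y})$ can be nonzero at grades outside $[-\gr(X)|_{\Lambda\setminus J},0]$; the resolution is that such contributions land outside the subcategory $D_{c,[-\gr(X)|_{\Lambda\setminus J},0]}^{b}$ and are killed once one passes to the essential image via the equivalence of Remark \ref{rem:on restriction to =00005B-gr(X),0=00005D locus}. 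Naturality of the resulting transformation is automatic from the naturality of each construction.
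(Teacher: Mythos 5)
Your reduction to Lemma \ref{lem:Condition A duality}, the choice of $B$ with terms $i_{V!}\underline{k[T_{S}]_{S\in\Lambda}[b]_{\gr}}_{V}$, and the comparison map $\For_{J}\gHoms(M,N)\to\gHoms(\For_{J}M,\For_{J}N)$ by restriction to grades $(c,(0))$ all match the paper's (very terse) argument, and your computations for the summands with $b|_{J}=0$ and for $\For_{J}\bigl(\bigoplus_{a\geq-\gr(X)}\omega_{Y}\bigr)$ are correct. The gap is in your treatment of the summands with $b|_{J}\neq0$, which is exactly the crux. For such a summand $A=i_{V!}\underline{k[T_{S}][b]_{\gr}}_{V}$ one computes $\gHoms(A,\bigoplus_{a\geq-\gr(X)}\omega_{Y})\cong i_{V*}\bigl(\bigoplus_{a\geq b-\gr(X)}\omega_{Y}|_{V}\bigr)$, so $\For_{J}$ of it is nonzero at every grade $c'\geq b|_{\Lambda\setminus J}-\gr(X)|_{\Lambda\setminus J}$ — in particular at $c'=0$ and throughout $[-\gr(X)|_{\Lambda\setminus J},0]$, not only "outside" that range. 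Moreover, $D_{c,[-\gr(X)|_{\Lambda\setminus J},0]}^{b}$ is a full subcategory and the functor $-|_{[-\gr(X)|_{\Lambda\setminus J},0]}$ of Remark \ref{rem:on restriction to =00005B-gr(X),0=00005D locus} does not annihilate a summand that is nonzero at grade $0$; passing to the essential image is not a localization and cannot identify two complexes that are not quasi-isomorphic. So your mechanism for discarding these contributions does not work, and as stated your map is not a termwise isomorphism of complexes.

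The correct resolution is that the comparison map, while not an isomorphism of complexes, is a quasi-isomorphism, and this is where the defining stabilization condition of $D_{c,[-\gr(X),0]}^{b}$ must enter. Concretely, since each $B^{j}(e)$ vanishes unless $e\geq0$, a degree-$(c',(0))$ graded homomorphism $B\to\bigoplus_{a\geq-\gr(X)}\omega_{Y}$ is the same as a degree-$c'$ homomorphism $R(B)\to\bigoplus_{a'\geq-\gr(X)|_{\Lambda\setminus J}}\omega_{Y}$, where $R(B)(e'):=\underrightarrow{\lim}_{e_{J}}B((e',e_{J}))$ is the colimit along the $T_{S}$, $S\in J$; on generators $R(i_{V!}\underline{k[T_{S}][b]_{\gr}}_{V})=i_{V!}\underline{k[T_{S}]_{S\in\Lambda\setminus J}[b|_{\Lambda\setminus J}]_{\gr}}_{V}$, which explains the extra summands you see. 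Your map is then induced by the canonical morphism $\For_{J}B=B((-,0))\to R(B)$, which is a quasi-isomorphism precisely because $H^{*}(B)(a)\to H^{*}(B)(a+e_{S})$ is an isomorphism for $a_{S}\geq0$, $S\in J$; applying $\gHoms(-,\bigoplus\omega_{Y})$ (legitimate by Lemma \ref{lem:dual of flat complex}, as both $\For_{J}B$ and $R(B)$ have terms of the required form) yields the desired natural quasi-isomorphism. Without some version of this stabilization argument the proof is incomplete.
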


\begin{proof}
Let $B\in D_{c,[-\gr(X),0]}^{b}(Y,\gMod k[T_{S}]_{S\in\Lambda})$
as in proof of Lemma \ref{lem:Condition A duality}. Then $\For_{J}B$
also satisfies the similar condition. By Lemma \ref{lem:dual of flat complex},
the assertion follows from a natural isomorphism
\begin{align*}
\For_{J}\gHom(B,\bigoplus_{\substack{a\in\Z_{\geq-\gr(X)}^{\Lambda}}
}\omega_{Y}) & \cong\gHom(\For_{J}B,\bigoplus_{\substack{b\in\Z_{\geq-\gr(X)|_{\Lambda\setminus J}}^{\Lambda\setminus J}}
}\omega_{Y}).
\end{align*}
\end{proof}
\begin{lem}
\label{lem:LG-q-isom duality}The functor $D(-)[\gr(X)]_{\gr}$ induces
a functor 
\[
D(-)[\gr(X)]_{\gr}\colon D_{c,[-\gr(X),0],LG}^{b}(Y,\gMod k[T_{S}]_{S\in\Lambda})\to D_{c,[-\gr(X),0],LG}^{b}(Y,\gMod k[T_{S}]_{S\in\Lambda}).
\]
\end{lem}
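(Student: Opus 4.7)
The plan is to show that $D(-)[\gr(X)]_{\gr}$ descends to the LG-localization, which (since Lemma~\ref{lem:Condition A duality} already provides the functor $D^{b}_{c,[-\gr(X),0]}\to D^{b}_{c,[-\gr(X),0]}$) amounts to verifying that the functor sends LG-quasi-isomorphisms to LG-quasi-isomorphisms. Suppose $\varphi\colon A\to B$ is an LG-quasi-isomorphism witnessed by open polyhedral neighborhoods $(W_{S}')_{S\in\Lambda}$. I would try to show that $D(\varphi)[\gr(X)]_{\gr}$ is an LG-quasi-isomorphism witnessed by the same neighborhoods. Fix an adapted pair $(L,a)$ with $L=(S_{L,1}\subsetneq\dots\subsetneq S_{L,r(L)})$; the task is to produce a quasi-isomorphism
\[
D(A)[\gr(X)]_{\gr}|_{Y\cap W_{L}'}(a)\cong D(B)[\gr(X)]_{\gr}|_{Y\cap W_{L}'}(a).
\]

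The key step is a reduction to a smaller graded category via the forgetful functor. Set $J:=\Lambda\setminus\{S_{L,i}\}_{i=1}^{r(L)}$. Since $(L,a)$ is adapted, $a_{S}\geq 0$ for $S\in J$, and since $D(A)[\gr(X)]_{\gr}\in D^{b}_{c,[-\gr(X),0]}$ by Lemma~\ref{lem:Condition A duality}, iterating the isomorphism $E(b)\cong E(b+e_{S})$ (valid for $b_{S}\geq 0$) collapses the $J$-coordinates to yield
\[
D(A)[\gr(X)]_{\gr}(a)\;\cong\;D(A)[\gr(X)]_{\gr}\bigl((a|_{\Lambda\setminus J},(0))\bigr)\;=\;\For_{J}\bigl(D(A)[\gr(X)]_{\gr}\bigr)(a|_{\Lambda\setminus J}),
\]
which by Lemma~\ref{lem:forgetful and dual} equals $D(\For_{J}A)[\gr(X)|_{\Lambda\setminus J}]_{\gr}(a|_{\Lambda\setminus J})$, and likewise for $B$. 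Since the ungraded duality commutes with restriction along an open immersion $j\colon U\hookrightarrow Y$ (via $j^{*}\omega_{Y}\cong\omega_{U}$), and the explicit formula $D(-)[\gr(X)|_{I}]_{\gr}\cong\gHoms(-,\bigoplus_{a\geq-\gr(X)|_{I}}\omega_{Y})$ from the proof of Lemma~\ref{lem:Condition A duality} shows that the same holds grade-wise, restriction to $Y\cap W_{L}'$ passes through $D$ and $\For_{J}$ freely. It therefore suffices to establish the quasi-isomorphism $\For_{J}(A|_{Y\cap W_{L}'})\cong\For_{J}(B|_{Y\cap W_{L}'})$, after which applying $D[\gr(X)|_{\Lambda\setminus J}]_{\gr}$ (a well-defined endofunctor on $D^{b}_{c,[-\gr(X)|_{\Lambda\setminus J},0]}$ preserving quasi-isomorphisms) finishes the argument. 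But this quasi-isomorphism is immediate from the LG-hypothesis: for every $b\in\Z^{\Lambda\setminus J}$ the pair $(L,(b,(0)))$ is adapted, so $A|_{Y\cap W_{L}'}((b,(0)))\cong B|_{Y\cap W_{L}'}((b,(0)))$.

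The main obstacle is the collapsing step, namely identifying $D(A)[\gr(X)]_{\gr}(a)$ with the forgotten dualization evaluated at $a|_{\Lambda\setminus J}$; this is essentially the reason the finiteness condition of Definition~\ref{def:condition A} was imposed, since without restricting to $[-\gr(X),0]$ the $J$-coordinates of $D$ would not stabilize and the reduction via $\For_{J}$ would be unavailable. Once this collapse is in place, the remainder is bookkeeping with Lemma~\ref{lem:forgetful and dual} and the standard compatibilities of duality, grade shift, and restriction to opens.
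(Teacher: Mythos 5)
Your proposal is correct and follows essentially the same route as the paper's proof: both reduce via the finiteness condition of Definition \ref{def:condition A} to collapse the coordinates outside $\{S_{L,i}\}_{i=1}^{r(L)}$, invoke Lemma \ref{lem:Condition A duality} and Lemma \ref{lem:forgetful and dual} to commute the forgetful functor with $D(-)[\gr(X)]_{\gr}$, and then apply the LG-hypothesis at all adapted pairs $(L,(b,(0)))$ to obtain a genuine quasi-isomorphism after forgetting. Your additional remark that restriction to $Y\cap W_{L}'$ passes through $D$ and $\For_{J}$ is a point the paper leaves implicit, but it is standard and does not change the argument.
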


\begin{proof}
Let $A,B\in D_{c,[-\gr(X),0]}^{b}(Y,\gMod k[T_{S}]_{S\in\Lambda})$,
and $\varphi\colon A\to B$ an LG-quasi-isomorphism. By Lemma \ref{lem:Condition A duality},
it suffices to show that 
\[
D(\varphi)[\gr(X)]_{\gr}\colon D(B)[\gr(X)]_{\gr}\to D(A)[\gr(X)]_{\gr}
\]
is also an LG-quasi-isomorphism. Let $(L,a)$ be an adapted pair.
Since $\varphi$ is an LG-quasi-isomorphism, for sufficiently small
open polyhedral neighborhoods $W_{S_{L,i}}'\subset X$ of $\relint S_{L,i}$
($1\leq i\leq r(L)$), the morphism $\varphi$ induces a quasi-isomorphism
\[
\For_{\Lambda\setminus\left\{ S_{L,i}\right\} _{i=1}^{r(L)}}A|_{Y\cap W_{L}'}\cong\For_{\Lambda\setminus\left\{ S_{L,i}\right\} _{i=1}^{r(L)}}B|_{Y\cap W_{L}'}\in D_{c,[-\gr(X)|_{\left\{ S_{L,i}\right\} _{i=1}^{r(L)}},0]}^{b}(Y\cap W_{L}',\gMod k[T_{S_{L,i}}]_{i=1}^{r(L)}),
\]
where we put $W_{L}':=\bigcap_{i=1}^{r(L)}W_{S_{L,i}}'$. Henbe by
Lemma \ref{lem:Condition A duality} and Lemma \ref{lem:forgetful and dual},
we have 
\begin{align*}
D(B)[\gr(X)]_{\gr}|_{Y\cap W_{L}'}(a) & \cong(\For_{\Lambda\setminus\left\{ S_{L,i}\right\} _{i=1}^{r(L)}}(D(B)[\gr(X)]_{\gr}))|_{Y\cap W_{L}'}(a|_{\left\{ S_{L,i}\right\} _{i=1}^{r(L)}})\\
 & \cong D(\For_{\Lambda\setminus\left\{ S_{L,i}\right\} _{i=1}^{r(L)}}B)[\gr(X)|_{\left\{ S_{L,i}\right\} _{i=1}^{r(L)}}]_{\gr}|_{Y\cap W_{L}'}(a|_{\left\{ S_{L,i}\right\} _{i=1}^{r(L)}})\\
 & \cong D(\For_{\Lambda\setminus\left\{ S_{L,i}\right\} _{i=1}^{r(L)}}A)[\gr(X)|_{\left\{ S_{L,i}\right\} _{i=1}^{r(L)}}]_{\gr}|_{Y\cap W_{L}'}(a|_{\left\{ S_{L,i}\right\} _{i=1}^{r(L)}})\\
 & \cong D(A)[\gr(X)]_{\gr}|_{Y\cap W_{L}'}(a).
\end{align*}
Thus the assertion holds. 
\end{proof}

\subsection{Truncation functors\label{subsec:Truncation-functors}}

We shall define a kind of truncation functors which involves grades.
There is one (\cite[Appendix]{BeilinsonOnthederivedcategoryofperversesheaves1987})
for filtered objects. The difference of it and ours is for the exceptional
part ($q=d$ in Subsection \ref{subsec:Local-computations}). (There
should be other kinds of truncation functors which can play the same
role as ours.) Let $X=(\Lambda,w)$ be a tropical variety, $Y\subset X$
a locally closed polyhedral subset (Definition \ref{def:locally closed polyhedral subset}).
We fix $S_{0}\in\Lambda$. Let $Z\subset Y\cap\relint S_{0}$ be a
polyhedral subset closed in $Y$, and $r,s,t\in\Z$ integers with
$t+1\leq s$. 
\begin{defn}
\label{def:truncation}Let $E$ be a complex of flat sheaves $E^{j}\in\Shv(Y,\gMod k[T_{S}]_{S\in\Lambda})$.
Let $a=(a_{S})_{S\in\Lambda}\in\Z^{\Lambda}$. For $u\in\Z,$ we put
$a(S_{0}=u)=(a(S_{0}=u)_{S})_{S\in\Lambda}\in\Z^{\Lambda}$ by $a(S_{0}=u)_{S_{0}}:=u$
and $a(S_{0}=u)_{S}:=a_{S}$ ($S\neq S_{0}$). We put $\tau_{\leq(t,r-S_{0},s)}^{Z}E^{j}(a)\in\Shv(Y,\Modu k)$
the sheafification of the presheaf whose sections on $U\subset Y$
are 
\[
\begin{cases}
E^{j}(a)(U) & (U\cap Z=\emptyset\ \\
 & or\ j\leq\min\left\{ \max\left\{ t,r-a_{S_{0}}\right\} ,s\right\} -1)\\
\Ker(d\colon E^{t}(a)\to E^{t+1}(a))(U) & (U\cap Z\neq\emptyset\ \text{and}\ j=t\geq r-a_{S_{0}})\\
+\Ima(T_{S_{0}}^{a_{S_{0}}-r+t+1}\colon E^{t}(a(S_{0}=r-t-1))\to E^{t}(a))(U)\\
\Ker(d\colon E^{j}(a(S_{0}=r-j))\to E^{j+1}(a(S_{0}=r-j)))(U) & (U\cap Z\neq\emptyset\ \\
+\Ima(T_{S_{0}}\colon E^{j}(a(S_{0}=r-j-1))\to E^{j}(a(S_{0}=r-j)))(U) & \text{and}\ \max\left\{ t+1,r-a_{S_{0}}\right\} \leq j\leq s-1)\\
\Ker(d\colon E^{s}(a)\to E^{s+1}(a))(U) & (U\cap Z\neq\emptyset\ \text{and}\ j=s\leq r-a_{S_{0}})\\
\Ker(d\colon E^{s}(a(S_{0}=r-s))\to E^{s+1}(a(S_{0}=r-s)))(U) & (U\cap Z\neq\emptyset\ \text{and}\ j=s\geq r-a_{S_{0}}+1)\\
0 & (U\cap Z\neq\emptyset\ \text{and}\ j\geq s+1)
\end{cases}.
\]
\end{defn}

\begin{rem}
The truncations
\[
\tau_{\leq(t,r-S_{0},s)}^{Z}E^{j}:=\bigoplus_{a\in\Z^{\Lambda}}\tau_{\leq(t,r-S_{0},s)}^{Z}E^{j}(a)
\]
($j\in\Z$) give a complex $\tau_{\leq(t,r-S_{0},s)}^{Z}E$ of sheaves
$\tau_{\leq(t,r-S_{0},s)}^{Z}E^{j}\in\Shv(Y,\gMod k[T_{S}]_{S\in\Lambda})$.
Note that since for a flat $\Z^{\Lambda}$-graded module $M$, the
multiplication map $T_{S}\colon M(a)\to M(a+e_{S})$ is injective
($a\in\Z^{\Lambda}$, $S\in\Lambda$), the map $d\colon\tau_{\leq(t,r-S_{0},s)}^{Z}E^{j}(a)\to\tau_{\leq(t,r-S_{0},s)}^{Z}E^{j+1}(a)$
is well-defined. 
\end{rem}

There is a natural morphism $\tau_{\leq(t,r-S_{0},s)}^{Z}E\to E$. 

\begin{rem}
For $x\in Z$ and $a\in\Z^{\Lambda}$, we have 
\[
\H^{j}(\tau_{\leq(t,r-S_{0},s)}^{Z}E(a))_{x}\cong\begin{cases}
\H^{j}(E(a))_{x} & (j\leq\min\left\{ \max\left\{ t,r-a_{S_{0}}\right\} ,s\right\} )\\
\H^{j}(E(a(S_{0}=r-j)))_{x} & (\max\left\{ t,r-a_{S_{0}}\right\} +1\leq j\leq s)\\
0 & (j\geq s+1)
\end{cases}.
\]
\end{rem}

By flat resolutions, we get a \emph{truncation functor} 
\[
\tau_{\leq(t,r-S_{0},s)}^{Z}\colon D^{b}(Y,\gMod k[T_{S}]_{S\in\Lambda})\to D^{b}(Y,\gMod k[T_{S}]_{S\in\Lambda}).
\]
 Obviously, it preserves LG-quasi-isomorphisms, hence we get a truncation
functor
\[
\tau_{\leq(t,r-S_{0},s)}^{Z}\colon D_{LG}^{b}(Y,\gMod k[T_{S}]_{S\in\Lambda})\to D_{LG}^{b}(Y,\gMod k[T_{S}]_{S\in\Lambda}).
\]

Our goal in this subsection is Corollary \ref{cor:supported-truncation-hom-isomorphism},
which is used to show that truncation functors and push-forward by
open immersions give equivalences of categories (Proposition \ref{prop:Deligne's characterization-1}).
These are analogs of results for the usual truncation functors (\cite[Subsection 1.15, Theorem 3.5]{GoreskyMacPhersonIntersectionhomologyII83}). 
\begin{rem}
For $u\in\Z$ and $?\in\{\leq,\geq,=\}$, we put 
\[
\Z_{S_{0}?u}^{\Lambda}:=\left\{ a\in\Z^{\Lambda}\mid a_{S_{0}}?u\right\} 
\]
 a partially ordered subset of $\Z^{\Lambda}$. Since $\Fct(\Z_{S_{0}?u}^{\Lambda},\Modu k)$
($?\in\{\leq,\geq,=\}$) is a good abelian category like $\Fct(\Z^{\Lambda},\Modu k)$
(see Subsection \ref{subsec:The-category-of graded modules}), we
have the abelian category of sheaves $\Shv(Z,\Fct(\Z_{S_{0}?u}^{\Lambda},\Modu k))$
(see Subsection \ref{subsec:Sheaves-of-graded}) with enough injectives,
and also have $D^{b}(\Shv(Z,\Fct(\Z_{S_{0}?u}^{\Lambda},\Modu k)))$. 
\end{rem}

Since $Z\subset\relint S_{0}$, the following LG-(quasi-)isomorphisms
only involve $S\in\Lambda$ with $S\subset S_{0}$. 
\begin{defn}
\label{def:LG (q) isom for =00005Cleq =00003D =00005Cgeq case}A morphism
$\varphi\colon A\to B$ in a derived category $D^{b}(\Shv(Z,\Fct(\Z_{S_{0}?u}^{\Lambda},\Modu k)))$
(resp. an abelian category $\Shv(Z,\Fct(\Z_{S_{0}?u}^{\Lambda},\Modu k))$)
($u\in\Z$, $?\in\{\leq,\geq,=\}$) is an LG-quasi-isomorphism (resp.
an LG-isomorphism) if there are open polyhedral neighborhoods $W_{S}'\subset X$
of $\relint S$ ($S\in\Lambda$ with $S\subset S_{0}$) such that
for an adapted pair $(L,a)$ with $a\in\Z_{S_{0}?u}^{\Lambda}$, $r(L)\in\Z_{\geq1}$,
and $S_{L,r(L)}=S_{0}$, the morphism $\varphi$ induces a quasi-isomorphism
(resp. an isomorphism) 
\[
A|_{Z\cap W'_{L}}(a)\cong B|_{Z\cap W'_{L}}(a).
\]
\end{defn}

\begin{rem}
\label{rem:D_LG for small p.o.subset}In the same way as $D^{b}(Z,\gMod k[T_{S}]_{S\in\Lambda})$
(Subsection \ref{subsec:Localization-by-LF-quasi-isomorp}), we have
a localization 
\[
D_{LG}^{b}(\Shv(Z,\Fct(\Z_{S_{0}?u}^{\Lambda},\Modu k))):=S_{LGq,?}^{-1}D^{b}(\Shv(Z,\Fct(\Z_{S_{0}?u}^{\Lambda},\Modu k)))
\]
 ($u\in\Z$, $?\in\{\leq,\geq,=\}$) by all LG-quasi-isomorphisms
$S_{LGq,?}$, its heart is equivalent to a localization 
\[
\Shv_{LG}(Z,\Fct(\Z_{S_{0}?u}^{\Lambda},\Modu k)):=S_{LG,?}^{-1}\Shv(Z,\Fct(\Z_{S_{0}?u}^{\Lambda},\Modu k))
\]
 by all LG-isomorphisms $S_{LG,?}$, and we have an equivalence 
\[
D^{b}(\Shv_{LG}(Z,\Fct(\Z_{S_{0}?u}^{\Lambda},\Modu k)))\cong D_{LG}^{b}(\Shv(Z,\Fct(\Z_{S_{0}?u}^{\Lambda},\Modu k))).
\]
\end{rem}

Let $v\in\Z_{\geq0}$. We put 
\begin{align*}
\Shv_{[-\gr(X),(v,\dots,v)],LG}(Z,\Fct(\Z_{S_{0}?u}^{\Lambda},\Modu k)) & \subset\Shv_{LG}(Z,\Fct(\Z_{S_{0}?u}^{\Lambda},\Modu k))
\end{align*}
 the full abelian subcategory of objects LG-isomorphic to objects
$E$ such that 
\begin{itemize}
\item for each $S\in I$ and $a\in\Z_{S_{0}?u}^{\Lambda}$ with $a_{S}\geq v$
and $a+e_{S}\in\Z_{S_{0}?u}^{\Lambda}$, the morphism $E(a)\to E(a+e_{S})$
is an isomorphism, and
\item $E(a)=0$ for $a\in\Z_{S_{0}?u}^{\Lambda}$ with $-\gr(X)\not\leq a$. 
\end{itemize}
\begin{rem}
\label{rem:direct sum of -(a) is faithful-1} The functor 
\begin{align*}
\bigoplus'_{a\in[-\gr(X),(v,\dots,v)]\cap\Z_{S_{0}?u}^{\Lambda}}-(a): & \Shv_{[-\gr(X),(v,\dots,v)],LG}(Z,\Fct(\Z_{S_{0}?u}^{\Lambda},\Modu k))\\
 & \to\bigoplus'_{a\in[-\gr(X),(v,\dots,v)]\cap\Z_{S_{0}?u}^{\Lambda}}\Shv_{L(a)}(Z,\Modu k)
\end{align*}
 is faithful, where $a$ runs through elements in $[-\gr(X),(v,\dots,v)]\cap\Z_{S_{0}?u}^{\Lambda}$
such that the set of $S_{0}$ and $S\in\Lambda$ with $a_{S}\leq-1$
forms a sequence $L(a)=(S_{L(a),1}\subsetneq\dots\subsetneq S_{L(a),r(L(a))-1}\subsetneq S_{L(a),r(L(a))}=S_{0})$
($r(L(a))\in\Z_{\geq1}$). (See below Definition \ref{def:L-local isom}
for $\Shv_{L(a)}(Z,\Modu k)$.) 
\end{rem}

\begin{rem}
\label{rem:restriction to small p.o.subset} Let $u\in\Z$. We have
the natural restriction functor 
\[
\res_{S_{0}\leq u}\colon\Fct(\Z^{\Lambda},\Modu k)\to\Fct(\Z_{S_{0}\leq u}^{\Lambda},\Modu k)
\]
 and ``restriction'' functors 
\begin{align*}
 & \res^{S_{0}\geq u}\colon\Fct(\Z^{\Lambda},\Modu k)\to\Fct(\Z_{S_{0}\geq u}^{\Lambda},\Modu k)\\
 & \res^{S_{0}\geq u}\colon\Fct(\Z_{S_{0}\leq u}^{\Lambda},\Modu k)\to\Fct(\Z_{S_{0}=u}^{\Lambda},\Modu k)
\end{align*}
 given by 
\[
F\mapsto(a\mapsto F(a)/\Ima(T_{S_{0}}^{a_{S_{0}}-u+1}\colon F(a(S_{0}=u-1))\to F(a))).
\]
We put $\res_{S_{0}=u}:=\res^{S_{0}\geq u}\circ\res_{S_{0}\leq u}$.
We also have an extension functor
\[
\ext_{S_{0}\leq u}\colon\Fct(\Z_{S_{0}\leq u}^{\Lambda},\Modu k)\to\Fct(\Z^{\Lambda},\Modu k)
\]
 given by 
\[
F\mapsto\left(a\mapsto\begin{cases}
F(a) & (a_{S_{0}}\leq u)\\
F(a(S_{0}=u)) & (a_{S_{0}}\geq u+1)
\end{cases}\right)
\]
and extension functors 
\begin{align*}
 & \ext^{S_{0}\geq u}\colon\Fct(\Z_{S_{0}\geq u}^{\Lambda},\Modu k)\to\Fct(\Z^{\Lambda},\Modu k)\\
 & \ext^{S_{0}\geq u}\colon\Fct(\Z_{S_{0}=u}^{\Lambda},\Modu k)\to\Fct(\Z_{S_{0}\leq u}^{\Lambda},\Modu k)
\end{align*}
 given by 
\[
F\mapsto\left(a\mapsto\begin{cases}
F(a) & (a_{S_{0}}\geq u)\\
0 & (a_{S_{0}}\leq u-1)
\end{cases}\right).
\]
A restriction functor $\res_{S_{0}\leq u}$ is the right adjoint of
an extension functor $\ext_{S_{0}\leq u}$. ``Restriction'' functors
$\res^{S_{0}\geq u}$ are the left adjoints of extension functors
$\ext^{S_{0}\geq u}$. 
\end{rem}

\begin{rem}
Functors in Remark \ref{rem:restriction to small p.o.subset} give
functors of presheaves on $Z$, hence by sheafifications, give functors
of sheaves in $\Shv$ on $Z$. Obviously, they preserves LG-isomorphisms.
Consequently, we have functors 
\begin{align*}
\ext_{S_{0}\leq u}\colon\Shv_{LG}(Z,\Fct(\Z_{S_{0}\leq u}^{\Lambda},\Modu k)) & \rightleftarrows\Shv_{LG}(Z,\gMod k[T_{S}]_{S\in\Lambda}):\res_{S_{0}\leq u}\\
\res^{S_{0}\geq u}\colon\Shv_{LG}(Z,\gMod k[T_{S}]_{S\in\Lambda}) & \rightleftarrows\Shv_{LG}(Z,\Fct(\Z_{S_{0}\geq u}^{\Lambda},\Modu k))\colon\ext^{S_{0}\geq u}\\
\res^{S_{0}\geq u}\colon\Shv_{LG}(Z,\Fct(\Z_{S_{0}\leq u}^{\Lambda},\Modu k)) & \rightleftarrows\Shv_{LG}(Z,\Fct(\Z_{S_{0}=u}^{\Lambda},\Modu k))\colon\ext^{S_{0}\geq u}.
\end{align*}
 Functors $\res_{S_{0}\leq u}$ and $\ext_{S_{0}\leq u}$ are exact,
hence we also have 
\begin{align*}
\ext_{S_{0}\leq u}\colon D_{LG}^{b}(\Shv(Z,\Fct(\Z_{S_{0}\leq u}^{\Lambda},\Modu k))) & \rightleftarrows D_{LG}^{b}(Z,\gMod k[T_{S}]_{S\in\Lambda})\colon\res_{S_{0}\leq u}.
\end{align*}
\end{rem}

\begin{lem}
\label{lem:truncation-morphism-on-homology} Let $r,s,t\in\Z$ be
with $t+1\leq s$, and $A,B\in D_{[-\gr(X),0],LG}^{b}(Z,\gMod k[T_{S}]_{S\in\Lambda})$
such that for an adapted pair $(L,a)$ with $r(L)\geq1$ and $S_{L,r(L)}=S_{0}$,
we have 
\begin{align*}
\H^{j}(A)(a)\cong_{L}\H^{j}(A)(a(S_{0}=r-j)) & \quad(\max\left\{ t,r-a_{S_{0}}\right\} +1\leq j\leq s)\\
\H^{j}(A)=0 & \quad(j\geq s+1)\\
\H^{j}(B)(a)=0 & \quad(j\leq\min\left\{ \max\left\{ t,r-a_{S_{0}}\right\} ,s\right\} -1).
\end{align*}
(See Definition \ref{def:L-local isom} for $L$-local isomorphisms
$\cong_{L}$.) Then the canonical map 
\begin{align*}
 & \Hom_{D_{LG}^{b}(Z,\gMod k[T_{S}]_{S\in\Lambda})}(A,B)\\
\to & \Hom_{\Shv_{LG}(Z,\Fct(\Z_{S_{0}\geq r-t}^{\Lambda},\Modu k))}(\res^{S_{0}\geq r-t}\H^{t}(A),\res^{S_{0}\geq r-t}\H^{t}(B))\\
 & \oplus\bigoplus_{t+1\leq j\leq s-1}\Hom_{\Shv_{LG}(Z,\Fct(\Z_{S_{0}=r-j}^{\Lambda},\Modu k))}(\res_{S_{0}=r-j}\H^{j}(A),\res_{S_{0}=r-j}\H^{j}(B))\\
 & \oplus\Hom_{\Shv_{LG}(Z,\Fct(\Z_{S_{0}\leq r-s}^{\Lambda},\Modu k))}(\res_{S_{0}\leq r-s}\H^{s}(A),\res_{S_{0}\leq r-s}\H^{s}(B))
\end{align*}
is an injection. 
\end{lem}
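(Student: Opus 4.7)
The plan is to use the equivalence $D^b(\Shv_{LG})\cong D_{LG}^b$ of Lemma~\ref{lem:LG-isom and LG-q-isom} and analyze a morphism $f\colon A\to B$ in $D_{LG}^b(Z,\gMod k[T_S]_{S\in\Lambda})$ by decomposing the grade poset $\Z^{\Lambda}$ along the $S_0$-coordinate into three regions: the upper region $\{a_{S_0}\ge r-t\}$, the middle layers $\{a_{S_0}=r-j\}$ for $t+1\le j\le s-1$, and the lower region $\{a_{S_0}\le r-s\}$. These three regions are captured, via the restriction functors $\res^{S_0\ge r-t}$, $\res_{S_0=r-j}$, and $\res_{S_0\le r-s}$ from Remark~\ref{rem:restriction to small p.o.subset}, by the three summands in the target of the canonical map.

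On each region, I claim that the restricted complexes satisfy concentration bounds allowing application of the standard t-structure identity $\Hom_{D^b}(X,Y)\cong\Hom(\H^nX,\H^nY)$ when $X\in D^{\le n}$ and $Y\in D^{\ge n}$. On the upper region, the LG-isomorphism $\H^i(A)(a)\cong_L\H^i(A)(a(S_0=r-i))$ for $i>t$ implies that multiplication by $T_{S_0}$ on $\H^i(A)$ is an LG-isomorphism between adjacent $S_0$-grades, so the cokernel $\res^{S_0\ge r-t}\H^i(A)$ vanishes for $i>t$; combined with $\H^i(A)=0$ for $i\ge s+1$, this gives $\res^{S_0\ge r-t}A\in D^{\le t}$. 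The vanishing hypothesis on $B$ gives $\res^{S_0\ge r-t}B\in D^{\ge t}$, so the first summand identifies with $\Hom_{D^b}(\res^{S_0\ge r-t}A,\res^{S_0\ge r-t}B)$. Parallel arguments on the middle layer $a_{S_0}=r-j$ yield $\res_{S_0=r-j}A\in D^{\le j}$ and $\res_{S_0=r-j}B\in D^{\ge j}$, and on the lower region $\res_{S_0\le r-s}A\in D^{\le s}$ and $\res_{S_0\le r-s}B\in D^{\ge s}$, matching the remaining summands.

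With these identifications, the hypothesis that $f$ lies in the kernel of the canonical map becomes the statement that the restriction of $f$ to each of the three sub-posets is zero in the corresponding derived category. The final step is to conclude $f=0$ in $D_{LG}^b(Z,\gMod k[T_S]_{S\in\Lambda})$, which I would establish by combining the faithfulness statement of Remark~\ref{rem:direct sum of -(a) is faithful} with a d\'evissage using the adjunctions $(\ext_{S_0\le u},\res_{S_0\le u})$ and $(\res^{S_0\ge u},\ext^{S_0\ge u})$ to express $A$ (and $B$) as iterated extensions of pieces supported in one of the three sub-posets, and then apply the vanishing region by region. The main obstacle will be ensuring that this gluing is clean in the localized category $D_{LG}^b$---in particular, that taking cones of the relevant adjunction units and counits respects LG-quasi-isomorphisms, and that the $\Hom$ identifications above persist at the level of the localized categories $\Shv_{LG}$ rather than only in the unlocalized $\Shv$.
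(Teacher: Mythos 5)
There is a genuine gap, and it sits exactly where you flag "the main obstacle": the gluing step at the end is not an obstacle to be smoothed over but is the entire content of the lemma, and your proposed mechanism for it does not work. The grade poset $\Z^{\Lambda}$ does not induce a semiorthogonal decomposition of $D^{b}(Z,\gMod k[T_{S}]_{S\in\Lambda})$ into pieces "supported" on the regions $\{a_{S_{0}}\geq r-t\}$, $\{a_{S_{0}}=r-j\}$, $\{a_{S_{0}}\leq r-s\}$: the functor $\ext_{S_{0}\leq u}$ is the \emph{constant} (left Kan) extension, not extension by zero, so $A$ is not an iterated extension of objects concentrated on these sub-posets, and vanishing of the three restricted components of $f$ does not formally force $f=0$. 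Remark \ref{rem:direct sum of -(a) is faithful} is a faithfulness statement for the abelian category $\Shv_{LG}$ and gives no handle on morphisms in the derived category. The ingredient you are missing is the paper's first move: a d\'evissage on the \emph{cohomological} degree of $A$, using the exact sequence
\begin{align*}
\Hom(\tau_{\leq u}A[1]_{\deg},B)\to\Hom(\tau^{\geq u+1}A,B)\to\Hom(A,B)\to\Hom(\tau_{\leq u}A,B),
\end{align*}
the five lemma, and induction on the number of nonvanishing $\H^{j}(A)$, to reduce to $A$ concentrated in a single degree $v$. Once $A$ lives in one degree, the hypothesis gives $A\cong\ext_{S_{0}\leq r-v}\res_{S_{0}\leq r-v}A$, and a chain of adjunctions (first $(\ext_{S_{0}\leq r-v},\res_{S_{0}\leq r-v})$, then the t-structure identification in $D^{b}(\Shv_{LG}(Z,\Fct(\Z_{S_{0}\leq r-v}^{\Lambda},\Modu k)))$, then $(\res^{S_{0}\geq r-v},\ext^{S_{0}\geq r-v})$ using the vanishing of $\res_{S_{0}\leq r-v}\H^{v}(B)$ below level $r-v$) lands in exactly one summand of the target. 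No gluing over the three regions is ever needed.

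A secondary but real problem is your treatment of $\res^{S_{0}\geq u}$ and $\res_{S_{0}=u}$ as if they were exact functors of complexes. They are cokernel constructions, hence only right exact; statements such as "$\res^{S_{0}\geq r-t}A\in D^{\leq t}$" or the identification of $\H^{t}$ of the restricted complex with $\res^{S_{0}\geq r-t}\H^{t}(A)$ require passing to the left derived functor and controlling the kernel terms coming from $\H^{j}(A)$ for $j>t$, which you do not address (and which is why the target of the canonical map is stated in terms of restrictions of the cohomology \emph{sheaves}, not cohomology of restricted complexes). The paper avoids this entirely: only the exact adjoint pair $(\ext_{S_{0}\leq u},\res_{S_{0}\leq u})$ is ever applied to complexes, and $\res^{S_{0}\geq u}$, $\res_{S_{0}=u}$ are applied only to the individual sheaves $\H^{j}(A)$, $\H^{j}(B)$.
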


\begin{proof}
By an exact sequence
\[
\Hom(\tau_{\leq u}A[1],B)\to\Hom(\tau^{\geq u+1}A,B)\to\Hom(A,B)\to\Hom(\tau_{\leq u}A,B)
\]
(where let$\Hom$ denote $\Hom_{D_{LG}^{b}(Z,\gMod k[T_{S}]_{S\in\Lambda})}$
for short), five lemma, and induction on the number of $j\in\Z$ such
that $\H^{j}(A)\neq0\in\Shv_{LG}(Z,\gMod k[T_{S}]_{S\in\Lambda})$,
we may assume that $A^{u}=0$ ($u\neq v$) for some $v\in\Z$. We
assume that $t+1\leq v\leq s-1$. The other cases can be proved similarly.
Since 
\[
\ext_{S_{0}\leq r-v}\res_{S_{0}\leq r-v}(A)\cong A,
\]
we have 
\begin{align*}
 & \Hom_{D_{LG}^{b}(Z,\gMod k[T_{S}]_{S\in\Lambda})}(A,B)\\
\cong & \Hom_{D_{LG}^{b}(Z,\gMod k[T_{S}]_{S\in\Lambda})}(\ext_{S_{0}\leq r-v}\res_{S_{0}\leq r-v}(A),B)\\
\cong & \Hom_{D_{LG}^{b}(Z,\Fct(\Z_{S_{0}\leq r-v}^{\Lambda},\Modu k))}(\res_{S_{0}\leq r-v}(A),\res_{S_{0}\leq r-v}(B))\\
\cong & \Hom_{\Shv_{LG}(Z,\Fct(\Z_{S_{0}\leq r-v}^{\Lambda},\Modu k))}(\H^{v}(\res_{S_{0}\leq r-v}(A)),\H^{v}(\res_{S_{0}\leq r-v}(B)))\\
\cong & \Hom_{\Shv_{LG}(Z,\Fct(\Z_{S_{0}\leq r-v}^{\Lambda},\Modu k))}(\res_{S_{0}\leq r-v}(\H^{v}(A)),\res_{S_{0}\leq r-v}(\H^{v}(B)))
\end{align*}
 where the third isomophism follows from an equivalence 
\[
D_{LG}^{b}(\Shv(Z,\Fct(\Z_{S_{0}\leq r-v}^{\Lambda},\Modu k)))\cong D^{b}(\Shv_{LG}(Z,\Fct(\Z_{S_{0}\leq r-v}^{\Lambda},\Modu k)))
\]
(Remark \ref{rem:D_LG for small p.o.subset}), $\H^{j}(\res_{S_{0}\leq r-v}(A))=0$
($j\geq v+1$), and 
\[
\H^{j}(\res_{S_{0}\leq r-v}(B))=0\in\Shv_{LG}(Z,\Fct(\Z_{S_{0}\leq r-t}^{\Lambda},\Modu k))\quad(j\leq v-1).
\]
Since by assumption, 
\[
\res_{S_{0}\leq r-v}(\H^{v}(B))=\ext^{S_{0}\geq r-v}\res^{S_{0}\geq r-v}\res_{S_{0}\leq r-v}(\H^{v}(B)),
\]
 we have 
\begin{align*}
 & \Hom_{\Shv_{LG}(Z,\Fct(\Z_{S_{0}\leq r-v}^{\Lambda},\Modu k))}(\res_{S_{0}\leq r-v}(\H^{v}(A)),\res_{S_{0}\leq r-v}(\H^{v}(B)))\\
\cong & \Hom_{\Shv_{LG}(Z,\Fct(\Z_{S_{0}\leq r-v}^{\Lambda},\Modu k))}(\res_{S_{0}\leq r-v}(\H^{v}(A)),\ext^{S_{0}\geq r-v}\res^{S_{0}\geq r-v}\res_{S_{0}\leq r-v}(\H^{v}(B)))\\
\cong & \Hom_{\Shv_{LG}(Z,\Fct(\Z_{S_{0}=r-v}^{\Lambda},\Modu k))}(\res_{S_{0}=r-v}(\H^{v}(A)),\res_{S_{0}=r-v}(\H^{v}(B))).
\end{align*}
\end{proof}
\begin{cor}
\label{cor:truncation-hom-isomorphism} Let $r,s,t\in\Z$ be with
$t+1\leq s$, $A\in D_{[-\gr(X),0],LG}^{b}(Z,\gMod k[T_{S}]_{S\in\Lambda})$
satisfy the condition in Lemma \ref{lem:truncation-morphism-on-homology},
and $\psi\colon B\to C$ a morphism in $D_{[-\gr(X),0],LG}^{b}(Z,\gMod k[T_{S}]_{S\in\Lambda})$
such that for an adapted pair $(L,a)$ with $r(L)\geq1$ and $S_{L,r(L)}=S_{0}$,
we have 
\begin{align*}
\H^{j}(\psi)(a)\colon\H^{j}(B)(a)\cong_{L}\H^{j}(C)(a) & \quad(j\leq\min\left\{ \max\left\{ t,r-a_{S_{0}}\right\} ,s\right\} ).
\end{align*}
Then we have 
\[
\Hom_{D_{[-\gr(X),0],LG}^{b}(Z,\gMod k[T_{S}]_{S\in\Lambda})}(A,B)\cong\Hom_{D_{[-\gr(X),0],LG}^{b}(Z,\gMod k[T_{S}]_{S\in\Lambda})}(A,C).
\]
\end{cor}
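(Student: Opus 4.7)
The plan is to deduce the corollary by applying Lemma \ref{lem:truncation-morphism-on-homology} to the cone of $\psi$. Let $D$ be the cone of $\psi$, which lies in $D_{[-\gr(X),0],LG}^{b}(Z,\gMod k[T_{S}]_{S\in\Lambda})$ and fits in a distinguished triangle $B\to C\to D\to B[1]$. Applying $\Hom_{D_{[-\gr(X),0],LG}^{b}}(A,-)$ yields an exact sequence
\[
\Hom(A,D[-1])\to\Hom(A,B)\to\Hom(A,C)\to\Hom(A,D),
\]
so it suffices to show that both $\Hom(A,D)$ and $\Hom(A,D[-1])$ vanish.

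For every adapted pair $(L,a)$ with $r(L)\geq1$ and $S_{L,r(L)}=S_{0}$, write $j_{0}(a):=\min\{\max\{t,r-a_{S_{0}}\},s\}$. The hypothesis that $\H^{j}(\psi)(a)$ is an $L$-local isomorphism for $j\leq j_{0}(a)$, together with the long exact cohomology sequence of the triangle, gives $\H^{j}(D)(a)\cong_{L}0$ for $j\leq j_{0}(a)-1$: surjectivity of $\H^{j_{0}(a)}(\psi)(a)$ kills the contribution from $\H^{j-1}(C)(a)$, while injectivity of $\H^{j+1}(\psi)(a)$ forces the boundary map $\H^{j}(D)(a)\to\H^{j+1}(B)(a)$ to vanish. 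This is exactly the condition on the second argument required by Lemma \ref{lem:truncation-morphism-on-homology}, applied now to the pair $(A,D)$. That lemma provides an injection of $\Hom(A,D)$ into a direct sum of $\Hom$-groups of the form $\Hom_{\Shv_{LG}}(\res\H^{j}(A),\res\H^{j}(D))$ in suitable localized abelian categories of functors on $\Z_{S_{0}?u}^{\Lambda}$. Since each of the relevant $\res\H^{j}(D)$ is zero in its localized target category (being the restriction of an object that is $L$-locally zero on the adapted pairs entering the definition of the corresponding $\Shv_{LG}$), the injection has zero codomain, so $\Hom(A,D)=0$. Applying the same reasoning to $D[-1]$, whose cohomology satisfies $\H^{j}(D[-1])(a)=\H^{j-1}(D)(a)\cong_{L}0$ for $j\leq j_{0}(a)$ — a fortiori for $j\leq j_{0}(a)-1$ — gives $\Hom(A,D[-1])=0$, completing the argument.

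The main technical point is verifying the passage from grade-wise $L$-local vanishings $\H^{j}(D)(a)\cong_{L}0$ (stated only for adapted pairs with $S_{L,r(L)}=S_{0}$) to the actual vanishing of each summand $\Hom_{\Shv_{LG}}(\res\H^{j}(A),\res\H^{j}(D))$ in the target of Lemma \ref{lem:truncation-morphism-on-homology}. This requires checking that the three restriction functors $\res^{S_{0}\geq r-t},\res_{S_{0}=r-j},\res_{S_{0}\leq r-s}$ carry objects that are $L$-locally zero on adapted pairs ending in $S_{0}$ to the zero object in their respective LG-localizations (which only involve grades $S$ with $S\subsetneq S_{0}$, together with the prescribed value of the $S_{0}$-grade). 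This is a bookkeeping exercise on the localization formalism of Subsection \ref{subsec:Localization-by-LF-quasi-isomorp} and Subsection \ref{subsec: locally graded sheaves Verdier-duality}; once done, the rest of the proof is a formal consequence of the distinguished triangle and Lemma \ref{lem:truncation-morphism-on-homology}.
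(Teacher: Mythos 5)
Your overall strategy (form the cone $D$ of $\psi$, apply Lemma \ref{lem:truncation-morphism-on-homology} to the pair $(A,D)$, and conclude via the long exact sequence) is the same as the paper's, and your treatment of $\Hom(A,D[-1])=0$ is correct. However, the claim $\Hom(A,D)=0$ is a genuine error. Writing $j_{0}(a):=\min\{\max\{t,r-a_{S_{0}}\},s\}$, the long exact sequence only gives $\H^{j}(D)(a)\cong_{L}0$ for $j\leq j_{0}(a)-1$; it says nothing about $\H^{j_{0}(a)}(D)(a)$, which in general is nonzero (it injects into $\H^{j_{0}(a)+1}(B)(a)$). Now look at which grades $a$ actually enter each summand of the injection target in Lemma \ref{lem:truncation-morphism-on-homology}: the $j=t$ summand uses $a_{S_{0}}\geq r-t$, where $j_{0}(a)=t$; the $j$-th middle summand uses $a_{S_{0}}=r-j$, where $j_{0}(a)=j$; and the $j=s$ summand uses $a_{S_{0}}\leq r-s$, where $j_{0}(a)=s$. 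In every case the cohomology of $D$ being paired with $\H^{j}(A)$ sits exactly in degree $j=j_{0}(a)$, i.e.\ precisely the one degree where no vanishing is available. So the codomain of the injection is not zero, and $\Hom(A,D)=0$ does not follow (nor is it true in general --- for $\psi$ a truncation morphism the cone typically receives nonzero maps from $A$).

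The correct conclusion, and the one the paper draws, is weaker but sufficient: the \emph{composite} $\Hom(A,C)\to\Hom(A,D)$ is the zero map. This uses two extra observations you omitted. First, surjectivity of $\H^{j_{0}(a)}(\psi)(a)$ forces $\H^{j_{0}(a)}(C)(a)\to\H^{j_{0}(a)}(D)(a)$ to be zero, so for any $f\colon A\to C$ the induced map $\H^{j}(A)(a)\to\H^{j}(D)(a)$ vanishes in each summand's relevant degree. Second, since the summands involve $\res^{S_{0}\geq r-t}\H^{t}(A)$ and $\res_{S_{0}=r-j}\H^{j}(A)$, which are quotients of $\H^{j}(A)$, one needs the injectivity of
\[
\Hom((T\H^{j}(A))(a),\H^{j}(D)(a))\to\Hom(\H^{j}(A)(a),\H^{j}(D)(a))
\]
to transfer the vanishing back to the quotient. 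Combined with $\Hom(A,D[-1])=0$ and the exact sequence, this gives the isomorphism. Without this replacement of ``$\Hom(A,D)=0$'' by ``the map from $\Hom(A,C)$ is zero,'' the proof does not go through.
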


\begin{proof}
This can be proved in the same way as \cite[Subsection 1.15, Proposition]{GoreskyMacPhersonIntersectionhomologyII83}
as follows. Let $M$ be the mapping cyclinder of $\psi$. Then we
have 
\[
\H^{j}(M)(a)=0\quad(j\leq\min\left\{ \max\left\{ t,r-a_{S_{0}}\right\} ,s\right\} -1)
\]
and $\H^{j}(C)(a)\to\H^{j}(M)(a)$ is $0$ map for $j=\min\left\{ \max\left\{ t,r-a_{S_{0}}\right\} ,s\right\} $.
By Remark \ref{rem:direct sum of -(a) is faithful-1} and Lemma \ref{lem:truncation-morphism-on-homology},
we have an injection
\begin{align*}
 & \Hom_{D_{LG}^{b}(Z,\gMod k[T_{S}]_{S\in\Lambda})}(A,M)\\
\to & \bigoplus'_{a\in[-\gr(X),(v,\dots,v)]\cap\Z_{S_{0}\geq r-t}^{\Lambda}}\Hom_{\Shv_{\epsilon,L(a)}(Z,\Modu k)}((\res^{S_{0}\geq r-t}\H^{t}(A))(a),\H^{t}(M)(a))\\
 & \oplus\bigoplus_{t+1\leq j\leq s-1}\bigoplus'_{a\in[-\gr(X),(v,\dots,v)]\cap\Z_{S_{0}=r-j}^{\Lambda}}\Hom_{\Shv_{\epsilon,L(a)}(Z,\Modu k)}((\res_{S_{0}=r-j}\H^{j}(A))(a),\H^{j}(M)(a))\\
 & \oplus\bigoplus'_{a\in[-\gr(X),(v,\dots,v)]\cap\Z_{S_{0}\leq r-s}^{\Lambda}}\Hom_{\Shv_{\epsilon,L(a)}(Z,\Modu k)}(\H^{s}(A)(a),\H^{s}(M)(a)),
\end{align*}
where $v:=\max\{0,r-t\}$, $a$ runs through elements such that the
set of $S_{0}$ and $S\in\Lambda$ with $a_{S}\leq-1$ forms a sequence
\[
L(a)=(S_{L(a),1}\subsetneq\dots\subsetneq S_{L(a),r(L(a))-1}\subsetneq S_{L(a),r(L(a))}=S_{0})
\]
 ($r(L(a))\in\Z_{\geq1}$). Note that for $T:=\res^{S_{0}\geq r-t},\res_{S_{0}=r-j}$,
the natural map
\[
\Hom_{\Shv_{\epsilon,L(a)}(Z,\Modu k)}((T\H^{j}(A))(a),\H^{j}(M)(a))\to\Hom_{\Shv_{\epsilon,L(a)}(Z,\Modu k)}(\H^{j}(A)(a),\H^{j}(M)(a))
\]
 is injective. Hence the map 
\[
\Hom_{D_{[-\gr(X),0],LG}^{b}(Z,\gMod k[T_{S}]_{S\in\Lambda})}(A,C)\to\Hom_{D_{[-\gr(X),0],LG}^{b}(Z,\gMod k[T_{S}]_{S\in\Lambda})}(A,M)
\]
 is $0$ map. We also have 
\[
\Hom_{D_{[-\gr(X),0],LG}^{b}(Z,\gMod k[T_{S}]_{S\in\Lambda})}(A,M)[-1]=0.
\]
Hence the assertion holds. 
\end{proof}
Let $j\colon\relint S_{0}\cap Y\hookrightarrow Y$ is an embedding.
We put $i\colon Y\setminus\relint S_{0}\hookrightarrow Y$ the complement. 
\begin{cor}
\label{cor:supported-truncation-hom-isomorphism} We assume that $j\colon\relint S_{0}\cap Y\hookrightarrow Y$
is a closed subset. Let $r,s,t\in\Z$ be with $t+1\leq s$, $A,B,C\in D_{[-\gr(X),0],LG}^{b}(Y,\gMod k[T_{S}]_{S\in\Lambda})$,
and $\psi\colon B\to C$ a morphism in $D_{[-\gr(X),0],LG}^{b}(Y,\gMod k[T_{S}]_{S\in\Lambda})$
such that for an adapted pair $(L,a)$ with $r(L)\geq1$ and $S_{L,r(L)}=S_{0}$,
we have 
\begin{align*}
\H^{j}(j^{*}A)(a)\cong_{L}\H^{j}(j^{*}A)(a(S_{0}=r-j)) & \quad(\max\left\{ t,r-a_{S_{0}}\right\} +1\leq j\leq s)\\
\H^{j}(j^{*}A)=0 & \quad(j\geq s+1)\\
\H^{j}(j^{*}\psi)(a)\colon\H^{j}(j^{*}B)(a)\cong_{L}\H^{j}(j^{*}C)(a) & \quad(j\leq\min\left\{ \max\left\{ t,r-a_{S_{0}}\right\} ,s\right\} )\\
i^{*}\psi\colon i^{*}B\cong_{LG}i^{*}C
\end{align*}
(e.g., $\psi\colon B=\tau_{\leq(t,r-S_{0},s)}^{Z}C\to C$). Then we
have 
\[
\Hom_{D_{[-\gr(X),0],LG}^{b}(Y,\gMod k[T_{S}]_{S\in\Lambda})}(A,B)\cong\Hom_{D_{[-\gr(X),0],LG}^{b}(Y,\gMod k[T_{S}]_{S\in\Lambda})}(A,C).
\]
\end{cor}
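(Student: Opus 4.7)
The plan is to reduce to Corollary \ref{cor:truncation-hom-isomorphism} by showing that the mapping cone of $\psi$ is LG-supported on $\relint S_0 \cap Y$, and then using the closed-open adjunction across $j$.

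First I would form the mapping cone $M$ of $\psi\colon B\to C$, giving a distinguished triangle $B\to C\to M\to B[1]$ in $D^b_{[-\gr(X),0],LG}(Y,\gMod k[T_S]_{S\in\Lambda})$. Applying $\Hom_{D^b_{[-\gr(X),0],LG}(Y,\gMod k[T_S]_{S\in\Lambda})}(A,-)$ produces a long exact sequence, so it suffices to show $\Hom(A,M)=0$ and $\Hom(A,M[-1])=0$. Since $i^*\psi\colon i^*B\cong_{LG} i^*C$, the cone $i^*M\cong_{LG} 0$; hence in the distinguished triangle
\[
i_!i^*M\to M\to j_*j^*M\to i_!i^*M[1]
\]
the left term vanishes in $D^b_{[-\gr(X),0],LG}(Y,\gMod k[T_S]_{S\in\Lambda})$ (using that $i_!$ is defined grade-wise and thus descends to the LG-localization, cf.\ Remark \ref{rem:some functors on D_LG}). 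Therefore $M\cong_{LG} j_*j^*M$, and the same holds for $M[-1]$.

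Next, by the $(j^*,j_*)$-adjunction (which is likewise inherited by the LG-localization since both functors are given grade-wise), we have
\[
\Hom_{D^b_{[-\gr(X),0],LG}(Y)}(A,M)\cong\Hom_{D^b_{[-\gr(X),0],LG}(\relint S_0\cap Y)}(j^*A,j^*M),
\]
and likewise for $M[-1]$. Since $j\colon\relint S_0\cap Y\hookrightarrow Y$ is a closed embedding by hypothesis, we may take $Z=\relint S_0\cap Y$ in the setup of Subsection \ref{subsec:Truncation-functors}, so $j^*A,j^*B,j^*C$ lie in $D^b_{[-\gr(X),0],LG}(Z,\gMod k[T_S]_{S\in\Lambda})$. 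The hypotheses of the statement on $j^*A$ and $j^*\psi$ are precisely those required to invoke Corollary \ref{cor:truncation-hom-isomorphism} on $Z$, yielding
\[
\Hom_{D^b_{[-\gr(X),0],LG}(Z)}(j^*A,j^*B)\cong\Hom_{D^b_{[-\gr(X),0],LG}(Z)}(j^*A,j^*C).
\]
Combined with the long exact sequence this gives $\Hom(j^*A,j^*M)=0$ and similarly for the shift, completing the argument.

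The main subtlety, and the part I would check most carefully, is the compatibility of the closed-open recollement with the LG-localization: specifically, that $i_!i^*\to\id\to j_*j^*\to i_!i^*[1]$ remains a distinguished triangle in $D^b_{[-\gr(X),0],LG}(Y,\gMod k[T_S]_{S\in\Lambda})$ and that the adjunction $\Hom(A,j_*E)\cong\Hom(j^*A,E)$ survives localization. Both reduce to the observation that $i_!,i^*,j_*,j^*$ are computed grade-wise on $\Shv(-,\Modu k)$ and thus preserve LG-quasi-isomorphisms (Remark \ref{rem:some functors on D_LG}); once that is granted, the triangle and the adjunction descend formally from the unlocalized derived categories. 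The only other point to verify is that the condition $-\gr(X)\leq a\leq 0$ is preserved under $j^*$ and under forming the cone $M$, which is immediate from exactness of $j^*$ and from the restriction operation of Definition \ref{def:restriction to =00005B-gr(X),0=00005D locus}.
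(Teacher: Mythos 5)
Your overall strategy---reduce to Corollary \ref{cor:truncation-hom-isomorphism} on the closed stratum via the recollement triangle $i_{!}i^{*}\to\id\to j_{*}j^{*}\to{}^{[1]}$ and the $(j^{*},j_{*})$-adjunction in the LG-localization---is the right one, and your preliminary reductions (descent of the triangle and of the adjunction to $D_{[-\gr(X),0],LG}^{b}$, preservation of the grading bounds, $M\cong_{LG}j_{*}j^{*}M$) are fine. The paper's own proof is the same idea packaged differently: it applies the triangle to $B$ and $C$ separately and concludes by the five lemma. However, your final step has a genuine gap: the claim $\Hom(j^{*}A,j^{*}M)=0$ does not follow from what you have established. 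The long exact sequence
\[
\Hom(j^{*}A,j^{*}B)\to\Hom(j^{*}A,j^{*}C)\to\Hom(j^{*}A,j^{*}M)\to\Hom(j^{*}A,j^{*}B[1]_{\deg})\to\Hom(j^{*}A,j^{*}C[1]_{\deg})
\]
together with the isomorphism $\Hom(j^{*}A,j^{*}B)\cong\Hom(j^{*}A,j^{*}C)$ only shows that $\Hom(j^{*}A,j^{*}M)$ injects into $\Hom(j^{*}A,j^{*}B[1]_{\deg})$; to kill it you would need the last displayed map to be injective, i.e.\ Corollary \ref{cor:truncation-hom-isomorphism} for $(j^{*}A,j^{*}B[1]_{\deg},j^{*}C[1]_{\deg})$. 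But the hypothesis on the shifted $\psi$ would require $\H^{j}(j^{*}\psi)(a)$ to be an $L$-isomorphism up to degree $\min\left\{ \max\left\{ t,r-a_{S_{0}}\right\} ,s\right\} +1$, which is not assumed. And the vanishing is not to be expected: already for $B=\tau_{\leq(t,r-S_{0},s)}^{Z}C$ the cone can have nonzero $\H^{t}(j^{*}M)(a)$ at grades with $a_{S_{0}}\geq r-t$, where $\H^{t}(j^{*}A)(a)$ may also be nonzero, so your argument does not rule out a nonzero Hom group.

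The gap is repairable without changing your framework: you do not need $\Hom(j^{*}A,j^{*}M)=0$. It suffices to have (i) that the composite $\Hom(j^{*}A,j^{*}C)\to\Hom(j^{*}A,j^{*}M)$ is the zero map, which is automatic from the surjectivity of $\Hom(j^{*}A,j^{*}B)\to\Hom(j^{*}A,j^{*}C)$ and exactness, and (ii) $\Hom(j^{*}A,j^{*}M[-1]_{\deg})=0$, which does follow by applying Corollary \ref{cor:truncation-hom-isomorphism} a second time to $(j^{*}A,j^{*}B[-1]_{\deg},j^{*}C[-1]_{\deg})$: shifting down by one only weakens the range of degrees in which $\H^{j}(j^{*}\psi)$ must be an $L$-isomorphism, so the hypotheses still hold, and the resulting isomorphisms on both sides of $\Hom(j^{*}A,j^{*}M[-1]_{\deg})$ in the long exact sequence force it to vanish. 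Conditions (i) and (ii) give injectivity and surjectivity of $\Hom(A,B)\to\Hom(A,C)$ exactly as in the paper's five-lemma argument, which uses precisely these two applications of Corollary \ref{cor:truncation-hom-isomorphism} (unshifted and $[-1]_{\deg}$-shifted) together with $i^{*}\psi$ being an LG-quasi-isomorphism.
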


\begin{proof}
By applying $Ri_{!}i^{*}\to\id\to Rj_{*}j^{*}\to^{[1]}$ to $B$ and
$C$, the assertion follows from five lemma and Corollary \ref{cor:truncation-hom-isomorphism}
for $(j^{*}A,j^{*}B,j^{*}C)$ and $(j^{*}A,j^{*}B[-1]_{\deg},j^{*}C[-1]_{\deg})$.
\end{proof}

\section{Sheaf-theoretic construction and Poincar\'{e}-Verdier duality\label{sec:Intersection-product-and-1}\protect 
}Let $X=(\Lambda,w)$ be a tropical variety of dimension $d$ in a
tropical toric variety $\Trop(T_{\Sigma})$. We assume that $T_{\Sigma}$
is smooth and $X$ is regular at infinity. In this section, we shall
introduce 
\[
\Fil_{*}^{\Lambda}IC_{\Trop,\sheaf}^{p,*}\in D_{c,[-\gr(X),0]}^{b}(X,\gMod\Q[T_{S}]_{S\in\Lambda})
\]
 (Definition\ref{def:sheaf-theoretic IC-1}), and give an LG-quasi-isomorphism
\[
\Fil_{*}^{\Lambda}IC_{\Trop,\sheaf}^{p,*}\cong_{LG}D(\Fil_{*}^{\Lambda}IC_{\Trop,\sheaf}^{d-p,*})[2d]_{\deg}[\gr(X)]_{\gr}
\]
 (Theorem \ref{thm:Verdier-duality-1}). The \emph{tropical intersection
complex} $IC_{\Trop,\sheaf}^{p,*}$ is defined as $\For_{\Lambda}\Fil_{*}^{\Lambda}IC_{\Trop,\sheaf}^{p,*}$,
and the above LG-quasi-isomorphism gives a quasi-isomorphism 
\[
IC_{\Trop,\sheaf}^{p,*}\cong D(IC_{\Trop,\sheaf}^{d-p,*})[2d]_{\deg}.
\]
Later (Proposition \ref{prop:comparison of 2 definition-1}), we will
show that there is a natural LG-quasi-isomorphism 
\[
\Fil_{*}^{\Lambda}IC_{\Trop,\geom,X}^{p,*}\cong_{LG}\Fil_{*}^{\Lambda}IC_{\Trop,\sheaf,X}^{p,*}.
\]
Remind that we put 
\[
\gr(X):=\sum_{S\in\Lambda}(\dim S+2\dim\sigma_{S})e_{S}\in\Z^{\Lambda},
\]
where $e_{S}\in\Z^{\Lambda}$ is the element whose $S$-component
is $1$ and other components are $0$. 

\subsection{On smooth part\label{subsec:On-smooth-part} }

In this subsection, we study a sheaf $\Fil_{*}^{\Lambda}\F_{X_{sm}}^{p,w}[2d-p]_{\deg}$
on $X_{\sm}:=\bigcup_{R\in\Lambda_{\sm}}\relint R$. The sheaf $\F_{X_{sm}}^{p,w}=\For_{\Lambda}\Fil_{*}^{\Lambda}\F_{X_{sm}}^{p,w}$
on $X_{\sm}$ is a version of Gubler-Jell-Rabinoff's sheaves for Berkovich
analytic spaces (\cite{GubleJellRabinoffharmonictropicalization21})
and for weighted metric graphs with boundaries (\cite{GublerJellRabinoffDolbeaultCohomologyofGraphsandBerkovichCurves}).
(See also the beginning of Subsection \ref{subsec:Definition}.) In
Subsection \ref{subsec:Intersection-product-and-1}, we will extend
$\Fil_{*}^{\Lambda}\F_{X_{sm}}^{p,w}[2d-p]_{\deg}$ to $\Fil_{*}^{\Lambda}IC_{\Trop,\sheaf}^{p,*}$
on $X$. 

\begin{defn}
\label{def:F^p}For $R\in\Lambda_{\sm}$, we put 
\[
F^{p,w}(R):=\Hom_{\Q}(F_{p,w}(R),\Q).
\]
More explicitly, for $P\in\Lambda_{\sm,d}$, 
\[
F^{p,w}(P):=\bigwedge^{p}\Hom_{\Q}(\Tan_{\Q}P,\Q)
\]
 and for $Q\in\Lambda_{\sm,d-1}$, the set $F^{p,w}(Q)$ is the subset
of $\bigoplus_{\substack{P\in\Lambda_{\sm,d}\\
Q\subset P
}
}\bigwedge^{p}\Hom_{\Q}(\Tan_{\Q}P,\Q)$ consisting of elements $(f_{P})_{P}$ satisfying 
\begin{align*}
f_{P_{i}}|_{\bigwedge^{p}\Tan_{\Q}Q}=f_{P_{j}}|_{\bigwedge^{p}\Tan_{\Q}Q} & \quad(P_{i},P_{j}\in\Lambda_{\sm,d},\ Q\subset P_{i}\cap P_{j})\ \text{and}\\
\sum_{\substack{P\in\Lambda_{\sm,d}\\
Q\subset P
}
}w_{P}f_{P}(v_{P,Q}\wedge u)=0 & \quad(u\in\bigwedge^{p-1}\Tan_{\Q}Q),
\end{align*}
 where$\tilde{v_{P,Q}}\in\Tan_{\Q}P$ is as in Definition \ref{def:F_p}.
\end{defn}

For $Q\in\Lambda_{\sm,d-1}$ and $P\in\Lambda_{\sm,d}$ with $Q\subset P$,
the projection gives a map $F^{p,w}(Q)\to F^{p,w}(P)$.
\begin{defn}
We put $\F_{X_{\sm}}^{p,w}$ the constructible sheaf on $X_{\sm}$
given by $\F_{X_{\sm}}^{p,w}|_{\relint R}=\underline{F^{p,w}(R)}_{\relint R}$
($R\in\Lambda_{\sm}$) and the above maps $F^{p,w}(Q)\to F^{p,w}(P)$,
where $\underline{F^{p,w}(R)}_{\relint R}$ is a constant sheaf.
\end{defn}

Since every chain on $X_{\sm}$ is allowable, the geometric tropical
intersection complex $IC_{\Trop,\geom,X}^{p,*}|_{X_{\sm}}$ on $X_{\sm}$
are just the complex of polyhedral chains with $F_{d-p,w}$-coefficients.
In paricular, $IC_{\Trop,\geom,X}^{d,*}|_{X_{\sm}}$ is just the complex
of $\Q$-coefficients polyhedral chains. 

\begin{lem}
\label{lem:tropical IH on smooth part2.5-1} (\cite[Proposition A.9 and Theorem 4.20]{GrossShokirehAsheaf-theoreticapproachtotropicalhomology23})
We have a natural isomorphism 
\[
IC_{\Trop,\geom,X}^{p,*}|_{X_{\sm}}\cong\Homs(\F_{X_{\sm}}^{d-p,w}[d+p]_{\deg},IC_{\Trop,\geom,X}^{d,*}|_{X_{\sm}})[2d]_{\deg}
\]
 and a quasi-isomorphism 
\[
\Homs(\F_{X_{\sm}}^{d-p,w}[d+p]_{\deg},IC_{\Trop,\geom,X}^{d,*}|_{X_{\sm}})[2d]_{\deg}\cong D(\F_{X_{\sm}}^{d-p,w}[d+p]_{\deg})[2d]_{\deg}.
\]
\end{lem}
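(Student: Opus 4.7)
The plan is to handle the two parts separately, reducing each to a pointwise/local statement and then invoking the cited Gross--Shokireh results to glue things together.

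For the first assertion (natural isomorphism), I would first unwind the definitions on $X_{\sm}$. Since every chain supported in $X_{\sm}$ is automatically allowable, the complex $IC_{\Trop,\geom,X}^{p,*}|_{X_{\sm}}$ coincides with the sheafified complex of polyhedral chains with coefficients in $F_{d-p,w}$ (with the shifts prescribed by Definition \ref{def:geom trop IH sheaf}). On a single $\Delta \subset \relint R$ with $R \in \Lambda_{\sm}$, the local term is $F_{d-p,w}(R)[\Delta]$. By the very definition of $\F_{X_{\sm}}^{d-p,w}$ as the pointwise $\Q$-dual of $F_{d-p,w}$, I get a canonical isomorphism
\[
F_{d-p,w}(R) \cong \Hom_{\Q}(F^{d-p,w}(R), \Q),
\]
and this isomorphism is compatible with the stalk-to-stalk specialization maps $\iota_{Q \subset P}$ on one side and the corestriction maps on the other (the balancing condition is set up precisely so that these duals match). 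Sheafifying and shifting by $[d+p]_{\deg}$ turns this coefficient-wise duality into the asserted isomorphism of complexes of sheaves, after identifying $IC_{\Trop,\geom,X}^{d,*}|_{X_{\sm}}$ with the $\Q$-polyhedral chain complex.

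For the quasi-isomorphism with the Verdier dual, the strategy is to exhibit $IC_{\Trop,\geom,X}^{d,*}|_{X_{\sm}}[2d]_{\deg}$ as a dualizing complex (in the sense of the six-functor formalism of Section \ref{sec:Appendix.}) on $X_{\sm}$ when paired against constructible sheaves built from $\F^{p,w}$. On the top-dimensional stratum $\bigsqcup_{P \in \Lambda_{\sm,d}} \relint P$ this is just the standard fact that polyhedral chains compute the $\Q$-dualizing complex of a $d$-dimensional manifold (up to a shift by $2d$). The nontrivial point is along the codimension-$1$ strata $\relint Q$ with $Q \in \Lambda_{\sm,d-1}$: here the balancing relation $\sum_{P \supset Q} w_P v_{P,Q} = 0$ together with the weights ensures that the local cohomology of the $\Q$-polyhedral chain complex in the transverse direction is one-dimensional in the correct degree, which is precisely what is needed for the complex to serve as a dualizing sheaf against $\F^{d-p,w}$. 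This is the content of \cite[Proposition A.9 and Theorem 4.20]{GrossShokirehAsheaf-theoreticapproachtotropicalhomology23}, so after setting up the matching of conventions (coefficients, weights, and shifts) one can invoke their result directly. The quasi-isomorphism then follows formally from the general identity
\[
\Homs(\mathscr{F}, \omega_{X_{\sm}}) \cong D(\mathscr{F})
\]
applied to $\mathscr{F} = \F_{X_{\sm}}^{d-p,w}[d+p]_{\deg}$.

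The main obstacle I expect is the second step near the codimension-$1$ faces $Q \in \Lambda_{\sm,d-1}$: one has to verify that the weighted chain complex is \emph{exact} in the correct degrees in a neighborhood of $\relint Q$, and that the residual cohomology matches the stalk of $\F^{d-p,w}$ dualized against $\Q$. This is precisely where the tropical regularity at infinity and the balancing condition with weights enter, and the bookkeeping of the signs $v_{P,Q}$ and the identifications of the $\Tan_{\Q} Q$-pieces across adjacent $P$'s must be handled carefully, which is why I would defer the detailed local computation to the Gross--Shokireh reference.
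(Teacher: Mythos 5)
Your proposal is correct and follows essentially the same route as the paper, which likewise reduces both assertions to the Gross--Shokrieh results (coefficient-wise duality $F_{d-p,w}(R)\cong\Hom_{\Q}(F^{d-p,w}(R),\Q)$ for the first isomorphism, and the identification of the $\Q$-polyhedral chain complex with the dualizing complex for the second). The only detail the paper adds that you leave implicit is that the quasi-isomorphism is induced by the counting-points morphism $R\Gamma_{c}(IC_{\Trop,\geom,X}^{d,*}|_{X_{\sm}})\to\Q$.
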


(Note that our sign convention of coboundary maps of complexes of
homomorphisms $\Hom(-,-)$ are the standard one, see e.g., \cite[Appendix A.6]{AcharPerversesheavesandapplicationstorepresentationtheory2021},
\cite[Section 6.4]{FuEtaleCohomologyTheory}, {[}SGA4-3{]}.)
\begin{proof}
The assertion can be proved in the same way as tropical homology \cite[Proposition A.9 and Theorem 4.20]{GrossShokirehAsheaf-theoreticapproachtotropicalhomology23}.
Note that the quasi-isomorphism is given by a morphism $R\Gamma_{c}IC_{\Trop,\geom,X}^{d,*}|_{X_{\sm}}\to\Q$
of counting points. 
\end{proof}
\begin{lem}
\label{lem:tropical IH on smooth part1-1} We have 
\begin{align*}
\H^{m}(IC_{\Trop,\geom,X}^{p,*}|_{X_{\sm}})=0\quad(m\geq-2d+p+1).
\end{align*}
\begin{proof}
The assertion is local, hence we may assume that $X_{\sm}=\R^{d-1}\times C$
for some $1$-dimensional tropical fan $C\subset\R^{s}$. By Lemma
\ref{lem:tropical IH on smooth part2.5-1} and K\"{u}nneth formula
for $\F_{\R^{d-1}\times C}^{d-p,w}[d+p]_{\deg}$, we have K\"{u}nneth
formula for $IC_{\Trop,\geom,\R^{d-1}\times C}^{p,*}$. Then the assertion
follows from the case of $X_{\sm}=\R$ or $C$, which easily follows
from direct computation.
\end{proof}
\end{lem}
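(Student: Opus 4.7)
The plan is to leverage the duality isomorphism established in Lemma~\ref{lem:tropical IH on smooth part2.5-1} to translate the statement into a vanishing result for the Verdier dual of the constructible sheaf $\F_{X_{\sm}}^{d-p,w}$, and then reduce to a local computation via the product structure forced by regularity at infinity. Concretely, Lemma~\ref{lem:tropical IH on smooth part2.5-1} gives a quasi-isomorphism
\[
IC_{\Trop,\geom,X}^{p,*}|_{X_{\sm}}\;\cong\;D(\F_{X_{\sm}}^{d-p,w})[d-p]_{\deg},
\]
so the claim becomes $\H^{k}(D(\F_{X_{\sm}}^{d-p,w}))=0$ for $k\ge -d+1$, i.e.\ that the Verdier dual is concentrated in degrees $\le -d$.

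The assertion is local on $X_{\sm}$, so I would fix a point $x\in X_{\sm}$ and work in a neighborhood of it. Since $\Lambda_{\sm}$ consists only of polyhedra of dimension $\ge d-1$ and $X$ is regular at infinity, such a neighborhood is stratified-homeomorphic either to $\R^{d}$ (when $x$ lies in the relative interior of a $d$-dimensional polyhedron) or to $\R^{d-1}\times C$ for some $1$-dimensional tropical fan $C\subset\R^{s}$ (when $x$ lies in the relative interior of a $(d-1)$-dimensional polyhedron of $\Lambda_{\sm}$, with $C$ describing the transverse directions together with the local weights). Under this identification the sheaf $\F^{d-p,w}$ splits as an exterior product along the two factors, because the weight-balancing relations at the $(d-1)$-stratum only involve the transverse directions.

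The next step is a K\"{u}nneth-type formula. Because Verdier duality is compatible with exterior products and $\F_{\R^{d-1}\times C}^{d-p,w}$ decomposes accordingly, the quasi-isomorphism from Lemma~\ref{lem:tropical IH on smooth part2.5-1} yields
\[
IC_{\Trop,\geom,\R^{d-1}\times C}^{p,*}\;\cong\;\bigoplus_{p_{1}+p_{2}=p} IC_{\Trop,\geom,\R^{d-1}}^{p_{1},*}\XBox IC_{\Trop,\geom,C}^{p_{2},*},
\]
up to appropriate degree shifts. This reduces the vanishing statement to the two base cases $X_{\sm}=\R^{d-1}$ and $X_{\sm}=C$. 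For $\R^{d-1}$, the sheaf $\F_{\R^{d-1}}^{q,w}$ is the constant sheaf $\bigwedge^{q}\R^{d-1}$ and its Verdier dual sits in a single degree $-(d-1)$, which is well below the bound. For $C$, a direct inspection of chains near the cone point, using the explicit description of $\F_{C}^{q,w}$ in terms of tangent directions of rays modulo balancing, gives the required vanishing.

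The main obstacle I foresee is formulating the K\"{u}nneth decomposition cleanly enough that the degree shifts line up precisely with the stated bound $m\ge -2d+p+1$. The non-trivial point is that on the factor $C$ the sheaf $\F_{C}^{q,w}$ is not locally constant at the vertex, so the local Verdier dual must be computed with care, and it is at this step that the balancing condition enters in an essential way. Once this base case is verified, assembling the product and affine factors is routine.
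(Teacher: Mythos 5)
Your proposal is correct and follows essentially the same route as the paper: localize to $\R^{d-1}\times C$ (or $\R^{d}$), transfer the statement through the duality of Lemma \ref{lem:tropical IH on smooth part2.5-1} to a K\"{u}nneth decomposition, and finish by direct computation on the one-dimensional factors $\R$ and $C$. The degree bookkeeping you give (concentration of $D(\F_{X_{\sm}}^{d-p,w})$ in degrees $\le -d$) matches the stated bound.
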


\begin{defn}
\label{def:contraction}(\cite[III Subsection 11.7]{BourbakiAlgebra})
Let $A$ be a finite dimensional $\Q$-vector space, $A^{\vee}$ its
dual $\Q$-vector space. For $p\leq q$, the contraction 

\[
-\llcorner-\colon\bigwedge^{q}A\otimes\bigwedge^{p}A^{\vee}\to\bigwedge^{q-p}A
\]
is a $\Q$-linear map characterized by 
\begin{align*}
(e_{1}\wedge\dots\wedge e_{q})\llcorner(e_{1}^{\vee}\wedge\dots\wedge e_{p}^{\vee}) & =e_{p+1}\wedge\dots\wedge e_{q}\\
(e_{2}\wedge\dots\wedge e_{q+1})\llcorner(e_{1}^{\vee}\wedge\dots\wedge e_{p}^{\vee}) & =0
\end{align*}
for any basis $e_{i}\in A$ and its dual basis $e_{i}^{\vee}\in A^{\vee}$.
By contraction, $\bigwedge^{*}A$ is a right $\bigwedge^{*}A^{\vee}$-module,
and $\bigwedge^{p}A^{\vee}$ is the dual $\Q$-vector space of $\bigwedge^{p}A$,
where $\bigwedge^{*}A^{\vee}$ has a ring structure by the wedge product.
\end{defn}

The fundamental class $[X]\in C_{d,d,w}^{\Trop,\BM}(X)$ (\cite{MikhalkinZharkovTropicaleignewaveandintermediatejacobians2014},
\cite[Definition 4.8]{JellRauShawLefschetz11-theoremintropicalgeometry},
\cite[Subsection 5.1]{GrossShokirehAsheaf-theoreticapproachtotropicalhomology23})
is given as 
\[
[X]=\sum_{P\in\Lambda_{\sm,d}}w_{P}1_{\bigwedge^{d}\Tan_{\Z}P}[P],
\]
where we take generators $1_{\bigwedge^{d}\Tan_{\Z}P}\in\bigwedge^{d}\Tan_{\Z}P\cong\Z$
and $[P]\in H_{d}^{\sing}(P,\partial P;\Z)$ so that for any $d$-form
$a\neq0\in\bigwedge^{d}\Hom(\Tan_{\Z}P,\Z)$, the sign of $1_{\bigwedge^{d}\Tan_{\Z}P}\llcorner a$
(Definition \ref{def:contraction}) is same as the sign of $\int_{[P]}ga$
for a non-negative integrable function $g\not\equiv0$. For example,
when $[P]$ is the standard $d$-simplex $[\Delta^{d}]\subset\R^{d}$,
we have 
\[
1_{\bigwedge^{d}\Tan_{\Z}P}=\frac{\partial}{\partial x_{1}}\wedge\dots\wedge\frac{\partial}{\partial x_{d}},
\]
where $x_{1},\dots,x_{d}$ is the standard coordinate of $\R^{d}$.
The fundamental class is allowable (Remark \ref{rem:allowable chains}),
i.e., $[X]\in IH_{d,d}^{\Trop,\BM}(X)$. 

Similarly to tropical homology (e.g., \cite[Definition 4.11]{JellRauShawLefschetz11-theoremintropicalgeometry}),
the fundamental class $[X]$ induces a cap product 
\[
[X]\cap-\colon\F_{X_{\sm}}^{p,w}\to\H^{-2d+p}(IC_{\Trop,\geom,X}^{p,*}|_{X_{\sm}})
\]
 characterized by 
\begin{align*}
\F_{X_{\sm},x}^{p,w}\cong F^{p,w}(P) & \to\H^{-2d+p}(IC_{\Trop,\geom,X}^{p,*}|_{X_{\sm}})_{x}\\
f & \mapsto1_{\wedge^{d}\Tan_{\Z}P}[P]\llcorner f
\end{align*}
 at $x\in\relint P$ ($P\in\Lambda_{\sm,d}$). 
\begin{lem}
\label{lem:tropical IH on smooth part2-1} The cap product $[X]\cap-$
is an isomorphism $\F_{X_{\sm}}^{p,w}\cong\H^{-2d+p}(IC_{\Trop,\geom,X}^{p,*}|_{X_{\sm}})$.
In particular, we have a quasi-isomorphism $\F_{X_{\sm}}^{p,w}[2d-p]_{\deg}\cong IC_{\Trop,\geom,X}^{p,*}|_{X_{\sm}}$. 
\end{lem}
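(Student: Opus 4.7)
The claim is local on $X_{\sm}$, and both sides are constructible sheaves with respect to the polyhedral stratification $\{\relint R\}_{R \in \Lambda_{\sm}}$. So it suffices to check that $[X]\cap-$ induces an isomorphism on stalks at each $x \in \relint R$, which breaks into the top-dimensional case $R \in \Lambda_{\sm,d}$ and the codimension-one case $R \in \Lambda_{\sm,d-1}$. The ``in particular'' follows formally: the complex $IC_{\Trop,\geom,X}^{p,*}$ is supported in cohomological degrees $[p-2d, p-d]$ by construction, and Lemma \ref{lem:tropical IH on smooth part1-1} kills its cohomology in degrees $\geq -2d+p+1$ on $X_{\sm}$, so the cohomology is concentrated in the single degree $-2d+p$, and the complex is quasi-isomorphic to the shifted cohomology sheaf $\F_{X_{\sm}}^{p,w}[2d-p]_{\deg}$.

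In the top-dimensional case, a neighborhood of $x \in \relint P$ is PL-homeomorphic to an open subset of $\R^d$ with the trivial polyhedral structure. The stalk of $\F_{X_{\sm}}^{p,w}$ at $x$ is $\bigwedge^p \Hom_\Q(\Tan_\Q P, \Q)$, while the stalk of $IC_{\Trop,\geom,X}^{p,*}|_{X_{\sm}}$ is local Borel--Moore homology on $\R^d$ with constant coefficients $F_{d-p,w}(P) = \bigwedge^{d-p}\Tan_\Q P$, concentrated in degree $-2d+p$ with value $\bigwedge^{d-p}\Tan_\Q P$. The map $f \mapsto 1_{\bigwedge^d \Tan_\Z P}\llcorner f$ is then the classical Poincar\'e duality isomorphism $\bigwedge^p(\Tan_\Q P)^\vee \cong \bigwedge^{d-p}\Tan_\Q P$ determined by the orientation $1_{\bigwedge^d \Tan_\Z P}$, exploiting the compatibility between this orientation and the tropical fundamental class imposed in the definition of $[X]$.

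For the codimension-one case at $x \in \relint Q$, since $\sigma_Q = 0$ and $X$ is regular at infinity, a neighborhood of $x$ decomposes as $\R^{d-1} \times V$, where $V$ is a neighborhood of the vertex of the $1$-dimensional balanced tropical fan $C \subset (N_\Q/\Tan_\Q Q) \otimes \R$ whose rays are $\R_{\geq 0} \cdot v_{P,Q}$ weighted by $w_P$, for $P \in \Lambda_{\sm,d}$ containing $Q$. By the K\"unneth decomposition used in the proof of Lemma \ref{lem:tropical IH on smooth part1-1}, both $\F_{X_{\sm}}^{p,w}$ and $IC_{\Trop,\geom,X}^{p,*}|_{X_{\sm}}$ split as sums of external products along this factorization; the $\R^{d-1}$-factor is handled by the top-dimensional case, so the problem reduces to checking the isomorphism at the vertex of $C$. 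There, the compatibility and balancing relations built into the definition of $F^{p,w}(Q)$ are precisely dual to the defining relations of $F_{d-p,w}(Q)$, and cap product with the local fundamental class of $C$, which reads out $\sum_P w_P v_{P,Q} = 0$, yields the required isomorphism -- this is the $1$-dimensional tropical Poincar\'e duality in the manner of Gubler--Jell--Rabinoff. The main obstacle is exactly this last point: matching the linear-algebra dualities defining $F^{p,w}(Q)$ and $F_{d-p,w}(Q)$ with the local Borel--Moore computation on $C$, which requires careful sign and orientation bookkeeping between the contraction $\llcorner$ and the balancing condition.
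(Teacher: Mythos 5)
Your proposal is correct and fills in exactly what the paper leaves implicit: the paper's own proof consists of the single remark that the first assertion ``easily follows from local computation'' and that the second follows from the first together with Lemma \ref{lem:tropical IH on smooth part1-1}, which is precisely your degree-concentration argument. Your stalk-wise check on top-dimensional and codimension-one strata, with the $\R^{d-1}\times C$ K\"unneth reduction and the observation that $F^{p,w}(Q)=\Hom_\Q(F_{p,w}(Q),\Q)$ makes the balancing relations dual by definition, is the intended local computation.
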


\begin{proof}
The first assertion easily follows from local computation. The second
assertion follows from the first one and Lemma \ref{lem:tropical IH on smooth part1-1}. 
\end{proof}
By Lemma \ref{lem:tropical IH on smooth part2.5-1} and Lemma \ref{lem:tropical IH on smooth part2-1},
we have the following.
\begin{cor}
\label{cor:Verdier dual naive smooth part}The cap product $-\cap[X]$
induces a quasi-isomorphism 
\[
\F_{X_{\sm}}^{p,w}[2d-p]_{\deg}\cong D(\F_{X_{\sm}}^{d-p,w}[d+p]_{\deg})[2d]_{\deg}.
\]
\end{cor}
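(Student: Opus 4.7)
The plan is to obtain the claimed quasi-isomorphism by composing the two quasi-isomorphisms supplied by the preceding two lemmas and then identifying the composite with the cap product with $[X]$. Concretely, first I would apply Lemma \ref{lem:tropical IH on smooth part2-1}, which gives a quasi-isomorphism
\[
\F_{X_{\sm}}^{p,w}[2d-p]_{\deg}\xrightarrow{\,[X]\cap-\,}IC_{\Trop,\geom,X}^{p,*}|_{X_{\sm}}.
\]
Then, using the natural isomorphism and the quasi-isomorphism provided by Lemma \ref{lem:tropical IH on smooth part2.5-1}, I would rewrite the right-hand side as
\[
IC_{\Trop,\geom,X}^{p,*}|_{X_{\sm}}\cong\Homs(\F_{X_{\sm}}^{d-p,w}[d+p]_{\deg},IC_{\Trop,\geom,X}^{d,*}|_{X_{\sm}})[2d]_{\deg}\cong D(\F_{X_{\sm}}^{d-p,w}[d+p]_{\deg})[2d]_{\deg}.
\]
Composing these gives a quasi-isomorphism of the claimed form.

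It then remains to verify that the resulting morphism is indeed the cap product $-\cap[X]$. This is a local check at a point $x\in\relint P$ for $P\in\Lambda_{\sm,d}$. At such a point the first quasi-isomorphism sends $f\in F^{p,w}(P)$ to the class of $1_{\bigwedge^{d}\Tan_{\Z}P}[P]\llcorner f$, by definition of the cap product $[X]\cap-$. Under the natural isomorphism of Lemma \ref{lem:tropical IH on smooth part2.5-1} (induced by the wedge pairing $F_{p,w}\otimes F_{d-p,w}\to F_{d,w}$) and the duality pairing $R\Gamma_{c}IC_{\Trop,\geom,X}^{d,*}|_{X_{\sm}}\to\Q$ given by counting points, this corresponds to the element of $D(\F_{X_{\sm}}^{d-p,w}[d+p]_{\deg})[2d]_{\deg}$ that pairs $g\in F^{d-p,w}(P)$ with the top-degree component $1_{\bigwedge^{d}\Tan_{\Z}P}\llcorner(f\wedge g)$, i.e.\ precisely the contraction used in the definition of the cap-product quasi-isomorphism of Lemma \ref{lem:tropical IH on smooth part2-1} applied with degree $(d-p)$ replaced by $p$.

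The main obstacle is essentially bookkeeping of signs and of the identifications between $F^{p,w}$ and the dual of $F_{p,w}$ (as arranged in Definition \ref{def:F^p} and Definition \ref{def:contraction}), together with checking that the two quasi-isomorphisms of the lemmas are compatible with the wedge and contraction pairings; once these are recorded, the verification is purely formal since both lemmas are already proven. There is no further global argument required, because the statement to be proved, being a quasi-isomorphism on $X_{\sm}$, can be checked at stalks and the above local computation covers every $x\in X_{\sm}$ (using that $F^{p,w}(Q)$ for $Q\in\Lambda_{\sm,d-1}$ is, by Definition \ref{def:F^p}, the natural gluing of the $F^{p,w}(P)$'s compatible with the balancing condition, so the local checks at $(d-1)$-dimensional strata reduce to those at $d$-dimensional ones).
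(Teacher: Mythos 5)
Your proposal is correct and follows exactly the paper's (very short) argument: the corollary is obtained by composing the cap-product quasi-isomorphism of Lemma \ref{lem:tropical IH on smooth part2-1} with the identifications of Lemma \ref{lem:tropical IH on smooth part2.5-1}. The extra local verification you sketch (that the composite pairing is the wedge/contraction pairing at points of $\relint P$, $P\in\Lambda_{\sm,d}$) is consistent with how the paper sets up the cap product and does not change the approach.
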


We shall prove an LG-version (Proposition \ref{prp:Verdier-dual-smooth-part-1})
of Corollary \ref{cor:Verdier dual naive smooth part}. We define
\[
\Fil_{*}^{\Lambda}\F_{X_{\sm}}^{p,w}\in\Shv_{c,[-\gr(X),0]}(X_{\sm},\gMod\Q[T_{S}]_{S\in\Lambda})
\]
 by 
\[
\Fil_{*}^{\Lambda}\F_{X_{\sm}}^{p,w}(a):=(-\cap[X])^{-1}(\H^{-2d+p}(\Fil_{*}^{\Lambda}IC_{\Trop,\geom,X}^{p,*}|_{X_{\sm}})(a))\subset\F_{X_{\sm}}^{p,w}
\]
 ($a\in\Z^{\Lambda}$) and inclusions. For a subset $I\subset\Lambda$,
we put
\begin{align*}
\Fil_{*}^{I}\F_{X_{\sm}}^{p,w} & :=\For_{\Lambda\setminus I}\Fil_{*}^{\Lambda}\F_{X_{\sm}}^{p,w}
\end{align*}
(Definition \ref{def:forgetful functors}). For a sequnce $L=(S_{L,1}\subsetneq\dots\subsetneq S_{L,r(L)})$
($S_{L,i}\in\Lambda$, $r(L)\in\Z_{\geq1}$), we put $\Lambda_{\supsetneq S_{L,r(L)}}\subset\Lambda$
the subset of polyhedra strictly containing $S_{L,r(L)}$ and $\Lambda_{\supsetneq S_{L,r(L)},L}:=\Lambda_{\supsetneq S_{L,r(L)}}\cup\left\{ S_{L,i}\right\} _{i=1}^{r(L)}$.
We also put $\Fil_{*}^{L}\F_{X_{\sm}}^{p,w}:=\For_{\Lambda\setminus\{S_{L,i}\}_{i=1}^{r(L)}}\Fil_{*}^{\Lambda}\F_{X_{\sm}}^{p,w}$.
Remind that we fixed open polyhedral (Definition \ref{def:locally closed polyhedral subset})
neighborhoods $W_{S}$ of $\relint S$ in $X$ ($S\in\Lambda$) such
that $W_{S}\cap W_{S'}=\emptyset$ for $S,S'\in\Lambda$ with $S\not\subset S'$
and $S'\not\subset S$. We put $W_{L}:=\bigcap_{i=1}^{r(L)}W_{S_{L,i}}$.
For $b\in\Z^{\left\{ S_{L,i}\right\} _{i=1}^{r(L)}}$, we put 
\begin{align*}
 & \gr_{b}\F_{X_{\sm}}^{p,w}|_{X_{\sm}\cap W_{L}}\\
:= & \Fil_{*}^{L}\F_{X_{\sm}}^{p,w}|_{X_{\sm}\cap W_{L}}(b)/\sum_{\substack{b'\in\Z^{\left\{ S_{L,i}\right\} _{i=1}^{r(L)}}\\
b'\lneq b
}
}\Fil_{*}^{L}\F_{X_{\sm}}^{p,w}|_{X_{\sm}\cap W_{L}}(b'),
\end{align*}
and also define 
\[
\Fil_{*}^{\Lambda_{\supsetneq S_{L,r(L)}}}\gr_{b}\F_{X_{\sm}}^{p,w}|_{X_{\sm}\cap W_{L}}\in\Shv(X_{\sm}\cap W_{L},\gMod\Q[T_{S}]_{S\in\Lambda_{\supsetneq S_{L,r(L)}}})
\]
 by 
\begin{align*}
 & \Fil_{*}^{\Lambda_{\supsetneq S_{L,r(L)}}}\gr_{b}\F_{X_{\sm}}^{p,w}|_{X_{\sm}\cap W_{L}}(a')\\
:= & \Fil_{*}^{\Lambda_{\supsetneq S_{L,r(L)},L}}\F_{X_{\sm}}^{p,w}|_{X_{\sm}\cap W_{L}}((b,a'))/\sum_{\substack{b'\in\Z^{\left\{ S_{L,i}\right\} _{i=1}^{r(L)}}\\
b'\lneq b
}
}\Fil_{*}^{\Lambda_{\supsetneq S_{L,r(L)},L}}\F_{X_{\sm}}^{p,w}|_{X_{\sm}\cap W_{L}}((b',a'))
\end{align*}
 ($a'\in\Z^{\Lambda_{\supsetneq S_{L,r(L)}}}$) and natural morphisms.
Obviously, for $a'\in\Z_{\geq0}^{\Lambda_{\supsetneq S_{L,r(L)}}},$
we have 
\[
\Fil_{*}^{\Lambda_{\supsetneq S_{L,r(L)}}}\gr_{b}\F_{X_{\sm}}^{p,w}|_{X_{\sm}\cap W_{L}}(a')\cong\gr_{b}\F_{X_{\sm}}^{p,w}|_{X_{\sm}\cap W_{L}}.
\]

\begin{lem}
\label{lem:decomp of Fil Fp locally}There is a (non-canonical) decomposition
\[
\Fil_{*}^{\Lambda_{\supsetneq S_{L,r(L)}}}\F_{X_{\sm}}^{p,w}|_{X_{\sm}\cap W_{L}}\cong\bigoplus_{b\in\Z^{\left\{ S_{L,i}\right\} _{i=1}^{r(L)}}}\Fil_{*}^{\Lambda_{\supsetneq S_{L,r(L)}}}\gr_{b}\F_{X_{\sm}}^{p,w}|_{X_{\sm}\cap W_{L}}
\]
in $\Shv(X_{\sm}\cap W_{L},\gMod\Q[T_{S}]_{S\in\Lambda_{\supsetneq S_{L,r(L)}}})$.
In particular, we have 
\[
\F_{X_{\sm}}^{p,w}|_{X_{\sm}\cap W_{L}}\cong\bigoplus_{b\in\Z^{\left\{ S_{L,i}\right\} _{i=1}^{r(L)}}}\gr_{b}\F_{X_{\sm}}^{p,w}|_{X_{\sm}\cap W_{L}}.
\]
\end{lem}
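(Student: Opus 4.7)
The plan is to produce the splitting of $\F_{X_{\sm}}^{p,w}|_{X_{\sm}\cap W_{L}}$ by choosing an inner product on $N\otimes\Q$ and using it to orthogonally split the filtration chain that defines the grading. Concretely, for each maximal polyhedron $P\in\Lambda_{\sm,d}$ with $P\cap W_{L}\neq\emptyset$, one has the nested chain
\[
0\subseteq\Tan_{\Q}\sigma_{S_{L,r(L)}}\subseteq\dots\subseteq\Tan_{\Q}\sigma_{S_{L,1}}\subseteq\pr_{\sigma_{S_{L,1}}}^{-1}(\Tan_{\Q}S_{L,1})\subseteq\dots\subseteq\pr_{\sigma_{S_{L,r(L)}}}^{-1}(\Tan_{\Q}S_{L,r(L)})\subseteq\Tan_{\Q}P
\]
inside $N_{\Q}$, exactly as used in the proof of Proposition \ref{prop:local vanish and attaching}. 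The successive orthogonal complements with respect to the fixed inner product give an orthogonal decomposition of $\Tan_{\Q}P$ into $2r(L)+1$ summands. Dualizing and taking $\bigwedge^{p}$ yields a multigraded decomposition
\[
F^{p,w}(P)\cong\bigoplus_{b\in\Z^{\left\{ S_{L,i}\right\}_{i=1}^{r(L)}}}F^{p,w}(P)_{b},
\]
where the summand labelled by $b$ consists of forms that, under the functions $v$ and $u$ of Definition \ref{def:v and u} (applied to the dualized picture), have exactly the ``bidegree'' prescribed by $b$.

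Next, I would check that this decomposition is independent of $P$ in the sense required to descend to $\F_{X_{\sm}}^{p,w}$: the subspaces $\Tan_{\Q}\sigma_{S_{L,i}}$ and $\pr_{\sigma_{S_{L,i}}}^{-1}(\Tan_{\Q}S_{L,i})$ are intrinsic to $S_{L,i}$ (they sit inside $N_{\Q}$ and are contained in every $\Tan_{\Q}P$ with $P\supset S_{L,r(L)}$), and the inner product is chosen once on $N\otimes\Q$. Therefore the orthogonal complements computed inside distinct $\Tan_{\Q}P$ are compatible with restriction to any common face $Q\in\Lambda_{\sm,d-1}$. This compatibility, together with the two relations cutting out $F^{p,w}(Q)\subset\bigoplus_{P\supset Q}\bigwedge^{p}\Hom(\Tan_{\Q}P,\Q)$ (which are themselves homogeneous for the bidegree $b$, since they only involve the intrinsic vectors $v_{P,Q}$ and tangent directions to $Q$), shows that the bidegree decomposition is preserved by all structure maps of $\F_{X_{\sm}}^{p,w}|_{X_{\sm}\cap W_{L}}$.

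Having the pointwise/combinatorial decomposition, I would promote it to the sheaf level: the bidegree-$b$ summand on each stratum glues into a subsheaf which is precisely the $b$-graded piece of $\Fil_{*}^{L}\F_{X_{\sm}}^{p,w}|_{X_{\sm}\cap W_{L}}$. Taking its push-forward under the filtration by the remaining indices in $\Lambda_{\supsetneq S_{L,r(L)}}$ (which is still governed by the same orthogonal decomposition, just regarded for those extra polyhedra) yields the factor $\Fil_{*}^{\Lambda_{\supsetneq S_{L,r(L)}}}\gr_{b}\F_{X_{\sm}}^{p,w}|_{X_{\sm}\cap W_{L}}$. Summing over $b$ gives the asserted direct-sum decomposition in $\Shv(X_{\sm}\cap W_{L},\gMod\Q[T_{S}]_{S\in\Lambda_{\supsetneq S_{L,r(L)}}})$; specializing to the trivial filtration ($a'\in\Z_{\geq0}^{\Lambda_{\supsetneq S_{L,r(L)}}}$) recovers the ungraded decomposition of $\F_{X_{\sm}}^{p,w}|_{X_{\sm}\cap W_{L}}$.

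The main obstacle I foresee is bookkeeping rather than anything conceptual: verifying that the orthogonal summands behave correctly under the codimension-one compatibility in Definition \ref{def:F^p} (in particular, that the balancing relation $\sum_{P}w_{P}f_{P}(v_{P,Q}\wedge u)=0$ is $b$-homogeneous once $v_{P,Q}$ is expanded in the orthogonal basis). Because the $v_{P,Q}$ lie in $\Tan_{\Q}P$ but not in any of the distinguished subspaces appearing in the chain (they measure transversality across $Q$), their contribution to each bidegree is coherent, and the balancing relation decomposes into a family of relations, one per bidegree, which exactly cuts out the $\gr_{b}$ pieces. Once this is checked, the splitting is non-canonical only through the choice of inner product, and the proof is complete.
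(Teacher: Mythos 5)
Your approach is the paper's: fix an inner product on $N\otimes\Q$, orthogonally split the canonical chain $0\subseteq\Tan_{\Q}\sigma_{S_{L,r(L)}}\subseteq\dots\subseteq\pr_{\sigma_{S_{L,r(L)}}}^{-1}(\Tan_{\Q}S_{L,r(L)})\subseteq\Tan_{\Q}P$ inside each maximal $\Tan_{\Q}P$, and let the resulting multigrading of $F^{p,w}(P)$ descend to $\F_{X_{\sm}}^{p,w}|_{X_{\sm}\cap W_{L}}$. Your extra care that the face-compatibility and balancing relations of Definition \ref{def:F^p} are homogeneous for the bidegree is fine (the paper takes this descent for granted), and the second displayed isomorphism of the lemma follows exactly as you say.

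The one place your argument fails as written is the claim that the filtration indexed by $\Lambda_{\supsetneq S_{L,r(L)}}$ is ``still governed by the same orthogonal decomposition, just regarded for those extra polyhedra.'' For $S\in\Lambda_{\supsetneq S_{L,r(L)}}$ the subspaces $\Tan_{\Q}\sigma_{S}$ and $\pr_{\sigma_{S}}^{-1}(\Tan_{\Q}S)$ that define $\Fil_{S}^{k}$ are not unions of summands of the orthogonal decomposition attached to $L$; they strictly refine it, so compatibility of the $b$-grading with $\Fil_{*}^{\Lambda_{\supsetneq S_{L,r(L)}}}(a')$ is not automatic. The paper supplies the missing step: for each $a'\in\Z^{\Lambda_{\supsetneq S_{L,r(L)}}}$ one may assume the strata with $a'_{S}\leq-1$ form a chain $L(a')$ (for incomparable strata the neighborhoods $W_{S}$ are disjoint, so the corresponding filtration piece contributes nothing), concatenate it with $L$ to a longer chain $L'(a')$, and pass to the finer orthogonal decomposition of $\Tan_{\Q}P$ attached to $L'(a')$. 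Both the $\gr_{b}$ pieces for $L$ and the piece $\Fil_{*}^{\Lambda_{\supsetneq S_{L,r(L)}}}\F_{X_{\sm}}^{p,w}(a')$ are then direct sums of summands of this common refinement, which is what the first assertion requires. With that refinement inserted, your proof is complete.
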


\begin{proof}
We fix an inner product of $N\otimes\Q$. For each $P\in\Lambda_{\sm,d}$
containing $S_{L,r(L)}$, a sequence 
\begin{align*}
0 & \subseteq\Tan_{\Q}\sigma_{S_{L,r(L)}}\subseteq\dots.\subseteq\Tan_{\Q}\sigma_{S_{L,1}}\\
 & \subseteq\pr_{\sigma_{S_{L,1}}}^{-1}(\Tan_{\Q}S_{L,1})\subseteq\dots\subseteq\pr_{\sigma_{S_{L,r(L)}}}^{-1}(\Tan_{\Q}S_{L,r(L)})\\
 & \subseteq\Tan_{\Q}P
\end{align*}
gives an orthogonal decomposition of $\Tan_{\Q}P$. These decompositions
for all such $P$ induces the required decomposition of $\F_{X_{\sm}}^{p,w}|_{X_{\sm}\cap W_{L}}$
into $\gr_{b}\F_{X_{\sm}}^{p,w}|_{X_{\sm}\cap W_{L}}$ ($b\in\Z^{\left\{ S_{L,i}\right\} _{i=1}^{r(L)}}$)
(cf. Proposition \ref{prop:local vanish and attaching}). We shall
show that this decomposition of $\F_{X_{\sm}}^{p,w}|_{X_{\sm}\cap W_{L}}$
induces the required decomposition of $\Fil_{*}^{\Lambda_{\supsetneq S_{L,r(L)}}}\F_{X_{\sm}}^{p,w}|_{X_{\sm}\cap W_{L}}(a')$
for each $a'\in\Z^{\Lambda_{\supsetneq S_{L,r(L)}}}$. For this, it
suffices to show that for $a'\in\Z^{\Lambda_{\supsetneq S_{L,r(L)}}}$
such that the set of $S\in\Lambda_{\supsetneq S_{L,r(L)}}$ with $a'_{S}\leq-1$
forms a sequence $L(a')=(S_{L(a'),1}\subsetneq\dots\subsetneq S_{L(a'),r(L(a'))})$
($r(L(a'))\in\Z_{\geq0}$), there exists a decomposition of $\F_{X_{\sm}}^{p,w}|_{X_{\sm}\cap W_{L'(a')}}$
such that $\gr_{b}\F_{X_{\sm}}^{p,w}|_{X_{\sm}\cap W_{L'(a')}}$ ($b\in\Z^{\left\{ S_{L,i}\right\} _{i=1}^{r(L)}}$)
and $\Fil_{*}^{\Lambda_{\supsetneq S_{L,r(L)}}}\F_{X_{\sm}}^{p,w}|_{X_{\sm}\cap W_{L'(a')}}(a')$
are direct sums of some direct summands, where we put 
\[
L'(a')=(S_{L'(a'),1}\subsetneq\dots\subsetneq S_{L'(a'),r(L)+r(L(a'))})
\]
 a sequence with $S_{L'(a'),i}:=S_{L,i}$ ($1\leq i\leq r(L)$) and
$S_{L'(a'),i}:=S_{L(a'),i-r(L)}$ ($r(L)+1\leq i\leq r(L)+r(L(a'))$).
For a polyhedron $P\in\Lambda_{\sm,d}$ containing $S_{L'(a'),r(L)+r(L(a'))}$,
we have a sequence 
\begin{align*}
0 & \subseteq\Tan_{\Q}\sigma_{S_{L'(a'),r(L)+r(L(a'))}}\subseteq\dots.\subseteq\Tan_{\Q}\sigma_{S_{L'(a'),1}}\\
 & \subseteq\pr_{\sigma_{S_{L'(a'),1}}}^{-1}(\Tan_{\Q}S_{L'(a'),1})\subseteq\dots\subseteq\pr_{\sigma_{S_{L'(a'),r(L)+r(L(a'))}}}^{-1}(\Tan_{\Q}S_{L'(a'),r(L)+r(L(a'))})\\
 & \subseteq\Tan_{\Q}P.
\end{align*}
This gives the required decomposition of a decomposition $\F_{X_{\sm}}^{p,w}|_{X_{\sm}\cap W_{L'(a')}}$. 
\end{proof}
For a poyhedron $S\in\Lambda$ and $c\in\Z$, we put 
\begin{align*}
\Fil_{S}^{c}\F_{X_{\sm}}^{p,w} & :=\For_{\Lambda\setminus\left\{ S\right\} }\Fil_{*}^{\Lambda}\F_{X_{\sm}}^{p,w}(-c).
\end{align*}

\begin{cor}
\label{cor:dual of Fil F^p simpler}We have a quasi-isomorphism
\[
D(\Fil_{*}^{L}\F_{X_{sm}}^{d-p,w}|_{X_{\sm}\cap W_{L}})\cong\gHoms(\Fil_{*}^{L}\F_{X_{sm}}^{d-p,w}|_{X_{\sm}\cap W_{L}},\bigoplus_{a\in\Z_{\geq0}^{\left\{ S_{L,i}\right\} _{i=1}^{r(L)}}}\omega_{X_{\sm}\cap W_{L}}),
\]
where $\omega_{X_{\sm}\cap W_{L}}\in D^{b}(X_{\sm}\cap W_{L},\Modu\Q)$
is the injective dualizing complex.
\end{cor}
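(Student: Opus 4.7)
The plan is to deduce this from a flatness property of $\Fil_{*}^{L}\F_{X_{\sm}}^{d-p,w}|_{X_{\sm}\cap W_{L}}$ combined with the general duality formula (as in Lemma \ref{lem:Condition A duality}), plus a tautological grade-shift identity.

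First I would verify that $\Fil_{*}^{L}\F_{X_{\sm}}^{d-p,w}|_{X_{\sm}\cap W_{L}}$ is flat as a sheaf of $\gMod\Q[T_{S_{L,i}}]_{i=1}^{r(L)}$-modules. The idea is to extract the required structure from the proof of Lemma \ref{lem:decomp of Fil Fp locally}: that proof constructs an orthogonal decomposition of each $\Tan_{\Q}P$ (for $P\in\Lambda_{\sm,d}$ containing $S_{L,r(L)}$) along the flag $\Tan_{\Q}\sigma_{S_{L,r(L)}}\subset\dots\subset\pr_{\sigma_{S_{L,1}}}^{-1}(\Tan_{\Q}S_{L,1})\subset\dots\subset\Tan_{\Q}P$. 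Forgetting gradings outside $\{S_{L,i}\}_{i=1}^{r(L)}$ via $\For_{\Lambda\setminus\{S_{L,i}\}_{i=1}^{r(L)}}$, each atomic summand of this decomposition becomes a constant sheaf on a locally closed polyhedral subset of $X_{\sm}\cap W_{L}$ tensored with a shifted free graded $\Q[T_{S_{L,i}}]_{i=1}^{r(L)}$-module, hence flat.

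Given flatness, Lemma \ref{lem:dual of flat complex} (applied as in the proof of Lemma \ref{lem:Condition A duality}) yields
\[
D(\Fil_{*}^{L}\F_{X_{\sm}}^{d-p,w}|_{X_{\sm}\cap W_{L}})[\gr(X)|_{\{S_{L,i}\}}]_{\gr}\cong\gHoms(\Fil_{*}^{L}\F_{X_{\sm}}^{d-p,w}|_{X_{\sm}\cap W_{L}},\bigoplus_{a\in\Z_{\geq-\gr(X)|_{\{S_{L,i}\}}}^{\{S_{L,i}\}}}\omega_{X_{\sm}\cap W_{L}}).
\]
The claim then follows by applying, with $c=-\gr(X)|_{\{S_{L,i}\}}$, the grade-wise identity
\[
\gHoms(B,\bigoplus_{a\in\Z_{\geq c}^{I}}\omega)[c]_{\gr}\cong\gHoms(B,\bigoplus_{a\in\Z_{\geq 0}^{I}}\omega)
\]
for any $B$ and any $c\in\Z^{I}$, which is immediate by chasing Definition \ref{def:shift and grHom} grade by grade (both sides at grade $b$ parametrize grade-preserving maps from $B$ to a copy of $\omega$ sitting in grades $a'\geq -b$).

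The main obstacle is the flatness verification. Although Lemma \ref{lem:decomp of Fil Fp locally} already gives a decomposition in the $\Lambda_{\supsetneq S_{L,r(L)}}$-grading, one must transfer this to the $\{S_{L,i}\}$-grading and show the summands are free (not merely flat). This reduces to checking that the ``$\Fil$-pattern'' along $\{S_{L,i}\}_{i=1}^{r(L)}$ induced by each atomic subspace in the orthogonal decomposition is a shifted quadrant of the form $\Z_{\geq -b}^{\{S_{L,i}\}}$, producing a shifted free graded $\Q[T_{S_{L,i}}]_{i=1}^{r(L)}$-module, so that the decomposition assembles into a flat graded sheaf globally.
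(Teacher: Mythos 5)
Your proposal is correct and is essentially the paper's own proof: the second assertion of Lemma \ref{lem:decomp of Fil Fp locally} already supplies the decomposition of $\Fil_{*}^{L}\F_{X_{\sm}}^{d-p,w}|_{X_{\sm}\cap W_{L}}$ into summands of the form $\bigoplus_{a\in\Z_{\geq b}}\gr_{b}\F_{X_{\sm}}^{d-p,w}|_{X_{\sm}\cap W_{L}}$ (so the ``transfer to the $\{S_{L,i}\}$-grading'' you single out as the main obstacle is exactly what that lemma states, the $\gr_{b}$ being by construction the graded pieces of the $L$-multifiltration), after which Lemma \ref{lem:dual of flat complex} applies verbatim. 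Your detour through the shifted duality of Lemma \ref{lem:Condition A duality} followed by the grade-shift identity is harmless but unnecessary, since Lemma \ref{lem:dual of flat complex} directly yields the unshifted formula in the statement.
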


\begin{proof}
This directly follows from the second assertion of Lemma \ref{lem:decomp of Fil Fp locally}
and Lemma \ref{lem:dual of flat complex}.
\end{proof}
\begin{rem}
\label{rem:Fil w simple form}Let 
\[
\omega_{X_{\sm}}[T_{S}]_{S\in\Lambda_{\sing}}\in D^{b}(X_{\sm},\gMod\Q[T_{S}]_{S\in\Lambda_{\sing}})
\]
 be the dualizing complex. By a quasi-isomorphism $IC_{\Trop,\geom,X}^{d,*}|_{X_{\sm}}\cong\omega_{X_{\sm}}$
(see e.g., \cite[Theorem 4.20]{GrossShokirehAsheaf-theoreticapproachtotropicalhomology23})
and Lemma \ref{lem:dualizing complex direct sum of dualizing complex},
we have a quasi-isomorphism 
\[
\bigoplus_{a\in\Z_{\geq0}^{\Lambda}}IC_{\Trop,\geom,X}^{d,*}|_{X_{\sm}}\cong\omega_{X_{\sm}}[T_{S}]_{S\in\Lambda}.
\]

The following is a case of an LG-version of \cite[Proposition A.9]{GrossShokirehAsheaf-theoreticapproachtotropicalhomology23}.
By Lemma \ref{lem:Condition A duality}, we have 
\[
D(\Fil_{*}^{\Lambda}\F_{X_{sm}}^{p,w}[2d-p]_{\deg})[2d]_{\deg}[\gr(X)]_{\gr}\in D_{c,[-\gr(X),0]}^{b}(X_{\sm},\gMod\Q[T_{S}]_{S\in\Lambda}).
\]
\end{rem}

\begin{lem}
\label{lem:graded version of GS}The quasi-isomorphism in Remark \ref{rem:Fil w simple form}
gives an LG-quasi-isomorphism
\begin{align*}
 & D(\Fil_{*}^{\Lambda}\F_{X_{sm}}^{d-p,w}[d+p]_{\deg})[2d]_{\deg}[\gr(X)]_{\gr}\\
\cong_{LG} & \gHoms(\Fil_{*}^{\Lambda}\F_{X_{sm}}^{d-p,w}[d+p]_{\deg},\bigoplus_{a\in\Z_{\geq0}^{\Lambda}}IC_{\Trop,\geom,X}^{d,*}|_{X_{\sm}})[2d]_{\deg}[\gr(X)]_{\gr}.
\end{align*}
\end{lem}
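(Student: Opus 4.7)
The plan is to verify the claimed LG-quasi-isomorphism by reducing to the explicit formula for the Verdier dual of a flat graded complex (Lemma \ref{lem:dual of flat complex}), exploiting the local decomposition of $\Fil_{*}^{\Lambda}\F_{X_{\sm}}^{d-p,w}$. Since an LG-quasi-isomorphism is a local condition on neighborhoods of strata (Definition \ref{def:LG q-isomorphisms}), I would restrict attention to open subsets $X_{\sm}\cap W_{L}$ attached to sequences $L=(S_{L,1}\subsetneq\cdots\subsetneq S_{L,r(L)})$ in $\Lambda$ and test gradings $a\in\Z^{\Lambda}$ adapted to $L$.

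On such an $X_{\sm}\cap W_{L}$, Lemma \ref{lem:decomp of Fil Fp locally} produces a direct-sum decomposition of $\Fil_{*}^{\Lambda_{\supsetneq S_{L,r(L)}}}\F_{X_{\sm}}^{d-p,w}|_{X_{\sm}\cap W_{L}}$ into flat graded pieces, and similar decompositions hold after further refining $L$. Consequently $\Fil_{*}^{\Lambda}\F_{X_{\sm}}^{d-p,w}$ locally admits a flat resolution of the type demanded by Lemma \ref{lem:dual of flat complex} (the same type of resolution used in the proof of Lemma \ref{lem:Condition A duality}). Applying that lemma together with the grading shift $[\gr(X)]_{\gr}$ gives, up to LG-quasi-isomorphism,
\[
D\bigl(\Fil_{*}^{\Lambda}\F_{X_{\sm}}^{d-p,w}[d+p]_{\deg}\bigr)[2d]_{\deg}[\gr(X)]_{\gr}\cong_{LG}\gHoms\bigl(\Fil_{*}^{\Lambda}\F_{X_{\sm}}^{d-p,w}[d+p]_{\deg},\bigoplus_{a\in\Z_{\geq 0}^{\Lambda}}\omega_{X_{\sm}}\bigr)[2d]_{\deg},
\]
because the outside shift $[\gr(X)]_{\gr}$ converts the natural target $\bigoplus_{a\in\Z_{\geq -\gr(X)}^{\Lambda}}\omega_{X_{\sm}}$ arising from Lemma \ref{lem:dual of flat complex} into $\bigoplus_{a\in\Z_{\geq 0}^{\Lambda}}\omega_{X_{\sm}}$, by the identity $(\bigoplus_{a\in\Z_{\geq 0}^{\Lambda}}\omega_{X_{\sm}})[\gr(X)]_{\gr}\cong\bigoplus_{b\in\Z_{\geq -\gr(X)}^{\Lambda}}\omega_{X_{\sm}}$ combined with the fact that graded shift on the target of $\gHoms$ commutes with the external graded shift.

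Next, I would substitute the quasi-isomorphism $\bigoplus_{a\in\Z_{\geq 0}^{\Lambda}}IC_{\Trop,\geom,X}^{d,*}|_{X_{\sm}}\cong\omega_{X_{\sm}}[T_{S}]_{S\in\Lambda}$ from Remark \ref{rem:Fil w simple form} (where the right-hand side is $\bigoplus_{a\in\Z_{\geq 0}^{\Lambda}}\omega_{X_{\sm}}$). Functoriality of $\gHoms$ in its second argument then transports this quasi-isomorphism into the desired LG-quasi-isomorphism to
\[
\gHoms\bigl(\Fil_{*}^{\Lambda}\F_{X_{\sm}}^{d-p,w}[d+p]_{\deg},\bigoplus_{a\in\Z_{\geq 0}^{\Lambda}}IC_{\Trop,\geom,X}^{d,*}|_{X_{\sm}}\bigr)[2d]_{\deg}[\gr(X)]_{\gr}.
\]
The substitution is LG-compatible because the local decomposition used in the first step ensures that $\gHoms$ commutes with the direct-sum decomposition of the source, and because the identification is performed termwise in the grading.

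The main obstacle will be bookkeeping: carefully checking that the cohomological shifts $[d+p]_{\deg}$, $[2d]_{\deg}$ and the graded shift $[\gr(X)]_{\gr}$ combine consistently, and that the substitution of $\omega_{X_{\sm}}$ by $IC_{\Trop,\geom,X}^{d,*}|_{X_{\sm}}$ respects the $[-\gr(X),0]$ finiteness condition of Section \ref{subsec: locally graded sheaves Verdier-duality}, so that Lemma \ref{lem:dual of flat complex} genuinely applies in the LG-localized setting. This is essentially the same combinatorial care required in the proof of Lemma \ref{lem:Condition A duality}, propagated through the extra substitution step.
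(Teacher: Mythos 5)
Your overall skeleton (restrict to $X_{\sm}\cap W_{L}$, use the orthogonal decomposition of Lemma \ref{lem:decomp of Fil Fp locally} to put $\Fil_{*}^{\Lambda}\F_{X_{\sm}}^{d-p,w}$ in the form required by Lemma \ref{lem:dual of flat complex}, and thereby identify the Verdier dual with $\gHoms(-,\bigoplus_{a}\omega_{X_{\sm}\cap W_{L}})$) matches the paper's first two steps, which are packaged there as Lemma \ref{lem:forgetful and dual} plus Corollary \ref{cor:dual of Fil F^p simpler}. The gap is in your final substitution step. You replace $\bigoplus_{a}\omega_{X_{\sm}}$ by $\bigoplus_{a}IC_{\Trop,\geom,X}^{d,*}|_{X_{\sm}}$ inside the second slot of $\gHoms$ and justify this by ``functoriality of $\gHoms$ in its second argument.'' But $\gHoms(F,-)$ here is the \emph{underived} sheaf-Hom, which is only left exact; it does not send quasi-isomorphisms in the second variable to quasi-isomorphisms unless the target complexes are adapted to $\Homs(F,-)$. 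The quasi-isomorphism of Remark \ref{rem:Fil w simple form} goes from a complex of soft sheaves ($IC_{\Trop,\geom,X}^{d,*}$) to a complex of injectives ($\omega_{X_{\sm}}$); softness is not enough for $\Homs(F,-)$-acyclicity, so the induced map on $\gHoms$ is not formally a quasi-isomorphism.

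The paper closes exactly this hole with a non-formal input: it first computes each graded piece $\gHoms(\Fil_{*}^{L}\F_{X_{\sm}}^{d-p,w},\bigoplus_{a}E)(c)$ explicitly as $\Homs$ from the quotient sheaf $\F_{X_{\sm}}^{d-p,w}/\sum_{i}\Fil_{S_{L,i}}^{c_{S_{L,i}}+1}\F_{X_{\sm}}^{d-p,w}$ into $E$ (isomorphism (\ref{eq:grHom Fil F^p (c)})), and then invokes \cite[Proposition A.9]{GrossShokirehAsheaf-theoreticapproachtotropicalhomology23}, which asserts precisely that for such constructible coefficient sheaves $G$ the natural map $\Homs(G,IC_{\Trop,\geom,X}^{d,*})\to\Homs(G,\omega)$ is a quasi-isomorphism. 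This is the sheaf-theoretic statement that mapping into the complex of Borel--Moore chains computes Verdier duality for these coefficients; it is a genuine theorem about the chain model, not bookkeeping. Without citing that result (or reproving it), your argument does not establish the lemma. The remaining points you flag as ``bookkeeping'' (the shifts $[\gr(X)]_{\gr}$ and the grade-wise identification) are indeed handled as you describe.
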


\begin{proof}
Let $L=(S_{L,1}\subsetneq\dots\subsetneq S_{L,r(L)})$ ($S_{L,i}\in\Lambda$,
$r(L)\in\Z_{\geq0}$) be a sequence. By Lemma \ref{lem:forgetful and dual},
it suffices to show that 
\begin{align*}
 & D(\Fil_{*}^{L}\F_{X_{sm}}^{d-p,w}|_{X_{\sm}\cap W_{L}})\\
\cong & \gHoms(\Fil_{*}^{L}\F_{X_{sm}}^{d-p,w}|_{X_{\sm}\cap W_{L}},\bigoplus_{a\in\Z_{\geq0}^{\left\{ S_{L,i}\right\} _{i=1}^{r(L)}}}IC_{\Trop,\geom,X}^{d,*}|_{X_{\sm}\cap W_{L}}).
\end{align*}
 We fix the injective dualizing complex $\omega_{X_{\sm}\cap W_{L}}\in D^{b}(X_{\sm}\cap W_{L},\Modu\Q)$.
By Corollary \ref{cor:dual of Fil F^p simpler}, we have 
\begin{align*}
 & D(\Fil_{*}^{L}\F_{X_{sm}}^{d-p,w}|_{X_{\sm}\cap W_{L}})\\
\cong & \gHoms(\Fil_{*}^{L}\F_{X_{sm}}^{d-p,w}|_{X_{\sm}\cap W_{L}},\bigoplus_{a\in\Z_{\geq0}^{\left\{ S_{L,i}\right\} _{i=1}^{r(L)}}}\omega_{X_{\sm}\cap W_{L}}).
\end{align*}
For $c\in\Z^{\left\{ S_{L,i}\right\} _{i=1}^{r(L)}}$ and $E=\omega_{X_{\sm}\cap W_{L}}$
or $IC_{\Trop,\geom,X}^{d,*}|_{X_{\sm}\cap W_{L}}$, we have isomorphims
\begin{align}
 & \gHoms(\Fil_{*}^{L}\F_{X_{sm}}^{d-p,w}|_{X_{\sm}\cap W_{L}},\bigoplus_{a\in\Z_{\geq0}^{\left\{ S_{L,i}\right\} _{i=1}^{r(L)}}}E)(c)\nonumber \\
\cong & \Ker(\Homs(\F_{X_{sm}}^{d-p,w}|_{X_{\sm}\cap W_{L}},E)\to\Homs(\sum_{i=1}^{r(L)}\Fil_{S_{L,i}}^{c_{S_{L,i}}+1}\F_{X_{sm}}^{d-p,w}|_{X_{\sm}\cap W_{L}},E))\nonumber \\
\cong & \Homs(\F_{X_{\sm}}^{d-p,w}|_{X_{\sm}\cap W_{L}}/\sum_{i=1}^{r(L)}\Fil_{S_{L,i}}^{c_{S_{L,i}}+1}\F_{X_{\sm}}^{d-p,w}|_{X_{\sm}\cap W_{L}},E),\label{eq:grHom Fil F^p (c)}
\end{align}
where the kernel is taken in $C(\Shv(X_{\sm}\cap W_{L},\Modu\Q))$.
By \cite[Proposition A.9]{GrossShokirehAsheaf-theoreticapproachtotropicalhomology23},
we have 
\begin{align*}
 & \Homs(\F_{X_{\sm}}^{d-p,w}|_{X_{\sm}\cap W_{L}}/\sum_{i=1}^{r(L)}\Fil_{S_{L,i}}^{c_{S_{L,i}}+1}\F_{X_{\sm}}^{d-p,w}|_{X_{\sm}\cap W_{L}},IC_{\Trop,\geom,X}^{d,*}|_{X_{\sm}\cap W_{L}})\\
\cong & \Homs(\F_{X_{\sm}}^{d-p,w}|_{X_{\sm}\cap W_{L}}/\sum_{i=1}^{r(L)}\Fil_{S_{L,i}}^{c_{S_{L,i}}+1}\F_{X_{\sm}}^{d-p,w}|_{X_{\sm}\cap W_{L}},\omega_{X_{\sm}\cap W_{L}}).
\end{align*}
Thus we proved the assertion. 
\end{proof}
\begin{rem}
\label{rem:elementary remark on duality of fil and /fil}Let $S\in\Lambda$
and $P\in\Lambda_{\sm,d}$ with $S\subset P$. We put $\Fil_{S}^{k}F_{d-p,w}(P)\subset F_{d-p,w}(P)$
the subspace spanned by elements $\alpha$ with $v(\alpha\cap\sigma_{S})+u(\alpha\cap\pr_{\sigma_{S}}^{-1}(S))\geq k$
(see Definition \ref{def:v and u} for $v$ and $u$). Then for $k\in\Z$,
the wedge product induces a non-degenerate pairing 
\[
\wedge\colon\Fil_{S}^{-k+\dim S+2\dim\sigma_{S}}F_{d-p,w}(P)\times(F_{p,w}(P)/\Fil_{S}^{k+1}F_{p,w}(P))\to F_{d,w}(P)(\cong\Q).
\]
\end{rem}

\begin{lem}
\label{lem:variant of vanishing of IC smooth part1-1} Let $L=(S_{L,1}\subsetneq\dots\subsetneq S_{L,r(L)})$
($S_{L,i}\in\Lambda$, $r(L)\in\Z_{\geq0}$) be a sequence and $c=(c_{S_{L,i}})_{S_{L,i}\in\left\{ S_{L,i}\right\} _{i=1}^{r(L)}}\in\Z^{\left\{ S_{L,i}\right\} _{i=1}^{r(L)}}$.
Then we have 
\begin{align*}
 & \H^{m}(\Homs(\F_{X_{\sm}}^{d-p,w}|_{X_{\sm}\cap W_{L}}/\sum_{i=1}^{r(L)}\Fil_{S_{L,i}}^{c_{S_{L,i}}+1}\F_{X_{\sm}}^{d-p,w}|_{X_{\sm}\cap W_{L}},IC_{\Trop,\geom,X}^{d,*}|_{X_{\sm}\cap W_{L}}))\\
\cong & \begin{cases}
\Fil_{*}^{L}\F_{X_{sm}}^{p,w}(c-\gr(X)|_{\left\{ S_{L,i}\right\} _{i=1}^{r(L)}})|_{X_{\sm}\cap W_{L}} & (m=-d)\\
0 & (m\neq-d).
\end{cases}
\end{align*}
\end{lem}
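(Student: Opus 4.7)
The plan is to reduce the computation to the pointwise wedge-duality of Remark~\ref{rem:elementary remark on duality of fil and /fil} via the decomposition of Lemma~\ref{lem:decomp of Fil Fp locally} and the Verdier-duality identification implicit in Lemma~\ref{lem:tropical IH on smooth part2.5-1}. Writing
\[
G := \F_{X_{\sm}}^{d-p,w}|_{X_{\sm}\cap W_{L}} / \sum_{i=1}^{r(L)} \Fil_{S_{L,i}}^{c_{S_{L,i}}+1} \F_{X_{\sm}}^{d-p,w}|_{X_{\sm}\cap W_{L}},
\]
the baseline case $r(L)=0$ (where $G = \F_{X_{\sm}}^{d-p,w}$) is obtained by combining Lemma~\ref{lem:tropical IH on smooth part2.5-1} with Lemma~\ref{lem:tropical IH on smooth part2-1} and shifting, yielding a quasi-isomorphism $\Homs(\F_{X_{\sm}}^{d-p,w}|_{X_{\sm}}, IC_{\Trop,\geom,X}^{d,*}|_{X_{\sm}}) \cong \F_{X_{\sm}}^{p,w}|_{X_{\sm}}[d]_{\deg}$ realized stalk-wise by contraction with $[X]$; this already establishes the vanishing of $\H^{m}$ for $m \neq -d$ in the special case.

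For general $(L,c)$ I invoke Lemma~\ref{lem:decomp of Fil Fp locally}: after fixing an inner product on $N_{\Q}$, simultaneous direct-sum splittings
\[
\F_{X_{\sm}}^{d-p,w}|_{X_{\sm}\cap W_{L}} \cong \bigoplus_{b} \gr_{b}\F_{X_{\sm}}^{d-p,w}|_{X_{\sm}\cap W_{L}} \quad\text{and}\quad \F_{X_{\sm}}^{p,w}|_{X_{\sm}\cap W_{L}} \cong \bigoplus_{b'} \gr_{b'}\F_{X_{\sm}}^{p,w}|_{X_{\sm}\cap W_{L}}
\]
realize each $\Fil_{S_{L,i}}^{k}$ as the partial sum over $b_{S_{L,i}} \geq k$, so $G \cong \bigoplus_{b \leq c} \gr_{b}\F^{d-p,w}|_{X_{\sm}\cap W_{L}}$. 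Since $\Homs(-, IC^{d,*})$ commutes with finite direct sums, the baseline isomorphism above must split into a family of identifications $\H^{-d}\Homs(\gr_{b}\F^{d-p,w}, IC^{d,*}) \cong \gr_{b^{*}}\F^{p,w}$ indexed by some bijection $b \mapsto b^{*}$, with all other cohomology vanishing on each summand. The pointwise wedge pairing of Remark~\ref{rem:elementary remark on duality of fil and /fil} gives a perfect pairing $\gr_{b}F_{d-p,w}(P) \times \gr_{\gr(X)|_{L}-b} F_{p,w}(P) \to \Q$ at every $P \in \Lambda_{\sm,d}$ meeting $W_{L}$, and coincides with the stalk of the contraction-with-$[X]$ map that realizes the baseline isomorphism; this pins down $b^{*} = \gr(X)|_{L} - b$. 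Summing over $b \leq c$,
\[
\H^{-d}\Homs(G, IC_{\Trop,\geom,X}^{d,*}|_{X_{\sm}\cap W_{L}}) \cong \bigcap_{i=1}^{r(L)} \Fil_{S_{L,i}}^{\gr(X)_{S_{L,i}} - c_{S_{L,i}}} \F_{X_{\sm}}^{p,w}|_{X_{\sm}\cap W_{L}},
\]
which is by definition $\Fil_{*}^{L}\F_{X_{\sm}}^{p,w}(c - \gr(X)|_{\{S_{L,i}\}_{i=1}^{r(L)}})|_{X_{\sm}\cap W_{L}}$, and $\H^{m}$ vanishes for $m \neq -d$.

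The main obstacle is justifying globally (not just pointwise over each top-dimensional polyhedron) that the Verdier-duality isomorphism of Lemma~\ref{lem:tropical IH on smooth part2.5-1} respects the decompositions of Lemma~\ref{lem:decomp of Fil Fp locally} according to the rule $b \mapsto \gr(X)|_{L} - b$. Because $\F^{p,w}$ and $\F^{d-p,w}$ are not locally constant across the codimension-one smooth strata $Q \in \Lambda_{\sm,d-1}$ but are constrained by the balancing conditions of Definition~\ref{def:F^p}, one has to check that the wedge-pairing identifications chosen independently at each adjacent top-dimensional $P \supset Q$ glue consistently through those balancing relations, so that the locally chosen algebraic splitting genuinely agrees with the global geometric Verdier-duality identification.
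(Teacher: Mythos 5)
Your proposal assembles the same ingredients as the paper's proof --- the pointwise wedge pairing of Remark \ref{rem:elementary remark on duality of fil and /fil}, the local direct-sum decomposition of Lemma \ref{lem:decomp of Fil Fp locally}, and the smooth-part computations of Lemmas \ref{lem:tropical IH on smooth part1-1} and \ref{lem:tropical IH on smooth part2-1} --- but in the opposite order, and the obstacle you flag at the end is exactly the cost of that reordering. The paper does not decompose first and then match summands under a bijection $b\mapsto\gr(X)|_{L}-b$. It first identifies the entire complex $\Homs(G,IC_{\Trop,\geom,X}^{d,*}|_{X_{\sm}\cap W_{L}})$ with the complex of locally finite chains with coefficients in the linear dual of the constructible sheaf $G$ (the Gross--Shokrieh mechanism behind Lemma \ref{lem:tropical IH on smooth part2.5-1}); the dual of $G$ is then computed purely at the level of coefficients by Remark \ref{rem:elementary remark on duality of fil and /fil}, and the gluing across the codimension-one strata $Q$ that worries you is automatic there, because $F_{p,w}(Q)$ and $F^{p,w}(Q)$ are by definition a mutually dual quotient/sub of $\bigoplus_{P\supset Q}$, so the structure maps of $D(G)$ are just the duals of those of $G$. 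Only after this identification does the decomposition enter: the resulting filtered chain complex is (up to shift and grade twist) a direct summand of $IC_{\Trop,\geom,X}^{p,*}|_{X_{\sm}\cap W_{L}}$, so it inherits concentration in degree $-d$ and the identification of $\H^{-d}$ with the corresponding filtered piece of $\F_{X_{\sm}}^{p,w}$. Your route can be completed --- the diagonality of the contraction map with respect to the two decompositions is a pointwise statement over each $P$ that propagates to the sheaf level precisely because Lemma \ref{lem:decomp of Fil Fp locally} produces a decomposition of sheaves, not merely of stalks --- but as written the assertion that the baseline isomorphism ``must split into a family of identifications indexed by some bijection'' is claimed rather than proved, and the paper's ordering is the cleaner way to avoid having to prove it.
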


\begin{proof}
Similarly to \cite[Theorem 4.20]{GrossShokirehAsheaf-theoreticapproachtotropicalhomology23},
\begin{align*}
\Homs(\F_{X_{\sm}}^{d-p,w}|_{X_{\sm}\cap W_{L}}/\sum_{i=1}^{r(L)}\Fil_{S_{L,i}}^{c_{S_{L,i}}+1}\F_{X_{\sm}}^{d-p,w}|_{X_{\sm}\cap W_{L}},IC_{\Trop,\geom,X}^{d,*}|_{X_{\sm}\cap W_{L}})
\end{align*}
 is isomorphic to the complex of sheaves of locally finite chains
with coefficients in the dual of 
\[
\F_{X_{\sm}}^{d-p,w}|_{X_{\sm}\cap W_{L}}/\sum_{i=1}^{r(L)}\Fil_{S_{L,i}}^{c_{S_{L,i}}+1}\F_{X_{\sm}}^{d-p,w}|_{X_{\sm}\cap W_{L}}.
\]
Namely, by Remark \ref{rem:elementary remark on duality of fil and /fil},
it is isomorphic to
\[
\For_{\Lambda\setminus\left\{ S_{L,i}\right\} _{i=1}^{r(L)}}\Fil_{*}^{\Lambda}IC_{\Trop,\geom,X}^{p,*}|_{X_{\sm}\cap W_{L}}[-d+p]_{\deg}(c-\gr(X)|_{\left\{ S_{L,i}\right\} _{i=1}^{r(L)}}),
\]
which is, similarly to Lemma \ref{lem:decomp of Fil Fp locally},
a direct sum of direct summands of a decomposition of $IC_{\Trop,\geom,X}^{p,*}|_{X_{\sm}\cap W_{L}}$.
Hence by Lemma \ref{lem:tropical IH on smooth part1-1} and Lemma
\ref{lem:tropical IH on smooth part2-1}, the assertion holds.
\end{proof}
\begin{rem}
\label{rem:Fp dual Forget}By Lemma \ref{lem:forgetful and dual}
and Corollary \ref{cor:Verdier dual naive smooth part}, we have a
quasi-isomorphism 
\begin{align*}
\For_{\Lambda}\Fil_{*}^{\Lambda}\F_{X_{sm}}^{d-p,w}[d+p]_{\deg} & =\F_{X_{sm}}^{d-p,w}[d+p]_{\deg}\\
 & \cong D(\F_{X_{sm}}^{p,w}[2d-p]_{\deg})[2d]_{\deg}\\
 & \cong\For_{\Lambda}D(\Fil_{*}^{\Lambda}\F_{X_{sm}}^{p,w}[2d-p]_{\deg})[2d]_{\deg}[\gr(X)]_{\gr}.
\end{align*}
\end{rem}

\begin{prop}
\label{prp:Verdier-dual-smooth-part-1} There is a unique LG-quasi-isomorphism
\[
\Fil_{*}^{\Lambda}\F_{X_{sm}}^{p,w}[2d-p]_{\deg}\cong_{LG}D(\Fil_{*}^{\Lambda}\F_{X_{sm}}^{d-p,w}[d+p]_{\deg})[2d]_{\deg}[\gr(X)]_{\gr}
\]
 in $D_{c,[-\gr(X),0],LG}^{b}(X_{\sm},\gMod\Q[T_{S}]_{S\in\Lambda})$
which induces the quasi-isomorphism in Remark \ref{rem:Fp dual Forget}
by the ``forgetful'' functor $\For_{\Lambda}$. 
\end{prop}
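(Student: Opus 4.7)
The plan is to combine Lemma~\ref{lem:graded version of GS} (which replaces the Verdier dual on the right-hand side by an explicit $\gHoms$) with the local computation in Lemma~\ref{lem:variant of vanishing of IC smooth part1-1}. More precisely, by Lemma~\ref{lem:graded version of GS} it suffices to produce a unique LG-quasi-isomorphism
\[
\Psi\colon\Fil_{*}^{\Lambda}\F_{X_{\sm}}^{p,w}[2d-p]_{\deg}\cong_{LG}\gHoms\bigl(\Fil_{*}^{\Lambda}\F_{X_{\sm}}^{d-p,w}[d+p]_{\deg},\bigoplus_{a\in\Z_{\geq0}^{\Lambda}}IC_{\Trop,\geom,X}^{d,*}|_{X_{\sm}}\bigr)[2d]_{\deg}[\gr(X)]_{\gr}
\]
whose $\For_{\Lambda}$ equals the cap-product quasi-isomorphism of Remark~\ref{rem:Fp dual Forget}.

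For the construction, I would start from the wedge pairing $F^{p,w}(P)\otimes F^{d-p,w}(P)\to F^{d,w}(P)$ on each top-dimensional polyhedron $P$, together with the identification $\F_{X_{\sm}}^{d,w}[d]_{\deg}\cong IC_{\Trop,\geom,X}^{d,*}|_{X_{\sm}}$ provided by the fundamental class (Lemma~\ref{lem:tropical IH on smooth part2-1}). This gives a chain map $\F_{X_{\sm}}^{p,w}[2d-p]_{\deg}\otimes\F_{X_{\sm}}^{d-p,w}[d+p]_{\deg}\to IC_{\Trop,\geom,X}^{d,*}|_{X_{\sm}}[2d]_{\deg}$. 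The filtration $\Fil_{*}^{\Lambda}\F_{X_{\sm}}^{p,w}$ was defined in Subsection~\ref{subsec:On-smooth-part} as the preimage under the cap product of the filtration on $\H^{-2d+p}(\Fil_{*}^{\Lambda}IC_{\Trop,\geom,X}^{p,*}|_{X_{\sm}})$; combining this with the pointwise duality in Remark~\ref{rem:elementary remark on duality of fil and /fil} (so that $\Fil_{S}^{k}$ on the $\F^{p,w}$-side pairs non-degenerately against the complementary piece on the $\F^{d-p,w}$-side, landing in the shift $\bigoplus_{a\in\Z_{\geq0}^{\Lambda}}IC^{d,*}|_{X_{\sm}}[2d]_{\deg}[\gr(X)]_{\gr}$), the wedge pairing extends to a morphism of filtered objects, whose tensor-Hom adjoint defines $\Psi$.

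To verify that $\Psi$ is an LG-quasi-isomorphism, I would check Definition~\ref{def:LG q-isomorphisms} pointwise. For each adapted pair $(L,c)$, equation~(\ref{eq:grHom Fil F^p (c)}) from the proof of Lemma~\ref{lem:graded version of GS} identifies the grade-$c$ component of the right-hand side on $W_{L}$ with $\Homs\bigl(\F_{X_{\sm}}^{d-p,w}|_{W_{L}}/\sum_{i=1}^{r(L)}\Fil_{S_{L,i}}^{c_{S_{L,i}}+\dim S_{L,i}+2\dim\sigma_{S_{L,i}}+1}\F_{X_{\sm}}^{d-p,w}|_{W_{L}},IC_{\Trop,\geom,X}^{d,*}|_{X_{\sm}\cap W_{L}}\bigr)$ up to the appropriate degree shifts. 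Lemma~\ref{lem:variant of vanishing of IC smooth part1-1} shows this complex has cohomology concentrated in a single degree (matching $p-2d$ after the shifts $[2d]_{\deg}[\gr(X)]_{\gr}$), and the resulting sheaf is exactly $\Fil_{*}^{L}\F_{X_{\sm}}^{p,w}(c)|_{X_{\sm}\cap W_{L}}$; by construction of $\Psi$ via the wedge pairing, the induced map on $\H^{p-2d}$ is the identity.

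For uniqueness, after shifting by $[p-2d]_{\deg}$ both sides become objects of the heart of the natural $t$-structure on $D_{c,[-\gr(X),0],LG}^{b}(X_{\sm},\gMod\Q[T_{S}]_{S\in\Lambda})$ (Definition~\ref{def:t-structure on LG-localization}). By the equivalence of Lemma~\ref{lem:LG-isom and LG-q-isom} and the heart description of Remark~\ref{lem:heart and LG-isom}, any morphism between them in the LG-derived category is determined by the underlying morphism in $\Shv_{c,[-\gr(X),0],LG}$; imposing that $\For_{\Lambda}$ recovers the cap-product pairing pins this down since the cap product on the smooth part is an honest isomorphism in the generic grade (Remark~\ref{rem:Fp dual Forget}) and the filtration is determined by its image. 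The main obstacle will be the filtration-compatibility step: showing that the wedge pairing of $\Fil_{*}^{\Lambda}\F^{p,w}$ with $\Fil_{*}^{\Lambda}\F^{d-p,w}$ lands in the correct graded piece of $\bigoplus_{a\in\Z_{\geq0}^{\Lambda}}IC^{d,*}|_{X_{\sm}}[\gr(X)]_{\gr}$ requires unwinding the indirect definition of $\Fil_{*}^{\Lambda}\F^{p,w}$ in terms of cap-product preimages and matching it with the pointwise duality $\Fil_{S}^{k}F_{d-p,w}(P)$ vs.\ $F_{p,w}(P)/\Fil_{S}^{k+1}F_{p,w}(P)$ of Remark~\ref{rem:elementary remark on duality of fil and /fil}, taking the sign $\dim S+2\dim\sigma_{S}$ in $\gr(X)$ carefully into account.
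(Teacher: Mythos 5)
Your proposal follows essentially the same route as the paper: reduce via Lemma \ref{lem:graded version of GS} to the explicit $\gHoms(-,\bigoplus_{a}IC^{d,*})$ model, define the map by the cap product/wedge pairing with the fundamental class, verify LG-quasi-isomorphy gradewise using isomorphism (\ref{eq:grHom Fil F^p (c)}) together with Lemma \ref{lem:variant of vanishing of IC smooth part1-1}, and get uniqueness from the fact that each $\Fil_{*}^{\Lambda}\F_{X_{\sm}}^{p,w}(a)$ is a subsheaf of $\F_{X_{\sm}}^{p,w}$ so the morphism is pinned down by its $\For_{\Lambda}$-image. The "filtration-compatibility obstacle" you flag is dissolved by exactly the mechanism you already cite: $\Fil_{*}^{\Lambda}\F_{X_{\sm}}^{p,w}$ is defined as the cap-product preimage of the filtration on $\H^{-2d+p}(\Fil_{*}^{\Lambda}IC_{\Trop,\geom}^{p,*}|_{X_{\sm}})$, and Lemma \ref{lem:variant of vanishing of IC smooth part1-1} identifies that target with the correct graded piece of the Hom-complex, so no separate compatibility check is needed.
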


\begin{proof}
The uniqueness holds since by definition of $\Fil_{*}^{\Lambda}\F_{X_{sm}}^{p,w}[2d-p]_{\deg}$,
LG-automorphisms of $\Fil_{*}^{\Lambda}\F_{X_{sm}}^{p,w}[2d-p]_{\deg}$
are uniquely determined by induced automorphisms of $\F_{X_{sm}}^{p,w}[2d-p]_{\deg}.$
By Lemma \ref{lem:tropical IH on smooth part2-1}, the cap product
$[X]\cap-$ induces a quasi-isomorphism 
\[
\F_{X_{sm}}^{p,w}[2d-p]_{\deg}\to IC_{\Trop,\geom,X}^{p,*}|_{X_{\sm}}\cong\Homs(\F_{X_{\sm}}^{d-p,w}[d+p]_{\deg},IC_{\Trop,\geom,X}^{d,*}|_{X_{\sm}})[2d]_{\deg}.
\]
 By Lemma \ref{lem:variant of vanishing of IC smooth part1-1} and
isomorphism (\ref{eq:grHom Fil F^p (c)}) in proof of Lemma \ref{lem:graded version of GS}
for $E=IC_{\Trop,\geom,X}^{d,*}|_{X_{\sm}\cap W_{L}}$, it induces
an LG-quasi-isomorphism 
\begin{align*}
\Fil_{*}^{\Lambda}\F_{X_{sm}}^{p,w}[2d-p]_{\deg} & \cong_{LG}\gHoms(\Fil_{*}^{\Lambda}\F_{X_{sm}}^{d-p,w}[d+p]_{\deg},\bigoplus_{a\in\Z_{\geq0}^{\Lambda}}IC_{\Trop,\geom,X}^{d,*}|_{X_{\sm}})[2d]_{\deg}[\gr(X)]_{\gr}.
\end{align*}
 Hence the assertion follows from Lemma \ref{lem:graded version of GS}. 
\end{proof}

\subsection{Sheaf-theoretic construction and Poincar\'{e}-Verdier duality\label{subsec:Intersection-product-and-1} }

We put $X_{\sm}:=U_{1}:=\bigcup_{R\in\Lambda_{\sm}}\relint R$. We
fix an order $\Lambda_{\sing}=\left\{ S_{2,}\dots,S_{\#\Lambda_{\sing}+1}\right\} $
such that $S_{i}\supset S_{j}$ only if $i\leq j$. In particular,
for $2\leq k\leq\#\Lambda_{\sing}+1$, the union $U_{k}:=U_{1}\cup\bigcup_{j=2}^{k}\relint S_{j}$
is an open subset of $X$, and $\relint S_{k}\subset U_{k}$ is a
closed subset. We put $i_{k}\colon U_{k-1}\hookrightarrow U_{k}$
and $j_{k}:\relint S_{k}\hookrightarrow U_{k}$ embeddings. We put
\[
m_{p,S}:=-2p-\dim S-\dim\sigma_{S}-1.
\]
 In this subsection, for simplicity, for a locally closed polyhedral
subset $Y\subset X$, we put 
\[
D_{*}^{b}(Y):=D_{c,[-\gr(X),0],*}^{b}(Y,\gMod\Q[T_{S}]_{S\in\Lambda})
\]
 ($*=\emptyset,LG$). 
\begin{defn}
\label{def:Axiom Ap}Let $2\leq k\leq\#\Lambda_{\sing}+1$, $E\in D_{LG}^{b}(U_{k})$,
and $p\in\Z_{\geq0}$. We put $\epsilon(\sigma_{S_{k}}):=\min\left\{ \dim\sigma_{S_{k}},1\right\} $.
We say that $E$ satisfies Axiom $A_{p}$ at $S_{k}$ if for an adapted
pair $(L,a)$ (Definition \ref{def:adapted pair}) with $r(L)\geq1$
and $S_{L,r(L)}=S_{k}$, the following conditions hold: 
\begin{enumerate}
\item (local vanishing) we have 
\begin{align*}
 & \H^{m}(j_{k}^{*}E)(a)\cong_{L}\H^{m}(j_{k}^{*}E)(a(S_{k}=m_{p,S_{k}}-m))\\
 & (\max\left\{ -p-d-\epsilon(\sigma_{S_{k}}),m_{p,S_{k}}-a_{S_{k}}\right\} +1\leq m\leq-p-\dim S_{k}-2+\epsilon(\sigma_{S_{k}}))
\end{align*}
 and 
\begin{align*}
\H^{m}(j_{k}^{*}E)=0 & \quad(m\geq-p-\dim S_{k}-1+\epsilon(\sigma_{S_{k}})),
\end{align*}
\item (attaching property) the natural map 
\[
\H^{m}(j_{k}^{*}E)(a)\to\H^{m}(j_{k}^{*}Ri_{k*}i_{k}^{*}E)(a)
\]
 is an $L$-local isomorphism for 
\begin{align*}
 & m\leq\max\left\{ -p-d-\epsilon(\sigma_{S_{k}}),\min\left\{ m_{p,S_{k}}-a_{S_{k}},-p-\dim S_{k}-2+\epsilon(\sigma_{S_{k}})\right\} \right\} .
\end{align*}
\end{enumerate}
\end{defn}

To simplify notation, we put 
\[
\tau_{p,S_{k}}:=\begin{cases}
\tau_{\leq(-p-d,m_{p,S_{k}}-S_{k},-p-\dim S_{k}-2)}^{\relint S_{k}} & (\dim\sigma_{S_{k}}=0,\dim S_{k}+3\leq d)\\
\tau_{\leq-p-d}^{\relint S_{k}} & (\dim\sigma_{S_{k}}=0,\dim S_{k}+2=d)\\
\tau_{\leq(-p-d-1,m_{p,S_{k}}-S_{k},-p-\dim S_{k}-1)}^{\relint S_{k}} & (\dim\sigma_{S_{k}}\geq1)
\end{cases}.
\]

\begin{prop}
\label{prop:Deligne's characterization-1}For $2\leq k\leq\#\Lambda_{\sing}+1$
and $p\geq0$, the functor 
\[
\tau_{p,S_{k}}Ri_{k*}\colon D^{b}(U_{k-1})\to D^{b}(U_{k})
\]
induces an equivalence of categories between $D_{LG}^{b}(U_{k-1})$
and the full subcategory of $D_{LG}^{b}(U_{k})$ consisting of complexes
satisfying Axiom $A_{p}$ at $S_{k}$. 
\end{prop}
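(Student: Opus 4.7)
The plan is to mimic the Deligne--Goresky--MacPherson characterisation (cf.\ \cite[Section 3]{GoreskyMacPhersonIntersectionhomologyII83}), with Corollary~\ref{cor:supported-truncation-hom-isomorphism} as the central tool. Set $G := \tau_{p,S_{k}} Ri_{k*}$ and $H := i_{k}^{*}$; I will show these are mutually quasi-inverse equivalences between $D^{b}_{LG}(U_{k-1})$ and the full subcategory $\mathcal{C} \subset D^{b}_{LG}(U_{k})$ of objects satisfying Axiom $A_{p}$ at $S_{k}$.

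First I would verify that $G(E) \in \mathcal{C}$ for every $E \in D^{b}_{LG}(U_{k-1})$. The local vanishing clause of Definition~\ref{def:Axiom Ap} is built into Definition~\ref{def:truncation}: the three cases defining $\tau_{p,S_{k}}$ are designed so that the truncation acts on stalks at $\relint S_{k}$ by killing all cohomology above the threshold of Axiom $A_{p}$ and by imposing the grade-twist identification $\H^{m}(j_{k}^{*}E)(a) \cong_{L} \H^{m}(j_{k}^{*}E)(a(S_{k} = m_{p,S_{k}} - m))$ throughout the intermediate range. The attaching clause then follows because $H(G(E)) = i_{k}^{*}\tau_{p,S_{k}}Ri_{k*}E = i_{k}^{*}Ri_{k*}E = E$ (the truncation is supported at $\relint S_{k}$), so the attaching map coincides with the natural $\tau_{p,S_{k}}Ri_{k*}E \to Ri_{k*}E$, which is an LG-isomorphism on stalks below the truncation cutoff by construction.

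Conversely, for $F \in \mathcal{C}$, I would produce an LG-isomorphism $G(H(F)) = \tau_{p,S_{k}}Ri_{k*}i_{k}^{*}F \cong F$ via Corollary~\ref{cor:supported-truncation-hom-isomorphism} applied to $A := \tau_{p,S_{k}}Ri_{k*}i_{k}^{*}F$, $B := F$, $C := Ri_{k*}i_{k}^{*}F$, with $\psi$ the adjunction unit. Its three hypotheses translate exactly into: local vanishing of $A$ (established above), the attaching condition of Axiom $A_{p}$ for $F$ (applied to $\psi$), and the tautology $i_{k}^{*}\psi = \id_{i_{k}^{*}F}$. The corollary then lifts the canonical morphism $A \to C$ uniquely to a morphism $A \to F$, which I would check is an LG-isomorphism: on $U_{k-1}$ it is the identity, and on $\relint S_{k}$ it is the map $\tau_{p,S_{k}}(j_{k}^{*}Ri_{k*}i_{k}^{*}F) \to j_{k}^{*}F$, an LG-isomorphism because the local vanishing and attaching clauses of Axiom $A_{p}$ for $F$ together say that $j_{k}^{*}F$ already realises the truncation of $j_{k}^{*}Ri_{k*}i_{k}^{*}F$ at the degrees specified by $\tau_{p,S_{k}}$.

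Full faithfulness of $G$ comes from the same mechanism: taking $A = G(E_{1})$, $B = G(E_{2})$, $C = Ri_{k*}E_{2}$ and $\psi$ the natural morphism in Corollary~\ref{cor:supported-truncation-hom-isomorphism} yields $\Hom(G(E_{1}), G(E_{2})) \cong \Hom(G(E_{1}), Ri_{k*}E_{2}) \cong \Hom(E_{1}, E_{2})$, where the second isomorphism is $(i_{k}^{*}, Ri_{k*})$-adjunction combined with $i_{k}^{*}G(E_{1}) = E_{1}$. The main obstacle is bookkeeping: one must verify in each of the three case-definitions of $\tau_{p,S_{k}}$ that the triple $(t, r - S_{k}, s)$ in Definition~\ref{def:truncation} produces exactly the inequalities appearing in Definition~\ref{def:Axiom Ap}. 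This reduces to checking, using $m_{p,S_{k}} = -2p - \dim S_{k} - \dim\sigma_{S_{k}} - 1$ and $\epsilon(\sigma_{S_{k}}) = \min\{\dim\sigma_{S_{k}}, 1\}$, that the chosen cutoffs match in the smooth-codimension-$\geq 3$ case, the smooth-codimension-$2$ case, and the $\dim\sigma_{S_{k}} \geq 1$ case separately.
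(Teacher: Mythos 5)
Your proposal is correct and follows essentially the same route as the paper, which simply invokes the Goresky--MacPherson argument (\cite[Subsection 3.5, Theorem]{GoreskyMacPhersonIntersectionhomologyII83}) together with Corollary~\ref{cor:supported-truncation-hom-isomorphism}; your write-up is a fleshed-out version of exactly that adjunction-plus-truncation argument with the same key corollary in the same roles.
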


\begin{proof}
When $\dim\sigma_{S_{k}}=0$ and $\dim S_{k}+2=d$, this is \cite[Subsection 3.5, Theorem]{GoreskyMacPhersonIntersectionhomologyII83}.
The other cases can be also proved in the same way by Corollary \ref{cor:supported-truncation-hom-isomorphism}. 
\end{proof}
\begin{defn}
\label{def:sheaf-theoretic IC-1} We put 
\begin{align*}
\Fil_{*}^{\Lambda}IC_{\Trop,\sheaf}^{d-p,*} & :=\Fil_{*}^{\Lambda}IC_{\Trop,\sheaf,X}^{d-p,*}\\
 & :=\tau_{p,S_{\#\Lambda_{\sing}+1}}Ri_{\#\Lambda_{\sing}+1*}\tau_{p,S_{\#\Lambda_{\sing}}}Ri_{\#\Lambda_{\sing}*}\dots\tau_{p,S_{2}}Ri_{2*}\Fil_{*}^{\Lambda}\F_{X_{\sm}}^{d-p,w}[d+p]_{\deg}\\
 & \in D^{b}(X).
\end{align*}
 We also put 
\begin{align*}
IC_{\Trop,\sheaf}^{d-p,*}:=IC_{\Trop,\sheaf,X}^{d-p,*}:=\For_{\Lambda}\Fil_{*}^{\Lambda}IC_{\Trop,\sheaf}^{d-p,*} & \in D_{c}^{b}(X,\Modu\Q).
\end{align*}
\end{defn}

\begin{defn}
Let $2\leq k\leq\#\Lambda_{\sing}+1$, $E\in D_{LG}^{b}(U_{k})$,
and $p\in\Z_{\geq0}$. We put $\epsilon(\sigma_{S_{k}}):=\min\left\{ \dim\sigma_{S_{k}},1\right\} $.
We say that $E$ satisfies Axiom $A_{p}$ (2)$'$ at $S_{k}$ if for
an adapted pair $(L,a)$ with $r(L)\geq1$ and $S_{L,r(L)}=S_{k}$
and 
\begin{align*}
m\leq\max\left\{ -p-d+1-\epsilon(\sigma_{S_{k}}),\min\left\{ m_{p,S_{k}}-a_{S_{k}}+1,-p-\dim S_{k}-1+\epsilon(\sigma_{S_{k}})\right\} \right\} ,
\end{align*}
 we have $\H^{m}(j_{k}^{!}E)(a)=0$ in $\Shv_{L}(\relint S_{k},\Modu\Q)$. 
\end{defn}

\begin{rem}
\label{rem:Axiom (2)' j_x form}For $x\in\relint S_{k}$, we put $u_{x}\colon\left\{ x\right\} \to\relint S_{k}$
and $j_{x}\colon\left\{ x\right\} \to U_{k}$ the inclusions. Since
$j_{x}^{!}\cong u_{x}^{!}j_{k}^{!}\cong u_{x}^{*}j_{k}^{!}[-\dim S_{k}]$,
similarly to \cite[Subsection 3.4]{GoreskyMacPhersonIntersectionhomologyII83},
we can reformulate Axiom $A_{p}$ (2)$'$ as follows. A complex $E\in D_{LG}^{b}(U_{k})$
satisfies Axiom $A_{p}$ (2)$'$ at $S_{k}$ if and only if there
are a complex $F\in D^{b}(U_{k})$ LG-quasi-isomorphic to $E$ and
open polyhedral neighborhoods $W_{S}'\subset X$ of $\relint S$ ($S\in\Lambda$
with $S\subset S_{k}$) such that for an adapted pair $(L,a)$ with
$r(L)\geq1$ and $S_{L,r(L)}=S_{k}$, a point $x\in\relint S_{k}\cap W_{L}'$,
and 
\begin{align*}
m\leq\max\left\{ -p-d+1-\epsilon(\sigma_{S_{k}}),\min\left\{ m_{p,S_{k}}-a_{S_{k}}+1,-p-\dim S_{k}-1+\epsilon(\sigma_{S_{k}})\right\} \right\} +\dim S_{k},
\end{align*}
 we have $H^{m}(j_{x}^{!}F)(a)=0$. 
\end{rem}

For $a\in\Z^{\Lambda}$, we put 
\[
n_{p,S_{k},a}:=\max\left\{ -p-d+1-\epsilon(\sigma_{S_{k}}),\min\left\{ m_{p,S_{k}}-a_{S_{k}}+1,-p-\dim S_{k}-1+\epsilon(\sigma_{S_{k}})\right\} \right\} .
\]
Note that when $n_{p,S_{k},a}\leq-p-\dim S_{k}-2+\epsilon(\sigma_{S_{k}})$,
we have $m_{p,S_{k}}-n_{p,S_{k},a}\leq a_{S_{k}}-1$. 
\begin{lem}
\label{lem:attaching-is-costalk-vanishing} 
\end{lem}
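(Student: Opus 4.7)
The plan is to deduce the equivalence from the fundamental distinguished triangle
\[
j_k^! E \to j_k^* E \to j_k^* R i_{k*} i_k^* E \xrightarrow{+1}
\]
obtained by applying $j_k^*$ to the standard triangle $j_{k!} j_k^! E \to E \to R i_{k*} i_k^* E \xrightarrow{+1}$ attached to the closed/open decomposition $\relint S_k \sqcup U_{k-1} = U_k$. Since all three functors $j_k^!,j_k^*,j_k^* Ri_{k*}i_k^*$ preserve LG-quasi-isomorphisms (Remark \ref{rem:some functors on D_LG}) and are grade-wise on $\Shv$ (Remark \ref{rem: 3 functors are graded-wise}, Definition \ref{def:h^! locally closed map}), the triangle induces, for each $a\in\Z^\Lambda$, a long exact sequence in $\Shv_L(\relint S_k,\Modu\Q)$ after passing to the appropriate localization corresponding to an adapted pair $(L,a)$ with $r(L)\geq 1$ and $S_{L,r(L)}=S_k$.

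First I would fix such an adapted pair $(L,a)$ and work in $\Shv_L(\relint S_k,\Modu\Q)$, regarding the attaching map of Axiom $A_p$(2) at degree $m$ in grade $a$ as the connecting map in the above long exact sequence. Then, by a direct inspection, the statement that $\H^m(j_k^*E)(a)\to\H^m(j_k^* Ri_{k*}i_k^* E)(a)$ is an $L$-local isomorphism for all
\[
m\leq \max\{-p-d-\epsilon(\sigma_{S_k}),\min\{m_{p,S_k}-a_{S_k},-p-\dim S_k-2+\epsilon(\sigma_{S_k})\}\}
\]
is equivalent, via the long exact sequence, to the vanishing $\H^m(j_k^! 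E)(a)=0$ in $\Shv_L(\relint S_k,\Modu\Q)$ for all $m\le n_{p,S_k,a}$, where
\[
n_{p,S_k,a}=\max\{-p-d+1-\epsilon(\sigma_{S_k}),\min\{m_{p,S_k}-a_{S_k}+1,-p-\dim S_k-1+\epsilon(\sigma_{S_k})\}\};
\]
the shift by $1$ between the two thresholds is exactly the one coming from the boundary map $\H^m(j_k^* Ri_{k*}i_k^* E)(a)\to \H^{m+1}(j_k^! E)(a)$.

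To make this implication go in both directions I need a companion injectivity/surjectivity at the top of the range, which is why the statement presumably requires Axiom $A_p$(1) (the local vanishing condition) to be assumed in tandem: the top-degree parts $\H^m(j_k^*E)$ for $m\geq -p-\dim S_k-1+\epsilon(\sigma_{S_k})$ vanish, so the long exact sequence degenerates and the equivalence of the two attaching-type conditions becomes clean. This is the standard Goresky--MacPherson maneuver (cf.\ \cite[Subsection 3.4]{GoreskyMacPhersonIntersectionhomologyII83}).

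The main obstacle I anticipate is purely bookkeeping: one must track the LG-quasi-isomorphism framework carefully, ensuring that the passage to $\Shv_L$ is compatible with taking cohomology of the distinguished triangle grade by grade. Remark \ref{rem:direct sum of -(a) is faithful} (faithfulness of $\bigoplus_a -(a)$) plus the fact that $j_k^!,j_k^*,Ri_{k*},i_k^*$ are all grade-preserving make this step formal, but the thresholds $n_{p,S_k,a}$ need to be verified in each of the three cases $\dim\sigma_{S_k}=0,\dim S_k+3\le d$; $\dim\sigma_{S_k}=0,\dim S_k+2=d$; and $\dim\sigma_{S_k}\geq 1$, matching the definition of $\tau_{p,S_k}$. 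After that, the reformulation via stalks of $j_x^!$ given in Remark \ref{rem:Axiom (2)' j_x form} is immediate from $j_x^!\cong u_x^* j_k^!\,[-\dim S_k]$, which accounts for the shift by $\dim S_k$ appearing there.
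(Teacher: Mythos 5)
Your use of the distinguished triangle $j_k^!E\to j_k^*E\to j_k^*Ri_{k*}i_k^*E\to^{[1]}\cdot$ and your treatment of the first assertion ((2)$'\Rightarrow$(2)) match the paper. The gap is in the converse. From the long exact sequence, the attaching map $\alpha_m\colon\H^m(j_k^*E)(a)\to\H^m(j_k^*Ri_{k*}i_k^*E)(a)$ being an isomorphism for $m\leq n_{p,S_k,a}-1$ (Axiom $A_p$ (2)) yields $\H^m(j_k^!E)(a)=0$ only for $m\leq n_{p,S_k,a}-1$; vanishing at the boundary degree $m=n_{p,S_k,a}$ requires in addition that $\alpha_{n_{p,S_k,a}}$ be injective, which Axiom (2) does not give. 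Your proposed substitute --- that the vanishing $\H^m(j_k^*E)=0$ for $m\geq -p-\dim S_k-1+\epsilon(\sigma_{S_k})$ from Axiom (1) makes the sequence degenerate --- only covers the easy case $n_{p,S_k,a}=-p-\dim S_k-1+\epsilon(\sigma_{S_k})$. In the main case $n_{p,S_k,a}\leq -p-\dim S_k-2+\epsilon(\sigma_{S_k})$ the critical degree lies strictly below the vanishing range, $\H^{n_{p,S_k,a}}(j_k^*E)(a)$ is in general nonzero, and your argument does not close. The two axioms are genuinely not equivalent, so the ``equivalence via the shift by $1$ in the boundary map'' you assert cannot be right as stated.

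What is missing --- and what you never invoke --- is the explicit extra hypothesis of the lemma: injectivity of the grade-change map $\H^{n}(j_k^*Ri_{k*}i_k^*E)(a(S_k=m_{p,S_k}-n))\to\H^{n}(j_k^*Ri_{k*}i_k^*E)(a)$ for $n=n_{p,S_k,a}$. The paper's proof uses the commutative square in which the grade-shift isomorphism of Axiom (1) identifies $\H^n(j_k^*E)(a)$ with $\H^n(j_k^*E)(a')$ for $a'=a(S_k=m_{p,S_k}-n)$; at grade $a'$ the attaching map in degree $n$ \emph{is} an isomorphism by Axiom (2), since the threshold there equals $n$, and composing with the assumed injectivity of the grade-change map shows $\alpha_n$ is injective at grade $a$, hence $\H^n(j_k^!E)(a)=0$. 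This step cannot be avoided: the injectivity hypothesis is verified separately for $\Fil_*^{\Lambda}IC_{\Trop,\sheaf}^{d-p,*}$ in Corollary \ref{cor:freeness of stalks and costalks-1}, which would be superfluous if the implication held on the strength of Axioms (1) and (2) alone.
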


\begin{itemize}
\item When $E$ satisfies Axiom $A_{p}$ (2)$'$, it also satisfies Axiom
$A_{p}$ (2).
\item We assume that $E$ satisfies Axiom $A_{p}$ (1) and (2). We also
assume that for an adapted pair $(L,a)$ such that $r(L)\geq1$, $S_{L,r(L)}=S_{k}$,
and $n_{p,S_{k},a}\leq-p-\dim S_{k}-2+\epsilon(\sigma_{S_{k}})$,
the natural map 
\begin{equation}
\H^{n_{p,S_{k},a}}(j_{k}^{*}Ri_{k*}i_{k}^{*}E)(a(S_{k}=m_{p,S_{k}}-n_{p,S_{k},a}))\to\H^{n_{p,S_{k},a}}(j_{k}^{*}Ri_{k*}i_{k}^{*}E)(a)\label{eq:attaching=00003Dcostalk vanishing}
\end{equation}
is injective in $\Shv_{L}(\relint S_{k},\Modu\Q)$. Then the complex
$E$ satisfies Axiom $A_{p}$ (2)$'$. 
\end{itemize}
\begin{proof}
The case of $\dim S_{k}+2=d$ and $\dim\sigma_{S}=0$ is \cite[Subsection 3.4]{GoreskyMacPhersonIntersectionhomologyII83}.
The other cases, given below, are similar. Let $(L,a)$ be an adapted
pair with $r(L)\geq1$ and $S_{L,r(L)}=S_{k}$. By a distinguished
triangle $j_{k}^{!}E^{*}\to j_{k}^{*}E^{*}\to j_{k}^{*}Ri_{k*}i_{k}^{*}E^{*}\to^{[1]}\cdot$,
the first assertion holds, and Axiom $A_{p}$ (2) shows $\H^{m}(j_{k}^{!}E^{*})(a)=0$
for $m\leq n_{p,S_{k},a}-1$. We assume that $E$ satisfies Axiom
$A_{p}$ (1) and (2). We also assume that $n_{p,S_{k},a}\leq-p-\dim S_{k}-2+\epsilon(\sigma_{S_{k}})$.
The other case is easy. We have a commutative diagram 
\[
\xymatrix{0\ar[r] & \H^{n_{p,S_{k},a}}(j_{k}^{!}E^{*})(a)\ar[r] & \H^{n_{p,S_{k},a}}(j_{k}^{*}E^{*})(a)\ar[r] & \H^{n_{p,S_{k},a}}(j_{k}^{*}Ri_{k*}i_{k}^{*}E^{*})(a)\\
 &  & \H^{n_{p,S_{k},a}}(j_{k}^{*}E^{*})(a')\ar[u]^{\cong}\ar[r]^{\cong} & \H^{n_{p,S_{k},a}}(j_{k}^{*}Ri_{k*}i_{k}^{*}E^{*})(a')\ar[u]
}
\]
with exact first row, where for simplicity, we put $a':=a(S_{k}=m_{p,S_{k}}-n_{p,S_{k},a})$,
and the first vertical arrow is an $L$-isomorphism by Axiom $A_{p}$
(1). Hence when the last vertical arrow is injective, we have $\H^{n_{p,S_{k},a}}(j_{k}^{!}E^{*})(a)=0$. 
\end{proof}
\begin{defn}
A complex $A\in D_{LG}^{b}(\relint S_{k})$ is said to be \emph{point-wise
free} if there are a complex $A'\in D^{b}(\relint S_{k})$ LG-quasi-isomorphic
to $A$ and open polyhedral neighborhoods $W_{S}'\subset X$ of $\relint S$
($S\in\Lambda$ with $S\subset S_{k}$) such that for a sequenece
$L=(S_{L,1}\subsetneq\dots\subsetneq S_{L,r(L)})$ ($S_{L,i}\in\Lambda$,
$r(L)\in\Z_{\geq1}$) with $S_{L,r(L)}=S_{k}$, an integer $j\in\Z$,
and $x\in\relint S_{k}\cap W_{L}'$, a $\Q[T_{S_{L,i}}]_{i=1}^{r(L)}$-module
$H^{j}(\For_{\Lambda\setminus\left\{ S_{L,i}\right\} _{i=1}^{r(L)}}u_{x}^{*}A')$
is free, where we put $u_{x}\colon\left\{ x\right\} \to\relint S_{k}$.
\end{defn}

Remind that we put 
\[
\gr(X):=\sum_{S\in\Lambda}(\dim S+2\dim\sigma_{S})e_{S}\in\Z^{\Lambda}.
\]

\begin{cor}
\label{cor:dual-version-GMintersection-homology-Deligne-characterization-1}
Let $A\in D_{LG}^{b}(U_{k-1})$ such that $j_{k}^{\epsilon}\tau_{d-p,S_{k}}Ri_{k*}A$
($\epsilon=*,!$) is point-wise free and the morphism (\ref{eq:attaching=00003Dcostalk vanishing})
is injective for $\tau_{d-p,S_{k}}Ri_{k*}A$. We put 
\[
B:=D(A)[2d]_{\deg}[\gr(X)]_{\gr}\in D_{LG}^{b}(U_{k-1})
\]
(Lemma \ref{lem:LG-q-isom duality}). Then there exists a unique LG-quasi-isomorphism
\[
\tau_{p,S_{k}}Ri_{k*}B\cong_{LG}D(\tau_{d-p,S_{k}}Ri_{k*}A)[2d]_{\deg}[\gr(X)]_{\gr}
\]
in $D_{LG}^{b}(U_{k})$ extending the equality on $U_{k-1}$. 
\end{cor}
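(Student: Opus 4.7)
\medskip

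\noindent\textbf{Proof plan.} Set $C := \tau_{d-p,S_k}Ri_{k*}A$ and $G := D(C)[2d]_{\deg}[\gr(X)]_{\gr}$. By construction $i_k^{*}C\cong_{LG}A$ and, via Proposition \ref{prop:Deligne's characterization-1}, $C$ satisfies Axiom $A_{d-p}$ at $S_k$; together with the hypothesis that the morphism (\ref{eq:attaching=00003Dcostalk vanishing}) is injective for $C$, Lemma \ref{lem:attaching-is-costalk-vanishing} upgrades this to Axiom $A_{d-p}$ (2)$'$. The plan is: (i) identify $i_k^{*}G$ with $B$ on $U_{k-1}$; (ii) verify that $G$ satisfies Axiom $A_p$ at $S_k$; (iii) invoke the equivalence of Proposition \ref{prop:Deligne's characterization-1} to conclude both the existence and uniqueness of the claimed LG-quasi-isomorphism $G\cong_{LG}\tau_{p,S_k}Ri_{k*}B$ extending the identity on $U_{k-1}$.

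For step (i), the compatibility of open restriction with Verdier duality and with the shift $[\gr(X)]_{\gr}$ (which is given in a grade-wise manner, see Remark \ref{rem:some functors on D_LG} and Lemma \ref{lem:LG-q-isom duality}) yields
\[
i_k^{*}G \;=\; D(i_k^{*}C)[2d]_{\deg}[\gr(X)]_{\gr} \;=\; D(A)[2d]_{\deg}[\gr(X)]_{\gr} \;=\; B.
\]
Step (iii) is then immediate from Proposition \ref{prop:Deligne's characterization-1} once step (ii) is established, since every complex satisfying Axiom $A_p$ at $S_k$ is LG-quasi-isomorphic to $\tau_{p,S_k}Ri_{k*}$ of its own restriction to $U_{k-1}$, and the equivalence makes such an extension unique.

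The heart of the argument is step (ii). Using the standard exchange formulas $j_k^{*}D\cong Dj_k^{!}$ and $j_k^{!}D\cong Dj_k^{*}$ from Section \ref{sec:Appendix.}, together with Lemma \ref{lem:LG-q-isom duality}, we have LG-quasi-isomorphisms
\[
j_k^{*}G \;\cong_{LG}\; D(j_k^{!}C)[2d]_{\deg}[\gr(X)]_{\gr}, \qquad j_k^{!}G \;\cong_{LG}\; D(j_k^{*}C)[2d]_{\deg}[\gr(X)]_{\gr}.
\]
The point-wise freeness hypothesis on $j_k^{\epsilon}C$ allows us to compute cohomology sheaves at each point $x\in\relint S_k$ and each adapted pair $(L,a)$ with $S_{L,r(L)}=S_k$ using the elementary grade-wise duality of Example \ref{exa:duality in graded modules}: a module that is eventually constant in the $S_k$-direction above some threshold becomes eventually zero below the mirrored threshold, and vice versa. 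Axiom $A_{d-p}$ (2)$'$ for $C$ — which gives vanishing of $\H^{m}(j_k^{!}C)(a)$ in a specified range — then dualizes precisely to Axiom $A_p$ (1) for $G$, and Axiom $A_{d-p}$ (1) for $C$ dualizes to Axiom $A_p$ (2)$'$ for $G$, which via the first part of Lemma \ref{lem:attaching-is-costalk-vanishing} implies Axiom $A_p$ (2).

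The main obstacle is bookkeeping: one must verify that the $[2d]_{\deg}[\gr(X)]_{\gr}$ shift causes $m_{d-p,S_k}$ to reflect to $m_{p,S_k}$ and that the break-point ranges $-p-\dim S_k-2+\epsilon(\sigma_{S_k})$ and $-p-d-\epsilon(\sigma_{S_k})$ for $G$ match those obtained by dualizing the ranges for $C$ against the component $\dim S_k + 2\dim\sigma_{S_k}$ of $\gr(X)$ at $S_k$. This is a matter of careful indexing rather than conceptual difficulty, and the identity $m_{p,S_k}+m_{d-p,S_k} = -2d - 2\dim S_k - 2\dim\sigma_{S_k} - 2$ together with the $\gr(X)_{S_k}$-shift is exactly what makes the reflection symmetric in the desired way.
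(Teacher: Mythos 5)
Your proposal is correct and follows essentially the same route as the paper: reduce to checking Axiom $A_p$ for $D(\tau_{d-p,S_k}Ri_{k*}A)[2d]_{\deg}[\gr(X)]_{\gr}$ via the exchange formulas $j_k^{*}D\cong Dj_k^{!}$, use Lemma \ref{lem:attaching-is-costalk-vanishing} to pass between Axiom (2) and (2)$'$, exploit point-wise freeness to compute the graded duals of stalks and costalks as in Example \ref{exa:duality in graded modules}, and conclude by the equivalence of Proposition \ref{prop:Deligne's characterization-1}. Both your write-up and the paper's defer the same final index bookkeeping to a "direct computation."
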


\begin{proof}
By Lemma \ref{lem:attaching-is-costalk-vanishing}, $\tau_{d-p,S_{k}}Ri_{k*}A$
satisfies Axiom $A_{d-p}$ (2)$'$. Let $A'\in D^{b}(U_{k})$ be LG-quasi-isomorphic
to $\tau_{d-p,S_{k}}Ri_{k*}A$ such that $H^{j}(\For_{\Lambda\setminus\left\{ S_{L,i}\right\} _{i=1}^{r(L)}}u_{x}^{*}j_{k}^{\epsilon}A')$
($\epsilon=*,!$) is free for some open polyhedral neighborhoods $W_{S}'\subset X$
of $\relint S$ ($S\in\Lambda$ with $S\subset S_{k}$), a sequenece
$L=(S_{L,1}\subsetneq\dots\subsetneq S_{L,r(L)})$ ($S_{L,i}\in\Lambda$,
$r(L)\in\Z_{\geq1}$) with $S_{L,r(L)}=S_{k}$, an integer $j\in\Z$,
and $x\in\relint S_{k}\cap W_{L}'$. We may assume that $A'$ satisfies
the condition in Remark \ref{rem:Axiom (2)' j_x form} for $W_{S}'$
$(S\in\Lambda$ with $S\subset S_{k}$). For $a\in\Z^{\Lambda}$ adapted
to $L$ and $\left\{ \epsilon_{1},\epsilon_{2}\right\} =\left\{ *,!\right\} $,
we have 
\begin{align*}
 & H^{m}(j_{x}^{\epsilon_{1}}D(A')[2d]_{\deg}[\gr(X)]_{\gr})(a)\\
\cong & H^{m+2d}(D(\For_{\Lambda\setminus\left\{ S_{L,i}\right\} _{i=1}^{r(L)}}(j_{x}^{\epsilon_{2}}A'))[\gr(X)|_{\left\{ S_{L,i}\right\} _{i=1}^{r(L)}}]_{\gr})(a|_{\left\{ S_{L,i}\right\} _{i=1}^{r(L)}})\\
\cong & \gHom(H^{-m-2d}(\For_{\Lambda\setminus\left\{ S_{L,i}\right\} _{i=1}^{r(L)}}(j_{x}^{\epsilon_{2}}A'))[-\gr(X)|_{\left\{ S_{L,i}\right\} _{i=1}^{r(L)}}]_{\gr},\Q[T_{S_{L,i}}]_{i=1}^{r(L)})(a|_{\left\{ S_{L,i}\right\} _{i=1}^{r(L)}}),
\end{align*}
where the first isomorphism follows from Lemma \ref{lem:forgetful and dual},
and the second isomorphism follows from point-wise freeness, where
we put $j_{x}\colon\left\{ x\right\} \to U_{k}$. (Note that $u_{x}^{!}\cong u_{x}^{*}[-\dim S_{k}]$
for $u_{x}\colon\left\{ x\right\} \to\relint S_{k}$.) Then the assertion
follows from direct computation and Proposition \ref{prop:Deligne's characterization-1}
(cf. Example \ref{exa:duality in graded modules}). 
\end{proof}
For a sequenece $L=(S_{L,1}\subsetneq\dots\subsetneq S_{L,r(L)})$
($S_{L,i}\in\Lambda$, $r(L)\in\Z_{\geq1}$) and $b\in\Z^{\left\{ S_{L,i}\right\} _{i=1}^{r(L)}}$,
we define 
\[
\Fil_{*}^{\Lambda_{\supsetneq S_{L,r(L)}}}\gr_{b}IC_{\Trop,\sheaf}^{p,*}|_{W_{L}\setminus\relint S_{L,r(L)}}\in D^{b}(W_{L}\setminus\relint S_{L,r(L)},\gMod\Q[T_{S}]_{S\in\Lambda_{\supsetneq S_{L,r(L)}}})
\]
in the same way as $\Fil_{*}^{\Lambda}IC_{\Trop,\sheaf}^{p,*}$ from
\[
\Fil_{*}^{\Lambda_{\supsetneq S_{L,r(L)}}}\gr_{b}\F_{X_{\sm}}^{p,w}[2d-p]_{\deg}|_{X_{\sm}\cap W_{L}}
\]
 (see above Lemma \ref{lem:decomp of Fil Fp locally}) using truncation
functors and push-forward under open immersions. (Note that 
\[
W_{L}\setminus\relint S_{L,r(L)}\subset\bigcup_{S\in\Lambda_{\sm}\cup\Lambda_{\supsetneq S_{L,r(L)}}}\relint S.)
\]
By Lemma \ref{lem:decomp of Fil Fp locally}, we have an isomorphism
\[
IC_{\Trop,\sheaf}^{p,*}|_{W_{L}\setminus\relint S_{L,r(L)}}\cong\bigoplus_{b\in\Z^{\left\{ S_{L,i}\right\} _{i=1}^{r(L)}}}\For_{\Lambda_{\supsetneq S_{L,r(L)}}}\Fil_{*}^{\Lambda_{\supsetneq S_{L,r(L)}}}\gr_{b}IC_{\Trop,\sheaf}^{p,*}|_{W_{L}\setminus\relint S_{L,r(L)}}.
\]
Hence we get the following. 
\begin{cor}
\label{cor:decomposition of Fil IC locally}For a sequenece $L=(S_{L,1}\subsetneq\dots\subsetneq S_{L,r(L)})$
($S_{L,i}\in\Lambda$, $r(L)\in\Z_{\geq1}$), the tropical intersection
complex $IC_{\Trop,\sheaf}^{p,*}|_{W_{L}}$ is isomorphic to the direct
sum of (usual) truncations of push-forwards of 
\[
\For_{\Lambda_{\supsetneq S_{L,r(L)}}}\Fil_{*}^{\Lambda_{\supsetneq S_{L,r(L)}}}\gr_{b}IC_{\Trop,\sheaf}^{p,*}|_{W_{L}\setminus\relint S_{L,r(L)}}
\]
 under $W_{L}\setminus\relint S_{L,r(L)}\to W_{L}$. 
\end{cor}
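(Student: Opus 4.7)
The plan is that this corollary is essentially an immediate consequence of the decomposition on $W_{L}\setminus\relint S_{L,r(L)}$ stated in the paragraph preceding the corollary, together with the iterative construction of $IC_{\Trop,\sheaf}^{p,*}$ and the compatibility of truncation functors and push-forward under open immersions with direct sums.

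First, I would observe that on $W_{L}$, the only steps $\tau_{d-p,S_{k}}Ri_{k*}$ in the definition of $\Fil_{*}^{\Lambda}IC_{\Trop,\sheaf}^{p,*}$ that act non-trivially after restriction to $W_{L}$ are those with $\relint S_{k}\cap W_{L}\neq\emptyset$. Using the properties $W_{S}\cap W_{S'}=\emptyset$ for incomparable $S,S'$ and $W_{S}\subset\bigcup_{S\subset R}\relint R$, this condition forces $S_{k}\supset S_{L,r(L)}$. In the fixed ordering of $\Lambda_{\sing}$, this implies that the smallest such $S_{k}$ (the one applied outermost in the composition) is $S_{L,r(L)}$ itself when $S_{L,r(L)}\in\Lambda_{\sing}$; the remaining steps at $S_{k}$ with $\relint S_{k}\cap W_{L}=\emptyset$ act as the identity on sections over $W_{L}$. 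Consequently, the construction restricted to $W_{L}$ reduces to a single extension from $W_{L}\setminus\relint S_{L,r(L)}$ to $W_{L}$, namely $\tau_{d-p,S_{L,r(L)}}Ri_{*}$ if $S_{L,r(L)}\in\Lambda_{\sing}$, and simply $Ri_{*}$ (interpreted as the trivial truncation) if $S_{L,r(L)}\in\Lambda_{\sm}$, where $i\colon W_{L}\setminus\relint S_{L,r(L)}\hookrightarrow W_{L}$.

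Second, I would invoke the displayed decomposition
\[
IC_{\Trop,\sheaf}^{p,*}|_{W_{L}\setminus\relint S_{L,r(L)}}\cong\bigoplus_{b}\For_{\Lambda_{\supsetneq S_{L,r(L)}}}\Fil_{*}^{\Lambda_{\supsetneq S_{L,r(L)}}}\gr_{b}IC_{\Trop,\sheaf}^{p,*}|_{W_{L}\setminus\relint S_{L,r(L)}}
\]
(which itself follows by lifting the decomposition of Lemma \ref{lem:decomp of Fil Fp locally} for $\F_{X_{\sm}}^{p,w}$ through the inductive construction, since each step $\tau_{d-p,S_{k}}Ri_{k*}$ for $S_{k}\supsetneq S_{L,r(L)}$ preserves direct sums for exactly the same reason discussed below). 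Applying the "last step" functor (i.e., $\tau_{d-p,S_{L,r(L)}}Ri_{*}$ or $Ri_{*}$) to both sides and using that $Ri_{*}$ for an open immersion preserves finite direct sums and that the truncation $\tau_{d-p,S_{L,r(L)}}$, being defined termwise on flat resolutions, preserves arbitrary direct sums, we obtain the asserted decomposition of $IC_{\Trop,\sheaf}^{p,*}|_{W_{L}}$. After forgetting the $\Lambda$-grade via $\For_{\Lambda}$, with the $b$-component fixed, the truncation functor $\tau_{d-p,S_{L,r(L)}}$ specializes to a usual (non-graded) cohomological truncation in a degree depending on $b$ and $p$; this is precisely the "usual truncation of the push-forward" appearing in the statement.

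The main bookkeeping point will be to ensure that the direct sum over $b\in\Z^{\left\{S_{L,i}\right\}_{i=1}^{r(L)}}$ is effectively finite so that the exchange with $Ri_{*}$ and the truncation is unambiguous; this is automatic since all complexes involved lie in $D_{c,[-\gr(X),0]}^{b}(-,\gMod\Q[T_{S}]_{S\in\Lambda})$, which forces $b$ to be supported in the finite box $[-\gr(X)|_{\left\{S_{L,i}\right\}_{i=1}^{r(L)}},0]$. Given this, the remaining verifications are formal.
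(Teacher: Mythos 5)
Your argument is correct and is essentially the route the paper intends: the corollary is deduced directly from the displayed decomposition of $IC_{\Trop,\sheaf}^{p,*}|_{W_{L}\setminus\relint S_{L,r(L)}}$ obtained from Lemma \ref{lem:decomp of Fil Fp locally}, with the reduction to the single remaining step $\tau_{d-p,S_{L,r(L)}}Ri_{*}$ on $W_{L}$ and its commutation with the (effectively finite) direct sum left implicit in the paper. The only point to watch is your parenthetical treatment of the case $S_{L,r(L)}\in\Lambda_{\sm}$, where $IC_{\Trop,\sheaf}^{p,*}|_{W_{L}}$ is not recovered as $Ri_{*}$ from the complement of a smooth codimension-one stratum (the stalk of $Ri_{*}i^{*}\F_{X_{\sm}}^{p,w}$ at a point of such a stratum is strictly larger than $F^{p,w}(Q)$); since the corollary is only applied with $S_{L,r(L)}\in\Lambda_{\sing}$, this does not affect anything downstream.
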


\begin{cor}
\label{cor:freeness of stalks and costalks-1} The complex $j_{k}^{\epsilon}\Fil_{*}^{\Lambda}IC_{\Trop,\sheaf}^{p,*}|_{U_{k}}$
($\epsilon=*,!$) is point-wise free, and the morphism (\ref{eq:attaching=00003Dcostalk vanishing})
is injective for $\Fil_{*}^{\Lambda}IC_{\Trop,\sheaf}^{d-p,*}|_{U_{k}}$. 
\end{cor}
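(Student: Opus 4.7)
The plan is induction on $k$, using the local product decomposition of Corollary \ref{cor:decomposition of Fil IC locally} to reduce pointwise freeness and the injectivity of the morphism (\ref{eq:attaching=00003Dcostalk vanishing}) to elementary statements about graded summands. For the restriction to $U_{k-1} \subset U_k$, the equality $i_k^* \Fil_*^\Lambda IC_{\Trop,\sheaf}^{p,*}|_{U_k} = \Fil_*^\Lambda IC_{\Trop,\sheaf}^{p,*}|_{U_{k-1}}$ lets the inductive hypothesis apply. For the initial case and for points in $X_\sm$, Lemma \ref{lem:decomp of Fil Fp locally} provides a local direct-sum splitting $\Fil_*^\Lambda \F_{X_\sm}^{p,w}|_{W_L \cap X_\sm} \cong \bigoplus_b \Fil_*^{\Lambda_{\supsetneq S_{L,r(L)}}} \gr_b \F_{X_\sm}^{p,w}|_{W_L \cap X_\sm}$, where the multiplications $T_{S_{L,i}}$ realize the shift across $b$-summands; each $\gr_b$ summand contributes a free generator, yielding freeness of the stalk as a $\Q[T_{S_{L,i}}]_{i=1}^{r(L)}$-module (and similarly for the costalk via Corollary \ref{cor:dual of Fil F^p simpler}).

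For the inductive step at $x \in \relint S_k$, I would apply Corollary \ref{cor:decomposition of Fil IC locally} with a sequence $L$ ending in $S_k$ to decompose $\Fil_*^\Lambda IC_{\Trop,\sheaf}^{p,*}|_{W_L}$ into a direct sum indexed by $b \in \Z^{\{S_{L,i}\}_{i=1}^{r(L)}}$ of truncated pushforwards of $\For_{\Lambda_{\supsetneq S_k}} \Fil_*^{\Lambda_{\supsetneq S_k}} \gr_b IC_{\Trop,\sheaf}^{p,*}$ from $W_L \setminus \relint S_k$. Each summand only involves strata strictly containing $S_k$ as filtration variables, so the inductive hypothesis applies; the $T_{S_{L,i}}$-action is realized as the shift across summands, yielding freeness of both $j_k^* = u_x^*$ and $j_k^!$. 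For injectivity of (\ref{eq:attaching=00003Dcostalk vanishing}), the summands indexed by distinct $b$'s map independently, so injectivity reduces to checking on each $\gr_b$ piece, where the map is induced by an inclusion of complexes concentrated in a single grade and is injective by construction.

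The principal obstacle will be handling the truncation functor $\tau_{p,S_k}$ carefully in the critical degrees near the boundary of its definition, where $\tau$ replaces a term by a kernel of $d$ together with an image of $T_{S_k}$. These replacements must be shown to respect the direct-sum decomposition by $b$. I expect this follows because both the differential $d$ and the multiplication $T_{S_k}$ are graded morphisms compatible with the $b$-splitting obtained from Lemma \ref{lem:decomp of Fil Fp locally}, so that truncation commutes with the decomposition and the injectivity of the comparison map in the critical grade is preserved summand-by-summand.
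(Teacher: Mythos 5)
Your proposal is correct and follows essentially the same route as the paper: the paper's proof is a one-line application of Corollary \ref{cor:decomposition of Fil IC locally}, observing that each graded piece $\Fil_{*}^{\Lambda}IC_{\Trop,\sheaf}^{p,*}|_{W_{L}}(a)$ is a direct sum of the $\gr_{b}$-summands, from which freeness of stalks/costalks and injectivity of (\ref{eq:attaching=00003Dcostalk vanishing}) follow summand-by-summand. The compatibility of the truncation functors with the $b$-decomposition that you flag as the principal obstacle is exactly what is already packaged into the definition of $\Fil_{*}^{\Lambda_{\supsetneq S_{L,r(L)}}}\gr_{b}IC_{\Trop,\sheaf}^{p,*}$ and the statement of Corollary \ref{cor:decomposition of Fil IC locally}, so no further argument is needed there.
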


\begin{proof}
By Corollary \ref{cor:decomposition of Fil IC locally}, for an adapted
pair $(L,a)$ with $r(L)\geq1$ and $S_{L,r(L)}=S_{k}$, the complex
$\Fil_{*}^{\Lambda}IC_{\Trop,\sheaf}^{p,*}|_{W_{L}}(a)$ is a direct
sum of direct summands in Corollary \ref{cor:decomposition of Fil IC locally}.
Hence the assertion hold.
\end{proof}
\begin{thm}
\label{thm:Verdier-duality-1} There is a unique LG-quasi-isomorphism
\[
\Fil_{*}^{\Lambda}IC_{\Trop,\sheaf}^{p,*}\cong_{LG}D(\Fil_{*}^{\Lambda}IC_{\Trop,\sheaf}^{d-p,*})[2d]_{\deg}[\gr(X)]_{\gr}
\]
in $D_{c,[-\gr(X),0],LG}^{b}(X,\gMod\Q[T_{S}]_{S\in\Lambda})$ whose
restriction to $X_{\sm}$ is the LG-quasi-isomorphism 
\[
\Fil_{*}^{\Lambda}\F_{X_{sm}}^{p,w}[2d-p]_{\deg}\cong_{LG}D(\Fil_{*}^{\Lambda}\F_{X_{sm}}^{d-p,w}[d+p]_{\deg})[2d]_{\deg}[\gr(X)]_{\gr}
\]
in Lemma \ref{prp:Verdier-dual-smooth-part-1}. 
\end{thm}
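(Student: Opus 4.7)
The plan is to induct on $k = 1, 2, \dots, \#\Lambda_{\sing}+1$, establishing on each $U_k$ a unique LG-quasi-isomorphism
\[
\Fil_{*}^{\Lambda}IC_{\Trop,\sheaf}^{p,*}|_{U_k} \cong_{LG} D(\Fil_{*}^{\Lambda}IC_{\Trop,\sheaf}^{d-p,*}|_{U_k})[2d]_{\deg}[\gr(X)]_{\gr}
\]
extending the one from the previous step. The base case $k=1$ is exactly Proposition \ref{prp:Verdier-dual-smooth-part-1}, since $\Fil_{*}^{\Lambda}IC_{\Trop,\sheaf}^{p,*}|_{U_1} = \Fil_{*}^{\Lambda}\F_{X_{\sm}}^{p,w}[2d-p]_{\deg}$ by construction (Definition \ref{def:sheaf-theoretic IC-1}).

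For the inductive step, assume the LG-quasi-isomorphism has been constructed on $U_{k-1}$, and take $A := \Fil_{*}^{\Lambda}IC_{\Trop,\sheaf}^{d-p,*}|_{U_{k-1}}$, so that $B := D(A)[2d]_{\deg}[\gr(X)]_{\gr} \cong_{LG} \Fil_{*}^{\Lambda}IC_{\Trop,\sheaf}^{p,*}|_{U_{k-1}}$. By Definition \ref{def:sheaf-theoretic IC-1}, we have
\[
\Fil_{*}^{\Lambda}IC_{\Trop,\sheaf}^{d-p,*}|_{U_k} = \tau_{d-p,S_k} Ri_{k*} A \quad\text{and}\quad \Fil_{*}^{\Lambda}IC_{\Trop,\sheaf}^{p,*}|_{U_k} = \tau_{p,S_k} Ri_{k*} B.
\]
Thus it suffices to invoke Corollary \ref{cor:dual-version-GMintersection-homology-Deligne-characterization-1}, which produces a unique LG-quasi-isomorphism $\tau_{p,S_k} Ri_{k*} B \cong_{LG} D(\tau_{d-p,S_k} Ri_{k*} A)[2d]_{\deg}[\gr(X)]_{\gr}$ extending the equality on $U_{k-1}$. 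The two hypotheses of that corollary --- point-wise freeness of $j_k^{\epsilon}\tau_{d-p,S_k} Ri_{k*} A$ for $\epsilon = *,!$, and injectivity of the map \eqref{eq:attaching=00003Dcostalk vanishing} for $\tau_{d-p,S_k} Ri_{k*} A$ --- are precisely what Corollary \ref{cor:freeness of stalks and costalks-1} provides for $\Fil_{*}^{\Lambda}IC_{\Trop,\sheaf}^{d-p,*}|_{U_k}$.

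Uniqueness follows by the same induction: the uniqueness clause in Corollary \ref{cor:dual-version-GMintersection-homology-Deligne-characterization-1} pins down the extension at each step from the restriction to $U_{k-1}$, and the base case uniqueness is built into Proposition \ref{prp:Verdier-dual-smooth-part-1}. Setting $k = \#\Lambda_{\sing}+1$ and using $U_{\#\Lambda_{\sing}+1} = X$ gives the desired global LG-quasi-isomorphism.

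The main conceptual content is therefore already packaged in the preceding corollaries; the residual task in writing this out is purely bookkeeping: verifying that the induction hypothesis (LG-quasi-isomorphism on $U_{k-1}$) and the freeness/injectivity hypotheses of Corollary \ref{cor:dual-version-GMintersection-homology-Deligne-characterization-1} match up at each stage, and that the extensions assemble into a single morphism on $X$. The only potentially subtle point is ensuring that Corollary \ref{cor:freeness of stalks and costalks-1}, which is stated for the intersection complex, can legitimately be applied to $A$ at the stage where $A$ is only known to be LG-quasi-isomorphic to the intersection complex on $U_{k-1}$ --- but since Corollary \ref{cor:dual-version-GMintersection-homology-Deligne-characterization-1} concerns $\tau_{d-p,S_k} Ri_{k*} A$ rather than $A$ itself, and these hypotheses concern the value on $U_k$, this matching is direct.
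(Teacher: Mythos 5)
Your proposal is correct and follows essentially the same route as the paper, whose proof consists exactly of invoking Corollary \ref{cor:dual-version-GMintersection-homology-Deligne-characterization-1} together with Corollary \ref{cor:freeness of stalks and costalks-1} in the stratum-by-stratum induction you spell out, with Proposition \ref{prp:Verdier-dual-smooth-part-1} as the base case. Your explicit verification that the freeness/injectivity hypotheses apply to $\tau_{d-p,S_k}Ri_{k*}A=\Fil_{*}^{\Lambda}IC_{\Trop,\sheaf}^{d-p,*}|_{U_k}$ is exactly the point the paper leaves implicit.
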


\begin{proof}
This follows from Corollary \ref{cor:dual-version-GMintersection-homology-Deligne-characterization-1}
and Corollary \ref{cor:freeness of stalks and costalks-1}. 
\end{proof}
By Lemma \ref{lem:forgetful and dual}, this LG-quasi-isomorphism
gives a quasi-isomorphism 
\[
IC_{\Trop,\sheaf}^{p,*}\cong D(IC_{\Trop,\sheaf}^{d-p,*})[2d]_{\deg}.
\]
We put 
\[
IH_{\Trop,\sheaf,*}^{p,q}(X):=H^{p+q-2d}R\Gamma_{*}(IC_{\Trop,\sheaf}^{p,*})
\]
 ($*=\emptyset,c$), which is finite dimensional by Corollary \ref{cor:Borel 10.13}.
\begin{cor}
\label{cor:poincare duality for sheaf def-1} For any $p$ and $q$,
we have a non-degenerate bilinear map 
\[
IH_{\Trop,\sheaf}^{p,q}(X)\times IH_{\Trop,\sheaf,c}^{d-p,d-q}(X)\to\Q.
\]
\end{cor}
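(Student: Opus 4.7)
The plan is to obtain this as a direct corollary of Theorem \ref{thm:Verdier-duality-1}, by passing from the LG-duality statement to a duality statement in the usual constructible derived category and then invoking classical Poincar\'{e}-Verdier adjunction. First I would apply the forgetful functor $\For_{\Lambda}$ to the LG-quasi-isomorphism of Theorem \ref{thm:Verdier-duality-1}. By Lemma \ref{lem:forgetful and dual}, the functor $\For_{\Lambda}$ commutes with the dualizing functor up to the shift $[\gr(X)]_{\gr}$, which is absorbed by $\For_{\Lambda}$ (since applying $\For_{\Lambda}$ to any object evaluates it at the zero grade, and the shift simply reindexes). This yields a quasi-isomorphism
\[
IC_{\Trop,\sheaf}^{p,*}\cong D(IC_{\Trop,\sheaf}^{d-p,*})[2d]_{\deg}
\]
in $D_{c}^{b}(X,\Modu\Q)$.

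Next I would apply $R\Gamma(X,-)$ and use the classical Poincar\'{e}-Verdier adjunction $R\Gamma(X,D(F))\cong R\Hom_{\Q}(R\Gamma_{c}(X,F),\Q)$ (which is available since $X$ is a compact polyhedral subset, so that the map $X\to\pt$ is proper-compactifiable; see the framework of Section \ref{sec:Appendix.}). Taking this with $F=IC_{\Trop,\sheaf}^{d-p,*}$ and using the shift by $2d$, I get
\[
R\Gamma(X,IC_{\Trop,\sheaf}^{p,*})\cong R\Hom_{\Q}(R\Gamma_{c}(X,IC_{\Trop,\sheaf}^{d-p,*}),\Q)[2d]_{\deg}.
\]

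Then I would take $(p+q-2d)$-th cohomology of both sides. The left hand side is, by definition, $IH_{\Trop,\sheaf}^{p,q}(X)$. For the right hand side, the finiteness of the cohomology groups of $R\Gamma_{c}(X,IC_{\Trop,\sheaf}^{d-p,*})$, guaranteed by Corollary \ref{cor:Borel 10.13} together with constructibility of $IC_{\Trop,\sheaf}^{d-p,*}$, allows me to identify
\[
H^{p+q}R\Hom_{\Q}(R\Gamma_{c}(X,IC_{\Trop,\sheaf}^{d-p,*}),\Q)\cong\Hom_{\Q}(H^{-p-q}R\Gamma_{c}(X,IC_{\Trop,\sheaf}^{d-p,*}),\Q)=IH_{\Trop,\sheaf,c}^{d-p,d-q}(X)^{\vee}.
\]
The induced perfect pairing $IH_{\Trop,\sheaf}^{p,q}(X)\times IH_{\Trop,\sheaf,c}^{d-p,d-q}(X)\to\Q$ is the required non-degenerate bilinear map.

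Since the substantive geometric and homological content is entirely concentrated in Theorem \ref{thm:Verdier-duality-1} and the finiteness result Corollary \ref{cor:Borel 10.13}, there is no real obstacle here: the only thing to verify carefully is the bookkeeping of the degree shift $[2d]_{\deg}$ and the cancellation of the grading shift $[\gr(X)]_{\gr}$ under $\For_{\Lambda}$, which is routine given Lemma \ref{lem:forgetful and dual}.
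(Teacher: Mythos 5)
Your proposal is correct and follows essentially the same route as the paper: apply $\For_{\Lambda}$ (via Lemma \ref{lem:forgetful and dual}, which kills the $[\gr(X)]_{\gr}$ shift since $\gr(X)|_{\emptyset}$ is trivial) to obtain $IC_{\Trop,\sheaf}^{p,*}\cong D(IC_{\Trop,\sheaf}^{d-p,*})[2d]_{\deg}$, then use global Verdier adjunction and the finite dimensionality from Corollary \ref{cor:Borel 10.13}. The only nit is that $X$ need not be compact, merely compactifiable (this is why $\Lambda$ is assumed finite), but the adjunction $R\Gamma(X,D(F))\cong R\Hom_{\Q}(R\Gamma_{c}(X,F),\Q)$ holds regardless, so the argument is unaffected.
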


\section{Comparisons\label{sec:Comparisons}}

Let $X=(\Lambda,w)$ of dimension $d$ in a tropical toric variety
$\Trop(T_{\Sigma})$. We assume that $T_{\Sigma}$ is smooth and $X$
is regular at infinity. In this section, we shall show that $IH_{\Trop,\sheaf}^{p,q}(X)$
is isomorphic to
\begin{itemize}
\item $IH_{\Trop,\geom}^{p,q}(X)$ (Proposition \ref{prop:comparison of 2 definition-1}), 
\item tropical cohomology $H_{\Trop}^{p,q}(X)$ (Proposition \ref{prop:comparison trop coh trop IH})
when $X$ is e.g., smooth, and 
\item $IH_{\Trop,\sheaf}^{p,q}(X')$ (Proposition \ref{prop:independence of polyhedral structure-1})
for a tropical variety $X'$ given by a subdivision of $\Lambda$. 
\end{itemize}
We use notations in Section \ref{sec:Intersection-product-and-1}.

\subsection{K\"{u}nneth formula and comparison of two constructions\label{subsec:Knneth-formula}}

In this subsection, we shall give an LG-quasi-isomorphism
\[
\Fil_{*}^{\Lambda}IC_{\Trop,\geom,X}^{p,*}\cong_{LG}\Fil_{*}^{\Lambda}IC_{\Trop,\sheaf,X}^{p,*}
\]
(Proposition \ref{prop:comparison of 2 definition-1}). By Lemma \ref{lem:tropical IH on smooth part2-1},
definition of $\Fil_{*}^{\Lambda}\F_{X_{\sm}}^{p,w}$, and Proposition
\ref{prop:Deligne's characterization-1}, it suffices to show that
$\Fil_{*}^{\Lambda}IC_{\Trop,\geom,X}^{p,*}$ satisfies Axiom $A_{d-p}$.
In Subsection \ref{subsec:Local-computations}, we have proved it
except Axiom $A_{d-p}$ (2) for $\Fil_{*}^{\Lambda}IC_{\Trop,\geom,X}^{p,*}$
at $S_{k}\in\Lambda_{\sing}$ with $\dim\sigma_{S_{k}}\geq1$. In
this case, as we will see, instead of attaching property, computation
of stalks (Lemma \ref{prp:local-computation-near-toric boundary})
and K\"{u}nneth formula (Proposition \ref{prop:Kunneth formula-1})
for $\Fil_{*}^{\Lambda}IC_{\Trop,\sheaf,X}^{p,*}$ are enough to prove
the above LG-quasi-isomorphism. 

Remind that $X_{\sm}:=U_{1}:=\bigcup_{R\in\Lambda_{\sm}}\relint R$,
$\Lambda_{\sing}=\left\{ S_{2,}\dots,S_{\#\Lambda_{\sing}+1}\right\} $,
and $U_{k}:=U_{1}\cup\bigcup_{j=2}^{k}\relint S_{j}$. We put $i_{k}\colon U_{k-1}\hookrightarrow U_{k}$
and $j_{k}:\relint S_{k}\hookrightarrow U_{k}$ embeddings. 

\begin{lem}
\label{lem:deligne characterization dual 2} Let $A\in D_{c,[-\gr(X),0],LG}^{b}(U_{k},\gMod\Q[T_{S}]_{S\in\Lambda})$.
We put 
\[
B:=D(A)[2d]_{\deg}[\gr(X)]_{\gr}\in D_{c,[-\gr(X),0],LG}^{b}(U_{k},\gMod\Q[T_{S}]_{S\in\Lambda}).
\]
We assume that $j_{k}^{*}A$ and $j_{k}^{*}B$ are point-wise free,
and both $A$ and $B$ satisfies Axiom $A_{d-p}$ (1) and $A_{p}$
(1), respectively. Then we have a natural LG-quasi-isomorphisms 
\[
A\cong\tau_{d-p,S_{k}}Ri_{k*}i_{k}^{*}A,\quad B\cong\tau_{p,S_{k}}Ri_{k*}i_{k}^{*}B.
\]
\end{lem}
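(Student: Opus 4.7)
The plan is to invoke Proposition \ref{prop:Deligne's characterization-1}: the functor $\tau_{d-p,S_k}Ri_{k*}$ (resp.\ $\tau_{p,S_k}Ri_{k*}$) is inverse to $i_k^*$ on the subcategory of $D_{LG}^b(U_k)$ satisfying Axiom $A_{d-p}$ (resp.\ $A_p$) at $S_k$. Thus the two asserted LG-quasi-isomorphisms amount to showing that $A$ satisfies Axiom $A_{d-p}$ and $B$ satisfies Axiom $A_p$ at $S_k$. Part (1) of each is assumed, so by the first bullet of Lemma \ref{lem:attaching-is-costalk-vanishing} it suffices to establish the stronger costalk-vanishing Axiom (2)$'$ in each case.

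The key input is Verdier duality. By Lemma \ref{lem:LG-q-isom duality}, the functor $D(-)[2d]_{\deg}[\gr(X)]_{\gr}$ is an involution on $D_{c,[-\gr(X),0],LG}^b(U_k,\gMod\Q[T_S]_{S\in\Lambda})$, so $A \cong_{LG} D(B)[2d]_{\deg}[\gr(X)]_{\gr}$; combined with base change $j_k^! D \cong D j_k^*$ this gives
\[
j_k^! A \;\cong_{LG}\; D(j_k^* B)[2d]_{\deg}[\gr(X)]_{\gr}, \qquad j_k^! B \;\cong_{LG}\; D(j_k^* A)[2d]_{\deg}[\gr(X)]_{\gr}.
\]
Hence Axiom $A_{d-p}$ (2)$'$ for $A$ is a vanishing statement about the cohomology of $D(j_k^* B)[2d][\gr(X)]$, and symmetrically for $B$. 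Reformulating pointwise via Remark \ref{rem:Axiom (2)' j_x form} and replacing $j_k^* B$ by a representative whose stalks at $x \in \relint S_k$ are termwise free graded $\Q[T_{S_{L,i}}]$-modules (this is exactly the role of the point-wise freeness hypothesis), one computes $H^*(u_x^!(-))$ of the dual by the graded $\mathrm{Hom}$ description of Example \ref{exa:duality in graded modules}, as in the proof of Corollary \ref{cor:dual-version-GMintersection-homology-Deligne-characterization-1}. The vanishing $\H^m(j_k^* B)=0$ for $m \geq -p-\dim S_k - 1 + \epsilon(\sigma_{S_k})$ together with the grade-stability $\H^m(j_k^* B)(b) \cong_L \H^m(j_k^* B)(b(S_k = m_{p,S_k}-m))$ from Axiom $A_p$ (1) then yields the vanishing of $\H^m(j_k^! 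A)(a)$ in the range required by Axiom $A_{d-p}$ (2)$'$. The symmetric computation, with the roles of $(A,d-p)$ and $(B,p)$ exchanged, handles $B$.

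The main obstacle is purely numerical: one must verify that the shift by $[2d]_{\deg}[\gr(X)]_{\gr}$, together with the local duality $u_x^! \cong u_x^*[-\dim S_k]$ on $\relint S_k$, converts the thresholds $-p - \dim S_k - 1 + \epsilon(\sigma_{S_k})$ and $m_{p,S_k} = -2p - \dim S_k - \dim\sigma_{S_k} - 1$ governing Axiom $A_p$ (1) for $B$ into precisely the thresholds $n_{d-p,S_k,a}$ governing Axiom $A_{d-p}$ (2)$'$ for $A$. The identity $m_{p,S_k} + m_{d-p,S_k} = -2d - 2\dim S_k - 2\dim\sigma_{S_k} - 2$ together with $\gr(X)_{S_k} = \dim S_k + 2\dim\sigma_{S_k}$ makes the arithmetic work, but one must carefully separate the cases $\dim\sigma_{S_k}=0$ and $\dim\sigma_{S_k}\geq 1$ (where $\epsilon(\sigma_{S_k})$ changes) and track the extra clause involving the $-p-d$ and $-p-\dim S_k - 2 + \epsilon$ bounds that define $\tau_{p,S_k}$.
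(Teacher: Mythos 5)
Your proposal is correct and follows essentially the same route as the paper: the paper's own proof is the one-line remark that the claim follows ``in the same way as Corollary \ref{cor:dual-version-GMintersection-homology-Deligne-characterization-1} by direct computation,'' and your argument spells out exactly that computation — reduce via Proposition \ref{prop:Deligne's characterization-1} to verifying Axiom $A_{d-p}$ (resp.\ $A_{p}$) for $A$ (resp.\ $B$), obtain Axiom (2)$'$ from Axiom (1) of the dual object using point-wise freeness, Remark \ref{rem:Axiom (2)' j_x form}, and the graded-Hom duality of Example \ref{exa:duality in graded modules}, then conclude with the first bullet of Lemma \ref{lem:attaching-is-costalk-vanishing}. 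The threshold arithmetic you flag is precisely the ``direct computation'' the paper leaves implicit.
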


\begin{proof}
In the same way as Corollary \ref{cor:dual-version-GMintersection-homology-Deligne-characterization-1},
this follows from direct computation.
\end{proof}
Let $V=(\Lambda_{V},w_{V})$ ($V=Y,Z$) be a tropical variety regular
at infinity of dimension $d_{V}$ in a smooth tropical toric variety
$\Trop(T_{\Sigma_{V}})$. Let $Y\times Z=(\Lambda_{Y\times Z},w_{Y\times Z})$
their product (Definition \ref{def:products of tropical varieties}).
For $V=Y,Z$ and $S_{V}\in\Lambda_{V}$, we fix an open polyhedral
neighborhood $W_{S_{V}}$ of $\relint S_{V}$ as in Subsection \ref{subsec:Definition}.
Then 
\[
\Fil_{*}^{\Lambda_{V}}IC_{\Trop,\sheaf,V}^{d_{V}-p,*}\in D_{c,[-\gr(V),0]}^{b}(V,\gMod\Q[T_{S_{V}}]_{S_{V}\in\Lambda_{V}})
\]
 is defined (Definition \ref{def:sheaf-theoretic IC-1}), and we get
their external tensor product (Subsection \ref{subsec:derived-category of graded sheaves})
\[
\Fil_{*}^{\Lambda_{Y}}IC_{\Trop,\sheaf,Y}^{d_{Y}-p,*}\overset{L}{\XBox}\Fil_{*}^{\Lambda_{Z}}IC_{\Trop,\sheaf,Z}^{d_{Z}-q,*}\in D_{c,[-\gr(Y\times Z),0]}^{b}(Y\times Z,\gMod\Q[T_{S_{Y\times Z}}]_{S_{Y\times Z}\in\Lambda_{Y\times Z}}).
\]
Let $W_{S_{Y}\times S_{Z}}:=W_{S_{Y}}\times W_{S_{Z}}$ be a product,
which is an open polyhedral neighborhood of 
\[
\relint(S_{Y}\times S_{Z})=\relint S_{Y}\times\relint S_{Z}.
\]
We fix an order $\Lambda_{Y\times Z,\sing}=\left\{ S_{Y\times Z,2},\dots\right\} $
such that $S_{Y\times Z,i}\supset S_{Y\times Z,j}$ only if $i\leq j$.
We put $U_{Y\times Z,1}:=\bigcup_{R\in\Lambda_{Y\times Z,\sm}}\relint R$,
and for $2\leq k\leq\#\Lambda_{Y\times Z,\sing}+1$, we put 
\[
U_{Y\times Z,k}:=U_{Y\times Z,1}\cup\bigcup_{j=2}^{k}\relint S_{Y\times Z,j}.
\]

\begin{prop}
\label{prop:Kunneth formula-1} We have a natural LG-quasi-isomorphism
\begin{align*}
\bigoplus_{p_{Y}+p_{Z}=p_{Y\times Z}}\Fil_{*}^{\Lambda_{Y}}IC_{\Trop,\sheaf,Y}^{d_{Y}-p_{Y},*}\overset{L}{\XBox}\Fil_{*}^{\Lambda_{Z}}IC_{\Trop,\sheaf,Z}^{d_{Z}-p_{Z},*}\cong_{LG}\Fil_{*}^{\Lambda_{Y\times Z}}IC_{\Trop,\sheaf,Y\times Z}^{d_{Y}+d_{Z}-p_{Y\times Z},*}
\end{align*}
in 
\[
D_{c,[-\gr(Y\times Z),0],LG}^{b}(Y\times Z,\gMod\Q[T_{S_{Y\times Z}}]_{S_{Y\times Z}\in\Lambda_{Y\times Z}}).
\]
In particular, we have a natural quasi-isomorphism
\[
\bigoplus_{p_{Y}+p_{Z}=p_{Y\times Z}}IC_{\Trop,\sheaf,Y}^{d_{Y}-p_{Y},*}\overset{L}{\XBox}IC_{\Trop,\sheaf,Z}^{d_{Z}-p_{Z},*}\cong IC_{\Trop,\sheaf,Y\times Z}^{d_{Y}+d_{Z}-p_{Y\times Z},*}.
\]
\end{prop}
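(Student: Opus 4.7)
The plan is to verify that the external tensor product on the left-hand side satisfies the sheaf-theoretic characterization of the right-hand side given by Proposition \ref{prop:Deligne's characterization-1} and Lemma \ref{lem:deligne characterization dual 2}. First, on the smooth open subset $Y_{\sm}\times Z_{\sm}\subset (Y\times Z)_{\sm}$, the orthogonal decomposition $\Tan_{\Q}(P_{Y}\times P_{Z})\cong \Tan_{\Q}P_{Y}\oplus \Tan_{\Q}P_{Z}$ yields a K\"unneth isomorphism on the $\F^{p,w}$-sheaves, and the functions $v(\alpha\cap\sigma_{S})$, $u(\alpha\cap\pr_{\sigma_{S}}^{-1}(S))$ of Definition \ref{def:v and u} are additive with respect to this orthogonal decomposition on a product polyhedron $P_{Y}\times P_{Z}$. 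Combined with Lemma \ref{lem:external projection formula}, this gives the LG-quasi-isomorphism
\[
\bigoplus_{p_{Y}+p_{Z}=p}\Fil_{*}^{\Lambda_{Y}}\F_{Y_{\sm}}^{p_{Y},w}\overset{L}{\XBox}\Fil_{*}^{\Lambda_{Z}}\F_{Z_{\sm}}^{p_{Z},w}\cong_{LG}\Fil_{*}^{\Lambda_{Y\times Z}}\F_{(Y\times Z)_{\sm}}^{p,w}
\]
compatibly with all cohomological shifts, establishing the claim on $U_{Y\times Z,1}$.

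Next, I would induct on $\#\Lambda_{Y\times Z,\sing}$ using Proposition \ref{prop:Deligne's characterization-1}. At a stratum $\relint(S_{Y}\times S_{Z})$ of $Y\times Z$, the invariants decompose additively: $\dim(S_{Y}\times S_{Z})=\dim S_{Y}+\dim S_{Z}$, $\dim\sigma_{S_{Y}\times S_{Z}}=\dim\sigma_{S_{Y}}+\dim\sigma_{S_{Z}}$, so the constants $m_{d_{Y}+d_{Z}-p,S_{Y}\times S_{Z}}$ split into the sum of $m_{d_{Y}-p_{Y},S_{Y}}$ and $m_{d_{Z}-p_{Z},S_{Z}}$ for each pair $p_{Y}+p_{Z}=p$. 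Axiom $A_{d_{Y}+d_{Z}-p}(1)$ (local vanishing) for the external tensor product then follows from the corresponding vanishing for each factor via the K\"unneth formula for stalks, since the degree bounds add across the two factors and point-wise freeness (Corollary \ref{cor:freeness of stalks and costalks-1}) rules out $\Tor$-contributions.

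The main obstacle is Axiom $A_{d_{Y}+d_{Z}-p}(2)$ (attaching property) on the product, because $W_{S_{Y}\times S_{Z}}\setminus\relint(S_{Y}\times S_{Z})$ is not a product of the analogous complements of each factor. I plan to circumvent this via Verdier duality. By Lemma \ref{lem:minpr and D are compatible} and the K\"unneth formula for the dualizing complex, the external tensor product commutes with $D(-)[2(d_{Y}+d_{Z})]_{\deg}[\gr(Y\times Z)]_{\gr}$ up to the identification $\gr(Y\times Z)=\iota_{\Lambda_{Y}}\gr(Y)+\iota_{\Lambda_{Z}}\gr(Z)$ (after the canonical identification $\Lambda_{Y\times Z}\cong\Lambda_{Y}\times\Lambda_{Z}$ of index sets for grading, with the appropriate $\min\pr$-pullback). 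Applying Theorem \ref{thm:Verdier-duality-1} to each factor, the dual of the external tensor product of $IC_{\Trop,\sheaf}^{d_{V}-p_{V},*}$'s is LG-quasi-isomorphic to the external tensor product of $IC_{\Trop,\sheaf}^{p_{V},*}$'s (up to shifts). Hence, once Axiom $A_{p}(1)$ is known for the latter (by the same argument as above with $p$ replaced by $d-p$), it transports via duality to Axiom $A_{d_{Y}+d_{Z}-p}(2)'$ for the original external tensor product, which by Lemma \ref{lem:attaching-is-costalk-vanishing} together with the injectivity of (\ref{eq:attaching=00003Dcostalk vanishing}) (which holds by direct verification from the explicit stalk formulas) implies Axiom $A_{d_{Y}+d_{Z}-p}(2)$. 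Finally, Lemma \ref{lem:deligne characterization dual 2} applied inductively along the ordering $\Lambda_{Y\times Z,\sing}=\{S_{Y\times Z,2},\dots\}$ produces the required LG-quasi-isomorphism, and applying $\For_{\Lambda_{Y\times Z}}$ via Lemma \ref{lem:forgetful and dual} yields the non-filtered K\"unneth formula as a corollary.
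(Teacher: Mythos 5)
Your overall architecture coincides with the paper's: identify the external product with $\F^{p,w}_{(Y\times Z)_{\sm}}$ on the smooth locus via the orthogonal decomposition of tangent spaces, induct over the singular strata, and use Lemma \ref{lem:minpr and D are compatible} together with Theorem \ref{thm:Verdier-duality-1} to see that the Verdier dual of the external product is again an external product of $IC$'s, so that (by point-wise freeness, Corollary \ref{cor:freeness of stalks and costalks-1}, and Lemma \ref{lem:deligne characterization dual 2}) only Axiom $A$ (1) needs to be checked for the product and for its dual — the attaching axiom never has to be verified directly. (Your detour through Axiom (2)$'$ and Lemma \ref{lem:attaching-is-costalk-vanishing} is redundant once you invoke Lemma \ref{lem:deligne characterization dual 2}, but it is not wrong. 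Also note the inclusion goes the other way: $(Y\times Z)_{\sm}\subsetneq Y_{\sm}\times Z_{\sm}$ in general, since a product of two codimension-one smooth strata has codimension two.)

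However, there is a genuine gap exactly where you dismiss the verification of Axiom $A_{p_{Y\times Z}}$ (1) with ``the degree bounds add across the two factors.'' They do not. Since $m_{p,S}=-2p-\dim S-\dim\sigma_{S}-1$, one has
\[
m_{p_{Y\times Z},S_{Y}\times S_{Z}}=m_{p_{Y},S_{Y}}+m_{p_{Z},S_{Z}}+1,
\]
not the sum, and likewise $\epsilon(\sigma_{S_{Y}\times S_{Z}})=\min\{\dim\sigma_{S_{Y}}+\dim\sigma_{S_{Z}},1\}$ is not $\epsilon(\sigma_{S_{Y}})+\epsilon(\sigma_{S_{Z}})$. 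Because of this off-by-one, the K\"unneth term in stalk degree $m=m_{Y}+m_{Z}$ with grade split $a_{Y}+a_{Z}=a_{Y\times Z}$ lands in grade $m_{p_{Y},S_{Y}}-m_{Y}$ plus $m_{p_{Z},S_{Z}}-m_{Z}$, which is $m_{p_{Y\times Z},S_{Y\times Z}}-m-1$, one short of what Axiom (1) for the product demands; the factor-wise stabilization isomorphisms alone do not close this gap. The paper resolves it by a case analysis: when both factors satisfy $m_{V}\geq -p_{V}-d_{V}+1-\epsilon(\sigma_{S_{V,k}})$ the two stabilizations compose and the $+1$ is absorbed, but in the boundary case $\epsilon(\sigma_{S_{Y,k}})=0$ and $m_{Y}=-p_{Y}-d_{Y}$ (the top stalk degree of the $Y$-factor) one needs the extra containment
\[
\H^{m_{Y}}(j_{Y,k}^{*}\cdots)(b)\subset\H^{m_{Y}}(j_{Y,k}^{*}\cdots)(b(S_{Y,k}=m_{p_{Y},S_{Y,k}}-m_{Y}+1)),
\]
which comes from the inequality of Remark \ref{rem:2nd condition of allowability} — i.e., from the second clause in the definition of allowability (Definition \ref{def:allowability}), which exists precisely to make this step work. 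Without supplying this case, the inductive verification of Axiom (1) for the external product, which is the heart of the proof, is not established.
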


\begin{proof}
We shall prove the existence of the LG-quasi-isomorphism on $U_{Y\times Z,k}$
by induction on $k\geq1$. Let $L_{Y\times Z}=(S_{L_{Y\times Z},1}\subsetneq\dots\subsetneq S_{L_{Y\times Z},r(L_{Y\times Z})})$
($S_{L_{Y\times Z},i}\in\Lambda_{Y\times Z}$) and $a_{Y\times Z}\in\Z^{\Lambda_{Y\times Z}}$
be an adapted pair. We put $S_{L_{Y\times Z},i}=S_{L_{Y},i}\times S_{L_{Z},i}$
($S_{L_{V},i}\in\Lambda_{V}$), and for $V=Y,Z$, we put $L_{V}=(S_{L_{V},1}'\subsetneq\dots\subsetneq S_{L_{V},r(L_{V})}')$
such that $\left\{ S_{L_{V},i}\right\} _{i=1}^{r(L_{Y\times Z})}=\left\{ S_{L_{V},j}'\right\} _{j=1}^{r(L_{V})}$.
We also put $W_{L_{Y\times Z}}:=\bigcap_{i=1}^{r(L_{Y\times Z})}W_{S_{L_{Y\times Z},i}}$.
When $k=1$, we have 
\[
\bigoplus_{p_{Y}+p_{Z}=p_{Y\times Z}}(\F_{Y_{\sm}}^{d_{Y}-p_{Y},w_{Y}}\XBox\F_{Z_{\sm}}^{d_{Z}-p_{Z},w_{Z}})|_{(Y\times Z)_{\sm}}\cong\F_{(Y\times Z)_{\sm}}^{d_{Y}+d_{Z}-p_{Y\times Z},w_{Y\times Z}}.
\]
Moreover, we have 
\begin{align*}
 & ((\F_{Y_{\sm}}^{d_{Y}-p_{Y},w_{Y}}\XBox\F_{Z_{\sm}}^{d_{Z}-p_{Z},w_{Z}})|_{(Y\times Z)_{\sm}}\cap\Fil_{*}^{\Lambda_{Y\times Z}}\F_{(Y\times Z)_{\sm}}^{d_{Y}+d_{Z}-p_{Y\times Z},w_{Y\times Z}}(a_{Y\times Z}))|_{(Y\times Z)_{\sm}\cap W_{L_{Y\times Z}}}\\
= & ((\F_{Y_{\sm}}^{d_{Y}-p_{Y},w_{Y}}\XBox\F_{Z_{\sm}}^{d_{Z}-p_{Z},w_{Z}})|_{(Y\times Z)_{\sm}}\cap\bigcap_{i=1}^{r(L_{Y\times Z})}\Fil_{S_{L_{Y\times Z},i}}^{-a_{Y\times Z,S_{L_{Y\times Z},i}}}\F_{(Y\times Z)_{\sm}}^{d_{Y}+d_{Z}-p_{Y\times Z},w_{Y\times Z}})|_{(Y\times Z)_{\sm}\cap W_{L_{Y\times Z}}}\\
= & \bigcap_{i=1}^{r(L_{Y\times Z})}(\sum_{a_{Y,i}+a_{Z,i}=a_{Y\times Z,S_{L_{Y\times Z},i}}}\Fil_{S_{L_{Y},i}}^{-a_{Y,i}}\F_{Y_{\sm}}^{d_{Y}-p_{Y},w_{Y}}\XBox\Fil_{S_{L_{Z},i}}^{-a_{Z,i}}\F_{Z_{\sm}}^{d_{Z}-p_{Z},w_{Z}})|_{(Y\times Z)_{\sm}\cap W_{L_{Y\times Z}}}\\
= & \sum_{a_{Y}+a_{Z}=a_{Y\times Z}}(\bigcap_{i=1}^{r(L_{Y\times Z})}\Fil_{S_{L_{Y},i}}^{-a_{Y,i}}\F_{Y_{\sm}}^{d_{Y}-p_{Y},w_{Y}}\XBox\Fil_{S_{L_{Z},i}}^{-a_{Z,i}}\F_{Z_{\sm}}^{d_{Z}-p_{Z},w_{Z}})|_{(Y\times Z)_{\sm}\cap W_{L_{Y\times Z}}}\\
= & \Fil_{*}^{\Lambda_{Y}}\F_{Y_{\sm}}^{d_{Y}-p_{Y},w_{Y}}\XBox\Fil_{*}^{\Lambda_{Z}}\F_{Z_{\sm}}^{d_{Z}-p_{Z},w_{Z}}(a_{Y\times Z})|_{(Y\times Z)_{\sm}\cap W_{L_{Y\times Z}}},
\end{align*}
where the third equality can be seen as follows. Each $\Fil_{S_{L_{V},i}}^{-a_{V,i}}\F_{V_{\sm}}^{d_{V}-p_{V},w_{V}}|_{V_{\sm}\cap W_{L_{V}}}$
($V=Y,Z$) is a direct sum of direct summands of the second decomposition
of Lemma \ref{lem:decomp of Fil Fp locally}. Hence the third line
is generated by external products of some direct summands of the decompositions
for $Y$ and $Z$. Then it is easy to see that the third line is contained
in the fourth one, i.e., the third equality holds.

We assume that $k\geq2$ and that we already have the LG-quasi-isomorphism
on $U_{Y\times Z,k-1}$. By Lemma \ref{lem:minpr and D are compatible}
and Theorem \ref{thm:Verdier-duality-1}, we have 
\begin{align*}
 & D(\Fil_{*}^{\Lambda_{Y}}IC_{\Trop,\sheaf,Y}^{d_{Y}-p_{Y},*}\overset{L}{\XBox}\Fil_{*}^{\Lambda_{Z}}IC_{\Trop,\sheaf,Z}^{d_{Z}-p_{Z},*})[2d_{Y\times Z}]_{\deg}[\gr(Y\times Z)]_{\gr}\\
\cong_{LG} & \Fil_{*}^{\Lambda_{Y}}IC_{\Trop,\sheaf,Y}^{p_{Y},*}\overset{L}{\XBox}\Fil_{*}^{\Lambda_{Z}}IC_{\Trop,\sheaf,Z}^{p_{Z},*}.
\end{align*}
Hence by Corollary \ref{cor:freeness of stalks and costalks-1} and
Lemma \ref{lem:deligne characterization dual 2}, it suffices to show
that 
\[
(\Fil_{*}^{\Lambda_{Y}}IC_{\Trop,\sheaf,Y}^{d_{Y}-p_{Y},*}\overset{L}{\XBox}\Fil_{*}^{\Lambda_{Z}}IC_{\Trop,\sheaf,Z}^{d_{Z}-p_{Z},*})|_{U_{Y\times Z,k}}
\]
 satisfies Axiom $A_{p_{Y\times Z}}$ (1) at $S_{Y\times Z,k}$. We
put $S_{Y\times Z,k}=S_{Y,k}\times S_{Z,k}$ ($S_{V,k}\in\Lambda_{V}$
($V=Y,Z$)). We assume that $r(L_{Y\times Z})\geq1$ and $S_{L_{Y\times Z},r(L_{Y\times Z})}=S_{Y\times Z,k}$.
We have 
\begin{align*}
 & \H^{m}(j_{Y\times Z,k}^{*}\Fil_{*}^{\Lambda_{Y}}IC_{\Trop,\sheaf,Y}^{d_{Y}-p_{Y},*}\overset{L}{\XBox}\Fil_{*}^{\Lambda_{Z}}IC_{\Trop,\sheaf,Z}^{d_{Z}-p_{Z},*})(a_{Y\times Z})\\
\cong_{L} & \bigoplus_{m_{Y}+m_{Z}=m}\sum'_{a_{Y}+a_{Z}=a_{Y\times Z}}\H^{m_{Y}}(j_{Y,k}^{*}\Fil_{*}^{\Lambda_{Y}}IC_{\Trop,\sheaf,Y}^{d_{Y}-p_{Y},*})(\min\pr_{\Lambda_{Y}}a_{Y})\\
 & \quad\quad\qquad\qquad\quad\XBox\H^{m_{Z}}(j_{Z,k}^{*}\Fil_{*}^{\Lambda_{Z}}IC_{\Trop,\sheaf,Z}^{d_{Z}-p_{Z},*})(\min\pr_{\Lambda_{Z}}a_{Z}),
\end{align*}
where $\sum'_{a_{Y}+a_{Z}=a_{Y\times Z}}$ is the sum of $a_{Y},a_{Z}\in\Z^{\Lambda_{Y\times Z}}$
adapted to $L_{Y},L_{Z}$ with $a_{Y}+a_{Z}=a_{Y\times Z}$, and $j_{V,k}\colon\relint S_{V,k}\to V$
($V=Y,Z,Y\times Z$) is an inclusion. We shall compute each 
\begin{equation}
\H^{m_{Y}}(j_{Y,k}^{*}\Fil_{*}^{\Lambda_{Y}}IC_{\Trop,\sheaf,Y}^{d_{Y}-p_{Y},*})(\min\pr_{\Lambda_{Y}}a_{Y})\XBox\H^{m_{Z}}(j_{Z,k}^{*}\Fil_{*}^{\Lambda_{Z}}IC_{\Trop,\sheaf,Z}^{d_{Z}-p_{Z},*})(\min\pr_{\Lambda_{Z}}a_{Z}).\label{eq:Kunneth formula-1}
\end{equation}
When 
\[
m\geq-p_{Y\times Z}-\dim S_{Y\times Z,k}-1+\epsilon(\sigma_{S_{Y\times Z,k}}),
\]
we have 
\[
m_{V}\geq-p_{V}-\dim S_{V,k}-1+\epsilon(\sigma_{S_{V,k}})
\]
 for $V=Y$ or $Z$, hence \eqref{eq:Kunneth formula-1} is $0$.
We assume 
\begin{align*}
 & \max\left\{ -p_{Y\times Z}-d_{Y\times Z}-\epsilon(\sigma_{S_{Y\times Z,k}}),m_{p_{Y\times Z},S_{Y\times Z,k}}-a_{Y\times Z,S_{Y\times Z,k}}\right\} +1\\
\leq & m\leq-p_{Y\times Z}-\dim S_{Y\times Z,k}-2+\epsilon(\sigma_{S_{Y\times Z,k}}),
\end{align*}
 and shall show that \eqref{eq:Kunneth formula-1} is contained in
\begin{equation}
\H^{m}(j_{Y\times Z,k}^{*}\Fil_{*}^{\Lambda_{Y}}IC_{\Trop,\sheaf,Y}^{d_{Y}-p_{Y},*}\overset{L}{\XBox}\Fil_{*}^{\Lambda_{Z}}IC_{\Trop,\sheaf,Z}^{d_{Z}-p_{Z},*})(a_{Y\times Z}(S_{Y\times Z,k}=m_{p_{Y\times Z},S_{Y\times Z,k}}-m))\label{eq:Kunneth formula A}
\end{equation}
 up to $L_{Y\times Z}$-local isomorphisms. We may assume that $m_{V}\geq-p_{V}-d_{V}$
for both $V=Y,Z$ since otherwise \eqref{eq:Kunneth formula-1} is
$0$. Moreover, we have 
\[
-p_{V}-d_{V}-\epsilon(\sigma_{S_{V,k}})+1\leq m_{V}
\]
 for $V=Y$ or $Z$. We assume that this holds for $Z$. When $Y$
also satisfies this inequlity, since 
\[
m_{p_{Y\times Z},S_{Y\times Z,k}}-m=m_{p_{Y},S_{Y,k}}-m_{Y}+m_{p_{Z},S_{Z,k}}-m_{Z}+1,
\]
by Axiom $A_{p_{V}}$ at $S_{V,k}$ for $\Fil_{*}^{\Lambda_{V}}IC_{\Trop,\sheaf,V}^{d_{V}-p_{V},*}$
($V=Y,Z$), (\ref{eq:Kunneth formula-1}) is contained in (\ref{eq:Kunneth formula A}).
Hence we may assume that $Y$ does not satisfy the inequlity, i.e.,
$\epsilon(\sigma_{S_{Y,k}})=0$ and $m_{Y}=-p_{Y}-d_{Y}$. In this
case, 
\[
m_{p_{Y},S_{Y,k}}-m_{Y}+1=-p_{Y}-\dim S_{Y,k}-\dim\sigma_{S_{Y,k}}+d_{Y}.
\]
Hence by the first inequality in Remark \ref{rem:2nd condition of allowability},
we have 
\begin{align*}
 & \H^{m_{V_{1}}}(j_{V_{1},k}^{*}\Fil_{*}^{\Lambda_{V_{1}}}IC_{\Trop,\sheaf,V_{1}}^{d_{V_{1}}-p_{V_{1}},*})(\min\pr_{\Lambda_{V_{1}}}a_{V_{1}})\\
\subset & \H^{m_{V_{1}}}(j_{V_{1},k}^{*}\Fil_{*}^{\Lambda_{V_{1}}}IC_{\Trop,\sheaf,V_{1}}^{d_{V_{1}}-p_{V_{1}},*})(\min\pr_{\Lambda_{V_{1}}}a_{V_{1}}(S_{V_{1},k}=m_{p_{V_{1}},S_{V_{1},k}}-m_{V_{1}}+1)).
\end{align*}
Hence by Axiom $A_{p_{Z}}$ at $S_{Z,k}$ for $\Fil_{*}^{\Lambda_{Z}}IC_{\Trop,\sheaf,Z}^{d_{V}-p_{V},*}$,
(\ref{eq:Kunneth formula-1}) is contained in (\ref{eq:Kunneth formula A}).
Thus 
\[
\Fil_{*}^{\Lambda_{Y}}IC_{\Trop,\sheaf,Y}^{d_{Y}-p_{Y},*}\overset{L}{\XBox}\Fil_{*}^{\Lambda_{Z}}IC_{\Trop,\sheaf,Z}^{d_{Z}-p_{Z},*}|_{U_{Y\times Z,k}}
\]
 satisfies Axiom $A_{r}$ (1) at $S_{Y\times Z,k}$, and we have finished
proof.
\end{proof}

\begin{prop}
\label{prop:comparison of 2 definition-1} We have a natural LG-quasi-isomorphism
\[
\Fil_{*}^{\Lambda}IC_{\Trop,\geom,X}^{p,*}\cong_{LG}\Fil_{*}^{\Lambda}IC_{\Trop,\sheaf,X}^{p,*}
\]
in $D_{c,[-\gr(X),0],LG}^{b}(X,\gMod\Q[T_{S}]_{S\in\Lambda})$.
\end{prop}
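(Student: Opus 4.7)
The plan is to invoke Proposition \ref{prop:Deligne's characterization-1} as a recognition principle: since $\Fil_{*}^{\Lambda}IC_{\Trop,\sheaf,X}^{p,*}$ is by construction the iterated Deligne-type extension $\tau_{d-p,S_{\#\Lambda_{\sing}+1}}Ri_{\#\Lambda_{\sing}+1*}\cdots\tau_{d-p,S_2}Ri_{2*}(\Fil_{*}^{\Lambda}\F_{X_{\sm}}^{p,w}[2d-p]_{\deg})$, it suffices to (i) supply a canonical LG-quasi-isomorphism $\Fil_{*}^{\Lambda}\F_{X_{\sm}}^{p,w}[2d-p]_{\deg}\cong_{LG}\Fil_{*}^{\Lambda}IC_{\Trop,\geom,X}^{p,*}|_{X_{\sm}}$, and (ii) show that $\Fil_{*}^{\Lambda}IC_{\Trop,\geom,X}^{p,*}$ satisfies Axiom $A_{d-p}$ at each singular stratum $S_k$, so that the equivalence of Proposition \ref{prop:Deligne's characterization-1} extends the LG-quasi-isomorphism from $U_{k-1}$ to $U_k$ by induction on $k$.

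For (i), Lemma \ref{lem:tropical IH on smooth part2-1} gives the cap-product quasi-isomorphism $\F_{X_{\sm}}^{p,w}[2d-p]_{\deg}\cong IC_{\Trop,\geom,X}^{p,*}|_{X_{\sm}}$, and the graded refinement $\Fil_{*}^{\Lambda}\F_{X_{\sm}}^{p,w}$ was defined in Subsection \ref{subsec:On-smooth-part} as the preimage of the geometric filtrations under $[X]\cap-$, so the LG-lift is tautological. For (ii), local vanishing (Axiom $A_{d-p}$(1)) at every $S_k$ has been established in Proposition \ref{prop:local vanish and attaching}(1)-(2), combined with Corollary \ref{cor:extra-local-vanishing}. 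The attaching property (Axiom $A_{d-p}$(2)) at $S_k$ with $\dim\sigma_{S_k}=0$ is Proposition \ref{prop:local vanish and attaching}(3).

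The main obstacle is the attaching property at $S_k$ with $\dim\sigma_{S_k}\geq 1$, which was explicitly deferred by Remark \ref{lem:attaching in general}. The strategy is to exploit the local product decomposition used in the proof of Proposition \ref{prp:local-computation-near-toric boundary}: regularity at infinity yields, locally around $\relint S_k$, an isomorphism $X\cong(\R\cup\{\infty\})\times(X\cap\overline{N_{\sigma,\R}})$ for some one-dimensional cone $\sigma\subset\sigma_{S_k}$, with the second factor having $\dim\sigma_{S_k}$ reduced by one. I will argue by induction on $\dim\sigma_{S_k}$, peeling off one toric coordinate at a time until reaching the base case $\dim\sigma_{S_k}=0$ (already handled). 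To transfer between the geometric and sheaf sides along this reduction, I will combine the graded K\"unneth formula (Proposition \ref{prop:Kunneth formula-1}) for $\Fil_{*}^{\Lambda}IC_{\Trop,\sheaf}$ on the product with the mapping-cylinder decomposition of $\Fil_{*}^{\Lambda}IC_{\Trop,\geom}$ established in the proof of Proposition \ref{prp:local-computation-near-toric boundary}.

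The most delicate step will be promoting the stalk-level product decomposition of Proposition \ref{prp:local-computation-near-toric boundary} to a statement in $D_{c,[-\gr(X),0],LG}^{b}(X,\gMod\Q[T_S]_{S\in\Lambda})$ that is compatible with every filtration $\Fil_S^\bullet$. The splitting of coefficients $F_{p,w}(P)$ by the direction of the toric coordinate $e_1$ (the $I=\emptyset$ versus $I=\{1\}$ pieces appearing in the proof of Proposition \ref{prp:local-computation-near-toric boundary}) must be shown to preserve the graded filtrations and to commute with the formation of $\Fil_{*}^{\Lambda}\gr_b$ used in Corollary \ref{cor:decomposition of Fil IC locally}; once this compatibility is in place, the attaching property for the geometric side at $S_k$ follows from the attaching property at the corresponding stratum in $X\cap\overline{N_{\sigma,\R}}$ via K\"unneth, completing the induction. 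Proposition \ref{prop:Deligne's characterization-1} then assembles the local LG-quasi-isomorphisms into the global comparison claimed in the proposition.
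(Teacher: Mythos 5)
Your skeleton is right for most of the argument: the identification on $X_{\sm}$ via Lemma \ref{lem:tropical IH on smooth part2-1} and the definition of $\Fil_{*}^{\Lambda}\F_{X_{\sm}}^{p,w}$, the induction over strata via Proposition \ref{prop:Deligne's characterization-1}, and the use of Proposition \ref{prop:local vanish and attaching} for local vanishing everywhere and for attaching at strata with $\dim\sigma_{S_k}=0$, all match the paper. The paper likewise runs an induction that peels off toric coordinates (it inducts on $r=\max\{\dim\sigma\mid U_k\cap N_{\sigma,\R}\neq\emptyset\}$, reducing a neighborhood of the deepest toric stratum to $\Trop(\A^{1})\times\Trop(\A^{r-1})\times(X\cap N_{\sigma_{S_k},\R})$) and uses exactly the two tools you name: Proposition \ref{prp:local-computation-near-toric boundary} on the geometric side and Proposition \ref{prop:Kunneth formula-1} on the sheaf side.

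The gap is in how you close the induction at strata with $\dim\sigma_{S_k}\geq 1$. You propose to verify Axiom $A_{d-p}$ (2) (attaching) for $\Fil_{*}^{\Lambda}IC_{\Trop,\geom,X}^{p,*}$ there and then invoke the equivalence of Proposition \ref{prop:Deligne's characterization-1}. But the attaching property concerns $j_k^{*}Ri_{k*}i_k^{*}E$, i.e.\ cohomology of deleted neighborhoods of the stratum, whereas the mapping-cylinder argument of Proposition \ref{prp:local-computation-near-toric boundary} only computes stalks $j_x^{*}E$; and the K\"unneth formula of Proposition \ref{prop:Kunneth formula-1} is proved only for the sheaf-theoretic complex, so deducing geometric attaching ``from the attaching property on $X\cap\overline{N_{\sigma,\R}}$ via K\"unneth'' presupposes a geometric K\"unneth that is not available --- indeed Remark \ref{lem:attaching in general} records that attaching for the geometric complex at such strata is a \emph{consequence} of this very proposition, not an input to it. The repair is to reverse the logic, as the paper does: the comparison morphism on $U_k$ already exists by Corollary \ref{cor:supported-truncation-hom-isomorphism}, which requires only the local vanishing of the geometric complex (Proposition \ref{prop:local vanish and attaching} (1)--(2)) together with the truncation structure of the target; one then checks that this morphism is an LG-quasi-isomorphism stalkwise along $\relint S_k$, which needs only $j_x^{*}$ and is supplied by Proposition \ref{prp:local-computation-near-toric boundary} (geometric side) matched against the $\Trop(\A^{1})$ computation, the induction hypothesis, and Proposition \ref{prop:Kunneth formula-1} (sheaf side). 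With that reorganization your argument closes; as written, the key step is not established by the tools you cite.
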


\begin{proof}
We prove the existence of the LG-quasi-isomorphism on $U_{k}$ by
induction on 
\[
r:=r(k,X):=\max\left\{ \dim\sigma\mid U_{k}\cap N_{\sigma,\R}\neq\emptyset\right\} .
\]
We may assume that $\dim\sigma_{S_{i}}\leq r$ for $i\leq k$ and
$\dim\sigma_{S_{k}}=r$. When $r=0$. i.e., $U_{k}\subset N_{\R}$,
by induction on $k$, the assertion follows from Proposition \ref{prop:local vanish and attaching},
Lemma \ref{lem:tropical IH on smooth part2-1} (and Definition of
$\Fil_{*}^{\Lambda}\F_{X_{\sm}}^{p,w}$), and Proposition \ref{prop:Deligne's characterization-1}.
We assume that $r\geq1$. By induction on $k$ and Proposition \ref{prop:local vanish and attaching},
we have a morphism 
\begin{equation}
\Fil_{*}^{\Lambda}IC_{\Trop,\geom,X}^{p,*}|_{U_{k}}\to\Fil_{*}^{\Lambda}IC_{\Trop,\sheaf,X}^{p,*}|_{U_{k}}\label{eq:comparison geom and sheaf on Uk}
\end{equation}
 in $D_{c,[-\gr(X),0],LG}^{b}(U_{k},\gMod\Q[T_{S}]_{S\in\Lambda})$
which is an LG-quasi-isomorphism on $U_{k-1}$. A small neighborhood
of $X\cap N_{\sigma_{S_{k}}}\subset X$ can be identified with that
of 
\[
\left\{ \infty\right\} \times\left\{ (\infty)^{r-1}\right\} \times(X\cap N_{\sigma_{S_{k}},\R})\subset\Trop(\A^{1})\times\Trop(\A^{r-1})\times(X\cap N_{\sigma_{S_{k}},\R}),
\]
where $X\cap N_{\sigma_{S_{k}},\R}$ has a natural structure of a
tropical variety (Remark \ref{rem:tropical structure of X =00005Ccap N_sigma R}).
Then by the case of $X=\Trop(\A^{1})$ (Example \ref{exa:tropical toric variety}),
which follows from easy computation and Proposition \ref{prop:Deligne's characterization-1},
hypothesis of induction to $\Trop(\A^{r-1})\times(X\cap N_{\sigma_{S_{k}},\R})$,
and K\"{u}nneth type formula (Lemma \ref{prp:local-computation-near-toric boundary}
and Proposition \ref{prop:Kunneth formula-1}), the morphism (\ref{eq:comparison geom and sheaf on Uk})
is an LG-quasi-isomorphism on $U_{k}$. 
\end{proof}
In particular, we have a natural isomorphism $IH_{\Trop,\geom,\epsilon}^{p,q}(X)\cong IH_{\Trop,\sheaf,\epsilon}^{p,q}(X)$
($\epsilon=\emptyset,c$). We simply denote them by $IH_{\Trop,\epsilon}^{p,q}(X)$
(or $IH_{\Trop,\epsilon}^{p,q}(X;\Q)$).

\subsection{Comparison to tropical cohomology\label{subsec:Compatibility-with-tropical cohomology}}

In this subsection, we give a natural morphism $H_{\Trop}^{p,q}(X;\Q)\to IH_{\Trop}^{p,q}(X;\Q)$
from tropical cohomology $H_{\Trop}^{p,q}(X;\Q)$, introduced by Itenberg-Katzarkov-Mikhalkin-Zharkov
\cite{ItenbergKatzarkovMikhalkinZharkovTropicalhomology2019}, and
show that it is an isomorphism when Poincar\'{e}-Verdier duality
holds for tropical cohomology. Let $M$ be a free $\Z$-module of
finite rank such that $\Spec K[M]\subset T_{\Sigma}$ is the dense
torus, where $K$ is the base field.
\begin{defn}
\label{def:IKMZ F^p -1}For $R\in\Lambda$, we put 
\[
F^{p}(R):=\bigwedge^{p}(M\cap\sigma_{R}^{\perp})\otimes\Q/\bigcap_{\substack{P\in\Lambda_{\sm,d}\\
R\subset P
}
}\Ker(\bigwedge^{p}(M\cap\sigma_{R}^{\perp})\otimes\Q\to\bigwedge^{p}\Hom(\Tan_{\Q}P,\Q)).
\]
 For $R_{1},R_{2}\in\Lambda$ with $R_{1}\subset R_{2}$, the inclusion
$M\cap\sigma_{R_{1}}^{\perp}\hookrightarrow M\cap\sigma_{R_{2}}^{\perp}$
induces a map $F^{p}(R_{1})\to F^{p}(R_{2})$. We put $\F^{p}=\F_{X}^{p}$
a constructible sheaf on $X$ given by $\F^{p}|_{\relint R}=\underline{F^{p}(R)}_{\relint R}$
($R\in\Lambda$) and natural maps $F^{p}(R_{1})\to F^{p}(R_{2})$
($R_{1},R_{2}\in\Lambda$ with $R_{1}\subset R_{2}$). 
\end{defn}

\begin{rem}
Similarly, we can define a sheaf $\F^{p,w}$ on $X$ by $F^{p,w}(S):=\Hom(F_{p,w}(S),\Q)$
($S\in\Lambda$) (see Definition \ref{def:F_p}). When $X$ is a tropicalization
of a smooth complex algebraic variety with respect to the trivial
valuation, the stalks of $\F^{p}$ and $\F^{p,w}$ at some points
approximate the highest weight graded quotients of some strata. See
Introduction for details. We leave the reader to compute the isomorphism
there. 
\end{rem}

\begin{defn}
Tropical homology $H_{p,q}^{\Trop,\BM}(X;\Q)$ (resp. cohomology $H_{\Trop,\epsilon}^{p,q}(X;\Q)$
($\epsilon=\emptyset,c$)) is defined as homology (resp. cohomology)
of singular or cellular chains (resp. cochains) with coefficients
in the dual of $F^{p}$ (resp. $F^{p}$-coefficients) (\cite{ItenbergKatzarkovMikhalkinZharkovTropicalhomology2019}).
Tropical homology (resp. cohomology) is canonically isomorphic to
sheaf cohomology of Verdier dual of $\F^{p}$ (\cite[Theorem, 4.20]{GrossShokirehAsheaf-theoreticapproachtotropicalhomology23})
(resp. sheaf cohomology of $\F^{p}$ (\cite[Section 12.4]{MikhalkinZharkovTropicaleignewaveandintermediatejacobians2014})),
i.e.,
\begin{align*}
H_{p,q}^{\Trop,\BM}(X;\Q) & \cong H^{-p-q}(R\Gamma D(\F_{X}^{p}[2d-p]_{\deg})[2d]_{\deg})\\
 & \cong H^{-p-q}(\Gamma\Homs(\F_{X}^{p}[2d-p]_{\deg},\Delta_{X}^{\Lambda,*})[2d]_{\deg})\\
H_{\Trop,\epsilon}^{p,q}(X;\Q) & \cong H^{q-2d+p}(R\Gamma_{\epsilon}\F_{X}^{p}[2d-p]_{\deg}),
\end{align*}
where $\Delta_{X}^{\Lambda,*}$ is the complex of sheaves of singular
simplexes with $\Q$-coefficients respecting $\Lambda$ (\cite[Appendix A]{GrossShokirehAsheaf-theoreticapproachtotropicalhomology23}),
and the second isomorphism is given in proof of \cite[Theorem, 4.20]{GrossShokirehAsheaf-theoreticapproachtotropicalhomology23}.
\end{defn}

There is a natural injective morphism $\F^{p}|_{X_{\sm}}\hookrightarrow\F_{X_{\sm}}^{p,w}$,
whose restriction to $\relint P$ ($P\in\Lambda_{\sm,d}$) is a natural
isomorphism of constant sheaves given by $\bigwedge^{p}M\otimes\Q\to\bigwedge^{p}\Hom(\Tan_{\Q}P,\Q)$.
For $a\in\Z^{\Lambda}$, we define a sheaf $\Fil_{*}^{\Lambda}\F^{p}(a)$
on $X$ by defining $\Fil_{*}^{\Lambda}\F^{p}(a)(U)$ as the inverse
image of $\Fil_{*}^{\Lambda}\F_{X_{\sm}}^{p,w}(a)(U\cap X_{\sm})$
for open subsets $U\subset X$. Then inclusion morphisms give 
\[
\Fil_{*}^{\Lambda}\F^{p}:=\bigoplus_{a\in\Z^{\Lambda}}\Fil_{*}^{\Lambda}\F^{p}(a)\in\Shv(X,\gMod\Q[T_{S}]_{S\in\Lambda}).
\]
Axiom $A_{d-p}$ (1) at $S_{k}$ for $\Fil_{*}^{\Lambda}\F^{p}[2d-p]_{\deg}$
holds trivially when $\dim\sigma_{S_{k}}=0$ and easily follows from
definition when $\dim\sigma_{S_{k}}\geq1$. Hence by Corollary \ref{cor:supported-truncation-hom-isomorphism},
we have a natural morphism 
\begin{align*}
\varphi^{p}\colon\Fil_{*}^{\Lambda}\F^{p}[2d-p]_{\deg}\to\Fil_{*}^{\Lambda}IC_{\Trop,\sheaf}^{p,*}
\end{align*}
 in $D_{c,[-\gr(X),0],LG}^{b}(X,\gMod\Q[T_{S}]_{S\in\Lambda})$. 

We recall cap product given in \cite[Subsection 4.6]{GrossShokirehAsheaf-theoreticapproachtotropicalhomology23}.
The fundamental class $[X]\in IH_{d,d}^{\Trop,\BM}(X)$ in Subsection
\ref{subsec:On-smooth-part} gives an element 
\[
H_{d,d}^{\Trop,\BM}(X;\Q)\cong H^{0}(\Gamma\Homs(\F_{X}^{d}[d]_{\deg},\Delta_{X}^{\Lambda,*}))\cong\Hom_{K^{b}(X,\Modu\Q)}(\F_{X}^{d}[d]_{\deg},\Delta_{X}^{\Lambda,*}).
\]
It is also denoted by $[X]$. We put $[X]\cap-\colon H_{\Trop,c}^{d,d}(X)\to\Q$
the induced morphism. Using singular cochains with $F^{d}$-coefficients,
the cap product $[X]\cap-\colon H_{\Trop,c}^{d,d}(X)\to\Q$ is given
by the natural pairing of singular cochains with $F^{d}$-coefficients
and singular chains with coefficients in the dual of $F^{d}$-coefficients.
The wedge product of $\bigwedge^{p}(M\cap\sigma_{R}^{\perp})\otimes\Q$
($R\in\Lambda$) induces a morphism 
\[
-\wedge-\colon(\F^{p}[2d-p]_{\deg}\otimes\F^{d-p}[d+p]_{\deg})[-2d]_{\deg}\to\F^{d}[d]_{\deg},
\]
which induces a morphism 
\[
-\cup-\colon H_{\Trop}^{p,q}(X)\otimes H_{\Trop,c}^{d-p,d-q}(X)\to H_{\Trop}^{d,d}(X).
\]
 Composition of $-\wedge-$ with $[X]\cap-\colon\F^{d}[d]_{\deg}\to\Delta_{X}^{\Lambda,*}$
induces a morphism 
\begin{align*}
[X]\cap-\colon\F^{p}[2d-p]_{\deg} & \to\Homs(\F^{d-p}[d+p]_{\deg},\Delta_{X}^{\Lambda,*})[2d]_{\deg}\\
 & \cong D(\F^{d-p}[d+p]_{\deg})[2d]_{\deg},
\end{align*}
where the quasi-isomorphism is given by \cite[Proposition A.9]{GrossShokirehAsheaf-theoreticapproachtotropicalhomology23}.
We also denote the induced morphism on cohomology groups by $[X]\cap-$.
Then by construction, we have the following commutative diagram 
\[
\xymatrix{H_{\Trop}^{p,q}(X;\Q)\ar[d]^{[X]\cap-}\ar@{}[r]|{\times} & H_{\Trop,c}^{d-p,d-q}(X;\Q)\ar[d]^{=}\ar[r]^{[X]\cap(-\cup-)} & \Q\ar[d]^{=}\\
H_{d-p,d-q}^{\Trop,\BM}(X;\Q)\ar@{}[r]|{\times} & H_{\Trop,c}^{d-p,d-q}(X;\Q)\ar[r] & \Q
}
.
\]

Similarly to Lemma \ref{lem:variant of vanishing of IC smooth part1-1},
for $L=(S_{L,1}\subsetneq\dots\subsetneq S_{L,r(L)})$ ($S_{L,i}\in\Lambda$,
$r(L)\in\Z_{\geq0}$) and $c=(c_{S_{L,i}})_{S_{L,i}\in\left\{ S_{L,i}\right\} _{i=1}^{r(L)}}\in\Z^{\left\{ S_{L,i}\right\} _{i=1}^{r(L)}}$,
the cap product $[X]\cap-$ gives a morphism 
\begin{align*}
\Fil_{*}^{L}\F^{p}(c-\gr(X)|_{\left\{ S_{L,i}\right\} _{i=1}^{r(L)}})|_{W_{L}}\to & \H^{-d}(\Homs(\F^{d-p}|_{W_{L}}/\sum_{i=1}^{r(L)}\Fil_{S_{L,i}}^{c_{S_{L,i}}+1}\F^{d-p}|_{W_{L}},\Delta_{X}^{\Lambda,*}|_{W_{L}})),
\end{align*}
where $\Fil_{*}^{L}\F^{p}$ and $\Fil_{S_{L,i}}^{c_{S_{L,i}}+1}\F^{d-p}$
are defined similarly to those for $\F_{X_{\sm}}^{*,w}$ (Subsection
\eqref{subsec:On-smooth-part}). Hence by isomorphism (\eqref{eq:grHom Fil F^p (c)})
in proof of Lemma \eqref{lem:graded version of GS} for $\F^{d-p}$
instead of $\F_{X_{\sm}}^{d-p,w}$ and $E=\Delta_{X}^{\Lambda,*}$,
we have a morphism
\begin{align*}
\Fil_{*}^{\Lambda}\F^{p}[2d-p]_{\deg} & \to\gHoms(\Fil_{*}^{\Lambda}\F^{d-p}[d+p]_{\deg},\bigoplus_{a\in\Z_{\geq0}^{\Lambda}}\Delta_{X}^{\Lambda,*})[2d]_{\deg}[\gr(X)]_{\gr}\\
 & \to D(\Fil_{*}^{\Lambda}\F^{d-p}[d+p]_{\deg})[2d]_{\deg}[\gr(X)]_{\gr},
\end{align*}
 where the second morphism is given by Lemma \ref{lem:dualizing complex direct sum of dualizing complex}.
\begin{lem}
We have a commutative diagram 
\[
\xymatrix{\Fil_{*}^{\Lambda}\F^{p}[2d-p]_{\deg}\ar[r]\ar[dd]^{\varphi^{p}} & \ar[d]D(\Fil_{*}^{\Lambda}\F^{d-p}[d+p]_{\deg})[2d]_{\deg}[\gr(X)]_{\gr}\\
 & Ri_{X_{\sm}\hookrightarrow X,*}D(\Fil_{*}^{\Lambda}\F^{d-p}|_{X_{\sm}}[d+p]_{\deg})[2d]_{\deg}[\gr(X)]_{\gr}\\
\Fil_{*}^{\Lambda}IC_{\Trop,\sheaf}^{p,*}\ar[r] & \ar[u]D(\Fil_{*}^{\Lambda}IC_{\Trop,\sheaf}^{d-p,*})[2d]_{\deg}[\gr(X)]_{\gr}
}
\]
in $D_{c,[-\gr(X),0],LG}^{b}(X,\gMod\Q[T_{S}]_{S\in\Lambda})$ whose
morphisms in the second column gives a commutative diagram 
\[
\xymatrix{D(\Fil_{*}^{\Lambda}\F^{d-p}[d+p]_{\deg})[2d]_{\deg}[\gr(X)]_{\gr}\ar[rd]\\
 & Ri_{X_{\sm}\hookrightarrow X,*}D(\Fil_{*}^{\Lambda}\F^{d-p}|_{X_{\sm}}[d+p]_{\deg})[2d]_{\deg}[\gr(X)]_{\gr},\\
D(\Fil_{*}^{\Lambda}IC_{\Trop,\sheaf}^{d-p,*})[2d]_{\deg}[\gr(X)]_{\gr}\ar[ru]\ar[uu]^{D(\varphi^{d-p})[2d]_{\deg}[\gr(X)]_{\gr}}
}
\]
 where $i_{X_{\sm}\hookrightarrow X}\colon X_{\sm}\hookrightarrow X$
is the open embedding. 
\end{lem}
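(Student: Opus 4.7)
The plan is to verify both diagrams after restriction to $X_{\sm}$ and then propagate the commutativity to all of $X$ by a uniqueness argument.

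First I would verify the main square after restriction to $X_{\sm}$. The composition through $\Fil_*^{\Lambda}IC_{\Trop,\sheaf}^{p,*}$ factors, via $\varphi^p|_{X_{\sm}}$ and the inclusion $\F^p|_{X_{\sm}}\hookrightarrow\F_{X_{\sm}}^{p,w}$ composed with the PV-duality LG-quasi-isomorphism of Proposition \ref{prp:Verdier-dual-smooth-part-1}, through $\gHoms(\Fil_*^{\Lambda}\F_{X_{\sm}}^{d-p,w}[d+p]_{\deg},\bigoplus_{a\in\Z_{\geq0}^{\Lambda}}IC_{\Trop,\geom,X}^{d,\ast}|_{X_{\sm}})$ as in Lemma \ref{lem:graded version of GS}. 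On the other hand, the top composition is by construction the cap product obtained from the wedge $\F^p\otimes\F^{d-p}\to\F^d$ followed by the fundamental-class map $\F^d[d]_{\deg}\to\Delta_X^{\Lambda,\ast}$ and the quasi-isomorphism $\Delta_X^{\Lambda,\ast}\cong\omega_X\cong IC_{\Trop,\geom,X}^{d,\ast}$ on $X_{\sm}$. Under the natural isomorphism $F^{p}(P)\cong\bigwedge^{p}\Hom(\Tan_{\Q}P,\Q)$ on each $P\in\Lambda_{\sm,d}$, both cap products coincide because each is defined by contraction against the fundamental vector $w_P\,1_{\wedge^d\Tan_{\Z}P}[P]$ as in Definition \ref{def:contraction} and the discussion preceding Lemma \ref{lem:tropical IH on smooth part2-1}. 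So the square commutes on $X_{\sm}$.

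Next I would promote commutativity from $X_{\sm}$ to $X$ by induction on the strata $S_2,\dots,S_{\#\Lambda_{\sing}+1}$. The target $D(\Fil_*^{\Lambda}IC_{\Trop,\sheaf}^{d-p,\ast})[2d]_{\deg}[\gr(X)]_{\gr}$ is LG-quasi-isomorphic, by Theorem \ref{thm:Verdier-duality-1}, to $\Fil_*^{\Lambda}IC_{\Trop,\sheaf}^{p,\ast}$, which by construction and Proposition \ref{prop:Deligne's characterization-1} is obtained stratum by stratum as $\tau_{p,S_k}Ri_{k*}$ applied to the corresponding object on $U_{k-1}$. By Corollary \ref{cor:supported-truncation-hom-isomorphism}, for the truncation $\tau_{p,S_k}Ri_{k*}$ the natural restriction map on Hom-sets out of any fixed source is a bijection between $\Hom$ in $U_k$ and $\Hom$ in $U_{k-1}$ of its restriction. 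Applying this inductively with source $\Fil_*^{\Lambda}\F^p[2d-p]_{\deg}|_{U_k}$ (which indeed sits in $D_{c,[-\gr(X),0],LG}^{b}$ and for which the same Axiom $A_{d-p}(1)$ that was used to define $\varphi^p$ is satisfied), the equality of the two compositions on $X_{\sm}=U_1$ forces equality of the two compositions on each $U_k$, hence on $X$.

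Finally, for the inner triangle, I would argue by naturality of the adjunction unit $\mathrm{id}\to Ri_{X_{\sm}\hookrightarrow X,\ast}\,i_{X_{\sm}\hookrightarrow X}^{\ast}$. Since $i_{X_{\sm}\hookrightarrow X}$ is an open embedding, $i_{X_{\sm}\hookrightarrow X}^{\ast}$ commutes with $D$ (up to the shift $[\gr(X)]_{\gr}$ which is preserved), and the vertical map $D(\varphi^{d-p})[2d]_{\deg}[\gr(X)]_{\gr}$ restricts on $X_{\sm}$ to the dual of $\varphi^{d-p}|_{X_{\sm}}$, which in turn factors as the dual of the composition $\Fil_*^{\Lambda}\F^{d-p}|_{X_{\sm}}\hookrightarrow\Fil_*^{\Lambda}\F_{X_{\sm}}^{d-p,w}\cong\Fil_*^{\Lambda}IC_{\Trop,\sheaf}^{d-p,\ast}|_{X_{\sm}}$. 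The triangle then reduces to the naturality square for the adjunction unit applied to this restriction, which is automatic.

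The main obstacle I foresee is in the second step: checking that Corollary \ref{cor:supported-truncation-hom-isomorphism} applies cleanly with source $\Fil_*^{\Lambda}\F^p[2d-p]_{\deg}$ at each inductive stage, and in particular verifying the required local vanishing/adaptedness for the source so that the truncation-and-push-forward characterization of $\Fil_*^{\Lambda}IC_{\Trop,\sheaf}^{p,\ast}$ controls morphisms \emph{into} its Verdier dual; the bookkeeping of the grade shift $[\gr(X)]_{\gr}$ and the compatibility with $\For_J$ via Lemma \ref{lem:forgetful and dual} will need to be tracked carefully throughout.
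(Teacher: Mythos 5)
Your proposal is correct and follows essentially the same route as the paper, which disposes of the lemma in two sentences: commutativity on $U_{1}=X_{\sm}$ by construction (comparison of the two cap products against the fundamental class), then induction over the strata $S_{k}$ using the truncation-and-pushforward characterization together with Corollary \ref{cor:supported-truncation-hom-isomorphism} to see that restriction to $U_{k-1}$ is injective on the relevant Hom-sets out of $\Fil_{*}^{\Lambda}\F^{p}[2d-p]_{\deg}$. Your additional care about Axiom $A_{d-p}$ (1) for the source and the adjunction-unit naturality for the inner triangle is exactly the content the paper leaves implicit.
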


\begin{proof}
The commutativity on $U_{1}$ follows from construction, and that
on $U_{k}$ ($k\geq2$) holds by induction on $k$. 
\end{proof}
\begin{lem}
The natural map 
\begin{align*}
 & \Hom_{D_{c,[-\gr(X),0],LG}^{b}(X,\gMod\Q[T_{S}]_{S\in\Lambda})}(\Fil_{*}^{\Lambda}\F^{p},D(\Fil_{*}^{\Lambda}\F^{d-p})[-d]_{\deg}[\gr(X)]_{\gr})\\
\to & \Hom_{D_{c,[-\gr(X),0],LG}^{b}(X,\gMod\Q[T_{S}]_{S\in\Lambda})}(\Fil_{*}^{\Lambda}\F^{p},Ri_{X_{\sm}\hookrightarrow X,*}D(\Fil_{*}^{\Lambda}\F^{d-p}|_{X_{\sm}})[-d]_{\deg}[\gr(X)]_{\gr})
\end{align*}
is injective. 
\end{lem}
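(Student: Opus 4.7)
The plan is to recognize the natural map in the lemma as $\Hom(\Fil_*^\Lambda\F^p,-)$ applied to the adjunction unit $A\to Ri_*i^*A$, where $i := i_{X_\sm\hookrightarrow X}$ and $A := D(\Fil_*^\Lambda\F^{d-p})[-d]_\deg[\gr(X)]_\gr$. Since $i$ is open, the identity $i^*D_X\cong D_{X_\sm} i^*$ holds, so $Ri_*i^*A$ agrees with the target of the natural map. Letting $j\colon Z:=X\setminus X_\sm\hookrightarrow X$ denote the closed complement, we have the distinguished triangle
$$j_*j^!A \longrightarrow A \longrightarrow Ri_*i^*A.$$
Via the induced long exact sequence of $\Hom$ groups, the injectivity claim reduces to the vanishing
$$\Hom_{D_{c,[-\gr(X),0],LG}^b}(\Fil_*^\Lambda\F^p,\,j_*j^!A)=0.$$

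The key geometric input is that regularity of $X$ at infinity forces $\Lambda_d\subset\Lambda_\sm$, so every polyhedron in $\Lambda_\sing$ has dimension at most $d-1$, and hence the closed subset $Z$ has topological dimension $d_Z\leq d-1$. For the closed immersion $j$, the base-change identity $j^!D_X\cong D_Zj^*$, extended to the graded/LG setting via Lemma \ref{lem:LG-q-isom duality} and the six-functor formalism of Section \ref{sec:Appendix.}, yields
$$j^!A\cong D_Z(j^*\Fil_*^\Lambda\F^{d-p})[-d]_\deg[\gr(X)]_\gr.$$
Since $j^*\Fil_*^\Lambda\F^{d-p}$ is concentrated in cohomological degree $0$ and $\dim Z\leq d-1$, the standard range bound for Verdier duality on $Z$ gives $\H^k D_Z(j^*\Fil_*^\Lambda\F^{d-p})=0$ for $k<-(d-1)$; after the shift $[-d]_\deg$ this becomes $\H^k(j^!A)=0$ for every $k\leq 0$, and exactness of $j_*$ for the closed immersion $j$ transfers the same vanishing to $j_*j^!A$.

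The desired vanishing then follows by a standard t-structure argument. Using the equivalence $D_{c,[-\gr(X),0],LG}^b\cong D^b(\Shv_{c,[-\gr(X),0],LG})$ of Lemma \ref{lem:LG-isom and LG-q-isom} together with the t-structure of Definition \ref{def:t-structure on LG-localization}, $\Hom$ in the derived category from the object $\Fil_*^\Lambda\F^p$, placed in cohomological degree $0$, to a complex whose cohomology is concentrated in strictly positive degrees vanishes. The main technical obstacle will be verifying compatibility of the identity $j^!D_X\cong D_Zj^*$ with the grading shift $[\gr(X)]_\gr$ and the finiteness condition of Definition \ref{def:condition A}, together with descending the Verdier-duality degree bounds on $Z$ to the LG-localization; these are handled by combining Lemma \ref{lem:Condition A duality}, Lemma \ref{lem:LG-q-isom duality}, and the compatibilities established in Section \ref{sec:Appendix.}.
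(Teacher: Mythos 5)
Your proof is correct, and its engine is the same as the paper's: the excision triangle $j_{*}j^{!}A\to A\to Ri_{*}i^{*}A$, the identity $j^{!}D\cong Dj^{*}$ (Lemma \ref{lem:Achar 1.5.15}), and the observation that regularity at infinity forces the singular locus to have dimension at most $d-1$, which after the shift $[-d]_{\deg}$ places the costalk complex in strictly positive cohomological degrees, so that $\Hom$ out of the degree-zero object $\Fil_{*}^{\Lambda}\F^{p}$ vanishes by the t-structure (Lemma \ref{lem:natural t-structure on LG-localization}). The genuine difference is the decomposition. The paper peels off one stratum at a time along the filtration $U_{1}\subset U_{2}\subset\dots\subset X$, so each costalk computation happens on a manifold $\relint S_{k}$, where $j_{k}^{*}\Fil_{*}^{\Lambda}\F^{d-p}$ is locally constant and its dual is concentrated in the single degree $-\dim S_{k}$ by Proposition \ref{prop:locally free sheaves on manifolds}. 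You instead excise the whole closed subset $Z=X\setminus X_{\sm}$ in one step, which is cleaner but requires the amplitude bound $\H^{k}D_{Z}(F)=0$ for $k<-\dim Z$ on the \emph{stratified} space $Z$ rather than on a manifold. That bound is true, and follows from the costalk formula (Propositions \ref{prop:Borel 3.10 and 3.5} and \ref{prop:Kashiwara-Schapira 3.4 (iii)}) combined with the vanishing of $H^{r}(R\Gamma_{c}F|_{U})$ for $r>\dim Z$; the latter is not stated explicitly in the appendix, so you should record it (it follows by the standard induction over strata using Lemma \ref{lem:Borel 3.8}). With that supplied, both routes terminate in the same t-structure vanishing, and either is acceptable.
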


\begin{proof}
It suffices to show that for any $k\geq2$, the natural map 
\begin{align*}
 & \Hom_{D_{c,[-\gr(X),0],LG}^{b}(U_{k},\gMod\Q[T_{S}]_{S\in\Lambda})}(\Fil_{*}^{\Lambda}\F^{p}|_{U_{k}},D(\Fil_{*}^{\Lambda}\F^{d-p}|_{U_{k}})[-d]_{\deg}[\gr(X)]_{\gr})\\
\to & \Hom_{D_{c,[-\gr(X),0],LG}^{b}(U_{k-1},\gMod\Q[T_{S}]_{S\in\Lambda})}(\Fil_{*}^{\Lambda}\F^{p}|_{U_{k-1}},D(\Fil_{*}^{\Lambda}\F^{d-p}|_{U_{k-1}})[-d]_{\deg}[\gr(X)]_{\gr})
\end{align*}
is injective. By applying $j_{k*}j_{k}^{!}\to\Id\to Ri_{k*}i_{k}^{*}\to^{[1]}\cdot$
to $D(\Fil_{*}^{\Lambda}\F^{d-p}|_{U_{k}})[-d]_{\deg}[\gr(X)]_{\gr}$,
it suffices to show that
\begin{align*}
 & \Hom_{D_{c,[-\gr(X),0],LG}^{b}(U_{k},\gMod\Q[T_{S}]_{S\in\Lambda})}(j_{k}^{*}\Fil_{*}^{\Lambda}\F^{p}|_{U_{k}},j_{k}^{!}D(\Fil_{*}^{\Lambda}\F^{d-p}|_{U_{k}})[-d]_{\deg}[\gr(X)]_{\gr})\\
\cong & \Hom_{D_{c,[-\gr(X),0],LG}^{b}(U_{k},\gMod\Q[T_{S}]_{S\in\Lambda})}(j_{k}^{*}\Fil_{*}^{\Lambda}\F^{p}|_{U_{k}},D(j_{k}^{*}\Fil_{*}^{\Lambda}\F^{d-p}|_{U_{k}})[-d]_{\deg}[\gr(X)]_{\gr})
\end{align*}
is $0$. This holds since 
\[
\H^{m}(D(j_{k}^{*}\Fil_{*}^{\Lambda}\F^{d-p}|_{U_{k}})[-d]_{\deg})=0
\]
 for $m\neq d-\dim S_{k}$ and $d-\dim S_{k}\geq1$.
\end{proof}
As a direct corollary of the above two lemmas, we have the following. 
\begin{cor}
\label{cor:sheaf level compatibility of intersection product of trop coh}We
have a commutative diagram 
\[
\xymatrix{\Fil_{*}^{\Lambda}\F^{p}[2d-p]_{\deg}\ar[r]\ar[d]^{\varphi^{p}} & D(\Fil_{*}^{\Lambda}\F^{d-p}[d+p]_{\deg})[2d]_{\deg}[\gr(X)]_{\gr}\\
\Fil_{*}^{\Lambda}IC_{\Trop,\sheaf}^{p,*}\ar[r] & D(\Fil_{*}^{\Lambda}IC_{\Trop,\sheaf}^{d-p,*})[2d]_{\deg}[\gr(X)]_{\gr}\ar[u]^{D(\varphi^{d-p})[2d]_{\deg}[\gr(X)]_{\gr}}
}
\]
in $D_{c,[-\gr(X),0],LG}^{b}(X,\gMod\Q[T_{S}]_{S\in\Lambda})$. 
\end{cor}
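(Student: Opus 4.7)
The plan is to obtain the commutativity of the square as a purely formal consequence of the two immediately preceding lemmas, via a Hom-set injectivity argument in $D_{c,[-\gr(X),0],LG}^{b}(X,\gMod\Q[T_{S}]_{S\in\Lambda})$. Let $\alpha,\beta\colon\Fil_{*}^{\Lambda}\F^{p}[2d-p]_{\deg}\to D(\Fil_{*}^{\Lambda}\F^{d-p}[d+p]_{\deg})[2d]_{\deg}[\gr(X)]_{\gr}$ denote the two morphisms obtained by reading the square clockwise and counter-clockwise respectively; commutativity of the square amounts to the equality $\alpha=\beta$.

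I would then post-compose both $\alpha$ and $\beta$ with the natural morphism
\[
D(\Fil_{*}^{\Lambda}\F^{d-p}[d+p]_{\deg})[2d]_{\deg}[\gr(X)]_{\gr}\to Ri_{X_{\sm}\hookrightarrow X,*}D(\Fil_{*}^{\Lambda}\F^{d-p}|_{X_{\sm}}[d+p]_{\deg})[2d]_{\deg}[\gr(X)]_{\gr}
\]
and invoke the first lemma: the large commutative pentagon provided there contains precisely the two triangles that compute $\alpha$ and $\beta$ after this post-composition. Reading around the pentagon, both post-compositions factor through the common top-right vertex of Lemma~1, so they are equal. The second lemma then says that post-composition with the displayed morphism induces an injection on the relevant $\Hom$ group in $D_{c,[-\gr(X),0],LG}^{b}(X,\gMod\Q[T_{S}]_{S\in\Lambda})$, which forces $\alpha=\beta$.

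The only thing I would be careful about is to make sure that the cap-product maps $[X]\cap-\colon\Fil_{*}^{\Lambda}\F^{p}[2d-p]_{\deg}\to D(\Fil_{*}^{\Lambda}\F^{d-p}[d+p]_{\deg})[2d]_{\deg}[\gr(X)]_{\gr}$ and $[X]\cap-\colon\Fil_{*}^{\Lambda}\F_{X_{\sm}}^{p,w}[2d-p]_{\deg}\to D(\Fil_{*}^{\Lambda}\F_{X_{\sm}}^{d-p,w}[d+p]_{\deg})[2d]_{\deg}[\gr(X)]_{\gr}$ appearing implicitly in the two lemmas are induced by the same fundamental class $[X]\in IH_{d,d}^{\Trop,\BM}(X)$ under the canonical inclusion $\F^{p}|_{X_{\sm}}\hookrightarrow\F_{X_{\sm}}^{p,w}$; this follows from construction since both are defined through the singular-cochain pairing associated to the same $[X]$. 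Beyond that bookkeeping, I do not anticipate any obstacle: this corollary is essentially a one-line consequence of the two lemmas, and the work has already been concentrated in setting up the LG-localized derived category so that $\Hom$-injectivity arguments of this kind are available.
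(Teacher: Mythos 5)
Your argument is exactly the paper's: the first lemma supplies the commutative diagram through the intermediate object $Ri_{X_{\sm}\hookrightarrow X,*}D(\Fil_{*}^{\Lambda}\F^{d-p}|_{X_{\sm}}[d+p]_{\deg})[2d]_{\deg}[\gr(X)]_{\gr}$, and the second lemma's Hom-injectivity then forces the two compositions around the square to coincide. The paper states the corollary as a direct consequence of those two lemmas in just this way, so your proposal is correct and follows the same route.
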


In particular, we have a commutative diagram 
\begin{equation}
\xymatrix{H_{\Trop}^{p,q}(X;\Q)\ar[d]^{\varphi^{p}}\ar[r]^{[X]\cap-} & H_{d-p,d-q}^{\Trop,\BM}(X;\Q)\\
IH_{\Trop}^{p,q}(X;\Q)\ar[r]^{=} & IH_{\Trop}^{p,q}(X;\Q)\ar[u]^{D(\varphi^{d-p})[2d]_{\deg}}
}
.\label{eq:comm diagram HTrop IHtrop 1}
\end{equation}

\begin{cor}
\label{cor:compatibility of intersection product of trop coh}We have
a commutative diagram 
\[
\xymatrix{H_{\Trop}^{p,q}(X;\Q)\ar[d]^{\varphi^{p}}\ar@{}[r]|{\times} & H_{\Trop,c}^{d-p,d-q}(X;\Q)\ar[d]^{\varphi^{d-p}}\ar[rr]^{\ \ \ \ \ [X]\cap(-\cup-)} &  & \Q\ar[d]^{=}\\
IH_{\Trop}^{p,q}(X;\Q)\ar@{}[r]|{\times} & IH_{\Trop,c}^{d-p,d-q}(X;\Q)\ar[rr] &  & \Q
}
.
\]
\end{cor}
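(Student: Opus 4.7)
The plan is to reduce the claim to the sheaf-level commutative square of Corollary \ref{cor:sheaf level compatibility of intersection product of trop coh}. Both pairings appearing in the statement are particular instances of the general construction that associates to a morphism $A\to D(B)[2d]_{\deg}$ in $D_{c}^{b}(X,\Modu\Q)$ a pairing $R\Gamma(X,A)\otimes R\Gamma_{c}(X,B)\to\Q$, defined as the composition of the canonical map
\[
R\Gamma(X,A)\otimes R\Gamma_{c}(X,B)\to R\Gamma_{c}(X,A\overset{L}{\otimes}B),
\]
the evaluation $A\overset{L}{\otimes}B\to D(B)[2d]_{\deg}\overset{L}{\otimes}B\to\omega_{X}[2d]_{\deg}$, and the trace $R\Gamma_{c}(X,\omega_{X}[2d]_{\deg})\to\Q$.

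First I would identify the top pairing of the statement with the one obtained from the top horizontal arrow $\F^{p}[2d-p]_{\deg}\to D(\F^{d-p}[d+p]_{\deg})[2d]_{\deg}$ of the diagram in Corollary \ref{cor:sheaf level compatibility of intersection product of trop coh}. This is essentially the definition of that arrow, as recalled just before Corollary \ref{cor:sheaf level compatibility of intersection product of trop coh}: it is built from the wedge $\F^{p}\otimes\F^{d-p}\to\F^{d}$ followed by the fundamental class morphism $[X]\cap-\colon\F^{d}[d]_{\deg}\to\Delta_{X}^{\Lambda,*}$ (a model for $\omega_{X}$) and the identification with $D(\F^{d-p}[d+p]_{\deg})[2d]_{\deg}$ coming from \cite[Proposition A.9]{GrossShokirehAsheaf-theoreticapproachtotropicalhomology23}; unwinding gives exactly $[X]\cap(\alpha\cup\beta)$. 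Next I would identify the bottom pairing with the one obtained from the bottom arrow $IC_{\Trop,\sheaf}^{p,*}\to D(IC_{\Trop,\sheaf}^{d-p,*})[2d]_{\deg}$: this is the Poincar\'{e}-Verdier pairing of Corollary \ref{cor:poincare duality for sheaf def-1}, which by construction is obtained by applying precisely the recipe above to the Verdier quasi-isomorphism supplied by Theorem \ref{thm:Verdier-duality-1} (after applying $\For_{\Lambda}$).

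Given these two identifications, the required commutativity follows by naturality of the recipe with respect to morphisms of pairing data: the vertical arrows $\varphi^{p}$ and $\varphi^{d-p}$ together with the commutative square of Corollary \ref{cor:sheaf level compatibility of intersection product of trop coh} induce the equality of the two compositions $R\Gamma(X,\F^{p}[2d-p]_{\deg})\otimes R\Gamma_{c}(X,\F^{d-p}[d+p]_{\deg})\to\Q$, one routed through the top arrow and one routed through $\varphi^{p},\varphi^{d-p}$ and the bottom arrow. I do not expect a serious obstacle: essentially all of the non-formal content has been absorbed into Corollary \ref{cor:sheaf level compatibility of intersection product of trop coh}, and what remains is to verify that both external pairings are indeed realized by the six-functor recipe above, which is routine once the relevant morphisms are unwound.
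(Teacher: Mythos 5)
Your proposal is correct and takes essentially the same route as the paper: the paper also reduces everything to the sheaf-level square of Corollary \ref{cor:sheaf level compatibility of intersection product of trop coh} (via diagram (\ref{eq:comm diagram HTrop IHtrop 1})) together with the formal compatibility of the evaluation pairings under $\varphi^{d-p}$ and $D(\varphi^{d-p})$. The only cosmetic difference is that the paper makes the intermediate step explicit by inserting a middle row pairing $H_{d-p,d-q}^{\Trop,\BM}(X;\Q)$ against $H_{\Trop,c}^{d-p,d-q}(X;\Q)$, whereas you phrase the same content as naturality of a uniform six-functor recipe.
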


\begin{proof}
The assertion follows from commutative diagram (\ref{eq:comm diagram HTrop IHtrop 1})
and the following commutative diagram 
\[
\xymatrix{H_{\Trop}^{p,q}(X;\Q)\ar[d]^{[X]\cap-}\ar@{}[r]|{\times} & H_{\Trop,c}^{d-p,d-q}(X;\Q)\ar[d]^{=}\ar[rr]^{\ \ \ \ \ [X]\cap(-\cup-)} &  & \Q\ar[d]^{=}\\
H_{d-p,d-q}^{\Trop,\BM}(X;\Q)\ar@{}[r]|{\times} & H_{\Trop,c}^{d-p,d-q}(X;\Q)\ar[d]^{\varphi^{d-p}}\ar[rr] &  & \Q\ar[d]^{=}\\
IH_{\Trop}^{p,q}(X;\Q)\ar[u]^{D(\varphi^{d-p})[2d]_{\deg}}\ar@{}[r]|{\times} & IH_{\Trop,c}^{d-p,d-q}(X;\Q)\ar[rr] &  & \Q
}
.
\]
\end{proof}
In the rest of this subsection, we assume that Poincar\'{e}-Verdier
duality holds for tropical cohomology of $X$, i.e., the cap product
$[X]\cap-$ induces a quasi-isomorphism $\F^{p}\cong D(\F^{d-p})[-d]_{\deg}$
($0\leq p\leq d$) on $X$ (e.g, $X$ is smooth (i.e., locally matroidal)
(Jell-Shaw-Smacka (\cite[Theorem 2]{JellShawSmackaSuperformstropicalcohomologyandPoincarduality2019}),
Gross-Shokrieh (\cite[Theorem D]{GrossShokirehAsheaf-theoreticapproachtotropicalhomology23}))
(cf. \cite[Theorem 1.2 and 1.3]{AminiPiquerezHomologicalsmoothnessandDeligneresolutionfortropicalfans2024})). 
\begin{lem}
\label{lem:gradedVD for Fp smooth trop varieties} The isomorphism
$\F^{p}\cong D(\F^{d-p})[-d]_{\deg}$ induces an LG-quasi-isomorphism
\begin{align*}
\Fil_{*}^{\Lambda}\F^{p}[2d-p]_{\deg}\cong D(\Fil_{*}^{\Lambda}\F^{d-p}[d+p]_{\deg})[2d]_{\deg}[\gr(X)]_{\gr}.
\end{align*}
\end{lem}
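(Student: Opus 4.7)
The morphism itself is the one constructed via the cap product in the discussion preceding Corollary \ref{cor:sheaf level compatibility of intersection product of trop coh}, combining $[X]\cap-$ with the $\gHoms$-identification (\ref{eq:grHom Fil F^p (c)}) applied to $\F^{d-p}$ in place of $\F^{d-p,w}_{X_\sm}$. The plan is to verify that this morphism is an LG-quasi-isomorphism, by first isolating a uniqueness statement and then checking the LG condition one adapted pair at a time via a local computation modelled on Lemma \ref{lem:graded version of GS}.

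For uniqueness, I would argue as in the first sentence of the proof of Proposition \ref{prp:Verdier-dual-smooth-part-1}: since $\Fil_*^\Lambda \F^p(a)$ is, by its pull-back definition, realised in each grade $a$ as a subsheaf of $\F^p$, any LG-endomorphism of $\Fil_*^\Lambda \F^p[2d-p]_{\deg}$ lifting the identity of $\F^p[2d-p]_{\deg}$ must itself be the identity. It therefore suffices to produce \emph{some} LG-quasi-isomorphism lifting the assumed global Poincar\'{e}-Verdier quasi-isomorphism $\F^p \cong D(\F^{d-p})[-d]_{\deg}$.

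For each adapted pair $(L, a)$ I plan to check that the cap-product map is a quasi-isomorphism at grade $a$ on a sufficiently small $W'_L$. Applying the forgetful functor $\For_{\Lambda\setminus\{S_{L,i}\}_{i=1}^{r(L)}}$ together with Lemma \ref{lem:forgetful and dual}, the target on $W'_L$ becomes
$$\gHoms\bigl(\Fil_*^L \F^{d-p}[d+p]_{\deg}|_{W'_L},\ \textstyle\bigoplus_{c\in\Z_{\geq 0}^{\{S_{L,i}\}}}\omega_{W'_L}\bigr)[2d]_{\deg}[\gr(X)|_{\{S_{L,i}\}}]_{\gr},$$
and at each grade $c$ this reduces, by the pattern of isomorphism (\ref{eq:grHom Fil F^p (c)}) combined with the assumed global PV quasi-isomorphism $\F^{d-p}\cong D(\F^p)[-d]_{\deg}$ and the identification of $\omega$ with $\Delta_X^{\Lambda,*}$ (as in \cite[Proposition A.9]{GrossShokirehAsheaf-theoreticapproachtotropicalhomology23}), to the degree $-d$ cohomology of $\Homs$ of the quotient $\F^{d-p}|_{W'_L}/\sum_i \Fil_{S_{L,i}}^{c_{S_{L,i}}+1}\F^{d-p}|_{W'_L}$ into $\Delta_X^{\Lambda,*}|_{W'_L}$. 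Its identification with $\Fil_*^L \F^p(a)|_{W'_L}$ is then effected through the wedge pairing $\F^p \otimes \F^{d-p} \to \F^d$ composed with $[X]\cap-$.

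The principal obstacle is establishing that this wedge pairing descends, on all of $W'_L$, to a perfect pairing between $\Fil^{-a_{S_{L,i}}}_{S_{L,i}}\F^p$ and $\F^{d-p}/\Fil_{S_{L,i}}^{a_{S_{L,i}}+1}\F^{d-p}$; this is the global-on-$W'_L$ analogue of the pointwise statement of Remark \ref{rem:elementary remark on duality of fil and /fil}, which \emph{a priori} applies only on the smooth locus $W'_L \cap X_\sm$. Here I expect to use the defining formula for $\F^p$ in Definition \ref{def:IKMZ F^p -1}, together with the fact that the filtrations on both $\F^p$ and $\F^{d-p}$ are defined by pull-back from $X_\sm$, to reduce stalkwise at points of $\relint R$ ($R\in\Lambda$) to a family of pairings over the polyhedra $P\in\Lambda_{\sm,d}$ with $R\subset P$, where Remark \ref{rem:elementary remark on duality of fil and /fil} applies directly.
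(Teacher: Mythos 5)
Your overall architecture --- uniqueness via the fact that the filtration is a subsheaf of $\F^{p}$ in each grade, the cap-product/wedge morphism from the discussion before Corollary \ref{cor:sheaf level compatibility of intersection product of trop coh}, and the reduction through $\For_{\Lambda\setminus\{S_{L,i}\}}$, Lemma \ref{lem:forgetful and dual} and isomorphism (\ref{eq:grHom Fil F^p (c)}) to a computation of $\Homs(\F^{d-p}/\sum_{i}\Fil_{S_{L,i}}^{a_{S_{L,i}}+1}\F^{d-p},\Delta_{X}^{\Lambda,*})$ --- matches the paper's, which simply reruns the proof of Proposition \ref{prp:Verdier-dual-smooth-part-1} with $\F^{*}$ in place of $\F_{X_{\sm}}^{*,w}$. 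The gap is in your resolution of the obstacle you correctly isolate. What is needed there is not a perfect pairing of stalks but a Verdier-duality statement identifying $D(\F^{d-p}/\sum_{i}\Fil_{S_{L,i}}^{a_{S_{L,i}}+1}\F^{d-p})[-d]_{\deg}$ with the corresponding filtered subsheaf of $\F^{p}$ on all of $W'_{L}$, including the singular strata. At a point $x\in\relint R$ with $R\notin\Lambda_{\sm,d}$ the stalk of $D(\F^{d-p})[-d]_{\deg}$ is computed from a deleted neighborhood of $x$, not from the stalk of $\F^{d-p}$ at $x$: already for $X=\Trop(\P^{1})$ and $p=0$ the stalk of $\F^{1}$ at $\infty$ is $0$ while the stalk of $D(\F^{1})[-1]_{\deg}\cong\F^{0}$ there is $\Q$, so no stalkwise pairing can be perfect. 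Hence reducing ``stalkwise at points of $\relint R$ \dots\ to a family of pairings over $P\in\Lambda_{\sm,d}$'' can only verify which graded pieces the wedge product couples; it cannot by itself produce the required quasi-isomorphism between the filtered subobject of $\F^{p}$ and the dual of the quotient of $\F^{d-p}$.

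The missing ingredient is the analog, for $\F^{*}$ on all of $W_{L}$, of the local splitting Lemma \ref{lem:decomp of Fil Fp locally}: orthogonal decompositions of the tangent spaces split $\F^{p}|_{W_{L}}$ into graded summands compatibly with all the filtrations, so that both $\Fil_{S_{L,i}}^{-a_{S_{L,i}}}\F^{p}$ and $\F^{d-p}/\Fil_{S_{L,i}}^{a_{S_{L,i}}+1}\F^{d-p}$ become direct sums of such summands. The wedge pairing is anti-diagonal with respect to these gradings --- this is the only place where your stalkwise reduction to Remark \ref{rem:elementary remark on duality of fil and /fil} is legitimately used, and it is a statement about coefficient spaces only --- and therefore the assumed unfiltered quasi-isomorphism $\F^{p}\cong D(\F^{d-p})[-d]_{\deg}$ decomposes into quasi-isomorphisms between complementary blocks; summing the relevant blocks yields the analog of Lemma \ref{lem:variant of vanishing of IC smooth part1-1} and hence the filtered statement. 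This splitting is exactly what the paper's parenthetical remark (``analogs of Lemma \ref{lem:decomp of Fil Fp locally} and Lemma \ref{lem:variant of vanishing of IC smooth part1-1} hold for $\F^{*}$'') supplies, and it is the step your plan omits.
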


\begin{proof}
Proof can be given in the same way as Proposition \ref{prp:Verdier-dual-smooth-part-1}.
(In particular, analogs of Lemma \ref{lem:decomp of Fil Fp locally}
and Lemma \ref{lem:variant of vanishing of IC smooth part1-1} hold
for $\F^{*}$ instead of $\F_{X_{\sm}}^{*,w}$.) 
\end{proof}
\begin{prop}
\label{prop:comparison trop coh trop IH} When $\F^{p}\cong D(\F^{d-p})[-d]_{\deg}$,
the morphism 
\[
\varphi^{p}\colon\Fil_{*}^{\Lambda}\F^{p}[2d-p]_{\deg}\to\Fil_{*}^{\Lambda}IC_{\Trop,\sheaf}^{p,*}
\]
 is an LG-quasi-isomorphism. 
\end{prop}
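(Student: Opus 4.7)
The plan is to apply the Deligne-style characterization of Proposition \ref{prop:Deligne's characterization-1} inductively along the stratum filtration $X_{\sm}=U_{1}\subset U_{2}\subset\dots\subset U_{\#\Lambda_{\sing}+1}=X$. It suffices to establish two things: (a) $\varphi^{p}|_{X_{\sm}}$ is an LG-quasi-isomorphism, and (b) $\Fil_{*}^{\Lambda}\F^{p}[2d-p]_{\deg}$ satisfies Axiom $A_{d-p}$ at every singular stratum $S_{k}$. Indeed, $\Fil_{*}^{\Lambda}IC_{\Trop,\sheaf}^{p,*}$ satisfies Axiom $A_{d-p}$ by construction (Definition \ref{def:sheaf-theoretic IC-1}), so an induction on $k$ via the equivalence of categories in Proposition \ref{prop:Deligne's characterization-1} extends $\varphi^{p}|_{U_{k-1}}$ uniquely to an LG-quasi-isomorphism on $U_{k}$; after $k$ steps we obtain the desired global LG-quasi-isomorphism (the extension agrees with $\varphi^{p}|_{U_{k}}$ by the uniqueness provided by Corollary \ref{cor:supported-truncation-hom-isomorphism}).

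For (a), $\varphi^{p}|_{X_{\sm}}$ is induced, up to the degree shift, by the natural inclusion $\F^{p}|_{X_{\sm}}\hookrightarrow\F_{X_{\sm}}^{p,w}$, which is already an isomorphism on the top-dimensional strata $\relint P$ ($P\in\Lambda_{\sm,d}$). At a codimension-one smooth stratum $\relint Q$ ($Q\in\Lambda_{\sm,d-1}$), the inclusion $F^{p}(Q)\hookrightarrow F^{p,w}(Q)$ is \emph{a priori} strict. I plan to close the gap by a local dimension count: the assumption $\F^{p}\cong D(\F^{d-p})[-d]_{\deg}$, combined with the always-valid Poincar\'{e}--Verdier duality for $\F_{X_{\sm}}^{p,w}$ (Corollary \ref{cor:Verdier dual naive smooth part}) and the cap-product compatibility of Corollary \ref{cor:sheaf level compatibility of intersection product of trop coh} restricted to $X_{\sm}$, gives via the symmetry $p\leftrightarrow d-p$ that $\dim F^{p}(Q)=\dim F^{p,w}(Q)$, hence the inclusion is an isomorphism. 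The refinement to an LG-isomorphism respecting the $\Fil_{*}^{\Lambda}$ filtration follows from the compatibility of all these identifications with the grading (i.e., Lemma \ref{lem:gradedVD for Fp smooth trop varieties} and Proposition \ref{prp:Verdier-dual-smooth-part-1} refine the cap-product duality to the graded setting).

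For (b), I will transfer Axiom $A_{d-p}$ from $\Fil_{*}^{\Lambda}IC_{\Trop,\sheaf}^{p,*}$ to $\Fil_{*}^{\Lambda}\F^{p}[2d-p]_{\deg}$ through duality. Under the assumption, the top horizontal arrow of the diagram in Corollary \ref{cor:sheaf level compatibility of intersection product of trop coh} is an LG-quasi-isomorphism by Lemma \ref{lem:gradedVD for Fp smooth trop varieties}, and the bottom horizontal is one by Theorem \ref{thm:Verdier-duality-1}. Combined with Corollary \ref{cor:freeness of stalks and costalks-1} (point-wise freeness of stalks/costalks of $\Fil_{*}^{\Lambda}IC_{\Trop,\sheaf}^{d-p,*}$), the mechanism of Corollary \ref{cor:dual-version-GMintersection-homology-Deligne-characterization-1} translates Axiom $A_{p}$ at $S_{k}$ for $\Fil_{*}^{\Lambda}IC_{\Trop,\sheaf}^{d-p,*}$ (satisfied by construction) into Axiom $A_{d-p}$ at $S_{k}$ for $D(\Fil_{*}^{\Lambda}IC_{\Trop,\sheaf}^{d-p,*})[2d]_{\deg}[\gr(X)]_{\gr}\cong_{LG}\Fil_{*}^{\Lambda}IC_{\Trop,\sheaf}^{p,*}$, and by the diagram equally for the LG-isomorphic $\Fil_{*}^{\Lambda}\F^{p}[2d-p]_{\deg}$.

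The main obstacle will be part (a): executing the dimension count at codimension-one smooth strata purely from the Poincar\'{e}--Verdier hypothesis, without invoking local matroidality. I expect this to require a delicate analysis of the cap-product pairing on stalks and costalks, tracing compatibility through two distinct duality isomorphisms while respecting the $\Fil_{*}^{\Lambda}$ grading; the upshot is that the balancing condition encoded by the weights $w_{P}$ in $F^{p,w}(Q)$ must automatically be witnessed by global sections in $\bigwedge^{p}M\otimes\Q$, which is precisely what the duality hypothesis forces.
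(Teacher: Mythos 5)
Your overall scaffolding (induction over the strata $U_{k}$ together with the Deligne-style characterization of Proposition \ref{prop:Deligne's characterization-1}) is the same as the paper's, and your part (a) is essentially the paper's base case: the reduction to the local model $\R^{d-1}\times C$ used in the proof of Lemma \ref{lem:tropical IH on smooth part1-1} shows that the Poincar\'{e}--Verdier hypothesis forces $F^{p}(Q)=F^{p,w}(Q)$ at the codimension-one smooth strata, so $\varphi^{p}|_{X_{\sm}}$ is an LG-quasi-isomorphism; the obstacle you flag there is real but is exactly the ``direct computation'' the paper performs, and it works.

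The genuine gap is in your part (b). You cannot ``transfer Axiom $A_{d-p}$ from $\Fil_{*}^{\Lambda}IC_{\Trop,\sheaf}^{p,*}$ to $\Fil_{*}^{\Lambda}\F^{p}[2d-p]_{\deg}$ through the diagram'' of Corollary \ref{cor:sheaf level compatibility of intersection product of trop coh}: the vertical arrows of that diagram are $\varphi^{p}$ and $D(\varphi^{d-p})[2d]_{\deg}[\gr(X)]_{\gr}$, and transporting a local property (in particular the attaching condition) across them presupposes that they are LG-quasi-isomorphisms, which is precisely the statement being proved. With both horizontals invertible, commutativity only exhibits $\varphi^{p}$ as a split monomorphism in the localized category; to kill the complementary summand you would still need an argument that $\Fil_{*}^{\Lambda}IC_{\Trop,\sheaf}^{p,*}$ has no direct summand supported on the singular strata, which you do not supply. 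The paper sidesteps the attaching axiom for $\F^{p}$ entirely: Axiom $A_{d-p}$ (1) for $\Fil_{*}^{\Lambda}\F^{p}[2d-p]_{\deg}$ and Axiom $A_{p}$ (1) for $\Fil_{*}^{\Lambda}\F^{d-p}[d+p]_{\deg}$ hold unconditionally (this was already used to construct $\varphi^{p}$ via Corollary \ref{cor:supported-truncation-hom-isomorphism}); the duality hypothesis enters only through Lemma \ref{lem:gradedVD for Fp smooth trop varieties}, which makes $\Fil_{*}^{\Lambda}\F^{p}[2d-p]_{\deg}$ LG-quasi-isomorphic to $D(\Fil_{*}^{\Lambda}\F^{d-p}[d+p]_{\deg})[2d]_{\deg}[\gr(X)]_{\gr}$; and then Lemma \ref{lem:deligne characterization dual 2} (self-duality plus stalk vanishing on both sides plus point-wise freeness, the latter coming from the decomposition of $\F^{p}|_{W_{L}}$ analogous to Lemma \ref{lem:decomp of Fil Fp locally}) yields $\Fil_{*}^{\Lambda}\F^{p}[2d-p]_{\deg}|_{U_{k}}\cong\tau_{p,S_{k}}Ri_{k*}(\Fil_{*}^{\Lambda}\F^{p}[2d-p]_{\deg}|_{U_{k-1}})$, which is exactly what the inductive step needs. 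Replacing your step (b) by this argument closes the gap.
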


\begin{proof}
We prove the assertion on $U_{k}$ by induction on $k\geq1$. When
$k=1$, the assertion easily follows from direct computation (see
proof of Lemma \ref{lem:tropical IH on smooth part1-1}). We assume
$k\geq2$. Similarly to Lemma \ref{lem:decomp of Fil Fp locally},
the sheaf $\F^{p}|_{W_{L}}$ has a decomposition. In particular, an
analog of Corollary \ref{cor:freeness of stalks and costalks-1} holds
for $\Fil_{*}^{\Lambda}\F^{p}[2d-p]_{\deg}$. Since $\Fil_{*}^{\Lambda}\F^{p}[2d-p]_{\deg}|_{U_{k}}$
and $\Fil_{*}^{\Lambda}\F^{d-p}[d+p]_{\deg}|_{U_{k}}$ satisfy Axiom
$A_{p}$ (1), by Lemma \ref{lem:deligne characterization dual 2}
and Lemma \ref{lem:gradedVD for Fp smooth trop varieties}, the assertion
holds on $U_{k}$. 
\end{proof}
In paticular, we have a quasi-isomorphism $\varphi^{p}\colon\F^{p}[2d-p]_{\deg}\cong IC_{\Trop,\sheaf}^{p,*}$
on $X$, and we have 
\begin{align*}
H_{\Trop,\epsilon}^{p,q}(X;\Q)\cong IH_{\Trop,\epsilon}^{p,q}(X;\Q),\quad H_{p,q}^{\Trop,\BM}(X;\Q)\cong IH_{p,q}^{\Trop,\BM}(X;\Q).
\end{align*}

\subsection{Independence of polyhedral structures}

Let $\Lambda'$ be a subdivision of $\Lambda$. We put $w'=(w'_{P'})_{P'\in\Lambda_{d}}$
and $w'_{P'}:=w_{P}$, where $P\in\Lambda_{d}$ is the unique polyhedron
containing $P'$. Then $X':=(\Lambda',w')$ is a tropical variety
of dimension $d$ in $\Trop(T_{\Sigma})$. We have $X=X'\subset\Trop(T_{\Sigma})$
as subsets (by abuse of notation). We assume that $X'$ is also regular
at infinity. In this subsection, we shall show that there is a natural
quasi-isomorphism 
\[
IC_{\Trop,\sheaf,X}^{d-p,*}\cong IC_{\Trop,\sheaf,X'}^{d-p,*}.
\]

We put $X_{\sm}':=U'_{1}:=\bigcup_{R'\in\Lambda_{\sm}'}\relint R'$.
We define
\begin{align*}
\Fil_{*}^{\Lambda'}\F_{X_{\sm}'}^{d-p,w} & \in\Shv(X_{\sm}',\gMod\Q[T_{S'}]_{S'\in\Lambda'})
\end{align*}
similarly to $\Fil_{*}^{\Lambda}\F_{X_{\sm}}^{d-p,w}$. We put $\iota_{\sm}\colon X_{\sm}'\hookrightarrow X_{\sm}$.
We have a natural morphism 
\[
\iota_{\sm}^{*}\colon\F_{X_{\sm}}^{d-p,w}\to\iota_{\sm*}\F_{X_{\sm}'}^{d-p,w}(\cong\iota_{\sm*}\iota_{\sm}^{*}\F_{X_{\sm}}^{d-p,w}).
\]
For $\Lambda\cup\Lambda'\subset2^{X}$, we define 
\[
\Fil_{*}^{\Lambda\cup\Lambda'}\F_{X_{\sm}}^{d-p,w}\in\Shv(X_{\sm},\gMod\Q[T_{S^{\circ}}]_{S^{\circ}\in\Lambda\cup\Lambda'})
\]
by 
\begin{align*}
 & \Fil_{*}^{\Lambda\cup\Lambda'}\F_{X_{\sm}}^{d-p,w}(a)\subset\F_{X_{\sm}}^{d-p,w}\\
:= & \Fil_{*}^{\Lambda}\F_{X_{\sm}}^{d-p,w}(a|_{\Lambda})\cap(\iota_{\sm}^{*})^{-1}(\Fil_{*}^{\Lambda'}\F_{X_{\sm}}^{d-p,w}(a|_{\Lambda'}))
\end{align*}
($a\in\Z^{\Lambda\cup\Lambda'}$) and inclusions, where $-|_{\Xi}\colon\Z^{\Lambda\cup\Lambda'}\to\Z^{\Xi}$
($\Xi=\Lambda,\Lambda'$) is the projection. We put 
\begin{align*}
\Fil_{*}^{\Lambda\cup\Lambda'}IC_{\Trop,\sheaf}^{d-p,*}:= & \tau_{p,S_{\#\Lambda+1}}Ri_{\#\Lambda+1*}\circ\dots\circ\tau_{p,S_{2}}Ri_{2*}\Fil_{*}^{\Lambda\cup\Lambda'}\F_{X_{\sm}}^{d-p,w}[d+p]_{\deg}\\
\in & D_{c,[-\gr(X,X'),0]}^{b}(X,\gMod\Q[T_{S^{\circ}}]_{S^{\circ}\in\Lambda\cup\Lambda'}),
\end{align*}
 where $\tau_{p,S_{i}}$ is as in Subsection \ref{subsec:Intersection-product-and-1},
and we put
\[
\gr(X,X'):=\sum_{S^{\circ}\in\Lambda\cup\Lambda'}(\dim S^{\circ}+2\dim\sigma_{S^{\circ}})e_{S^{\circ}}\in\Z^{\Lambda\cup\Lambda'}.
\]
 By construction, we have a quasi-isomorphism 
\[
\For_{\Lambda'\setminus\Lambda}\Fil_{*}^{\Lambda\cup\Lambda'}IC_{\Trop,\sheaf}^{d-p,*}\cong\Fil_{*}^{\Lambda}IC_{\Trop,\sheaf,X}^{d-p,*}\in D_{c,[-\gr(X),0]}^{b}(X,\gMod\Q[T_{S}]_{S\in\Lambda}).
\]

\begin{lem}
\label{lem:for indep of polyhedral structures} There are open polyhedral
neighborhoods $W_{S'}'$ of $\relint S'$ ($S'\in\Lambda'$) such
that for an adapted pair $L'=(S'_{L',1}\subsetneq\dots\subsetneq S'_{L',r(L')})$
($S'_{L',i}\in\Lambda'$, $r(L')\in\Z_{\geq1}$) and $a'=(a'_{S^{\circ}})_{S^{\circ}\in\Lambda\cup\Lambda'}\in\Z^{\Lambda\cup\Lambda'}$
(i.e., $a'_{S^{\circ}}\geq0$ for $S^{\circ}\in(\Lambda\cup\Lambda')\setminus\left\{ S_{L',i}'\right\} _{i=1}^{r(L')}$)
satisfying $\dim S'_{L',r(L')}\neq\dim S_{L'}$, we have injections
\begin{align*}
 & \H^{m}((j_{S_{L'}}^{\epsilon}\Fil_{*}^{\Lambda\cup\Lambda'}IC_{\Trop,\sheaf}^{d-p,*})|_{\relint S_{L'}\cap\bigcap_{i=1}^{r(L')}W_{S'_{L',i}}'}(a'))\\
\hookrightarrow & \H^{m}((j_{S_{L'}}^{\epsilon}\Fil_{*}^{\Lambda\cup\Lambda'}IC_{\Trop,\sheaf}^{d-p,*})|_{\relint S_{L'}\cap\bigcap_{i=1}^{r(L')}W_{S'_{L',i}}'}(a'(S_{L',r(L')}'=0,\ S_{L'}=a'_{S_{L',r(L')}'})))\\
\hookrightarrow & \H^{m}((j_{S_{L'}}^{\epsilon}\Fil_{*}^{\Lambda\cup\Lambda'}IC_{\Trop,\sheaf}^{d-p,*})|_{\relint S_{L'}\cap\bigcap_{i=1}^{r(L')}W_{S'_{L',i}}'}(a'(S_{L',r(L')}'=a'_{S_{L',r(L')}'}+\codim_{S_{L'}}S_{L',r(L')}')))
\end{align*}
($\epsilon=*,!$), where $S_{L'}\in\Lambda$ is the polyhedron such
that $\relint S_{L'}\supset\relint S'_{L',r(L')}$, the morphism $j_{S_{L'}}\colon\relint S_{L'}\to X$
is the inclusion, 
\[
a'(S_{L',r(L')}'=0,\ S_{L'}=a'_{S_{L',r(L')}'})\in\Z^{\Lambda\cup\Lambda'}
\]
 is the element whose $S_{L',r(L')}'$-component is $0$, $S_{L'}$-component
is $a'_{S'_{L',r(L')}}$, and the other components are same as $a'$,
and 
\[
a'(S_{L',r(L')}'=a'_{S_{L',r(L')}'}+\codim_{S_{L'}}S_{L',r(L')}')\in\Z^{\Lambda\cup\Lambda'}
\]
 is the element whose $S_{L',r(L')}'$-component is $a'_{S_{L',r(L')}'}+\codim_{S_{L'}}S_{L',r(L')}'$,
and the other components are same as $a'$. 
\end{lem}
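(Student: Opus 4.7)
The plan is to extend the orthogonal decomposition technique of Lemma \ref{lem:decomp of Fil Fp locally} and Corollary \ref{cor:decomposition of Fil IC locally} to the mixed polyhedral structure $\Lambda \cup \Lambda'$, exploiting the geometric relation between $S'_{L',r(L')}$ and $S_{L'}$. The starting observation is that $\relint S'_{L',r(L')} \subset \relint S_{L'}$ forces $\sigma_{S'_{L',r(L')}} = \sigma_{S_{L'}}$ while $\Tan_\Q S'_{L',r(L')} \subsetneq \Tan_\Q S_{L'}$ with the difference having dimension exactly $\codim_{S_{L'}} S'_{L',r(L')}$. From Definition \ref{def:v and u}, this produces the sheaf-level inclusions
\[
\Fil^k_{S'_{L',r(L')}} IC^{d-p,*} \subset \Fil^k_{S_{L'}} IC^{d-p,*}, \qquad \Fil^{k+\codim_{S_{L'}} S'_{L',r(L')}}_{S_{L'}} IC^{d-p,*} \subset \Fil^k_{S'_{L',r(L')}} IC^{d-p,*},
\]
which are exactly the sheaf-level counterparts of the two required injections (in the first case: passing to $\Fil^{k}_{S_{L'}}$ means relaxing $S'_{L',r(L')}$ and imposing the same restriction on $S_{L'}$; in the second: relaxing $S'_{L',r(L')}$ by $\codim_{S_{L'}} S'_{L',r(L')}$).

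Next I would promote these sheaf inclusions to injections on $\H^m$ by showing they are split, i.e.\ that the source is a direct summand of the target locally on $\relint S_{L'}\cap\bigcap_i W'_{S'_{L',i}}$. For this, for each top-dimensional $P \in \Lambda'_{\sm,d}$ meeting this locus, I would refine the flag used in Lemma \ref{lem:decomp of Fil Fp locally} to
\[
0 \subset \Tan_\Q \sigma_{S'_{L',r(L')}} \subset \cdots \subset \pr_\sigma^{-1}\bigl(\Tan_\Q S'_{L',r(L')}\bigr) \subset \pr_\sigma^{-1}\bigl(\Tan_\Q S_{L'}\bigr) \subset \cdots \subset \Tan_\Q P,
\]
augmented by the analogous subspaces coming from $S'_{L',1},\dots,S'_{L',r(L')-1}$, and use a fixed inner product to split $F_{p,w}(P)$ into orthogonal summands. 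Exactly as in Lemma \ref{lem:decomp of Fil Fp locally}, this induces a decomposition of $\Fil^{\Lambda \cup \Lambda'}_* \F^{d-p,w}_{X_\sm}$ on $X_\sm\cap\bigcap_i W'_{S'_{L',i}}$ into summands indexed by multi-grades, in which both $\Fil^{-a'_{S'_{L',r(L')}}}_{S'_{L',r(L')}}$ and $\Fil^{-a'_{S_{L'}}}_{S_{L'}}$ are direct sums of specific summands, and the LHS of each desired injection is precisely a direct summand of the RHS. I would then propagate this splitting through the iterated truncation/push-forward construction of Definition \ref{def:sheaf-theoretic IC-1} — the functors $\tau_{p,S_k}$ and $Ri_{k*}$ commute with the locally constant direct sums — to obtain the splitting at the level of $\Fil^{\Lambda\cup\Lambda'}_* IC_{\Trop,\sheaf}^{d-p,*}$, which yields the claimed injections for $\epsilon=*$. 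The case $\epsilon = !$ follows by applying the Poincar\'{e}--Verdier LG-quasi-isomorphism of Theorem \ref{thm:Verdier-duality-1}, noting that the shift $[\gr(X,X')]_{\gr}$ transforms the asserted grade changes on $j_{S_{L'}}^*$ into the corresponding grade changes on $j_{S_{L'}}^!$.

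The main obstacle is the second step: verifying that the orthogonal decomposition indexed by the refined flag really is compatible with both the filtrations $\Fil^k_{S'_{L',r(L')}}$ and $\Fil^k_{S_{L'}}$ simultaneously, and that the precise numerical shift $\codim_{S_{L'}} S'_{L',r(L')}$ in the second injection arises naturally from the single ``extra'' flag step $\pr_\sigma^{-1}(\Tan_\Q S_{L'})/\pr_\sigma^{-1}(\Tan_\Q S'_{L',r(L')})$. In particular, it requires care to match the decomposition across different top-dimensional $P$ (the decomposition depends on choices of $P$, but the resulting filtration subspaces do not), and to ensure the splitting is preserved when passing from $\F^{d-p,w}_{X_\sm}$ to the $IC_{\Trop,\sheaf}$-complex through the truncations; the proof will use Corollary \ref{cor:decomposition of Fil IC locally} as the main bookkeeping tool.
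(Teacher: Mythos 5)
Your proposal follows essentially the same route as the paper: refine the orthogonal flag decomposition of Lemma \ref{lem:decomp of Fil Fp locally} so that the flag contains both $\pr_{\sigma}^{-1}(\Tan_{\Q}S'_{L',r(L')})$ and $\pr_{\sigma}^{-1}(\Tan_{\Q}S_{L'})$ (using $\sigma_{S'_{L',r(L')}}=\sigma_{S_{L'}}$), propagate the resulting direct-sum decomposition through the truncation/push-forward construction as in Corollary \ref{cor:decomposition of Fil IC locally}, and read off the two injections as containments of sub-sums of summands. The one unnecessary, and slightly risky, detour is your treatment of $\epsilon=!$ via Theorem \ref{thm:Verdier-duality-1} --- Verdier duality converts injections into surjections, so that step would need reworking --- but it is also not needed: once the complex itself splits as a direct sum on $\bigcap_{i}W'_{S'_{L',i}}$, both $j_{S_{L'}}^{*}$ and $j_{S_{L'}}^{!}$ commute with the decomposition, so both cases follow from the same computation on summands, which is how the paper argues.
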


\begin{proof}
In the same way as Lemma \ref{lem:decomp of Fil Fp locally}, there
is a decomposition 
\[
\Fil_{*}^{\Lambda_{\supsetneq S_{L'}}}\F_{X_{\sm}}^{p,w}|_{X_{\sm}\cap\bigcap_{i=1}^{r(L')}W_{S'_{L',i}}'}\cong\bigoplus_{b\in\Z^{\left\{ S'_{L',i}\right\} _{i=1}^{r(L')}\cup\left\{ S_{L'}\right\} }}\Fil_{*}^{\Lambda_{\supsetneq S_{L'}}}\gr_{b}\F_{X_{\sm}}^{p,w}|_{X_{\sm}\cap\bigcap_{i=1}^{r(L')}W_{S'_{L',i}}'},
\]
 where direct summands of the right-hand side are defined similarly
to above Lemma \ref{lem:decomp of Fil Fp locally}. Then we have a
decomposition of $IC_{\Trop,\sheaf,X}^{d-p,*}|_{\bigcap_{i=1}^{r(L')}W_{S'_{L',i}}'}$
similar to Corollary \ref{cor:decomposition of Fil IC locally}. Then
cohomology groups in the assertion are direct sums of cohomology groups
of direct summands, and the assertion immediately follows from direct
computation. (Note that we can take $W_{S'_{L',r(L')}}'\subset W_{S_{L'}}$,
since $a'$ is adapted to $L'$, we have $a'_{S_{L'}}\geq0$, and
we have $\sigma_{S_{L'}}=\sigma_{S'_{L',r(L')}}$.) 
\end{proof}
\begin{rem}
\label{rem:indep of poly struc;Axiom (2)'} By construction, $\Fil_{*}^{\Lambda\cup\Lambda'}IC_{\Trop,\sheaf}^{d-p,*}$
satisfies analogs of Axiom $A_{p}$ (1) and (2) at each $S_{k}\in\Lambda_{\sing}$
for an adapted pair $L'=(S'_{L',1}\subsetneq\dots\subsetneq S'_{L',r(L')-1}\subsetneq S_{k})$
($S'_{L',i}\in\Lambda'$) and $a^{\circ}\in\Z^{\Lambda\cup\Lambda'}$.
By decompositions in proof of Lemma \ref{lem:for indep of polyhedral structures}
and Lemma \ref{lem:attaching-is-costalk-vanishing}, it also satisfies
an analog of Axiom $A_{p}$ (2)$'$. 
\end{rem}

\begin{prop}
\label{prop:independence of polyhedral structure-1} We have a natural
LG-quasi-isomorphism 
\[
\For_{\Lambda\setminus\Lambda'}\Fil_{*}^{\Lambda\cup\Lambda'}IC_{\Trop,\sheaf}^{d-p,*}\cong\Fil_{*}^{\Lambda'}IC_{\Trop,\sheaf,X'}^{d-p,*}
\]
in $D_{c,[-\gr(X'),0],LG}^{b}(X,\gMod\Q[T_{S'}]_{S'\in\Lambda'})$. 
\end{prop}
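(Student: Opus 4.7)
The plan is to invoke the Deligne-type characterization in Proposition \ref{prop:Deligne's characterization-1} for the polyhedral structure $\Lambda'$: it suffices to produce a natural LG-quasi-isomorphism on $X'_{\sm}=X_{\sm}$ between the restrictions of the two complexes and then check that $\For_{\Lambda\setminus\Lambda'}\Fil_{*}^{\Lambda\cup\Lambda'}IC_{\Trop,\sheaf}^{d-p,*}$ satisfies Axiom $A_p$ at every $S'_k\in\Lambda'_{\sing}$. The restriction to $X_{\sm}$ is handled essentially by definition: by the construction of $\Fil_{*}^{\Lambda\cup\Lambda'}\F_{X_{\sm}}^{d-p,w}$ as an intersection of the $\Lambda$- and $\Lambda'$-filtrations, the functor $\For_{\Lambda\setminus\Lambda'}$ recovers $\Fil_{*}^{\Lambda'}\F_{X_{\sm}}^{d-p,w}$ up to LG-quasi-isomorphism (using the same orthogonal decomposition argument as in Lemma \ref{lem:decomp of Fil Fp locally} applied to $\Lambda\cup\Lambda'$ and then forgetting $\Lambda\setminus\Lambda'$).

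For the verification of Axiom $A_p$ at $S'_k\in\Lambda'_{\sing}$, I would stratify by $S_{L'}\in\Lambda$ with $\relint S_{L'}\supset\relint S'_k$ and distinguish two cases. If $\dim S'_k=\dim S_{L'}$, then $S_{L'}\in\Lambda_{\sing}$, $\sigma_{S'_k}=\sigma_{S_{L'}}$, and local vanishing plus attaching property at $S'_k$ reduces, via $\For_{\Lambda\setminus\Lambda'}$, directly to the corresponding statements for $S_{L'}$, which hold by Remark \ref{rem:indep of poly struc;Axiom (2)'}. If $\dim S'_k<\dim S_{L'}$, the new information is that one has strictly more grading variables but the stratum lies in the interior of a coarser stratum; here the key input is Lemma \ref{lem:for indep of polyhedral structures}, which provides the crucial injections relating cohomology at $a'$ to cohomology at $a'$ with the $S'_k$-component pushed either to $0$ or to $a'_{S_{L'}}+\codim_{S_{L'}}S'_k$. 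After forgetting $\Lambda\setminus\Lambda'$, these injections translate the non-trivial cohomological range allowed for $\Lambda$ at $S_{L'}$ (where $S_{L'}\in\Lambda_{\sm}$ forces vanishing outside degree $-p-d$, and $S_{L'}\in\Lambda_{\sing}$ is handled by Remark \ref{rem:indep of poly struc;Axiom (2)'}) into the range required by Axiom $A_p$ at $S'_k$, noting that $\dim S'_k$, $\dim\sigma_{S'_k}=\dim\sigma_{S_{L'}}$ and the number $m_{p,S'_k}$ enter the bounds with exactly the correct shift $\codim_{S_{L'}}S'_k$.

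With local vanishing in hand, attaching property (Axiom $A_p$ (2), equivalently Axiom $A_p$ (2)$'$) follows by the same reduction as in Lemma \ref{lem:attaching-is-costalk-vanishing}, using the analog of Axiom $A_p$ (2)$'$ for the $\Lambda\cup\Lambda'$-complex stated in Remark \ref{rem:indep of poly struc;Axiom (2)'}, combined with the point-wise freeness of stalks and costalks that is inherited from the orthogonal-decomposition argument (an analog of Corollary \ref{cor:freeness of stalks and costalks-1}) applied to the refined structure $\Lambda\cup\Lambda'$. The morphism itself is constructed inductively along the stratification of $X'$ by the open sets $U'_k$: on $U'_1=X'_{\sm}$ it is the LG-quasi-isomorphism described above, and at each step the universal property of $\tau_{p,S'_k}Ri'_{k*}$ (Proposition \ref{prop:Deligne's characterization-1}, available because the target satisfies Axiom $A_p$ at $S'_k$) yields a unique extension.

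The main obstacle will be the second case above, namely controlling the behavior of $\For_{\Lambda\setminus\Lambda'}\Fil_{*}^{\Lambda\cup\Lambda'}IC_{\Trop,\sheaf}^{d-p,*}$ at a $\Lambda'$-stratum $\relint S'_k$ that lies in the relative interior of a strictly higher-dimensional $\Lambda$-stratum. Here the Axiom $A_p$ bounds for $\Lambda'$ and those coming from $\Lambda$ involve different integers $\dim S$ and different shifts, and one must verify that Lemma \ref{lem:for indep of polyhedral structures} precisely bridges the gap; this is essentially a bookkeeping of the graded-dimension shifts under $\For_{\Lambda\setminus\Lambda'}$ combined with the local product structure $\Trop(T_{\sigma_{S_{L'}}})\times\R^{\dim S_{L'}}\times\Lambda_{S_{L'}}$ from Remark \ref{rem:allowable chains} applied to both $\Lambda$ and $\Lambda'$.
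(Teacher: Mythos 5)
Your proposal follows essentially the same route as the paper: identify the two complexes on $\Lambda'$'s smooth locus via the intersection description of $\Fil_*^{\Lambda\cup\Lambda'}\F_{X_{\sm}}^{d-p,w}$ and the decomposition of Lemma \ref{lem:decomp of Fil Fp locally}, then verify Axiom $A_p$ (1) and (2)$'$ for $\For_{\Lambda\setminus\Lambda'}\Fil_*^{\Lambda\cup\Lambda'}IC_{\Trop,\sheaf}^{d-p,*}$ at each $S'\in\Lambda'_{\sing}$ using Lemma \ref{lem:for indep of polyhedral structures}, Remark \ref{rem:indep of poly struc;Axiom (2)'}, Remark \ref{rem:Axiom (2)' j_x form}, and the shift identity $m_{p,S_{L'}}+\codim_{S_{L'}}S'=m_{p,S'}$, concluding with Proposition \ref{prop:Deligne's characterization-1}. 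The only slip is the assertion $X'_{\sm}=X_{\sm}$: in general one only has $X'_{\sm}\subseteq X_{\sm}$ (a subdivision may create cells of dimension $<d-1$ inside $\Lambda$-smooth strata, which is exactly your second case), but this is harmless since the comparison on the smooth locus is only needed over $X'_{\sm}$.
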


\begin{proof}
We have a natural LG-isomorphism 
\[
\For_{\Lambda\setminus\Lambda'}\Fil_{*}^{\Lambda\cup\Lambda'}\F_{X_{\sm}}^{d-p,w}|_{X_{\sm}'}\cong\Fil_{*}^{\Lambda'}\F_{X'_{\sm}}^{d-p,w'}
\]
on $X_{\sm}'$. By Proposition \ref{prop:Deligne's characterization-1},
it suffices to show that $\For_{\Lambda\setminus\Lambda'}\Fil_{*}^{\Lambda\cup\Lambda'}IC_{\Trop,\sheaf}^{d-p,*}$
satisfies Axiom $A_{p}$ (1) and (2)$'$ at each $S'\in\Lambda'_{\sing}$.
This follows from Remark \ref{rem:Axiom (2)' j_x form}, Lemma \ref{lem:for indep of polyhedral structures},
and Remark \ref{rem:indep of poly struc;Axiom (2)'} directly. (Note
that for $S\in\Lambda$ and $S'\in\Lambda'$ with $\relint S'\subset\relint S$,
we have $m_{p,S}+\codim_{S}S'=m_{p,S'}$.) 
\end{proof}
In particular, we have 
\begin{align*}
IC_{\Trop,\sheaf,X}^{d-p,*} & =\For_{\Lambda}\Fil_{*}^{\Lambda}IC_{\Trop,\sheaf,X}^{d-p,*}\\
 & \cong\For_{\Lambda\cup\Lambda'}\Fil_{*}^{\Lambda\cup\Lambda'}IC_{\Trop,\sheaf}^{d-p,*}\\
 & \cong\For_{\Lambda'}\Fil_{*}^{\Lambda'}IC_{\Trop,\sheaf,X'}^{d-p,*}\\
 & =IC_{\Trop,\sheaf,X'}^{d-p,*}.
\end{align*}

\section{Algebraicity \label{sec:Algebraicity}}

In this section, we shall compare tropical intersection homology and
algebraic cycles with rational coefficients modulo numerical equivalence.
More generally, we study tropical intersection homology of natural
compactifications of tropical fans. To give examples, we shall also
see the existence of good blow-ups.

Let $(\Lambda,w)$ be a tropical variety of dimension $d$ in $N_{\R}\cong\R^{n}$
with a unimodular fan $\Lambda$. We put $T_{\Lambda}$ the smooth
toric variety corresponding to $\Lambda$, and $X:=\overline{(\Lambda,w)}:=(\overline{\Lambda},w)$
the closure in $\Trop(T_{\Lambda})$, i.e., a tropical variety regular
at infinity of dimension d with $\overline{\Lambda}:=\left\{ \overline{P}\cap\overline{N_{\lambda,\R}}\right\} _{P\in\Lambda,\lambda\in\Lambda},$
and $w_{\overline{P}}:=w_{P}$ ($P\in\Lambda_{d}$). 
We assume that $\Lambda$ is locally $1$-connected, i.e., 
 for any $\lambda \in \Lambda$
 and $P_a,P_b \in \Lambda_d$ containing $\lambda$, 
 there is a sequence 
     $P_1=P_a , P_2, \dots, P_r=P_b $
     in $\Lambda_d$ 
     such that $P_i \supset \lambda$ and $\dim P_i \cap P_{i+1} =d-1$ for any $i$. 
 This condition is equivalent to the statement 
 that 
   for $\lambda\in \Lambda$, 
   the $(0,0)$-th cohomology
   $IH_{\Trop}^{0,0}(X\cap N_{\lambda,\R};\Q)$ is $1$-dimensional.
   (In general, its dimension is the number of local $1$-connected components.)
We put 
\[
\Num_{(\Lambda,w)}^{p}(T_{\Lambda}):=\left\{ \alpha\in\CH^{p}(T_{\Lambda})\otimes_{\Z}\Q\mid(\Lambda,w)(\alpha\cdot\beta)=0\ (\beta\in\CH^{d-p}(T_{\Lambda})\otimes_{\Z}\Q)\right\} ,
\]
where $\alpha\cdot\beta$ is intersection product of $CH^{*}(T_{\Lambda})\otimes_{\Z}\Q$,
and $(\Lambda,w)$ is considered as a Minkowski weight $(\Lambda,w)\in\MW_{d}(\Lambda)\cong\Hom(\CH^{d}(T_{\Lambda}),\Z)$.
See \cite{CoxaLittleSchenckToricvarieties2011} for toric geometry.

Amini-Piquerez (\cite[Theorem 1.1]{AminiPiquerezTropicalFeichtner-Yuzvinskyandpositivitycriterionforfans2024})
proved that there is an isomorphism $\CH^{p}(T_{\Lambda})\otimes_{\Z}\Q\cong H_{\Trop}^{p,p}(X;\Q)$
which compatible with the pairing given by $(\alpha,\beta)\mapsto(\Lambda,w)(\alpha\cdot\beta)$
and that given by the fundamental class $[X]\cap-$ (see Subsection
\ref{subsec:Compatibility-with-tropical cohomology}), and $H_{\Trop}^{p,q}(X;\Q)=0$
for $p\leq q-1$. The following analog is the main result in this
section.
\begin{thm}
\label{thm:Algebraicity of TropIH for fans} For $p\geq0$, the natural
map $H_{\Trop}^{p,p}(X;\Q)\to IH_{\Trop}^{p,p}(X;\Q)$ given by $\F_{X}^{p}[2d-p]_{\deg}\to IC_{\Trop,X}^{p,*}$
(see Subsection \ref{subsec:Compatibility-with-tropical cohomology})
induces an isomorphism
\[
\CH^{p}(T_{\Lambda})\otimes_{\Z}\Q/\Num_{(\Lambda,w)}^{p}(T_{\Lambda})\cong IH_{\Trop}^{p,p}(X;\Q),
\]
 and we have $IH_{\Trop}^{p,q}(X;\Q)=0$ for $p\neq q$. Moreover,
the pairing of $IH_{\Trop}^{p,p}(X;\Q)$ coincides with that of $\CH^{p}(T_{\Lambda})\otimes_{\Z}\Q/\Num_{(\Lambda,w)}^{p}(T_{\Lambda})$
given by $(\alpha,\beta)\mapsto(\Lambda,w)(\alpha\cdot\beta)$. 
\end{thm}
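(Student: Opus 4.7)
The plan is to adapt the spectral sequence sketched in the introduction to the present setting, using the map
\[
\varphi^{p}\colon\F_{X}^{p}[2d-p]_{\deg}\to IC_{\Trop,\sheaf,X}^{p,*}
\]
from Subsection~\ref{subsec:Compatibility-with-tropical cohomology}, which induces a natural morphism $H_{\Trop}^{p,q}(X;\Q)\to IH_{\Trop}^{p,q}(X;\Q)$. Amini-Piquerez give $H_{\Trop}^{p,p}(X;\Q)\cong\CH^{p}(T_{\Lambda})\otimes_{\Z}\Q$ and $H_{\Trop}^{p,q}(X;\Q)=0$ for $p<q$, with the cup-product pairing matching the intersection pairing $(\alpha,\beta)\mapsto(\Lambda,w)(\alpha\cdot\beta)$. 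It therefore suffices to prove (i) $IH_{\Trop}^{p,q}(X;\Q)=0$ for $p\neq q$, (ii) the composition $\CH^{p}(T_{\Lambda})\otimes\Q\to H_{\Trop}^{p,p}(X;\Q)\to IH_{\Trop}^{p,p}(X;\Q)$ is surjective, and (iii) identify its kernel with $\Num_{(\Lambda,w)}^{p}(T_{\Lambda})$.

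For the spectral sequence, I would filter $X$ by the closed subsets $X^{\geq k}:=\bigcup_{\lambda\in\Lambda,\ \dim\lambda\geq k}X\cap\overline{N_{\lambda,\R}}$ coming from the toric orbit stratification, and apply this to the sheaf-theoretic complex $IC_{\Trop,\sheaf,X}^{p,*}$. The resulting $E_{1}$-page is a direct sum, indexed by cones $\lambda\in\Lambda$, of local tropical intersection cohomology supported near the stratum $X\cap N_{\lambda,\R}$. A neighborhood of $X\cap N_{\lambda,\R}$ in $X$ is identified, via Proposition~\ref{prp:local-computation-near-toric boundary}, with a product of tropical toric factors $\Trop(\A^{\dim\lambda})$ and the tropical fan $X\cap N_{\lambda,\R}$ itself (the star fan at $\lambda$, which is a tropical fan in $N_{\lambda,\R}$ with vertex at the origin). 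Combining this identification with the K\"unneth formula of Proposition~\ref{prop:Kunneth formula-1} reduces each $E_{1}$-term to an expression in $IH_{\Trop}^{r,s}$ of the star fan of $\lambda$, shifted by the toric factor contributions. The core local input is that the conical retraction of a tropical fan to its vertex, combined with the bidegree constraints built into the allowability condition (Definition~\ref{def:allowability}), forces $IH_{\Trop}^{r,s}$ of a tropical fan to vanish outside a controlled range, so that for the total degree target $IH_{\Trop}^{s,s}(X;\Q)$ the $E_{1}$-term $E_{1}^{s,s}\cong\bigoplus_{\lambda\in\Lambda_{s}}\Q$ surjects onto the abutment and the off-diagonal $E_{1}^{p,q}$ terms with $p+q\neq 2s$ or $(p,q)=(s,s)$ contribute nothing at $E_{\infty}$; this simultaneously gives (i) and the surjection (ii), which matches the analogous presentation of $\CH^{p}(T_{\Lambda})\otimes\Q\cong H_{\Trop}^{p,p}(X;\Q)$ by Amini-Piquerez.

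Once (i) and (ii) are in hand, (iii) follows formally: by Poincar\'{e}-Verdier duality (Corollary~\ref{cor:poincare duality for sheaf def-1}) the pairing
\[
IH_{\Trop}^{p,p}(X;\Q)\times IH_{\Trop}^{d-p,d-p}(X;\Q)\to\Q
\]
is non-degenerate, and by Corollary~\ref{cor:compatibility of intersection product of trop coh} the square
\[
\xymatrix{
H_{\Trop}^{p,p}(X;\Q)\ar@{}[r]|{\times}\ar[d] & H_{\Trop}^{d-p,d-p}(X;\Q)\ar[d]\ar[r] & \Q\ar@{=}[d]\\
IH_{\Trop}^{p,p}(X;\Q)\ar@{}[r]|{\times} & IH_{\Trop}^{d-p,d-p}(X;\Q)\ar[r] & \Q
}
\]
commutes, and the top pairing is $(\alpha,\beta)\mapsto(\Lambda,w)(\alpha\cdot\beta)$ via Amini-Piquerez. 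Surjectivity of both vertical maps (from (ii) applied in both degrees $p$ and $d-p$) then forces the kernel of $\CH^{p}(T_{\Lambda})\otimes\Q\twoheadrightarrow IH_{\Trop}^{p,p}(X;\Q)$ to equal the left radical of the intersection pairing, which is $\Num_{(\Lambda,w)}^{p}(T_{\Lambda})$ by definition.

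The main obstacle is the rigorous control of the $E_{1}$-page and differentials of the spectral sequence for $IH_{\Trop}$. As emphasized in the introduction, unlike $H_{\Trop}$, the $E_{1}$-page for $IH_{\Trop}$ can have non-trivial off-diagonal terms in the "lower half" ($p>s>q$, with $p+q\leq 2s-1$), reflecting the failure of $E_{\infty}=E_{2}$; these must be shown to be killed by the spectral sequence before reaching $E_{\infty}^{s,s}$. Handling this requires a careful local computation of the stalks of $IC_{\Trop,\sheaf,X}^{p,*}$ near each stratum using the filtered complex $\Fil_{*}^{\Lambda}IC_{\Trop,\sheaf,X}^{p,*}$, the truncation functors of Subsection~\ref{subsec:Truncation-functors}, and the sheaf-theoretic characterization of Proposition~\ref{prop:Deligne's characterization-1}, so that the relevant differentials can be explicitly identified. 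In particular, the star-fan star-description of each stratum together with induction on $\dim\lambda$ and K\"unneth should allow one to reduce the needed vanishing to the tropical-fan vanishing statement of the previous paragraph.
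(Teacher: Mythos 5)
Your proposal is correct and follows essentially the same skeleton as the paper's proof: the orbit-dimension filtration spectral sequence applied to $i_{X*}IC_{\Trop,\sheaf,X}^{s,*}$, reduction of the $E_{1}$-terms to star fans via Proposition \ref{prp:local-computation-near-toric boundary} and the K\"unneth formula, the conical-contraction vanishing for tropical fans (this is exactly Lemma \ref{lem:the Lemma}, which together with Poincar\'e--Verdier duality kills $E_{1}^{p,q}$ for $2s\leq p+q$ with $(p,q)\neq(s,s)$ and hence gives both the off-diagonal vanishing and the surjection $\bigoplus_{\lambda\in\Lambda_{s}}\Q\cong E_{1}^{s,s}\twoheadrightarrow IH_{\Trop}^{s,s}(X;\Q)$), and finally the non-degeneracy-plus-surjectivity argument via Corollary \ref{cor:poincare duality for sheaf def-1} and Corollary \ref{cor:compatibility of intersection product of trop coh} identifying the kernel with $\Num_{(\Lambda,w)}^{p}(T_{\Lambda})$. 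The one genuine difference is at the source: you invoke Amini--Piquerez's isomorphism $\CH^{p}(T_{\Lambda})\otimes_{\Z}\Q\cong H_{\Trop}^{p,p}(X;\Q)$ directly, whereas the paper deliberately avoids it (see the remark immediately following the theorem) and instead runs the same spectral sequence for $\bigoplus_{s}\F_{\Trop(T_{\Lambda})}^{s}[-s]_{\deg}$ and for singular cochains on $T_{\Lambda}(\C)$, obtaining $\CH^{p}(T_{\Lambda})\otimes_{\Z}\Q\cong H_{\Trop}^{p,p}(\Trop(T_{\Lambda});\Q)$ from Totaro's theorem (Lemma \ref{lem:toric lowest weight cohomology}) and matching the pairings via Lemma \ref{lem:comparison of fundamental class  in general}; the paper's remark explicitly records that your shortcut is legitimate, its longer route being chosen only to stay self-contained relative to the computations in Amini--Piquerez. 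One small imprecision in your last paragraph: the lower-half terms $E_{1}^{p,q}$ with $p+q\leq 2s-1$ need not be ``killed''; what matters is that every differential \emph{out} of $E_{r}^{s,s}$ lands in a vanishing term, so the incoming differentials from the lower half only make $E_{\infty}^{s,s}$ a further quotient of $E_{1}^{s,s}$, which is all that surjectivity requires (and, as the introduction explains, these incoming differentials genuinely can be nonzero for $IH_{\Trop}$).
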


\begin{proof}
The assertion directly follows from Corollary \ref{cor:compatibility of intersection product of trop coh},
Lemma \ref{lem:toric var, tro coho and sing coho}, Lemma \ref{lem:toric lowest weight cohomology},
and Lemma \ref{lem:comparison of fundamental class  in general}.
\end{proof}
\begin{rem}
To prove Theorem \ref{thm:Algebraicity of TropIH for fans}, by Poincar\'{e}
duality (Corollary \ref{cor:poincare duality for sheaf def-1}) and
the isomorphism $\CH^{p}(T_{\Lambda})\otimes_{\Z}\cong H_{\Trop}^{p,p}(X;\Q)$
(\cite[Theorem 1.1]{AminiPiquerezTropicalFeichtner-Yuzvinskyandpositivitycriterionforfans2024})
of Amini-Piquerez, it suffices to show that the natural map $H_{\Trop}^{p,p}(X;\Q)\twoheadrightarrow IH_{\Trop}^{p,p}(X;\Q)$
is surjective, and $IH_{\Trop}^{p,q}(X;\Q)=0$ for $p\leq q-1$. However,
computations of $H_{\Trop}^{p,p}(X;\Q)$ in Amini-Piquerez's paper
(\cite[Subsection 2.4, 2.5, 2.7]{AminiPiquerezTropicalFeichtner-Yuzvinskyandpositivitycriterionforfans2024})
is elementary but complicated. For readers convenience, we prove Theorem
\ref{thm:Algebraicity of TropIH for fans} without directly using
Amini-Piquerez's isomorphism $\CH^{p}(T_{\Lambda})\otimes_{\Z}\Q\cong H_{\Trop}^{p,p}(X;\Q)$
but using an essentially same (and technically different) method. 
\end{rem}

\begin{cor}
\label{cor:IH =00003D Ch/num }
Let $T_{\Sigma}$ be a smooth toric
variety over a field, 
and $Y\subset T_{\Sigma}$ a smooth irreducible
proper variety 
which is a tropical compactification (\cite[Definition 1.1]{TevelevCompactificationsofsubvarietiesoftori07})
(i.e., $Y\cap O(\sigma)$ ($\sigma\in\Sigma$) is non-empty and of
codimension $=\dim\sigma$ in $Y$, where $O(\sigma)\subset T_{\Sigma}$
is the torus orbit corresponding to $\sigma\in\Sigma$). 
We assume that for $\sigma \in \Sigma$ with $\dim Y \cap O(\sigma) \geq 1$, 
the intersection 
$Y \cap O(\sigma)$ is irreducible,
and assume
that the natural morphism 
\[
\CH^{*}(T_{\Sigma})\otimes\Q\to\CH_{\Num}^{*}(Y)\otimes_{\Z}\Q
\]
 is surjective, where $\CH_{\Num}^{*}(Y)$ is the Chow group modulo
numerical equivalence. 

Then we have 
\[
IH_{\Trop}^{p,p}(\Trop(Y);\Q)\cong\begin{cases}
\CH_{\Num}^{*}(Y)\otimes_{\Z}\Q & (p=q)\\
0 & (p\neq q)
\end{cases},
\]
 where $\Trop(Y)\subset\Trop(T_{\Sigma})$ is the tropicalization
with respect to the trivial valuation of the base field (see \cite[Subsection 3.3 and 3.5]{GublerRabinoffWernerTropicalskeletons2017}).
\end{cor}

\begin{proof}
By \cite[Lemma 2.3]{KatzPayneRealizationspacesfortropicalfans2011},
we have $\Trop(\varphi(Y))=\overline{\deg(\varphi^{*}(-))}$, where
$\deg(\varphi^{*}(-))\in\MW_{\dim Y}(\Sigma)$ is the Minkowski weight,
considered as a tropical fan, given by the composition of the pull-back
\[
\varphi^{*}\colon\CH^{*}(T_{\Sigma})\otimes_{\Z}\Q\to\CH^{*}(Y)\otimes_{\Z}\Q
\]
 and $\deg\colon\CH^{\dim Y}(Y)\otimes_{\Z}\Q\to\Q$. (Note that the
definition of tropicalizations used in {[}loc.cit.{]} coincides with
the one in \cite[Subsection 3.3 and 3.5]{GublerRabinoffWernerTropicalskeletons2017}
by \cite[Theorem 5.6]{GublerAguidetotropicalizations2013}). 
Since $Y \cap O(\sigma)$ is irreducible for $\sigma \in \Sigma$ with $\dim Y \cap O(\sigma) \geq 1$, 
by 
\cite[Theorem 3.5.1]{MaclaganSturmfelsIntroductiontotropicalgeometry2015}, 
$\Trop (Y) \cap N_{\sigma,\R}$ is $1$-connected.
Then 
the assertion easily follows from projection formula and Theorem \ref{thm:Algebraicity of TropIH for fans}.
\end{proof}
\begin{example}
\label{cor:nice example}For a smooth irreducible projective variety
$Y_{0}$ over an infinite field, there is a blow-up $Y\to Y_{0}$
with a closed immersion $\varphi\colon Y\hookrightarrow T_{\Sigma}$
to a smooth toric variety $T_{\Sigma}$ satisfying the assumptions
of Corollary \ref{cor:IH =00003D Ch/num }. 

This can be given in the following way. By Proposition \ref{prp:existence-of-good-example}
(which will be proved later in this section), there exists a composition
$Y\to Y_{0}$ of blow-ups at smooth centers such that $\CH_{\Num}^{*}(Y)\otimes_{\Z}\Q$
is generated by the Chern classes of (finitely many) very ample line
bundles $L_{i}$ on $Y$ as a $\Q$-algebra. Let $Y\hookrightarrow\prod_{i}\P(L_{i})$
be the product of the closed immersions to the projective spaces given
by $L_{i}$. We fix a generic structure of a toric variety of each
$\P(L_{i})$, and hence fix a structure of a smooth projective toric
variety of $\prod_{i}\P(L_{i})$, denoted by $T_{\Sigma_{1}}$. Then
by Bertini's theorem the pull-back of toric divisors of $T_{\Sigma_{1}}$
is simple normal crossing divisors of $Y$. Hence $Y\subset T_{\Sigma}$
is a required one, where $T_{\Sigma}\subset T_{\Sigma_{1}}$ is the
minimal open toric subvariety containing $Y$. 
\end{example}

We shall prove lemmas (Lemma \ref{lem:toric var, tro coho and sing coho},
Lemma \ref{lem:toric lowest weight cohomology}, and Lemma \ref{lem:comparison of fundamental class  in general})
for Theorem \ref{thm:Algebraicity of TropIH for fans}. Proof of Theorem
\ref{thm:Algebraicity of TropIH for fans} is similar to \cite[Theorem 1.1]{AminiPiquerezTropicalFeichtner-Yuzvinskyandpositivitycriterionforfans2024}.
We shall use a spectral sequence for $IH_{\Trop}^{*,*}$ which is
an analog of the one for tropical cohomology used there, and is similar
to the one used to show algebraicity of singular cohomology of smooth
projective toric varieties (\cite[Section 12.3]{CoxaLittleSchenckToricvarieties2011}
and \cite[Theorem 4]{TotaroChowgroupsChowcohomologyandlinearvarieties2014}).
It is also similar to coniveau speactral sequences (see e.g., \cite{Colliot-Thel`eneHooblerKahnTheBloch-Ogus-GubberTheorem1997}),
which is used to show $\CH^{p}(X)\otimes_{\Z}\Q\cong H_{\Trop}^{p,p}(X;\Q)$
and $H_{\Trop}^{p,q}(X,\Q)=0$ ($p\leq q-1$) for smooth algebraic
varieties $X$ over trivially valued fields (\cite{MikamiOntropicalcycleclassmaps2020}).
To show compatibility of pairings, we shall use compatibility of tropical
cohomology and singular cohomology of toric varieties. This can be
considered as a simpler case of \cite[Theorem 1.1]{ItenbergKatzarkovMikhalkinZharkovTropicalhomology2019}.
For this purpose, we shall also use the spectral sequences for tropical
cohomology $H_{\Trop}^{*,*}$ and singular cohomology $H_{\sing}^{*}$
. 

Let $-\log\lvert\cdot\rvert\colon T_{\Lambda}(\C)\to\Trop(T_{\Lambda})$
be the disjoint union of maps 
\[
-\log\lvert\cdot\rvert\colon O(\lambda)(\C)=\Hom(M\cap\lambda^{\perp},\C^{\times})\to\Hom(M\cap\lambda^{\perp},\R)=N_{\lambda,\R}
\]
($\lambda\in\Lambda$) given by composition with $-\log\lvert\cdot\rvert\colon\C^{\times}\to\R$.
Then we get a morphism 
\[
\bigoplus_{s\geq0}\F_{\Trop(T_{\Lambda})}^{s}[-s]_{\deg}\to(-\log\lvert\cdot\rvert)_{*}C_{\sing,T_{\Lambda}(\C)}^{*}(-;\C)
\]
of complexes of sheaves on $\Trop(T_{\Lambda})$ given by 
\[
\F^{s}\ni\sum_{I}a_{I}m_{I,1}\wedge\dots\wedge m_{I,s}\mapsto(\gamma\mapsto(\frac{-1}{2\pi i})^{s}\sum_{I}a_{I}\int_{\gamma}d\log m_{I,1}\wedge\dots\wedge d\log m_{I,s})
\]
($a_{I}\in\Q$, $m_{I,j}\in M$), where $m_{I,j}\in M$ is also considered
as meromorphic function on $T_{\Lambda}(\C)$, and $C_{\sing,T_{\Lambda}(\C)}^{*}(-;R)$
is the complex of flabby sheaves of smooth singular cochains with
$R$-coefficients. This is well-defined by definition of $\F_{\Trop(T_{\Lambda})}^{s}$
(see Subsection \ref{subsec:Compatibility-with-tropical cohomology}
for $\F_{\Trop(T_{\Lambda})}^{s}$). This gives a map 
\[
\bigoplus_{p+q=r}H_{\Trop}^{p,q}(\Trop(T_{\Lambda});\Q)\to H_{\sing}^{r}(T_{\Lambda}(\C);\C).
\]

The first assertion in the following lemma is a simpler case of \cite[Theorem 1.1]{ItenbergKatzarkovMikhalkinZharkovTropicalhomology2019}.
For readers convenience, we give a proof here.
\begin{lem}
\label{lem:toric var, tro coho and sing coho}
\begin{itemize}
\item For $r\geq0$, the above map induces an isomorphism 
\[
\bigoplus_{p+q=r}H_{\Trop}^{p,q}(\Trop(T_{\Lambda});\Q)\cong H_{\sing}^{r}(T_{\Lambda}(\C);\Q).
\]
Moreover, for $p,q\geq0$, this gives isomoprhisms 
\[
H_{\Trop}^{p,q}(\Trop(T_{\Lambda});\Q)\cong\Gr_{2p}^{W}H_{\sing}^{p+q}(T_{\Lambda}(\C);\Q),
\]
where $\Gr_{*}^{W}H_{\sing}^{p+q}(T_{\Lambda}(\C);\Q)$ is the graded
quotient of the weight filtration of the mixed Hodge structure. 
\item For $p\geq0$, the natural map 
\[
H_{\Trop}^{p,p}(\Trop(T_{\Lambda});\Q)\to IH_{\Trop}^{p,p}(X;\Q)
\]
 induced by 
\[
\F_{\Trop(T_{\Lambda})}^{p}[2\dim X-p]\to i_{X,*}\F_{X}^{p}[2\dim X-p]\to i_{X,*}IC_{\Trop,\sheaf,X}^{p,*}
\]
 is surjective, and $IH_{\Trop}^{p,p}(X;\Q)=0$ for $p\leq q-1$,
where $i_{X}\colon X\hookrightarrow\Trop(T_{\Lambda})$ is an embedding.
\end{itemize}
\end{lem}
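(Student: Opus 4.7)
The plan is to deduce both bullets from comparisons of the spectral sequences associated with the orbit stratifications $\Trop(T_\Lambda) = \bigsqcup_{\lambda \in \Lambda} N_{\lambda,\R}$ and $T_\Lambda(\C) = \bigsqcup_{\lambda \in \Lambda} O(\lambda)(\C)$. For the first bullet, the tropical cohomology spectral sequence recalled in the introduction, applied to $\varphi(Y) = T_\Lambda$, has for each fixed $s$ an $E_1$-page concentrated in the row $q = s$: each $N_{\lambda,\R}$ is contractible and $\F^{s-p}$ is constant on it, so $H_\Trop^{s-p,q-s}(N_{\lambda,\R};\Q)$ equals $\bigwedge^{s-p}(M \cap \lambda^\perp) \otimes \Q$ for $q = s$ and vanishes otherwise. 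The Leray-type weight spectral sequence for $H_\sing^*(T_\Lambda(\C);\Q)$ coming from the orbit filtration has $E_1^{p,q,\sing} \cong \bigoplus_{\lambda \in \Lambda_p} \bigwedge^{q-p}(M \cap \lambda^\perp) \otimes \Q(-q)$, pure of weight $2q$, and therefore degenerates at $E_2$. Up to Tate twists the two $E_1$-pages and their $d_1$-differentials (toric boundary/Gysin maps) agree, and the integration morphism $\F^p \to C_{\sing,T_\Lambda(\C)}^p(-;\C)$ recalled just above the lemma realises this agreement as a morphism of spectral sequences, yielding $H_\Trop^{s,p}(\Trop(T_\Lambda);\Q) \cong \Gr_{2s}^W H_\sing^{s+p}(T_\Lambda(\C);\Q)$. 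Because $H_\sing^*(T_\Lambda(\C);\Q)$ is mixed Tate and the integration map sends $\F^p$-classes into $W_{2p}$, summing over bidegrees with $p+q=r$ and comparing dimensions recovers the total $H_\sing^r(T_\Lambda(\C);\Q)$.

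For the second bullet, I apply the analogous spectral sequence to $X = \overline{(\Lambda,w)} \subset \Trop(T_\Lambda)$ for both $H = H_\Trop$ and $H = IH_\Trop$. Each stratum $X \cap N_{\lambda,\R}$ is a tropical fan whose support admits a radial retraction to the origin, and Lemma \ref{lem:the Lemma} then supplies the vanishings $H_\Trop^{s-p,q-s}(X \cap N_{\lambda,\R};\Q) = 0$ for $q - s \geq 1$, and $IH_\Trop^{s-p,q-s}(X \cap N_{\lambda,\R};\Q) = 0$ for $q - s \geq 1$ together with $p + q \geq 2s$. For both theories this forces the $E_1$-page to give a surjection $\bigoplus_{\lambda \in \Lambda_s} \Q \cong E_1^{s,s} \twoheadrightarrow H^{s,s}(X;\Q)$ and the vanishing $H^{p,q}(X;\Q) = 0$ for $p \leq q - 1$. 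The natural morphism $\F^s_{\Trop(T_\Lambda)} \to i_{X,*} \F^s_X \to i_{X,*} IC_{\Trop,\sheaf,X}^{s,*}$ induces a morphism of spectral sequences which is the identity on the $E_1^{s,s}$-terms, so the composition $H_\Trop^{s,s}(\Trop(T_\Lambda);\Q) \to H_\Trop^{s,s}(X;\Q) \to IH_\Trop^{s,s}(X;\Q)$ is surjective.

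The main obstacle will be verifying the compatibility of the integration map with the two $d_1$-differentials of the first bullet: the tropical $d_1$ comes from contraction against the primitive normal direction across codimension-one faces, while the singular $d_1$ is the classical Gysin/residue map for the inclusions $V(\tau) \hookrightarrow V(\sigma)$ of toric strata. Locally near each codimension-one face, this reduces to the standard residue computation $\int d\log z = 2\pi i$, which shows that integration intertwines the two differentials after the Tate twist. Once this is established, adapting the introductory spectral sequence from $\Trop(\varphi(Y))$ to the general tropical variety $X = \overline{(\Lambda,w)}$ and to $IH_\Trop$ is routine: one filters by orbit codimension and uses the sheaf-theoretic description of $IC_{\Trop,\sheaf,X}^{p,*}$ from Section \ref{sec:Intersection-product-and-1}, together with Lemma \ref{lem:the Lemma}, to identify the stalks at each stratum.
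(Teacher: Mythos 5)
Your proposal follows essentially the same route as the paper: the exact couple coming from the filtration of $\Trop(T_{\Lambda})$ by closures of orbit strata, identification of the $E_{1}$-terms via K\"{u}nneth and the affine-line case, purity/weight degeneration on the singular side, the integration morphism inducing an isomorphism of $E_{1}$-pages (compatibility with $d_{1}$ is automatic from functoriality of the exact couple once the sheaf-level morphism is in place, so it is less of an obstacle than you suggest), and Lemma \ref{lem:the Lemma} for the $IH_{\Trop}$ vanishing. The only point to make explicit is that Lemma \ref{lem:the Lemma} gives vanishing of compactly supported tropical intersection homology, so one must invoke Poincar\'{e}--Verdier duality (Corollary \ref{cor:poincare duality for sheaf def-1}) to obtain the cohomological vanishing $IH_{\Trop}^{s-p,q-s}(X\cap N_{\lambda,\R})=0$ actually used in the spectral sequence.
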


\begin{proof}
We put 
\[
T^{p}:=\bigcup_{\substack{\lambda\in\Lambda\\
\dim\lambda=p
}
}\overline{N_{\lambda,\R}}\subset\Trop(T_{\Lambda}),
\]
 $i_{p}\colon\Trop(T_{\Lambda})\setminus T^{p}\hookrightarrow\Trop(T_{\Lambda})$
an open immersion, and $j_{p}\colon T^{p}\hookrightarrow\Trop(T_{\Lambda})$
its complement. By applying $H^{*}(R\Gamma-)$ and $j_{p+1*}j_{p+1}^{!}\to\Id\to Ri_{p+1*}i_{p+1}^{*}\to^{[1]}\cdot$
to $j_{p*}j_{p}^{!}G$ for a complex of sheaves $G$ on $\Trop(T_{\Lambda})$,
we have an exact sequence 
\begin{align*}
\dots & \to H^{p+q}(R\Gamma j_{p+1}^{!}G)\to H^{p+q}(R\Gamma j_{p}^{!}G)\to\\
 & H^{p+q}(R\Gamma i_{p+1}^{*}j_{p*}j_{p}^{!}G)\to H^{p+q+1}(R\Gamma j_{p+1}^{!}G)\to\dots.
\end{align*}
This gives an exact couple 
\[
D^{p,q}:=H^{p+q}(R\Gamma j_{p}^{!}G)\quad E^{p,q}=H^{p+q}(R\Gamma i_{p+1}^{*}j_{p*}j_{p}^{!}G)
\]
 (see \cite[Section 5.9]{WeibelAnintroductiontohomologicalalgebra1984}
for exact couples). We get a spectral sequence 
\[
E_{1,G}^{p,q}\cong\bigoplus_{\substack{\lambda\in\Lambda\\
\dim\lambda=p
}
}H^{p+q}(R\Gamma j_{\lambda}^{!}G)\Rightarrow H^{p+q}(R\Gamma G).
\]
where we put $j_{\lambda}\colon N_{\lambda,\R}\hookrightarrow\Trop(T_{\Lambda})$.
By K\"{u}nneth formula (\cite[Lemma 4.13]{GrossShokirehAsheaf-theoreticapproachtotropicalhomology23}
for tropical cohomology and Proposition \ref{prop:Kunneth formula-1}
for tropical intersection cohomology) and easy computation for (tropical)
affine line, we have 
\[
H^{p+q}(R\Gamma j_{\lambda}^{!}G)\cong\begin{cases}
\bigoplus_{s\geq0}H_{\Trop}^{s-p,q-s}(N_{\lambda,\R}) & (G=\bigoplus_{s\geq0}\F_{\Trop(T_{\Lambda})}^{s}[-s]_{\deg})\\
H_{\sing}^{q-p}(O(\lambda)(\C);R)(-p) & (G=(-\log\lvert\cdot\rvert)_{*}C_{\sing,T_{\Lambda}(\C)}^{*}(-;R))\\
IH_{\Trop}^{s-p,q-s}(X\cap N_{\lambda,\R}) & (G=i_{X*}IC_{\Trop,\sheaf,X}^{s,*}[-2d]_{\deg}(s\in\Z_{\geq0})
\end{cases},
\]
 where $R=\Q,\C$, the intersection $X\cap N_{\lambda,\R}$ is endowed
with the natural structure of a tropical variety (see above Proposition
\ref{prp:local-computation-near-toric boundary}), and $(-p)$ is
the Tate-twist of Hodge structure. 
\begin{itemize}
\item By retractions to $0\in N_{\lambda,\R}$, we have $H_{\Trop}^{s-p,q-s}(N_{\lambda,\R})=0$
for $q-s\geq1$. Hence the decomposition of $E_{1,\bigoplus_{s\geq0}\F_{\Trop(T_{\Lambda})}^{s}[-s]_{\deg}}^{p,q}$
into $E_{1,\F_{\Trop(T_{\Lambda})}^{s}[-s]_{\deg}}^{p,q}$ shows that
it degenerates at $E_{2}$-pages. 
\item Moreover, the morphisms
\[
\bigoplus_{s\geq0}\F_{\Trop(T_{\Lambda})}^{s}[-s]_{\deg}\to(-\log\lvert\cdot\rvert)_{*}C_{\sing,T_{\Lambda}(\C)}^{*}(-;\C)\leftarrow(-\log\lvert\cdot\rvert)_{*}C_{\sing,T_{\Lambda}(\C)}^{*}(-;\Q)
\]
induces an isomorphism
\[
E_{1,\bigoplus_{s\geq0}\F_{\Trop(T_{\Lambda})}^{s}[-s]_{\deg}}^{p,q}\cong E_{1,(-\log\lvert\cdot\rvert)_{*}C_{\sing,T_{\Lambda}(\C)}^{*}(-;\Q))}^{p,q}.
\]
 The first assertion follows from this. Since $H_{\sing}^{q-p}(O(\lambda)(\C);R)(-p)$
is of pure weight $2q$, $E_{1,(-\log\lvert\cdot\rvert)_{*}C_{\sing,T_{\Lambda}(\C)}^{*}(-;R))}^{p,q}$
degenerates at $E_{2}$-pages, and we have 
\[
E_{2,(-\log\lvert\cdot\rvert)_{*}C_{\sing,T_{\Lambda}(\C)}^{*}(-;\Q))}^{p,q}\cong\Gr_{2q}^{W}H_{\sing}^{p+q}(T_{\Lambda}(\C);\Q).
\]
Thus the second assertion holds.
\item By Lemma \ref{lem:the Lemma} and Poincar\'{e}-Verdier duality (Corollary
\ref{cor:poincare duality for sheaf def-1}), we have 
\[
IH_{\Trop}^{s-p,q-s}(X\cap N_{\lambda,\R})=0
\]
 for $\max\left\{ s-p,1\right\} \leq q-s$. Hence we have $E_{1,i_{X*}IC_{\Trop,\sheaf,X}^{s,*}[-2d]_{\deg}}^{p,q}=0$
for $2s\leq p+q$ except for $p=q=s$. In particular, $IH_{\Trop}^{s,p+q-s}(X)=0$
for $s\leq p+q-s-1$, and 
\begin{align*}
E_{1,\F_{\Trop(T_{\Lambda})}^{s}[-s]_{\deg}}^{s,s} & \cong\bigoplus_{\lambda\in\Lambda,\dim\lambda=s}H_{\Trop}^{0,0}(N_{\lambda,\R})\\
 & \cong\bigoplus_{\lambda\in\Lambda,\dim\lambda=s}\Q\\
 & \cong\bigoplus_{\lambda\in\Lambda,\dim\lambda=s}IH_{\Trop}^{0,0}(X\cap N_{\lambda,\R})\\
 & \cong E_{1,i_{X*}IC_{\Trop,\sheaf,X}^{s,*}[-2d]_{\deg}}^{s,s}\\
 & \twoheadrightarrow E_{\infty,i_{X*}IC_{\Trop,\sheaf,X}^{s,*}[-2d]_{\deg}}^{s,s}\cong IH_{\Trop}^{s,s}(X).
\end{align*}
 Since 
\[
E_{1,\F_{\Trop(T_{\Lambda})}^{s}[-s]_{\deg}}^{s,s}\twoheadrightarrow E_{2,\F_{\Trop(T_{\Lambda})}^{s}[-s]_{\deg}}^{s,s}\cong H_{\Trop}^{s,s}(\Trop(T_{\Lambda})),
\]
the last assertion holds.
\end{itemize}
\end{proof}
\begin{lem}
\label{lem:the Lemma} Let $W\subset N_{\R}$ be a tropical fan. Then
we have 
\[
IH_{p,q}^{\Trop,\cpt}(W)=0
\]
 for $\max\left\{ p,1\right\} \leq q$. 
\end{lem}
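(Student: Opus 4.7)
My plan is to exhibit every relevant homology class as bounded by a cone, using the contracting scaling self-map $\mu\colon [0,1]\times W\to W$, $(t,x)\mapsto tx$, that a fan admits. Since $W\subset N_{\R}$ lives in the ``dense torus'' part $N_{\{0\},\R}$, I may and will assume $\dim\sigma_{S}=0$ for every stratum $S$, so the second (simpler) case of Definition~\ref{def:allowability} is the only one that enters.

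First I would dispose of the top degree $q=d$: the top-dimensional polyhedra of a fan $W$ of dimension $d\ge 1$ are unbounded cones, so boundary relations propagating inward from the non-compact directions force every compactly supported closed $d$-chain to vanish, giving $IZ^{\Trop,\cpt}_{p,d}(W)=0$; the range $q>d$ is vacuous. It then remains to handle $1\le q\le d-1$ with $p\le q$. For such $(p,q)$ and a closed chain $\gamma=\sum_{\Delta}\alpha_{\Delta}[\Delta]\in IZ^{\Trop,\cpt}_{p,q}(W)$, I would form the \emph{cone} $C(\gamma):=\sum_{\Delta}\alpha_{\Delta}[\mu([0,1]\times\Delta)]$, suitably triangulated, and put the coefficient $\alpha_{\Delta}$ on each piece via the canonical identification $\mu_{t}^{\ast}F_{p,w}(R_{\Delta})=F_{p,w}(R_{\mu_{t}(\Delta)})$ arising from the linearity of $\mu_{t}$ (which preserves the tangent space of every cone). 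Because $\mu$ is continuous and $\supp\gamma$ is compact, $C(\gamma)$ has compact support. Using $\mu_{0,\ast}\gamma=0$ (valid since $q\ge 1$ and $\mu_{0}$ collapses everything to a point) and $\partial\gamma=0$, one checks directly that $\partial C(\gamma)=\gamma$.

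The real content is the allowability of $C(\gamma)$, which is what forces the hypothesis $q\ge\max\{p,1\}$. For a singular stratum $\relint S$ with $S\ne\{0\}$, the scaling preserves $\relint S$, so $\dim(\mu([0,1]\times\Delta)\cap\relint S)=\dim(\Delta\cap\relint S)+1$ while $\alpha_{\Delta}$, $\sigma_{S}$, $v(\alpha_{\Delta}\cap\sigma_{S})$ and $u(\alpha_{\Delta}\cap\pr_{\sigma_{S}}^{-1}(S))$ are untouched; the allowability inequality for $C(\gamma)$ at $S$ therefore reduces to the one for $\gamma$, which holds by assumption. At the apex $S=\{0\}$ we have $\dim(C(\Delta)\cap\{0\})=0$ and $\dim\sigma_{\{0\}}=0$, so Definition~\ref{def:allowability} requires $(q+1)-0+0\ge\max\{2,p+1\}$, i.e.\ $q\ge\max\{p,1\}$---which is precisely our hypothesis; the edge case $q=d-1$ is additionally covered by the exceptional clause ``$\dim\sigma_{S}=0$ and $q+1=d-\dim\{0\}$.'' Thus $C(\gamma)\in IC^{\Trop,\cpt}_{p,q+1}(W)$, so $[\gamma]=0$ in $IH^{\Trop,\cpt}_{p,q}(W)$, completing the proof. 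The main obstacle to turn this sketch into a complete proof is the bookkeeping needed to check that the triangulation of $\mu([0,1]\times\Delta)$ and the propagation of the coefficient $\alpha_{\Delta}$ through subdivisions of the cylinder really assemble into an allowable chain over the entire stratification, but this is purely combinatorial, modeled on the mapping-cylinder arguments already used in Claims~\ref{claim:1 in local-computation-near-toric boundary} and~\ref{claim:2 in local-computation-near-toric boundary}.
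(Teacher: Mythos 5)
Your argument is essentially the paper's own proof: the paper takes $c(\gamma)$ to be the mapping cone of the retraction $N_{\R}\to\{0\}$, observes that allowability at $\{0\}$ is exactly the condition $\max\{p,1\}\leq q$ (with $v=u=0$ there), and that allowability at every other stratum $S$ is inherited from $\gamma$ because coning shifts both $q$ and $\dim(\Delta\cap\relint S)$ by one. Your separate disposal of $q=d$ and the remark about degenerate pieces of the cone are reasonable housekeeping that the paper leaves implicit, but the route is the same.
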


\begin{proof}
For $\gamma\in IC_{p,q}^{\Trop,\cpt}(W)$ with $\partial\gamma=0$,
let $c(\gamma)$ be the mapping cone of the retraction $N_{\R}\to\left\{ 0\right\} $
to $0\in N_{\R}=\Hom(M,\R)$. Since $\max\left\{ p,1\right\} \leq q$,
by definition, the chain $c(\gamma)$ is allowable at the $0$-dimensional
stratum $\left\{ 0\right\} $. It is easy to see that $c(\gamma)$
is also allowable at any other strata since so is $\gamma$. Thus
$\gamma=\partial c(\gamma)$ is $0$ in $IH_{p,q}^{\Trop,\cpt}(W)$. 
\end{proof}
Remind that $W_{2p}H_{\sing}^{2p}\cong\Gr_{2p}^{W}H_{\sing}^{2p}$
is the lowest part.
\begin{lem}
\label{lem:toric lowest weight cohomology}(\cite[Theorem 3]{TotaroChowgroupsChowcohomologyandlinearvarieties2014})
The cycle class map induces an isomorphism 
\[
CH^{p}(T_{\Lambda})\otimes\Q\cong W_{2p}H_{\sing}^{2p}(T_{\Lambda}(\C);\Q).
\]
\end{lem}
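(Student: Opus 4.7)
The plan is to extract this from the spectral sequence constructed in the proof of Lemma \ref{lem:toric var, tro coho and sing coho}, applied with $G = (-\log\lvert\cdot\rvert)_{*}C_{\sing,T_{\Lambda}(\C)}^{*}(-;\Q)$, which gives
\[
E_{1}^{p,q}\cong\bigoplus_{\substack{\lambda\in\Lambda\\ \dim\lambda=p}}H_{\sing}^{q-p}(O(\lambda)(\C);\Q)(-p)\Rightarrow H_{\sing}^{p+q}(T_{\Lambda}(\C);\Q).
\]
As already noted there, $E_{1}^{p,q}$ is pure of weight $2q$ (since $O(\lambda)\cong\G_{m}^{n-p}$), so $d_{r}=0$ for $r\geq2$ and $\Gr_{2q}^{W}H_{\sing}^{p+q}(T_{\Lambda}(\C);\Q)\cong E_{2}^{p,q}$. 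Because $T_{\Lambda}$ is smooth, Deligne's theorem gives $H^{2p}_{\sing}(T_\Lambda(\C);\Q)$ weights $\geq 2p$, so $W_{2p}H_{\sing}^{2p}(T_{\Lambda}(\C);\Q)=\Gr_{2p}^{W}H_{\sing}^{2p}(T_{\Lambda}(\C);\Q)\cong E_{2}^{p,p}$ (the only pair with $p'+q'=2p$ and $q'=p$).

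The problem is thus reduced to identifying $E_{2}^{p,p}=\operatorname{coker}(d_{1}\colon E_{1}^{p-1,p}\to E_{1}^{p,p})$ with $\CH^{p}(T_{\Lambda})\otimes\Q$ compatibly with the cycle class map. First I would identify $E_{1}^{p,p}=\bigoplus_{\dim\lambda=p}H^{0}(O(\lambda)(\C);\Q)(-p)$ with the free $\Q$-vector space on the classes $[\overline{O(\lambda)}]$, using the convention that the Tate twist $(-p)$ matches $(2\pi i)^{-p}$ times the top $d\log$-form on $O(\lambda)$, evaluated against the fundamental class of $\overline{O(\lambda)}$ in $T_{\Lambda}(\C)$. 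Second, I would compute the differential $d_{1}\colon E_{1}^{p-1,p}\to E_{1}^{p,p}$ as a residue: for $\sigma\in\Lambda_{p-1}$, the identification $H^{1}(O(\sigma)(\C);\Q)(-(p-1))\cong(M\cap\sigma^{\perp})\otimes\Q$ via $m\mapsto\tfrac{1}{2\pi i}d\log m$ sends $m$ to $\sum_{\lambda\supset\sigma,\dim\lambda=p}\langle m,v_{\lambda/\sigma}\rangle[\overline{O(\lambda)}]$, where $v_{\lambda/\sigma}\in N$ is the primitive generator of the ray $\lambda/\sigma$. Third, I would invoke the Fulton--Sturmfels presentation of $\CH^{p}(T_{\Lambda})\otimes\Q$: it is generated by $[\overline{O(\lambda)}]$ ($\dim\lambda=p$) modulo precisely the relations $\sum_{\lambda\supset\sigma,\dim\lambda=p}\langle m,v_{\lambda/\sigma}\rangle[\overline{O(\lambda)}]=0$ coming from the push-forwards of principal divisors $\operatorname{div}(m)$ on $\overline{O(\sigma)}$ for $\sigma\in\Lambda_{p-1}$ and $m\in M\cap\sigma^{\perp}$. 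This identifies $E_{2}^{p,p}\cong\CH^{p}(T_{\Lambda})\otimes\Q$, and compatibility with the cycle class map follows because $\tfrac{1}{2\pi i}d\log f$ represents the class of the divisor $\{f=0\}$ in $H^{2}_{\sing}$.

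The main obstacle is the explicit computation of $d_{1}$ as a residue: one must identify the connecting homomorphism of the long exact sequence for the open/closed stratification near a $(p-1)$-dimensional orbit with integration of $d\log$-forms against small loops linking the toric divisors $\overline{O(\lambda)}$ (for $\lambda\supset\sigma$, $\dim\lambda=p$) transverse to $\overline{O(\sigma)}$. This is a standard Poincar\'e-residue computation on the smooth toric pair $(\overline{O(\sigma)},\bigcup_{\lambda}\overline{O(\lambda)})$, but careful sign and orientation bookkeeping is required to match the Fulton--Sturmfels relations; this is precisely where Totaro's argument for general linear varieties (inductive localization on a stratification by affine spaces and tori) is used, and in the toric case it is encoded in the primitive generators $v_{\lambda/\sigma}$ appearing in Definition \ref{def:def of tropical variety}.
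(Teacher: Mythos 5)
Your proposal is correct, but it fills in the proof by a genuinely different route from the one the paper indicates. The paper simply cites Totaro's theorem on linear varieties and sketches an alternative: run the stratification spectral sequence, reduce to the smooth \emph{projective} toric case (where $H^{2p}_{\sing}$ is pure and the classical isomorphism $\CH^{p}\otimes\Q\cong H^{2p}_{\sing}$ is available), and transport the result by functoriality. You instead stay entirely inside the spectral sequence: after the (correct) observations that weights $\geq 2p$ on $H^{2p}$ of a smooth variety give $W_{2p}H^{2p}=\Gr^{W}_{2p}H^{2p}\cong E_{2}^{p,p}$ and that $E_{1}^{p+1,p}=0$ forces $E_{2}^{p,p}=\Coker(d_{1}\colon E_{1}^{p-1,p}\to E_{1}^{p,p})$, you identify $d_{1}$ with the divisor/residue map $m\mapsto\sum_{\lambda\supset\sigma}\langle m,v_{\lambda/\sigma}\rangle[\overline{O(\lambda)}]$ and match the cokernel with the Fulton--Sturmfels presentation of $\CH^{p}(T_{\Lambda})\otimes\Q$. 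This buys a self-contained argument with no compactification or comparison map, at the cost of the sign/orientation bookkeeping in the residue computation that you correctly flag as the delicate point (together with the implicit use, already asserted in the proof of Lemma \ref{lem:toric var, tro coho and sing coho}, that the spectral sequence is one of mixed Hodge structures whose abutment filtration is the weight filtration). One small correction: the residue identification of $d_{1}$ is an independent local computation on the smooth pair $(\overline{O(\sigma)},\bigcup_{\lambda}\overline{O(\lambda)})$ and does not actually rely on Totaro's inductive localization argument, so your closing remark conflates two logically separate inputs; this does not affect the validity of the proof.
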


\begin{proof}
This is a special case of \cite[Theorem 3]{TotaroChowgroupsChowcohomologyandlinearvarieties2014}.
In our case, we can prove the assertion using spectral sequences in
proof of Lemma \ref{lem:toric var, tro coho and sing coho}, the projective
case, and some functoriality. We omit proof.
\end{proof}
Consequently, we have a natural isomorphism 
\[
CH^{p}(T_{\Lambda})\otimes\Q\cong H_{\Trop}^{p,p}(\Trop(T_{\Lambda});\Q),
\]
 which preserves ring structures by construction. We shall compare
their cap products. See Subsection \ref{subsec:On-smooth-part} and
Subsection \ref{subsec:Compatibility-with-tropical cohomology} for
the fundamental class 
\[
[X]\in H_{d,d}^{\Trop}((\Trop(T_{\Lambda});\Q)\cong\Hom(H_{\Trop}^{d,d}((\Trop(T_{\Lambda});\Q),\Q)
\]
 and the cap product $[X]\cap-\colon H_{\Trop}^{d,d}((\Trop(T_{\Lambda});\Q)\to\Q$. 
\begin{lem}
\label{lem:comparison of fundamental class for P1} We have a commtative
diagram 
\[
\xymatrix{CH^{1}(\P^{1})\otimes\Q\ar[d]^{\cong}\ar[rr]^{\quad\quad\deg} &  & \Q\ar[d]^{=}\\
H_{\Trop}^{1,1}(\Trop(\P^{1});\Q)\ar[rr]^{\quad\quad\quad\quad[\Trop(\P^{1})]\cap-} &  & \Q,
}
\]
where $\deg\colon CH^{1}(\P^{1})\otimes\Q\to\Q$ is the map counting
points.
\end{lem}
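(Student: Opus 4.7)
I would prove the commutativity by factoring the diagram through singular cohomology $H^2_{\sing}(\P^1(\C);\Q)$, reducing the claim to two classical compatibilities and one explicit computation on a chosen generator. By Lemma \ref{lem:toric var, tro coho and sing coho} applied to $T_{\Lambda}=\P^1$, the $d\log$ morphism of complexes induces an isomorphism $\delta\colon H^{1,1}_{\Trop}(\Trop(\P^1);\Q)\xrightarrow{\sim}H^2_{\sing}(\P^1(\C);\Q)$, and by Lemma \ref{lem:toric lowest weight cohomology} the cycle class map gives an isomorphism $cl\colon CH^1(\P^1)\otimes\Q\xrightarrow{\sim}H^2_{\sing}(\P^1(\C);\Q)$. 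By construction of the comparison map, the left vertical arrow of the diagram is $\delta^{-1}\circ cl$. It therefore suffices to show that under these identifications both horizontal arrows become evaluation against the singular fundamental class $[\P^1(\C)]\in H^{\sing}_2(\P^1(\C);\Q)$.

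The top identification ($\deg=\int_{\P^1(\C)}\circ cl$) is the classical fact that the cycle class map is compatible with the degree map, giving $\deg([pt])=1$. The remaining content is the bottom identification: that the tropical cap $[\Trop(\P^1)]\cap -$ corresponds, via $\delta$, to $\int_{\P^1(\C)}$. Since both are $\Q$-linear maps out of a one-dimensional vector space, it is enough to evaluate them on a single explicit generator.

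I would compute $H^{1,1}_{\Trop}(\Trop(\P^1);\Q)=H^1(\Trop(\P^1),\F^1)$ using the \v{C}ech cover $\{U_+,U_-\}$ with $U_\pm:=\Trop(\P^1)\setminus\{\mp\infty\}$. The compatibility condition for sections, combined with $F^1(\{\pm\infty\})=0$, forces $\F^1(U_\pm)=0$, whereas $\F^1(U_+\cap U_-)=F^1(P_0)\cong\Q$ is generated by the standard monomial $z\in M\cap\{0\}^\perp$; this yields a \v{C}ech generator $\alpha$. On the tropical side, the fundamental class is $[\Trop(\P^1)]=\tfrac{\partial}{\partial x}\otimes[P_0]$ (Subsection \ref{subsec:On-smooth-part}), and the cap-pairing reduces to the contraction $\tfrac{\partial}{\partial x}\llcorner dz=1$, giving $[\Trop(\P^1)]\cap\alpha=1$. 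On the singular side, under $\delta$ the cocycle $\alpha$ is sent to the \v{C}ech cocycle $\tfrac{-1}{2\pi i}d\log z$ on the preimage cover of $\P^1(\C)$; a standard \v{C}ech-to-de-Rham computation identifies the resulting class in $H^2_{\sing}(\P^1(\C);\Q)$ as $c_1(\O_{\P^1}(1))$, i.e., the Poincar\'e dual of the point $\{+\infty\}$, which pairs to $1$ with $[\P^1(\C)]$.

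\textbf{Main obstacle.} The delicate point is the last step: confirming that the factor $\tfrac{-1}{2\pi i}$ in the definition of the $d\log$ morphism, together with the sign conventions in the \v{C}ech-to-singular comparison and in the cap product, combine to give precisely the integer $1$. This is essentially the classical computation that $c_1(\O_{\P^1}(1))$ evaluates to $1$ on the fundamental class, packaged as a tropical-to-singular comparison; I expect no surprises beyond careful bookkeeping.
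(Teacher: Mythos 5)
Your proposal is correct and follows essentially the same route as the paper: both reduce the top row to the singular cap product $[\P^1(\C)]\cap-$ via compatibility of the cycle class map with $\deg$, and then match explicit generators through the $d\log$ comparison of Lemma \ref{lem:toric var, tro coho and sing coho}. The only difference is organizational — you track the generator through a two-set \v{C}ech cover, while the paper uses the equivalent localization long exact sequences at $0$ and $\infty$ — and the residual sign/normalization bookkeeping you flag is exactly the content the paper also leaves as ``direct computation.''
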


\begin{proof}
Since the map $\deg\colon CH^{1}(\P^{1})\otimes\Q\to\Q$ is compatible
with $[\P^{1}(\C)]\cap-\colon H_{\sing}^{2}(\P^{1}(\C);\Q)\to\Q$,
we can use the latter. Then the assertion easily follows from direct
computation using long exact sequences 
\[
\xymatrix{H_{\sing}^{1}(\C^{\times};\Q)\ar[d]^{\cong}\ar[r] & H_{\sing,\left\{ 0\right\} }^{2}(\C;\Q)\oplus H_{\sing,\left\{ \infty\right\} }^{2}(\C^{\times}\cup\left\{ \infty\right\} ;\Q)\ar[d]^{\cong}\ar[r] & H_{\sing}^{2}(\P^{1}(\C);\Q)\ar[d]^{\cong}\\
H_{\Trop}^{1,0}(\R;\Q)\ar[r] & H_{\Trop,\left\{ \infty\right\} }^{1,1}(\R\cup\left\{ \infty\right\} ;\Q)\oplus H_{\Trop,\left\{ -\infty\right\} }^{1,1}(\R\cup\left\{ -\infty\right\} ;\Q)\ar[r] & H_{\Trop}^{1,1}(\Trop(\P^{1});\Q),
}
\]
where in the first vertical morphism, $T\in H_{\Trop}^{1,0}(\R;\Q)$
maps to the element of $H_{\sing}^{1}(\C^{\times};\Q)$ given by integration
of $\frac{-1}{2\pi i}d\log T$, where $T$ is the affine parameter
of $\P^{1}$ which give a fixed toric structure of $\P^{1}$. 
\end{proof}
\begin{lem}
\label{lem:comparison of fundamental class  in general} We have a
commtative diagram 
\[
\xymatrix{\CH^{d}(T_{\Lambda})\otimes_{\Z}\Q\ar[d]^{\cong}\ar[rr]^{\quad\ \quad(\Lambda,w)} &  & \Q\ar[d]^{=}\\
H_{\Trop}^{d,d}(\Trop(T_{\Lambda});\Q)\ar[rr]^{\quad\quad[X]\cap-} &  & \Q.
}
\]
\end{lem}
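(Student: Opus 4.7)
The plan is to combine the ring structures on both sides with a reduction to Lemma \ref{lem:comparison of fundamental class for P1}. First, I would observe that the isomorphism $\CH^*(T_\Lambda)\otimes_\Z\Q \cong H_\Trop^{*,*}(\Trop(T_\Lambda);\Q)$ built from Lemma \ref{lem:toric lowest weight cohomology} and Lemma \ref{lem:toric var, tro coho and sing coho} is in fact a ring isomorphism: the cycle class map $\CH^*\otimes\Q \to W_{2*}H^{2*}_\sing(T_\Lambda(\C);\Q)$ is multiplicative by standard intersection theory, and the comparison map $\bigoplus_s \F^s_{\Trop(T_\Lambda)}[-s]_\deg \to (-\log\lvert\cdot\rvert)_*C^*_{\sing,T_\Lambda(\C)}(-;\C)$ of Subsection \ref{subsec:Compatibility-with-tropical cohomology} respects cup products because the wedge product on $F^*$ matches the de Rham cup via $d\log$.

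Next I would reduce to divisor classes. For a smooth toric variety $T_\Lambda$, the ring $\CH^*(T_\Lambda)\otimes\Q$ is generated by toric divisor classes $\{D_\rho\}_{\rho\in\Lambda_1}$ modulo Stanley--Reisner and linear relations, and using the linear relations $\sum_\rho\langle m,v_\rho\rangle D_\rho=0$ ($m\in M$) to eliminate repeated factors, $\CH^d(T_\Lambda)\otimes\Q$ is spanned by squarefree products $D_{\rho_1}\cdots D_{\rho_d}$. Such a product vanishes unless $\{\rho_1,\ldots,\rho_d\}$ generates a cone $\sigma\in\Lambda_d$; when it does, unimodularity of $\sigma$ gives $D_{\rho_1}\cdots D_{\rho_d}=[V(\sigma)]$, and the Fulton--Sturmfels correspondence yields $(\Lambda,w)([V(\sigma)])=w_\sigma$. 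It therefore remains to verify that the tropical cap product with $[X]$ outputs $w_\sigma$ on these generators.

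The tropical class of $D_\rho$ in $H^{1,1}_\Trop(\Trop(T_\Lambda);\Q)$ is represented by the tropical toric divisor $\Trop(D_\rho) = \bigsqcup_{\tau\ni\rho} N_{\tau,\R}$, and for $\sigma=\cone(\rho_1,\ldots,\rho_d)\in\Lambda_d$ the $d$-fold product is Poincar\'e dual to the tropical fixed point of $\sigma$ at infinity. Near this point, $\Trop(T_\Lambda)$ is locally $\Trop(T_\sigma)\cong(\R\cup\{\infty\})^d\times\R^{n-d}$, and among the top-dimensional cones of $\Lambda$ only $\sigma$ itself contributes to $[X]$ at this point (other $d$-dimensional cones do not approach the tropical fixed point in the direction of $\sigma$). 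The contribution of $\sigma$ then factors, via a K\"unneth formula for tropical cohomology analogous to Proposition \ref{prop:Kunneth formula-1}, as $w_\sigma$ times a product of $d$ copies of the $\P^1$ cap product of Lemma \ref{lem:comparison of fundamental class for P1}, yielding $w_\sigma$.

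The main obstacle I anticipate is the careful orientation bookkeeping: the fundamental class $[X]=\sum_{P\in\Lambda_d}w_P\,1_{\wedge^d\Tan_\Z P}[P]$ of Subsection \ref{subsec:On-smooth-part} depends on matched choices of integral top form $1_{\wedge^d\Tan_\Z P}$ and orientation class $[P]$, and these must be traced through the local product decomposition so that the sign contribution of each of the $d$ tropical $\P^1$-factors matches the algebraic intersection sign at the fixed point $V(\sigma)$. Once this sign matching is verified (using that $1_{\wedge^d\Tan_\Z P}$ is chosen so that its pairing with positive volume forms is positive, exactly as in the $\P^1$ case), the identity $(\Lambda,w)(D_{\rho_1}\cdots D_{\rho_d})=w_\sigma = [X]\cap(D_{\rho_1}\cdots D_{\rho_d})$ follows automatically from the product structure.
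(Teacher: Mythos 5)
Your route is genuinely different from the paper's: the paper never represents the class of $[V(\sigma)]$ locally, but instead uses the projection formula repeatedly (refine $\Lambda$ to a quasi-projective fan, complete it, write the Minkowski weight as a combination of toric subvarieties, and finally pull back along a map from $(\P^1)^d$) to reduce the whole diagram to the single case $T_\Lambda=(\P^1)^d$, $X=\Trop((\P^1)^d)$, where Remark \ref{rem:fundamental class and cross product} and Lemma \ref{lem:comparison of fundamental class for P1} finish. Your localization strategy is attractive because it sees the weight $w_\sigma$ directly, and its core geometric claim is in fact correct: since $X$ is a finite union of closures $\overline{P}$, and $\overline{P}\cap N_{\sigma,\R}\neq\emptyset$ forces $\sigma\leq P$ hence $P=\sigma$ for maximal $\sigma$, a small neighborhood of the point $p_\sigma=\overline{\sigma}\cap N_{\sigma,\R}$ in $X$ meets only $\overline{\sigma}$. (Two small corrections: $p_\sigma$ is a point of the $(n-d)$-dimensional stratum $N_{\sigma,\R}$, not a torus-fixed point unless $n=d$; and for non-complete $T_\Lambda$ the Stanley--Reisner presentation is not the Chow ring, though generation of $\CH^d(T_\Lambda)\otimes_\Z\Q$ by the classes $[V(\sigma)]$, $\sigma\in\Lambda_d$, is all you need.)

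The genuine gap is the step that lets you compute $[X]\cap\varphi([V(\sigma)])$ locally at $p_\sigma$ at all. The cap product pairs a global cocycle with the entire chain $[X]$, so "only $\sigma$ contributes" requires exhibiting a representative of $\varphi([V(\sigma)])$ supported in an arbitrarily small neighborhood of $\overline{N_{\sigma,\R}}$, i.e., a lift of this class to cohomology with supports in $\overline{N_{\sigma,\R}}$. Poincar\'e duality on the ambient $\Trop(T_\Lambda)$ does not give this for free; what does give it is the spectral sequence of Lemma \ref{lem:toric var, tro coho and sing coho}, under which $[V(\sigma)]$ corresponds to the canonical generator of the $\sigma$-summand $H^{d+d}(R\Gamma j_\sigma^!\F^d[-d]_{\deg})$ of $E_1^{d,d}$ --- you should invoke this rather than the ring structure. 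Once that is in place, the local model is $\Trop(\A^1)^d\times\R^{n-d}$ and the pairing you need is between the Borel--Moore fundamental class of $\R\cup\{\infty\}$ and $H^{1,1}_{\Trop,\{\infty\}}(\R\cup\{\infty\};\Q)$ in each factor; this is not literally Lemma \ref{lem:comparison of fundamental class for P1} but the with-supports term appearing in its proof, so that step also needs to be restated. Finally, the obstacle you flag (orientation bookkeeping) is real but secondary; the missing excision/supports argument is the substantive one.
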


\begin{proof}
By projection formula, we may assume that $T_{\Lambda}$ is quasi-projective.
By projective formula again, we may assume that $T_{\Lambda}$ is
projective. Then by projective formula and the fact that $MW_{d}(T_{\Sigma})\cong CH^{n-d}(T_{\Sigma})$
is generated by toric subvarieties, we may assume that $T_{\Lambda}$
is projective and $X=\Trop(T_{\Lambda})$. By projective formula again,
we may assume that $T_{\Lambda}=(\P^{1})^{d}$ and $X=\Trop((\P^{1})^{d})$.
Since fundamental classes are compatible with cross products (Remark
\ref{rem:fundamental class and cross product}), the assertion follows
from Lemma \ref{lem:comparison of fundamental class for P1}.
\end{proof}
\begin{rem}
\label{rem:fundamental class and cross product}We recall cross products
of tropical homology groups and compatiblity with fundamental classes
\cite[Proposition 5.9]{GrossShokirehAsheaf-theoreticapproachtotropicalhomology23}.
Let $X_{1}$ and $X_{2}$ be tropical varieties of dimension $d_{1}$
and $d_{2}$, $\pr_{i}\colon X_{1}\times X_{2}\to X_{i}$ ($i=1,2$)
the projection. The pullback maps $\pr_{i}^{*}\F_{X_{i}}^{d_{i}}\to\F_{X_{1}\times X_{2}}^{d_{i}}$
($i=1,2$) and the wedge product of $\F_{X_{1}\times X_{2}}^{*}[-*]_{\deg}$
induce a morphism 
\begin{equation}
-\wedge-\colon\F_{X_{1}}^{d_{1}}[-d_{1}]_{\deg}\XBox\F_{X_{2}}^{d_{2}}[-d_{2}]_{\deg}\to\F_{X_{1}\times X_{2}}^{d_{1}+d_{2}}[-d_{1}-d_{2}]_{\deg}.\label{eq:cross product coefficients}
\end{equation}
This morphism induces a morphism 
\begin{equation}
H_{\Trop}^{d_{1},d_{1}}(X_{1})\otimes H_{\Trop}^{d_{2},d_{2}}(X_{2})\to H_{\Trop}^{d_{1}+d_{2},d_{1}+d_{2}}(X_{1}\times X_{2})\label{eq:cross product with shift cohomology}
\end{equation}
of tropical cohomology. By Verdier dual functors, we get a morphism
\[
D(\F_{X_{1}\times X_{2}}^{d_{1}+d_{2}}[-d_{1}-d_{2}]_{\deg})\to D(\F_{X_{1}}^{d_{1}}[-d_{1}]_{\deg})\overset{L}{\XBox}D(\F_{X_{2}}^{d_{2}}[-d_{2}]_{\deg}),
\]
which also induces a morphism 
\[
H_{d_{1}+d_{2},d_{1}+d_{2}}^{\Trop}(X_{1}\times X_{2})\to H_{d_{1},d_{1}}^{\Trop}(X_{1})\otimes H_{d_{2},d_{2}}^{\Trop}(X_{2}).
\]
 This morphism maps $[X_{1}\times X_{2}]$ to $[X_{1}]\otimes[X_{2}]$
(\cite[Proposition 5.9]{GrossShokirehAsheaf-theoreticapproachtotropicalhomology23}).
This follows from the fact that the natural quasi-isomorphism $D_{X_{1}}\overset{L}{\XBox}D_{X_{2}}\cong D_{X_{1}\times X_{2}}$
gives the usual cross products of singular homology (\cite[Lemma 4.11]{GrossShokirehAsheaf-theoreticapproachtotropicalhomology23}).
See \cite{GrossShokirehAsheaf-theoreticapproachtotropicalhomology23}
for details. (Note that our convention of the sign of fundamental
classes $[X]$ (Subsection \ref{subsec:On-smooth-part}) is different
from the one in {[}loc.cit.{]} (see also Definition \ref{def:contraction}),
but the same proof works.)
\end{rem}

We shall see that Kleiman's splitting theorem of vector bundles (\cite[Theorem 4.7]{KleimanGrassmanniansSplittingBundles69})
and Bertini's theorem give good blow-ups (Proposition \ref{prp:existence-of-good-example})
of a smooth projective variety $Y$ over an infinite field. These
blow-ups are used in Example \ref{cor:nice example}.
\begin{prop}
\label{prp:existence-of-good-example}There exists a composition $Y'\to Y$
of blow-ups at smooth centers such that $\CH_{\Num}^{*}(Y')\otimes_{\Z}\Q$
is generated by $\CH^{1}(Y')\otimes_{\Z}\Q$ as a $\Q$-algebra, where
$\CH_{\Num}^{*}(Y')$ is the Chow group modulo numerical equivalences. 
\end{prop}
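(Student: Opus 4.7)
The strategy is to combine the Grothendieck--Riemann--Roch isomorphism, which guarantees that Chow classes are generated by Chern classes of vector bundles, with Kleiman's splitting theorem for vector bundles via blow-ups at smooth centers, and then reduce Chern classes to polynomials in divisor classes via the splitting principle.

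\emph{Step 1 (generation by Chern classes).} Since $Y_0$ is smooth projective, the Chern character gives an isomorphism $K^0(Y_0)\otimes_{\Z}\Q\cong\CH^*(Y_0)\otimes_{\Z}\Q$. Consequently $\CH_{\Num}^*(Y_0)\otimes_{\Z}\Q$ is generated, as a $\Q$-algebra, by the Chern classes $c_j(\mathcal{E}_i)$ of finitely many vector bundles $\mathcal{E}_1,\dots,\mathcal{E}_m$ on $Y_0$.

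\emph{Step 2 (Kleiman's splitting).} Apply Kleiman's splitting theorem (\cite[Theorem 4.7]{KleimanGrassmanniansSplittingBundles69}), which is valid over any infinite field, to the direct sum $\bigoplus_i\mathcal{E}_i$. This produces a composition $\pi\colon Y_1\to Y_0$ of blow-ups at smooth irreducible centers such that each $\pi^*\mathcal{E}_i$ admits a complete flag of subbundles with line bundle quotients $L_{i,1},\dots,L_{i,\rank\mathcal{E}_i}$ on $Y_1$. By the Whitney sum formula, every $\pi^*c_j(\mathcal{E}_i)$ becomes the $j$-th elementary symmetric polynomial in the classes $c_1(L_{i,\bullet})\in\CH^1(Y_1)$, and hence lies in the subalgebra of $\CH^*(Y_1)\otimes_{\Z}\Q$ generated by $\CH^1(Y_1)$.

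\emph{Step 3 (new classes introduced by the blow-ups).} It remains to check that the full ring $\CH_{\Num}^*(Y_1)\otimes_{\Z}\Q$, and not merely $\pi^*\CH_{\Num}^*(Y_0)\otimes_{\Z}\Q$, is generated by divisors. Write $\pi$ as an iterated blow-up along smooth centers $Z_k\subset Y_{k-1}$ with exceptional divisors $E_k=\mathbb{P}(N_{Z_k/Y_{k-1}})$. The classical blow-up formula decomposes
\[
\CH^*(Y_k)\otimes_{\Z}\Q \;=\; \pi_k^*\CH^*(Y_{k-1})\otimes_{\Z}\Q \;\oplus\; \bigoplus_{r=0}^{c_k-2} j_{k,*}\bigl(h_k^{r}\cdot\pi_{E_k}^{*}\CH^{*}(Z_k)\otimes_{\Z}\Q\bigr),
\]
where $h_k=c_1(\mathcal O_{E_k}(1))$ and $c_k=\codim_{Y_{k-1}}Z_k$. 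Using the identity $[E_k]\cdot j_{k,*}\beta=-j_{k,*}(h_k\beta)$ on the blow-up, together with the projective bundle formula on $E_k$, these new summands reduce to polynomial expressions in $[E_k]$ and pullbacks of classes from $Z_k$. An induction on $\dim Y_0$ (the base case $\dim Y_0=1$ being trivial since Chow classes are divisors) applied to each smooth center $Z_k$ then produces further blow-ups of the $Z_k$ in which $\CH_{\Num}^*$ is divisor-generated; by passing to strict transforms, these lift to blow-ups of $Y_1$ at smooth centers, eventually yielding $Y'$.

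\emph{Main obstacle.} The principal technical point is Step 3: one must coordinate the inductive blow-ups of the centers $Z_k$ with the ambient geometry of $Y_1$, ensuring that the classes on $Z_k$ that appear in the blow-up decomposition can be captured by divisors pulled back from blow-ups of $Y_k$ along smooth centers mapping to $Z_k$. This bookkeeping is delicate but finite, since each induction step reduces dimension strictly and Kleiman's construction produces only finitely many centers.
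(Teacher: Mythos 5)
Your Steps 1 and 2 follow the paper's strategy (Chern character to reduce to Chern classes of vector bundles, then Kleiman's splitting theorem via blow-ups at smooth centers to reduce to first Chern classes of line bundles), but Step 3 — which you correctly identify as the main obstacle — contains a genuine gap that your sketch does not close. The blow-up formula forces you to control the classes $j_{k,*}\bigl(h_k^{r}\cdot\pi_{E_k}^{*}\alpha\bigr)$ for $\alpha\in\CH^{*}(Z_k)\otimes_{\Z}\Q$. Using $[E_k]\cdot j_{k,*}\beta=-j_{k,*}(h_k\beta)$ these reduce to $[E_k]^{r}\cdot j_{k,*}\pi_{E_k}^{*}\alpha$, and $j_{k,*}\pi_{E_k}^{*}\alpha$ is a polynomial in ambient divisor classes only when $\alpha$ is (modulo numerical equivalence) the \emph{restriction to $Z_k$ of a class from the ambient variety}: by the projection formula, $j_{k,*}\pi_{E_k}^{*}(\tilde\alpha|_{Z_k})=[E_k]\cdot\pi_k^{*}\tilde\alpha$. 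Your induction only arranges that $\CH_{\Num}^{*}(Z_k')\otimes_{\Z}\Q$ is generated by divisor classes \emph{on $Z_k'$}, which is strictly weaker: a center $Z_k$ of large Picard rank inside an ambient variety of Picard rank one already defeats the argument, and "passing to strict transforms" does not convert intrinsic divisor classes on $Z_k'$ into restrictions of ambient ones.

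The paper closes exactly this gap by proving a relative statement by induction on the dimension of the center: for $V\subset Y$ smooth there is a composition of blow-ups at smooth centers contained in $V$ after which $\CH_{\Num}^{*}(V')\otimes_{\Z}\Q$ is generated by the \emph{pull-back of} $\CH^{1}(Y')\otimes_{\Z}\Q$. The extra ingredient you are missing is the line-bundle step (the paper's Lemma on line bundles): given a very ample line bundle $L$ on the center $V$, write $L\cong\O(D)$ for a smooth divisor $D\subset V$ by Bertini's theorem, first apply the induction to $D$, and then blow up the \emph{ambient} variety along (the strict transform of) $D$; the pull-back of $L$ to the strict transform of $V$ then becomes a combination of restrictions of exceptional divisors of the ambient blow-up, hence a restriction of an ambient divisor class. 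Without this Bertini step your induction does not terminate in a statement strong enough to feed back into the blow-up formula, so as written the proof is incomplete.
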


Proposition \ref{prp:existence-of-good-example} is a special case
of Proposition \ref{prp:strong-ver.-existence-of-good-example-}.
\begin{defn}
Let $V\subset Y$ be a smooth closed subscheme of dimension $r$.
A \emph{good blow-up} $Y'\to Y$ with respect to $V$ is a composition
\[
\xymatrix{Y'\ar@{}[r]|{=} & Y_{m}\ar@{}[d]|{\bigcup}\ar[r]^{\varphi_{m}} & Y_{m-1}\ar@{}[d]|{\bigcup}\ar[r]^{\varphi_{m-1}} & \dots\ar[r]^{\varphi_{1}} & Y_{0}\ar@{}[d]|{\bigcup}\ar@{}[r]|{:=} & Y\\
V'\ar@{}[r]|{:=} & V_{m}\ar[r] & V_{m-1}\ar@{}[d]|{\bigcup}\ar[r] & \dots\ar[r] & V_{0}\ar@{}[d]|{\bigcup}\ar@{}[r]|{:=} & V\\
 &  & Z_{m-1} & \dots & Z_{0}
}
\]
 of blow-ups 
\[
\varphi_{i}\colon Y_{i}:=\Bl_{Z_{i-1}}Y_{i-1}\to Y_{i-1}
\]
 at smooth centers $Z_{i-1}\subsetneq V_{i-1}$ of dimension $\leq(r-1)$
such that $\CH_{\Num}^{*}(Z_{i})\otimes_{\Z}\Q$ is generated by pull-back
of $\CH^{1}(Y_{i})\otimes_{\Z}\Q$, and $Z_{i-1}$ does not contain
any irreducible component of $V_{i-1}$, where 
\[
V_{i}:=\Bl_{Z_{i-1}}V_{i-1}\to V_{i-1}
\]
 is the strict transform of $V_{i-1}$.
\end{defn}

\begin{prop}
\label{prp:strong-ver.-existence-of-good-example-}Let $V\subset Y$
be a smooth closed subscheme of dimension $r$. Then there exists
a good blow-up $Y'\to Y$ with respect to $V$ such that $\CH_{\Num}^{*}(V')\otimes_{\Z}\Q$
is generated by pull-back of $\CH^{1}(Y')\otimes_{\Z}\Q$.
\end{prop}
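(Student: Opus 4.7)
I plan to proceed by induction on $r = \dim V$. The base case $r \leq 0$ is immediate: $V$ is a finite disjoint union of reduced points, $\CH_{\Num}^{*}(V) \otimes_{\Z} \Q = \Q$ is trivially generated as a $\Q$-algebra, and we may take $Y' := Y$ (a good blow-up of length zero).

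For the inductive step, since $\CH_{\Num}^{*}(V) \otimes_{\Z} \Q$ is a finite-dimensional $\Q$-algebra, first choose finitely many integral proper subvarieties $W_{1},\ldots,W_{m} \subsetneq V$ of dimension $< r$ whose classes, together with $[V] = 1$, generate it as a $\Q$-algebra. The plan is then to carry out three nested stages of blow-ups of the current ambient $Y$, each a further good blow-up. \emph{Stage 1 (resolution).} Apply Hironaka's embedded resolution of singularities to each $W_{i}$, realized via a sequence of blow-ups of $Y$ at smooth centers contained in the strict transforms of $V$. Since each such center $Z$ has $\dim Z < r$, before blowing up $Z$ recursively invoke the inductive hypothesis applied to $Z \subset Y$ (in its current ambient) to first arrange that $\CH_{\Num}^{*}(Z) \otimes_{\Z} \Q$ is generated by pullback of $\CH^{1}$; this makes the subsequent blow-up good. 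After this nested procedure we may assume each $W_{i}$ is smooth. \emph{Stage 2 (treat the $W_{i}$).} Apply the inductive hypothesis to each smooth $W_{i} \subset Y$: this produces further good blow-ups after which $\CH_{\Num}^{*}(W_{i}) \otimes_{\Z} \Q$ is generated by pullback of $\CH^{1}(Y)$. \emph{Stage 3 (blow up the $W_{i}$).} Blow up $Y$ along each (strict transform of the) $W_{i}$ in turn; by Stage 2 these are good blow-ups. Denote the total composition by $Y' \to Y$ and the strict transform of $V$ by $V'$.

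To verify that $\CH_{\Num}^{*}(V') \otimes_{\Z} \Q$ is generated by pullback of $\CH^{1}(Y')$, I would combine the blow-up formula and the projective bundle formula. Any class in $\CH^{*}(V')$ is a polynomial in (i) pullbacks from $V$, (ii) exceptional divisor classes of the Stage 3 blow-ups of $V$, and (iii) pullbacks from each $W_{i}$, including the Chern classes of the normal bundles $N_{W_{i}/V}$. By the excess intersection formula, the pullback of each generating class $[W_{i}]$ is itself a polynomial in the corresponding exceptional divisor class and the Chern classes of $N_{W_{i}/V}$. Crucially, each exceptional divisor of $V' \to V$ arising in Stage 3 equals the restriction to $V'$ of the corresponding exceptional divisor of $Y' \to Y$ (since the blown-up centers lie in $V$), hence represents a class pulled back from $\CH^{1}(Y')$. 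And the Chern classes of $N_{W_{i}/V}$ lie in $\CH^{*}(W_{i})$, whose image in $\CH_{\Num}^{*}(W_{i}) \otimes_{\Z} \Q$ is generated by pullback of $\CH^{1}(Y')$ by Stage 2. Combining, $\CH_{\Num}^{*}(V') \otimes_{\Z} \Q$ is generated by pullback of $\CH^{1}(Y')$, as required.

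The main difficulty is the interleaved recursion in Stage 1 and between Stages 1 and 2: each good blow-up's center must itself be divisor-generated, so the inductive hypothesis has to be invoked on ever-smaller subvarieties before each individual blow-up, and the whole procedure must be shown to terminate while respecting the good blow-up constraints $Z \subsetneq V$ and $Z$ not containing any irreducible component of $V$. Along the way, Kleiman's splitting theorem is used to express Chern classes of the normal bundles appearing in the argument in terms of first Chern classes (after splitting the bundles as sums of line bundles on a flag bundle realized via further blow-ups), and Bertini's theorem is used to ensure that the generic smooth centers and hyperplane sections chosen are smooth and meet the good blow-up constraints.
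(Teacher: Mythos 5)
Your overall inductive skeleton (induct on $\dim V$, apply the hypothesis to lower-dimensional smooth centers before each blow-up so that the blow-ups are good, and finish with the blow-up formula) matches the paper, but your choice of generators creates a genuine gap. You generate $\CH^{*}_{\Num}(V)\otimes_{\Z}\Q$ by classes of integral subvarieties $W_{i}\subsetneq V$ and then invoke Hironaka's embedded resolution to make them smooth. The proposition is used in Example \ref{cor:nice example} for a smooth projective variety over an arbitrary \emph{infinite field}, where resolution of singularities is not available; the paper's whole point in this section is to get by with only Kleiman's splitting theorem and Bertini, both of which work in that generality. Moreover, even in characteristic zero your Stage 3 computation does not close: the excess/key formula expresses $\pi^{*}[W]$ in terms of the exceptional divisor and $c_{*}(N_{W/V})$ only when you blow up along the smooth $W$ itself, whereas after your Stage 1 you are blowing up along the strict transform $\widetilde{W_{i}}$, and $[\widetilde{W_{i}}]$ differs from the proper pullback of $[W_{i}]$ by correction terms supported over the singular locus of $W_{i}$. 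Those correction classes are exactly of the type you started with, so your argument either leaves them untreated or restarts the whole procedure on them.

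The paper avoids both problems by changing generators: since $V$ is smooth, $\CH^{*}(V)\otimes_{\Z}\Q$ is generated by Chern classes of vector bundles, so it suffices (after the reduction via the blow-up formula for $\CH^{*}$ of a blow-up, which shows one only needs the \emph{image} of $\CH^{*}_{\Num}(V)$ in $\CH^{*}_{\Num}(V')$ to be divisor-generated) to handle a vector bundle $G$ and a line bundle $L$ separately. Kleiman's theorem produces a smooth center $Z\subsetneq V$ of dimension $\leq r-1$ such that $G$ acquires a line subbundle after blowing up $Z$; Bertini writes a very ample $L$ as $\O(D)$ for a smooth divisor $D\subset V$, and blowing up $D$ (after first applying the induction hypothesis to $D$) realizes $L$ as the restriction of a line bundle on the ambient $Y'$. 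If you want to salvage your write-up, replace your subvariety generators $W_{i}$ by Chern classes of bundles and substitute the two lemmas above for your Stages 1--3; the recursive "prepare the center, then blow it up" mechanism you describe is then exactly what is needed.
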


\begin{proof}
We shall prove the assertion by induction on $r$. The case of $r=0$
is trivial. We assume that $r\geq1$ and the assertion holds for smaller
$r$. By blow-up formula of Chow groups (\cite[Proposition 6.7, Example 8.3.4, Example 8.39]{FultonIntersectionTheory}),
it suffices to show that the existence of a good blow-up $Y'\to Y$
with respect to $V$ such that the image of $\CH_{\Num}^{*}(V)\otimes_{\Z}\Q$
to $\CH_{\Num}^{*}(V')\otimes_{\Z}\Q$ is generated by $\CH^{1}(Y')\otimes_{\Z}\Q$.
Since $\CH^{*}(V')\otimes_{\Z}\Q$ is generated by chern classes of
vector bundles, the assertion follows from the following two Lemmas. 
\end{proof}
\begin{lem}
\label{lem:existence-of-good-blow-up-vector-bundle} Let $V$ be as
in Proposition \ref{prp:strong-ver.-existence-of-good-example-}.
We assume that Proposition \ref{prp:strong-ver.-existence-of-good-example-}
holds for smooth closed subscheme of dimension $\leq(r-1)$. For a
vector bundle $G$ on $V$, there exists a good blow-up $\varphi_{G}\colon Y_{G}'\to Y$
with respect to $V$ such that the pull-back $(\varphi_{G}|_{V_{G}'})^{*}G$
to the strict transform $V'_{G}$ contains a line bundle. 
\end{lem}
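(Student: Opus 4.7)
The plan is to produce, via Bertini's theorem, a regular section of a suitable twist of $G$ whose vanishing scheme is a smooth subvariety $Z\subsetneq V$ of dimension $\leq r-1$, apply the inductive hypothesis to resolve $Z$, and then blow up along the strict transform of $Z$; the resulting good blow-up will carry a line subbundle of the pullback of $G$ obtained by dividing the pulled-back section by the total exceptional divisor.

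First fix an ample line bundle $L$ on $V$ such that $G\otimes L$ is globally generated. If $\rank G>\dim V$, then a general section of $G\otimes L$ is nowhere vanishing and the trivial blow-up $Y'_{G}:=Y$ already yields the line subbundle $L^{-1}\hookrightarrow G$. Otherwise, Bertini's theorem over the infinite base field supplies a section $s\in H^{0}(V,G\otimes L)$ whose zero locus $Z:=Z(s)\subsetneq V$ is smooth of expected codimension $\rank G\geq 1$; in particular $\dim Z\leq r-1$, and since $V$ is a smooth closed subscheme of $Y$, so is $Z$. Applying the inductive hypothesis (Proposition~\ref{prp:strong-ver.-existence-of-good-example-} in dimension $\leq r-1$) to the pair $(Y,Z)$ produces a good blow-up $\varphi_{0}\colon Y_{0}\to Y$ with respect to $Z$ whose strict transform $Z_{0}$ has $\CH_{\Num}^{*}(Z_{0})\otimes_{\Z}\Q$ generated, as a $\Q$-algebra, by pullbacks from $\CH^{1}(Y_{0})\otimes_{\Z}\Q$. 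The successive centers of $\varphi_{0}$ lie inside the strict transforms of $Z\subset V$ and have dimension $\leq\dim Z-1\leq r-2$, so they also qualify as good centers with respect to $V$ (the numerical-Chow generation condition on each center is identical in both settings). Setting $Y'_{G}:=\Bl_{Z_{0}}Y_{0}$ and composing with $\varphi_{0}$ gives the desired good blow-up $\varphi_{G}\colon Y'_{G}\to Y$ with respect to $V$, since $Z_{0}$ is smooth of dimension $\leq r-1$, lies strictly inside the strict transform of $V$, and satisfies the numerical-Chow condition by the previous step.

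To produce the line subbundle of $(\varphi_{G}|_{V'_{G}})^{*}G$, trace $s$ through all the blow-ups. A chart-by-chart computation modelled on the local picture $V=\A^{r}$, $Z=\{x_{1}=\cdots=x_{\rank G}=0\}$, $s=(x_{1},\ldots,x_{\rank G})$ shows that each time one blows up a smooth center of the current smooth model of $V$ contained in the strict transform of the (reduced) vanishing locus, the pulled-back section factors as (a local equation of the newly created exceptional divisor)$\cdot$(a section of the appropriate twist whose scheme-theoretic zero locus is exactly the strict transform of the previous vanishing locus). After the final blow-up along $Z_{0}$, the reduced cofactor is nowhere vanishing; hence on $V'_{G}$ one obtains a factorization $\pi^{*}s=t_{D}\cdot s'$, where $\pi:=\varphi_{G}|_{V'_{G}}$, $t_{D}$ is a defining section of an effective divisor $D$ (a sum of exceptional divisors, each with multiplicity one), and $s'$ is a nowhere-vanishing section of $\pi^{*}(G\otimes L)\otimes\O_{V'_{G}}(-D)$. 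Equivalently, $\O_{V'_{G}}(D)\otimes\pi^{*}L^{-1}$ is a line subbundle of $\pi^{*}G$. The principal technical point is this multiplicity-one factorization, which ultimately reduces to the chart computation above together with the fact that every successive center is smooth and sits inside the strict transform of the previous vanishing locus.
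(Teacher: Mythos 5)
Your proof is correct, but it takes a genuinely different route from the paper. The paper invokes Kleiman's splitting theorem (\cite[Theorem 4.7]{KleimanGrassmanniansSplittingBundles69}) as a black box: it directly supplies a smooth center $Z\subsetneq V$ of dimension $\leq r-1$ such that the pullback of $G$ to $\Bl_{Z}V$ already contains a line bundle, and the paper then only has to observe that $V'_{G}$ admits a natural morphism to $\Bl_{Z}V$ (because the intermediate centers all sit inside strict transforms of $Z$, so $I_{Z}\cdot\O_{V'_{G}}$ becomes invertible). You instead re-derive the relevant case of Kleiman's theorem from scratch: $Z$ is produced as the smooth zero scheme of a general section $s$ of a twist $G\otimes L$, and the line subbundle is exhibited explicitly as $\O_{V'_{G}}(D)\otimes\pi^{*}L^{-1}$ via the factorization $\pi^{*}s=t_{D}\cdot s'$. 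What your approach buys is self-containedness and an explicit line subbundle; what it costs is the chart-by-chart bookkeeping, which the paper's citation (together with the universal property of blow-ups) absorbs. Two small points to tighten: (i) over an infinite field of positive characteristic, global generation of $G\otimes L$ alone does not guarantee that a general section has smooth zero locus of the expected codimension --- you should take $L$ ample enough that $G\otimes L$ separates first-order jets, after which the standard incidence-variety dimension count works in any characteristic; (ii) the parenthetical claim that $D$ is a sum of exceptional divisors ``each with multiplicity one'' is not true in general (a later center may lie inside an earlier exceptional divisor, inflating the multiplicity in the total transform), but this is harmless, since your induction only needs $D$ effective and the final cofactor $s'$ nowhere vanishing.
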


\begin{proof}
By \cite[Theorem 4.7]{KleimanGrassmanniansSplittingBundles69}, there
exists a smooth closed subscheme $Z\subset V$ of dimension $\leq(r-1)$
such that $\varphi_{Z}^{*}G$ contains a line bundle and $Z$ does
not contain any irreducible component of $V$, where $\varphi_{Z}\colon\Bl_{Z}V\to V$
is the blow-up at $Z$. By the assumption, there exists a good blow-up
$\varphi_{G,0}\colon Y_{G,0}'\to Y$ with respect to $Z$ (and hence
with respect to $V$) such that $\CH_{\Num}^{*}(Z')\otimes_{\Z}\Q$
is generated by $\CH^{1}(Y_{G,0}')\otimes_{\Z}\Q$, where $Z'$ is
the strict transform. We put $\varphi_{G}\colon Y_{G}'\to Y$ the
composition of $\varphi_{G,0}$ and the blow-up at $Z'$. Then $\varphi_{G}$
is a good blow-up with respect to $V$. There is a natural morphism
from $V'_{G}$ to $\Bl_{Z}V$, hence $(\varphi_{G}|_{V_{G}'})^{*}G$
contains a line bundle. 
\end{proof}
\begin{lem}
\label{lem:existence-of-good-blow-up-line-bundle} Let $V$ be as
in Proposition \ref{prp:strong-ver.-existence-of-good-example-}.
We assume that Proposition \ref{prp:strong-ver.-existence-of-good-example-}
holds for smooth closed subscheme of dimension $\leq(r-1)$. For a
line bundle $L$ on $V$, there exists a good blow-up $\varphi_{L}\colon Y'_{L}\to Y$
with respect to $V$ such that the pull-back $(\varphi_{L}|_{V_{L}'})^{*}L$
to the strict transform $V'_{L}$ is isomorphic to the restriction
of a line bundle on $Y'$. 
\end{lem}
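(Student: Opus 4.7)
The plan is to first reduce, using the infinite base field and Bertini's theorem, to the case $L=\mathcal{O}_V(D)$ for a smooth divisor $D\subset V$ of dimension $r-1$ not containing any irreducible component of $V$, and then to construct $Y'_L$ by prepending to $\Bl_{D_0}(Y_0)\to Y_0$ a good blow-up $\psi_0\colon Y_0\to Y$ with respect to $D$ supplied by the inductive hypothesis.

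Since the base field is infinite, write $L=L_1\otimes L_2^{-1}$ with $L_1,L_2$ very ample; the property of being the restriction of a line bundle on the blow-up is preserved under tensor products and inverses, so by Bertini's theorem it suffices to treat $L=\mathcal{O}_V(D)$ with $D$ a smooth divisor in $V$ of dimension $r-1$ not containing any irreducible component of $V$. Applying Proposition~\ref{prp:strong-ver.-existence-of-good-example-} in dimension $r-1$ to $D\subset Y$ yields a good blow-up $\psi_0\colon Y_0\to Y$ with respect to $D$ such that $\CH_{\Num}^*(D_0)\otimes_\Z\Q$ is generated by pull-back of $\CH^1(Y_0)\otimes_\Z\Q$, where $D_0$ is the iterated strict transform of $D$. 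Every center of $\psi_0$ lies in a successive strict transform of $D\subset V$ and has dimension at most $r-2$, so it cannot contain any irreducible component of the corresponding strict transform of $V$; hence $\psi_0$ is also a good blow-up with respect to $V$, and $D_0$ embeds into the strict transform $V_0$ of $V$ in $Y_0$ as a smooth Cartier divisor.

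Now set $\varphi_L\colon Y'_L:=\Bl_{D_0}(Y_0)\to Y_0\to Y$, with exceptional divisor $E$ and last-stage morphism $\pi\colon Y'_L\to Y_0$. Since $D_0$ is Cartier in $V_0$, the strict transform $V'_L$ of $V_0$ in $Y'_L$ is canonically isomorphic to $V_0$, and $E\cap V'_L=D_0$; the Chow condition on $D_0$ arranged by $\psi_0$ makes this last blow-up a good blow-up with respect to $V_0$, so the composite $\varphi_L$ is a good blow-up of $Y$ with respect to $V$. For the line-bundle statement, $\mathcal{O}_{Y'_L}(E)|_{V'_L}=\mathcal{O}_{V_0}(D_0)$, while $(\psi_0|_{V_0})^*\mathcal{O}_V(D)=\mathcal{O}_{V_0}(D_0+\sum_j a_j F_j)$ with $F_j$ the exceptional divisors of $V_0\to V$; each $F_j$ equals $G_j\cap V_0$ for the corresponding $Y$-exceptional divisor $G_j\subset Y_0$, because blowing up a smooth center $W\subset V\subset Y$ restricts on the strict transform of $V$ to the blow-up of $V$ along $W$, so that $\mathbb{P}(N_{W/V})=\mathbb{P}(N_{W/Y})\cap V$ at the level of exceptional divisors, and this identification persists under subsequent blow-ups via strict transforms. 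Combining these, $(\varphi_L|_{V'_L})^*L$ is isomorphic to the restriction of $\mathcal{O}_{Y'_L}(E)\otimes\bigotimes_j\pi^*\mathcal{O}_{Y_0}(G_j)^{a_j}$ to $V'_L$. The main obstacle is the bookkeeping for iterated strict transforms: verifying that each $V_0$-exceptional divisor $F_j$ truly is the scheme-theoretic intersection of a $Y_0$-divisor with $V_0$ with the correct multiplicity $a_j$, so that the identification of line bundles on $V'_L$ with a restriction from $Y'_L$ is exact rather than merely numerical.
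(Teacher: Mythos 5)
Your proposal is correct and follows essentially the same route as the paper: reduce via Bertini to $L\cong\mathcal{O}_V(D)$ for a smooth divisor $D$, apply the inductive hypothesis to get a good blow-up with respect to $D$ (which is automatically good with respect to $V$), then blow up the strict transform of $D$ and express the pullback of $L$ as a restriction of a combination of exceptional divisors. Your extra bookkeeping of the exceptional divisors $F_j=G_j\cap V_0$ just makes explicit what the paper summarizes as ``a sum of intersections of $V_L'$ and exceptional divisors.''
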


\begin{proof}
We may assume that $L$ is a very ample line bundle. By Bertini's
theorem, we have $L\cong\text{\ensuremath{\mathcal{O}}(D)}$ for some
smooth divisor $D\subset V$. By the assumption, there exists a good
blow-up $\varphi_{L,0}\colon Y_{L,0}'\to Y$ with respect to $D$
(and hence with respect to $V$) such that $\CH_{\Num}^{*}(D')\otimes_{\Z}\Q$
is generated by $\CH^{1}(Y_{L,0}')\otimes_{\Z}\Q$, where $D'$ is
the strict transform. We put $\varphi_{L}\colon Y_{L}'\to Y$ the
composition of $\varphi_{L,0}$ and the blow-up $\psi_{L}\colon Y_{L}'\to Y_{L,0}'$
at $D'$. Then $\varphi_{L}$ is a good blow-up with respect to $V$,
and $(\varphi_{L}|_{V_{L}'})^{*}L$ is given by a sum of intersections
of $V_{L}'$ and exceptional divisors of $\varphi_{L}\colon Y'_{L}\to Y$.
Thus $(\varphi_{L}|_{V_{L}'})^{*}L$ is isomorphic to the restriction
of a line bundle on $Y_{L}'$. 
\end{proof}

\section{Appendix. Six functors for graded sheaves \label{sec:Appendix.}}

Let $k$ be a commutative noetherian ring of finite global dimension
with unit, $I$ a finite set, and $f\colon X\to Y$ a continuous map
of paracompact locally compact Hausdorff spaces. (Paracompactness
is used only for soft sheaves, e.g., $IC_{\Trop}^{p,q}$.) Let 
\[
D(X,\gMod k[T_{i}]_{i\in I}):=D(\Shv(X,\gMod k[T_{i}]_{i\in I}))
\]
 be the derived category of complexes of sheaves of $\gMod k[T_{i}]_{i\in I}$-modules
(see Section \ref{sec:Graded-modules}), and $D^{*}(X,\gMod k[T_{i}]_{i\in I})$
($*=b,+,-$) its full triangulated subcategory of complexes quasi-isomorphic
to bounded (resp. bounded below, bounded above) complexes. In this
section, for readers convenience, we shall see that there exist six
functors on them, and they satisfies the usual compatibility and preserves
constructibility. Constructions and proofs directly follow from the
case of sheaves of $k$-modules or can be given in a parallel way,
see e.g., \cite{KashiwaraSchapiraSheavesonmanifolds90} and \cite{BorelIntersectioncohomology1984}.
We also give some computation of Verdier duality in Subsection \ref{subsec:Appendix Verdier-duality}. 

\subsection{Five functors\label{subsec:Five-functors}}

Remind that 
\[
\Shv(X,\gMod k[T_{i}]_{i\in I})\cong\Shv(X,\Fct(\Z^{I},\Modu k))\cong\Fct(\Z^{I},\Shv(X,\Modu k)).
\]
We have three functors 
\begin{align*}
f_{*} & \colon\Shv(X,\gMod k[T_{i}]_{i\in I})\to\Shv(Y,\gMod k[T_{i}]_{i\in I})\\
f^{*} & \colon\Shv(Y,\gMod k[T_{i}]_{i\in I})\to\Shv(X,\gMod k[T_{i}]_{i\in I})\\
f_{!} & \colon\Shv(X,\gMod k[T_{i}]_{i\in I})\to\Shv(Y,\gMod k[T_{i}]_{i\in I}),
\end{align*}
direct sums of the usual functors for sheaves of $k$-modules at each
grade $a\in\Z^{I}$. We put
\begin{align*}
Rf_{*} & \colon D^{+}(X,\gMod k[T_{i}]_{i\in I})\to D^{+}(Y,\gMod k[T_{i}]_{i\in I})\\
f^{*} & \colon D^{*}(Y,\gMod k[T_{i}]_{i\in I})\to D^{*}(X,\gMod k[T_{i}]_{i\in I})\\
Rf_{!} & \colon D^{+}(X,\gMod k[T_{i}]_{i\in I})\to D^{+}(Y,\gMod k[T_{i}]_{i\in I})
\end{align*}
their derived functors. We also put $R\Gamma:=Ra_{X*}$ and $R\Gamma_{c}:=Ra_{X!}$,
where $a_{X}\colon X\to\left\{ \pt\right\} $. 
\begin{lem}
\label{lem:injective so is each grade }Let $a\in\Z^{I}$. The functor
\begin{align*}
-(a)\colon\Shv(X,\gMod k[T_{i}]_{i\in I}) & \to\Shv(X,\Modu k)\\
F & \mapsto F(a)
\end{align*}
 sends injective objects to injective objects.
\end{lem}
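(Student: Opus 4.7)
My plan is to prove the statement by exhibiting an exact left adjoint to $-(a)$ and then invoking the standard fact that a right adjoint of an exact functor preserves injectives.

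First, I would construct the candidate left adjoint $L_a \colon \Shv(X,\Modu k) \to \Shv(X,\gMod k[T_i]_{i\in I})$. Given $G \in \Shv(X,\Modu k)$, define
\[
(L_a G)(b) := \begin{cases} G & (b \geq a) \\ 0 & (b \not\geq a) \end{cases}
\]
with the multiplication maps $T_i \colon (L_a G)(b) \to (L_a G)(b+e_i)$ taken to be the identity on $G$ whenever both grades satisfy $\geq a$, and zero otherwise. Equivalently, one can write $L_a G = G \otimes_k \bigoplus_{b \geq a} k$ with the evident graded $k[T_i]_{i\in I}$-module structure shifted by $a$. Since $G$ is already a sheaf and $L_a$ is defined grade-wise by either $G$ or $0$, the result is a sheaf.

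Next I would verify that $L_a$ is exact and that $(L_a, -(a))$ is an adjoint pair. Exactness is clear: a sequence $0 \to G_1 \to G_2 \to G_3 \to 0$ in $\Shv(X,\Modu k)$ is exact, and at each grade $b$, $L_a$ applied to it is either the same sequence or the zero sequence, so exactness is preserved. For the adjunction, note that a morphism $\varphi \colon L_a G \to F$ in $\Shv(X,\gMod k[T_i]_{i\in I})$ is a compatible family of $\varphi_b \colon (L_a G)(b) \to F(b)$; for $b \not\geq a$ it must vanish, and for $b \geq a$ the compatibility $\varphi_{b+e_i} = T_i \circ \varphi_b$ forces $\varphi_b = T^{b-a} \circ \varphi_a$. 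Thus $\varphi$ is determined by its component $\varphi_a \colon G \to F(a)$, which may be arbitrary. This yields a natural bijection
\[
\Hom_{\Shv(X,\gMod k[T_i]_{i\in I})}(L_a G, F) \cong \Hom_{\Shv(X,\Modu k)}(G, F(a)).
\]

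Finally, since $-(a)$ is the right adjoint of the exact functor $L_a$, and right adjoints of exact functors preserve injectivity (this is an immediate consequence of the adjunction: if $F$ is injective then $\Hom(-, F(a)) \cong \Hom(L_a(-), F)$ is a composition of an exact functor and an exact functor, hence exact), we conclude that $F(a)$ is injective whenever $F$ is injective. There is no real obstacle here; the only thing to take care of is ensuring the multiplication maps in $L_a G$ are set up correctly so that the adjunction bijection matches morphisms of sheaves of graded modules on the nose, which is automatic from the definition.
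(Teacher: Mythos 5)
Your proposal is correct and follows essentially the same route as the paper: the paper's proof also constructs the exact left adjoint $\geq a(A) := \bigoplus_{b \in \Z_{\geq a}^{I}} A$ with identity structure maps (your $L_a$) and concludes that the right adjoint $-(a)$ preserves injectives. Your verification of the adjunction bijection is just a more explicit spelling-out of what the paper asserts.
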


\begin{proof}
Let 
\begin{align*}
\geq a\colon\Shv(X,\Modu k) & \to\Shv(X,\gMod k[T_{i}]_{i\in I})
\end{align*}
 be the functor given by $\geq a(A):=\bigoplus_{b\in\Z_{\geq a}^{I}}A$
($A\in\Shv(X,\Modu k)$) and the identity map of $A$, here we use
notation similar to Subsection \ref{subsec:The-category-of graded modules}.
This functor is exact, and is left adjoint of $-(a)$. Hence the assertion
holds.
\end{proof}
\begin{rem}
\label{rem: 3 functors are graded-wise}By Lemma \ref{lem:injective so is each grade },
three functors $Rf_{*},f^{*},Rf_{!}$ are given in a graded-wise manner,
e.g., $(Rf_{*}E)(a)\cong Rf_{*}(E(a))$, and $Rf_{*}$ and $Rf_{!}$
have finite cohomological dimensions when so do 
\[
Rf_{*},Rf_{!}\colon D^{+}(X,\Modu k)\to D^{+}(Y,\Modu k).
\]
\end{rem}

\begin{defn}
$F\in\Shv(X,\gMod k[T_{i}]_{i\in I})$ is flat if for $x\in X$, the
stalk $F_{x}\in\gMod k[T_{i}]_{i\in I}$ is flat. 
\end{defn}

We have a forgetful functor $\gMod k[T_{i}]_{i\in I}\to\Modu k[T_{i}]_{i\in I}$.
By abuse of notation, we donote the images of objects and morphisms
under this functor by the same symbols.
\begin{lem}
\label{lem:projective resolution graded modules}Let $P\in\gMod k[T_{i}]_{i\in I}$
be projective as a non-graded $k[T_{i}]_{i\in I}$-module, i.e., as
an object in $\Modu k[T_{i}]_{i\in I}$. Then it is also projective
in $\gMod k[T_{i}]_{i\in I}$. 
\end{lem}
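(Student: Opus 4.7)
My plan is to show that $P$ is a direct summand of a \emph{graded} free $k[T_i]_{i\in I}$-module; since projectivity of graded modules is equivalent to being a direct summand of a direct sum of shifts of $k[T_i]_{i\in I}$ (as such direct sums corepresent the exact functor $E \mapsto \bigoplus_a E(a)$), this suffices.

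First I would build a graded free cover. Writing $P = \bigoplus_{a \in \Z^I} P(a)$, choose for each $a$ a generating set of $P(a)$ as a $k$-module, and collect these as a family of homogeneous elements $\{p_j\}_{j \in J}$ with $p_j \in P(a_j)$. Set
\[
F := \bigoplus_{j \in J} k[T_i]_{i \in I}[-a_j]_{\gr},
\]
so that $F$ has a homogeneous generator $e_j$ in grade $a_j$, and define the grade-preserving surjection $\varphi \colon F \to P$ by $e_j \mapsto p_j$.

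Next, I would extract a graded section from a non-graded one. By the hypothesis that $P$ is projective in $\Modu k[T_i]_{i\in I}$, the surjection $\varphi$ admits a $k[T_i]_{i\in I}$-linear section $s \colon P \to F$, a priori not respecting grading. For $a \in \Z^I$, let $\pr_a \colon F \to F(a)$ denote the $k$-linear projection coming from the decomposition $F = \bigoplus_a F(a)$ of $k$-modules, and define $s' \colon P \to F$ by $s'|_{P(a)} := \pr_a \circ s|_{P(a)}$. Then $s'$ is visibly grade-preserving; a direct check using the fact that $T_i$ shifts the $k$-module grade by $e_i$ shows that $s'$ is $k[T_i]_{i\in I}$-linear; and since $\varphi$ is grade-preserving, for $p \in P(a)$ we have $\varphi(s(p)) = p \in P(a)$, which forces $\varphi(\pr_a s(p)) = p$, so $\varphi \circ s' = \id_P$.

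There is no real obstacle here beyond bookkeeping: the only point requiring care is the verification that the naive grade-$a$ truncation $s'$ of the non-graded section $s$ remains $k[T_i]_{i\in I}$-linear, which follows because $T_i$ sends $F(a)$ to $F(a+e_i)$ and hence commutes with the projections $\pr_a$ in the appropriate sense. Once $s'$ is produced, $P$ is exhibited as a graded direct summand of the graded free module $F$, and the conclusion follows.
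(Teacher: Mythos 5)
Your proof is correct, and it rests on exactly the same key device as the paper's: a non-graded $k[T_{i}]_{i\in I}$-linear map out of $P$ is homogenized by composing, on each grade component $P(a)$, with the projection $\pr_{a}$ onto the $a$-th graded piece of the target, and this projected map remains $k[T_{i}]_{i\in I}$-linear because $T_{i}$ shifts grades by $e_{i}$ and hence intertwines the projections. The only difference is in packaging: the paper applies this homogenization directly to a lift $\varphi\colon P\to B$ of an arbitrary graded surjection $f\colon B\to C$ against a graded map $\psi\colon P\to C$, thereby verifying the lifting property verbatim, whereas you first build a graded free cover $F\to P$, homogenize a non-graded section, and then invoke the fact that graded direct summands of direct sums of shifts $k[T_{i}]_{i\in I}[-a_{j}]_{\gr}$ are projective in $\gMod k[T_{i}]_{i\in I}$. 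Your route requires that last (standard, and correctly justified by the exactness of $E\mapsto E(a)$) auxiliary fact, which the paper's direct verification avoids; on the other hand, your argument makes explicit the graded free presentation of $P$, which is in the spirit of how the lemma is used in Corollary \ref{cor:projective resolution finite length}. Either way the conclusion is sound.
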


\begin{proof}
Let $B\stackrel{f}{\to}C\to0$ be an exact sequence in $\gMod k[T_{i}]_{i\in I}$,
and $\psi\colon P\to C$ a morphism in $\gMod k[T_{i}]_{i\in I}$.
Then there exists a morphism $\varphi\colon P\to B$ in $\Modu k[T_{i}]_{i\in I}$
such that $\psi=f\circ\varphi$. For each $a\in\Z^{I}$, we have 
\[
\psi(a)=f(a)\circ\pr_{a}\circ\varphi|_{P(a)}\colon P(a)\to C(a),
\]
 where $pr_{a}\colon B\to B(a)$ is the projection. We put 
\[
\phi(a):=\pr_{a}\circ\varphi|_{P(a)}\colon P(a)\to B(a).
\]
Then it is easy to see that $\phi=(\phi(a))_{a\in\Z^{I}}\colon P\to B$
is a morphism in $\gMod k[T_{i}]_{i\in I}$ satisfying $\psi=f\circ\phi$. 
\end{proof}
\begin{cor}
\label{cor:projective resolution finite length}Every object $B$
of $\gMod k[T_{i}]_{i\in I}$ has a projective resolution whose length
is at most the global dimension of $B$ as a non-graded $k[T_{i}]_{i\in I}$-module.
\end{cor}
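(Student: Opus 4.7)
The strategy is to build a resolution of $B$ in $\gMod k[T_i]_{i\in I}$ by graded modules that happen to be free (hence projective) already as non-graded $k[T_i]_{i\in I}$-modules, and then invoke Lemma \ref{lem:projective resolution graded modules} to cap the resolution at the point where the running kernel first becomes non-graded projective.

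First I would observe that every $B \in \gMod k[T_i]_{i\in I}$ admits a surjection $\pi_0 \colon F_0 \twoheadrightarrow B$ from a graded free module of the form $F_0 = \bigoplus_\lambda k[T_i]_{i\in I}[a_\lambda]_{\gr}$: take one summand for each homogeneous element of $B$. Such an $F_0$ is free, and in particular projective, as an object of $\Modu k[T_i]_{i\in I}$. Let $K_0 := \Ker \pi_0 \in \gMod k[T_i]_{i\in I}$. I would then iterate, constructing graded free $F_j$ with surjections $F_j \twoheadrightarrow K_{j-1}$ and kernels $K_j$; the $F_j$ and their differentials automatically assemble into an exact complex of graded modules, each $F_j$ being free and hence projective in both categories.

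The key estimate is the standard fact from non-graded homological algebra: in any short exact sequence $0 \to K \to F \to M \to 0$ with $F$ projective, one has $\pd_{k[T_i]_{i\in I}}(K) \leq \max\{0, \pd_{k[T_i]_{i\in I}}(M) - 1\}$. Applied inductively, this gives
\[
\pd_{k[T_i]_{i\in I}}(K_j) \leq \max\{0, \pd_{k[T_i]_{i\in I}}(B) - j - 1\}.
\]
Let $d := \pd_{k[T_i]_{i\in I}}(B)$. Then $K_{d-1}$ is projective as a non-graded $k[T_i]_{i\in I}$-module. By Lemma \ref{lem:projective resolution graded modules}, $K_{d-1}$ is therefore projective also in $\gMod k[T_i]_{i\in I}$, so truncating with $P_d := K_{d-1}$ yields an exact sequence
\[
0 \to P_d \to F_{d-1} \to F_{d-2} \to \cdots \to F_0 \to B \to 0
\]
in $\gMod k[T_i]_{i\in I}$ with every term projective in the graded category, of length $d$ as required.

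There is no real obstacle here; the only point that warrants care is that the kernel $K_j$ must be regarded both as a graded object (to form the next step of the graded resolution) and as an ordinary $k[T_i]_{i\in I}$-module (to apply the non-graded projective-dimension estimate). Since the underlying-module functor $\gMod k[T_i]_{i\in I} \to \Modu k[T_i]_{i\in I}$ is exact and faithful, the kernel computed in the graded category agrees with the one computed in the non-graded category, so these two viewpoints are consistent, and Lemma \ref{lem:projective resolution graded modules} bridges them at the final step.
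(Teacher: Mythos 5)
Your proposal is correct and is essentially the same argument as the paper's: resolve $B$ by graded free modules (shifts of $k[T_i]_{i\in I}$), observe that the kernel at the step indexed by the non-graded projective dimension is projective in $\Modu k[T_i]_{i\in I}$, and apply Lemma \ref{lem:projective resolution graded modules} to conclude it is projective in the graded category. The dimension-shifting estimate you spell out is left implicit in the paper, but the route is identical.
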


\begin{proof}
Let 
\[
0\to P_{r}\to\dots\to P_{0}\to B\to0
\]
 be a resolution in $\gMod k[T_{i}]_{i\in I}$ (which is also exact
in $\Modu k[T_{i}]_{i\in I}$) such that $P_{i}$ ($0\leq i\leq r-1$)
is a direct sum of modules of the form $\bigoplus_{a\in\Z_{\geq b}^{I}}k$
($b\in\Z^{I}$) (in particular, projective), and $P_{r}$ is projective
in $\Modu k[T_{i}]_{i\in I}$. Then by Lemma \ref{lem:projective resolution graded modules},
$P_{r}$ is also projective in $\gMod k[T_{i}]_{i\in I}$. 
\end{proof}
\begin{rem}
\label{rem:injective resolution finite length}By Corollary \ref{cor:projective resolution finite length}
and the same discussion as \cite[Section 4.1]{WeibelAnintroductiontohomologicalalgebra1984},
every object $B$ of $\gMod k[T_{i}]_{i\in I}$ also has an injective
resolution whose length is at most the global dimension of $B$ as
a non-graded $k[T_{i}]_{i\in I}$-module.
\end{rem}

\begin{rem}
\label{rem:flat resolution of finite length}By Corollary \ref{cor:projective resolution finite length},
as in the case of non-graded modules, every element of $\Shv(X,\gMod k[T_{i}]_{i\in I})$
has a flat resolution of finite length. 
\end{rem}

\begin{defn}
For $F,G\in\Shv(X,\gMod k[T_{i}]_{i\in I})$, we put $F\otimes G\in\Shv(X,\gMod k[T_{i}]_{i\in I})$
the sheafification of 
\[
U\mapsto F(U)\otimes_{k[T_{i}]_{i\in I}}G(U).
\]
By Remark \ref{rem:flat resolution of finite length}, we have a derived
functor 
\begin{align*}
-\overset{\L}{\otimes}- & \colon D^{b}(X,\gMod k[T_{i}]_{i\in I})\times D^{b}(X,\gMod k[T_{i}]_{i\in I})\to D^{b}(X,\gMod k[T_{i}]_{i\in I}).
\end{align*}
\end{defn}

\begin{defn}
For $F,G\in\Shv(X,\gMod k[T_{i}]_{i\in I})$, we put $\gHoms(F,G)$
the sheaf
\[
U\mapsto\bigoplus_{a\in\Z^{I}}\Hom_{\Shv(U,\gMod k[T_{i}]_{i\in I})}(F|_{U},G|_{U}[a]_{\gr}).
\]
We put 
\begin{align*}
\RgHoms & \colon D^{-}(X,\gMod k[T_{i}]_{i\in I})^{\op}\times D^{+}(X,\gMod k[T_{i}]_{i\in I})\to D^{+}(X,\gMod k[T_{i}]_{i\in I})
\end{align*}
 its derived functor. We also put $\RgHom:=R\Gamma\circ\RgHoms$. 
\end{defn}

\begin{defn}
\label{def:h^! locally closed map}For a locally closed subset $h\colon W\hookrightarrow X$,
we put
\[
h^{!}\colon\Shv(X,\gMod k[T_{i}]_{i\in I})\to\Shv(W,\gMod k[T_{i}]_{i\in I})
\]
 the direct sum of $h^{!}$ for sheaves of $k$-modules at each grade,
and put 
\[
Rh^{!}\colon D^{+}(X,\gMod k[T_{i}]_{i\in I})\to D^{+}(W,\gMod k[T_{i}]_{i\in I})
\]
 its derived functor, which is given in a grade-wise manner by Lemma
\ref{lem:injective so is each grade }.
\end{defn}

For $E\in D^{+}(X,\gMod k[T_{i}]_{i\in I})$, a closed subset $j\colon Z\hookrightarrow X$,
and its complement $i\colon U\hookrightarrow X$, we have two distinguished
triangles
\begin{align*}
Ri_{!}i^{*}E\to E\to j_{*}j^{*}E\to^{[1]}\cdot\\
j_{*}j^{!}E\to E\to Ri_{*}i^{*}E\to^{[1]}\cdot.
\end{align*}

\begin{defn}
Let $F\in\Shv(X,\gMod k[T_{i}]_{i\in I})$ be a sheaf. 
\begin{itemize}
\item $F$ is \emph{flabby} if $F(X)\to F(U)$ is surjective for any open
subset $U\subset X$. 
\item $F$ is \emph{soft} if $F(X)\to F|_{Z}(Z)$ is surjective for any
closed subset $Z\subset X$.
\item $F$ is \emph{c-soft} if $F(X)\to F|_{K}(K)$ is surjective for any
compact subset $K\subset X$.
\item $F$ is \emph{$f$-soft} if $F|_{f^{-1}(y)}$ is c-soft for any $y\in Y$.
\end{itemize}
\end{defn}

Injective sheaves are flabby, flabby sheaves are soft, soft sheaves
are c-soft, and c-soft sheaves are $f$-soft. 
\begin{rem}
Remind that the derived functor of a left (resp. right) exact functor
$F$ is computable by $F$-injective (resp. $F$-projective) full
subcategories (\cite[Definition 1.8.2]{KashiwaraSchapiraSheavesonmanifolds90}).
Here is a list of $F$-injective (resp. $F$-projective) full subcategories
similar to \cite[Table 1.4.1]{AcharPerversesheavesandapplicationstorepresentationtheory2021}
(see \cite[Chapter II, Section 9]{BredonSheafTheory1997} for soft
sheaves). (This follows from Lemma \ref{lem:injective so is each grade }
and the fact that surjectivity in $\gMod k[T_{i}]_{i\in I}$ are computed
in the grade-wise manner.)
\begin{itemize}
\item $f^{*}$ is exact.
\item $Rf_{*}$ is left exact. Injective sheaves, flabby sheaves, soft sheaves
consist $Rf_{*}$-injective subcategories, respectively. 
\item $Rf_{!}$ is left exact. Injective sheaves, flabby sheaves, soft sheaves,
c-soft sheaves, and $f$-soft sheaves consist $Rf_{!}$-injective
subcategories, respectively. 
\item $\overset{\L}{\otimes}$ is right exact in both variables. The pair
of $\Shv(X,\gMod k[T_{i}]_{i\in I})$ and the full subcategory of
flat sheaves is $\overset{\L}{\otimes}$-projective (\cite[Definition 1.10.6]{KashiwaraSchapiraSheavesonmanifolds90}).
\item $\RgHoms$ is left exact in both variables. The pair of $\Shv(X,\gMod k[T_{i}]_{i\in I})$
and the full subcategory of injective sheaves is $\RgHoms$-injective.
\end{itemize}
The list of relations of these classes and functors $\Shv(X,\Modu k)$
in \cite[Table 1.4.1]{AcharPerversesheavesandapplicationstorepresentationtheory2021}
also holds for $\Shv(X,\gMod k[T_{i}]_{i\in I})$. The relations give
various compatibility of these functors. See \cite[Section 1.4]{AcharPerversesheavesandapplicationstorepresentationtheory2021}
and also \cite[Section 2.6]{KashiwaraSchapiraSheavesonmanifolds90}. 
\end{rem}

\begin{thm}
(projection formula \cite[Theorem 1.4.9]{AcharPerversesheavesandapplicationstorepresentationtheory2021})\label{thm:projection formula}For
$F\in D^{b}(X,\gMod k[T_{i}]_{i\in I})$ and $G\in D^{b}(Y,\gMod k[T_{i}]_{i\in I})$,
we have $f_{!}F\stackrel{\L}{\otimes}G\cong f_{!}(F\stackrel{\L}{\otimes}f^{*}G)$.
\end{thm}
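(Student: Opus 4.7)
The strategy is to reduce to the classical projection formula for sheaves of $k$-modules (which is the statement cited from \cite[Theorem 1.4.9]{AcharPerversesheavesandapplicationstorepresentationtheory2021}, or \cite[Proposition 2.6.6]{KashiwaraSchapiraSheavesonmanifolds90}) by exploiting the fact that $Rf_{!}$ and $f^{*}$ act grade-wise on $\gMod k[T_{i}]_{i\in I}$, as recorded in Remark \ref{rem: 3 functors are graded-wise}.

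First I would construct the natural morphism $\alpha\colon f_{!}F\overset{\mathrm{L}}{\otimes}G\to f_{!}(F\overset{\mathrm{L}}{\otimes}f^{*}G)$. After replacing $G$ by a bounded flat resolution $G^{\flat}$ (available by Remark \ref{rem:flat resolution of finite length}) and $F$ by a bounded c-soft, hence $f_{!}$-acyclic, resolution $F^{\sharp}$, the map $\alpha$ is defined on sections by the formula $(s,g)\mapsto s\otimes f^{*}g$. This is manifestly $\Z^{I}$-graded and $k[T_{i}]_{i\in I}$-bilinear, hence descends to the graded tensor product to yield a morphism in $D^{b}(Y,\gMod k[T_{i}]_{i\in I})$ which is independent of the choice of resolutions and natural in both variables.

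Second, I would verify that $\alpha$ is a quasi-isomorphism by checking grade-by-grade on stalks. By Remark \ref{rem: 3 functors are graded-wise}, the exact functor $-(a')$ commutes with $Rf_{!}$, $f^{*}$, and taking stalks, so $(Rf_{!}H)(a')_{y}\cong R\Gamma_{c}(f^{-1}(y),H(a')|_{f^{-1}(y)})$ for each $a'\in\Z^{I}$ and each $y\in Y$. Combined with the presentation of the graded tensor product at grade $a$ as the quotient of $\bigoplus_{a_{1}+a_{2}=a}(-)(a_{1})\otimes_{k}(-)(a_{2})$ by the $T_{i}$-multiplication relations, and with the identification $(f^{*}G)(a_{2})|_{f^{-1}(y)}\cong\underline{G(a_{2})_{y}}$, the verification reduces to showing that for each $(a_{1},a_{2})$ with $a_{1}+a_{2}=a$ the natural map
\[
R\Gamma_{c}(f^{-1}(y),F(a_{1})|_{f^{-1}(y)})\otimes_{k}G(a_{2})_{y}\to R\Gamma_{c}(f^{-1}(y),F(a_{1})|_{f^{-1}(y)}\otimes_{k}\underline{G(a_{2})_{y}})
\]
is a quasi-isomorphism — precisely the classical projection formula for $k$-modules applied to $a_{f^{-1}(y)}\colon f^{-1}(y)\to\{\mathrm{pt}\}$.

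The main obstacle will be showing that the isomorphisms on the individual summands $\bigoplus_{a_{1}+a_{2}=a}$ descend to an isomorphism after imposing the $T_{i}$-relations that define the graded tensor product. The relations on both sides of $\alpha$ correspond naturally: on the left they come from multiplying $T_{i}$ across $(Rf_{!}F)(a_{1})\otimes_{k}G(a_{2})$, on the right from multiplying $T_{i}$ across $F(a_{1})\otimes_{k}(f^{*}G)(a_{2})$, and these are interchanged by the classical projection formula because $f^{*}$ is $k[T_{i}]$-linear grade-wise. Since the space $f^{-1}(y)$ is paracompact and everything is computed via bounded c-soft complexes, $R\Gamma_{c}$ is exact on c-soft sheaves and commutes with the finite colimits imposing these relations, so the quotients are compatible and $\alpha$ is a quasi-isomorphism.
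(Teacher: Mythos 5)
Your proposal is correct and is essentially the paper's argument: the paper's proof simply invokes graded versions of Kashiwara--Schapira's Lemma 2.5.12 and Proposition 2.5.13, and your grade-wise stalk computation with c-soft/flat resolutions, reducing to the classical projection formula over each fiber $f^{-1}(y)$ and checking compatibility with the $T_{i}$-relations defining the graded tensor product, is exactly what those graded versions amount to. You spell out more detail than the paper does, but there is no difference in method.
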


\begin{proof}
Similarly to the case of non-graded sheaves, this follows from graded
versions of \cite[Lemma 2.5.12]{KashiwaraSchapiraSheavesonmanifolds90}
and \cite[Proposition 2.5.13]{KashiwaraSchapiraSheavesonmanifolds90},
which can also be proved similarly.
\end{proof}

\subsection{Verdier duality\label{subsec:Appendix Verdier-duality}}

In this subsection, we assume that 
\[
Rf_{!}\colon D^{+}(X,\Modu k)\to D^{+}(Y,\Modu k).
\]
 has finite cohomological dimension (cf. Remark \ref{rem: 3 functors are graded-wise}).
In the same way as \cite[Section 3.1]{KashiwaraSchapiraSheavesonmanifolds90},
the right adjoint 
\[
f^{!}\colon D^{+}(Y,\gMod k[T_{i}]_{i\in I})\to D^{+}(X,\gMod k[T_{i}]_{i\in I})
\]
 of $Rf_{!}$ exists. This is constructed as follows, for details
see {[}loc.cit.{]}. Let $K^{*}$ be a flat and f-soft resolution of
$\underline{\Z}_{X}$ of finite length. For an injective sheaf $F\in\Shv(Y,\gMod k[T_{i}]_{i\in I})$,
we put $f_{K^{i}}^{!}(F)\in\Shv(X,\gMod k[T_{i}]_{i\in I})$ an injective
sheaf 
\[
U\mapsto\gHom_{\Shv(Y,\gMod k[T_{i}]_{i\in I})}(f_{!}(\underline{k[T_{i}]_{i\in I}}_{X}\otimes\Ker(K^{i}\to j_{U*}j_{U}^{*}K^{i})),F),
\]
 where $j_{U}\colon X\setminus U\hookrightarrow X$ is the complement,
and for sheaves $G\in\Shv(X,\gMod k[T_{i}]_{i\in I})$ and $K\in\Shv(X,\Modu\Z)$,
we put $G\otimes K\in\Shv(X,\gMod k[T_{i}]_{i\in I})$ the sheaf given
by 
\[
(G\otimes K)(a):=G(a)\otimes_{\underline{\Z}_{X}}K
\]
 and morphisms $T_{i}\otimes\Id_{K}$. For a bounded below injective
complex $F^{*}$ on $Y$, we put $f^{!}F^{*}$ the single complex
associated with the double complex $(f_{K^{-q}}^{!}F^{p})^{p,q}$.
Using injective resolutions, this gives the functor 
\[
f^{!}\colon D^{+}(Y,\gMod k[T_{i}]_{i\in I})\to D^{+}(X,\gMod k[T_{i}]_{i\in I}).
\]
Let $a_{X}\colon X\to\left\{ \pt\right\} $. We put $\omega_{X}[T_{i}]_{i\in I}:=a_{X}^{!}k[T_{i}]_{i\in I}$
the dualizing complex and 
\begin{align*}
D=D_{X}\colon D^{-}(X,\gMod k[T_{i}]_{i\in I})^{\op} & \to D^{+}(X,\gMod k[T_{i}]_{i\in I})\\
F & \mapsto\RgHoms(F,\omega_{X}[T_{i}]_{i\in I})
\end{align*}
 the Verdier dual. 
\begin{lem}
\label{lem:Achar 1.5.15}(\cite[Lemma 1.5.15]{AcharPerversesheavesandapplicationstorepresentationtheory2021})
For $F,G\in D^{b}(Y,\gMod k[T_{i}]_{i\in I})$, we have
\[
f^{!}\RgHoms(F,G)\cong\RgHoms(f^{*}F,f^{!}G).
\]
In particular, we have $f^{!}D\cong Df^{*}$. 
\end{lem}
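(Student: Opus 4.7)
The plan is to prove this by a standard Yoneda argument, reducing to the projection formula of Theorem \ref{thm:projection formula} together with the two adjunctions $(Rf_!, f^!)$ and $(- \overset{\L}{\otimes} -, \RgHoms)$, each of which has already been set up in the graded setting by the constructions of Subsection \ref{subsec:Five-functors} and the existence of flat/injective resolutions of finite length (Remark \ref{rem:flat resolution of finite length}, Remark \ref{rem:injective resolution finite length}).

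First I would establish the internal tensor–Hom adjunction in the graded derived category: for $A, B, C \in D^b(X, \gMod k[T_i]_{i\in I})$,
\[
\Hom_{D^b(X,\gMod k[T_i]_{i\in I})}(A \overset{\L}{\otimes} B, C) \cong \Hom_{D^b(X,\gMod k[T_i]_{i\in I})}(A, \RgHoms(B, C)).
\]
This is proved as in the ungraded case by taking a flat resolution of $B$ and an injective resolution of $C$, using Lemma \ref{lem:injective so is each grade } to see that grading is compatible with the required adjunction computations.

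Next, to prove the desired isomorphism, by Yoneda it suffices to exhibit, functorially in $H \in D^b(X, \gMod k[T_i]_{i\in I})$, an isomorphism
\[
\Hom(H, f^! \RgHoms(F, G)) \cong \Hom(H, \RgHoms(f^* F, f^! G)).
\]
Chasing adjunctions on the left gives
\[
\Hom(H, f^! \RgHoms(F, G)) \cong \Hom(Rf_! H, \RgHoms(F, G)) \cong \Hom(Rf_! H \overset{\L}{\otimes} F, G),
\]
while on the right,
\[
\Hom(H, \RgHoms(f^* F, f^! G)) \cong \Hom(H \overset{\L}{\otimes} f^* F, f^! G) \cong \Hom(Rf_!(H \overset{\L}{\otimes} f^* F), G).
\]
Applying the projection formula (Theorem \ref{thm:projection formula}) to identify $Rf_!(H \overset{\L}{\otimes} f^* F) \cong Rf_! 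H \overset{\L}{\otimes} F$ completes the proof. The particular case $f^! D \cong D f^*$ is then obtained by taking $G = \omega_Y[T_i]_{i\in I}$, since $f^!(\omega_Y[T_i]_{i\in I}) = f^! a_Y^! k[T_i]_{i\in I} \cong (a_Y \circ f)^! k[T_i]_{i\in I} = \omega_X[T_i]_{i\in I}$ by the functoriality $(g \circ f)^! \cong f^! g^!$, which again follows formally from the adjunction.

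The only genuinely nontrivial point is checking that the projection formula and the internal tensor–Hom adjunction really do survive in the graded setting; but Theorem \ref{thm:projection formula} is already stated, and the internal adjunction reduces grade-by-grade to the ordinary statement once one has flat resolutions of finite length together with Lemma \ref{lem:injective so is each grade }. Everything else is formal adjunction juggling, so I expect no serious obstacle beyond bookkeeping the shift-of-grade functor $[b]_{\gr}$ correctly when unwinding $\gHom$ into its graded pieces.
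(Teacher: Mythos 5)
Your proposal is correct and follows essentially the same route as the paper: the paper's proof is precisely the Yoneda argument combining the projection formula (Theorem \ref{thm:projection formula}) with the $(Rf_!,f^!)$ and tensor--$\RgHoms$ adjunctions, which you have simply written out in more detail. Your derivation of the special case $f^!D\cong Df^*$ via $f^!\omega_Y[T_i]_{i\in I}\cong\omega_X[T_i]_{i\in I}$ is also the intended one.
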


\begin{proof}
For $H\in D^{b}(X,\gMod k[T_{i}]_{i\in I})$, by projection formula
$f_{!}H\stackrel{\L}{\otimes}F\cong f_{!}(H\stackrel{\L}{\otimes}f^{*}F)$
and adjunction, we have 
\[
\Hom(H,f^{!}\RgHoms(F,G))\cong\Hom(H,\RgHoms(f^{*}F,f^{!}G)).
\]
Hence the assertion follows from Yoneda's lemma. 
\end{proof}
We shall describe $\omega_{X}[T_{i}]_{i\in I}$ and $D$ more explicitly.
\begin{lem}
Let $A$ be an injective $k$-module, and $J\subset I$ a subset.
Then $\bigoplus_{a\in\Z_{\leq(-1,\dots,-1)}^{J}\times\Z^{I\setminus J}}A$
is an injective graded $k[T_{i}]_{i\in I}$-modules. 
\end{lem}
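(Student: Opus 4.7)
\smallskip
\noindent\textbf{Proof proposal.} My plan is to identify a left adjoint for the Hom functor into $M := \bigoplus_{a \in \Omega} A$, where $\Omega := \Z^{J}_{\leq (-1,\dots,-1)} \times \Z^{I \setminus J}$, and then verify that this left adjoint is exact. First, I would unwind what a graded morphism $\varphi \colon N \to M$ amounts to. It is a collection of $k$-linear maps $\varphi_{a} \colon N(a) \to M(a)$ which must be natural with respect to all the $T_i$. Writing a grade as $(b',a') \in \Z^{J} \times \Z^{I \setminus J}$ and observing that $M(b',a') = A$ if $(b',a') \in \Omega$ and zero otherwise (with $T_i$ on $M$ being the identity of $A$ whenever both source and target lie in $\Omega$, and zero otherwise), the naturality squares break into three cases: (i) for $i \in I \setminus J$, the condition $\varphi_{b',a'} = \varphi_{b', a'+e_i} \circ T_i$; (ii) for $i \in J$ with $b'_i \leq -2$, the condition $\varphi_{b',a'} = \varphi_{b' + e_i, a'} \circ T_i$; (iii) for $i \in J$ with $b'_i = -1$, the condition is automatic since both composites land in $0$.

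Condition (ii) shows that $\varphi$ is determined on $\Omega$ by its values at the ``top slice'' $b' = (-1,\dots,-1)$. Condition (i) then says that the family $g_{a'} := \varphi_{(-1,\dots,-1),a'} \colon N((-1,\dots,-1),a') \to A$ satisfies $g_{a'} = g_{a'+e_i} \circ T_i$ for each $i \in I \setminus J$, i.e., is a compatible family in the inverse system $\{\Hom_k(N((-1,\dots,-1),a'),A)\}_{a' \in \Z^{I \setminus J}}$. By the universal property of filtered colimits, such families correspond exactly to $k$-linear maps
\[
\Phi(N) := \varinjlim_{a' \in \Z^{I \setminus J}} N((-1,\dots,-1),a') \longrightarrow A,
\]
the transition maps of the colimit being the $T_i$ for $i \in I \setminus J$. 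Hence I obtain a functorial isomorphism
\[
\Hom_{\gMod k[T_i]_{i \in I}}(N, M) \cong \Hom_k(\Phi(N), A).
\]

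Finally, I would observe that $\Phi$ is exact: each evaluation functor $N \mapsto N((-1,\dots,-1),a')$ is exact on $\gMod k[T_i]_{i \in I}$ (since kernels and images are computed in a grade-wise manner, cf.\ Subsection \ref{subsec:The-category-of graded modules}), and filtered colimits in $\Modu k$ are exact. Composing with the exact functor $\Hom_k(-,A)$, which is exact because $A$ is injective over $k$, shows that $\Hom_{\gMod k[T_i]_{i \in I}}(-, M)$ is exact, i.e., $M$ is injective. The only mildly delicate point is the bookkeeping in case (iii) above, i.e., checking that the naturality condition at the ``boundary'' $b'_i = -1$ imposes no extra constraint; this is immediate once one notes that $T_i$ on $M$ is zero there. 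No heavy machinery is needed beyond exactness of filtered colimits.
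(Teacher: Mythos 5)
Your argument is correct, but it takes a genuinely different route from the paper. The paper proves the lemma via a graded analogue of Baer's criterion: it takes a morphism $\psi\colon J[b]_{\gr}\to M$ from a shifted finitely generated graded ideal, uses the shape of $\Omega:=\Z_{\leq(-1,\dots,-1)}^{J}\times\Z^{I\setminus J}$ to show that $\psi$ is controlled by its restriction to a single grade $((-1,\dots,-1),c)$ with $c\in\Z^{I\setminus J}$ chosen large enough to dominate the grades of the relevant generators, extends there using injectivity of $A$ over $k$, and propagates the extension back over all of $\Omega$. You instead compute the representing object of $\Hom_{\gMod k[T_{i}]_{i\in I}}(-,M)$ directly, obtaining a natural isomorphism with $\Hom_{k}(\Phi(-),A)$ for the exact functor $\Phi(N)=\varinjlim_{a'}N((-1,\dots,-1),a')$; equivalently, you exhibit $A\mapsto\bigoplus_{a\in\Omega}A$ as right adjoint to an exact functor, so it preserves injectives. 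Your bookkeeping is right: the only naturality squares that impose conditions are those with both grades in $\Omega$, the $J$-directions collapse everything onto the top slice $b'=(-1,\dots,-1)$, and the remaining constraints in the $I\setminus J$-directions are exactly the data of a cone out of the filtered system, whence the identification with $\Hom_k$ of the colimit. What each approach buys: yours is self-contained (it does not need the graded Baer criterion, which the paper only asserts "can be seen in the same way as non-graded rings", nor finite generation of the test ideal) and matches the adjunction technique the paper itself uses to show that evaluation at a grade preserves injectives; the paper's Baer-type argument is more hands-on and only requires checking extensions against finitely generated ideals. Both are valid proofs.
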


\begin{proof}
A graded $k[T_{i}]_{i\in I}$-module $B$ is injective if and only
if for a finitely generated ideal $J\subset k[T_{i}]_{i\in I}$ in
$\gMod k[T_{i}]_{i\in I}$, and an element $b\in\Z^{I}$, any morphism
$J[b]_{\gr}\to B$ in $\gMod k[T_{i}]_{i\in I}$ extends to $k[T_{i}]_{i\in I}[b]_{\gr}\to B$.
(This can be seen in the same way as non-graded rings, see e.g., \cite[Baer's Criterion 2.3.1]{WeibelAnintroductiontohomologicalalgebra1984}.)
Let 
\[
\psi\colon J[b]_{\gr}\to\bigoplus_{a\in\Z_{\leq(-1,\dots,-1)}^{J}\times\Z^{I\setminus J}}A
\]
 be a morphism. Let $j_{1},\dots,j_{r}\in J$ be a generator of pure
grades $c_{1},\dots,c_{r}\in\Z^{I}$. Then $\psi$ is determined by
$\psi(j_{l})\in A$ for $l$ with $c_{l}-b\in\Z_{\leq(-1,\dots,-1)}^{J}\times\Z^{I\setminus J}$.
Let $c\in\Z^{I\setminus J}$ be an element such that 
\[
c_{l}-b\leq((-1,\dots,-1),c)
\]
for such $l$. Then
\[
J[b]_{\gr}(((-1,\dots,-1),c))=k\langle c_{l}\rangle_{c_{l}-b\in\Z_{\leq(-1,\dots,-1)}^{J}\times\Z^{I\setminus J}}.
\]
Since $A$ is an injective $k$-module, $k$-linear morphism
\[
\psi|_{((-1,\dots,-1),c)}\colon J[b]_{\gr}(((-1,\dots,-1),c))\to A
\]
extends to $\varphi\colon k\to A$. It induces a morphism 
\[
k[T_{i}]_{i\in I}[b]_{\gr}\to\bigoplus_{a\in\Z_{\leq(-1,\dots,-1)}^{J}\times\Z^{I\setminus J}}A
\]
 in $\gMod k[T_{i}]_{i\in I}$. By construction, this is an extension
of $\psi$. Thus $\bigoplus_{a\in\Z_{\leq(-1,\dots,-1)}^{J}\times\Z^{I\setminus J}}A$
is injective.
\end{proof}
\begin{cor}
\label{cor:injective resolution of kTi}Let $A^{*}$ be an injective
resolution of $k$ as a $k$-module. Then the total complex 
\[
\Tot(\bigoplus_{a\in\Z^{I}}A^{*}\to\bigoplus_{i\in I}\bigoplus_{a\in\Z_{\leq-1}^{\left\{ i\right\} }\times\Z^{I\setminus\left\{ i\right\} }}A^{*}\to\bigoplus_{\substack{\left\{ i,j\right\} \subset I\\
\#\left\{ i,j\right\} =2
}
}\bigoplus_{a\in\Z_{\leq(-1,-1)}^{\left\{ i,j\right\} }\times\Z^{I\setminus\left\{ i,j\right\} }}A^{*}\to\dots\to\bigoplus_{a\in\Z_{\leq(-1,\dots,-1)}^{I}}A^{*})
\]
 is an injective resolution of $k[T_{i}]_{i\in I}$ as a graded $k[T_{i}]_{i\in I}$-module,
where morphisms are given by $A^{p}\to A^{p+1}$ and projections 
\[
\bigoplus_{a\in\Z_{\leq(-1,\dots,-1)}^{J}\times\Z^{I\setminus J}}A^{p}\to\bigoplus_{a\in\Z_{\leq(-1,\dots,-1)}^{J\cup\left\{ i\right\} }\times\Z^{I\setminus(J\cup\left\{ i\right\} )}}A^{p}
\]
($i\in I\setminus J$) (up to $\{\pm1\}$). 
\end{cor}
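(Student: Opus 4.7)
The plan is to verify two things: that every term of the total complex is injective, and that the total complex is quasi-isomorphic to $k[T_{i}]_{i\in I}$ concentrated in degree $0$ via the evident augmentation. Each term in horizontal position $m$ has the form
\[
\bigoplus_{\substack{J\subset I\\ \#J=m}}\bigoplus_{a\in\Z_{\leq(-1,\dots,-1)}^{J}\times\Z^{I\setminus J}}A^{p},
\]
and each inner summand is injective by the preceding lemma. Since $k[T_{i}]_{i\in I}$ is noetherian by the Hilbert basis theorem, and noetherianness passes to the graded setting, arbitrary direct sums of injective objects in $\gMod k[T_{i}]_{i\in I}$ remain injective; this gives injectivity of each term.

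For quasi-isomorphism I would work one grade at a time. Fix $b=(b_{i})_{i\in I}\in\Z^{I}$ and set $J_{b}:=\{i\in I\mid b_{i}\leq-1\}$. The grade-$b$ piece of $\bigoplus_{a\in\Z_{\leq(-1,\dots,-1)}^{J}\times\Z^{I\setminus J}}A^{p}$ is $A^{p}$ if $J\subset J_{b}$ and $0$ otherwise, so the horizontal complex at grade $b$ becomes
\[
\bigoplus_{\substack{J\subset J_{b}\\ \#J=0}}A^{p}\to\bigoplus_{\substack{J\subset J_{b}\\ \#J=1}}A^{p}\to\cdots\to\bigoplus_{\substack{J\subset J_{b}\\ \#J=\#J_{b}}}A^{p},
\]
whose differential, being an alternating sum of identities on $A^{p}$, factors as $A^{p}\otimes_{k}C^{*}$, where $C^{*}$ is the (augmented) simplicial cochain complex of a $(\#J_{b}-1)$-simplex on vertex set $J_{b}$. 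When $J_{b}=\emptyset$ (equivalently $b\geq 0$), this horizontal complex is just $A^{*}$ in position $0$, and its cohomology is $k$ concentrated in degree $0$. When $J_{b}\neq\emptyset$, the simplex is contractible so $C^{*}$ is acyclic; the horizontal complex is then acyclic, and the standard spectral sequence of the double complex forces the total complex at grade $b$ to be acyclic as well. Collecting grades, the cohomology of the total complex is $k$ at grade $b\geq 0$ and $0$ otherwise, matching $k[T_{i}]_{i\in I}$.

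It remains to identify the augmentation $k[T_{i}]_{i\in I}\to\bigoplus_{a\in\Z^{I}}A^{*}$: at a grade $b\geq 0$ it should be the inclusion $k\hookrightarrow A^{0}$ coming from the chosen injective resolution of $k$, while at $b\not\geq 0$ both sides sit inside the acyclic piece computed above, so compatibility is automatic. Putting all of this together yields the claimed injective resolution.

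The main obstacle is really just the bookkeeping in the second paragraph: pinning down the signs of the horizontal differentials precisely enough to recognize the Koszul/simplicial cochain complex of a simplex, and checking that the natural augmentation lands in the kernel of the degree-$0$ differential grade-by-grade. Both steps are routine but must be handled carefully because the sign conventions on the total complex and on the shifts $[b]_{\gr}$ interact.
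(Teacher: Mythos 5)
Your proof is correct and follows the route the paper clearly intends: injectivity of each term comes from the preceding lemma, and exactness is checked grade by grade, where the horizontal complex at grade $b$ is $A^{*}$ tensored with an acyclic Koszul-type complex unless $b\geq0$. One small simplification: since $I$ is finite, each term of the total complex is a \emph{finite} direct sum of the injectives supplied by the lemma, so you do not need the locally noetherian property of $\gMod k[T_{i}]_{i\in I}$ to conclude injectivity.
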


Let 
\[
\omega_{X}=(\omega_{X}^{j})_{j\in\Z}\in D^{b}(\Shv(X,\Modu k))
\]
 be the dualizing complex with injective sheaves $\omega_{X}^{j}$
($j\in\Z$). 
\begin{lem}
\label{lem:dualizing complex direct sum of dualizing complex}We have
a quasi-isomorphism $\omega_{X}[T_{i}]_{i\in I}\cong\bigoplus_{a\in\Z_{\geq0}^{I}}\omega_{X}$.
\end{lem}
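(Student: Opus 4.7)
The plan is to apply $a_X^!$ to the explicit injective resolution of $k[T_i]_{i\in I}$ from Corollary \ref{cor:injective resolution of kTi} and compute the result grade by grade. Since the terms of that resolution are injective in $\gMod k[T_i]_{i \in I}$, applying $a_X^!$ term-by-term and totalizing yields a complex representing $a_X^!(k[T_i]_{i \in I}) = \omega_X[T_i]_{i\in I}$.

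The core computation is the identification, for each subset $J \subset I$ and each injective $k$-module $A$, of a natural quasi-isomorphism
\[
a_X^!\bigl(\textstyle\bigoplus_{a \in \Z_{\leq(-1,\dots,-1)}^{J}\times \Z^{I\setminus J}} A\bigr) \;\cong\; \bigoplus_{a \in \Z_{\leq(-1,\dots,-1)}^{J}\times \Z^{I\setminus J}} a_X^!(\underline{A}_X),
\]
where the graded sheaf on the right has $a_X^!(\underline{A}_X)$ placed in the prescribed grades with identity multiplication maps wherever possible. I would verify this via the defining adjunction: a graded morphism $G \to \bigoplus_\Omega a_X^! \underline{A}_X$ is a compatible family of $G(a) \to a_X^!\underline{A}_X$ for $a \in \Omega$, which by ordinary Verdier adjunction corresponds to a family $R(a_X)_! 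G(a) \to A$, and, using Remark \ref{rem: 3 functors are graded-wise} that $R(a_X)_!$ is grade-wise, this family is precisely a graded morphism $R(a_X)_! G \to \bigoplus_\Omega A$. Alternatively, one applies the Kashiwara--Schapira construction $a_X^!_{K^\bullet}$ directly to this injective graded sheaf.

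Combining these two steps, $\omega_X[T_i]_{i\in I}$ is represented by the totalization of a double complex whose grade-$a$ part (for $a \in \Z^I$, with $S(a) := \{i \in I : a_i \leq -1\}$) is
\[
a_X^!(A^\bullet) \;\longrightarrow\; \bigoplus_{i \in S(a)} a_X^!(A^\bullet) \;\longrightarrow\; \bigoplus_{\{i,j\}\subset S(a)} a_X^!(A^\bullet) \;\longrightarrow\; \cdots .
\]
For $a \in \Z^I_{\geq 0}$ this reduces to $a_X^!(A^\bullet) \simeq \omega_X$; for $a$ with $S(a) \neq \emptyset$ it is the reduced simplicial chain complex of the full simplex on $S(a)$ tensored with $a_X^!(A^\bullet)$, hence acyclic. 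Grade by grade this identifies $\omega_X[T_i]_{i\in I}$ with $\bigoplus_{a \in \Z^I_{\geq 0}} \omega_X$, the quasi-isomorphism being the adjoint of the family of integration maps $R(a_X)_! \omega_X \to k$ indexed by $a \in \Z^I_{\geq 0}$.

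The main obstacle is the grade-wise computation of $a_X^!$ on the Koszul terms $\bigoplus_{a \in \Omega} \underline{A}_X$, since $a_X^!$ is not a priori grade-wise in the way that $Rf_*$, $f^*$, and $Rf_!$ are. The point that makes this work is the very particular form of these injective graded modules --- direct sums of copies of a single injective $k$-sheaf indexed by a subset of $\Z^I$, with identity multiplication maps wherever both source and target are nonzero --- which is respected by the explicit $f^!_{K^\bullet}$ construction and by the adjunction argument above.
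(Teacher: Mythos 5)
Your proposal is correct and follows essentially the same route as the paper: both apply the explicit $a^{!}_{X,K^{\bullet}}$ model to the injective resolution of $k[T_{i}]_{i\in I}$ from Corollary \ref{cor:injective resolution of kTi}, and both rest on the observation that $\gHom$ out of $\bigoplus_{a\in\Z_{\geq0}^{I}}B$ (equivalently, $a_{X}^{!}$ of the Koszul terms) is computed grade-wise. The only difference is cosmetic: the paper first contracts the augmented Koszul complex inside $\gHom(\bigoplus_{a\in\Z_{\geq0}^{I}}B,-)$ via its displayed exact sequence and then reads off grades, whereas you compute $a_{X}^{!}$ of each Koszul term grade-wise first and then contract the resulting simplex complex grade by grade.
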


\begin{proof}
Let $A^{*}$ be an injective resolution of $k$. We may assume that
$\omega_{X}=a_{X}^{!}A^{*}$. We have an injective resolution of $k[T_{i}]_{i\in I}$
as in Corollary \ref{cor:injective resolution of kTi}. For a $k$-module
$B$ and $j\in\Z$, obviously, we have an exact sequence 
\begin{align*}
0 & \to\gHom(\bigoplus_{a\in\Z_{\geq0}^{I}}B,\bigoplus_{a\in\Z_{\geq0}^{I}}A^{j})\to\gHom(\bigoplus_{a\in\Z_{\geq0}^{I}}B,\bigoplus_{a\in\Z^{I}}A^{j})\\
 & \to\gHom(\bigoplus_{a\in\Z_{\geq0}^{I}}B,\bigoplus_{i\in I}\bigoplus_{a\in\Z_{\leq-1}^{\left\{ i\right\} }\times\Z^{I\setminus\left\{ i\right\} }}A^{j})\to\dots\\
 & \to\gHom(\bigoplus_{a\in\Z_{\geq0}^{I}}B,\bigoplus_{a\in\Z_{\leq(-1,\dots,-1)}^{I}}A^{j})\to0.
\end{align*}
Hence the natural morphism 
\[
\varphi\colon a_{XK^{-q}}^{!}(\bigoplus_{a\in\Z_{\geq0}^{I}}A^{p})\to a_{XK^{-q}}^{!}(\bigoplus_{a\in\Z^{I}}A^{p}\oplus\bigoplus_{i\in I}\bigoplus_{a\in\Z_{\leq-1}^{\left\{ i\right\} }\times\Z^{I\setminus\left\{ i\right\} }}A^{p-1}\oplus\dots\oplus\bigoplus_{a\in\Z_{\leq(-1,\dots,-1)}^{I}}A^{p-\#I})
\]
given by the inclusion $\bigoplus_{a\in\Z_{\geq0}^{I}}A^{p}\hookrightarrow\bigoplus_{a\in\Z^{I}}A^{p}$
induces a quasi-isomorphism of single complexes associated to the
double complex $(a_{XK^{-q}}^{!}(\bigoplus_{a\in\Z_{\geq0}^{I}}A^{p}))^{p,q}$
and 
\[
(a_{XK^{-q}}^{!}(\bigoplus_{a\in\Z^{I}}A^{p}\oplus\bigoplus_{i\in I}\bigoplus_{a\in\Z_{\leq-1}^{\left\{ i\right\} }\times\Z^{I\setminus\left\{ i\right\} }}A^{p-1}\oplus\dots\oplus\bigoplus_{a\in\Z_{\leq(-1,\dots,-1)}^{I}}A^{p-\#I}))^{p,q},
\]
where $K^{*}$ is a flat and f-soft resolution of $\underline{\Z}_{X}$
of finite length, and $a_{XK^{-q}}^{!}(\bigoplus_{a\in\Z_{\geq0}^{I}}A^{p})$
is defined similarly to $a_{XK^{i}}^{!}(F)$ for an injective sheaf
$F\in\Shv(\pt,\gMod k[T_{i}]_{i\in I})$. Thus we have a quasi-isomorphism
$\omega_{X}[T_{i}]_{i\in I}\cong\bigoplus_{a\in\Z_{\geq0}^{I}}\omega_{X}$. 
\end{proof}
\begin{lem}
\label{lem:dual of flat complex} Let $B^{*}\in D^{b}(X,\gMod k[T_{i}]_{i\in I})$
be such that $B^{j}$ ($j\in\Z$) is a direct sum of sheaves of the
form $\bigoplus_{\substack{a\in\Z_{\geq a_{0}}^{I}}
}C$ ($a_{0}\in\Z^{I}$, $C\in\Shv(X,\Modu k)$). Then the quasi-isomorphism
$\omega_{X}[T_{i}]_{i\in I}\cong\bigoplus_{\substack{a\in\Z_{\geq0}^{I}}
}\omega_{X}$ in Lemma \ref{lem:dualizing complex direct sum of dualizing complex}
induces a quasi-isomorphism
\[
D(B^{*})\cong\gHoms(B^{*},\bigoplus_{\substack{a\in\Z_{\geq0}^{I}}
}\omega_{X}).
\]
\end{lem}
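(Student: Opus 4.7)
The plan is to reduce the statement to a grade-wise computation by combining the explicit injective representative of $\omega_X[T_i]_{i\in I}$ constructed in the proof of Lemma \ref{lem:dualizing complex direct sum of dualizing complex} with the freeness of the summands of $B^j$. First, I would fix an injective resolution $A^*$ of $k$ as a $k$-module, together with the flat f-soft resolution $K^*$ of $\underline{\Z}_X$, and use the total complex
\[
I^{*} := \Tot\left(a_{XK^{-q}}^{!}\left(\bigoplus_{a\in\Z^{I}}A^{p}\oplus\cdots\oplus\bigoplus_{a\in\Z_{\leq(-1,\dots,-1)}^{I}}A^{p-\#I}\right)\right)^{p,q}
\]
appearing in the proof of Lemma \ref{lem:dualizing complex direct sum of dualizing complex} as an injective representative of $\omega_X[T_i]_{i\in I}$. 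That proof shows the natural map $\bigoplus_{a\in\Z_{\geq0}^{I}}\omega_{X}\to I^{*}$ induced by the inclusion $\bigoplus_{a\in\Z_{\geq0}^{I}}A^{p}\hookrightarrow\bigoplus_{a\in\Z^{I}}A^{p}$ is a quasi-isomorphism, and each $I^{j}$ is an injective graded sheaf by the lemma immediately preceding it.

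Next, I would compute $\gHoms$ from a typical summand $\bigoplus_{a\geq a_{0}}C$ of $B^{j}$: a grade-preserving morphism $\bigoplus_{a\geq a_{0}}C\to E[b]_{\gr}$ is determined by its value at grade $a_{0}$ (since every $T_{i}$ on the source acts as the identity $C\to C$), so
\[
\gHoms\bigl(\bigoplus_{a\geq a_{0}}C,E\bigr)(b)\cong\Homs(C,E(a_{0}+b)).
\]
Applied to both $E=\bigoplus_{a\in\Z_{\geq0}^{I}}\omega_{X}$ and $E=I^{*}$, this identifies $\gHoms(B^{j},\bigoplus_{a}\omega_{X})(b)$ with a sum of $\Homs(C,\omega_X)$'s indexed by summands with $a_{0}+b\geq0$, and identifies $\gHoms(B^{j},I^{*})(b)$ with the corresponding total complex obtained by applying $\Homs(C,a_{X K^{-q}}^{!}(-))$ to the Koszul-like resolution in grade $a_{0}+b$. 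By the grade-wise exact sequence spelled out in the proof of Lemma \ref{lem:dualizing complex direct sum of dualizing complex} (applied with $B=C$ and a shift by $a_{0}+b$), the induced map of complexes is a quasi-isomorphism in every grade $b$. Since $\gHoms$ commutes with direct sums in the first variable, this quasi-isomorphism passes to all of $B^{j}$.

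Finally, I would assemble these grade-wise and termwise statements into a quasi-isomorphism of double complexes $\gHoms(B^{*},\bigoplus_{a}\omega_{X})\to\gHoms(B^{*},I^{*})$; taking totalizations and using that the right-hand side is by definition $D(B^{*})=\RgHoms(B^{*},\omega_{X}[T_{i}]_{i\in I})$ (computed via the injective resolution $I^{*}$) yields the claim. The only real obstacle is the bookkeeping in the grade-wise step: one has to check that the exact sequence of the cited proof, applied at each shifted grade $a_{0}+b$ and summed over the direct summands of $B^{j}$, matches the inclusion of $\gHoms(B^{j},\bigoplus_{a}\omega_{X})(b)$ into $\gHoms(B^{j},I^{*})(b)$ up to the sign conventions in the totalization. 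Everything else is formal adjunction and the already-established properties of injective graded sheaves.
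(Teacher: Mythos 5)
Your proposal is correct and follows essentially the same route as the paper: both arguments use the explicit injective representative of $\omega_{X}[T_{i}]_{i\in I}$ built from the Koszul-type resolution of $k[T_{i}]_{i\in I}$ in Corollary \ref{cor:injective resolution of kTi}, and both reduce to the exactness of the sequence obtained by applying $\gHom(B^{j},a_{XK^{-q}}^{!}(-))$, which your computation $\gHoms(\bigoplus_{a\geq a_{0}}C,E)(b)\cong\Homs(C,E(a_{0}+b))$ justifies grade by grade. The paper leaves this identification implicit ("the assertion follows in a similar way to Lemma \ref{lem:dualizing complex direct sum of dualizing complex}"), so your version is just a slightly more explicit writing of the same proof.
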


\begin{proof}
Let $A^{*}$ be an injective resolution of $k$ and $K^{*}$ a flat
and f-soft resolution of $\underline{\Z}_{X}$ of finite length. For
$j,p,q\in\Z$, we have exact sequences 
\begin{align*}
0 & \to\gHom(B^{j},a_{XK^{-q}}^{!}(\bigoplus_{\substack{a\in\Z_{\geq0}^{I}}
}A^{p}))\to\gHom(B^{j},a_{XK^{-q}}^{!}(\bigoplus_{\substack{a\in\Z^{I}}
}A^{p}))\\
 & \to\gHom(B^{j},a_{XK^{-q}}^{!}(\bigoplus_{i\in I}\bigoplus_{a\in\Z_{\leq-1}^{\left\{ i\right\} }\times\Z^{I\setminus\left\{ i\right\} }}A^{p}))\to\dots\\
 & \to\gHom(B^{j},a_{XK^{-q}}^{!}(\bigoplus_{a\in\Z_{\leq(-1,\dots,-1)}^{I}}A^{p}))\to0.
\end{align*}
 Thus the assertion follows in a similar way to Lemma \ref{lem:dualizing complex direct sum of dualizing complex}. 
\end{proof}
Note that every element of $D^{b}(X,\gMod k[T_{i}]_{i\in I})$ is
quasi-isomorphic to some $B^{*}$ as in Lemma \ref{lem:dual of flat complex}.

\subsection{Constructible complexes}

Six functors preserve constructible complexes (Remark \ref{rem:constructiblity of pullback},
Theorem \ref{thm:constructiblity of push and proper push}, Proposition
\ref{prop:constructiblity of tensor product}, Theorem \ref{thm:constructiblity of RgrHom},
Corollary \ref{cor:constructiblity of !-pullback}). This can be seen
in the same way as the case of sheaves of $k$-modules in \cite[Chapter V]{BorelIntersectioncohomology1984}.
Since results we need spread apart in \cite[Chapter V]{BorelIntersectioncohomology1984},
for readers' convenience, we survey proof. 

Remind that $f\colon X\to Y$ is a continuous map of paracompact locally
compact Hausdorff spaces. In this subsection, we assume that 
\[
Rf_{*},Rf_{!}\colon D^{+}(X,\Modu k)\to D^{+}(Y,\Modu k)
\]
 have finite cohomological dimensions. Then by Remark \ref{rem: 3 functors are graded-wise},
\[
Rf_{*},Rf_{!}\colon D^{+}(X,\gMod k[T_{i}]_{i\in I})\to D^{+}(Y,\gMod k[T_{i}]_{i\in I})
\]
also have finite cohomological dimensions. (Note that by Remark \ref{rem:flat resolution of finite length},
$\overset{\L}{\otimes}$ has finite homological dimension.)
\begin{defn}
\label{def:unrestricted-topological-stratifications} (\cite[Chapter V. 2.1. Remark]{BorelIntersectioncohomology1984})
An unrestricted topological stratification $(X_{j})_{j}$ of $X$
is a filtration 
\[
\emptyset=X_{-1}\subset X_{0}\subset X_{1}\subset\dots\subset X_{\dim X-1}\subset X_{\dim X}=X
\]
by closed subspaces such that each stratum $X_{j}\setminus X_{j-1}$
is a topological manifold of dimension $j$, and for each point $x\in X_{j}\setminus X_{j-1}$,
there is a neighborhood $U$ of $x$ and an homeomorphism of $U$
to $B_{j}\times\overset{\circ}{c}(L)$ preserving stratifications,
where $B_{j}$ is the $j$-dimensional open disk , $L$ is a $(\dim X-j-1)$-dimensional
compact unrestricted topological stratified space, and $\overset{\circ}{c}(L):=[0,1)\times L/(\left\{ 0\right\} \times L\backsim\pt)$
is the open cone over $L$ with the natural stratification.
\end{defn}

In this subsection, we fix unrestricted topological stratifications
$(X_{j})_{j}$ and $(Y_{j})_{j}$ of $X$ and $Y$. 

\begin{defn}
\label{def:stratified continuous-map}(\cite[Chapter V. 10.12]{BorelIntersectioncohomology1984})
A continuous map $f\colon X\to Y$ is \emph{stratified} if 
\begin{itemize}
\item for a connected component $S$ of a stratum $Y_{j}\setminus Y_{j-1}$
of $Y$, the inverse image $f^{-1}(S)$ is a union of connected components
of strata of $X$, and 
\item for $y\in Y$, there exist a neighborhood $U$ of $y$ in the stratum
of $Y$ containing $y$, an unrestricted topological stratified space
$F$, and a stratification preserving homeomorphism $F\times U\cong f^{-1}(U)$
which transform the projection to $U$ into $f$. 
\end{itemize}
\end{defn}

In this subsection, we assume that $f\colon X\to Y$ is stratified
with respect to $(X_{j})_{j}$ and $(Y_{j})_{j}$. 

\begin{defn}
\label{def:constructible sheaves}Let $F\in\Shv(X,\gMod k[T_{i}]_{i\in I})$. 
\begin{itemize}
\item $F$ is \emph{locally constant} if there is an open covering of $X$
such that the restriction of $F$ to each open subset is a constant
sheaf.
\item $F$ is \emph{of finite type} if each stalk $F_{x}$ ($x\in X$) is
a finitely generated graded $k[T_{i}]_{i\in I}$-module.
\item $F$ is \emph{weakly constructible} with respect to a stratification
$(X_{j})_{j}$ if $F|_{X_{j}\setminus X_{j-1}}$ is locally constant.
\item $F$ is \emph{constructible} with respect to a stratification $(X_{j})_{j}$
if $F|_{X_{j}\setminus X_{j-1}}$ is locally constant of finite type.
\end{itemize}
\end{defn}

\begin{defn}
\label{def:constructible complexes of sheaves}$E\in D^{b}(X,\gMod k[T_{i}]_{i\in I})$
is \emph{(weakly) constructible} with respect to a stratification
$(X_{j})_{j}$ if $\H^{k}(E)$ ($k\in\Z$) is (weakly) constructible
with respect to $(X_{j})_{j}$. We put 
\[
D_{(w-)c,(X_{j})_{j}}^{b}(X,\gMod k[T_{i}]_{i\in I})\subset D^{b}(X,\gMod k[T_{i}]_{i\in I})
\]
 the full triangulated subcategories of (weakly) constructible complexes
with respect to $(X_{j})_{j}$. 
\end{defn}

Remind that $f^{*},Rf_{*},Rf_{!},R\Gamma,R\Gamma_{c}$ (and $f^{!}$
when $f$ is a locally closed embedding) are given as direct sums
of these functors at each grade (Remark \ref{rem: 3 functors are graded-wise}).
In particular, we can apply some results on such functors for sheaves
of $k$-modules to sheaves of graded $k[T_{i}]_{i\in I}$-modules,
e.g., Remark \ref{rem:constructiblity of pullback}, Lemma \ref{lem:weak constructiblity of push and proper push},
Proposition \ref{prop:locally free sheaves on manifolds}, and Lemma
\ref{lem:Borel 3.8}. 
\begin{rem}
\label{rem:constructiblity of pullback} Obviously, for $G\in D_{c,(Y_{j})_{j}}^{b}(Y,\gMod k[T_{i}]_{i\in I})$,
we have $f^{*}G\in D_{c,(X_{j})_{j}}^{b}(X,\gMod k[T_{i}]_{i\in I})$
(\cite[Chapter V. Theorem 10.16 (i)]{BorelIntersectioncohomology1984}). 
\end{rem}

\begin{lem}
\label{lem:weak constructiblity of push and proper push}(\cite[Chapter V. Theorem 10.16 (ii)]{BorelIntersectioncohomology1984})
For $F\in D_{w-c,(X_{j})_{j}}^{b}(X,\gMod k[T_{i}]_{i\in I})$, we
have $Rf_{*}F,Rf_{!}F\in D_{w-c,(Y_{j})_{j}}^{b}(Y,\gMod k[T_{i}]_{i\in I})$.
\end{lem}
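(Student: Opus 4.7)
The plan is to reduce the graded statement to the classical case for sheaves of $k$-modules \cite[Chapter V. Theorem 10.16 (ii)]{BorelIntersectioncohomology1984} by exploiting the grade-wise nature of $Rf_{*}$ and $Rf_{!}$ (Remark \ref{rem: 3 functors are graded-wise}). First, since $Rf_{*}$ and $Rf_{!}$ on $D^{+}(X,\Modu k)$ have finite cohomological dimension by assumption, the induced functors on $D^{+}(X,\gMod k[T_{i}]_{i\in I})$ send $D^{b}$ to $D^{b}$; so boundedness of $Rf_{*}F$ and $Rf_{!}F$ is immediate.

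Next, I would verify local constancy of each $\H^{k}(Rf_{\epsilon}F)|_{Y_{j}\setminus Y_{j-1}}$ ($\epsilon=*,!$) as a sheaf of graded $k[T_{i}]_{i\in I}$-modules. Fix $y\in Y_{j}\setminus Y_{j-1}$. By the stratified hypothesis on $f$, there exist a neighborhood $U\subset Y_{j}\setminus Y_{j-1}$ of $y$, a compact unrestricted topological stratified space $F_{y}$, and a stratification-preserving homeomorphism $f^{-1}(U)\cong F_{y}\times U$ transforming $f$ into the projection. Over $U$, the classical Borel argument (proper/smooth base change plus K\"unneth along the product) shows that for every $a\in\Z^{I}$,
\[
\H^{k}(Rf_{\epsilon}F)(a)|_{U}\cong \H^{k}(Rf_{\epsilon}(F(a)))|_{U}
\]
is a locally constant sheaf of $k$-modules on $U$, and is in fact the pullback under the second projection of the stalk at $y$. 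The crucial point is that this trivialization is induced by a \emph{single} stratification-preserving homeomorphism $f^{-1}(U)\cong F_{y}\times U$, hence is simultaneously valid at every grade $a\in\Z^{I}$ and is compatible with the multiplication maps $T_{i}$. Consequently, $\H^{k}(Rf_{\epsilon}F)|_{U}$ is the pullback of the stalk at $y$ under the projection, hence locally constant as a sheaf of graded $k[T_{i}]_{i\in I}$-modules.

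The main technical point to watch is that $\Z^{I}$ is infinite, so one cannot simply intersect grade-wise trivializing covers; this is why it is essential that the stratified structure of $f$ produces a single topological trivialization over $U$, from which grade-wise local constancy is an automatic consequence rather than something to be assembled a posteriori. Granted this, weak constructibility of $Rf_{*}F$ and $Rf_{!}F$ with respect to $(Y_{j})_{j}$ follows by running the argument over every stratum of $Y$.
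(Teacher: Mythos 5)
Your proof is correct and matches the paper's (implicit) argument: the paper offers no proof beyond citing Borel together with the remark that $Rf_{*}$ and $Rf_{!}$ are computed grade-wise, and your reduction to the classical case over a local product trivialization $f^{-1}(U)\cong F_{y}\times U$ of the stratified map is exactly the intended route. The subtlety you flag about $\Z^{I}$ being infinite is real but can also be dispatched more cheaply than via the single trivialization: strata of $Y$ are topological manifolds, hence locally simply connected, so grade-wise local constancy of each $\H^{k}(Rf_{\epsilon}F)(a)$ on a stratum already forces local constancy of the graded sheaf (take $U$ a ball, on which every grade is constant, and observe that the $T_{i}$ are then morphisms of constant sheaves on a connected set, so the whole graded sheaf is constant on $U$).
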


\begin{prop}
\label{prop:locally free sheaves on manifolds}(\cite[Chapter V. Proposition 3.7 (a)]{BorelIntersectioncohomology1984})
Assume $X$ is a manifold and $(X_{j})_{j}$ is the trivial stratification.
Then for $x\in X$ and $F\in D_{w-c,(X_{j})_{j}}^{b}(X,\gMod k[T_{i}]_{i\in I})$,
we have 
\[
H^{r}(R\Gamma_{c}F|_{U})\cong H^{r-\dim X}(F_{x})
\]
 for an open neighborhood $U$ of $x$ homeomorphic to an open ball.
\end{prop}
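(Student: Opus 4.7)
The plan is to reduce the statement to the corresponding classical result for sheaves of $k$-modules, which is the cited result of Borel (Chapter V, Proposition 3.7 (a)). The key point is that the functors involved all behave grade-wise, so no new input beyond Borel's theorem is needed.

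First, I would invoke Remark \ref{rem: 3 functors are graded-wise}, which says that the three functors $Rf_*, f^*, Rf_!$ on $D^+(X,\gMod k[T_i]_{i \in I})$ are computed in a grade-wise manner. Applied to $a_X \colon X \to \{\pt\}$ (and $a_U \colon U \to \{\pt\}$), this gives $R\Gamma_c(F|_U)(a) \cong R\Gamma_c(F(a)|_U)$ for each $a \in \Z^I$. Similarly, taking stalks commutes with taking grades: $(F_x)(a) \cong F(a)_x$. Since cohomology in the abelian category $\gMod k[T_i]_{i \in I}$ is computed grade-wise as well, it suffices to prove, for every $a \in \Z^I$, the identity
\[
H^r(R\Gamma_c F(a)|_U) \cong H^{r-\dim X}(F(a)_x).
\]

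Second, I would check that $F(a) \in D_{w\text{-}c, (X_j)_j}^{b}(X,\Modu k)$. By assumption $\H^k(F)$ is locally constant on $X$ for every $k$, i.e., there is an open cover $\{V_\alpha\}$ of $X$ such that $\H^k(F)|_{V_\alpha}$ is a constant sheaf of graded $k[T_i]_{i \in I}$-modules. Restricting to each grade, the sheaf $\H^k(F)(a)|_{V_\alpha} = \H^k(F(a))|_{V_\alpha}$ is a constant sheaf of $k$-modules, so $F(a)$ is weakly constructible with respect to the trivial stratification.

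Finally, Borel's result (Chapter V, Proposition 3.7 (a)) applied to the manifold $X$, the weakly constructible complex $F(a)$, the point $x$, and the open ball neighborhood $U$ gives exactly the required isomorphism for each grade $a$, completing the argument. There is no real obstacle here beyond verifying that the grade-wise nature of the functors allows a clean reduction; this was already set up in Section \ref{sec:Graded-modules} and Subsection \ref{subsec:Five-functors}, so the proof is essentially a formal consequence of those structural results together with the classical statement.
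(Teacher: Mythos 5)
Your proposal is correct and matches the paper's (implicit) argument: the paper proves this proposition precisely by noting that $R\Gamma_{c}$, stalks, and cohomology are all computed grade-wise (Remark \ref{rem: 3 functors are graded-wise}) and then citing Borel's classical result at each grade $a\in\Z^{I}$. The only point you leave tacit is that the grade-wise isomorphisms are natural in the complex and hence commute with the structure maps $T_{i}\colon F(a)\to F(a+e_{i})$, so that they assemble into an isomorphism of graded $k[T_{i}]_{i\in I}$-modules; this is immediate from the functoriality of Borel's isomorphism.
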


\begin{lem}
\label{lem:Borel 3.8}(\cite[Chapter V. Lemma 3.8 (b)]{BorelIntersectioncohomology1984})
Let $Z$ be an open ball of dimension $s$. We assume that $X$ is
compact. We equip $W:=X\times Z$ with a filtration $(W_{j}:=X_{j}\times Z)_{j}$.
Let $\pi\colon W\to Z$ be the projection, and $F\in D_{w-c,\left\{ W_{j}\right\} _{j}}^{b}(W,\gMod k[T_{i}]_{i\in I})$.
Then for $z\in Z$, we have 
\begin{align*}
H^{r}(R\Gamma_{c}F) & \cong H^{r-s}(R\Gamma_{c}F|_{\pi^{-1}(z)})\\
H^{r}(R\Gamma F) & \cong H^{r}(R\Gamma F|_{\pi^{-1}(z)}).
\end{align*}
\end{lem}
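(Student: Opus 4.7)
The plan is to reduce both isomorphisms to a single structural claim: that $R\pi_*F \in D^b(Z, \gMod k[T_i]_{i\in I})$ is \emph{locally constant}, i.e., weakly constructible with respect to the trivial stratification $\{Z\}$. Since $X$ is compact, $\pi\colon W\to Z$ is proper, so $R\pi_! = R\pi_*$, and we have
\[
R\Gamma_c(W,F)\cong R\Gamma_c(Z,R\pi_*F),\qquad R\Gamma(W,F)\cong R\Gamma(Z,R\pi_*F).
\]
If $R\pi_*F$ is locally constant on the contractible $s$-ball $Z$, then it is quasi-isomorphic to the constant complex with value its stalk $(R\pi_*F)_z \cong R\Gamma(F|_{\pi^{-1}(z)})$ (the last isomorphism by proper base change along $\pi$, using compactness of $X$). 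Consequently $R\Gamma(Z,R\pi_*F)\cong R\Gamma(F|_{\pi^{-1}(z)})$, which is the second isomorphism. For the first, Proposition~\ref{prop:locally free sheaves on manifolds} applied to $Z$ (a manifold of dimension $s$, trivial stratification) and the locally constant complex $R\pi_*F$ gives $H^r(R\Gamma_c(Z,R\pi_*F))\cong H^{r-s}((R\pi_*F)_z) \cong H^{r-s}(R\Gamma_c F|_{\pi^{-1}(z)})$, where the last equality uses that $\pi^{-1}(z) = X\times\{z\}$ is compact.

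The core task is therefore to establish local constancy of $R\pi_*F$. I would argue by induction on the length of $(X_j)_j$. Let $j$ be maximal with $S := X_j\setminus X_{j-1}\neq\emptyset$, and let $\iota\colon S\times Z\hookrightarrow W$ be the open inclusion with closed complement $\kappa\colon X_{j-1}\times Z\hookrightarrow W$. The distinguished triangle $R\iota_!\iota^*F \to F \to R\kappa_*\kappa^*F\to[1]$ becomes, after $R\pi_*$,
\[
R(\pi\iota)_!\iota^*F \longrightarrow R\pi_*F \longrightarrow R(\pi\kappa)_*\kappa^*F \longrightarrow[1],
\]
where $R\pi_*R\iota_! = R\pi_!R\iota_! = R(\pi\iota)_!$ by properness of $\pi$. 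The third term is handled by the inductive hypothesis applied to $X_{j-1}$ (compact, with shorter product stratification on $X_{j-1}\times Z$) and $\kappa^*F$. It remains to show that $R(\pi\iota)_!$ of a locally constant complex on the product $S\times Z$ is locally constant on $Z$.

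For this base step, fix $z\in Z$ and a small open ball $U\subset Z$ containing $z$. Since $S$ is a topological manifold and $\iota^*F$ is locally constant on $S\times Z$, the product structure together with local triviality lets one realize $\iota^*F|_{S\times U}$ as (quasi-isomorphic to) the pullback of $\iota^*F|_{S\times\{z\}}$ along the projection $S\times U\to S\times\{z\}$. Proper base change and the contractibility of $U$ then give $(R(\pi\iota)_!\iota^*F)|_U$ constant with value $R\Gamma_c(S\times\{z\},\iota^*F|_{S\times\{z\}})$, establishing local constancy. I expect the main obstacle to be precisely this base case: controlling $R(\pi_S)_!$ for $\pi_S\colon S\times Z\to Z$ when the fiber $S$ is only locally closed (and typically non-compact), where only the product structure of the stratification and weak constructibility of $F$ along product strata make the argument work. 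Once the base case is in hand, the inductive triangle propagates local constancy, and the reductions of the first paragraph finish the proof.
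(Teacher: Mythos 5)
Your argument is correct, but it takes a genuinely different route from the paper. The paper gives no argument for this lemma beyond the remark preceding it: since $R\Gamma$, $R\Gamma_c$ and restriction are all computed grade-wise on $\Shv(W,\gMod k[T_i]_{i\in I})\cong\Fct(\Z^I,\Shv(W,\Modu k))$ (Remark \ref{rem: 3 functors are graded-wise}), the classical statement \cite[Chapter V, Lemma 3.8(b)]{BorelIntersectioncohomology1984} applied to each $F(a)$ yields the graded statement, the isomorphisms being natural and hence compatible with the structure maps $T_i\colon F(a)\to F(a+e_i)$. You instead reconstruct the classical proof from scratch: properness of $\pi$ to identify both sides with (compactly supported) cohomology of $R\pi_*F$ on $Z$, induction on strata via the triangle $\iota_!\iota^*F\to F\to\kappa_*\kappa^*F\to[1]$ to prove local constancy of $R\pi_*F$, then contractibility of $Z$ and Proposition \ref{prop:locally free sheaves on manifolds} to conclude. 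This buys self-containedness at the cost of re-proving Borel's lemma; the paper's reduction buys a one-line proof but leans entirely on the classical reference. Two small points to tighten in your version: (i) a locally constant complex on a contractible space need not be quasi-isomorphic to a constant complex (there are Postnikov obstructions), but the consequence you actually use, $H^r(R\Gamma(Z,R\pi_*F))\cong H^r((R\pi_*F)_z)$, follows from the hypercohomology spectral sequence since each $\H^q(R\pi_*F)$ is constant; (ii) the base case needs an explicit homotopy-invariance input (e.g.\ \cite[Proposition 2.7.8]{KashiwaraSchapiraSheavesonmanifolds90}) to identify $\iota^*F|_{S\times U}$ with the pullback of its restriction to $S\times\{z\}$ before invoking base change for $R\pi_!$ along $U\to\pt$; with those citations in place the induction closes.
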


\begin{prop}
\label{prop:Borel 3.10 and 3.5}Let $F\in D_{c,(X_{j})_{j}}^{b}(X,\gMod k[T_{i}]_{i\in I})$.
\begin{enumerate}
\item (\cite[Chapter V. Proposition 3.10 a).b).e)]{BorelIntersectioncohomology1984})
Let $x\in X_{j}\setminus X_{j-1}$, and $U$ a neighborhood of $x$
homeomorphic to $B_{j}\times\overset{\circ}{c}(L)$ as in Definition
\ref{def:unrestricted-topological-stratifications}. Then $H^{r}(R\Gamma_{c}F|_{U})$
does not depend on $U$ and finitely generated over $k[T_{i}]_{i\in I}$.
Moreover, we have $H^{r}(R\Gamma_{c}F|_{U})\cong H^{r}(j_{x}^{!}F)$,
where $j_{x}\colon\left\{ x\right\} \to X$ is the inclusion
\item (\cite[Chapter V. Theorem 3.5]{BorelIntersectioncohomology1984})
Let $P\subset Q\subset X$ be open subsets such that $\overline{P}\subset Q$
and $\overline{P}$ is compact. Then 
\[
\Ima(H^{r}(R\Gamma_{c}F|_{P})\to H^{r}(R\Gamma_{c}F|_{Q}))
\]
 is finitely generated over $k[T_{i}]_{i\in I}$. 
\end{enumerate}
\end{prop}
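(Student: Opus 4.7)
The plan is to follow the strategy of \cite[Chapter V, \S3]{BorelIntersectioncohomology1984} verbatim, replacing $\Modu k$ by $\gMod k[T_{i}]_{i\in I}$. The key observation is that since $k$ is noetherian, so is $k[T_{i}]_{i\in I}$ (Hilbert basis theorem), hence finitely generated graded $k[T_{i}]_{i\in I}$-modules form a noetherian abelian category stable under kernels, cokernels, and extensions. Combined with Remark \ref{rem: 3 functors are graded-wise} (which says $Rf_{*}$, $Rf_{!}$, $f^{*}$ and $j^{!}$ for locally closed $j$ are computed grade-wise), every exact-sequence argument for sheaves of $k$-modules carries over.

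For part (1), I would argue by induction on $\dim X$. The base case $\dim X = 0$ is trivial. For the inductive step, let $x \in X_{j} \setminus X_{j-1}$ and $U \cong B_{j}\times\overset{\circ}{c}(L)$ as in Definition \ref{def:unrestricted-topological-stratifications}. First, using that $B_{j}$ is an open ball and $L$ is a compact unrestricted topologically stratified space of dimension $\dim X - j - 1 < \dim X$, Lemma \ref{lem:Borel 3.8} applied to the projection $\pi\colon B_{j}\times\overset{\circ}{c}(L)\to B_{j}$ gives
\[
H^{r}(R\Gamma_{c}F|_{U}) \cong H^{r-j}(R\Gamma_{c}F|_{\overset{\circ}{c}(L)\times\{0\}}),
\]
reducing the problem to the cone. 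Writing the cone vertex as $v$ and $i\colon\overset{\circ}{c}(L)\setminus\{v\}\hookrightarrow\overset{\circ}{c}(L)$, $\jmath\colon\{v\}\hookrightarrow\overset{\circ}{c}(L)$, the distinguished triangle
\[
i_{!}i^{*}F \to F \to \jmath_{*}\jmath^{*}F \to^{[1]}\cdot
\]
together with $\overset{\circ}{c}(L)\setminus\{v\} \cong (0,1)\times L$ and a second application of Lemma \ref{lem:Borel 3.8} expresses $H^{r}(R\Gamma_{c}F|_{U})$ in a long exact sequence involving $H^{*}(R\Gamma_{c}F|_{L})$ and $F_{v}$. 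The inductive hypothesis applied to the stratified space $L$ shows the former is finitely generated; $F_{v}$ is finitely generated by constructibility. Hence $H^{r}(R\Gamma_{c}F|_{U})$ is finitely generated. The independence from $U$ and the identification $H^{r}(R\Gamma_{c}F|_{U})\cong H^{r}(j_{x}^{!}F)$ follows by taking a cofinal system of cone-like neighborhoods shrinking toward $x$ and noting the stabilization of the directed system (since all the cohomology groups in the reduction are isomorphic via the local product structure); passing to the limit, and using the standard identification $j_{x}^{!}F \cong \varinjlim R\Gamma_{c}(V,F|_{V})$ over shrinking open neighborhoods $V$ of $x$ available because $f$ is stratified.

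For part (2), the idea is to cover $\overline{P}$ by finitely many open sets $U_{1},\dots,U_{m}$ of the product-of-ball-with-cone form in part (1), each contained in $Q$ (possible since $\overline{P}$ is compact and each point of $X$ has such a neighborhood by the definition of unrestricted topological stratification). Choose also open sets $U_{\alpha}'$ with $U_{\alpha}'\Subset U_{\alpha}$ of the same form that still cover $\overline{P}$. By a Mayer--Vietoris spectral sequence applied to the cover $\{U_{\alpha}'\}$ (iterated pairwise intersections are again products of balls with cones, hence by part (1) have finitely generated $R\Gamma_{c}$-cohomology), the cohomology $H^{r}(R\Gamma_{c}F|_{\bigcup U_{\alpha}'})$ is built by finitely many extensions from finitely generated modules, hence is finitely generated. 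The image of $H^{r}(R\Gamma_{c}F|_{P})\to H^{r}(R\Gamma_{c}F|_{Q})$ factors through $H^{r}(R\Gamma_{c}F|_{\bigcup U_{\alpha}'})$, so its image is a submodule of a finitely generated $k[T_{i}]_{i\in I}$-module; by noetherianity it is itself finitely generated.

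The main obstacle will be the bookkeeping in the inductive step of part (1)---specifically, verifying that the cofinal system of cone-like neighborhoods gives a stabilizing directed system, so that the finite generation on each member descends to the stalk $H^{r}(j_{x}^{!}F)$. Everything else is a mechanical transcription of the arguments in \cite[Chapter V, \S3]{BorelIntersectioncohomology1984}, made legitimate by the grade-wise nature of the six functors and the noetherianity of $k[T_{i}]_{i\in I}$.
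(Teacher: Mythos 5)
Your part (1) is essentially the paper's argument (both are a joint induction on $\dim X$ in which (1) for $X$ feeds on (2) for the compact link $L$), just organized slightly differently: the paper splits $U\cong B_{j}\times\overset{\circ}{c}(L)$ directly into the closed stratum piece $B_{j}$ (handled by Proposition \ref{prop:locally free sheaves on manifolds}) and its complement $U\setminus B_{j}\cong(B_{j}\times(0,1))\times L$ (handled by Lemma \ref{lem:Borel 3.8} with compact factor $L$), whereas you first project onto $B_{j}$ and then decompose the cone. Note, however, that your first application of Lemma \ref{lem:Borel 3.8} takes the fiber to be $\overset{\circ}{c}(L)$, which is not compact, so it is not covered by the lemma as stated in the paper; this is easily repaired by decomposing as the paper does before invoking the lemma. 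The identification $H^{r}(R\Gamma_{c}F|_{U})\cong H^{r}(j_{x}^{!}F)$ you leave to a stabilization argument; the paper simply inherits it from the non-graded case since $j_{x}^{!}$ is computed grade-wise.

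Part (2) has a genuine gap. Your Mayer--Vietoris argument needs the iterated intersections $U_{\alpha_{0}}'\cap\dots\cap U_{\alpha_{m}}'$ to have finitely generated $R\Gamma_{c}$-cohomology, and you justify this by asserting that such intersections "are again products of balls with cones." For an unrestricted topological stratification this is false: the intersection of two distinguished neighborhoods centered at different points (possibly on different strata) is just some open subset of $X$, with no distinguished structure and no a priori finiteness of its compactly supported cohomology. As a consequence your intermediate claim that $H^{r}(R\Gamma_{c}F|_{\bigcup_{\alpha}U_{\alpha}'})$ is itself finitely generated is unjustified --- and it is strictly stronger than the statement being proved, which (as in \cite[Chapter V, Theorem 3.5]{BorelIntersectioncohomology1984}) asserts finite generation only of the \emph{image} $\Ima(H^{r}(R\Gamma_{c}F|_{P})\to H^{r}(R\Gamma_{c}F|_{Q}))$ precisely because the individual groups need not be finitely generated. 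The correct route, which the paper follows by reference to Borel, is a diagram chase that propagates finite generation of images: one proves that if the image statement holds for the pairs $(P_{1},Q_{1})$, $(P_{2},Q_{2})$ and $(P_{1}\cap P_{2},Q_{1}\cap Q_{2})$, then it holds for $(P_{1}\cup P_{2},Q_{1}\cup Q_{2})$ (here only images and submodules of finitely generated modules appear, so noetherianity of $k[T_{i}]_{i\in I}$ suffices), and for the base case one interpolates $P\subset U'\subset Q$ with $U'$ a smaller distinguished neighborhood inside a single chart, so that the image factors through the group $H^{r}(R\Gamma_{c}F|_{U'})$, which is finitely generated by part (1). Your factorization-through-a-finitely-generated-module logic at the end is fine; it is the finite generation of the middle term that your covering argument does not deliver.
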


When $X$ is compact, we can take $P=Q=X$, and hence $H^{r}(R\Gamma_{c}F)$
is finitely generated. 
\begin{proof}
We shall prove both assertions by induction on $\dim X$. When $\dim X=0$,
the assertions are trivial. We assume $\dim X\geq1$. 

(1) There exists an exact sequence 
\[
\dots\to H^{r}(R\Gamma_{c}F|_{U\setminus B_{j}})\to H^{r}(R\Gamma_{c}F|_{U})\to H^{r}(R\Gamma_{c}F|_{B_{j}})\to\dots.
\]
By Proposition \ref{prop:locally free sheaves on manifolds}, the
cohomology group $H^{r}(R\Gamma_{c}F|_{B_{j}})$ does not depend on
$U$ and finitely generated. By Lemma \ref{lem:Borel 3.8}, we have
\[
H^{r}(R\Gamma_{c}F|_{U\setminus B_{j}})\cong H^{r-j}(R\Gamma F|_{L}).
\]
 The right-hand side does not depend on $U$ and, by hypothesis of
induction of (2), finitely generated. Hence the first assertion holds.
The last assertion immediately follows from the case of non-graded
sheaves \cite[Chapter V. Proposition 3.10 c)]{BorelIntersectioncohomology1984}.

(2) This follows in the same way as \cite[Chapter V Theorem 3.5]{BorelIntersectioncohomology1984}
by diagram chasing and hypothesis of induction. We omit details.
\end{proof}
\begin{rem}
Note that we assumed that $k$ is noether, and in proof of both assertions
of Proposition \ref{prop:Borel 3.10 and 3.5}, we used noetherian
property of $k[T_{i}]_{i\in I}$, i.e., for a finitely generated graded
$k[T_{i}]_{i\in I}$-module $A$, its graded $k[T_{i}]_{i\in I}$-submodules
are also finitely generated. (This obviously follows from the case
of non-graded $k[T_{i}]_{i\in I}$-modules.) 
\end{rem}

\begin{cor}
\label{cor:Borel 3.11}(\cite[Chapter V Corollary 3.11 (iii)]{BorelIntersectioncohomology1984})
Let $h\colon V\hookrightarrow X$ be an open subset which is a union
of connected components of strata $X_{j}\setminus X_{j-1}$ ($j\in\Z_{\geq0}$).
Let $F\in D_{c,(V\cap X_{j})_{j}}^{b}(V,\gMod k[T_{i}]_{i\in I})$.
Then $Rh_{*}F\in D_{c,(X_{j})_{j}}^{b}(X,\gMod k[T_{i}]_{i\in I})$. 
\end{cor}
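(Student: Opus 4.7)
The plan is to adapt the argument of \cite[Chapter V, Corollary 3.11]{BorelIntersectioncohomology1984} to the graded setting, separating weak constructibility from finite generation of stalks. The grade-wise structure of $Rh_{*}$ (Remark \ref{rem: 3 functors are graded-wise}) means most of the topology is handled by the classical statements already invoked, and the extra input needed in the graded case is finite generation over the noetherian ring $k[T_{i}]_{i\in I}$.

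First, I would show that $Rh_{*}F$ is weakly constructible with respect to $(X_{j})_{j}$. Because $V$ is a union of connected components of the strata $X_{j}\setminus X_{j-1}$, the open inclusion $h\colon V\hookrightarrow X$ is a stratified continuous map (Definition \ref{def:stratified continuous-map}) with respect to $(V\cap X_{j})_{j}$ and $(X_{j})_{j}$. Applying Lemma \ref{lem:weak constructiblity of push and proper push} to $h$ then gives $Rh_{*}F\in D^{b}_{w\text{-}c,(X_{j})_{j}}(X,\gMod k[T_{i}]_{i\in I})$.

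Second, I need to check that each stalk $\H^{r}(Rh_{*}F)_{x}$ is a finitely generated graded $k[T_{i}]_{i\in I}$-module. For $x\in V$ the stalk coincides with $\H^{r}(F)_{x}$ and is finitely generated by hypothesis. For $x\in X_{j}\setminus X_{j-1}$ with $x\notin V$, I would fix a distinguished conical neighborhood $U\cong B_{j}\times\overset{\circ}{c}(L)$ as in Definition \ref{def:unrestricted-topological-stratifications}, and use the cofinal system of shrinking conical subneighborhoods to write
\[
\H^{r}(Rh_{*}F)_{x}\cong\varinjlim_{U'\ni x}H^{r}(R\Gamma(U'\cap V,F)).
\]
Lemma \ref{lem:Borel 3.8} trivializes the $B_{j}$-factor, and the standard cone formula together with the fact that $L\cap V$ is a union of connected components of strata of the compact stratified link $L$ reduces the computation (up to a shift) to $H^{r}(R\Gamma(L\cap V,F|_{L\cap V}))$. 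Since $\dim L<\dim X$, induction on $\dim X$ applies, and finite generation follows from Proposition \ref{prop:Borel 3.10 and 3.5}(2) applied to a relatively compact open pair inside $L\cap V$, combined with the noetherian property of $k[T_{i}]_{i\in I}$ to pass from grade-wise finite generation over $k$ to finite generation as a graded module.

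The main obstacle will be the bookkeeping for the reduction to the link: $L\cap V$ is in general only a proper open subset of $L$, so the induction hypothesis must be invoked for the inclusion $L\cap V\hookrightarrow L$ rather than for $L$ itself, and one must check that this inclusion is again of the type considered in the corollary (union of components of strata). Once this is set up, however, the argument is essentially formal, since every nontrivial topological input has already been recorded in Lemma \ref{lem:weak constructiblity of push and proper push}, Lemma \ref{lem:Borel 3.8}, and Proposition \ref{prop:Borel 3.10 and 3.5}, and all six-functor manipulations are grade-wise.
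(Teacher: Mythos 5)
Your overall strategy is sound and uses the same three ingredients as the paper (Lemma \ref{lem:weak constructiblity of push and proper push}, Lemma \ref{lem:Borel 3.8}, Proposition \ref{prop:Borel 3.10 and 3.5}), but you organize the induction differently. The paper inducts on the stratum index $j$: it factors $h$ through intermediate open subsets obtained by adding one stratum at a time, so at each step one may assume $V\cap U\cong B_{j}\times\overset{\circ}{c}(L)\setminus B_{j}$, i.e.\ the punctured neighborhood minus only the stratum through $x$. Then Lemma \ref{lem:Borel 3.8} applies directly with the \emph{compact} factor $L$, giving $H^{r}(R\Gamma\,Rh_{*}F|_{U})\cong H^{r}(R\Gamma F|_{L})$, which is finitely generated by Proposition \ref{prop:Borel 3.10 and 3.5}(2) with $P=Q=L$. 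You instead keep $V$ general near $x$ and induct on $\dim X$ via the link, which is a legitimate variant but makes the non-compact set $L\cap V$ appear, and that is where your write-up has two concrete problems.

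First, Lemma \ref{lem:Borel 3.8} as stated requires the non-ball factor to be compact, so you cannot use it to ``trivialize the $B_{j}$-factor'' over $U\cap V\cong B_{j+1}\times(L\cap V)$ directly; you must first push forward along $\id\times(L\cap V\hookrightarrow L)$ (using weak constructibility of the pushforward) and only then apply the lemma over the compact $L$. Second, and more seriously, Proposition \ref{prop:Borel 3.10 and 3.5}(2) applied to a relatively compact open pair $\overline{P}\subset Q$ \emph{inside} $L\cap V$ only bounds the image of $H^{r}(R\Gamma_{c}F|_{P})\to H^{r}(R\Gamma_{c}F|_{Q})$; it says nothing about $H^{r}(R\Gamma(L\cap V,F))$, since $L\cap V$ is not compact. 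The correct closure of your induction is: by the inductive hypothesis the pushforward of $F|_{L\cap V}$ to the compact link $L$ is constructible, and then $H^{r}(R\Gamma(L\cap V,F))\cong H^{r}(R\Gamma(L,-))$ is finitely generated by the $P=Q=L$ case of Proposition \ref{prop:Borel 3.10 and 3.5}(2). You do flag that the induction must be invoked for $L\cap V\hookrightarrow L$, so the repair is within reach, but the finiteness step as literally written does not prove what is claimed. (Also, no appeal to noetherianity is needed to pass from ``grade-wise finitely generated over $k$'' to ``finitely generated graded $k[T_{i}]_{i\in I}$-module'' --- that implication is false in general; the cited proposition already yields finite generation over $k[T_{i}]_{i\in I}$ directly.)
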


\begin{proof}
$Rh_{*}F$ is weakly constructible by Lemma \ref{lem:weak constructiblity of push and proper push}.
To see constructibility at $x\in X_{j}\setminus X_{j-1}$, by induction
on $j$, we may assume that $V\cap U\cong B_{j}\times\overset{\circ}{c}(L)\setminus B_{j}$
for a neighborhood $U\cong B_{j}\times\overset{\circ}{c}(L)$ of $x$
as in Definition \ref{def:unrestricted-topological-stratifications}.
Then by Lemma \ref{lem:Borel 3.8}, we have 
\begin{align*}
H^{r}(R\Gamma Rh_{*}F|_{U}) & \cong H^{r}(R\Gamma F|_{B_{j}\times\overset{\circ}{c}(L)\setminus B_{j}})\\
 & \cong H^{r}(R\Gamma F|_{L}),
\end{align*}
which is finitely generated by Proposition \ref{prop:Borel 3.10 and 3.5}
(2).
\end{proof}
\begin{defn}
\label{def:compactifiable}An unrestricted topological stratified
space $X$ with $(X_{j})_{j}$ is \emph{compactifiable} if there exist
a compact topological space $\overline{X}$ containing $X$ as an
open dense subset, and an unrestricted topological stratification
$(\overline{X}_{j})_{j}$ of $\overline{X}$ such that $X$ is a union
of connected components of strata $\overline{X}_{j}\setminus\overline{X}_{j-1}$
($j\in\Z_{\geq0}$) and the inducing stratification $(X\cap\overline{X}_{j})_{j}$
on $X$ is finer than $(X_{j})_{j}$. 
\end{defn}

\begin{cor}
\label{cor:Borel 10.13}(\cite[Chapter V. Lemma 10.13]{BorelIntersectioncohomology1984})
We assume that $X$ is compactifiable. Then for $F\in D_{c,(X_{j})_{j}}^{b}(X,\gMod k[T_{i}]_{i\in I})$,
the cohomology groups $H^{r}(R\Gamma_{c}F)$ and $H^{r}(R\Gamma F)$
are finitely generated. 
\end{cor}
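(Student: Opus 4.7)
The plan is to reduce both statements to the already-established finiteness result Proposition~\ref{prop:Borel 3.10 and 3.5}(2) by means of the compactification. Fix a compact $\overline{X}\supset X$ with stratification $(\overline{X}_j)_j$ as in Definition~\ref{def:compactifiable}, and let $j\colon X\hookrightarrow\overline{X}$ be the open inclusion. Since $(X\cap\overline{X}_j)_j$ refines $(X_j)_j$, the given complex $F$ is also constructible with respect to the stratification inherited from $\overline{X}$.

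For $R\Gamma_c$, I would first extend by zero. Because $X$ is a union of connected components of strata of $(\overline{X}_j)_j$, the stalks of $j_!F$ vanish off $X$ and coincide with $F$ on $X$, so $j_!F\in D_{c,(\overline{X}_j)_j}^b(\overline{X},\gMod k[T_i]_{i\in I})$. Compactness of $\overline{X}$ gives $R\Gamma_c(\overline{X},-)=R\Gamma(\overline{X},-)$, and the identity $a_{X!}=a_{\overline{X}!}\circ j_!$ yields
\[
R\Gamma_c(X,F)\cong R\Gamma(\overline{X},j_!F).
\]
Applying Proposition~\ref{prop:Borel 3.10 and 3.5}(2) to $j_!F$ on $\overline{X}$ with $P=Q=\overline{X}$ (so $\overline{P}=\overline{X}$ is compact and $\overline{P}\subset Q$), the image of the identity map is all of $H^r(R\Gamma_c(\overline{X},j_!F))$, which is therefore finitely generated over $k[T_i]_{i\in I}$.

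For $R\Gamma$, Corollary~\ref{cor:Borel 3.11} applied to $j$ directly delivers $Rj_*F\in D_{c,(\overline{X}_j)_j}^b(\overline{X},\gMod k[T_i]_{i\in I})$. From $a_{X*}=a_{\overline{X}*}\circ j_*$ one gets
\[
R\Gamma(X,F)\cong R\Gamma(\overline{X},Rj_*F)\cong R\Gamma_c(\overline{X},Rj_*F),
\]
the last identification coming from compactness of $\overline{X}$, and a second appeal to Proposition~\ref{prop:Borel 3.10 and 3.5}(2) with $P=Q=\overline{X}$ finishes the argument.

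There is essentially no obstacle beyond verifying the two constructibility statements $j_!F,\,Rj_*F\in D_{c,(\overline{X}_j)_j}^b(\overline{X},\gMod k[T_i]_{i\in I})$; the first is immediate from the description of stalks of extensions by zero, and the second is exactly the content of Corollary~\ref{cor:Borel 3.11}. Since all the relevant functors on graded sheaves act grade-wise on cohomology (Remark~\ref{rem: 3 functors are graded-wise}), the argument is a verbatim transcription of the classical case for sheaves of $k$-modules, with noetherianness of $k[T_i]_{i\in I}$ used implicitly through Proposition~\ref{prop:Borel 3.10 and 3.5}.
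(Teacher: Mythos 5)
Your argument is correct and is exactly the route the paper takes: the paper's one-line proof cites Proposition \ref{prop:Borel 3.10 and 3.5}(2) (applied on the compact space $\overline{X}$ with $P=Q=\overline{X}$) together with Corollary \ref{cor:Borel 3.11} for the constructibility of $Rj_*F$, and your write-up simply fills in the same reduction, with the extension-by-zero step for $R\Gamma_c$ being the routine part the paper leaves implicit.
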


\begin{proof}
This follows from Proposition \ref{prop:Borel 3.10 and 3.5} (2) and
Corollary \ref{cor:Borel 3.11}. 
\end{proof}
\begin{prop}
\label{prop:proper pullback constructibility strata} Let $F\in D_{c,(X_{j})_{j}}^{b}(X,\gMod k[T_{i}]_{i\in I})$.
Then $h_{j}^{!}F\in D^{b}(X_{j}\setminus X_{j-1},\gMod k[T_{i}]_{i\in I})$
is constructible with respect to the trivial stratification, where
$h_{j}\colon X_{j}\setminus X_{j-1}\hookrightarrow X$ is the embedding.
\end{prop}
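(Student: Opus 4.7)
The plan is to verify, at each $x\in X_{j}\setminus X_{j-1}$, that the cohomology sheaves of $h_{j}^{!}F$ are locally constant near $x$ and have finitely generated stalks at $x$.

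First, for finite generation, factor $j_{x}\colon\{x\}\hookrightarrow X$ as $j_{x}=h_{j}\circ u_{x}$ with $u_{x}\colon\{x\}\hookrightarrow X_{j}\setminus X_{j-1}$, so that $j_{x}^{!}F\cong u_{x}^{!}h_{j}^{!}F$. Proposition \ref{prop:Borel 3.10 and 3.5}(1) gives that $H^{r}(j_{x}^{!}F)$ is finitely generated over $k[T_{i}]_{i\in I}$. Since $X_{j}\setminus X_{j-1}$ is a topological $j$-manifold, once we know $h_{j}^{!}F$ is locally constant in a neighborhood of $x$, the standard identity $u_{x}^{!}G\cong u_{x}^{*}G[-j]$ (up to orientation) for locally constant $G$ transfers finite generation to $\mathcal{H}^{r}(h_{j}^{!}F)_{x}$.

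The local constancy is the main content. Factor $h_{j}=\tilde{h}_{j}\circ k_{j}$ with $\tilde{h}_{j}\colon X_{j}\hookrightarrow X$ closed and $k_{j}\colon X_{j}\setminus X_{j-1}\hookrightarrow X_{j}$ open, so that $h_{j}^{!}F\cong k_{j}^{*}\tilde{h}_{j}^{!}F$, reducing the problem to showing $\tilde{h}_{j}^{!}F$ is constructible on $X_{j}$ with respect to the induced filtration. From the distinguished triangle $\tilde{h}_{j*}\tilde{h}_{j}^{!}F\to F\to Ri_{*}i^{*}F\to^{[1]}\cdot$ (where $i\colon X\setminus X_{j}\hookrightarrow X$ is the open complement), applying $\tilde{h}_{j}^{*}$ and using $\tilde{h}_{j}^{*}\tilde{h}_{j*}\cong\mathrm{id}$ identifies $\tilde{h}_{j}^{!}F$ with a shifted cone of $\tilde{h}_{j}^{*}F\to\tilde{h}_{j}^{*}Ri_{*}i^{*}F$. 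Both terms are weakly constructible on $X_{j}$ by Remark \ref{rem:constructiblity of pullback} and Lemma \ref{lem:weak constructiblity of push and proper push}. For the top stratum, take a distinguished neighborhood $U\cong B_{j}\times\overset{\circ}{c}(L)$ of $x$ meeting $X_{j}\setminus X_{j-1}$ in $B_{j}\times\{v\}$; Lemma \ref{lem:Borel 3.8} applied to the projection onto $B_{j}$ shows the cohomology sheaves are constant along $B_{j}\times\{v\}$, giving the required local constancy.

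The main obstacle is ensuring the entire reduction works in the graded setting. This is the case because $Rf_{*}$, $Rf_{!}$, $f^{*}$, and $h^{!}$ for locally closed $h$ are computed grade-wise by Remark \ref{rem: 3 functors are graded-wise} and Definition \ref{def:h^! locally closed map}, so the distinguished triangles, the local cone computations underlying Lemma \ref{lem:Borel 3.8} and Proposition \ref{prop:Borel 3.10 and 3.5}, and the manifold identity $u_{x}^{!}\cong u_{x}^{*}[-j]$ all transfer from the case of sheaves of $k$-modules without modification, paralleling \cite[Chapter V]{BorelIntersectioncohomology1984}.
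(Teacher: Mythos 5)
Your proof is correct and follows essentially the same route as the paper: weak constructibility of $h_{j}^{!}F$ (which the paper simply cites from \cite[Chapter V, Proposition 3.10 d)]{BorelIntersectioncohomology1984}, whereas you sketch it via the attaching triangle for $\tilde{h}_{j}^{!}$), combined with the manifold identity $u_{x}^{!}\cong u_{x}^{*}[-j]$ on the stratum and Proposition \ref{prop:Borel 3.10 and 3.5} (1) to get finite generation of stalks. The only loose point is invoking Lemma \ref{lem:Borel 3.8} (stated for a compact factor) to get constancy along $B_{j}\times\{v\}$; this is not needed, since your triangle argument already reduces local constancy to Remark \ref{rem:constructiblity of pullback} and Lemma \ref{lem:weak constructiblity of push and proper push}.
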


\begin{proof}
By \cite[Proposition 3.10 d)]{BorelIntersectioncohomology1984}, $h_{j}^{!}F$
is weakly constructible. For $x\in X_{j}\setminus X_{j-1}$, we put
$h_{x}\colon\left\{ x\right\} \hookrightarrow X_{j}\setminus X_{j-1}$.
By \cite[Chapter V. Proposition 3.7 b) and Proposition 3.10 c)]{BorelIntersectioncohomology1984},
we have $H^{s}(h_{x}^{*}h_{j}^{!}F)\cong H^{s-j}((h_{j}\circ h_{x})^{!}F)$.
By Proposition \ref{prop:Borel 3.10 and 3.5} (1), it is finitely
generated.
\end{proof}
\begin{lem}
\label{lem:Borel 10.15}(\cite[Lemma 10.15]{BorelIntersectioncohomology1984})
We assume that $f\colon X\to Y$ is a locally closed embedding. Then
for $F\in D_{c,(Y_{j})_{j}}^{b}(Y,\gMod k[T_{i}]_{i\in I})$, we have
$f^{!}F\in D_{c,(X_{j})_{j}}^{b}(X,\gMod k[T_{i}]_{i\in I})$.
\end{lem}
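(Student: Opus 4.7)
The plan is to factor the locally closed embedding $f\colon X\to Y$ as $f = j\circ i$, where $i\colon X\hookrightarrow\overline{X}$ is an open embedding into the closure of $X$ in $Y$ and $j\colon\overline{X}\hookrightarrow Y$ is a closed embedding, and then treat each factor separately. Since $f^!$ for a locally closed embedding is computed in a grade-wise manner (Definition \ref{def:h^! locally closed map}) and noetherianness of $k[T_i]_{i\in I}$ reconciles grade-wise finite generation with the finite generation required by Definition \ref{def:constructible sheaves}, the arguments parallel the case of sheaves of $k$-modules using the results already established for graded sheaves.

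First I would dispose of the open case: for $i\colon X\hookrightarrow\overline{X}$ one has $i^! = i^*$, so constructibility is preserved by Remark \ref{rem:constructiblity of pullback}. Next, for the closed embedding $j\colon\overline{X}\hookrightarrow Y$ with complementary open embedding $u\colon V := Y\setminus\overline{X}\hookrightarrow Y$, apply the distinguished triangle
\[
j_*j^!F \to F \to Ru_*u^*F \xrightarrow{[1]}.
\]
Constructibility of $F$ is given; constructibility of $u^*F$ on $V$ follows from Remark \ref{rem:constructiblity of pullback}; and to conclude that $Ru_*u^*F$ is constructible on $Y$, I would invoke Corollary \ref{cor:Borel 3.11}, which applies precisely when $V$ is a union of connected components of strata $Y_j\setminus Y_{j-1}$. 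Granted this, the triangle shows $j_*j^!F$ is constructible on $Y$; since $j_*$ is exact and fully faithful, and stalks of $j_*j^!F$ on $\overline{X}$ agree with those of $j^!F$, the complex $j^!F$ is constructible on $\overline{X}$. Combining, $f^!F = i^*j^!F$ is constructible on $X$.

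The main obstacle is the stratification bookkeeping required to apply Corollary \ref{cor:Borel 3.11}: one must arrange that $\overline{X}$, and hence $V$, is a union of closed strata of $Y$. This should follow from the hypothesis that $f$ is stratified with respect to $(X_j)_j$ and $(Y_j)_j$ (Definition \ref{def:stratified continuous-map}) by passing to a common refinement of $(Y_j)_j$ along the closure; since any refinement of a stratification only enlarges the class of constructible complexes (and the stratifications in the statement may themselves be refined without loss), this causes no difficulty for $F$ itself. Once this compatibility is arranged, every remaining step is either an application of the distinguished triangle or a direct appeal to Remark \ref{rem:constructiblity of pullback} and Corollary \ref{cor:Borel 3.11}.
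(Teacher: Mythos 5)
Your route is genuinely different from the paper's. The paper reduces to a closed embedding and then runs an induction over the strata of $X$ itself, restricting $f^{!}F$ to the successively larger open sets $X\setminus X_{l}$ and applying the triangle $j_{l*}j_{l}^{!}\to\Id\to Ri_{l*}i_{l}^{*}$ together with Corollary \ref{cor:Borel 3.11}; the key input at each step is Proposition \ref{prop:proper pullback constructibility strata}, which controls the costalks $H^{s}(h_{x}^{*}h_{j}^{!}F)\cong H^{s-j}((h_{j}\circ h_{x})^{!}F)$ via Proposition \ref{prop:Borel 3.10 and 3.5}(1). You instead work entirely on $Y$ with the single triangle $j_{*}j^{!}F\to F\to Ru_{*}u^{*}F$ and the two-out-of-three property for the triangulated subcategory of constructible complexes. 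Your version is shorter and avoids Proposition \ref{prop:proper pullback constructibility strata} altogether; the paper's version has the advantage that it produces constructibility with respect to $(X_{j})_{j}$ directly (you get it with respect to the stratification induced from $(Y_{j})_{j}$ and must then observe that $(X_{j})_{j}$ refines it, which does hold because $f$ is stratified), and its induction pattern is reused verbatim in the proof of Theorem \ref{thm:constructiblity of push and proper push}.

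The one step you should repair is the justification that Corollary \ref{cor:Borel 3.11} applies to $u\colon V=Y\setminus\overline{X}\hookrightarrow Y$. Appealing to ``a common refinement of $(Y_{j})_{j}$'' is not the right move: the paper gives no mechanism for refining an unrestricted topological stratification so that a prescribed closed set becomes a union of strata while preserving the local cone condition of Definition \ref{def:unrestricted-topological-stratifications}, and such refinements are not automatic. What you actually need is that $\overline{X}$, hence $V$, is a union of connected components of strata of $Y$, and this is true without any refinement: since $f$ is a stratified embedding, $X$ is a union of connected components of strata of $Y$, and the local model $B_{j}\times\overset{\circ}{c}(L)$ forces the frontier condition (if the closure of a connected component $C$ of a stratum meets a connected component $S$ of a lower stratum, then $S\cap\overline{C}$ is open and closed in $S$, so $S\subset\overline{C}$). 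With that observation your argument goes through; the same point is also what makes the induced filtration on $\overline{X}$ an unrestricted topological stratification, which you implicitly use when you restrict $j_{*}j^{!}F$ to $\overline{X}$ and then to the open subset $X$.
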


\begin{proof}
We may assume that $f$ is a closed embedding. We shall show that
$f^{!}F|_{X\setminus X_{l}}$ is constructible with respect to $(X_{j}\setminus X_{l})_{j}$
by induction on $l\leq\dim X-1$. When $l=\dim X-1$, the subset $X\setminus X_{\dim X-1}\subset Y$
is a union of connected components of $Y_{\dim X}\setminus Y_{\dim X-1}$.
Hence the assertion of induction follows from Proposition \ref{prop:proper pullback constructibility strata}.
We assume that $l\leq\dim X-2$. We put $i_{l}\colon X\setminus X_{l+1}\hookrightarrow X\setminus X_{l}$
an open embedding and $j_{l}\colon X_{l+1}\setminus X_{l}\hookrightarrow X\setminus X_{l}$
the complement. By the hypothesis of induction, a distinguised triangle
given by applying $j_{l*}j_{l}^{!}\to\Id\to Ri_{l*}i_{l}^{*}\to^{[1]}\cdot$
to $(f^{!}F)|_{X\setminus X_{l}}$, and Corollary \ref{cor:Borel 3.11},
it suffices to show that $j_{l}^{!}((f^{!}F)|_{X\setminus X_{l}})$
is constructible with respect to the trivial stratification on $X_{l+1}\setminus X_{l}$.
This follows from Proposition \ref{prop:proper pullback constructibility strata}. 
\end{proof}
\begin{thm}
(\cite[Chapter V. Theorem 10.16 (iv) (v)]{BorelIntersectioncohomology1984})
\label{thm:constructiblity of push and proper push} We assume that
each fiber $f^{-1}(y)$ with a stratification $(f^{-1}(y)\cap X_{j})_{j}$
($y\in Y$) is compactifiable. Then for $F\in D_{c,(X_{j})_{j}}^{b}(X,\gMod k[T_{i}]_{i\in I})$,
we have $Rf_{*}F,Rf_{!}F\in D_{c,(Y_{j})_{j}}^{b}(Y,\gMod k[T_{i}]_{i\in I})$. 
\end{thm}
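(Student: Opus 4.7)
My plan is to follow the structure of \cite[Chapter V. Theorem 10.16 (iv),(v)]{BorelIntersectioncohomology1984}, exploiting the fact that all the relevant functors operate grade-wise. Weak constructibility of $Rf_*F$ and $Rf_!F$ with respect to $(Y_j)_j$ is already supplied by Lemma \ref{lem:weak constructiblity of push and proper push}, so the only remaining task is to show, for each $y \in Y$ and each $r \in \Z$, that the stalk $H^r(j_y^*Rf_*F)$ and $H^r(j_y^*Rf_!F)$ is finitely generated over $k[T_i]_{i\in I}$, where $j_y\colon\{y\}\hookrightarrow Y$.

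The first step is to use the stratified structure of $f$: for $y \in Y$ lying in a connected component of some stratum $Y_j \setminus Y_{j-1}$, Definition \ref{def:stratified continuous-map} provides an open neighborhood $U$ of $y$ in that stratum and a stratification-preserving homeomorphism $f^{-1}(U) \cong f^{-1}(y) \times U$ compatible with $f$ and the projection. After shrinking $U$ to an open ball, this puts us exactly in the situation of Lemma \ref{lem:Borel 3.8} with $X = f^{-1}(y)$, $Z = U$ (taking compact exhaustions for the non-compact case as in \cite{BorelIntersectioncohomology1984}).

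The second step is the actual computation. For $Rf_*F$, the stalk $H^r((Rf_*F)_y)$ equals $\varinjlim_{V \ni y} H^r(R\Gamma(f^{-1}(V), F))$; restricting to the cofinal system of $V \subset U$ that are open balls in the stratum of $y$ and applying the $R\Gamma$ half of Lemma \ref{lem:Borel 3.8} (after passing to a suitable compactification of the fiber to invoke it), one obtains $H^r((Rf_*F)_y) \cong H^r(R\Gamma(f^{-1}(y), F|_{f^{-1}(y)}))$. For $Rf_!F$, the analogous computation with the $R\Gamma_c$ half of Lemma \ref{lem:Borel 3.8} (which is the graded-wise version of proper base change) yields $H^r((Rf_!F)_y) \cong H^r(R\Gamma_c(f^{-1}(y), F|_{f^{-1}(y)}))$. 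In both cases the identification is grade-wise, since all three functors $Rf_*$, $Rf_!$, $f^*$ are computed in a grade-wise manner (Remark \ref{rem: 3 functors are graded-wise}).

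The final step invokes Corollary \ref{cor:Borel 10.13}: by Remark \ref{rem:constructiblity of pullback} the restriction $F|_{f^{-1}(y)}$ is constructible with respect to the induced stratification $(f^{-1}(y) \cap X_j)_j$, and by hypothesis $f^{-1}(y)$ is compactifiable, so both $H^r(R\Gamma F|_{f^{-1}(y)})$ and $H^r(R\Gamma_c F|_{f^{-1}(y)})$ are finitely generated graded $k[T_i]_{i\in I}$-modules. The main technical point to watch is that Lemma \ref{lem:Borel 3.8} requires $X$ compact, so one must either assume the fibers are compact or, more generally, use the compactification provided by Definition \ref{def:compactifiable} together with a distinguished triangle relating $Rf_!$ on the fiber and on its compactification — this is the standard workaround used in \cite[Chapter V]{BorelIntersectioncohomology1984} and causes no new difficulty in the graded setting because each grade can be handled separately.
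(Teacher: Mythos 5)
Your treatment of $Rf_!F$ is essentially the paper's: proper base change gives $H^r((Rf_!F)_y)\cong H^r(R\Gamma_c F|_{f^{-1}(y)})$ at \emph{every} $y$, and Corollary \ref{cor:Borel 10.13} applied to the compactifiable fiber finishes it. But your treatment of $Rf_*F$ has a genuine gap. The stalk $H^r((Rf_*F)_y)$ is the colimit of $H^r(R\Gamma(f^{-1}(V),F))$ over open neighborhoods $V$ of $y$ \emph{in $Y$}, and open balls in the stratum of $y$ are not cofinal among these unless $y$ lies in the open stratum: Definition \ref{def:stratified continuous-map} only trivializes $f$ over a neighborhood of $y$ \emph{inside its stratum}, so $f^{-1}(U)$ there is not open in $X$ and does not compute the stalk. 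Concretely, for the open inclusion $f\colon \R\setminus\{0\}\hookrightarrow\R$ with $Y_0=\{0\}$ and $F$ the constant sheaf, $(Rf_*F)_0\cong k^2$ while $R\Gamma(f^{-1}(0),F)=0$; so the identity $H^r((Rf_*F)_y)\cong H^r(R\Gamma F|_{f^{-1}(y)})$ you rely on is simply false off the open stratum, and your argument proves nothing about the stalks there.

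This is exactly why the paper does not argue pointwise for $Rf_*$. It first handles $y$ in the top-dimensional stratum (where your formula is valid, via \cite[Chapter V, Lemma 10.14 (ii)(2)]{BorelIntersectioncohomology1984}), and then proceeds by downward induction over the strata of $Y$, applying $j_{l*}j_l^!\to\Id\to Ri_{l*}i_l^*\to^{[1]}$ to $Rf_*F|_{Y\setminus Y_l}$, using Corollary \ref{cor:Borel 3.11} for the $Ri_{l*}i_l^*$ term, and reducing the $j_l^!$ term via base change $j_l^!Rf_*\cong Rf|_{f^{-1}(Y_{l+1}\setminus Y_l)*}\varphi_l^!$ together with Lemma \ref{lem:Borel 10.15} for constructibility of $\varphi_l^!F$. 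Your proposal is missing this entire inductive mechanism, and without it the constructibility of $Rf_*F$ along the lower strata is not established.
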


\begin{proof}
By Lemma \ref{lem:weak constructiblity of push and proper push},
$Rf_{*}F$ and $Rf_{!}F$ are weakly constructible. Since $H^{r}(Rf_{!}F_{y})\cong H^{r}(R\Gamma_{c}F|_{f^{-1}(y)})$
(\cite[Chapter V. Lemma 7.12]{BorelIntersectioncohomology1984}),
by Corollary \ref{cor:Borel 10.13}, $H^{r}(Rf_{!}F_{y})$ is finitely
generated, i.e., $Rf_{!}F$ is constructible. We shall show that $Rf_{*}F|_{Y\setminus Y_{l}}$
is constructible with respect to a stratification $(Y_{j}\setminus Y_{l})_{j}$
by induction on $l\leq\dim Y-1$. When $l=\dim Y-1$, since $Y\setminus Y_{\dim Y-1}$
is a topological manifold, by \cite[Chapter V. Lemma 10.14 (ii)(2)]{BorelIntersectioncohomology1984},{\small{}
}for $y\in Y\setminus Y_{\dim Y-1}$, we have $H^{r}(Rf_{*}F_{y})\cong H^{r}(R\Gamma F|_{f^{-1}(y)}).$
By Corollary \ref{cor:Borel 10.13}, it is finitely generated. We
assume that $l\leq\dim Y-2$. We put $i_{l}\colon Y\setminus Y_{l+1}\hookrightarrow Y\setminus Y_{l}$
an open embedding and $j_{l}\colon Y_{l+1}\setminus Y_{l}\hookrightarrow Y\setminus Y_{l}$
the complement. By the hypothesis of induction, a distinguished triangle
given by applying $j_{l*}j_{l}^{!}\to\Id\to Ri_{l*}i_{l}^{*}\to^{[1]}\cdot$
to $(Rf_{*}F)|_{Y\setminus Y_{l}}$, and Corollary \ref{cor:Borel 3.11},
it suffices to show that $j_{l}^{!}((Rf_{*}F)|_{Y\setminus Y_{l}})$
is constructible with respect to the trivial stratification on $Y_{l+1}\setminus Y_{l}$.
By \cite[Proposition 1.5.7 (2)]{AcharPerversesheavesandapplicationstorepresentationtheory2021},
we have $j_{l}^{!}Rf_{*}F|_{Y\setminus Y_{l}}\cong Rf|_{f^{-1}(Y_{l+1}\setminus Y_{l})*}\varphi_{l}^{!}F$,
where we put $\varphi_{l}\colon f^{-1}(Y_{l+1}\setminus Y_{l})\hookrightarrow X$.
By Lemma \ref{lem:Borel 10.15}, $\varphi_{l}^{!}F$ is constructible.
Since $Y_{l+1}\setminus Y_{l}$ is a manifold, by the case of $l=\dim Y-1$,
the complex $Rf|_{f^{-1}(Y_{l+1}\setminus Y_{l})*}\varphi_{l}^{!}F$
is constructible. Thus we have proved the assertion. 
\end{proof}
\begin{prop}
(\cite[Chapter V. Proposition 10.18]{BorelIntersectioncohomology1984})
\label{prop:constructiblity of tensor product}For $F,G\in D_{c,(X_{j})_{j}}^{b}(X,\gMod k[T_{i}]_{i\in I})$,
the tensor product $F\stackrel{\L}{\otimes}G$ is also constructible
with respect to $(X_{j})_{j}$. 
\end{prop}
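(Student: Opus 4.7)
The plan is to reduce the assertion to a statement about stalks and then invoke the noetherian/finite global dimension hypothesis on $k[T_i]_{i\in I}$. First, by Remark \ref{rem:flat resolution of finite length}, both $F$ and $G$ admit flat resolutions of finite length, so $F\stackrel{\L}{\otimes}G\in D^b(X,\gMod k[T_i]_{i\in I})$ is a bounded complex whose underlying sheaves of graded modules can be computed by the ordinary tensor product of such resolutions. Since $\otimes$ is defined sheaf-theoretically (sheafification of the presheaf tensor product), and since taking stalks commutes with tensor product and with flat resolutions, for each $x\in X$ we get a canonical isomorphism
\[
(F\stackrel{\L}{\otimes}G)_x\;\cong\;F_x\stackrel{\L}{\otimes}_{k[T_i]_{i\in I}}G_x
\]
in $D^b(\gMod k[T_i]_{i\in I})$.

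Next, I would verify local constancy of the cohomology sheaves on each stratum. Fix $j$ and restrict to $X_j\setminus X_{j-1}$. On this stratum the cohomology sheaves of $F$ and $G$ are locally constant, so locally on the stratum one can replace $F$ and $G$ by complexes of constant sheaves $\underline{A^\bullet}$ and $\underline{B^\bullet}$ of flat graded $k[T_i]_{i\in I}$-modules (using a flat resolution of the stalks, which exists of finite length by Corollary \ref{cor:projective resolution finite length} together with the fact that $k[T_i]_{i\in I}$ has finite global dimension since $k$ does). The tensor product of constant sheaves is the constant sheaf associated to the tensor product of stalks, so $\H^r(F\stackrel{\L}{\otimes}G)|_{X_j\setminus X_{j-1}}$ is locally constant.

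The remaining point is finite generation of stalks, which is the main technical ingredient. Since $k$ is noetherian, the polynomial ring $k[T_i]_{i\in I}$ is noetherian as an ordinary ring, and consequently any submodule of a finitely generated graded $k[T_i]_{i\in I}$-module is finitely generated. Combined with finite global dimension (bounded by $\dim k+\#I$), this implies that for finitely generated graded modules $A,B$, every $\Tor_p^{k[T_i]_{i\in I}}(A,B)$ is a finitely generated graded module, and vanishes for $p$ sufficiently large. Applying this grade-wise to the stalk computation above yields finite generation of $\H^r((F\stackrel{\L}{\otimes}G)_x)$, together with boundedness.

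The main obstacle is bookkeeping rather than substance: one must ensure that the flat resolutions and finite generation arguments, which are classical in the non-graded setting, are genuinely available in $\gMod k[T_i]_{i\in I}$. This is precisely what Corollary \ref{cor:projective resolution finite length} (flat/projective resolutions of bounded length in the graded category) and noetherianity of $k[T_i]_{i\in I}$ provide, so once these are in place the proof is parallel to the classical case (see \cite[Chapter V. Proposition 10.18]{BorelIntersectioncohomology1984}) and no new geometric input is needed.
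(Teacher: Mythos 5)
Your proposal is correct and is essentially the argument the paper has in mind: the paper's own "proof" simply says the claim follows as in Borel, Ch.~V, Prop.~10.18 and omits the details, and what you write out (stalkwise reduction to $F_x\stackrel{\L}{\otimes}G_x$, local constancy on strata via constant-sheaf models on small balls, and finite generation plus boundedness of the graded $\Tor$'s from noetherianity and finite global dimension of $k[T_i]_{i\in I}$, using Corollary \ref{cor:projective resolution finite length}) is exactly that classical argument transported gradewise. No discrepancy with the paper's route.
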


\begin{proof}
This follows in the same way as \cite[Chapter V. Proposition 10.18]{BorelIntersectioncohomology1984}.
We omit details.
\end{proof}
\begin{thm}
(\cite[Chapter V. Theorem 8.6]{BorelIntersectioncohomology1984})
\label{thm:constructiblity of RgrHom}For $F,G\in D_{c,(X_{j})_{j}}^{b}(X,\gMod k[T_{i}]_{i\in I})$,
the complex $\RgHoms(F,G)$ is also constructible with respect to
$(X_{j})_{j}$. 
\end{thm}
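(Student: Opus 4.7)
The plan is to follow Borel's inductive strategy (Chapter V, Theorem 8.6) by inducting on the number of strata meeting $\supp \H^*(F)$, with the only substantive departure from the non-graded case being a graded-noetherian finite-generation check at the base. Since constructibility is local on $X$, I would first pass to the open subset $V = X \setminus X_{j_0-1}$, where $X_{j_0} \setminus X_{j_0-1}$ is a smallest-index stratum meeting $\supp \H^*(F)$. Let $i \colon X_{j_0} \setminus X_{j_0-1} \hookrightarrow V$ be the closed inclusion and $u \colon V \setminus (X_{j_0} \setminus X_{j_0-1}) \hookrightarrow V$ the open complement. Applying $\RgHoms(-, G|_V)$ to the standard attaching triangle $Ru_! u^* F|_V \to F|_V \to Ri_* i^* F|_V \to^{[1]} \cdot$, and using the adjunction $(Ri_!, i^!) = (Ri_*, i^!)$ (for closed $i$) together with $u^! \cong u^*$ (for open $u$) via Theorem \ref{thm:projection formula}, yields a distinguished triangle
\[
Ri_* \RgHoms(i^* F, i^! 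G|_V) \to \RgHoms(F|_V, G|_V) \to Ru_* \RgHoms(u^* F, u^* G|_V) \to^{[1]} \cdot.
\]

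The third vertex is constructible by the inductive hypothesis: the restriction to $V \setminus (X_{j_0} \setminus X_{j_0-1})$ meets strictly fewer strata, and $u^* F$, $u^* G$ remain constructible by Remark \ref{rem:constructiblity of pullback}; then Corollary \ref{cor:Borel 3.11} lets one push forward constructibility under $Ru_*$. For the first vertex, $i^* F$ is constructible on the stratum with its trivial stratification (Remark \ref{rem:constructiblity of pullback}), $i^! G|_V$ is constructible there by Lemma \ref{lem:Borel 10.15}, and Corollary \ref{cor:Borel 3.11} again handles the $Ri_*$, so the step reduces to the base case: constructibility of $\RgHoms(A, B)$ when $A, B \in D_{c,\{M\}}^b(M, \gMod k[T_i]_{i \in I})$ are locally constant complexes of finite type on a topological manifold $M$.

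For the base case I would argue local constancy by restricting to a contractible (e.g.\ Euclidean-ball) neighborhood $W \subset M$, on which $A, B$ become quasi-isomorphic to constant complexes $\underline{A_\bullet}_W$, $\underline{B_\bullet}_W$ associated to bounded complexes $A_\bullet, B_\bullet$ of finitely generated graded $k[T_i]_{i \in I}$-modules, yielding on stalks $\H^n \RgHoms(A, B)_x \cong \H^n \RgHom_{\gMod k[T_i]_{i \in I}}(A_\bullet, B_\bullet)$. The main obstacle, and essentially the only point beyond Borel's proof, is verifying the remaining finite-generation claim: that $\H^n \RgHom_{\gMod k[T_i]_{i \in I}}(A_\bullet, B_\bullet)$ is a finitely generated graded $k[T_i]_{i \in I}$-module for each $n$. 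This will follow from the fact that $k[T_i]_{i \in I}$ is graded-noetherian (graded submodules of finitely generated graded modules are finitely generated) together with Corollary \ref{cor:projective resolution finite length}, which yields a bounded resolution of $A_\bullet$ by finite direct sums of grade-shifts of $k[T_i]_{i \in I}$; then $\RgHom(A_\bullet, B_\bullet)$ is computed by a bounded complex whose terms are finite direct sums of grade-shifts of the finitely generated graded modules $B_\bullet^j$, and hence so is each $\H^n$.
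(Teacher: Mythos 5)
Your proposal is correct and takes essentially the same route the paper intends: its proof of Theorem \ref{thm:constructiblity of RgrHom} is simply a pointer to Borel, Chapter V, Theorem 8.6, whose stratum-by-stratum induction via attaching triangles (together with the local adjunction $\RgHoms(Ru_!A,B)\cong Ru_*\RgHoms(A,u^!B)$) and reduction to locally constant complexes of finite type on a manifold is exactly what you carry out. You have also correctly isolated the only genuinely new point in the graded setting, namely finite generation of $\H^{n}\RgHom(A_{\bullet},B_{\bullet})$ for bounded complexes of finitely generated graded modules, which does follow from graded-noetherianity of $k[T_{i}]_{i\in I}$ and Corollary \ref{cor:projective resolution finite length} (with the harmless caveat that the last term of that resolution is only a finitely generated projective, i.e.\ a summand of a finite free graded module, rather than a sum of grade-shifts of $k[T_{i}]_{i\in I}$).
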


\begin{proof}
This follows in the same way as \cite[Chapter V. Theorem 8.6]{BorelIntersectioncohomology1984}.
We omit details.
\end{proof}
\begin{lem}
(\cite[Lemma 2.8.1]{AcharPerversesheavesandapplicationstorepresentationtheory2021})
\label{lem:Achar Lemma 2.8.1}We have $\omega_{X}[T_{i}]_{i\in I}\in D_{c,(X_{j})_{j}}^{b}(X,\gMod k[T_{i}]_{i\in I})$
and 
\[
D\colon D_{c,(X_{j})_{j}}^{b}(X,\gMod k[T_{i}]_{i\in I})\to D_{c,(X_{j})_{j}}^{b}(X,\gMod k[T_{i}]_{i\in I}).
\]
\end{lem}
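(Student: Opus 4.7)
The plan is to combine the explicit description of the dualizing complex given in Lemma \ref{lem:dualizing complex direct sum of dualizing complex} with Theorem \ref{thm:constructiblity of RgrHom}. First I would reduce the first assertion to the classical constructibility of $\omega_X$: by Lemma \ref{lem:dualizing complex direct sum of dualizing complex} there is a quasi-isomorphism
\[
\omega_X[T_i]_{i\in I} \cong \bigoplus_{a\in \Z_{\geq 0}^I}\omega_X,
\]
so I may work with the right-hand side. Boundedness is immediate from the finite-length Koszul resolution of Corollary \ref{cor:injective resolution of kTi}. Since cohomology and local constancy can be checked grade-wise (Remark \ref{rem: 3 functors are graded-wise}), local constancy of $\H^r(\omega_X[T_i]_{i\in I})$ on each stratum $X_j\setminus X_{j-1}$ is inherited from the analogous property of the classical dualizing complex $\omega_X\in D_c^b(X,\Modu k)$.

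The one point that is not purely grade-wise is finite generation of stalks as graded $k[T_i]_{i\in I}$-modules. For $x \in X_j\setminus X_{j-1}$, the construction of $\bigoplus_{a\in\Z_{\geq 0}^I}\omega_X$ places a copy of $(\omega_X)_x$ in each grade $a\in\Z_{\geq 0}^I$ with the maps $T_i$ acting as the identity between consecutive copies. Hence
\[
\H^r\Bigl(\bigoplus_{a\in\Z_{\geq 0}^I}\omega_X\Bigr)_x \;\cong\; \bigoplus_{a\in\Z_{\geq 0}^I} H^r(\omega_X)_x
\]
is generated, as a graded $k[T_i]_{i\in I}$-module, by $H^r(\omega_X)_x$ placed in grade $0$; since the latter is finitely generated over $k$ by constructibility of $\omega_X$, the whole graded stalk is finitely generated over $k[T_i]_{i\in I}$. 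This establishes $\omega_X[T_i]_{i\in I}\in D_{c,(X_j)_j}^b(X,\gMod k[T_i]_{i\in I})$.

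For the second assertion, $D(F)=\RgHoms(F,\omega_X[T_i]_{i\in I})$ by definition, so the constructibility just proved together with Theorem \ref{thm:constructiblity of RgrHom} applied to the pair $(F,\omega_X[T_i]_{i\in I})$ immediately yields $D(F)\in D_{c,(X_j)_j}^b(X,\gMod k[T_i]_{i\in I})$. The main technical obstacle is really the finite-type verification on the stalks of $\omega_X[T_i]_{i\in I}$; everything else is either a direct citation of classical results or a grade-wise translation, and the crucial observation that resolves it is that $T_i$ acts as the identity on consecutive graded pieces under the isomorphism of Lemma \ref{lem:dualizing complex direct sum of dualizing complex}, collapsing what looks like an infinite direct sum into a finitely generated graded module generated in grade $0$.
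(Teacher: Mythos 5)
Your proof is correct and follows essentially the same route as the paper: reduce via Lemma \ref{lem:dualizing complex direct sum of dualizing complex} to the classical constructibility of $\omega_X$, and then deduce the second assertion from the first together with Theorem \ref{thm:constructiblity of RgrHom}. The only difference is that you spell out the finite-generation check on stalks (the $T_i$ acting as identities so that the graded stalk is generated in grade $0$), which the paper leaves implicit.
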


\begin{proof}
The first assertion follows from Lemma \ref{lem:dualizing complex direct sum of dualizing complex}
and constructibility of $\omega_{X}$ (e.g., \cite[Lemma 2.8.1]{AcharPerversesheavesandapplicationstorepresentationtheory2021}).
The second assertion follows from the first one and Theorem \ref{thm:constructiblity of RgrHom}. 
\end{proof}
\begin{lem}
\label{lem:local cohomology and open neighborhood} (\cite[Chapter V Proposition 3.10 a) ]{BorelIntersectioncohomology1984})
Let $x\in X_{j}\setminus X_{j-1}$, and $U$ a neighborhood of $x$
homeomorphic to $B_{j}\times\overset{\circ}{c}(L)$ as in Definition
\ref{def:unrestricted-topological-stratifications}. Then for $F\in D_{c,(X_{j})_{j}}^{b}(X,\gMod k[T_{i}]_{i\in I})$,
we have $H^{r}(R\Gamma F|_{U})\cong H^{r}(j_{x}^{*}F)$, where $j_{x}\colon\left\{ x\right\} \to X$
is the inclusion.
\end{lem}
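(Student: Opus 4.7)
The plan is to reduce this to the corresponding classical statement for sheaves of $k$-modules, which is precisely \cite[Chapter V, Proposition 3.10 a)]{BorelIntersectioncohomology1984}. First, by Remark \ref{rem: 3 functors are graded-wise}, both $j_x^{*}$ and $R\Gamma$ are computed grade-wise (the latter because Lemma \ref{lem:injective so is each grade } shows that the grade-$a$ component of an injective resolution is an injective resolution of the grade-$a$ component). Consequently, for every $a \in \Z^I$,
\[
H^{r}(R\Gamma F|_{U})(a) \;\cong\; H^{r}(R\Gamma F(a)|_{U}),\qquad H^{r}(j_{x}^{*}F)(a) \;\cong\; H^{r}(j_{x}^{*}F(a)),
\]
and it suffices to establish the statement after passing to each grade.

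Second, I would verify that each $F(a) \in D^{b}(X,\Modu k)$ is classically constructible with respect to $(X_{j})_{j}$. Local constancy on each stratum is immediate from the corresponding property of $F$. For finite generation of the stalks over $k$, note that any finitely generated graded $k[T_{i}]_{i\in I}$-module $M$ has each graded piece $M(a)$ finitely generated as a $k$-module: picking homogeneous generators $m_{1},\dots,m_{s}$ of pure degrees $b_{1},\dots,b_{s}$, the piece $M(a)$ is spanned over $k$ by the finitely many monomials $T^{c}m_{j}$ with $c + b_{j} = a$ and $c \in \Z_{\geq 0}^{I}$. Since $k$ is noetherian, this property is inherited by the stalks of the cohomology sheaves $\mathscr{H}^{r}(F(a))$.

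Third, apply \cite[Chapter V, Proposition 3.10 a)]{BorelIntersectioncohomology1984} to each $F(a)$ to obtain $H^{r}(R\Gamma F(a)|_{U}) \cong H^{r}(j_{x}^{*}F(a))$, realized by the natural restriction map. By the naturality of this map in $F(a)$, the resulting isomorphisms are compatible with the multiplication operators $T_{i}\colon F(a) \to F(a+e_{i})$ on both sides, and hence assemble into a graded isomorphism $H^{r}(R\Gamma F|_{U}) \cong H^{r}(j_{x}^{*}F)$ of $\gMod k[T_{i}]_{i\in I}$-modules.

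There is no real obstacle here; the topological content is entirely absorbed into Borel's classical result, and the only work is the bookkeeping translating between the graded and non-graded settings, the subtlest point of which is the finite-generation check for $F(a)$ above.
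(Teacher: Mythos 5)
Your proof is correct and follows essentially the same route as the paper: the paper's own argument is simply to invoke \cite[Chapter V, Proposition 3.10 a)]{BorelIntersectioncohomology1984} (independence of $H^{r}(R\Gamma F|_{U})$ from the cone-like neighborhood $U$, hence identification with the stalk), with the grade-wise reduction left implicit since it is established once and for all earlier in the appendix. Your explicit bookkeeping — grade-wise computation of $R\Gamma$ and $j_x^*$, and the check that each $F(a)$ is classically constructible — is sound and just makes the reduction explicit.
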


\begin{proof}
By \cite[Chapter V Proposition 3.10 a) ]{BorelIntersectioncohomology1984},
the cohomology group $H^{r}(R\Gamma F|_{U})$ does not depend on $U$.
Hence the assertion holds. 
\end{proof}
\begin{prop}
(\cite[Proposition 3.4.3 (iii)]{KashiwaraSchapiraSheavesonmanifolds90})\label{prop:Kashiwara-Schapira 3.4 (iii)}
For $F\in D_{c,(X_{j})_{j}}^{b}(X,\gMod k[T_{i}]_{i\in I})$, we have
$H^{r}(j_{x}^{*}DF)\cong H^{r}(Dj_{x}^{!}F)$. 
\end{prop}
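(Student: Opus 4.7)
The plan is to deduce this proposition from Lemma~\ref{lem:Achar 1.5.15} together with biduality for constructible complexes. First, I would establish the biduality isomorphism
\[
F \cong DDF \quad \text{in } D_{c,(X_{j})_{j}}^{b}(X,\gMod k[T_{i}]_{i\in I}),
\]
and then, assuming biduality, deduce the proposition as follows. Lemma~\ref{lem:Achar 1.5.15} gives $j_{x}^{!}DG\cong Dj_{x}^{*}G$ for any $G\in D_{c}^{b}$; applied to $G=DF$ this yields $j_{x}^{!}DDF\cong Dj_{x}^{*}DF$. Using biduality on the left-hand side, $j_{x}^{!}F\cong Dj_{x}^{*}DF$, and then applying $D$ and using biduality on the point (which takes the elementary form of graded module duality as in Example~\ref{exa:duality in graded modules}), we obtain $Dj_{x}^{!}F\cong j_{x}^{*}DF$, which is the claim.

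To establish biduality for constructible $F$, I would argue locally on $X$ and proceed by induction on the depth of the stratification $(X_{j})_{j}$. Using the distinguished triangles $Rh_{!}h^{*}E\to E\to g_{*}g^{*}E\to^{[1]}$ associated to an open stratum $h\colon X\setminus X_{j-1}\hookrightarrow X$ and its closed complement $g\colon X_{j-1}\hookrightarrow X$, together with constructibility-preservation of the six functors (Remark~\ref{rem:constructiblity of pullback}, Theorem~\ref{thm:constructiblity of push and proper push}, Lemma~\ref{lem:Borel 10.15}), this reduces biduality to the case where $F$ is (the extension by $0$ from) a locally constant constructible complex on a single stratum. Shrinking to a trivializing chart and invoking Proposition~\ref{prop:locally free sheaves on manifolds}, one further reduces to biduality for the stalk, i.e.\ to a statement about finitely generated graded $k[T_{i}]_{i\in I}$-modules.

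The main obstacle is therefore the point case: for any bounded complex $M^{*}$ of finitely generated graded $k[T_{i}]_{i\in I}$-modules, one must show
\[
M^{*} \overset{\sim}{\longrightarrow} \RgHom(\RgHom(M^{*},k[T_{i}]_{i\in I}),k[T_{i}]_{i\in I}).
\]
By Corollary~\ref{cor:projective resolution finite length} and the finite global dimension of $k[T_{i}]_{i\in I}$, every such $M^{*}$ admits a bounded resolution by finite direct sums of shifted free modules $k[T_{i}]_{i\in I}[b]_{\gr}$, so the question reduces to biduality for a single shifted free module, which is immediate. To pass from the pointwise duality with $k[T_{i}]_{i\in I}$ to the sheaf-theoretic duality with $\omega_{X}[T_{i}]_{i\in I}$, I would use Lemma~\ref{lem:dualizing complex direct sum of dualizing complex} to identify $\omega_{X}[T_{i}]_{i\in I}$ with $\bigoplus_{a\in\Z_{\geq0}^{I}}\omega_{X}$, combined with Lemma~\ref{lem:dual of flat complex} applied to complexes whose terms are of the form $\bigoplus_{a\in\Z_{\geq a_{0}}^{I}} C$; this reduces the calculation of $DDF$, one grade at a time, to ordinary biduality for constructible complexes of sheaves of $k$-modules, which is the classical case (\cite[Proposition 3.4.3]{KashiwaraSchapiraSheavesonmanifolds90}). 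Finally, I would check the naturality needed to identify the morphism produced by these local reductions with the canonical biduality morphism $F\to DDF$, completing the proof.
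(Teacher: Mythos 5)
Your reduction of the proposition to biduality is itself fine: given $F\cong DDF$ and Lemma \ref{lem:Achar 1.5.15} applied to $G=DF$, together with biduality for the finitely generated graded modules $H^{*}(j_{x}^{*}DF)$ (which uses Lemma \ref{lem:Achar Lemma 2.8.1} to know $DF$ is constructible), one does get $Dj_{x}^{!}F\cong j_{x}^{*}DF$. But the overall architecture inverts the logical order of the paper, and the inversion is where the gap sits. In the paper (as in Kashiwara--Schapira and Borel), the present proposition is proved \emph{first}, by a direct local computation, and biduality $F\cong DDF$ is then \emph{deduced from it}: the paper's proof of $F\cong DDF$ computes $(DDF)_{x}\cong D(j_{x}^{!}DF)$ using exactly this proposition. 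So you cannot simply "establish biduality first" without supplying an independent proof, and your sketch of that independent proof does not go through as written.

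Concretely, two steps of your biduality argument fail or beg the question. First, the d\'evissage: applying $DD$ to the triangle $Ri_{!}i^{*}E\to E\to j_{*}j^{*}E\to^{[1]}$ requires $D\circ Rh_{*}\cong Rh_{!}\circ D$ for the open inclusion $h$ (only $D\circ Rh_{!}\cong Rh_{*}\circ D$ is formal from adjunction), and in the paper that commutation is itself a corollary of biduality; the other triangle runs into the same obstruction. Second, checking $F\to DDF$ on stalks requires computing $j_{x}^{*}D(DF)$, which is precisely the content of the proposition you are trying to prove. The non-circular route is the paper's: use the formal adjunction $R\Gamma(U;DF)\cong\RgHom(R\Gamma_{c}(U;F),k[T_{i}]_{i\in I})$, then the stabilization results over cone-like neighborhoods $U\cong B_{j}\times\overset{\circ}{c}(L)$ — Lemma \ref{lem:local cohomology and open neighborhood} identifying $H^{r}(R\Gamma DF|_{U})$ with the stalk $H^{r}(j_{x}^{*}DF)$ (valid because $DF$ is constructible), and Proposition \ref{prop:Borel 3.10 and 3.5}(1) identifying $H^{r}(R\Gamma_{c}F|_{U})$ with the costalk $H^{r}(j_{x}^{!}F)$. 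Your point-case discussion (perfect resolutions by shifted free graded modules, Lemma \ref{lem:dual of flat complex}) is correct and is indeed needed later, but it belongs to the proof of biduality, which should come after, not before, this proposition.
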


\begin{proof}
By Lemma \ref{lem:Achar Lemma 2.8.1}, $DF$ is constructible. Hence
by Lemma \ref{prop:Borel 3.10 and 3.5} (1) and Lemma \ref{lem:local cohomology and open neighborhood},
we have 
\begin{align*}
H^{r}(j_{x}^{*}DF) & \cong H^{r}(R\Gamma DF|_{U})\\
 & \cong H^{r}(\RgHom(R\Gamma_{c}F|_{U},k[T_{i}]_{i\in I}))\\
 & \cong H^{r}(Dj_{x}^{!}F),
\end{align*}
where $U$ is a neighborhood of $x$ homeomorphic to $B_{j}\times\overset{\circ}{c}(L)$. 
\end{proof}
\begin{prop}
(\cite[Proposition 3.4.3 (ii)]{KashiwaraSchapiraSheavesonmanifolds90})For
$F\in D_{c,(X_{j})_{j}}^{b}(X,\gMod k[T_{i}]_{i\in I})$, we have
$F\cong DDF$. 
\end{prop}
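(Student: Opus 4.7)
The plan is to adapt the Kashiwara-Schapira argument for biduality \cite[Proposition 3.4.3 (ii)]{KashiwaraSchapiraSheavesonmanifolds90} to our graded setting. First I would define the natural biduality morphism $\eta_F\colon F\to DDF$ via the evaluation pairing $F\overset{\L}{\otimes}DF\to\omega_X[T_i]_{i\in I}$ together with the $\overset{\L}{\otimes}$-$\RgHoms$ adjunction. By Lemma \ref{lem:Achar Lemma 2.8.1} both $F$ and $DDF$ lie in $D_{c,(X_j)_j}^b(X,\gMod k[T_i]_{i\in I})$, so it suffices to show $\eta_F$ is a quasi-isomorphism after applying $j_x^*$ at each $x\in X$.

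For $x\in X$ with inclusion $j_x\colon\{x\}\to X$, I combine the preceding Proposition \ref{prop:Kashiwara-Schapira 3.4 (iii)} (applied with $F$ replaced by $DF$) with Lemma \ref{lem:Achar 1.5.15} applied to the locally closed inclusion $j_x$. These yield natural isomorphisms
\[
j_x^* DDF \;\cong\; D_{\{x\}}\,j_x^! DF \;\cong\; D_{\{x\}}\,D_{\{x\}}\,j_x^* F,
\]
where the second step uses $j_x^!\omega_X[T_i]_{i\in I}=\omega_{\{x\}}[T_i]_{i\in I}=k[T_i]_{i\in I}$ (via Lemma \ref{lem:dualizing complex direct sum of dualizing complex} at a point). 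Tracing through these isomorphisms and their adjunction origins, they assemble into a commutative square identifying $j_x^*\eta_F$ with the pointwise biduality map $\eta_{j_x^* F}$. By constructibility of $F$, the complex $j_x^* F$ has finitely generated graded cohomology in each degree, so the problem reduces to biduality for bounded complexes of finitely generated graded $k[T_i]_{i\in I}$-modules at a point.

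For this pointwise biduality, I would proceed by the standard d\'evissage: the truncation triangles $\tau_{\leq r-1}M\to M\to\H^r(M)[-r]_{\deg}\to^{[1]}\cdot$ reduce, by induction on cohomological length, to the case of a single finitely generated graded $R$-module $M$, where $R=k[T_i]_{i\in I}$. Corollary \ref{cor:projective resolution finite length} then provides a finite resolution $P^{\bullet}\to M$ by finitely generated free graded modules of the form $R[a]_{\gr}$, using that $R$ is noetherian of finite global dimension. Since each $R[a]_{\gr}$ is projective, a direct computation gives $D_R(R[a]_{\gr})\cong\gHoms_R(R[a]_{\gr},R)\cong R[-a]_{\gr}$ (the generator of $R[a]_{\gr}$ sits in grade $-a$, and $R(b-a)=k$ iff $b\geq a$ componentwise), whence $D_R D_R(R[a]_{\gr})\cong R[a]_{\gr}$ with the canonical biduality map being the identity; extending termwise across $P^\bullet$ concludes.

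The main obstacle I anticipate is the compatibility step in the second paragraph: verifying that the two isomorphisms coming from Proposition \ref{prop:Kashiwara-Schapira 3.4 (iii)} and Lemma \ref{lem:Achar 1.5.15} truly assemble into a commutative square identifying $j_x^*\eta_F$ with the pointwise biduality morphism $\eta_{j_x^* F}$. This is formal but requires careful bookkeeping of the canonical adjunction units and counits underlying each step (as well as verifying naturality in the graded variable); once in place, the d\'evissage and direct computation on free graded modules in the third paragraph close the argument.
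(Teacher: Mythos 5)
Your proposal follows essentially the same route as the paper: both reduce to stalks via Proposition \ref{prop:Kashiwara-Schapira 3.4 (iii)} and Lemma \ref{lem:Achar 1.5.15} (giving $(DDF)_x\cong\RgHom(\RgHom(F_x,k[T_i]_{i\in I}),k[T_i]_{i\in I})$), and then invoke biduality for bounded complexes of finitely generated graded modules, which the paper cites from Achar's Theorem A.10.2 and you prove by the same d\'evissage that reference uses. The only caveat is that Corollary \ref{cor:projective resolution finite length} yields a finite \emph{projective} (not necessarily free) resolution when $k$ is not a field, but biduality passes to direct summands of the modules $R[a]_{\gr}$, so this does not affect the argument.
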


\begin{proof}
By Proposition \ref{prop:Kashiwara-Schapira 3.4 (iii)} and Lemma
\ref{lem:Achar 1.5.15}, we have
\begin{align*}
(DDF)_{x} & \cong D(j_{x}^{!}DF)\\
 & \cong\RgHom(j_{x}^{!}DF,k[T_{i}]_{i\in I})\\
 & \cong\RgHom(Dj_{x}^{*}F,k[T_{i}]_{i\in I})\\
 & \cong\RgHom(\RgHom(F,k[T_{i}]_{i\in I}),k[T_{i}]_{i\in I})\\
 & \cong F_{x},
\end{align*}
where the last quasi-isomorphism follows in the same way as \cite[Theorem A.10.2]{AcharPerversesheavesandapplicationstorepresentationtheory2021}.
\end{proof}
\begin{cor}
\label{cor:constructiblity of !-pullback}
\[
f^{!}\colon D_{c,(Y_{j})_{j}}^{b}(Y,\gMod k[T_{i}]_{i\in I})\to D_{c,(X_{j})_{j}}^{b}(X,\gMod k[T_{i}]_{i\in I}).
\]
\end{cor}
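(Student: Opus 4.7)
The plan is to reduce the statement to results already established in the excerpt, specifically the compatibility $f^{!}D \cong Df^{*}$ from Lemma \ref{lem:Achar 1.5.15}, the biduality isomorphism $F \cong DDF$ for constructible $F$ (the proposition immediately preceding the corollary), and the fact that both $f^{*}$ and $D$ preserve constructibility (Remark \ref{rem:constructiblity of pullback} and Lemma \ref{lem:Achar Lemma 2.8.1} respectively).

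Concretely, given $G \in D_{c,(Y_{j})_{j}}^{b}(Y,\gMod k[T_{i}]_{i\in I})$, I would first invoke biduality to rewrite $G \cong D_{Y}D_{Y}G$. Then, applying $f^{!}$ and using the second assertion of Lemma \ref{lem:Achar 1.5.15} (which gives $f^{!}D_{Y} \cong D_{X}f^{*}$), I obtain the chain of isomorphisms
\[
f^{!}G \;\cong\; f^{!}D_{Y}D_{Y}G \;\cong\; D_{X}f^{*}D_{Y}G
\]
in $D^{b}(X,\gMod k[T_{i}]_{i\in I})$. Now I would run through the right-hand side in three steps: $D_{Y}G$ lies in $D_{c,(Y_{j})_{j}}^{b}(Y,\gMod k[T_{i}]_{i\in I})$ by Lemma \ref{lem:Achar Lemma 2.8.1}; its pullback $f^{*}D_{Y}G$ lies in $D_{c,(X_{j})_{j}}^{b}(X,\gMod k[T_{i}]_{i\in I})$ by Remark \ref{rem:constructiblity of pullback} (since $f$ is assumed stratified with respect to $(X_{j})_{j}$ and $(Y_{j})_{j}$); and finally $D_{X}f^{*}D_{Y}G$ again lies in $D_{c,(X_{j})_{j}}^{b}(X,\gMod k[T_{i}]_{i\in I})$ by a second application of Lemma \ref{lem:Achar Lemma 2.8.1}.

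There is essentially no obstacle here: all the hard work has been done in the preceding results, and the argument is the usual ``sandwich'' reduction of the exceptional inverse image to the ordinary inverse image via Verdier duality. The only minor point to verify is that Lemma \ref{lem:Achar 1.5.15} applies to objects of $D^{b}$ as stated (not only to constructible complexes), so that the intermediate object $D_{Y}G$ is allowed as input; this is indeed how that lemma is formulated in the excerpt.
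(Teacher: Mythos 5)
Your argument is correct and is exactly the paper's own proof, which is the one-line identity $f^{!}\cong f^{!}DD\cong Df^{*}D$ combined with the fact that $D$ and $f^{*}$ preserve constructibility. Your write-up just spells out the same sandwich reduction in more detail, including the (correct) observation that Lemma \ref{lem:Achar 1.5.15} is stated for arbitrary bounded complexes.
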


\begin{proof}
$f^{!}\cong f^{!}DD\cong Df^{*}D$. 
\end{proof}
Since $f^{!}\omega_{Y}[T_{i}]_{i\in I}\cong\omega_{X}[T_{i}]_{i\in I}$,
we have $Rf_{*}D\cong DRf_{!}$. 
\begin{cor}
We have $DRf_{*}\cong Rf_{!}D$ and $f^{*}D\cong Df^{!}$. 
\end{cor}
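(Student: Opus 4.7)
The plan is to deduce both natural isomorphisms formally from three ingredients that are already in place: (i) the identity $Rf_{*}D \cong DRf_{!}$ from the preceding corollary (obtained via $f^{!}\omega_{Y}[T_{i}]_{i\in I}\cong\omega_{X}[T_{i}]_{i\in I}$ and the adjunction $(Rf_{!},f^{!})$), (ii) Lemma \ref{lem:Achar 1.5.15}, which gives $f^{!}D\cong Df^{*}$ on $D^{b}$, and (iii) the biduality $D\circ D\cong\id$ on $D_{c,(X_{j})_{j}}^{b}(X,\gMod k[T_{i}]_{i\in I})$ and $D_{c,(Y_{j})_{j}}^{b}(Y,\gMod k[T_{i}]_{i\in I})$, which was established just above.

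For the first isomorphism, I would start from $Rf_{*}D\cong DRf_{!}$ and apply $D$ to both sides; using biduality on the right, this yields $DRf_{*}D\cong Rf_{!}$ as functors $D_{c,(X_{j})_{j}}^{b}(X,\gMod k[T_{i}]_{i\in I})\to D_{c,(Y_{j})_{j}}^{b}(Y,\gMod k[T_{i}]_{i\in I})$. Substituting $F$ by $DG$ for a general $G\in D_{c,(X_{j})_{j}}^{b}(X,\gMod k[T_{i}]_{i\in I})$ and once more invoking $DDG\cong G$ gives $DRf_{*}G\cong Rf_{!}DG$, which is the first claim. The second isomorphism is entirely parallel: apply $D$ to $f^{!}D\cong Df^{*}$ to obtain $Df^{!}D\cong f^{*}$, then replace the argument $F$ by $DG$ and use biduality once more to conclude $Df^{!}G\cong f^{*}DG$, i.e.\ $f^{*}D\cong Df^{!}$.

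The only subtlety to double-check is that every manipulation stays inside the constructible subcategory, so that biduality is legitimate at each step; this is precisely Theorem \ref{thm:constructiblity of push and proper push} (for $Rf_{*}$ and $Rf_{!}$), Corollary \ref{cor:constructiblity of !-pullback} (for $f^{!}$), Remark \ref{rem:constructiblity of pullback} (for $f^{*}$), and Lemma \ref{lem:Achar Lemma 2.8.1} (for $D$). Since none of these steps involves any genuinely new computation — the argument is purely a formal shuffle of $D$'s via adjunction plus biduality — I do not anticipate any real obstacle; the work was done in the preceding pages, and this corollary is the bookkeeping consequence.
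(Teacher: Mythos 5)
Your proposal is correct and is exactly the intended argument: the paper states this corollary without proof, and the formal derivation is precisely to dualize the already-established identities $Rf_{*}D\cong DRf_{!}$ and $f^{!}D\cong Df^{*}$ using biduality $DD\cong\id$ on the constructible subcategories, with constructibility preserved at each step by the preceding results. (The only nitpick is attribution: $Rf_{*}D\cong DRf_{!}$ comes from the sentence preceding the corollary, via $f^{!}\omega_{Y}[T_{i}]_{i\in I}\cong\omega_{X}[T_{i}]_{i\in I}$ and adjunction, not from the corollary on constructibility of $f^{!}$.)
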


\bibliographystyle{amsalpha}
\bibliography{mikamibibtex}

@article{TevelevCompactificationsofsubvarietiesoftori07,
	author = {J. Tevelev},
	date-added = {2025-01-27 09:56:54 +0800},
	date-modified = {2025-01-27 09:59:36 +0800},
	journal = {Amer. J. Math.},
	number = {4},
	pages = {1087-1104},
	title = {Compactifications of subvarieties of tori},
	volume = {129},
	year = {2007}}

@book{PetersSteenbrinkMixedHodgestructures2008,
	author = {C.A.M. Peters and J.H.M. Steenbrink},
	date-added = {2024-12-30 11:02:06 +0800},
	date-modified = {2024-12-30 11:04:57 +0800},
	publisher = {Springer-Verlag, Berlin},
	series = {Ergebnisse der Mathematik und ihrer Grenzgebiete. 3. Folge. A Series of Modern Surveys in Mathematics [Results in Mathematics and Related Areas. 3rd Series. A Series of Modern Surveys in Mathematics]},
	title = {Mixed {H}odge structures},
	volume = {52},
	year = {2008}}

@misc{AksnesAminiPiquerezShawCohomologicallytropicalvarieties2023,
	author = {E. Aksnes and O. Amini and M. Piquerez and K. Shaw},
	date-added = {2024-12-19 09:51:06 +0800},
	date-modified = {2024-12-19 09:53:21 +0800},
	howpublished = {arXiv:2307.02945},
	title = {Cohomologically tropical varieties},
	year = {2023}}

@article{GrossSiebertMirrorsymmetryvialogarithmicdegenerationdataII2010,
	author = {M. Gross and B. Siebert},
	date-added = {2024-12-19 09:46:33 +0800},
	date-modified = {2025-01-09 10:31:16 +0800},
	journal = {J. Algebraic Geom.},
	number = {4},
	pages = {679-780},
	title = {Mirror symmetry via logarithmic degeneration data, {II}},
	volume = {19},
	year = {2010}}

@article{HuhWangLefschetzclassesonprojectivevarieties2017,
	author = {J. Huh and B. Wang},
	date-added = {2024-12-19 09:38:11 +0800},
	date-modified = {2024-12-19 09:40:36 +0800},
	journal = {Proc. Amer. Math. Soc.},
	number = {11},
	pages = {4629-4637},
	title = {Lefschetz classes on projective varieties},
	volume = {145},
	year = {2017}}

@book{MurreNagelPetersLecturesonthetheoryofpuremotives2013,
	author = {J.P. Murre and J. Nagel and C.A.M. Peters},
	date-added = {2024-12-19 09:33:12 +0800},
	date-modified = {2024-12-19 11:45:40 +0800},
	publisher = {American Mathematical Society, Providence, RI},
	series = {University Lecture},
	title = {Lectures on the theory of pure motives},
	volume = {61},
	year = {2013}}

@book{BredonSheafTheory1997,
	author = {G.E. Bredon},
	date-added = {2024-12-13 14:58:47 +0800},
	date-modified = {2024-12-13 15:00:30 +0800},
	edition = {Second},
	number = {170},
	publisher = {Springer-Verlag, New York},
	series = {Graduate Texts in Mathematics},
	title = {Sheaf theory},
	year = {1997}}

@conference{BeilinsonOnthederivedcategoryofperversesheaves1987,
	author = {A. A. Beilinson},
	booktitle = {K -theory, arithmetic and geometry},
	date-added = {2024-12-13 12:59:08 +0800},
	date-modified = {2024-12-13 13:01:09 +0800},
	number = {1289},
	pages = {27-41},
	publisher = {Springer, Berlin},
	series = {Lecture Notes in Math.},
	title = {On the derived category of perverse sheaves},
	year = {1987}}

@book{FuEtaleCohomologyTheory,
	author = {L. Fu},
	date-added = {2024-10-15 11:02:58 +0800},
	date-modified = {2024-10-15 16:51:33 +0800},
	edition = {Revised},
	publisher = {World Scientific Publishing Co. Pte. Ltd., Hackensack, NJ},
	series = {Nankai Tracts in Mathematics},
	title = {Etale cohomology theory},
	volume = {14},
	year = {2015}}

@book{BourbakiAlgebra,
	author = {N. Bourbaki},
	date-added = {2024-10-09 10:13:00 +0800},
	date-modified = {2024-10-10 15:11:35 +0800},
	publisher = {Hermann, Paris; Addison-Wesley Publishing Co., Reading, MA},
	title = {Elements of mathematics. {A}lgebra, {P}art {I}: {C}hapters 1-3},
	year = {1974}}

@article{RabinoffTropicalanalyticgeometryNewtonpolygonsandtropicalintersections2012,
	author = {J. Rabinoff},
	date-added = {2024-08-12 14:47:27 +0800},
	date-modified = {2024-10-04 13:30:23 +0800},
	journal = {Adv.Math.},
	number = {6},
	pages = {3192-3255},
	title = {Tropical analytic geometry, {N}ewton polygons, and tropical intersections},
	volume = {229},
	year = {2012}}

@article{PayneAnalytificationisthelimitofalltropicalizations2009,
	author = {S. Payne},
	date-added = {2024-08-12 14:45:55 +0800},
	date-modified = {2024-08-12 14:46:45 +0800},
	journal = {Math. Res. Lett.},
	number = {3},
	pages = {543-556},
	title = {Analytification is the limit of all tropicalizations},
	volume = {16},
	year = {2009}}

@conference{KatzPayneRealizationspacesfortropicalfans2011,
	author = {E. Katz and S. Payne},
	booktitle = {Combinatorial aspects of commutative algebra and algebraic geometry},
	date-added = {2024-08-12 14:42:52 +0800},
	date-modified = {2024-08-12 14:44:23 +0800},
	pages = {73--88},
	publisher = {Springer, Berlin},
	series = {Abel Symp.},
	title = {Realization spaces for tropical fans},
	volume = {6},
	year = {2011}}

@book{MaclaganSturmfelsIntroductiontotropicalgeometry2015,
	author = {D. Maclagan and B. Sturmfels},
	date-added = {2024-08-12 14:04:52 +0800},
	date-modified = {2024-08-12 14:06:49 +0800},
	publisher = {American Mathematical Society, Providence, RI},
	series = {Graduate Studies in Mathematics},
	title = {Introduction to tropical geometry},
	volume = {161},
	year = {2015}}

@conference{GublerAguidetotropicalizations2013,
	author = {W. Gubler},
	booktitle = {Algebraic and combinatorial aspects of tropical geometry},
	date-added = {2024-08-12 14:02:10 +0800},
	date-modified = {2024-08-12 14:07:37 +0800},
	pages = {125-189},
	publisher = {Amer. Math. Soc., Providence, RI},
	series = {Contemp. Math.},
	title = {A guide to tropicalizations},
	volume = {589},
	year = {2013}}

@article{GublerRabinoffWernerTropicalskeletons2017,
	author = {W. Gubler and J. Rabinoff and A. Werner},
	date-added = {2024-08-12 13:59:27 +0800},
	date-modified = {2024-10-04 12:47:54 +0800},
	journal = {Ann. Inst. Fourier (Grenoble)},
	number = {5},
	pages = {1905-1961},
	title = {Tropical skeletons},
	volume = {67},
	year = {2017}}

@article{TotaroChowgroupsChowcohomologyandlinearvarieties2014,
	author = {B. Totaro},
	date-added = {2024-08-11 16:14:10 +0800},
	date-modified = {2024-10-04 13:30:33 +0800},
	journal = {Forum Math. Sigma 2},
	pages = {Paper No. e17, 25},
	title = {{C}how groups, {C}how cohomology, and linear varieties},
	year = {2014}}

@misc{AminiPiquerezHomologicalsmoothnessandDeligneresolutionfortropicalfans2024,
	author = {O. Amini and M. Piquerez},
	date-added = {2024-06-27 13:53:47 +0800},
	date-modified = {2024-10-04 13:27:54 +0800},
	howpublished = {arXiv:2105.01504},
	title = {Homological smoothness and {D}eligne resolution for tropical fans},
	year = {2024}}

@conference{Colliot-Thel`eneHooblerKahnTheBloch-Ogus-GubberTheorem1997,
	author = {J.-L. Colliot-Th\'{e}l\`{e}ne and R. T. Hoobler and B. Kahn},
	booktitle = {Algebraic $K$-theory (Toronto,ON, 1996)},
	date-added = {2024-06-13 13:43:47 +0800},
	date-modified = {2024-10-04 13:28:24 +0800},
	pages = {31-94},
	publisher = {Amer. Math., Providence, RI},
	series = {Fields Inst. Commun.},
	title = {The {B}loch-{O}gus-{G}abber theorem},
	volume = {16},
	year = {1997}}

@article{GoreskyMacPhersonIntersectionhomologytheory,
	author = {M. Goresky and R. MacPherson},
	date-added = {2024-06-06 15:51:32 +0800},
	date-modified = {2024-06-09 13:41:13 +0800},
	journal = {Topology},
	number = {2},
	pages = {135-162},
	title = {Intersection homology theory},
	volume = {19},
	year = {1980}}

@misc{MikamiOntropicalcycleclassmaps2020,
	author = {R. Mikami},
	date-added = {2024-06-06 15:47:20 +0800},
	date-modified = {2024-06-09 13:56:50 +0800},
	howpublished = {arXiv:2009.04690v5},
	title = {On tropical cycle class maps},
	year = {2020}}

@misc{AminiPiquerezTropicalFeichtner-Yuzvinskyandpositivitycriterionforfans2024,
	author = {O. Amini and M. Piquerez},
	date-added = {2024-06-06 15:34:56 +0800},
	date-modified = {2024-10-04 13:28:08 +0800},
	howpublished = {arXiv:2405.05014},
	title = {Tropical {F}eichtner-{Y}uzvinsky and positivity criterion for fans},
	year = {2024}}

@article{JellShawSmackaSuperformstropicalcohomologyandPoincarduality2019,
	author = {P. Jell and K. Shaw and J. Smacka},
	date-added = {2024-06-06 15:23:56 +0800},
	date-modified = {2024-10-04 13:30:07 +0800},
	journal = {Adv. Geom.},
	number = {1},
	pages = {101-130},
	title = {Superforms, tropical cohomology, and {P}oincar{\'e} duality},
	volume = {19},
	year = {2019}}

@article{ItenbergKatzarkovMikhalkinZharkovTropicalhomology2019,
	author = {I. Itenberg and L. Katzarkov and G. Mikhalkin and I. Zharkov},
	date-added = {2024-06-06 15:20:21 +0800},
	date-modified = {2025-02-03 14:10:44 +0800},
	journal = {Math. Ann.},
	number = {1-2},
	pages = {963-1006},
	title = {Tropical homology},
	volume = {374},
	year = {2019}}

@article{JellRauShawLefschetz11-theoremintropicalgeometry,
	author = {P. Jell and J. Rau and K Shaw},
	date-added = {2024-05-23 10:40:32 +0800},
	date-modified = {2024-10-04 13:29:59 +0800},
	journal = {{\'E}pijournal G{\'e}om. Alg{\'e}brique},
	pages = {Art. 11, 27pp},
	title = {{L}efschetz (1,1)-theorem in tropical geometry},
	volume = {2},
	year = {2018}}

@book{IllusieComplexecotangentetDeformationsI1971,
	author = {L. Illusie},
	date-added = {2024-05-19 19:22:11 +0800},
	date-modified = {2024-06-09 13:43:16 +0800},
	publisher = {Springer-Verlag, Berlin-New York},
	series = {Lecture Notes in Mathematics},
	title = {Complexe cotangent et D\'eformations I},
	volume = {239},
	year = {1971}}

@conference{LaumonSurlacategoriederiveedesDmodulesfiltres,
	author = {G. Laumon},
	booktitle = {Algebraic geometry (Tokyo/Kyoto, 1982)},
	date-added = {2024-05-19 19:16:23 +0800},
	date-modified = {2024-06-09 14:05:33 +0800},
	pages = {151-237},
	publisher = {Springer, Berlin},
	series = {Lecture Notes in Math.},
	title = {Sur la cat\'egorie deriv\'ee des $\mathcal{D}$-modules filtr\'es},
	volume = {1016},
	year = {1983}}

@conference{MikhalkinZharkovTropicaleignewaveandintermediatejacobians2014,
	author = {G. Mikhalkin and I. Zharkov},
	booktitle = {Homological mirror symmetry and tropical geometry},
	date-added = {2024-05-16 11:09:52 +0800},
	date-modified = {2024-06-09 13:59:09 +0800},
	pages = {309-349},
	publisher = {Springer, Cham},
	series = {Lect. Notes Unione Mat. Ital.},
	title = {Tropical eigenwave and intermediate jacobians},
	volume = {15},
	year = {2014}}

@book{CoxaLittleSchenckToricvarieties2011,
	author = {D. A. Cox and J. B. Little and H. K. Schenck},
	date-added = {2024-05-11 11:24:43 +0800},
	date-modified = {2024-06-09 11:41:56 +0800},
	publisher = {American Mathematical Society, Providence, RI},
	series = {Graduate Studies in Mathematics},
	title = {Toric varieties},
	volume = {124},
	year = {2011}}

@book{BorelIntersectioncohomology1984,
	author = {A. Borel and et al.},
	date-added = {2024-05-09 13:44:32 +0800},
	date-modified = {2024-07-29 09:09:14 +0800},
	publisher = {Birkh{\"a}user Boston, MA},
	series = {Modern Birkh{\"a}user Classics.},
	title = {Intersection cohomology},
	year = {1984}}

@book{AcharPerversesheavesandapplicationstorepresentationtheory2021,
	author = {P.N. Achar},
	date-added = {2024-05-09 10:28:40 +0800},
	date-modified = {2024-06-07 18:10:42 +0800},
	publisher = {American Mathematical Society, Providence, RI},
	rating = {0},
	read = {0},
	series = {Math. Surveys Monogr.},
	title = {Perverse sheaves and applications to representation theory},
	volume = {258},
	year = {2021}}

@book{WeibelAnintroductiontohomologicalalgebra1984,
	author = {C. A. Weibel},
	date-added = {2024-05-03 10:21:26 +0800},
	date-modified = {2024-06-09 14:01:36 +0800},
	publisher = {Cambridge University Press, Cambridge},
	series = {Cambridge Studies in Advanced Mathematics},
	title = {An introduction to homological algebra},
	volume = {38},
	year = {1994}}

@article{GrossShokirehAsheaf-theoreticapproachtotropicalhomology23,
	author = {A. Gross and F. Shokrieh},
	date-added = {2024-05-03 09:23:41 +0800},
	date-modified = {2024-06-09 13:42:01 +0800},
	journal = {J. Algebra},
	pages = {577-641},
	title = {A sheaf-theoretic approach to tropical homology},
	volume = {635},
	year = {2023}}

@book{FultonIntersectionTheory,
	author = {Fulton},
	date-added = {2024-04-15 12:28:50 +0800},
	date-modified = {2024-10-04 13:29:09 +0800},
	edition = {Second},
	publisher = {Springer-Verlag, Berlin},
	series = {Ergebnisse der Mathematik und ihrer Grenzgebiete. 3. Folge. A Series of Modern Surveys in Mathematics [Results in Mathematics and Related Areas. 3rd Series]2},
	title = {Intersection theory.},
	year = {1998}}

@article{KleimanGrassmanniansSplittingBundles69,
	author = {S.L. Kleiman},
	date-added = {2024-04-15 09:58:50 +0800},
	date-modified = {2024-06-09 13:51:47 +0800},
	journal = {Inst. Hautes {\'E}tudes Sci. Publ. Math.},
	pages = {281-297},
	title = {Geometry on grassmannians and applications to splitting bundles and smoothing cycles},
	volume = {36},
	year = {1969}}

@article{GoreskyMacPhersonIntersectionhomologyII83,
	author = {M. Goresky and R. MacPherson},
	date-added = {2024-04-09 10:17:22 +0800},
	date-modified = {2024-10-04 13:29:20 +0800},
	journal = {Invent. Math.},
	number = {1},
	pages = {77-129},
	title = {Intersection homology {II}},
	volume = {72},
	year = {1983}}

@misc{GublerJellRabinoffDolbeaultCohomologyofGraphsandBerkovichCurves,
	author = {W. Gubler and P. Jell and J. Rabinoff},
	date-added = {2024-03-22 10:18:37 +0800},
	date-modified = {2024-10-04 13:29:42 +0800},
	howpublished = {arXiv:2111.05747},
	keywords = {tropical geometry},
	title = {Dolbeault cohomology of graphs and {B}erkovich curves},
	year = {2021}}

@misc{GubleJellRabinoffharmonictropicalization21,
	author = {W. Gubler and P. Jell and J. Rabinoff},
	date-added = {2024-03-22 10:12:47 +0800},
	date-modified = {2024-10-04 13:29:29 +0800},
	howpublished = {arXiv:2111.05741},
	keywords = {tropical geometry},
	title = {Forms on {B}erkovich spaces based on harmonic tropicalizations},
	year = {2021}}

@book{KashiwaraSchapiraCategoriesandsheaves06,
	author = {M. Kashiwara and P. Schapira},
	date-added = {2024-03-21 15:41:41 +0800},
	date-modified = {2024-03-21 15:58:09 +0800},
	number = {332},
	publisher = {Springer-Verlag, Berlin},
	series = {Grundlehren der mathematischen Wissenschaften [Fundamental Principles of Mathematical Sciences]},
	title = {Categories and sheaves},
	year = {2006}}

@book{KashiwaraSchapiraSheavesonmanifolds90,
	author = {M. Kashiwara and P. Schapira},
	date-added = {2024-03-21 15:39:02 +0800},
	date-modified = {2024-03-21 15:57:41 +0800},
	publisher = {Springer-Verlag, Berlin},
	series = {With a chapter in French by Christian Houzel. Grundlehren der mathematischen Wissenschaften [Fundamental Principles of Mathematical Sciences]},
	title = {Sheaves on manifolds},
	volume = {292},
	year = {1990}}

@book{PopescuAbelianCategories73,
	author = {N. Popescu},
	date-added = {2024-03-21 13:19:01 +0800},
	date-modified = {2024-06-10 13:45:18 +0800},
	publisher = {Academic Press, London-New York},
	series = {London Math. Soc. Monog., No. 3},
	title = {Abelian categories with applications to rings and modules},
	year = {1973}}

@book{SchubertCategories72,
	author = {H. Schubert},
	booktitle = {Categories},
	date-added = {2024-03-21 09:50:51 +0800},
	date-modified = {2024-03-21 15:37:35 +0800},
	publisher = {Translated from the German by Eva Gray. Springer-Verlag, New York-Heidelberg},
	title = {Categories},
	year = {1972},
	bdsk-url-1 = {https://doi.org/10.1007/978-3-642-65364-3_1}}

\end{document}